 \titleformat{\section}{\bfseries\Large}{\appendixname~\thesection:}{0.5em}{}%
 \titleformat{\subsection}{\bfseries\large}{\thesubsection}{0.5em}{}%
\DeclareMathAlphabet{\mathcal}{OMS}{cmsy}{m}{n}
\definecolor{myyellow}{RGB}{200, 200, 0}
\newtheorem{theorem}{Theorem}
\numberwithin{theorem}{section}
\newtheorem{prop}[theorem]{Proposition}
\newtheorem{lemma}[theorem]{Lemma}
\newtheorem{corollary}[theorem]{Corollary}
\theoremstyle{definition}
\newtheorem{definition}[theorem]{Definition}
\newtheorem*{remark}{Remark}
\newtheorem{example}{Example}
\newtheorem{assumption}{Assumption}
\newcommand{\ext}{\mathrm{Ext}}
\newcommand{\bs}{\mathrm{BS}}
\newcommand{\kb}{\mathbbm{k}}
\renewcommand{\hom}{\mathrm{Hom}}
\newcommand{\im}{\mathrm{im } \,}
\newcommand{\Ac}{\mathcal{A}}
\newcommand{\Cc}{\mathcal{C}}
\newcommand{\Zb}{\mathbb{Z}}
\newcommand{\hf}{\mathfrak{h}}
\newcommand{\id}{\mathrm{id}}
\newcommand{\grk}{\underline{\mathrm{rk}} \ }
\newcommand{\Db}{\mathbb{D}}
\newcommand{\jw}{\mathrm{JW}}
\newcommand{\Dext}{\mathscr{D}^{\mathrm{Ext}}}
\newcommand{\bsbimext}{\textnormal{BSBim}^{\mathrm{Ext}}}
\newcommand{\bsbim}{\textnormal{BSBim}}
\newcommand{\sbim}{\mathbb{S}\textnormal{Bim}}
\newcommand{\Fext}{\mathscr{F}^{\ext}}
\newcommand{\Bs}{\mathscr{B}}
\newcommand{\kar}{\textbf{Kar}}
\newcommand{\bsw}{\mathrm{BS}(\underline{w})}
\newcommand{\hh}{\mathrm{HH}}
\newcommand{\hhh}{\mathrm{HHH}}
\newcommand\scalemath[2]{\scalebox{#1}{\mbox{\ensuremath{\displaystyle #2}}}}
\newcommand{\sbrac}[1]{\left[#1 \right]}
\newcommand{\abrac}[1]{\left\langle#1\right\rangle}
\newcommand{\dsbrac}[1]{\llbracket #1 \rrbracket}
\newcommand{\paren}[1]{\left( #1 \right)}
\newcommand{\set}[1]{\left \{ #1 \right \}}
\newcommand{\rhos}[3]{\rho_s^e({}_{#1}\widehat{\underline{#2}}_{#3})}
\newcommand{\rhott}[3]{\rho_t t(\rho_t)^e({}_{#1}\widehat{#2}_{#3})}
\newcommand{\rhosw}{\rho_s^e(\underline{w})}
\newcommand{\rhottw}{\rho_t t(\rho_t)^e(\underline{w})}
\newcommand{\urtt}{\underline{\rho_t t(\rho_t)}}
\newcommand{\urhos}{\underline{\rho_s}}
\renewcommand{\ll}[1]{\mathrm{L}_{  \underline{w}, \underline{#1} }}
\newcommand{\usmt}[3]{{}_{#1}\widehat{\underline{#2}}_{#3}}
\newcommand{\smt}[3]{{}_{#1}\widehat{#2}_{#3}}
\newcommand{\s}[1]{\scalemath{0.8}{#1}}
\newcommand{\un}[1]{\underline{#1}}
\newcommand{\lfrown}{\mathbin{\acute{\frown}}}
\newcommand{\ch}{\mathrm{ch}}
\newcommand\dboxed[1]{
\raisebox{-1ex}{
\begin{tikzpicture}
         \node[draw,dashed] {\text{$#1$}}; 
   \end{tikzpicture}}
}
\newcommand\dbox[1]{
\begin{tikzpicture}
         \node[draw,dashed] {\text{$#1$}}; 
   \end{tikzpicture}
}
\DeclareRobustCommand\widecheck[1]{{\mathpalette\@widecheck{#1}}}
\def\@widecheck#1#2{%
    \setbox\z@\hbox{\m@th$#1#2$}%
    \setbox\tw@\hbox{\m@th$#1%
       \widehat{%
          \vrule\@width\z@\@height\ht\z@
          \vrule\@height\z@\@width\wd\z@}$}%
    \dp\tw@-\ht\z@
    \@tempdima\ht\z@ \advance\@tempdima2\ht\tw@ \divide\@tempdima\thr@@
    \setbox\tw@\hbox{%
       \raise\@tempdima\hbox{\scalebox{1}[-1]{\lower\@tempdima\box
\tw@}}}%
    {\ooalign{\box\tw@ \cr \box\z@}}}
\tikzset{
uni/.style={circle,fill,draw,inner sep=0mm,minimum size=1mm},
  midarrow/.style={postaction={decorate,decoration={markings,mark=at position #1 with {\arrow{>}}}}},
  midarrow/.default=0.5,
  midarrowrev/.style={postaction={decorate,decoration={markings,mark=at position #1 with {\arrow{<}}}}},
  midarrowrev/.default=0.5,
  dot/.style={circle,fill,draw,inner sep=0mm,minimum size=1.3mm},  
  rdot/.style={circle,fill, color=red, draw,inner sep=0mm,minimum size=1.3mm},  
  bdot/.style={circle,fill, color=blue, draw,inner sep=0mm,minimum size=1.3mm},  
  pdot/.style={circle,color=Violet,fill=Violet,draw,inner sep=0mm,minimum size=1.3mm},  
  hdot/.style={circle,fill=white,draw,inner sep=0mm,minimum size=1.3mm}, 
  rhdot/.style={circle,color=red,fill=white,draw,inner sep=0mm,minimum size=1.3mm, }, 
  bhdot/.style={circle,color=blue,fill=white,draw,inner sep=0mm,minimum size=1.3mm, }, 
  yhdot/.style={circle,color=myyellow,fill=white,draw,inner sep=0mm,minimum size=1.3mm, }, 
  rkhdot/.style={circle,color=red,fill=white,draw,inner sep=0mm,minimum size=4mm, }, 
  rbkhdot/.style={circle,color=red,fill=white,draw,inner sep=0mm,minimum size=6mm, }, 
  rbbkhdot/.style={circle,color=red,fill=white,draw,inner sep=0mm,minimum size=6.7mm, }, 
  tridot/.style={square,fill=white,draw,inner sep=0mm,minimum size=1.3mm}, 
    btridot/.style={rectangle,color=blue,fill=white,draw,inner sep=0.3mm,minimum size=1.7mm}, 
    rtridot/.style={rectangle,color=red,fill=white,draw,inner sep=0.3mm,minimum size=1.7mm}, 
  every picture/.style=thick
}
\newsavebox\lowerdot
\savebox\lowerdot{%
\begin{tikzpicture}[scale=0.3,thick,baseline]
 \draw (0,-0.5) to (0,0.5);
 \node at (0,-0.5) {$\bullet$};
\end{tikzpicture}%
}
\newsavebox\upperdot
\savebox\upperdot{%
\begin{tikzpicture}[scale=0.3,thick,baseline]
 \draw (0,-0.5) to (0,0.5);
 \node at (0,0.5) {$\bullet$};
\end{tikzpicture}%
}
\newcommand{\rzero}{\ \raisebox{-2ex}{\begin{tikzpicture}[xscale=0.25,yscale=0.3,thick,baseline]
 \draw[red] (0,0) -- (0,2); 
\end{tikzpicture}}}
\newcommand{\bzero}{\ \raisebox{-2ex}{\begin{tikzpicture}[xscale=0.25,yscale=0.3,thick,baseline]
 \draw[blue] (0,0) -- (0,2); 
\end{tikzpicture}}}
\newcommand{\vzero}{\ \raisebox{-2ex}{\begin{tikzpicture}[xscale=0.25,yscale=0.3,thick,baseline]
 \draw[violet] (0,0) -- (0,3); 
\end{tikzpicture}}}
\newcommand{\yzero}{\ \raisebox{-2ex}{\begin{tikzpicture}[xscale=0.25,yscale=0.3,thick,baseline]
 \draw[myyellow] (0,0) -- (0,3); 
\end{tikzpicture}}}
\newcommand{\ds}{\ \raisebox{-2ex}{\begin{tikzpicture}[xscale=0.25,yscale=0.3,thick,baseline]
 \draw[red] (0,0) -- (0,2);
 \node[rtridot] at (0,1) {$\scalemath{0.9}{d_s}$};
\end{tikzpicture}}}
\newcommand{\dt}{\ \raisebox{-2ex}{\begin{tikzpicture}[xscale=0.25,yscale=0.3,thick,baseline]
 \draw[blue] (0,0) -- (0,2); 
 \node[btridot] at (0,1) {$\scalemath{0.9}{d_t}$};
\end{tikzpicture}}}
\newcommand{\hunit}{\ \raisebox{0ex}{\begin{tikzpicture}[xscale=0.25,yscale=0.3,thick,baseline]
 \draw (0,0) -- (0,1);
 \node[hdot] at (0,0) {};
\end{tikzpicture}}}
\newcommand{\unit}{\ \raisebox{0ex}{\begin{tikzpicture}[xscale=0.25,yscale=0.3,thick,baseline]
 \draw (0,0) -- (0,1);
 \node[dot] at (0,0) {};
\end{tikzpicture}}}
\newcommand{\rhunit}{\ \raisebox{0ex}{\begin{tikzpicture}[xscale=0.25,yscale=0.3,thick,baseline]
 \draw[red] (0,0) -- (0,1);
 \node[rhdot] at (0,0) {};
\end{tikzpicture}}}
\newcommand{\runit}{\ \raisebox{0ex}{\begin{tikzpicture}[xscale=0.25,yscale=0.3,thick,baseline]
 \draw[red] (0,0) -- (0,1);
 \node[dot, red] at (0,0) {};
\end{tikzpicture}}}
\newcommand{\bhunit}{\ \raisebox{0ex}{\begin{tikzpicture}[xscale=0.25,yscale=0.3,thick,baseline]
 \draw[blue] (0,0) -- (0,1);
 \node[bhdot] at (0,0) {};
\end{tikzpicture}}}
\newcommand{\rhzero}{\ \raisebox{0ex}{\begin{tikzpicture}[xscale=0.3,yscale=0.48,thick,baseline]
 \draw[red] (0,-1) -- (0,1);
 \node[rhdot] at (0,0) {};
\end{tikzpicture}}}
\newcommand{\bhzero}{\ \raisebox{0ex}{\begin{tikzpicture}[xscale=0.3,yscale=0.48,thick,baseline]
 \draw[blue] (0,-1) -- (0,1);
 \node[bhdot] at (0,0) {};
\end{tikzpicture}}}
\newcommand{\bunit}{\ \raisebox{0ex}{\begin{tikzpicture}[xscale=0.25,yscale=0.3,thick,baseline]
 \draw[blue] (0,0) -- (0,1);
 \node[dot, blue] at (0,0) {};
\end{tikzpicture}}}
\newcommand{\vunit}{\ \raisebox{0ex}{\begin{tikzpicture}[xscale=0.25,yscale=0.3,thick,baseline]
 \draw[violet] (0,0) -- (0,1);
 \node[dot, violet] at (0,0) {};
\end{tikzpicture}}}
\newcommand{\rcounit}{\ \raisebox{0ex}{\begin{tikzpicture}[xscale=0.25,yscale=0.3,thick,baseline]
 \draw[red] (0,-1) -- (0,0);
 \node[dot, red] at (0,0) {};
\end{tikzpicture}}}
\newcommand{\yunit}{\ \raisebox{0ex}{\begin{tikzpicture}[xscale=0.25,yscale=0.3,thick,baseline]
 \draw[myyellow] (0,0) -- (0,1);
 \node[dot, myyellow] at (0,0) {};
\end{tikzpicture}}}
\newcommand{\rhpitchcupin}{\ \raisebox{2ex}{\begin{tikzpicture}[xscale=0.25,yscale=-0.3,thick,baseline]
             \draw[red] (0,0) to[out=90,in=-180] (1,1.3);
	       \draw[red] (2,0) to[out=90,in=0] (1,1.3);
              \draw[blue] (1,0) to (1,0.7);
              \node[bhdot] at (1,0.7) {};
\end{tikzpicture}}}
\newcommand{\rhhpitchcupout}{\ \raisebox{2ex}{\begin{tikzpicture}[xscale=0.25,yscale=-0.3,thick,baseline]
             \draw[red] (0,0) to[out=90,in=-180] (1,1.3);
	       \draw[red] (2,0) to[out=90,in=0] (1,1.3);
              \node[rhdot] at (1,1.3) {};
             \draw[blue] (1,0) to (1,0.7);
	       \node[bhdot] at (1,0.7) {};
\end{tikzpicture}}}
\newcommand{\bhpitchcupin}{\ \raisebox{2ex}{\begin{tikzpicture}[xscale=0.25,yscale=-0.3,thick,baseline]
             \draw[blue] (0,0) to[out=90,in=-180] (1,1.3);
	       \draw[blue] (2,0) to[out=90,in=0] (1,1.3);
              \draw[red] (1,0) to (1,0.7);
              \node[rhdot] at (1,0.7) {};
\end{tikzpicture}}}
\newcommand{\bhpitchcupout}{\ \raisebox{2ex}{\begin{tikzpicture}[xscale=0.25,yscale=-0.3,thick,baseline]
             \draw[blue] (0,0) to[out=90,in=-180] (1,1.3);
	       \draw[blue] (2,0) to[out=90,in=0] (1,1.3);
              \node[bhdot] at (1,1.3) {};
	       \node[rdot] at (1,0.7) {};
        \draw[red] (1,0) to (1,0.7);
\end{tikzpicture}}}
\newcommand{\bhhpitchcupout}{\ \raisebox{2ex}{\begin{tikzpicture}[xscale=0.25,yscale=-0.3,thick,baseline]
             \draw[blue] (0,0) to[out=90,in=-180] (1,1.3);
	       \draw[blue] (2,0) to[out=90,in=0] (1,1.3);
              \node[bhdot] at (1,1.3) {};
             \draw[red] (1,0) to (1,0.7);
	       \node[rhdot] at (1,0.7) {};
\end{tikzpicture}}}
\newcommand{\rmult}{\ \raisebox{0ex}{\begin{tikzpicture}[xscale=0.25,yscale=0.3,thick,baseline]
 \draw[red] (-1,-0.25) -- (0,0.5);
 \draw[red] (1,-0.25) -- (0,0.5);
 \draw[red] (0,1.5) -- (0,0.5);
\end{tikzpicture}}}
\newcommand{\rcomult}{\ \raisebox{0ex}{\begin{tikzpicture}[xscale=0.25,yscale=0.3,thick,baseline]
 \draw[red] (-1,1.25) -- (0,0.5);
 \draw[red] (1,1.25) -- (0,0.5);
 \draw[red] (0,0.5) -- (0,-0.5);
\end{tikzpicture}}}
\newcommand{\bcounit}{\ \raisebox{0ex}{\begin{tikzpicture}[xscale=0.25,yscale=0.3,thick,baseline]
 \draw[blue] (0,-1) -- (0,0);
 \node[dot, blue] at (0,0) {};
\end{tikzpicture}}}
\newcommand{\0}{\ \raisebox{-2ex}{\begin{tikzpicture}[xscale=0.25,yscale=0.3,thick,baseline]
 \draw (0,0) -- (0,3); 
\end{tikzpicture}}}
\renewcommand{\1}{\ \raisebox{-2ex}{\begin{tikzpicture}[xscale=0.25,yscale=0.3,thick,baseline]
 \draw (0,0) -- (0,1);
 \draw (0,2) -- (0,3);
 \node[dot] at (0,1) {};
  \node[dot] at (0,2) {};
\end{tikzpicture}}}
\newcommand{\rect}{
\begin{tikzpicture}[scale=0.3]
   \draw (0,0) rectangle (2.1,0.6);
\end{tikzpicture}
}
\title{\textbf{The Two-Color Ext Soergel Calculus}}
\date{\relax }
\author{or: The Ext-Dihedral Cathedral\\ \vspace{-1ex} \\Cailan Li}
\begin{document}

\maketitle
\begin{abstract}
    We compute Ext groups between Soergel Bimodules associated to the infinite/finite dihedral group for a realization in characteristic 0 and show that they are free right $R-$modules. In particular, we obtain an explicit diagrammatic basis for the Hochschild cohomology of indecomposable Soergel Bimodules. We then give a diagrammatic presentation for the corresponding monoidal category of Ext-enhanced Soergel Bimodules.  \\
   \indent  As applications, we explicitly compute HOMFLY homology/triply graded link homology $\overline{\hhh}$ for the connect sum of two Hopf links and the negative torus link $T(3,-3)$ as right $R-$modules. Furthermore, we show that the Hochschild cohomology of Soergel Bimodules in finite dihedral type categorifies Gomi's trace, providing a $t-$analog of Soergel's Hom Formula in the dihedral setting.
\end{abstract}

\tableofcontents


\section{Introduction}

\indent The story of Soergel Bimodules had it's birthplace in the work of Wolfgang Soergel in the 1990s \cite{Soe90} as an alternative algebraic approach to proving the illustrious Kazhdan-Lusztig (KL) conjectures \cite{KL79} and to give a combinatorial description of Harish-Chandra bimodules \cite{Soe92}.  Soergel Bimodules have a relatively straightforward definition, yet they have had profound applications in representation theory, geometry and link homology, especially in recent years. Conceptually, this arises from the fact that Soergel Bimodules categorify the Hecke Algebra, and the Hecke Algebra is a fundamental object of study within these fields. 

\subsection{History}

Let $(W, S)$ be a Coxeter group and $\mathbf{H}_W$ be its associated Hecke Algebra. When $W$ is the Weyl group for a split torus $T$ of a reductive group $G$, It has two well-known bases over $\Zb[v^{\pm 1}]$, the \textit{standard basis} $\set{\delta_w}_{w\in W}$ and the \textit{Kazhdan-Lusztig basis} $\set{b_w}_{w\in W}$.  In \cite{KL79}, Kazhdan and Lusztig made their astonishing conjecture that the change of basis matrix between $\set{b_w}_{w\in W}$ and $\set{\delta_x}_{x\in W}$ at $v=1$ equals the matrix of Jordan-Hölder multiplicities of simples in Verma modules in $\mathcal{O}_0(\mathfrak{g})$, the principal block of Category $\mathcal{O}$ for $\mathfrak{g}$, the lie algebra of $G$. This conjecture was then proved by Beilinson–Bernstein \cite{BB81}, and Brylinski–Kashiwara \cite{BryKash81} by employing deep geometric techniques, such as the usage of the Decomposition Theorem.\\

A key role in the proofs is played by $\mathcal{H}_W$, the (geometric) Hecke category, a certain monoidal subcategory of $D^b_{B\times B}(G, \mathbb{C})$, the $B\times B$ equivariant derived category of sheaves on $G$ with  $\mathbb{C}$ coefficients. As the name suggests, $\mathcal{H}_W$ is a categorification of the Hecke algebra. By categorification, we mean that there is an isomorphism of algebras
\begin{align}
\label{heckecatiso}
    K_\oplus(\mathcal{H}_W)\cong \mathbf{H}_W 
\end{align} 
where $K_\oplus(\mathcal{H}_W)$ is the split Grothendieck group of $\mathcal{H}_W$. In the early 1990s, Soergel \cite{Soe92} gave another incarnation of the Hecke category $\mathcal{H}_W$ that was more "algebraic." Specifically, let $\mathfrak{t}=\mathrm{Lie}(T)$ and $R=\mathrm{Sym}(\mathfrak{t}^*)$. Because $ H^\bullet_{B\times B}(\mathrm{pt})\cong  R \otimes_\mathbb{C} R$, equivariant hypercohomology of any object in $D^b_{B\times B}(G, \mathbb{C})$ is naturally a graded module over $R \otimes_\mathbb{C} R$. Since $R$ is commutative, hypercohomology is therefore a functor $ \mathbb{H}^\bullet_{B\times B}: \mathcal{H}_W\to R-\mathrm{Bim} $. Soergel's key observation is that $ \mathbb{H}^\bullet_{B\times B}$ is fully-faithful and monoidal and as a result we have a monoidal equivalence
\[ \mathcal{H}_W \xrightarrow{\sim} \sbim(\mathfrak{t}, W)  \]
where $\sbim$ is the category of \textit{Soergel bimodules}, a monoidal subcategory of $R-\mathrm{Bim}$ given by the essential image of $\mathbb{H}^\bullet_{B\times B}$. Using this equivalence (or rather a similar equivalence with Soergel modules), Soergel \cite{Soe90} shows how the KL conjectures would follow from a certain desired property of $\sbim(\mathfrak{t}, W)$ called \textit{Soergel's Conjecture}. \\

In fact, Soergel gives a completely algebraic definition of $\sbim$ in his work above. Specifically, $W$ acts on $\mathfrak{t}$ and thus on $R$ via graded algebra automorphisms. Let $R^s\subset R$ be the subalgebra of elements of $R$ invariant under $s\in S$. Define the $R-$bimodule $B_s=R\otimes_{R^s}R (1)$. $\sbim$ is then equivalent to the smallest full additive monoidal Karoubian graded subcategory of $R-\mathrm{Bim}$ containing $B_s$ for each $s\in S$. Thus, the definition above allows us to define $\sbim (\mathfrak{h}, W)$ for \emph{any} Coxeter group $W$ and \emph{any} representation $\hf$. Moreover in \cite{Soe07}, Soergel shows that when $\hf$ is ``reflection faithful", $\sbim (\mathfrak{h}, W)$ categorifies $\mathbf{H}_W$. In other words, there is an isomorphism of algebras
\[ \mathbf{H}_W\cong K_\oplus (\sbim (\mathfrak{h}, W)) \]
In this setting, the statement of Soergel's conjecture still makes sense and Soergel shows that this would imply a major open question in combinatorics: the Kazhdan-Lusztig positivity conjecture for arbitrary Coxeter groups.

However, lacking the Decomposition Theorem in this setting, he was unable to prove his conjecture. \\

Fortunately $\sbim (\mathfrak{t}, W)$ is a monoidal category, and thus one can hope to give a presentation by generators modulo local relations. Using the language of planar diagrammatics, this was first done by Elias-Khovanov in \cite{EK10} for $W=S_n$, by Elias \cite{DC} in the dihedral case and finally by Elias-Williamson \cite{SC} for any Coxeter group $W$. Although the presentation was found using Soergel bimodules, the resulting monoidal category can be considered independently, thus giving a third incarnation of the Hecke category $\mathscr{D}(\hf, W)$, referred to as the diagrammatic Hecke category. This incarnation has the advantage that many complicated computations can be reduced to algorithmic manipulations of planar diagrams. Using the diagrammatic Hecke category, Elias and Williamson \cite{HodgeSoergel} were able to prove Soergel's conjecture for any Coxeter group $W$ thereby proving the Kazhdan-Lusztig positivity conjecture and completing Soergel's purely algebraic proof of the Kazhdan-Lusztig conjectures. The diagrammatic Hecke category has subsequently led to other breakthroughs in representation theory such as Williamson's counterexamples \cite{Wtorsionexplod} to the long-standing Lusztig's conjecture (the analogue of the KL conjectures for representations of reductive algebraic groups in characteristic $p$) and the subsequent new character formulas for irreducible and indecomposable tilting modules conjectured in \cite{RWtilt} and proved in \cite{RWtilt} and \cite{KoszulAMRW}.\\

In type $A$, the Hecke algebra $\mathbf{H}_{S_n}$ also plays an important role in knot theory in the construction of link invariants. Given a link $L$ written as the closure of a braid $\beta_L\in \mathrm{Br}_{n}$, Jones \cite{Jonesannals} shows that the HOMFLY polynomial of $L$ is ``essentially equal" to $\mathrm{Tr}(\pi(\beta_L))$ where $\pi: B_{n}\to \mathbf{H}_{S_{n}}$ and $\mathrm{Tr}$ is the Jones-Ocneanu trace on $\mathbf{H}_{S_n}$. Khovanov \cite{Kho07} would later categorify this construction to produce a triply graded link homology theory as follows. For a braid $\beta_L\in B_{n}$, there is an associated (Rouquier) complex $F(\beta_L)\in K^b( \sbim(\mathbb{C}^{n}, S_{n}))$ (categorification of $\mathbf{H}_{S_n}$) where $\mathbb{C}^{n}$ is the permutation representation of $S_{n}$. Khovanov then applies the functor of Hochschild cohomology (categorification of $\mathrm{Tr}$) to $F(\beta_L)$ and takes cohomology to produce the desired link homology $\mathrm{HHH}(\beta_L)$. In recent years there has been a great deal of interest surrounding $\mathrm{HHH}(\beta_L)$ as it turns out to have deep connections with Hilbert Schemes \cite{ORS12}, \cite{GNR}, \cite{GH17} rational Cherednik algebras \cite{GORS14} and rational Catalan combinatorics \cite{Hog17}, \cite{HM19}, \cite{GMV20}.
Needless to say, Soergel Bimodules and the diagrammatic Hecke category are at the center of modern geometric representation theory and categorification.

\subsection{Main Results}

In this paper we give a diagrammatic presentation for the monoidal categories $\sbim^\ext( \hf, W_{\infty}), \sbim^\ext( \hf, W_{m})$ of Ext-enhanced Soergel Bimodules for the infinite and finite dihedral group $W_{\infty}$ and $W_m$ (we will abbreviate both cases using the notation $W_{\infty/m}$) for a realization $\hf$ in characteristic 0 satisfying the usual 2-color assumptions + a linear independence condition. This paper builds upon the $A_1$ case done by Shotaro Makisumi in \cite{M}. We will give some clarifications on the monoidal structure later, but for now note that extending/replacing the graded Hom spaces in $\sbim(\mathfrak{h}, W_{\infty/m})$ via
\[ \hom_{R-\mathrm{Bim}}^\bullet(-, -) \leadsto \ext_{R-\mathrm{Bim}}^{\bullet, \bullet}(-,-) \]

still results in a category $\sbim^\ext( \hf, W_{\infty/m})$ with composition given by the Yoneda product. This still has a monoidal structure (or rather super-monoidal structure as explained in \cite{M}) and in addition to the Elias-Khovanov generators we have (essentially) three additional one-color generators pictured below using planar diagrammatics
\[  \rhzero \qquad \qquad \bhzero \qquad \qquad \raisebox{0ex}{\dboxed{\xi}} \quad \xi\in \Lambda^\bullet(V[1](-2))  \]
with many new relations (see \cref{Maksect}). We also have (essentially) one additional two-color generator
\[\raisebox{-4ex}{\begin{tikzpicture}[scale=0.7]
	       \draw[red] (1.1,-0.1) -- (2,-1);
	       \draw[blue] (1.5,-0.1) -- (2,-1);
	       \draw[violet] (2.9,-0.1) -- (2,-1);
	       \draw[blue] (1.1,-2) -- (2,-1);
	       \draw[red] (1.5,-2) -- (2,-1);
	       \draw[violet] (2.9,-2) -- (2,-1);
	       \node at (2.1,-0.4) {$\scalemath{0.8}{\ldots}$};
	       \node at (2.1,-1.6) {$\scalemath{0.8}{\ldots}$};
	       \node[rhdot, minimum size=3.4mm] at (2,-1) {};
	       \node at (2,-1) {$\scalemath{0.7}{2k}$};
	    \end{tikzpicture}}\quad k\ge 2\]
with many new relations (see \cref{diagraminftysect} and \cref{diagramfinitesect}). These generators and relations can be considered independently of $\sbim^\ext( \hf, W_{\infty/m})$ giving rise to the diagrammatic Ext enhanced Hecke categories $\mathscr{D}^\ext(\hf, W_{\infty/m})$. To be more precise, as in \cite{EK10} and \cite{SC} what we actually do is give a presentation for $\bsbim^\ext(\hf, W_{\infty/m})$, the full subcategory of $D^b(R-\mathrm{Bim})$\footnote{Minor technicality, Hom spaces in $\bsbim^\ext(\hf, W_{\infty/m})$ will actually be $\bigoplus_n \hom_{D^b(R-\mathrm{Bim})}(M, N[n])$.} with objects the Bott-Samelson bimodules from which $\sbim^\ext( \hf, W_{\infty/m})$ can be recovered by taking the Karoubian envelope. One main result of the paper will then be the following equivalences
\[ \mathscr{D}^\ext(\hf, W_\infty) \simeq \bsbim^\ext(\hf, W_\infty), \qquad  \mathscr{D}^\ext(\hf, W_{m}) \simeq \bsbim^\ext(\hf, W_{m}) \]
Note that although the categories $\mathscr{D}^\ext(\hf, W_{\infty / m})$ can be considered when $\hf$ has characteristic $p$, it is not clear to the author that the same equivalences above still hold, as there appears to be more generators in $\bsbim^\ext(\hf, W_{\infty / m})$. In the course of the proof we obtain a complete description of Hom spaces in $\bsbim^\ext(\hf, W_{\infty / m})$. Specifically for any two expressions $\un{v}, \un{w}$, we will compute  
\[ \ext_{R-\mathrm{Bim}}^{\bullet, \bullet}( \bs(\un{v}), \bs(\un{w}) ) \]
as a right $R$ module (although the results can be equivalently stated with the left $R-$module structure) (see \cref{maincohoiso}, \cref{maincohoisofin}). In particular the morphism space above will always be free as a right $R-$ module. When $m=2,3,4,6$ this is also a special case of \cite{WW11} which uses geometric methods that do not apply when $W_m$ is non-crystallographic. In fact, we are able to obtain an explicit basis for $\ext^{\bullet, \bullet}_{R-\mathrm{Bim}}(R, B_w)$ where $B_w$ is an indecomposable Soergel Bimodule (see \cref{algextindecomp}). \\

One thing to note is that throughout the algebraic portion of the paper, diagrammatics are already present, first as a visual aid to help explain the new relations, and then used in proofs of theorems which live in the algebraic category. This is allowed, however, as each time we are either using the equivalence $\mathscr{D}(\hf, W_{\infty / m}) \simeq \bsbim(\hf, W_{\infty / m})$ established in \cite{DC} or a previously established relation in $\bsbimext(\hf, W_{\infty/m})$ of which we had a diagrammatic interpretation of. When the expression for a relation in $\bsbimext(\hf, W_{\infty/m})$ is not too cumbersome, we will write it out explicitly, and accompany it with a diagrammatic description.

\subsubsection{Applications}

\begin{enumerate}[(a)]
    \item Because Hochschild cohomology of a $R-$bimodule $M$ is defined to be $\mathrm{HH}^k(M):=\ext_{R-\mathrm{Bim}}^k(R, M)$ one can apply our results to compute triply graded link homology $\mathrm{HHH}(\beta_L)$ where $\beta_L$ is a braid on 3 strands by setting $\hf=\mathbb{C}^3 (\mathbb{C}^2)$ to be the permutation (geometric) realization of $S_3$. This yields a new technique for computing $\mathrm{HHH}$, and can be visualized schematically as follows: $\hhh^{A=k}$ is cohomology of the following chain complex \vspace{-1ex}
 \[ \ldots\to \hh^k( F(\beta)_i) \xrightarrow{d_i} \hh^k( F(\beta)_{i+1})  \to \ldots \vspace{-3ex} \vspace{1ex} \qquad \qquad \text{(homological algebra)}  \]
\begin{center}
\raisebox{1.9ex}{\rotatebox{270}{$\leadsto$}}  \cref{algextindecomp} \vspace{-2ex}
\end{center}
 \[\qquad \qquad \qquad \qquad \ \ldots\to R^{\ell_i} \xrightarrow{\cref{finequiv}} R^{\ell_{i+1}}  \to \ldots \vspace{-3ex} \vspace{1ex} \qquad \qquad \text{(linear algebra over }R)\vspace{1ex} \]
  An upshot of this technique is that it doesn't require $\hhh$ to be parity\footnote{Concentrated in only even or only odd $T-$degrees.} which is crucial for the method in \cite{EH19} used to compute $\hhh$ of positive torus links. Thus our technique allows for new computations of $\hhh$ not in the literature. For example in \cref{appendixhomfly}, we compute $\hhh$ of the negative torus link $T(3,-3)$. \\
  
  In \cite{L3deg} we go beyond 3 strands by computing $\hhh$ of any positive or negative braid on $n$ strands in certain $T-$degrees. Also, unlike in \cite{EH19} which can only compute $\hhh$ as a vector space, our method computes $\hhh$ as $R-$modules. This capability is vital in our computation of $\hhh(T(3,-3))$. See \cite[Problem 1.12, Section 1.1.2]{HM19} for more on the importance of computing $\hhh$ as an $R-$module.
    
    \item In \cite{Gom06}, Gomi generalizes the two variable Jones-Ocneanu trace on $\mathbf{H}_{S_n}$ to a trace on $\mathbf{H}_W$ satisfying a ``Markov" type condition, where $W$ is any finite Coxeter group as follows. Because $\mathbf{H}_W$ is semisimple any trace $\tau:\mathbf{H}_W\to \Zb[q,q^{-1}][t]$ can be written as
    \[ \tau=\sum_{\chi\in \mathrm{Irr} W} w^\chi \chi_q \]
    Gomi then defines $w^\chi:\mathbf{H}_W\to \Zb[q,q^{-1}][t] $ using entries from Lusztig's Fourier transform matrix $S$. When $W$ is the Weyl group of a reductive algebraic group $G/\mathbb{F}_q$, $S$ is the change of basis matrix between unipotent characters and almost characters for the finite group of Lie type $G^F$ \cite{Luz84}. When $W$ is of dihedral type, $S$ is the Exotic Fourier transform of Lusztig \cite{Luz94}. In \cref{gomisect} we show that the Poincare series for the Hochschild cohomology of Soergel Bimodules for $W_m$ coincides with Gomi's trace defined above\footnote{We were informed by Minh-Tâm Trinh that a version of this was stated in the thesis of Lasy, with an incomplete proof.} which allows us to establish a $t-$analog of Soergel's Hom Formula. For $m=2,3,4,6$ this is a special case of \cite{WW11} which uses properties of unipotent character sheaves. From this we speculate that a suitable category of Ext-enhanced Soergel Bimodules is the right setting for the study of ''spetses" \cite{BMM99}.
    
    \item Soergel Bimodules corresponding to $\sbim(\mathbb{C}^n, S_n)$ have an alternative pictorial description that is 3D instead of 2D. Instead of depicting morphisms between Bott-Samelson bimodules using planar diagrams, we depict a Bott-Samelson bimodule $B_{i_1}\otimes_R \cdots \otimes_R B_{i_d}$ as a planar diagram, see \cite{Kho07} Figures 2,3. Morphisms can then be realized by linear combinations of foams$-$decorated two-dimensional CW-complexes embedded in $\mathbb{R}^3$ that arise as cobordisms between the planar diagrams described above. Foams are one of the fundamental objects used in constructing link homology theories. They show up prominently in the doubly graded $\mathfrak{sl}_n$ theories starting with \cite{Kho04} in $\mathfrak{sl}_3$ and \cite{MSV09}, \cite{QR16} for $\mathfrak{sl}_n$, and finally in \cite{RW20} for equivariant $\mathfrak{sl}_n$. \vspace{-1ex}\\
    
    One major step in the construction of all these link homology theories is to define some variant of foam evaluation for closed foams which is used to associate a vector space $F(\Gamma)$ to a web. In \cite{RW20b}, Robert and Wagner show that applying the variant, $F_\infty(\Gamma)$ to a special class of foams recovers $\mathscr{B}(\Gamma)$ the (singular) Soergel Bimodule corresponding to $\Gamma$. Moreover, they show that by closing the planar diagram first and then applying $F_\infty$ recovers $\mathrm{HH}_0(\mathscr{B}(\Gamma))$. In \cite{KRW} they extend this to $\mathrm{HH}_k(\mathscr{B}(\Gamma))$ for all $k\ge 0$. As $\mathrm{HH}_\bullet(M)\cong \mathrm{HH}^\bullet(M)$ for bimodules over polynomial rings, it would be interesting to see how our relations below can be realized in this framework.  
    
    \item A crucial step in the proof of the tilting character formula in \cite{KoszulAMRW} 
    was to relate two different derived categories of sheaves. To do this the authors first establish an equivalence between two different monoidal categories which act on the different categories above. Specifically, using the diagrammatics developed in \cite{SC}, they establish monoidal Koszul duality with modular coefficients for any Kac-Moody group $\mathscr{G}$. For more details there is a nice introduction to these ideas in \cite{M19}, but the key point right now is that there's an equivalence of monoidal categories
    \[  \kappa^{mon}: (\mathscr{D}(\hf, W), \star ) \xrightarrow{\sim} (\mathrm{TiltFM}(\hf^*, W), \widehat{\star}) \]
    giving rise to the doubly graded equivalence of categories
    \[  \kappa: K^b(\mathscr{D}(\hf, W)\otimes_R \kb) \xrightarrow{\sim} K^b(\kb\otimes_R \mathscr{D}(\hf^*, W))  \]
    interchanging indecomposable and tilting objects such that $\kappa\circ (-1)[1]=(1)\circ \kappa$. In \cite{HM20} Hogancamp and Makisumi conjecture that this can be extended to an equivalence of monoidal categories
    \[  (\kappa^{mon})^\ext: (\mathscr{D}^\ext(\hf, W), \star ) \overset{\sim}{\dashrightarrow} (\mathrm{TiltFM}^\ext(\hf^*, W), \widehat{\star}) \]
    giving rise to a triply graded equivalence of categories
    \[  \kappa^\ext: K^b(\mathscr{D}^\ext(\hf, W)\otimes_R \kb) \overset{\sim}{\dashrightarrow} K^b(\kb\otimes_R \mathscr{D}^\ext(\hf^*, W))  \]
    interchanging indecomposable and tilting objects such that $\kappa^\ext\circ (-1)[1]=(1)\circ \kappa^\ext$ and provide evidence in the $\mathfrak{gl}_2$ realization of $W=S_2$. The main results in this paper can therefore be used to help define $(\kappa^{mon})^\ext$ to prove such an equivalence. According to Makisumi, ''One may speculate that this triply-graded Koszul duality is connected to a duality for character sheaves."
\end{enumerate}

\subsection{Organization}

\begin{itemize}
    \item \textbf{Background}
    \begin{itemize}
        \item In \cref{prelimsect} we review notation and conventions. In particular our notation for Koszul complexes will be important for our computations later on. Throughout the paper we will \underline{always} assume the reader is familiar with Soergel bimodules and the two-color Soergel diagrammatics such as the light leaf morphisms. A good place to read up on this would be in \cite{EMTW20} Chapters 4, 5, 9, 10, 12. 
         \item In \cref{Maksect} we review the diagrammatics of the $A_1$ case $\sbim^\ext(\hf, S_2)$. For the computations done in this paper, it turns out that we only need that Theorem 5.2 in \cite{M} holds, i.e. the defining relations of $\Dext(\hf, S_2)$ hold in $\sbim^\ext(\hf, S_2)$.
    \end{itemize}
    \item \textbf{The Algebraic Category}
    \begin{itemize}
        \item In \cref{maincomptutesec} we compute $\ext_{R^e}^{\bullet, \bullet}(B_t, \bs(\underline{w}))$ as a right $R-$module when $m_{st}=\infty$ which will allow us to define the main new generating morphism $\Phi_t^{\underline{w}}$ in $\bsbimext(\hf, W_\infty)$ in \cref{newgensectgen}. We should note that $\Phi_t^{\underline{w}}$ doesn't quite correspond to the new generator in the diagrammatic category (they are off by a rotation+possibly a sign) but it's much easier to work with $\Phi_t^{\underline{w}}$ in the algebraic category.
        \item In \cref{extindecompsect} we give an explicit basis for $\ext_{R^e}^{\bullet, \bullet}(R, B_w)$ as a right $R$ module which shows that $\ext_{R^e}^{\bullet, \bullet}(R, B_w)$ is more or less controlled by $\hom_{R^e}^{ \bullet}(R, B_w)$.
        \item \cref{finitesect} proves a lot of the same theorems as above but now in the finite case $m_{st}<\infty$. In most cases, we just check that the argument for $m_{st}=\infty$ still holds when $m_{st}<\infty$. 
    \end{itemize}
    \item \textbf{The Diagrammatic Category}
    \begin{itemize}
        \item In \cref{diagraminftysect} and \cref{diagramfinitesect} we define the Ext-enhanced diagrammatic Hecke categories $\Dext_\infty$ and $\Dext_{m_{st}}$ associated to a (faithful) realization $\hf$ of the infinite and finite dihedral groups $W_\infty$, $W_{m_{st}}$ respectively. We will then establish the equivalences $\Dext_\infty\simeq \bsbimext(\hf, W_\infty)$ and $\Dext_{m_{st}}\simeq \bsbimext(\hf, W_{m_{st}})$. The proof bootstraps the proof of the dihedral equivalence in \cite{DC} in that it just suffices to show the isomorphism
        \[  \hom^{\bullet, \bullet}_{\Dext_\infty}(\Bs_\varnothing, \Bs_w) \cong \ext^{\bullet, \bullet}_{R^e}(R, B_w)  \]
        and that we have a very explicit description of the rank 2 projectors to the indecomposable summands $B_w$ given by the \textit{Jones-Wenzl} projectors. Unlike in \cite{DC} our proof relies on one of the main theorems in \cite{SC}, namely that double leaves form a right $R$ basis for hom spaces in the diagrammatic Hecke category.
    \end{itemize}
    \item \textbf{Appendices} 
    \begin{itemize}
        \item  In \cref{chainliftsection} we define chain lifts for morphisms in $\sbim(\hf, W)$. In \cref{appendixhomfly} we first compute HOMFLY homology for the connect sum of two Hopf links. Then we compute the HOMFLY homology for the negative torus link $T(3,-3)$. In \cref{gomisect} we show that the Poincare series for Hochschild homology of Soergel Bimodules agrees with Gomi's trace for $W=W_m$.  
    \end{itemize}

\end{itemize}

\subsection{Acknowledgements}

The author would like to thank Shotaro Makisumi for his guidance and support and for helping the author get over his life long fear of computing cohomology. The problem of giving a diagrammatic presentation for the Ext groups of Soergel Bimodules in rank 2 was suggested to the author by Makisumi and he very generously shared with the author a draft of his results in the $A_1$ case, laying the foundations for the work done here. We also thank Makisumi for reading through a very early and rough draft of the paper, as his comments helped improve the quality of the writing immensely. We also thank Mikhail Khovanov, Ben Elias, Eugene Gorsky and Ben Webster for helpful comments and discussions related to the paper. The author was supported by the U.S. Department of Defense (DoD) through the National Defense Science $\&$ Engineering Graduate (NDSEG) Fellowship.

\section{Preliminaries and Notation}
\label{prelimsect}

\subsection{Realizations and Gradings}

We first recall the definition of a realization of a Coxeter system $(W, S)$ as defined in \cite{SC} Section 3.

\begin{definition}
Let $\kb$ be a commutative ring. A realization of $(W,S)$ over $\kb$ is a triple $$\hf=(V, \set{\alpha_s}_{s\in S}\subset V^*, \set{\alpha_s^\vee}_{s\in S}\subset V)$$ where $V$ is a free, finite rank $\kb$ module such that for $\abrac{-,-}$ the natural pairing between $V, V^*$,                                                                                                                                                                                                                                                                    
\begin{enumerate}[(1)]
    \item $\abrac{\alpha_s^\vee, \alpha_s}=2$ for all $s\in S$.
    \item The assignment $s(v)=v-\abrac{v, \alpha_s} \alpha_s^\vee$ for all $v\in V$ yields a representation of $W$.
    \item For each $s,t\in S$ let $a_{st}:=\abrac{\alpha_s^\vee, \alpha_t}$ and $m_{st}$ the order of $st$ in $W$. Then $[m_{st}]_{a_{st}}=[m_{st}]_{a_{ts}}=0$ where $[k]_x$ are the $2-$colored quantum numbers defined in Section 3 of \cite{SC}.
\end{enumerate}
\end{definition}

For the rest of the paper we will assume the following. 

\begin{assumption}
\label{assump0}
$\hf$ is a balanced realization of rank $n$ that satisfies Demazure surjectivity over an integral domain $\kb$.
\end{assumption}

All of our algebras and modules will be bigraded with the cohomological grading $[1]$ and Soergel/internal grading $(1)$ that shifts degree down 1, e.g. $M(1)_d=M_{d+1}$. If an element has degree $(a,b)$ the first coordinate $a$ will always be the cohomological degree while the second coordinate $b$ will always be the internal degree. If an algebra/module has no grading shift such as $V$, it is in degree $(0,0)$. We will frequently use the combined shift $\dsbrac{1}:=[1](-2)$.

\begin{definition}
Let $R:=\mathrm{Sym}^\bullet (V^*(-2))$ and let $R^e=R\otimes_\kb R$. Let $R^{ee}=R\otimes_\kb R\otimes_\kb R$ and so on with $R^{eee}$, etc.
\end{definition}

This paper deals with the case when $W=W_{m_{st}}$ where $W_{m_{st}}$ is the Coxeter group generated by simple reflections $s,t$ with relation $(st)^{m_{st}}=1$. $W_\infty$ will be the infinite dihedral group. 

\begin{definition}
$\otimes$ will always mean $\otimes_\kb$ while $\otimes_s$ will mean $\otimes_{R^s}$.
\end{definition}

\begin{definition}
Given two complexes $A^\bullet=\oplus_p A^p, B^\bullet=\oplus_p B^p$ of $R^e-$ graded modules, define the bigraded Hom complex as
\[ \un{\hom}_{R^e}(A^\bullet, B^\bullet)=\bigoplus_{i,j}\un{\hom}^{i,j}_{R^e}(A^\bullet, B^\bullet)=\bigoplus_{i,j}\prod_p\hom_{R^e}(A^p, B^{p+i}(j)) \]
with differential $d^n: \un{\hom}^{n,j}_{R^e}(A^\bullet, B^\bullet)\to \un{\hom}^{n+1,j}_{R^e}(A^\bullet, B^\bullet)$ given by 
\[ d^n(f)=d_{B^\bullet}\circ f+(-1)^{n+1} f\circ d_{A^\bullet} \]
\end{definition}

\subsection{Koszul Complexes}
For $a\in R$, let $a^e:=a\otimes 1-1\otimes a\in R^e$. Given a sequence of homogeneous elements $a_1, \ldots, a_n\in R$ where $|a_i|$ is the internal degree of $a_i$, let $K(a_1^e, \ldots, a_n^e)=K(a_1^e)\otimes_{R^e} \ldots \otimes_{R^e} K(a_n^e)$ be the following graded Koszul complex where $K(a_i^e)$ is the complex
$$K(a_i^e)=[0\to R^e \underline{a_i} \xrightarrow{a_i^e} \boxed{R^e\underline{1}}\to 0]= [0\to R^e(-|a_i|) \xrightarrow{a_i^e} \boxed{R^e}\to 0]$$ 
where the boxed term is in cohomological degree 0 and we have underlined $a_i$ and $1$ as they are the ``exterior" part. Specifically, the graded Koszul complex has the structure of a bigraded dga as $K(a_i^e)=\Lambda^\bullet(\kb \underline{a_i} [1](-|a_i|) )\boxtimes R^e$ while $K(a_1^e, \ldots, a_n^e)= \Lambda^\bullet(\kb \underline{a_1} [1](-|a_1|)\oplus \ldots\kb \underline{a_n} [1](-|a_n|) )\boxtimes R^e $ where the differential is determined by $d(\underline{a_i} )=\underline{1}\boxtimes a_i^e$ and the graded Leibniz rule. Here we use the notation $\Lambda^\bullet(\un{V})\boxtimes R^e:= \Lambda^\bullet(\un{V})\otimes R^e$ to distinguish the exterior algebra part from $R^e$. All elements of $\Lambda^\bullet(\un{V})$ will be underlined as well. The $R-$bimodule structure is given by only acting on the $R^e$ part, i.e. $r\cdot(\underline{v}\boxtimes f\otimes g)\cdot  r^\prime=\underline{v}\boxtimes rf\otimes gr^\prime$ and when tensoring two Koszul complexes $\Lambda^\bullet(\un{V_1})\boxtimes R^e$, $\Lambda^\bullet(\un{V_2})\boxtimes R^e$ over $R$ (instead of $R^e$), we tensor the $\Lambda$ and $R^e$ parts separately, i.e
\[ \Lambda^\bullet(\un{V_1})\boxtimes R^e \otimes_R \Lambda^\bullet(\un{V_2})\boxtimes R^e=\Lambda^\bullet(\un{V_1}) \otimes \Lambda^\bullet(\un{V_2})\boxtimes R^e\otimes_R R^e=\Lambda^\bullet(\un{V_1}) \otimes \Lambda^\bullet(\un{V_2})\boxtimes R^{ee}\]
Now the $R-$bimodule structure is given by acting on the leftmost and rightmost tensor factors of $R^{ee}$. We will use the shorthand $\underline{v^{(1)}}:=\underline{v}\otimes \underline{1}$ for $v\in V_1$ and likewise $\underline{w^{(2)}}:=\underline{1}\otimes \underline{w}$ for $w\in V_2$.\\

Two Koszul complexes will be of great importance to us. As shown in \cite {M} $q_\varnothing: K_\varnothing\to R$ and $q_s: K_s\to B_s$ will be projective resolutions of $R$ and $B_s$ as a $R-$bimodule where
\begin{align}
\label{Kvarnothingeq}
    K_\varnothing:=K(e_1^e, \ldots, e_n^{e})&= \Lambda^\bullet(\un{V^*}\dsbrac{1})\boxtimes R^e \\
    \label{Kseq}
    K_s:= K(\rho_s s(\rho_s)^e, e_1^e, \ldots, e_{n-1}^e)(1)&=\Lambda^\bullet( \kb \underline{\rho_s s(\rho_s)} [1](-4) \oplus \un{(V^*)^s}\dsbrac{1} )\boxtimes R^e
\end{align}

where $\rho_s\in V^*$ satisfies $\abrac{\alpha_s^\vee, \rho_s}=1$ which exists by Demazure surjectivity and $\set{e_i}_{i=1}^{n-1}$ is a basis for $(V^*)^s$ while $\set{e_i}_{i=1}^{n}$ is a basis for $V^*$. $q_\varnothing$ is the map sending $\underline{1}\boxtimes 1\otimes 1\mapsto 1\otimes_R 1$ and everything else to 0, and similarly with $q_s$. \\

Koszul complexes have natural chain maps given by contraction. Specifically, following \cite {M}

\begin{definition}
Given $v\in V\dsbrac{-1}$, define $v\lfrown (-): \Lambda^\bullet(V^*\dsbrac{1})\to \Lambda^\bullet(V^*\dsbrac{1})$ by
\[ v \lfrown (r_1 \wedge \cdots \wedge r_k) = \sum_{i = 1}^k(-1)^{i+1} r_i(v) r_1 \wedge \cdots \wedge \widehat{r_i} \wedge \cdots \wedge r_k  \]
One can then check that $v_1 \lfrown (-)$ anticommutes with  $v_2 \lfrown(-)$ and so there is an induced map $\xi \lfrown(-):\Lambda^\bullet(V^*\dsbrac{1})\to \Lambda^\bullet(V^*\dsbrac{1}) $ for any $\xi\in \Lambda^\bullet(V\dsbrac{-1})$. Similarly, one also obtains a map $w \lfrown(-):\Lambda^\bullet(V\dsbrac{-1})\to \Lambda^\bullet(V\dsbrac{-1}) $ for any $w\in \Lambda^\bullet(V^*\dsbrac{1})$.
\end{definition}

Therefore for any $\xi\in \Lambda^\bullet(V\dsbrac{-1})$, by extending $R^e$ linearly, we obtain a map
\begin{align*}
 \widetilde{\iota_\xi}: K_\varnothing&\to K_\varnothing \\
 \un{z}\boxtimes 1\otimes 1 &\mapsto \xi \lfrown(\un{z})\boxtimes 1\otimes 1
\end{align*}
and because $\widetilde{\iota_\xi}$ is a derivation on $K_\varnothing$ it will automatically be a chain map. Let $\iota_\xi$ be the induced map on cohomology. We also have similar contraction maps for the Koszul complex $K_s$ and there is one in particular that will be of great importance, namely, 

\begin{definition}
Define $\gamma_s^\vee: \kb \underline{\rho_s s(\rho_s)} [1](-4) \oplus \un{(V^*)^s}\dsbrac{1}\to \kb $ by $\gamma_s^\vee(\underline{\rho_s s(\rho_s)})=1$ and $\gamma_s^\vee(\un{(V^*)^s})=0$. Then define the chain map
\[ \widetilde{\phi_s}=-\gamma_s^\vee\lfrown(-)\in \un{\hom}_{R^e}^{1,-4}(K_s, K_s) \]
and let $\phi_s$ be the induced map on cohomology. 
\end{definition}

In \cite {M} it was shown that $\alpha_s^\vee \lfrown(-): \Lambda^\bullet(V^*\dsbrac{1})\to \Lambda^\bullet(V^*\dsbrac{1})$ actually lands in $\Lambda^\bullet((V^*)^s\dsbrac{1})$. As a result, we can define

\begin{definition}
Define the chain map 
\[ \widetilde{\eta_s}^\ext= \alpha_s^\vee \lfrown(-)\in \un{\hom}_{R^e}^{1,-3}(K_\varnothing, K_s) \]
and let $\eta_s^\ext$ be the induced map on cohomology. 
\end{definition}

\begin{definition}
\label{extdemazuredef}
Let $s \in S$. Define the exterior Demazure operator $\partial_s: \Lambda^\bullet(V\dsbrac{-1})\to \Lambda^\bullet(V\dsbrac{-1}/\kb \alpha_s^\vee) $ as the composition
\[ \Lambda^\bullet(V\dsbrac{-1})\xrightarrow{\alpha_s \lfrown (-)} \Lambda^\bullet(V\dsbrac{-1})\twoheadrightarrow  \Lambda^\bullet(V\dsbrac{-1}/\kb \alpha_s^\vee) \]
\end{definition}

\subsection{Review of the $A_1$ Case}
\label{Maksect}
In this section we review the main results from \cite{M}. We first recall the definition of the algebraic category $\bsbimext(\hf, W)$.

\begin{definition}
Let $\mathbb{D}^b(R^e-\mathrm{gmod})$\footnote{According to 
Bernhard Keller, this is sometimes called the $\mathbb{Z}$-graded category associated with $D^b(R^e-\mathrm{gmod}).$}be the category with underlying objects $D^b(R^e-\mathrm{gmod})$ and morphism spaces
\[ \hom_{\mathbb{D}^b(R^e-\mathrm{gmod})}(A^\bullet, B^\bullet)=\bigoplus_{n\in \Zb}\hom_{D^b(R^e-\mathrm{gmod})}(A^\bullet, B^\bullet [n]) \]
\end{definition}

\begin{definition}
Given an expression $\underline{w}=(s_1, \ldots, s_m)$, define the corresponding Bott-Samelson complex to be
\[ \mathrm{KS}(\underline{w})=\mathrm{KS}(s_1, \ldots, s_m):=K_{s_1}\otimes_R \ldots\otimes_R K_{s_m} \]
 Then $\bsbimext(\hf, W)$ is the smallest full, additive, graded subcategory of $\mathbb{D}^b(R^e-\mathrm{gmod})$ consisting of complexes isomorphic to Bott-Samelson complexes.
\end{definition}

\begin{lemma}
$q_{\underline{w}}:\mathrm{KS}(s_1, \ldots, s_m)\xrightarrow{q_{s_1}\otimes_R \ldots\otimes_R q_{s_m}}\bs(s_1, \ldots, s_m)$ is a free $R^e$ resolution.
\end{lemma}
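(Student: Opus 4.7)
The plan is to verify two things: (a) each term $K(\underline{w})^i$ is a free $R^e$-module, and (b) the chain map $q_{\underline{w}}$ is a quasi-isomorphism with $\mathrm{H}^0(K(\underline{w})) \cong \bs(\underline{w})$ and all other cohomology vanishing. Together these say $K(\underline{w})$ is a bounded free $R^e$-resolution of $\bs(\underline{w})$.

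For (a), I would unpack the middle tensor products using the description from \eqref{Kseq}. Setting $U_i := \kb \underline{\rho_{s_i} s_i(\rho_{s_i})}[1](-4) \oplus \un{(V^*)^{s_i}}\dsbrac{1}$, so that $K_{s_i} = \Lambda^\bullet(U_i) \boxtimes R^e$, the successive $\otimes_R$ contractions of the middle $R$-factors collapse to give
\[ K(\underline{w}) \;=\; \Lambda^\bullet(U_1) \otimes \cdots \otimes \Lambda^\bullet(U_m) \;\boxtimes\; R^{\otimes (m+1)}, \]
with the $R^e$-bimodule action on the outer two tensor factors of $R^{\otimes (m+1)}$. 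Writing $R^{\otimes (m+1)} \cong R^e \otimes_\kb R^{\otimes (m-1)}$ and noting that $R^{\otimes (m-1)}$ is free over $\kb$, each term is manifestly free as an $R^e$-module (of infinite rank in general).

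For (b), I would induct on $m$. The base case $m=1$ is the stated fact that $q_s : K_s \to B_s$ is a free $R^e$-resolution. For the inductive step, factor
\[ q_{\underline{w}} \;=\; \bigl(q_{s_1} \otimes_R \mathrm{id}_{\bs(s_2,\ldots,s_m)}\bigr) \circ \bigl(\mathrm{id}_{K_{s_1}} \otimes_R q_{(s_2,\ldots,s_m)}\bigr). \]
By the induction hypothesis, $q_{(s_2,\ldots,s_m)}$ is a quasi-isomorphism, and from (a) each term of $K_{s_1}$ is free, hence flat, as a right $R$-module; since all complexes in sight are bounded, $K_{s_1} \otimes_R -$ preserves quasi-isomorphisms, so the right-hand factor is a quasi-isomorphism. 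For the left factor, $B_{s_1} = R \otimes_{R^{s_1}} R$ is free of rank two as a left $R$-module (by Demazure surjectivity, $R$ is free over $R^{s_1}$ on $\{1, \rho_{s_1}\}$), so $- \otimes_R B_{s_1}$ likewise preserves quasi-isomorphisms and carries the quasi-isomorphism $q_{s_1}$ to a quasi-isomorphism. Composing yields (b), and inspection in cohomological degree $0$ gives $B_{s_1} \otimes_R \bs(s_2,\ldots,s_m) = \bs(\underline{w})$. The only care required is bookkeeping the sidedness of each $R$-action when invoking flatness; beyond that there is no serious obstacle.
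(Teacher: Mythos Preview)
Your approach is essentially the same as the paper's: induct on $m$ and use that Bott-Samelson bimodules are free as one-sided $R$-modules. The paper packages the inductive step via $\mathrm{Tor}^R$ rather than your two-step factorization, but the content is identical.

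There is one slip in your ``left factor'' step. The map $q_{s_1} \otimes_R \mathrm{id}_{\bs(s_2,\ldots,s_m)}$ is obtained by applying the functor $-\otimes_R \bs(s_2,\ldots,s_m)$ to $q_{s_1}$, not $-\otimes_R B_{s_1}$. So the flatness you need is that of $\bs(s_2,\ldots,s_m)$ as a \emph{left} $R$-module, not of $B_{s_1}$. The fix is immediate: each $B_{s_i}$ is free of rank two as a left $R$-module (by the same Demazure-surjectivity argument you gave), hence so is their tensor product $\bs(s_2,\ldots,s_m) = B_{s_2}\otimes_R\cdots\otimes_R B_{s_m}$. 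With that correction your argument goes through.
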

\begin{proof}
Proceed by induction on $m$. Note that any free $R^e$ resolution will also be a free left or right $R-$module resolution. So by induction we see that
\[ H^k (\mathrm{KS}(s_1, \ldots, s_{m-1})\otimes_R K_{s_m})=\mathrm{Tor}^R_k(\bs(s_1, \ldots, s_{m-1})_R, {}_RB_{s_m}) \]
where $\bs(s_1, \ldots, s_{m-1})_R$ is the left $R$ module given by the right action of $R$, aka $r\star m =m\cdot r$ for $m\in \bs(s_1, \ldots, s_{m-1})_R$. But since all Bott-Samelson bimodules are free left or right $R$ modules, we see that for $k>0$ the RHS is 0 and for $k=0$, we have
\[  H^0 (\mathrm{KS}(s_1, \ldots, s_{m-1})\otimes_R K_{s_m})=\bs(s_1, \ldots, s_{m-1})\otimes_R B_{s_m}  \]
as sets. But since the morphisms involved were $R^e-$linear it follows that the above is a bimodule isomorphism and so  $\mathrm{KS}(s_1, \ldots, s_m)$ is a free $R^e$ resolution of  $\bs(s_1, \ldots, s_m)$ as desired
\end{proof}

Thus as $\mathrm{KS}(s_1, \ldots, s_m)$ is a complex of projective $R^e$ modules that resolves the bimodule $\bs(s_1, \ldots, s_m)$ it follows that
\[ \hom_{\bsbimext(\hf, W)}(\mathrm{KS}(s_1, \ldots, s_m), \mathrm{KS}(r_1, \ldots, r_k))\cong \ext^{\bullet, \bullet}_{R^e}(\bs(s_1, \ldots, s_m), \bs(r_1, \ldots, r_k)) \]

As mentioned in the introduction, we could have equivalently defined $\bsbimext(\hf, W)$ as replacing $\hom$ spaces with $\ext$ groups. However, the (super)-monoidal structure will then be $\otimes^L_R$ of complexes. The upshot of the definition above\footnote{aka choosing a fixed projective resolution for each object.} is that the (super)monoidal structure is just simply the $\otimes_R$ of complexes and composition is just composition of chain complexes instead of the Yoneda product.\\

As each $\mathrm{KS}(s_1, \ldots, s_m)$ is a complex of projective $R^e-$modules any morphism in the Soergel category $\bsbim(\hf, W)$ will automatically lift to a morphism (unique up to homotopy) in $\bsbimext(\hf, W)$ and $\bsbim(\hf, W)$ embeds inside $\bsbimext(\hf, W)$ fully faithfully as the cohomological degree 0 part. \\

We now recall the definition of the diagrammatic category $\Dext(\hf,  S_2)$ as defined in \cite {M} Section 4.

\begin{definition}
\label{A_1diagdef}
Let $\Dext(\hf,  S_2)$ be the strict $\kb$ linear supermonoidal category associated to a realization $\hf$ of $S_2=\abrac{s}$ defined as follows. 
\begin{itemize}
    \item \textbf{Objects} of $\Dext(\hf,  S_2)$ are words in $S=\set{s}$, i.e. $\underline{w}=(s_1, \ldots, s_n)$ where $s_i\in S$ where the monoidal structure is given by concatenation.
    \item \textbf{Morphism spaces} in $\Dext(\hf,  S_2)$ are bigraded $\kb$ modules. For a morphism $\alpha$ homogeneous of total degree $(\ell, n)$, $\ell$ will be the \underline{cohomological degree} while $n$ will be the \underline{internal or Soergel degree}. Let $|\alpha|=\ell$ the cohomological degree. $\Dext_\infty$ will then be supermonoidal for the cohomological grading. Specifically, $\otimes$ will satisfy the following \underline{super exchange law}
    \[ (h\otimes k)\circ (f\otimes g)=(-1)^{|k||f|} (h\circ f)\otimes (k\circ g) \]
    $\hom_{\Dext(\hf,  S_2)}(\underline{v}, \underline{w})$ will be the free $\kb$ module generated by horizontally and vertically concatenating colored graphs built from certain generating morphisms, such that the bottom and top boundaries are $\underline{v}$ and $\underline{w}$. The generating morphisms will be the generating morphisms of the diagrammatic Hecke category $\mathscr{D}(\hf, S_2)$ for $A_1$ plus the additional two "Hochschild" generators
    \[
\begin{array}{c|cc}
\text{generator}
&
\begin{array}{c}\begin{tikzpicture}[scale=-0.5,thick,baseline]
 \draw (0,-1) to (0,1); \node[hdot] at (0,0) {};
\end{tikzpicture}\end{array}
&
\begin{array}{c}\begin{tikzpicture}[scale=0.5,thick,baseline]
 \draw[dashed] (-0.5,-0.5) rectangle (0.5,0.5); \node at (0,0) {$x$};
\end{tikzpicture}\end{array} \\
\text{bidegree} & (1,-4) & \deg x \\
\text{name} & \text{(bivalent) Hochschild dot} & \text{Exterior Box}
\end{array}
\]
Here, $x$ is a homogeneous element in $\Lambda^\bullet(V\dsbrac{-1})$, and $\deg x$ denotes its bidegree. We also define the following ``univalent Hochschild dots'' as shorthands:
\begin{equation} \label{eqn:hupdot-hdowndot-def}
\begin{array}{c}\begin{tikzpicture}[xscale=0.3,yscale=0.5,thick,baseline]
 \draw (0,0) -- (0,1); \node[hdot] at (0,1) {};
\end{tikzpicture}\end{array}
:=
\begin{array}{c}\begin{tikzpicture}[xscale=0.3,yscale=0.5,thick,baseline]
 \draw (0,-1) -- (0,1); \node[hdot] at (0,0) {}; \node[dot] at (0,1) {};
\end{tikzpicture}\end{array},
\qquad \qquad
\begin{array}{c}\begin{tikzpicture}[xscale=0.3,yscale=-0.5,thick,baseline]
 \draw (0,0) -- (0,1); \node[hdot] at (0,1) {};
\end{tikzpicture}\end{array}
:=
\begin{array}{c}\begin{tikzpicture}[xscale=0.3,yscale=-0.5,thick,baseline]
 \draw (0,-1) -- (0,1); \node[hdot] at (0,0) {}; \node[dot] at (0,1) {};
\end{tikzpicture}\end{array}.
\end{equation}
\item \textbf{Relations} in $\Dext(\hf,  S_2)$ are as follows. All the defining relations of $\mathscr{D}(\hf, S_2)$ in \cite{EK10} will be satisfied plus the relations invovling the "Hochschild'' generators in the subsection below.
\end{itemize}
\end{definition}

\subsubsection{$1-$color Relations}

\begin{enumerate}

\item \textbf{Hochschild dot slides past trivalent vertices:}
\begin{equation} \label{eqn:hdot-trivalent-a}
\begin{array}{c}\begin{tikzpicture}[xscale=0.6,yscale=0.7,thick,baseline]
 \draw (-1,-1) -- (0,0) -- (0,1); \draw (1,-1) -- (0,0); \node[hdot] at (-0.5,-0.5) {};
\end{tikzpicture}\end{array}
=
\begin{array}{c}\begin{tikzpicture}[xscale=0.6,yscale=0.7,thick,baseline]
 \draw (-1,-1) -- (0,0) -- (0,1); \draw (1,-1) -- (0,0); \node[hdot] at (0,0.5) {};
\end{tikzpicture}\end{array}
=
\begin{array}{c}\begin{tikzpicture}[xscale=0.6,yscale=0.7,thick,baseline]
 \draw (-1,-1) -- (0,0) -- (0,1); \draw (1,-1) -- (0,0); \node[hdot] at (0.5,-0.5) {};
\end{tikzpicture}\end{array}
\end{equation}
\begin{equation} \label{eqn:hdot-trivalent-b}
\begin{array}{c}\begin{tikzpicture}[xscale=0.6,yscale=-0.7,thick,baseline]
 \draw (-1,-1) -- (0,0) -- (0,1); \draw (1,-1) -- (0,0); \node[hdot] at (-0.5,-0.5) {};
\end{tikzpicture}\end{array}
=
\begin{array}{c}\begin{tikzpicture}[xscale=0.6,yscale=-0.7,thick,baseline]
 \draw (-1,-1) -- (0,0) -- (0,1); \draw (1,-1) -- (0,0); \node[hdot] at (0,0.5) {};
\end{tikzpicture}\end{array}
=
\begin{array}{c}\begin{tikzpicture}[xscale=0.6,yscale=-0.7,thick,baseline]
 \draw (-1,-1) -- (0,0) -- (0,1); \draw (1,-1) -- (0,0); \node[hdot] at (0.5,-0.5) {};
\end{tikzpicture}\end{array}
\end{equation}

\item \textbf{Hochschild barbell relation:}
\begin{equation} \label{eqn:hbarbell}
\begin{array}{c}\begin{tikzpicture}[xscale=0.3,yscale=0.3,thick,baseline]
 \draw (0,-1) -- (0,1); \node[dot] at (0,-1) {}; \node[hdot] at (0,0) {}; \node[dot] at (0,1) {};
\end{tikzpicture}\end{array}=
\begin{array}{c}\begin{tikzpicture}[scale=0.3,thick,baseline]
 \draw[dashed] (-1,-1) rectangle (1,1); \node at (0,0) {$\alpha_s^\vee$};
\end{tikzpicture}\end{array}
\end{equation}

\item \textbf{Hochschild dot annihilation:}
\begin{equation} \label{eqn:hdot-square}
\begin{array}{c}\begin{tikzpicture}[scale=0.5,thick,baseline]
 \draw (0,-1.5) -- (0,1.5); \node[hdot] at (0,-0.5) {}; \node[hdot] at (0,0.5) {};
\end{tikzpicture}\end{array}
=
0
\end{equation}

\item \textbf{Exterior boxes add and multiply:}
\begin{equation} \label{eqn:exterior-boxes-mult}
\begin{array}{c}\begin{tikzpicture}[scale=0.5,thick,baseline]
 \draw[dashed] (-0.5,-0.5) rectangle (0.5,0.5); \node at (0,0) {$x$};
\end{tikzpicture}\end{array}
+
\begin{array}{c}\begin{tikzpicture}[scale=0.5,thick,baseline]
 \draw[dashed] (-0.5,-0.5) rectangle (0.5,0.5); \node at (0,0) {$y$};
\end{tikzpicture}\end{array}
=
\begin{array}{c}\begin{tikzpicture}[scale=0.5,thick,baseline]
 \draw[dashed] (-1,-0.5) rectangle (1,0.5); \node at (0,0) {$x + y$};
\end{tikzpicture}\end{array}, \qquad
\begin{array}{c}\begin{tikzpicture}[scale=0.5,thick,baseline]
 \draw[dashed] (-0.5,0.5) rectangle (0.5,1.5); \node at (0,1) {$x$};
 \draw[dashed] (-0.5,-1.5) rectangle (0.5,-0.5); \node at (0,-1) {$y$};
\end{tikzpicture}\end{array}
=
\begin{array}{c}\begin{tikzpicture}[scale=0.5,thick,baseline]
 \draw[dashed] (-1,-0.5) rectangle (1,0.5); \node at (0,0) {$x \wedge y$};
\end{tikzpicture}\end{array}
\end{equation}
for $x, y \in  \Lambda^\bullet(V\dsbrac{-1})$.

\item \textbf{Exterior forcing relation:}
\begin{equation} \label{eqn:exterior-forcing}
\begin{array}{c}\begin{tikzpicture}[scale=0.5,thick,baseline]
 \draw (0,-1.5) -- (0,1.5);
 \draw[dashed] (0.5,-0.5) rectangle (1.5,0.5); \node at (1,0) {$x$};
\end{tikzpicture}\end{array}
=
\begin{array}{c}\begin{tikzpicture}[scale=0.5,thick,baseline]
 \draw (0,-1.5) -- (0,1.5);
 \draw[dashed] (-1.9,-0.5) rectangle (-0.5,0.5); \node at (-1.2,0) {$s(x)$};
\end{tikzpicture}\end{array}
+
\begin{array}{c}\begin{tikzpicture}[scale=0.5,thick,baseline]
 \draw (0,0.8) -- (0,1.5); \node [hdot] at (0,0.8) {};
 \draw[dashed] (-1,-0.5) rectangle (1,0.5); \node at (0,0) {$\partial_s(x)$};
 \draw (0,-1.5) -- (0,-0.8); \node[dot] at (0,-0.8) {};
\end{tikzpicture}\end{array}
\qquad \text{for } x \in \Lambda^\bullet(V\dsbrac{-1}),
\end{equation}
where $ \partial_s: \Lambda^\bullet(V\dsbrac{-1})\to \Lambda^\bullet(V\dsbrac{-1}/\kb \alpha_s^\vee) $ is the exterior Demazure operator defined in \cref{extdemazuredef}.
\end{enumerate}

\subsubsection{Further Relations}
The following relations follow from the defining relations above.

\begin{itemize}
    \item \emph{$1-$color Hochschild Jumping:}
    \begin{equation}
    \label{eqn:hochjump}
\begin{array}{c}\begin{tikzpicture}[xscale=0.4,yscale=0.6,thick,baseline]
 \draw (-1,-1) -- (-1,1); \draw (1,-1) -- (1,1); \node[hdot] at (-1,0) {};
\end{tikzpicture}\end{array}
=
\begin{array}{c}\begin{tikzpicture}[xscale=0.4,yscale=0.6,thick,baseline]
 \draw (-1,-1) -- (-1,1); \draw (1,-1) -- (1,1); \node[hdot] at (1,0) {};
\end{tikzpicture}\end{array}    
\end{equation}
\item \emph{$1-$color Cohomology:}
\begin{equation}
    \begin{array}{c}\begin{tikzpicture}[xscale=0.2,yscale=0.6,thick,baseline]
 \node at (-1,0) {\text{\scriptsize $\alpha_s^\vee$}}; \draw[dashed] (-2,-0.3) rectangle (0,0.3);
 \draw (1,-1) -- (1,0); \node[dot] at (1,0) {};
\end{tikzpicture}\end{array}
=
\begin{array}{c}\begin{tikzpicture}[xscale=0.2,yscale=0.6,thick,baseline]
 \node at (-1,0) {\text{\scriptsize $\alpha_s$}}; \draw(-2,-0.3) rectangle (0,0.3);
 \draw (1,-1) -- (1,0); \node[hdot] at (1,0) {};
\end{tikzpicture}\end{array}
\end{equation}
\item \emph{Hochschild coroot annihilation:}
\begin{equation} \label{eqn:coroot-hdot}
\begin{array}{c}\begin{tikzpicture}[xscale=0.2,yscale=0.6,thick,baseline]
 \node at (-1,0) {\text{\scriptsize $\alpha_s^\vee$}}; \draw[dashed] (-2,-0.3) rectangle (0,0.3);
 \draw (1,-1) -- (1,1); \node[hdot] at (1,0) {};
\end{tikzpicture}\end{array}
=
\begin{array}{c}\begin{tikzpicture}[xscale=-0.2,yscale=0.6,thick,baseline]
 \node at (-1,0) {\text{\scriptsize $\alpha_s^\vee$}}; \draw[dashed] (-2,-0.3) rectangle (0,0.3);
 \draw (1,-1) -- (1,1); \node[hdot] at (1,0) {};
\end{tikzpicture}\end{array}
= 0
\end{equation}
\end{itemize}

\subsubsection{Diagrammatics to Bimodules}

\begin{theorem}[Main Theorem of \cite{M}]
\label{Mmaintheorem}
There is a $\kb-$linear monodial functor $\mathcal{F}^\ext: \Dext(\hf, S_2)\to \bsbimext(\hf, S_2)$ extending the equivalence $\mathcal{F}: \mathscr{D}(\hf, S_2)\xrightarrow{\sim} \bsbim(\hf, S_2)$ defined on objects by sending $(s)\mapsto K_s$, $\varnothing\mapsto K_\varnothing$ and on morphisms by sending
\[ \mathcal{F}^\ext\paren{\raisebox{0.5ex}{\begin{tikzpicture}[xscale=0.4,yscale=0.4,thick,baseline]
 \draw (-1,-1) -- (-1,1); \node[hdot] at (-1,0) {};
\end{tikzpicture}} }=\phi_s, \qquad \mathcal{F}^\ext\paren{\raisebox{0ex}{\dboxed{\xi}}}=\iota_\xi \quad \textnormal{ for }\xi\in \Lambda^\bullet(V\dsbrac{-1})\]
and sends the generating morphisms of $\mathscr{D}(\hf, S_2)$ to their class in cohomology of their chain lifts as defined in \cref{chainliftsection}. Specifically,
\[ \mathcal{F}^\ext\paren{\unit}= \sbrac{\widetilde{\eta_s}} , \qquad  \mathcal{F}^\ext\paren{ 
\raisebox{1ex}{\begin{tikzpicture}[xscale=0.25,yscale=-0.3,thick,baseline]
 \draw (0,0) -- (0,1);
 \node[dot] at (0,0) {};
\end{tikzpicture}}}= \sbrac{\widetilde{\epsilon_s}}, \qquad \mathcal{F}^\ext\paren{\begin{array}{c}\begin{tikzpicture}[xscale=0.3,yscale=0.35,thick,baseline]
 \draw (-1,-1) -- (0,0) -- (0,1); \draw (1,-1) -- (0,0);
\end{tikzpicture}\end{array}}= \sbrac{\widetilde{\mu_s}}, \qquad  \mathcal{F}^\ext\paren{\begin{array}{c}\begin{tikzpicture}[xscale=0.3,yscale=-0.35,thick,baseline]
 \draw (-1,-1) -- (0,0) -- (0,1); \draw (1,-1) -- (0,0); 
\end{tikzpicture}\end{array}}=\sbrac{\widetilde{\delta_s}} \]
\end{theorem}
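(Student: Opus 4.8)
The plan is to verify in turn that $\mathcal{F}^\ext$ is a well-defined functor, that it is essentially surjective, and that it is fully faithful; the last point is the crux, and the first two are quick. Essential surjectivity is immediate: the objects of $\bsbimext(\hf, S_2)$ are by construction the complexes isomorphic to some Bott--Samelson complex $K(\un{w})$, and $\mathcal{F}^\ext$ sends $\un{w}$ to $K(\un{w})$.

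Well-definedness has two parts. The generators and relations of $\mathscr{D}(\hf, S_2)$ cause no difficulty: sending a cohomological-degree-$0$ Soergel morphism to the homotopy class of any of its chain lifts defines a functor $\bsbim(\hf, S_2) \to \bsbimext(\hf, S_2)$, because lifts exist (each $K(\un{w})$ is a bounded complex of projective $R^e$-modules resolving $\bs(\un{w})$), are unique up to homotopy, and compose correctly; composing this functor with the known equivalence $\mathcal{F}\colon \mathscr{D}(\hf, S_2) \xrightarrow{\sim} \bsbim(\hf, S_2)$ shows that the $\mathscr{D}(\hf, S_2)$-relations are respected. What remains is to check the five new relations, each a direct computation with the explicit contraction chain maps on the Koszul resolutions $K_\varnothing$ and $K_s$ of \eqref{Kvarnothingeq}--\eqref{Kseq}: relation~\eqref{eqn:exterior-boxes-mult} just says $\xi \mapsto \widetilde{\iota_\xi}$ is an algebra homomorphism out of $\Lambda^\bullet(V\dsbrac{-1})$; \eqref{eqn:hdot-square} is $(\widetilde{\phi_s})^2 = 0$, which holds since $\widetilde{\phi_s} = -\gamma_s^\vee \lfrown(-)$ is contraction against an odd element; \eqref{eqn:hdot-trivalent-a}--\eqref{eqn:hdot-trivalent-b} are the compatibility of $\widetilde{\phi_s}$ with the chain lifts of the trivalent vertices; the Hochschild barbell relation~\eqref{eqn:hbarbell} amounts to the computation that $\widetilde{\phi_s}$ conjugated by the lifts of the two dots represents the class $\iota_{\alpha_s^\vee}$; and the exterior forcing relation~\eqref{eqn:exterior-forcing} records the failure of $\widetilde{\iota_\xi}$ to commute with $q_s\colon K_s \to B_s$, the error being governed by the exterior Demazure operator $\partial_s$ of \cref{extdemazuredef}. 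I expect \eqref{eqn:exterior-forcing} to be the most delicate of the five, as it is where the $R^s$-twisting built into $K_s$ enters; the others are short.

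For fully faithfulness, the first move is to shrink the problem. The decomposition $K_s \otimes_R K_s \cong K_s(1) \oplus K_s(-1)$ holds inside $\bsbimext(\hf, S_2)$, being the image under the (now well-defined, monoidal) functor $\mathcal{F}^\ext$ of the classical idempotent decomposition of $B_s \otimes_R B_s$ built from Elias--Khovanov generators; so every object of $\bsbimext(\hf, S_2)$ is a finite direct sum of grading shifts of $K_\varnothing$ and $K_s$, and the resulting splitting of $\hom$-spaces is compatible with $\mathcal{F}^\ext$. Since moreover $(s)$ is self-biadjoint up to a grading shift, with unit and counit built from Elias--Khovanov generators which $\mathcal{F}^\ext$ preserves, it suffices to show that
\[ \mathcal{F}^\ext\colon \hom^{\bullet,\bullet}_{\Dext(\hf, S_2)}(\varnothing, \un{w}) \longrightarrow \ext^{\bullet,\bullet}_{R^e}(R, \bs(\un{w})) \]
is an isomorphism, and in fact only $\un{w} = \varnothing$ and $\un{w} = (s)$ need be treated. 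On the target side, resolving $R$ by $K_\varnothing$ gives $\ext^{\bullet,\bullet}_{R^e}(R, R) = \hh^\bullet(R) = R \otimes_\kb \Lambda^\bullet(V\dsbrac{-1})$ (the Koszul differential vanishes on the symmetric bimodule $R$) and, after a small computation with the Koszul differential on $B_s$, $\ext^{\bullet,\bullet}_{R^e}(R, B_s)$ as an explicit free right $R$-module with a distinguished basis. On the source side one produces a candidate right $R$-spanning set by decorating the Elias--Khovanov light leaves: \eqref{eqn:exterior-boxes-mult} and \eqref{eqn:exterior-forcing} push all exterior boxes into one fixed region (at the cost of $\partial_s$-corrections), \eqref{eqn:hdot-trivalent-a}--\eqref{eqn:hdot-trivalent-b} and the Hochschild jumping relation~\eqref{eqn:hochjump} move Hochschild dots to fixed positions, and \eqref{eqn:hdot-square} and \eqref{eqn:coroot-hdot} delete the redundant ones. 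One then counts to see that this spanning set has exactly the graded $R$-rank of the corresponding Ext group, and checks that $\mathcal{F}^\ext$ carries it onto the distinguished basis found above. Fullness is then clear, and faithfulness follows formally: if an $R$-combination of spanning diagrams maps to $0$, then the corresponding $R$-combination of basis vectors of the Ext group vanishes, forcing all coefficients, hence the diagram, to be $0$.

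The step I expect to be the main obstacle is the diagrammatic spanning claim: that decorated light leaves span $\hom^{\bullet,\bullet}_{\Dext(\hf, S_2)}(\varnothing, \un{w})$ over $R$ with cardinality no larger than the already-computed rank of $\ext^{\bullet,\bullet}_{R^e}(R, \bs(\un{w}))$. This is the analogue in the present setting of the double-leaves spanning argument of \cite{EK10} and \cite{SC}, and the interactions most apt to cause trouble are the Hochschild-dot/trivalent-vertex relations \eqref{eqn:hdot-trivalent-a}--\eqref{eqn:hdot-trivalent-b} and the $\partial_s$-correction in \eqref{eqn:exterior-forcing}, both of which can create rather than merely relocate decorations; controlling exactly how many independent decorated leaves survive is what makes the argument nontrivial.
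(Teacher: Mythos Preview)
The paper does not prove this theorem at all: it is stated as the ``Main Theorem of \cite{M}'' and is quoted without proof, serving as background input for the two-color results developed later. There is therefore no proof in the paper to compare your proposal against.

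That said, your outline is the standard architecture for such equivalences and is essentially what one expects the proof in \cite{M} to look like: verify the defining relations hold on the bimodule side (well-definedness), then reduce full faithfulness via biadjointness and the decomposition $K_s\otimes_R K_s\cong K_s(1)\oplus K_s(-1)$ to computing $\ext^{\bullet,\bullet}_{R^e}(R,R)$ and $\ext^{\bullet,\bullet}_{R^e}(R,B_s)$ and matching these against a diagrammatic spanning set. Indeed the paper itself carries out exactly this style of argument in the two-color case (Theorem~\ref{affineequiv}), and the warm-up computation of $\ext^{\bullet,\bullet}_{R^e}(R,B_s)$ in Corollary~\ref{hochbs} is precisely the algebraic side of your $\un{w}=(s)$ case. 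Your identification of the exterior forcing relation~\eqref{eqn:exterior-forcing} as the most delicate well-definedness check, and of the decorated-light-leaves spanning argument as the main obstacle, is accurate.
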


Under $\mathcal{F}^\ext$ the univalent Hochschild dots defined in \cref{eqn:hupdot-hdowndot-def} will be mapped to $\sbrac{\widetilde{\epsilon_s}}\circ \phi_s$ and $\phi_s \circ \sbrac{\widetilde{\eta_s}}$ respectively. In \cite {M} it was shown that $\phi_s \circ\sbrac{\widetilde{\eta_s}}=\eta_s^\ext$, in other words
\[\hunit =\eta_s^\ext \]

Thus we can and will use the RHS above for our computations involving $\hunit$ in the algebraic category instead of $\phi_s \circ\sbrac{\widetilde{\eta_s}}$. It is important to note that $\bsbim(\hf, S_2)$ only sits inside of $\bsbimext(\hf, S_2)$ isomorphically, with objects $B_s$ replaced by the dga $K_s$ and morphisms replaced by their cohomology class of their chain lifts. Likewise, throughout the rest of the paper diagrammatics traditionally representing morphisms in $\bsbim(\hf, W)$ will instead refer to their corresponding morphism in $\bsbimext(\hf, W)$.

\subsection{Action of Exterior Boxes}
Here we explain what diagrams with exterior boxes mean in $\bsbimext(\hf, W)$. Given $\xi\in \Lambda^\bullet(V\dsbrac{-1})$ we have that
\begin{center}
\begin{tikzcd}[row sep=0.4cm]
 \phantom{a}  &    \raisebox{-2ex}{$K_s$} \\
     &  K_\varnothing \otimes_R K_s \ar[u, "\widetilde{\lambda_s} \ "] \\
   &  K_\varnothing \otimes_R K_s  \ar[u, "\widetilde{\iota_\xi}\otimes_R\id \ "] \\ 
\ar[dash, uuu, "\dboxed{\scalemath{1.3}{\xi}} \ ", "\ \  ="']     & K_s \ar[u, "\widetilde{\tau_s} \ "]
\end{tikzcd}
\end{center}
where $\widetilde{\tau_s}$ is the chain lift for the inverse of the left unitor as defined in \cref{chainliftsection} and $\widetilde{\lambda_s}$ is the chain lift of the left unitor for $B_s$ (Specifically, $\widetilde{\lambda_s}= K_\varnothing \otimes_R K_s\xrightarrow{q_\varnothing\otimes_R \id} R\otimes_R K_s\xrightarrow{\mathrm{mult}}K_s $). If $f\in \un{\hom}^{1, \bullet}_{R^e}(K_\varnothing, K_s) $, one can also check that
\[ \dboxed{\xi} \ \  \raisebox{-2ex}{\begin{tikzpicture}[xscale=0.25,yscale=0.3,thick,baseline]
 \draw (0,0) -- (0,3);
 \node[rkhdot, color=black, fill=white] at (0,0){};
 \node at (0,0) {$\s{f}$};
\end{tikzpicture}}= \ \underset{\hspace{-0.4ex}\dboxed{\xi}}{\raisebox{-2ex}{\begin{tikzpicture}[xscale=0.25,yscale=0.3,thick,baseline]
 \draw (0,0) -- (0,3);
 \node[rkhdot, color=black, fill=white] at (0,0){};
 \node at (0,0) {$\s{f}$};
\end{tikzpicture}}} \ =  \raisebox{-2ex}{\begin{tikzpicture}[xscale=0.25,yscale=0.3,thick,baseline]
 \draw (0,0) -- (0,3);
 \node[rkhdot, color=black, fill=white] at (0,0){};
 \node at (0,0) {$\s{f}$};
\end{tikzpicture}}\ \dboxed{\xi} \]
as chain maps (each map has a different algebraic interpretation). Note that the relative positioning of the exterior boxes matter because of the Koszul sign rule. For example, suppose $f\in \un{\hom}^{1, \bullet}_{R^e}(K_s, K_s)$ and $y\in \Lambda^1(V\dsbrac{-1})$. Then we have that
\[ \dboxed{y} \ \raisebox{-3ex}{\begin{tikzpicture}[xscale=0.25,yscale=0.3,thick,baseline]
 \draw (0,0) -- (0,6);
 \node[rkhdot, color=black, fill=white] at (0,4){};
 \node at (0,4) {$\s{f}$};
\end{tikzpicture}}=- \raisebox{4ex}{$\dboxed{y}$} \ \raisebox{-3ex}{\begin{tikzpicture}[xscale=0.25,yscale=0.3,thick,baseline]
 \draw (0,0) -- (0,6);
 \node[rkhdot, color=black, fill=white] at (0,1.75){};
 \node at (0,1.75) {$\s{f}$};
\end{tikzpicture}} \]

\section{Computation of $\ext^{\bullet, \bullet}_{R^e}(B_t, \mathrm{BS}(\underline{w}))$ for $m_{st}=\infty$}
\label{maincomptutesec}

\subsection{Warm-Up computation of \(\ext^{\bullet, \bullet}_{R^e}(R, B_s)\)}

\begin{lemma}
\label{prehochbs}
Let $B$ be a $R^e-$module and suppose there is a subset $J\subset [n]$ s.t. $a_j^e=0 \text{ in }B \ \forall j\in J$, then there is an isomorphism of complexes as bigraded right $R-$modules .
\[ \underline{\hom}_{R^e}( K(a_1^e, \ldots, a_{n}^e) , B)\cong \underline{\hom}_{R^e}( \otimes_{i\not\in J} K(a_i^e) , B)\otimes_\kb \Lambda^\bullet\paren{ \oplus_{j\in J} \kb \underline{a_j} [1](-|a_j|) }^* \]
where the right $R-$module structure is given by $(f\cdot r)(x)=f(x)\cdot r$.
\end{lemma}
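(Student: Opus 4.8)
The plan is to decompose the Koszul complex $K(a_1^e,\ldots,a_n^e)$ as a tensor product over $R^e$ of the ``inactive'' Koszul factors (those indexed by $J$) with the ``active'' ones, and then push the $\un{\hom}_{R^e}(-,B)$ through. Concretely, write
\[ K(a_1^e,\ldots,a_n^e) = \Bigl(\bigotimes_{i\notin J} K(a_i^e)\Bigr) \otimes_{R^e} \Bigl(\bigotimes_{j\in J} K(a_j^e)\Bigr), \]
and recall from the dga description in the preliminaries that each factor $K(a_j^e) = \Lambda^\bullet(\kb\,\underline{a_j}\,[1](-|a_j|)) \boxtimes R^e$. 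First I would observe that because $a_j^e$ acts as $0$ on $B$ for $j\in J$, the differential coming from the $J$-factors is killed upon applying $\hom_{R^e}(-,B)$; what remains of those factors is purely their exterior part, contributing a factor of $\Lambda^\bullet(\oplus_{j\in J}\kb\,\underline{a_j}\,[1](-|a_j|))^*$ after dualizing (the dual appearing because $\hom$ is contravariant in the first argument, which also flips the cohomological shift $[1] \leadsto [-1]$, matching the grading bookkeeping).

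In more detail, the key computation is the natural isomorphism
\[ \un{\hom}_{R^e}\bigl(M \otimes_{R^e} \Lambda^\bullet(\un{F}) \boxtimes R^e,\ B\bigr) \;\cong\; \un{\hom}_{R^e}(M,B)\otimes_\kb \Lambda^\bullet(\un{F})^* \]
for a finite free $\kb$-module $\un{F}$ placed in the appropriate bidegrees, valid because $\Lambda^\bullet(\un{F})$ is a finite free $\kb$-module so $M\otimes_{R^e}(\Lambda^\bullet(\un{F})\boxtimes R^e) \cong M\otimes_\kb \Lambda^\bullet(\un{F})$ as $R^e$-modules, and $\hom$ commutes with finite direct sums. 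Applying this with $M = \bigotimes_{i\notin J}K(a_i^e)$ and $\un{F} = \oplus_{j\in J}\kb\,\underline{a_j}\,[1](-|a_j|)$, I get the underlying bigraded right $R$-module isomorphism claimed. The right $R$-action, defined by $(f\cdot r)(x) = f(x)\cdot r$, only touches the $B$-side (equivalently the rightmost $R$-tensor factor), so it is transported correctly and acts only on the $\un{\hom}_{R^e}(\otimes_{i\notin J}K(a_i^e),B)$ tensorand, as stated.

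The one genuine point to check — and the step I expect to be the main obstacle — is that this is an isomorphism \emph{of complexes}, i.e.\ that it is compatible with the differentials, not merely an isomorphism in each bidegree. The differential on the left-hand Hom complex is $d(f) = d_B\circ f + (-1)^{|f|+1} f\circ d_K$, where $d_K$ splits as $d_{\mathrm{active}} \otimes \id \pm \id\otimes d_{\mathrm{inactive}}$ via the graded Leibniz rule. The inactive part $\id\otimes d_{\mathrm{inactive}}$ composed with any $f$ lands in $B$ via the maps $a_j^e$, which vanish on $B$; hence only $d_{\mathrm{active}}$ survives, and this is exactly the differential of $\un{\hom}_{R^e}(\otimes_{i\notin J}K(a_i^e),B)$ tensored with $\id$ on $\Lambda^\bullet(\un{F})^*$ (the latter carries zero differential). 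The care needed here is purely in the Koszul signs: one must verify that the sign $(-1)^{|f|+1}$ and the signs from the graded Leibniz rule on $d_K$ and from moving past the exterior generators all conspire so that the induced differential on the right-hand side is precisely $d_{\un{\hom}}\otimes \id$ with the correct sign convention, and in particular that no residual sign twist appears on the $\Lambda^\bullet(\un{F})^*$ factor. Once the signs are pinned down this is a routine verification, so I would state the sign conventions explicitly at the outset and then let the identification fall out.
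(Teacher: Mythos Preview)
Your proposal is correct and is essentially the same argument as the paper's, just organized slightly differently: the paper routes through the tensor--hom adjunction $\un{\hom}_{R^e}(M\otimes_{R^e} N,B)\cong \un{\hom}_{R^e}(M,\un{\hom}_{R^e}(N,B))$ and then identifies the inner Hom as $\Lambda^\bullet(\un{F})^*\otimes_\kb B$ (with zero differential, since $a_j^e=0$ on $B$), whereas you pull the exterior factor out directly and verify the differential compatibility by hand. Both rest on the same observation and give the same result; the paper's version is a bit more streamlined and avoids the explicit sign check you flag.
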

\begin{proof}
We have the following chain of isomorphism of complexes
\begin{equation*}
\begin{aligned}
\underline{\hom}_{R^e}( \otimes_{i\not\in J} K(a_i^e)\otimes_{R^e} \otimes_{j\in J} K(   a_j^e) , B) &\cong  \underline{\hom}_{R^e}( \otimes_{i\not\in J} K(a_i^e),\ \un{\hom}_{R^e} (\otimes_{j\in J} K(   a_j^e) , B))  \\
&\cong\underline{\hom}_{R^e}(\otimes_{i\not\in J} K(a_i^e), \Lambda^\bullet( \oplus_{j\in J} \kb \un{a_j} [1](-|a_j|))^*\otimes_\kb B )\\
&\cong\underline{\hom}_{R^e}( \otimes_{i\not\in J} K(a_i^e) , B)\otimes_\kb \Lambda^\bullet( \oplus_{j\in J} \kb \un{a_j} [1](-|a_j|))^*
\end{aligned} 
\end{equation*}
where the isomorphisms arise from the differential of the complex $\Lambda^\bullet( \oplus_{j\in J} \kb \un{a_j} [1](-|a_j|))^*\otimes_\kb B$ being 0 since $a_j^e=0$ in $B$. It is easy to check that these are all maps of right $R-$modules.
\end{proof}

\begin{corollary}
\label{hochbs}
We have an isomorphism of bigraded right $R$ modules
\[\ext_{R^e}^{\bullet, \bullet}( R, B_s )\cong H^\bullet(\underline{\hom}_{R^e}(K(\rho_s^e), B_s))\otimes_{\kb} \Lambda^\bullet(\un{V} \dsbrac{-1} /\kb \alpha_s^\vee )\]
More specifically we have an isomorphism
\[ \ext_{R^e}^{\bullet, \bullet}( R, B_s )\cong  \unit \ R \otimes_{\kb} \Lambda^\bullet(\un{V} \dsbrac{-1} /\kb \alpha_s^\vee ) \bigoplus  \hunit \ R \otimes_\kb \Lambda^\bullet(\un{V} \dsbrac{-1} /\kb \alpha_s^\vee )   \]
where $\hunit $ is the class of the map in $\underline{\hom}^{1,-3}_{R^e}(K(\rho_s^e), B_s)$ sending $\underline{\rho_s}\boxtimes 1\otimes 1\to  1\otimes_{s} 1$, and $\underline{1}\boxtimes 1\otimes 1$ to $0$. 
\end{corollary}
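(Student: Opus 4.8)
The plan is to apply \cref{prehochbs} with $B = B_s$ and then separately analyze the remaining two-term Hom complex $\underline{\hom}_{R^e}(K(\rho_s^e), B_s)$. First I would choose the basis $e_1, \ldots, e_n$ of $V^*$ so that $e_1, \ldots, e_{n-1}$ spans $(V^*)^s$ and $e_n = \rho_s$ (this is legitimate by Demazure surjectivity, since $\langle \alpha_s^\vee, \rho_s \rangle = 1$ forces $\rho_s \notin (V^*)^s$). Then $K_\varnothing = K(e_1^e, \ldots, e_{n-1}^e, \rho_s^e)$, and for $j \in J := \{1, \ldots, n-1\}$ we have $e_j^e = e_j \otimes 1 - 1 \otimes e_j = 0$ in $B_s = R \otimes_{R^s} R$ because $e_j \in R^s$ is central for the bimodule structure. \cref{prehochbs} then yields
\[ \ext_{R^e}^{\bullet,\bullet}(R, B_s) \cong H^\bullet\bigl(\underline{\hom}_{R^e}(K(\rho_s^e), B_s)\bigr) \otimes_\kb \Lambda^\bullet\bigl(\textstyle\bigoplus_{j=1}^{n-1} \kb\,\underline{e_j}\,[1](-2)\bigr)^*. \]
The exterior factor is, up to identifying the graded dual, $\Lambda^\bullet\bigl((V^*)^s{}^*\,\dsbrac{-1}\bigr)$; and since $(V^*)^s$ is the annihilator of $\alpha_s^\vee$ inside $V^*$, its dual is canonically $V/\kb\alpha_s^\vee$, giving the $\Lambda^\bullet(\underline{V}\dsbrac{-1}/\kb\alpha_s^\vee)$ appearing in the statement. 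This identification — matching the graded dual of the exterior algebra on $\underline{e_j}[1](-2)$'s with $\Lambda^\bullet(\underline{V}\dsbrac{-1}/\kb\alpha_s^\vee)$, keeping careful track of the $[1]$ and $(-2)$ shifts combining into $\dsbrac{1}$ and its dual $\dsbrac{-1}$ — is the first place to be careful, though it is essentially bookkeeping.

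Next I would compute $H^\bullet\bigl(\underline{\hom}_{R^e}(K(\rho_s^e), B_s)\bigr)$ directly. The complex $K(\rho_s^e)$ is $[R^e\,\underline{\rho_s} \xrightarrow{\rho_s^e} R^e]$ with $R^e$ in cohomological degree $0$, so $\underline{\hom}_{R^e}(K(\rho_s^e), B_s)$ is the two-term complex $[B_s \xrightarrow{d} B_s\,\underline{\rho_s}^*]$ where the differential is (up to sign) multiplication by $\rho_s^e$ acting on $B_s$, i.e. $b \mapsto \rho_s \cdot b - b \cdot \rho_s$. I would show: (i) the $H^0$ is $\ker(\rho_s^e\text{ on }B_s)$, and since $B_s = R \otimes_{R^s} R$ is free as a left (or right) $R$-module and $\rho_s^e$ acts injectively on it, $H^0$ degenerates — more precisely, the standard computation for $B_s$ gives $\ker = 0$ except we must remember we want the full answer, so I should instead observe $\rho_s$ is not a zero divisor so $H^0$ is concentrated appropriately; (ii) the cokernel $H^1$ can be computed using the decomposition $B_s \cong R \oplus R(-2)$ as a left $R$-module (with basis $1 \otimes 1$ and $\rho_s \otimes 1 - 1 \otimes \rho_s = $ a degree-$2$ element, or the standard $c_s$-type generator). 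Concretely $B_s$ is free of rank $2$ over $R$ on the left, and multiplication by $\rho_s^e$ is a rank-one-ish operator; the upshot, as in \cite{M} for the $A_1$ case, is that $H^0 \cong R$ generated by the class of the bimodule map $R \to B_s$, $1 \mapsto 1\otimes 1$ (this is $\unit$), and $H^1 \cong R(-3)$... wait, more precisely $H^1$ is a free right $R$-module of rank $1$ generated by the class $\hunit$ of the cocycle in $\underline{\hom}^{1,-3}_{R^e}(K(\rho_s^e), B_s)$ sending $\underline{\rho_s}\boxtimes 1 \otimes 1 \mapsto 1 \otimes_s 1$ and $\underline{1}\boxtimes 1\otimes 1 \mapsto 0$. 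To see this is a cocycle and generates, I would unwind that in this two-term complex every degree-$1$ element is automatically a cocycle, and check that the coboundaries are exactly $\rho_s^e \cdot (\text{degree-0 maps}) = $ the submodule one quotients by; the quotient being free of rank one on $\hunit$ then follows from the explicit bimodule structure of $B_s$ just as in \cite{M} Theorem/Lemma for $\ext(R,B_s)$ in type $A_1$.

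Finally, I would assemble: $H^\bullet\bigl(\underline{\hom}_{R^e}(K(\rho_s^e), B_s)\bigr) \cong \unit\, R \oplus \hunit\, R$ as a bigraded right $R$-module, and tensoring with $\Lambda^\bullet(\underline{V}\dsbrac{-1}/\kb\alpha_s^\vee)$ over $\kb$ — which is exactly what \cref{prehochbs} delivers, with the stated right $R$-action $(f\cdot r)(x) = f(x)\cdot r$ — gives both displayed isomorphisms. The one genuine subtlety to get right, and the step I expect to be the main obstacle, is the identification of $H^1$ with a free rank-one right $R$-module on the specific cocycle $\hunit$: one must verify that the coboundary image of $\rho_s^e$ acting on $\hom_{R^e}(R^e, B_s) = B_s$ cuts down the degree-$1$ Hom space $B_s\,\underline{\rho_s}^*$ to precisely a free rank-one complement, which comes down to the fact that $B_s / \rho_s^e B_s$ is free of rank one over $R$ on the right — a concrete computation with the $R$-basis $\{1\otimes 1,\ \rho_s \otimes 1\}$ of $B_s$ that I would carry out exactly as in the $A_1$ case of \cite{M}. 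Everything else (the Koszul/Hom adjunction, the vanishing of the differential on the $\Lambda$-factor since $e_j^e = 0$ on $B_s$, and the degree shift bookkeeping) is formal given \cref{prehochbs}.
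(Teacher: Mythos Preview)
Your overall strategy matches the paper's exactly: split off the $(V^*)^s$ part of $K_\varnothing$ via \cref{prehochbs}, identify the exterior factor with $\Lambda^\bullet(\un{V}\dsbrac{-1}/\kb\alpha_s^\vee)$, and then compute the two-term complex $\underline{\hom}_{R^e}(K(\rho_s^e), B_s)$ explicitly using an $R$-basis of $B_s$. The paper carries this out with the right $R$-module $01$-basis $\{c_{id}, c_s\}$ of $B_s$ and writes the differential as an explicit $2\times 2$ matrix.

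Your discussion of $H^0$, however, is wrong and would need to be fixed. The map $\rho_s^e$ is \emph{not} injective on $B_s$: its kernel is $c_s \cdot R$, where $c_s = \eta_s(1) = \rho_s \otimes_s 1 - 1 \otimes_s s(\rho_s)$ (one checks $\rho_s \cdot c_s = c_s \cdot \rho_s$ directly using that $\rho_s + s(\rho_s)$ and $\rho_s s(\rho_s)$ lie in $R^s$). Relatedly, your identification of $\unit$ as ``$1 \mapsto 1 \otimes_s 1$'' is incorrect: that assignment is not even an $R^e$-module map, since $r\cdot(1\otimes_s 1)\neq (1\otimes_s 1)\cdot r$ for $r\notin R^s$. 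The diagrammatic $\unit$ is the Frobenius unit $\eta_s$, and it is $c_s$ (not $c_{id}=1\otimes_s 1$) that generates $H^0$. Once this is corrected, your $H^1$ computation is fine and agrees with the paper: the image of $\rho_s^e$ is $(-\alpha_s\, c_{id} + c_s)R$, so the cokernel is free of rank one on the class of $c_{id}$, which after the shift from $\underline{\rho_s}^*$ is exactly $\hunit$.
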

\begin{proof}
The LHS above is the cohomology of the complex $ \un{\hom}_{R^e}(K_\varnothing, B_s)$. From the decomposition $V^*= \kb \rho_s\oplus (V^*)^s$, we see that we can write $K_\varnothing=K( \rho_s^e, e_1^e, \ldots, e_{n-1}^e )$ in \cref{Kvarnothingeq}. Here we use the isomorphism $\Lambda^\bullet((V^*)^s\dsbrac{1})^*\cong \Lambda^\bullet(V \dsbrac{-1} /\kb \alpha_s^\vee )$ as explained in \cite {M} Section 2. Now apply \cref{prehochbs} as $e_i^e=0$ in $B_s$ and we see that
\begin{equation}
\label{homkbs}
    \un{\hom}_{R^e}(K_\varnothing, B_s)\cong \underline{\hom}_{R^e}(K(\rho_s^e), B_s)\otimes_{\kb} \Lambda^\bullet(\un{V} \dsbrac{-1} /\kb \alpha_s^\vee )
\end{equation}
It remains to compute the cohomology of $\underline{\hom}_{R^e}(K(\rho_s^e), B_s) $ 
which is the cohomology of the following complex of right $R-$modules
\[ 0\to \boxed{R(1)\oplus R(-1)}\xrightarrow{ \begin{bmatrix}
-\alpha_s & 0 \\
1 & 0
\end{bmatrix} }R(3)\oplus R(1)\xrightarrow{d^1} 0 \]
where we have decomposed $B_s\cong R(1)\oplus R(-1)= c_{id} R \oplus  c_{s} R $ as a right $R-$module where 
\begin{equation}
\label{01diagram}
    c_{id}=\0 \   \qquad \qquad c_{s}=\1 \
\end{equation}    
Here we are identifying a map in $\hom_{R^e}(B_s, B_s)$ with it's image after being applied to $1\otimes_{s} 1$. This is exactly the $01-$basis for $B_s$, for more information see Section 12.1 of \cite{EMTW20}. The differential $d^0=\rho_s^e$ will be the bimodule map
\[ \boxed{\rho_s} \0- \0 \ \boxed{\rho_s}= \0 \ \boxed{s(\rho_s) -\rho_s} + \1 \ \boxed{\partial_s(\rho_s)}= \0 \ \boxed{-\alpha_s} +\1\]
Applying this to the two basis vectors $c_{id}, c_s$ by stacking it above the diagrams in \cref{01diagram} and simplifying using diagrammatics yields the matrix above . We know $\hom_{R^e}(R, B_s )=R \unit$ so it remains to compute $\ext^1_{R^e}(R, B_s )$. We have
\[ \ker d^1=R\begin{bmatrix}
1 \\
0
\end{bmatrix}\oplus R\begin{bmatrix}
0 \\
1
\end{bmatrix}, \quad \im d^0=R\begin{bmatrix}
-\alpha_s \\
1
\end{bmatrix} \]
Applying the invertible matrix to $\ker d^1$ below 
\[Q=\begin{bmatrix}
1 & -\alpha_s\\
0 & 1
\end{bmatrix}\]
we see that $\ker d^1\cong R c_{id} \oplus \im d^0 \implies \ext^1_{R^e}(R, B_s )= R \hunit$ as desired.
\end{proof}

Explicity the isomorphism in \cref{homkbs} is given on homogeneous components by
\begin{equation}
\label{explicitisoeq}
\begin{aligned}
\underline{\hom}^{k, \bullet}_{R^e}( K_\varnothing , B_s)&\longrightarrow \underline{\hom}_{R^e}(K(\rho_s^e), B_s)\otimes_\kb \Lambda^\bullet( \un{(V^*)^s} \dsbrac{1})^*\\
\psi&\longrightarrow \sum_{1\le i_1<\ldots< i_k \le n-1} \paren{\underline{1}\boxtimes 1\otimes 1\mapsto \psi( \underline{e_{i_1}}\wedge\ldots\wedge \underline{e_{i_{k}}}  ) }\otimes \paren{\underline{e_{i_1}}\wedge\ldots\wedge \underline{e_{i_{k}}} }^*\\
+&\sum_{1\le i_1<\ldots< i_{k-1}\le n-1} \paren{\underline{\rho_s}\boxtimes 1\otimes 1\mapsto \psi( \underline{\rho_s}\wedge \underline{e_{i_1}}\wedge\ldots\wedge \underline{e_{i_{k-1}}}  ) }\otimes \paren{\underline{e_{i_1}}\wedge\ldots\wedge \underline{e_{i_{k-1}}} }^*
\end{aligned}
\end{equation}
where $v^*$ is the dual basis vector of $v\in \Lambda^\bullet( (V^*)^s \dsbrac{1})$.

\begin{lemma}
For any $f\in \underline{\hom}_{R^e}(K(\rho_s^e), B_s)$ and $\xi\in \Lambda^\bullet((V^*)^s\dsbrac{-1})^* $ the following relation holds at chain level
\begin{equation}
\underset{\hspace{-0.4ex}\dboxed{\xi}}{\raisebox{-2ex}{\begin{tikzpicture}[xscale=0.25,yscale=0.3,thick,baseline]
 \draw (0,0) -- (0,3);
 \node[rbbkhdot, color=black, fill=white] at (0,0){};
 \node at (0,0) {$\scalemath{0.7}{f\otimes \un{1}^*}$};
\end{tikzpicture}}}=f\otimes \underline{\xi} 
\end{equation}
where $f\otimes \un{1}^*\in \underline{\hom}_{R^e}( K_\varnothing , B_s) $ under the isomorphism in \cref{homkbs}.
\end{lemma}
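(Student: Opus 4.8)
The plan is a direct chain-level verification. Both sides of the asserted identity are $R^e$-linear chain maps $K_\varnothing\to B_s$, so it suffices to compare their values on the $\kb$-basis of $\Lambda^\bullet(V^*\dsbrac{1})\boxtimes R^e$ coming from the splitting $V^*=\kb\rho_s\oplus(V^*)^s$, namely on the elements $\un{e_{i_1}}\wedge\cdots\wedge\un{e_{i_k}}\boxtimes 1\otimes 1$ and $\un{\rho_s}\wedge\un{e_{i_1}}\wedge\cdots\wedge\un{e_{i_k}}\boxtimes 1\otimes 1$. By linearity in $f$ we may assume $f$ is homogeneous; since $K(\rho_s^e)$ is concentrated in cohomological degrees $0$ and $-1$ while $B_s$ sits in degree $0$, either $|f|=0$ and $f$ is determined by $f(\un{1}\boxtimes1\otimes1)\in B_s$, or $|f|=1$ and $f$ is determined by $f(\un{\rho_s}\boxtimes1\otimes1)\in B_s$.

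First I would unwind the left-hand side. Unwinding the diagram, it is the chain map $(f\otimes\un{1}^*)\circ\widetilde{\iota_\xi}$, where $\widetilde{\iota_\xi}\colon K_\varnothing\to K_\varnothing$ is the contraction map of \cref{prelimsect}; so it sends $\un{z}\boxtimes1\otimes1\mapsto(f\otimes\un{1}^*)\bigl((\xi\lfrown\un{z})\boxtimes1\otimes1\bigr)$. Since $\xi$ contracts only in the $(V^*)^s$-directions, a short computation with the definition of $\lfrown$ gives $\xi\lfrown(\un{\rho_s}\wedge\un{\omega})=(-1)^{|\xi|}\,\un{\rho_s}\wedge(\xi\lfrown\un{\omega})$, while $\xi\lfrown\un{\omega}$ stays in $\Lambda^\bullet((V^*)^s\dsbrac{1})$ whenever $\un{\omega}$ has no $\un{\rho_s}$-factor. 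On the other hand, reading \cref{explicitisoeq} with the exterior label specialized to $\un{1}^*$, the chain map $f\otimes\un{1}^*$ annihilates every one of the above basis vectors except $\un{1}$ (when $|f|=0$) or $\un{\rho_s}$ (when $|f|=1$), on which it returns $f(\un{1}\boxtimes1\otimes1)$, respectively $f(\un{\rho_s}\boxtimes1\otimes1)$. Matching cohomological degrees forces the input $\un{z}$ to have exterior degree $|f|+|\xi|$, whence $\xi\lfrown\un{z}$ is a scalar multiple of $\un{1}$ if $|f|=0$ (so $\un{z}$ is a wedge of $\un{e_i}$'s) or of $\un{\rho_s}$ if $|f|=1$ (so $\un{z}=\un{\rho_s}\wedge(\text{wedge of }\un{e_i}\text{'s})$), the scalar being the natural pairing of $\xi$ against the $(V^*)^s$-part of $\un{z}$, up to the sign $(-1)^{|\xi|}$ in the latter case. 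Thus the left-hand side sends $\un{z}\boxtimes1\otimes1$ to this pairing times $f$ of the appropriate generator of $K(\rho_s^e)$.

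Running \cref{explicitisoeq} in the other direction shows that $f\otimes\un{\xi}$ is given by exactly the same recipe: it sends a basis vector $\un{e_{i_1}}\wedge\cdots\wedge\un{e_{i_l}}\boxtimes1\otimes1$, or $\un{\rho_s}\wedge\un{e_{i_1}}\wedge\cdots\wedge\un{e_{i_l}}\boxtimes1\otimes1$, to a sign times $f(\un{1}\boxtimes1\otimes1)$, respectively $f(\un{\rho_s}\boxtimes1\otimes1)$, precisely when $\un{e_{i_1}}\wedge\cdots\wedge\un{e_{i_l}}$ is the basis element dual to $\un{\xi}$, and to $0$ otherwise. So the two chain maps agree once one checks that the pairing used on the left is normalized to match the dual-basis convention of \cref{explicitisoeq}, and that the sign $(-1)^{|\xi|}$ produced by sliding $\xi\lfrown$ past $\un{\rho_s}$ matches the sign hidden in the $\un{\rho_s}$-row of \cref{explicitisoeq}. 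This sign bookkeeping is the only real content of the argument; to keep it painless I would first reduce to $\xi$ of exterior degree $1$, using that $\widetilde{\iota_{(-)}}$ is the algebra map induced by $\lfrown$ (so $\widetilde{\iota_\xi}$ is a composite of contractions by exterior-degree-$1$ elements) and that the $\Lambda^\bullet$-factor in \cref{homkbs} carries the corresponding graded action, after which a single contraction remains and every sign is immediate. Finally, it is precisely because $\un{1}^*$ has exterior degree $0$, so that no Koszul sign is incurred when it is commuted past $f$, that the identity holds with no correcting sign; for a nontrivial exterior label the analogous identity would pick up such a sign.
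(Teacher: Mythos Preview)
Your approach is exactly the paper's: the paper's entire proof reads ``This holds by direct computation,'' and what you have written is precisely that computation spelled out. Your interpretation of the diagram as $(f\otimes\un{1}^*)\circ\widetilde{\iota_\xi}$ is correct (this is the meaning of an exterior box placed in the empty region below a morphism out of $K_\varnothing$), and the strategy of evaluating both sides on the basis coming from $V^*=\kb\rho_s\oplus(V^*)^s$ and comparing via the explicit isomorphism \eqref{explicitisoeq} is the right one. Your observation that the only nontrivial content is the sign bookkeeping, and that this can be reduced to the case $|\xi|=1$ by multiplicativity of contraction, is sound.
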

\begin{proof}
This holds by direct computation. 
\end{proof}

In other words, the $\Lambda^\bullet((V^*)^s\dsbrac{-1})^* $ part of $\underline{\hom}_{R^e}(K_\varnothing, B_s)$ can always be extracted out as an exterior box leaving just $\underline{\hom}_{R^e}(K(\rho_s^e), B_s)$ which has the advantage of being much easier to work with as it's a smaller complex. We will sometimes abuse notation and write $f\in \underline{\hom}_{R^e}(K(\rho_s^e), B_s)$ when we really mean $f\otimes \un{1}^*\in \underline{\hom}_{R^e}( K_\varnothing , B_s) $.

\begin{remark}
Recall that the morphism  $\eta_s^{\ext}$ was defined to be the class of the map $\alpha_s^\vee \lfrown(-)\in \un{\hom}_{R^e}^{1,-3}(K_\varnothing, K_s)$. Postcomposing the resolution $q_{s}: K_s\to B_s$ with the isomorphism in \cref{explicitisoeq}, we see that
\begin{align*}
\un{\hom}_{R^e}^{1,\bullet}(K_\varnothing, K_s)\to \un{\hom}_{R^e}^{1,\bullet}(K_\varnothing, B_s)\to&  \underline{\hom}_{R^e}(K(\rho_s^e), B_s)\otimes_\kb \Lambda^\bullet( \un{(V^*)^s} \dsbrac{1})^*\\
    \alpha_s^\vee \lfrown(-)\mapsto  \alpha_s^\vee \lfrown(-)|_{\Lambda^1(\un{V^*}\dsbrac{1})\boxtimes R^e}\mapsto& \sum_{i=1}^{n-1} \paren{\un{1}\boxtimes 1\otimes 1\mapsto \abrac{\alpha_s^\vee, e_i} \otimes_{R^s} 1} \otimes (\un{e_i})^*\\ &+\paren{\un{\rho_s}\boxtimes 1\otimes 1\mapsto \abrac{\alpha_s^\vee, \rho_s} \otimes_{R^s} 1} \otimes \un{1}^*= \hunit\otimes \un{1}^*
\end{align*}

as $\abrac{\alpha_s^\vee, -}$ is 0 on $(V^*)^s$. The first map is an isomorphism after taking cohomology. As \hunit \  was also used to denote $\eta_s^{\ext}$ in \cite {M}, the above calculation and paragraph above this remark justifies our notation.
\end{remark}

\subsection{Main Computation}

Let red correspond to the simple reflection $s$ and blue correspond to the simple reflection $t$. Let $\underline{w}$ be an arbitrary expression in $s$ and $t$. For the rest of the paper we will also assume 

\begin{assumption}
\label{assump1}
$\exists \rho_s, \rho_t\in V^*$ such that $\abrac{\alpha_s^\vee, \rho_s}=1=\abrac{\alpha_t^\vee, \rho_t}$ and $\abrac{\alpha_s^\vee, \rho_t}=0=\abrac{\alpha_t^\vee, \rho_s}$.
\end{assumption}

In particular, this means that $\rho_s\in (V^*)^t$ and that $\rho_t\in (V^*)^s$.

\begin{remark}
This assumption is satisfied for both $\mathbb{C}^n$, the permutation realization, and the geometric realization for $W_{m_{st}}$. This assumption is also satisfied for the Kac-Moody realization for $W_\infty$, as the coroots $\set{\alpha_s^\vee}$ of the realization are linearly independent.
\end{remark}

\begin{lemma}
Let $F: \Ac\to \mathrm{End}(\Cc)$ be a monoidal functor from $(\Ac, \otimes)$ a monoidal category to the category of endofunctors of a category $\Cc$. Suppose we have objects $a_1, a_2\in \Ac$ such that $a_1\otimes(-)$ is left adjoint to $a_2\otimes(-)$ in $\Ac$, then 
\[ \hom_{\Cc}(F_{a_1}(c), c^\prime)\cong \hom_{\Cc}(c, F_{a_2}(c^\prime) ) \]
\end{lemma}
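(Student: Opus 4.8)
The statement is an abstract nonsense lemma, so the plan is to trace through the definitions. First I would unpack what ``monoidal functor $F\colon \Ac \to \mathrm{End}(\Cc)$'' gives us: for each object $a \in \Ac$ an endofunctor $F_a$ of $\Cc$, and for each morphism a natural transformation, together with coherent natural isomorphisms $F_{a \otimes b} \cong F_a \circ F_b$ and $F_{\mathbf{1}} \cong \id_\Cc$. The hypothesis that $a_1 \otimes (-)$ is left adjoint to $a_2 \otimes (-)$ in $\Ac$ means there is a unit $\eta\colon \id_\Ac \Rightarrow (a_2 \otimes (-)) \circ (a_1 \otimes (-)) = a_2 \otimes a_1 \otimes (-)$ and a counit $\varepsilon\colon a_1 \otimes a_2 \otimes (-) \Rightarrow \id_\Ac$ satisfying the triangle identities. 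Evaluating these natural transformations at the monoidal unit $\mathbf{1} \in \Ac$ gives morphisms $\eta_{\mathbf{1}}\colon \mathbf{1} \to a_2 \otimes a_1$ and $\varepsilon_{\mathbf{1}}\colon a_1 \otimes a_2 \to \mathbf{1}$ in $\Ac$ still satisfying (the $\mathbf{1}$-component of) the triangle identities.

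Next I would apply the functor $F$ to these two morphisms in $\Ac$. Since $F$ is monoidal, $F(\eta_{\mathbf{1}})$ becomes (after composing with the coherence isomorphisms $F_{a_2 \otimes a_1} \cong F_{a_2} \circ F_{a_1}$ and $F_{\mathbf{1}} \cong \id_\Cc$) a natural transformation $\widetilde{\eta}\colon \id_\Cc \Rightarrow F_{a_2} \circ F_{a_1}$, and similarly $F(\varepsilon_{\mathbf{1}})$ becomes $\widetilde{\varepsilon}\colon F_{a_1} \circ F_{a_2} \Rightarrow \id_\Cc$. The key point is that a monoidal functor preserves composition of morphisms and the monoidal structure up to coherent isomorphism, so the triangle identities for $(\eta_{\mathbf{1}}, \varepsilon_{\mathbf{1}})$ are carried by $F$ to the triangle identities for $(\widetilde{\eta}, \widetilde{\varepsilon})$; here one uses the naturality and coherence (hexagon/triangle) axioms of the monoidal functor to see that all the bookkeeping isomorphisms cancel. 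Hence $F_{a_1}$ is left adjoint to $F_{a_2}$ as endofunctors of $\Cc$, which immediately yields the natural isomorphism
\[ \hom_{\Cc}(F_{a_1}(c), c') \cong \hom_{\Cc}(c, F_{a_2}(c')). \]

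The main obstacle — really the only thing requiring care — is the coherence bookkeeping in the second step: verifying that the coherence isomorphisms of the monoidal functor $F$ conspire so that the image of the triangle identities is exactly the triangle identities for the candidate unit and counit, rather than something off by an uncancelled associator or unitor. I would handle this by writing the unit and counit as explicit pasting diagrams and invoking the monoidal-functor axioms (compatibility of the structure isomorphism with associativity and with the left/right unitors) to slide the isomorphisms through; since everything in sight is invertible and natural, the verification is a diagram chase with no real content. An alternative, slicker route is to avoid evaluating at $\mathbf{1}$ directly and instead observe that $F$ sends the adjunction $a_1 \otimes (-) \dashv a_2 \otimes (-)$ in the 2-category of endofunctors of $\Ac$ (with its monoidal structure) to an adjunction in $\mathrm{End}(\Cc)$, because any (strong) monoidal functor — viewed as a 2-functor between one-object 2-categories, or simply because monoidal functors preserve adjunctions in monoidal categories — preserves adjoint pairs; then the Hom-isomorphism is just the defining property of the adjunction $F_{a_1} \dashv F_{a_2}$. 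Either way the substance is the same, and I would present the evaluation-at-$\mathbf{1}$ version since it is the most concrete.
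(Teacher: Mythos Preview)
Your proposal is correct and follows essentially the same approach as the paper: apply $F$ to the unit $\mathbf{1}\to a_2\otimes a_1$ and counit $a_1\otimes a_2\to \mathbf{1}$ in $\Ac$ to obtain the unit and counit of an adjunction $F_{a_1}\dashv F_{a_2}$ in $\mathrm{End}(\Cc)$, with the triangle identities following from functoriality. The paper's proof is a terse two-sentence version of exactly this argument; you have simply been more careful about the coherence bookkeeping, which the paper elides.
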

\begin{proof}
We need to give a unit and counit map satisfying the triangle identities. Since $F$ is monoidal, we see that $F_{a_2}\circ F_{a_1}=F_{a_2\otimes a_1}$ and thus the unit map will just come from applying $F$ to the unit map in $\Ac$, $\mathds{1}\to a_2\otimes a_1$ and similarly with the counit map and because $F$ is a functor these will satisfy the triangle identities.
\end{proof}

\begin{corollary}
\label{cor:adjunct}
Let $M, N\in \bsbim(\hf, W)$. Then we have a natural isomorphism of right $R-$modules, 
\[ \ext^{\bullet, \bullet}_{R^e}(B_t\otimes_R M, N)\xrightarrow{\sim} \ext^{\bullet, \bullet}_{R^e}(M,B_t\otimes_R N) \]
\end{corollary}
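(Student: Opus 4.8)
The plan is to deduce this corollary directly from the preceding abstract lemma about monoidal functors by exhibiting the right monoidal category $\Ac$, the right ambient category $\Cc$, and the right functor $F$. Take $\Ac = \bsbim(\hf, W)$ with its usual monoidal structure $\otimes_R$, take $\Cc$ to be the (homotopy or derived) category in which $\ext^{\bullet,\bullet}_{R^e}$ is computed — concretely $K^b(\mathrm{Proj}(R^e\text{-}\mathrm{gmod}))$, or equivalently the category $\bsbimext(\hf, W)$ of Bott--Samelson complexes — and let $F$ send a Soergel bimodule $B$ to the endofunctor $B \otimes_R (-)$. The point is that $\hom_{\bsbimext}(\bs(\un v), \bs(\un w)) \cong \ext^{\bullet,\bullet}_{R^e}(\bs(\un v), \bs(\un w))$, which was recorded earlier in the excerpt, so morphisms in $\Cc$ are exactly the Ext groups we want; and tensoring with $B_t$ on the left is an endofunctor of $\Cc$ because $B_t \otimes_R (-)$ preserves projective $R^e$-modules (as $B_t$ is free as a left $R$-module) and is exact on the relevant complexes, hence descends to the homotopy/derived category.

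Next I would verify the adjunction hypothesis: I need $a_1 \otimes (-)$ left adjoint to $a_2 \otimes (-)$ in $\Ac$ with $a_1 = a_2 = B_t$. This is the standard biadjunction (Frobenius/self-duality) for $B_s$ in the Soergel category: $B_t \otimes_R (-)$ is both left and right adjoint to itself, with unit and counit given by the "dot" maps $R \to B_t$ (or $R(1) \to B_t$, up to shift) and $B_t \to R$, i.e.\ the cup/cap morphisms, satisfying the zigzag identities up to the usual degree shifts. So I would invoke the well-known fact that $B_t$ is a Frobenius object in $\bsbim(\hf, W)$ — this lives in cohomological degree $0$, so there is no super-sign subtlety at the level of $\Ac$. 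One caveat to address: the adjunction in $\bsbim$ involves an internal grading shift (the unit map naturally has degree $(1)$ or $(-1)$ depending on normalization), so strictly speaking $B_t(1)\otimes_R(-)$ is left adjoint to $B_t(-1)\otimes_R(-)$ or similar; since the statement only claims an isomorphism of bigraded right $R$-modules $\ext^{\bullet,\bullet}$, I would either absorb the shift into the statement's bigrading bookkeeping or simply note that the lemma applies verbatim in the graded setting where $\hom_\Cc$ means the full bigraded Hom space $\bigoplus_{i,j}\hom(-, -[i](j))$, in which shifts of arguments are invisible.

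Then the conclusion of the abstract lemma reads $\hom_\Cc(F_{B_t}(M), N) \cong \hom_\Cc(M, F_{B_t}(N))$, which unwinds to exactly $\ext^{\bullet,\bullet}_{R^e}(B_t \otimes_R M, N) \cong \ext^{\bullet,\bullet}_{R^e}(M, B_t \otimes_R N)$. Naturality and the right $R$-module structure come for free: the right $R$-action on Ext is induced by the right $R$-action on $N$ (equivalently $B_t \otimes_R N$), and the adjunction isomorphism, being built by composing with fixed unit/counit morphisms that are $R^e$-linear, automatically intertwines this action; naturality in $M$ and $N$ is part of the adjunction. I would spell this last identification out in one or two lines.

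The main obstacle, and the only genuinely non-formal point, is checking that $F_{B_t} = B_t \otimes_R (-)$ is a well-defined \emph{endofunctor of $\Cc$} — i.e.\ that it is monoidal (so $F_{B_t} \circ F_{B_t} = F_{B_t \otimes_R B_t}$ as needed by the lemma's proof) and, crucially, that it is well-defined on the homotopy category: tensoring a Bott--Samelson complex $K(\un w)$ on the left by $B_t$, or rather by $K_t$, should land back in $\bsbimext$ up to quasi-isomorphism, and homotopic maps go to homotopic maps. This is where one uses that $B_t$ (resp.\ $K_t$) is projective/free over $R$ so that $\otimes_R$ is exact here and commutes with taking the resolutions $q_{\un w}$; concretely $K_t \otimes_R K(\un w) = K(t, \un w)$ is again a Bott--Samelson complex resolving $B_t \otimes_R \bs(\un w)$, by the resolution lemma proved earlier in the excerpt. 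Once that is in place the rest is a formal consequence of the adjunction lemma.
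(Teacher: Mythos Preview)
Your proposal is correct and follows essentially the same approach as the paper: apply the abstract adjunction lemma with $\Ac=\bsbim(\hf,W)$, $\Cc=\bsbimext(\hf,W)$, $F(B_s)=K_s\otimes_R(-)$ extended monoidally, and invoke the self-biadjunction of $B_t$. You spell out more of the well-definedness and grading-shift subtleties than the paper's two-line proof, but the argument is the same.
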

\begin{proof}
Apply the above lemma where $\Ac=\bsbim(\hf, W)$, $\Cc=\bsbimext(\hf, W)$ and $F$ is defined on objects as $F(B_s)=K_s\otimes_R (-)$ and then extended monoidally. On morphisms $F(f)=\widetilde{f}$, where $\widetilde{f}$ is any chain lift of $f$. Now use that $B_t\otimes(-)$ is biadjoint to itself with unit $u_t$. Then the isomorphism above is given by is given by $f\mapsto f\circ (u_t \otimes_R \id_M)$ which is clearly right $R-$linear.
\end{proof}

Our goal is to compute $\ext^{\bullet, \bullet}_{R^e}( \bs(\underline{v}), \bs(\underline{w}))$ but as seen above it suffices to compute Hochschild cohomology $\ext^{\bullet, \bullet}_{R^e}(R, \bs(\underline{w}))$. However it also suffices to compute $\ext^{\bullet, \bullet}_{R^e}(B_t,  \bs(\underline{w}))$ for all $\underline{w}$ and this turns out to be easier. Because of \cref{assump1}, $\rho_s, \rho_t$ will be linearly independent, so we can decompose $V^*=\kb \rho_s \oplus \kb \rho_t \oplus (V^*)^{s,t}$ as $s,t$ are reflections and thus fix a subspace of codimension 1. Let $\Lambda_{st}=\Lambda^\bullet((V^*)^{s,t} \dsbrac{1}  )^*\cong \Lambda^\bullet(  V \dsbrac{-1}/(\kb \alpha_s^\vee\oplus \kb \alpha_t^\vee)  )  $. As such the resolution of $B_t$ given in \cref{Kseq} can be written as
\[ K_t=K(\rho_t t(\rho_t)^e, \rho_s^e, e_1^e, \ldots, e_{n-2}^e)(1)  \]
where $\set{e_i}_{i=1}^{n-2}$ is a basis for $(V^*)^{s,t} $. As $e_i^e=0$ on $ \mathrm{BS}(\underline{w})$ when $e_i\in (V^*)^{s,t}$ we can use \cref{prehochbs} to obtain

\begin{theorem}
\label{prehochBSW}
We have an isomorphism of bigraded right $R$ modules, 
$$\ext^{\bullet, \bullet}_{R^e}(B_t, \mathrm{BS}(\underline{w} ))\cong H^\bullet(\underline{\hom}_{R^e}(K( \rho_t t(\rho_t)^e , \rho_s^e)(1),  \mathrm{BS}(\underline{w} )))\otimes_{\kb} \Lambda_{st}$$
\end{theorem}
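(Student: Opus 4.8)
The plan is to deduce this directly from \cref{prehochbs}. Since $q_{\underline{w}}\colon K(\underline{w})\to\bs(\underline{w})$ is a free $R^e$-resolution — in particular $K_t\to B_t$ is — we have $\ext^{\bullet,\bullet}_{R^e}(B_t,\bs(\underline{w}))\cong H^\bullet(\underline{\hom}_{R^e}(K_t,\bs(\underline{w})))$, so the whole problem is to understand the Hom complex out of $K_t$. The idea is that $K_t$ contains two ``essential'' Koszul factors, $K(\rho_t t(\rho_t)^e)$ and $K(\rho_s^e)$, together with $n-2$ factors $K(e_i^e)$, $e_i\in(V^*)^{s,t}$, which act trivially on every Bott--Samelson bimodule and can therefore be peeled off as a free exterior factor by \cref{prehochbs}.

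Concretely, I would proceed as follows. First, using \cref{assump1}, decompose $V^*=\kb\rho_s\oplus\kb\rho_t\oplus(V^*)^{s,t}$ with $\{e_i\}_{i=1}^{n-2}$ a basis of $(V^*)^{s,t}$, so that, as recorded after \cref{Kseq}, $K_t=K(\rho_t t(\rho_t)^e,\rho_s^e,e_1^e,\ldots,e_{n-2}^e)(1)$. Second, check that $e_i^e$ acts as $0$ on $\bs(\underline{w})$: since $R$ is commutative, for $a\in R^s$ one has $a^e\cdot(r\otimes_s r')=r(a\otimes 1-1\otimes a)r'=0$ in $B_s$ because $a$ slides across $\otimes_s$, and likewise $a^e B_t=0$ for $a\in R^t$; an element $a$ with $a^e$ killing a bimodule factor also kills any $\otimes_R$-product containing it, so $a^e\cdot\bs(\underline{w})=0$ for $a\in R^{s,t}\supseteq(V^*)^{s,t}$. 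Third, apply \cref{prehochbs} with $B=\bs(\underline{w})$ and $J=\{3,\ldots,n\}$ indexing the $e_i^e$ (applied to the unshifted complex and then shifted by $(1)$, which lands on the remaining Koszul factor exactly as in the statement), giving an isomorphism of complexes of bigraded right $R$-modules
\[ \underline{\hom}_{R^e}(K_t,\bs(\underline{w}))\;\cong\;\underline{\hom}_{R^e}\big(K(\rho_t t(\rho_t)^e,\rho_s^e)(1),\bs(\underline{w})\big)\otimes_\kb\Lambda^\bullet\!\Big(\textstyle\bigoplus_{i=1}^{n-2}\kb\,\underline{e_i}[1](-2)\Big)^{\!*}. \]
Fourth, identify the last factor: each $\underline{e_i}$ has bidegree $[1](-2)=\dsbrac{1}$, so $\bigoplus_i\kb\,\underline{e_i}[1](-2)$ is $(V^*)^{s,t}\dsbrac{1}$ with the $\underline{e_i}$ as basis, and the pairing identification recalled from \cite{M} Section~2 gives $\Lambda^\bullet((V^*)^{s,t}\dsbrac{1})^*\cong\Lambda^\bullet(V\dsbrac{-1}/(\kb\alpha_s^\vee\oplus\kb\alpha_t^\vee))=\Lambda_{st}$. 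Finally, this exterior factor has zero differential, so Künneth identifies $H^\bullet$ of the tensor product with the tensor product of cohomologies, yielding the asserted isomorphism of right $R$-modules.

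The steps requiring attention — rather than genuine obstacles — are the verification that $(V^*)^{s,t}$-elements act centrally on Bott--Samelson bimodules (this is what licenses \cref{prehochbs}) and the routine bookkeeping of cohomological and internal degree shifts in passing between $\underline{e_i}$, the shift $\dsbrac{1}$, the dualized exterior algebra, and the overall $(1)$ on $K_t$. No new homological input beyond \cref{prehochbs} is needed.
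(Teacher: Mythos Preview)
Your proposal is correct and follows essentially the same approach as the paper: decompose $K_t$ using the basis adapted to $(V^*)^{s,t}$, observe that $e_i^e$ acts by zero on any Bott--Samelson bimodule since elements of $R^{s,t}$ slide across every $\otimes_{R^s}$ and $\otimes_{R^t}$, and then invoke \cref{prehochbs}. The paper compresses all of this into a single sentence preceding the theorem, while you have made the verification of $e_i^e\cdot\bs(\underline{w})=0$ and the identification $\Lambda^\bullet((V^*)^{s,t}\dsbrac{1})^*\cong\Lambda_{st}$ explicit; these are exactly the details the paper leaves to the reader.
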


As in the previous section, this decomposition shows that elements in $\Lambda_{st}$ act freely by contraction and so WLOG we will \fbox{assume $ \Lambda_{st}=\kb$} (and thus $K_t=K( \rho_t t(\rho_t)^e , \rho_s^e)(1)$) for the remainder of this paper to make notation easier.\\

To distinguish the terms $\rho_s^e$ and $\rho_t t(\rho_t)^e$ in the differential for $\underline{\hom}_{R^e}(K(\rho_t t(\rho_t)^e, \rho_s^e)(1), \mathrm{BS}(\underline{w} ))$ as $\underline{w}$ varies, let 
$$\rhosw: \mathrm{BS}(\underline{w} )\to \mathrm{BS}(\underline{w} )\quad \quad \rhottw: \mathrm{BS}(\underline{w} )\to \mathrm{BS}(\underline{w} ) $$ 

Explicitly, $K( \rho_t t(\rho_t)^e , \rho_s^e)(1)$ is the complex 
\begin{equation}
\label{ktcomplex}
0\to \underline{\rho_s\wedge\rho_t t(\rho_t)}(1)\boxtimes R^e\xrightarrow{ \begin{bmatrix}
-\rho_t t(\rho_t)^e  \\
\rho_s^e
\end{bmatrix} }\underline{\rho_s}(1)\boxtimes R^e\oplus \underline{\rho_t t(\rho_t)}(1) \boxtimes R^e \xrightarrow{ \begin{bmatrix}
\rho_s^e\ & \rho_tt(\rho_t) ^e
\end{bmatrix} } \boxed{\underline{1}(1)\boxtimes R^e} \to  0 
\end{equation}

And thus it follows that $\underline{\hom}_{R^e}(K( \rho_t t(\rho_t)^e , \rho_s^e),  \mathrm{BS}(\underline{w}))$ will be the total complex of the following double complex
\begin{equation}
\label{keydoublecomplex}
\begin{tikzcd}
& \mathrm{BS}(\underline{w} ) \underline{\rho_tt(\rho_t)} (-1) \arrow[r, "\rho_s^e" ]&  \mathrm{BS}(\underline{w} ) \underline{\rho_s\wedge\rho_tt(\rho_t)} (-1)\\
 &\boxed{\mathrm{BS}(\underline{w} )}\, \underline{1} (-1) \arrow[u, "\rho_tt(\rho_t)^e"] \arrow[r, "\rho_s^e" ] & \mathrm{BS}(\underline{w} ) \underline{\rho_s}(-1) \arrow[u, "\rho_t t(\rho_t)^e", swap]
\end{tikzcd}=\begin{tikzcd}
& \mathrm{BS}(\underline{w} ) (3) \arrow[r, "\rho_s^e" ]&  \mathrm{BS}(\underline{w} ) (5)\\
 &\boxed{\mathrm{BS}(\underline{w} )}\, (-1) \arrow[u, "\rho_tt(\rho_t)^e"] \arrow[r, "\rho_s^e" ] & \mathrm{BS}(\underline{w} )(1)  \arrow[u, "\rho_t t(\rho_t)^e", swap]
\end{tikzcd}
\end{equation}
where the boxed term is in cohomological degree 0 and on the LHS an element  $b\underline{1}$ for $b\in\mathrm{BS}(\underline{w} ) $ corresponds to the map sending $\underline{1}\boxtimes 1\otimes 1\in K(\rho_tt(\rho_t)^e, \rho_s^e)$ to $b$ (likewise with $b\un{\rho_s}$, etc) and on the RHS we have just replaced this by the corresponding internal degree, etc. It's clear that the corresponding spectral sequence of the double complex degenerates at the $E_2$ page and so taking horizontal cohomology first the $E_1$ page will look like

\begin{equation}
\label{keyeq}
\begin{tikzcd}
& H^0(\underline{\hom}_{R^e}(K( \rho_s^e), \mathrm{BS}(\underline{w} ))  ) (3) &  H^1(\underline{\hom}_{R^e}(K( \rho_s^e), \mathrm{BS}(\underline{w} ))  ) (5)\\
 &\boxed{H^0(\underline{\hom}_{R^e}(K( \rho_s^e), \mathrm{BS}(\underline{w} ))  )\, (-1)} \arrow[u, "\rho_tt(\rho_t)^e"]& H^1(\underline{\hom}_{R^e}(K( \rho_s^e), \mathrm{BS}(\underline{w} ))  )(1)  \arrow[u, "\rho_t t(\rho_t)^e", swap]
\end{tikzcd}
\end{equation}
In fact, we will show in \cref{mainsection} that both arrows in \cref{keyeq} are 0. The bottom left corner of the $E_2$ page will compute $\ext^{0, \bullet}_{R^e}(B_t, \mathrm{BS}(\underline{w} ))$ and the top right corner will compute $\ext^{2, \bullet}_{R^e}(B_t, \mathrm{BS}(\underline{w} ))$. The diagonal will give us a filtration on $\ext^{1, \bullet}_{R^e}(B_t, \mathrm{BS}(\underline{w} ))$. However, we will show that both groups are free right $R$ modules, and thus the filtration splits and the $E_2$ page will also compute $\ext^{1, \bullet}_{R^e}(B_t, \mathrm{BS}(\underline{w} ))$ as well. For the remainder of this section we will work in the case $W=W_\infty$ and will show later that the results also hold in the finite case $W=W_{m_{st}}$.

\subsubsection{Computation of $H^0(\underline{\hom}_{R^e}(K( \rho_s^e), \mathrm{BS}(\underline{w} ))  )$}
\label{h0section}

Throughout we will fix the total ordering on $W_\infty$
\[ \id< s< t<ts<st<\ldots \]
which refines the Bruhat order. For this section we will make the additional assumption 
\begin{assumption}
\label{assump2} $\set{\alpha_s, \alpha_t}$ is linearly independent, $\mathrm{char}\ \kb=0$ and $\hf$ is a symmetric realization, i.e. $a_{st}=a_{ts}$.
\end{assumption}

Following Section 3 in \cite{DC}, set $[2]=q+q^{-1}=-a_{st}=-a_{ts}$ and let $[m]=\tfrac{q^m-q^{-m}}{q-q^{-1}}$ for $m\in \mathbb{Z}^{\ge 0}$. It was shown in \cite{DC} that $[m]$ is in fact a polynomial in $[2]$, and thus $[m]\in \kb$ is well defined given $a_{st}$. \\

Our computations below will use the light leaves basis for $\mathrm{BS}(\underline{w} )$ as a free right $R-$module, more information can be found in Section 12.4 of \cite{EMTW20}. Here is a brief summary. For any subexpression $\underline{f}\subset \underline{w}$ (A subexpression for $\underline{w}=(w_1, \ldots w_m)$ is a string $\underline{f}=(f_1, \ldots,f_m)$ where $f_i\in\set{0,1}$), let $r(\underline{f})=\underline{w}^{\underline{f}}\in W_\infty$. Then there is a (flipped) light leaf map
\[ \overline{\mathrm{LL}}_{  \underline{w}, \underline{f}}: \bs(r(\underline{f}))\to \bs(\underline{w})  \]
One technically needs to choose a reduced expression for $r(\underline{f})\in W$ to make sense of above. However this isn't that big of an issue. For the affine case there is only one reduced expression for any $x\in W_\infty$ so $r(\underline{f})$ is a well defined expression. For the finite case any two reduced expressions are related by braid moves and the only effect on the light leaf map will be the addition of various $2m_{st}-$valent morphisms. \\  

Now let $c_{bot}=c_{id}\otimes_{R}\ldots \otimes_{R}c_{id}\in \mathrm{BS}(r(\underline{f}))$\footnote{We will often abuse notation and let $c_{bot}$ denote the corresponding element $c_{id}\otimes_R \ldots$ for any choice of $\underline{f}$.}. Then the light leaves basis for $\bsw$ is the set
\[ \set{\ll{f}=\overline{\mathrm{LL}}_{  \underline{w}, \underline{f} }(c_{bot}) \ | \ \underline{f}\subset \underline{w}} \]

\begin{lemma}
\label{keylemma}
Arrange the light leaves basis based on $r(\underline{f})$ from least to greatest in the total ordering on $W$ indicated above (can choose an arbitrary order on those elements with the same $r(\underline{f})$). In this basis, the matrix for $\rhosw$ as a right $R$ module will be upper triangular with diagonal entries equal to $r(\underline{e})^{-1}(\rho_s)-\rho_s$.
\end{lemma}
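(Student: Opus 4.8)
The plan is to read off both statements from the standard (``$\Delta$-flag'') filtration of Bott--Samelson bimodules together with its compatibility with the light leaves basis. First I would recall (see \cite{EMTW20}, \cite{DC}, \cite{SC}) that $\mathrm{BS}(\un{w})$ carries a filtration $0=\Gamma_0\subseteq\Gamma_1\subseteq\cdots\subseteq\Gamma_N=\mathrm{BS}(\un{w})$ by $R$-sub-bimodules, indexed by the subexpressions $\un{f}\subseteq\un{w}$ enumerated $\un{f_1},\dots,\un{f_N}$ in \emph{any} order refining the Bruhat order on the elements $r(\un{f})$, with $\Gamma_j/\Gamma_{j-1}$ isomorphic, up to grading shift, to the standard bimodule $\Delta_{r(\un{f_j})}$, and such that $\ll{f_1},\dots,\ll{f_j}$ form a right $R$-basis of $\Gamma_j$ with $\ll{f_j}$ projecting to a free right-$R$ generator of $\Gamma_j/\Gamma_{j-1}$. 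Since the total order $\id<s<t<ts<st<\cdots$ refines the Bruhat order on $W_\infty$ (same-length elements being Bruhat-incomparable), I would take $\Gamma_\bullet$ adapted to it; the freedom within a Bruhat level is precisely the ``arbitrary order'' allowed in the statement.

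Upper-triangularity is then immediate: $\rho_s^e=\rho_s\otimes 1-1\otimes\rho_s$ is central in $R^e$, so it acts on $\mathrm{BS}(\un{w})$ by a right-$R$-linear endomorphism preserving every sub-bimodule, hence each $\Gamma_j$; thus $\rhosw(\ll{f_j})$ lies in the right $R$-span of $\ll{f_1},\dots,\ll{f_j}$, which is exactly the assertion that the matrix of $\rhosw$ in this ordered basis is upper triangular.

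For the diagonal entry at $\ll{f}$ I would pass to the induced endomorphism of the subquotient $\Gamma_j/\Gamma_{j-1}\cong\Delta_{r(\un{f})}$ (a standard bimodule, free of rank one as a right $R$-module, with generator the image of $\ll{f}$): the diagonal entry is exactly the scalar in $R$ by which $\rho_s^e$ acts there. A direct computation — $\rho_s^e$ acts on a bimodule as (left multiplication by $\rho_s$) minus (right multiplication by $\rho_s$), cf.\ the computation of $d^0$ in \cref{hochbs}, and on a standard bimodule left and right multiplication differ by the relevant $W$-twist — identifies this scalar, with the conventions of \cref{prelimsect}, as $r(\un{f})^{-1}(\rho_s)-\rho_s$. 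As consistency checks: $\rhosw=0$ when $\un{w}=\varnothing$; for $\un{w}=(s)$ one recovers the matrix of \cref{hochbs}; and \cref{assump1} forces $\rho_s\in(V^*)^t$, so $\rho_s^e=0$ on $B_t$, matching $r(\un{f})^{-1}(\rho_s)-\rho_s=0$ for $r(\un{f})\in\{\id,t\}$.

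The part I expect to be the main obstacle is the bookkeeping underlying the previous paragraph: pinning down exactly which standard bimodule (and in which twist convention, hence with which $\pm1$ exponent and overall sign) occurs as $\Gamma_j/\Gamma_{j-1}$ so that the answer comes out as $r(\un{f})^{-1}(\rho_s)-\rho_s$, and checking that refining within a single Bruhat class still yields sub-bimodules. If one wants a self-contained argument avoiding the $\Delta$-flag as a black box, the alternative is an induction on the length $m$ of $\un{w}$: for $\un{w}=\un{w}'s_m$ one has $\mathrm{BS}(\un{w})=\mathrm{BS}(\un{w}')\otimes_R B_{s_m}$, on which $\rho_s^e$ acts as the derivation $\rho_s^e\otimes\id+\id\otimes\rho_s^e$, and one combines the inductive hypothesis for $\mathrm{BS}(\un{w}')$ with the explicit $2\times 2$ action of $\rho_s^e$ on $B_{s_m}$ (identically $0$ if $s_m=t$ by \cref{assump1}, and the matrix of \cref{hochbs} if $s_m=s$) and the recursion building the light leaves of $\un{w}$ from those of $\un{w}'$ by appending an elementary $\mathrm{U}_0,\mathrm{U}_1,\mathrm{D}_0$ or $\mathrm{D}_1$ move. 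In this version the obstacle becomes the four-case analysis of how each move changes $r(\un{f})$ and its position in the total order, together with tracking the cross-terms produced by the derivation.
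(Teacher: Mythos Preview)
Your proposal is correct and takes essentially the same approach as the paper: the paper's proof is a one-line citation of Proposition 12.26 in \cite{EMTW20}, which is precisely the statement that left multiplication by a polynomial acts on the light leaves basis via the $\Delta$-flag filtration with the diagonal term you compute. You have simply unpacked the content of that proposition rather than citing it, and your worries about the twist convention and the sign on the exponent are resolved exactly by that reference.
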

\begin{proof}
The proof of Proposition 12.26 in \cite{EMTW20} shows 
\begin{equation}
\rho_s \cdot \ll{e}= \ll{e}\cdot r(\underline{e}) ^{-1}(\rho_s)+\sum_{ \underline{w} ^{\underline{f^\prime}}<r(\underline{e}) } \ll{f^\prime}a_{f^\prime}     \quad\quad a_{f^\prime}\in R
\end{equation}  
and thus the lemma follows.
\end{proof}

\begin{corollary}
\label{keycor}
When $m_{st}=\infty$, the image of $\rhosw: \bsw\to \bsw$ is a free right $R$ module and the kernel is also a free right $R$ module with basis given by $\set{\ll{f} \, | r(\underline{f})=\id \textnormal{ or }t }$.
\end{corollary}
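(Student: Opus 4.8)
The plan is to read the statement off the upper-triangular matrix of $\rhosw$ supplied by \cref{keylemma}, after locating its vanishing diagonal entries and then upgrading that to the vanishing of whole columns. First I would determine for which $\underline{e}$ the diagonal entry $d_{\underline{e}} := r(\underline{e})^{-1}(\rho_s) - \rho_s$ vanishes. Since $r(\underline{e})^{-1}$ fixes $\rho_s$ exactly when $r(\underline{e})$ does, this happens iff $r(\underline{e}) \in \mathrm{Stab}_{W_\infty}(\rho_s)$. By \cref{assump1}, $\abrac{\alpha_s^\vee, \rho_s} = 1 > 0$ and $\abrac{\alpha_t^\vee, \rho_s} = 0$, so $\rho_s$ is a dominant weight lying on exactly the fixed hyperplane of $t$; hence its stabilizer is the standard parabolic $\set{\id, t}$ (equivalently: the only positive coroot of $W_\infty$ orthogonal to $\rho_s$ is $\alpha_t^\vee$, since every positive coroot is $a\alpha_s^\vee + b\alpha_t^\vee$ with $a, b \in \Zb_{\ge 0}$ and pairs with $\rho_s$ to give $a$). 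Thus $d_{\underline{e}} = 0$ iff $r(\underline{e}) \in \set{\id, t}$, and otherwise $d_{\underline{e}}$ is a nonzero element of $V^* \subseteq R$, hence a nonzerodivisor, $R$ being a polynomial ring over the domain $\kb$ (\cref{assump0}).

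Next I would show that the columns of the matrix indexed by those $\underline{e}$ with $r(\underline{e}) \in \set{\id, t}$ are identically zero, i.e.\ $\rhosw(\ll{e}) = 0$. Writing $\ll{e} = \overline{\mathrm{LL}}_{\underline{w}, \underline{e}}(c_{bot})$ with $\overline{\mathrm{LL}}_{\underline{w}, \underline{e}} : \bs(r(\underline{e})) \to \bsw$ a map of $R$-bimodules, one has $\rhosw(\ll{e}) = \overline{\mathrm{LL}}_{\underline{w}, \underline{e}}\paren{\rho_s \cdot c_{bot} - c_{bot}\cdot \rho_s}$, which is $\overline{\mathrm{LL}}_{\underline{w},\underline{e}}$ applied to $\rho_s^e$ evaluated on $c_{bot} \in \bs(r(\underline{e}))$. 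For $r(\underline{e}) = \id$ this is $\rho_s^e$ on the commutative ring $R$, which is $0$; for $r(\underline{e}) = t$ it is $\rho_s^e$ on $B_t = R\otimes_{R^t} R$ evaluated at $1\otimes 1$, which is $0$ because $\rho_s \in R^t$ (by \cref{assump1}, $t(\rho_s) = \rho_s - \abrac{\alpha_t^\vee,\rho_s}\alpha_t = \rho_s$), so $\rho_s$ crosses $\otimes_{R^t}$. The crux here is that $\rho_s$ is at once central in $R$ and $R^t$-invariant.

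Finally I would run the linear algebra over $R$. Let $A = \set{\underline{e} : r(\underline{e}) \in \set{\id,t}}$ and let $B$ be its complement; since $\set{\ll{e}}$ is a free right $R$-basis of $\bsw$, we get a direct sum decomposition $\bsw = N_A \oplus N_B$ with $N_A := \bigoplus_{\underline{e}\in A}\ll{e}R$ and $N_B := \bigoplus_{\underline{e}\in B}\ll{e}R$ both free. The submatrix of $\rhosw$ on the rows and columns in $B$ is upper-triangular (being a submatrix of an upper-triangular matrix) with diagonal $\set{d_{\underline{e}}}_{\underline{e}\in B}$, a set of nonzerodivisors; its determinant is therefore a nonzero element of the domain $R$, so this submatrix is injective as a map $N_B \to N_B$. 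Now if $v = v_A + v_B \in \ker\rhosw$, then by the column-vanishing step $\rhosw v = \rhosw v_B$, and the $N_B$-component of $\rhosw v_B$ is exactly the image of $v_B$ under that injective submatrix; hence $v_B = 0$ and $v = v_A$. Combined with the column-vanishing step (which gives $N_A \subseteq \ker\rhosw$), this shows $\ker\rhosw = N_A$, free with basis $\set{\ll{f} : r(\underline{f}) = \id \text{ or } t}$. Since $\rhosw$ is right $R$-linear, $\im\rhosw \cong \bsw/\ker\rhosw \cong N_B$ is then free as well. The only non-routine ingredient is the column-vanishing step; the rest is bookkeeping with the Bruhat-refining order from \cref{keylemma} together with $R$ being a domain.
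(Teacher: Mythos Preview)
Your overall architecture matches the paper's exactly: locate the zero diagonal entries of the upper-triangular matrix from \cref{keylemma}, upgrade this to vanishing of the corresponding columns via bimodule-linearity of the light leaf maps, and then read off kernel and image using upper-triangularity over a domain. The column-vanishing step and the final linear algebra are correct and essentially what the paper does.

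The gap is in the step ``$d_{\underline{e}} \neq 0$ for $r(\underline{e}) \notin \{\id, t\}$''. Your stabilizer argument---$\rho_s$ is dominant with $\abrac{\alpha_s^\vee,\rho_s}=1>0$, positive coroots are $\Zb_{\ge 0}$-combinations of simple coroots, hence $\mathrm{Stab}(\rho_s)=\{\id,t\}$---is valid only for the geometric realization over an ordered field. For a general realization satisfying Assumptions~\ref{assump0}--\ref{assump2}, the domain $\kb$ need not be ordered, and the coroots $w(\alpha_u^\vee)$ have coefficients that are polynomials in $[2]=-a_{st}\in\kb$, not nonnegative integers (e.g.\ $ts(\alpha_t^\vee)=[2]\,\alpha_s^\vee+([2]^2-1)\,\alpha_t^\vee$). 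Whether such coefficients vanish is exactly what the paper's explicit computation establishes: it expresses $w^{-1}(\rho_s)-\rho_s$ as a combination of $\alpha_s,\alpha_t$ with coefficients given by sums $\sum[2k+1]$, $\sum[2k]$ of quantum numbers, and shows these vanish precisely when $q^{2m}=1$ for some $m\ge 1$, which is excluded by faithfulness of $\hf$ as a realization of $W_\infty$. So your conceptual shortcut does not bypass the paper's quantum-number computation; that computation is what is needed to make the stabilizer claim rigorous in the stated generality.
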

\begin{proof}
By \cref{keylemma} we can find a right $R$ basis for $\bsw$ such that $\rhosw$ is upper triangular with diagonal entries equal to $r(\underline{e})^{-1}(\rho_s)-\rho_s$. It is clear that when $r(\underline{e})=id \textnormal{ or }t $ that this expression is 0; we claim in all other cases, this will not be zero. One can easily check by induction that
\begin{align}
s(ts)^m(\rho_s) &=\rho_s-\sum_{k=0}^m (st)^k(\alpha_s)   \\ 
(ts)^m(\rho_s) &=\rho_s-\sum_{k=0}^{m-1} t(st)^k(\alpha_s) 
\end{align}
By Claim 3.5 in \cite{DC}, one then computes that
\begin{align}
\label{stsrhos}
s(ts)^m(\rho_s)-\rho_s &=-\sum_{k=0}^m\paren{ [2k+1]\alpha_s+[2k]\alpha_t }  \\\label{tsrhos}
(ts)^m(\rho_s)-\rho_s &=-\sum_{k=0}^{m-1} \paren{[2k+1]\alpha_s+[2k+2]\alpha_t}
\end{align}
To show \cref{stsrhos} and \cref{tsrhos} are nonzero, as $\alpha_s, \alpha_t$ are linearly independent, it suffices to show the sums
\[O_m=\sum_{k=0}^m [2k+1]\quad \quad E_m=\sum_{k=0}^m [2k] \]
don't simultaneously vanish. Similarly, to show \cref{tsrhos} is nonzero it suffices to show $O_{m-1}$ and $E_m$ don't simultaneously vanish. Depending on the realization there are two cases. First if $q=\pm 1$, it's clear that $O_m, E_m$ is never zero for $m\ge 0, m\ge 1$ respectively, in characteristic zero. Now for $q\neq \pm 1$, one computes that
\begin{align*}
O_m&= \frac{q+q^3+\ldots +q^{2m+1}-q^{-1}-q^{-3}-\ldots-q^{-(2m+1)}}{q-q^{-1}}    \\
&=\frac{1-q^{2m+2}+1-q^{-(2m+2)}}{(1-q^{-2})(1-q^2)} =-\frac{(q^{m+1}-q^{-(m+1)})^2}{(1-q^{-2})(1-q^2)} \\
E_m&= \frac{q^2+q^4+\ldots +q^{2m}-q^{-2}-q^{-4}-\ldots-q^{-(2m)}}{q-q^{-1}}=\frac{q^2-q^{-2m}+1-q^{2m+2}}{(q-q^{-1})(1-q^2)}     \\
&=\frac{q^{-2m}(q^{2m+2}-1)-(q^{2m+2}-1)}{(q-q^{-1})(1-q^2)}=\frac{(q^{-2m}-1)(q^{2m+2}-1)}{(q-q^{-1})(1-q^2)}
\end{align*}  
It follows that $O_m=0\iff q^{2m+2}=1$ and $E_m=0\iff q^{2m}=1 \textnormal{ or }q^{2m+2}=1$. It follows that
\begin{equation}
\label{vanishingeq}
\cref{stsrhos}=0\iff q^{2m+2}=1 \quad \quad \cref{tsrhos}=0\iff q^{2m}=1
\end{equation} 
But, as noted in \cite{DC}, a realization for $W_\infty$ will be a realization for $W_k$ $\iff$ $q^{2k}=1$. Thus for $\hf$ to be a (faithful )realization for $W_\infty$ $q^{2m}\neq 1 \ \forall m\in \mathbb{Z}^+$ and thus \cref{stsrhos} is never 0 for $m\ge 0$ and \cref{tsrhos} is never zero for $m\ge 1$. \\

Now note that when $r(\underline{e})=id \textnormal{ or }t $ not only does $r(\underline{e})^{-1}(\rho_s)-\rho_s=0$, but in fact $\ll{e}\in \ker \rho_s^e(\underline{w})$. This is clear from the diagrammatic picture as $r(\underline{e})$ corresponds to the red or blue lines protruding below $\ll{e}$ and it is precisely these lines which we need to slide $\rho_s$ from left to right over.\\

Therefore the image of $\rho_s^e(\underline{w})$ will be the span of $\set{ \rho_s^e(\underline{w})(\ll{f}) \, | r(\underline{f})\neq id \textnormal{ or }t }$. As shown above, we can arrange these vectors into a matrix that is upper triangular with nonzero entries on the diagonal. 
Because $R$ is a domain this will imply that $\set{ \rho_s^e(\underline{w})(\ll{f}) \, | r(\underline{f})\neq id \textnormal{ or }t }$ is linearly independent over $R$ and therefore free. It then follows that $\ker \rho_s^e(\underline{w})$ is generated by $\set{\ll{f} \, | r(\underline{f})=id \textnormal{ or }t }$ and thus is a basis.
\end{proof}

\subsubsection{Computation of $H^1(\underline{\hom}_{R^e}(K( \rho_s^e), \mathrm{BS}(\underline{w} ))  )$}

In this section we will use the 01-basis for $\bsw$ as defined in Chapter 12 of \cite{EMTW20}. In particular for $\underline{w}=(s_1, \ldots, s_m)$, $c_{bot}$ as defined in \cref{h0section} and
\[c_{top}=c_{s_1}\otimes_{R} \ldots\otimes_{R} c_{s_m} \in \bsw\]
are two special elements in this basis.

\begin{definition}
Define $\mathrm{Tr}:\mathrm{BS}(\underline{w})\to R$ by sending any element $b$ to the coefficient of $c_{top}$ when $b$ is expressed in the 01-basis as a right $R-$module.
\end{definition}

\begin{definition}
The global intersection form on $\mathrm{BS}(\underline{w})$ is the $R-$valued pairing 
\begin{equation*}
    \abrac{-,-}: \mathrm{BS}(\underline{w})\times \mathrm{BS}(\underline{w})\to R
\end{equation*}
defined by
\[ \abrac{a,b}=\mathrm{Tr}(ab) \]
\end{definition}

The global intersection form on $\mathrm{BS}(\underline{w})$ is non-degenerate as seen in \cite[Section 18.2.2.2]{EMTW20} and gives an isomorphism $D_{\underline{w}}:\bsw\cong \Db(\bsw)$ sending $v\mapsto\abrac{v,-}$.\\

Recall that in \cref{keycor} we found a right $R$ basis for $\ker \rhosw$ given by $\set{\ll{f} \, | r(\underline{f})=\id \textnormal{ or }t }$. For $\underline{f}=00\ldots$, we clearly have $r(\underline{f})=\id $ and the corresponding light leaves will be all start dots, i.e. the element $c_{s_1}\otimes_R \ldots \otimes_R c_{s_m}$ which is exactly $c_{top}$ in the 01-basis. One can easily check that the dual basis vector under the global intersection form will be $c_{bot}$ which we now denote 
$$1(\underline{w}):=c_{id}\otimes_R \ldots \otimes_R c_{id} \in \Db( \ker\rhosw )$$

\begin{definition}
Let $\Db: R^e-\mathrm{gmod}\to R^e-\mathrm{gmod}$ be the functor
\[ \Db(N):=\hom_{\_ R}(N, R) \]
where $\_ R$ means we take right $R$ module homomorphisms. This has a $R^e-\mathrm{gmod}$ structure defined as $(r\cdot f \cdot r^\prime)(b):=r \cdot f(b) \cdot r^\prime$.
\end{definition}

\begin{theorem}
\label{h1rhos}
We have an isomorphism of right $R$ modules
\[  H^1(\underline{\hom}_{R^e}(K( \rho_s^e), \mathrm{BS}(\underline{w} ))  ) \cong \Db(\ker \rhosw)  \]
\end{theorem}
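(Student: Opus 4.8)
The plan is to reduce the statement to a description of $\mathrm{coker}(\rhosw)$ and to read that description off the global intersection form. Unwinding the definition of the Hom complex, $\underline{\hom}_{R^e}(K(\rho_s^e),\bsw)$ is the two-term complex $[\,\bsw\xrightarrow{\ \rhosw\ }\bsw(2)\,]$ with $\bsw$ in cohomological degree $0$, so $H^0=\ker\rhosw$ (computed in \cref{keycor}) and $H^1\cong\mathrm{coker}(\rhosw)$. I will produce the isomorphism as the map induced by
\[ \Psi\colon\ \bsw\longrightarrow\Db(\ker\rhosw),\qquad \Psi(b)=\abrac{b,-}\big|_{\ker\rhosw}, \]
and show that $\Psi$ annihilates $\mathrm{im}(\rhosw)$, is surjective, and descends to an isomorphism $\overline{\Psi}\colon\mathrm{coker}(\rhosw)\xrightarrow{\ \sim\ }\Db(\ker\rhosw)$ of bigraded right $R$-modules, the grading shift being fixed by the identification of $c_{\mathrm{top}}\in\ker\rhosw$ with its dual $c_{\mathrm{bot}}=1(\underline w)$ recorded just before the statement.

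The algebraic input is that $\rhosw$ is \emph{self-adjoint} for $\abrac{-,-}$. Since $R^e$ is commutative, $\rhosw$ is multiplication by the central element $\rho_s^e\in R^e$, i.e.\ multiplication by the fixed element $\rhosw(1(\underline w))$ inside the commutative Frobenius $R$-algebra $\bsw$; and multiplication operators are visibly self-adjoint for $\abrac{a,b}=\mathrm{Tr}(ab)$, so $\abrac{\rhosw a,b}=\abrac{a,\rhosw b}$ for all $a,b$. Hence $\abrac{\rhosw c,x}=\abrac{c,\rhosw x}=0$ for every $x\in\ker\rhosw$, so $\mathrm{im}(\rhosw)\subseteq\ker(\Psi)=(\ker\rhosw)^{\perp}$ and $\Psi$ descends to $\overline{\Psi}$. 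For surjectivity: by \cref{keycor} the module $\ker\rhosw$ is free, hence a direct summand of $\bsw$ as a right $R$-module, so the restriction $\Db(\bsw)\twoheadrightarrow\Db(\ker\rhosw)$ is onto; since $\Psi$ is this restriction precomposed with the isomorphism $D_{\underline w}\colon\bsw\xrightarrow{\sim}\Db(\bsw)$ coming from non-degeneracy of the intersection form, $\Psi$ and therefore $\overline{\Psi}$ is onto.

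What remains is to check that $\overline{\Psi}$ is injective, equivalently that $\mathrm{im}(\rhosw)=(\ker\rhosw)^{\perp}$, equivalently that $\mathrm{coker}(\rhosw)$ is torsion-free. Applying $\hom_{\_ R}(-,R)$ to the presentation $\bsw\xrightarrow{\rhosw}\bsw\to\mathrm{coker}(\rhosw)\to0$ and using self-adjointness one already gets $\Db(\mathrm{coker}(\rhosw))\cong\ker\rhosw$, so the real content of the theorem is that $\mathrm{coker}(\rhosw)$ is reflexive. By \cref{keycor} both $\ker\rhosw$ and $\mathrm{im}(\rhosw)$ are free, of ranks $k:=\#\{\underline f\subseteq\underline w:\ r(\underline f)\in\{\id,t\}\}$ and $\operatorname{rk}\bsw-k$; thus $\Db(\ker\rhosw)$ and $\mathrm{coker}(\rhosw)$ both have rank $k$, and a surjection from a torsion-free module of rank $k$ onto a free module of rank $k$ is automatically an isomorphism, so it is enough to prove torsion-freeness of $\mathrm{coker}(\rhosw)$.

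This last step is the main obstacle, and it genuinely uses the explicit picture rather than formal nonsense: by \cref{keylemma} the matrix of $\rhosw$ in the light leaves basis of $\bsw$, ordered by $r(\underline f)$, is upper triangular with its zero columns exactly at the $k$ subexpressions with $r(\underline f)\in\{\id,t\}$, but its determinant is the product of the non-zero diagonal entries $r(\underline f)^{-1}(\rho_s)-\rho_s$, which is not a unit, so triangularity alone does not force the cokernel to be free. To finish I would localise at each height one prime of $R$ (there $R$ is a discrete valuation ring and torsion-freeness becomes the statement that the non-trivial elementary divisors of $\rhosw$ are units) and verify this using the explicit non-vanishing formulas \cref{stsrhos}, \cref{tsrhos} for the diagonal entries together with the fact that the off-diagonal light-leaf coefficients are controlled by $\partial_s(\rho_s)=1$; concretely, one exhibits a right $R$-basis of $\mathrm{coker}(\rhosw)$ dual, under $\abrac{-,-}$, to the light leaves basis of $\ker\rhosw$, generalising the pair $c_{\mathrm{top}}\leftrightarrow c_{\mathrm{bot}}$. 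Granting torsion-freeness, $\overline{\Psi}$ is the asserted isomorphism.
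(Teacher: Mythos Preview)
Your setup is exactly the paper's: identify $H^1$ with $\mathrm{coker}(\rhosw)$, use self-adjointness of $\rhosw$ for the global intersection form, and compare with $\Db(\ker\rhosw)$. You correctly construct the surjection $\overline\Psi$ and correctly isolate its injectivity---equivalently, torsion-freeness of $\mathrm{coker}(\rhosw)$---as what remains.

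The gap in your proposal is that this last step is not proved: you sketch two approaches (localize at height-one primes, or exhibit an explicit dual basis extending $c_{\mathrm{top}}\leftrightarrow c_{\mathrm{bot}}$) and then write ``granting torsion-freeness''. The localization sketch in particular is not close to complete: the nonzero diagonal entries $r(\underline f)^{-1}(\rho_s)-\rho_s$ from \cref{keylemma} are genuine non-units at the height-one primes they generate, so over those DVRs the upper-triangular form alone gives elementary divisors that are not units. One must actually use the off-diagonal entries of the light-leaves matrix to see the cancellation, and you have not analyzed them.

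The paper packages the same content differently and more briefly. After the commutative square gives $\mathrm{coker}(\rhosw)\cong\mathrm{coker}\,\Db(\rhosw)$, it dualizes the short exact sequence $0\to\ker\rhosw\to\bsw\to\im\rhosw\to0$; freeness of $\im\rhosw$ (established in \cref{keycor}) makes $\hom_{\_R}(-,R)$ exact on this sequence, yielding $0\to\Db(\im\rhosw)\to\Db(\bsw)\to\Db(\ker\rhosw)\to0$, and the result is read off. In effect the paper leverages the hard-won freeness of the \emph{image} where you attempt to prove torsion-freeness of the \emph{cokernel} from scratch; your route is not wrong, but as written it leaves exactly this step open.
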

\begin{proof}
We have a commutative diagram
\begin{equation}
\label{globalcommdiag}
    \begin{tikzcd}
    \bsw\arrow[r, "\rhosw"]\arrow[d, "D_{\underline{w}}", swap]  & \bsw \arrow[d, "D_{\underline{w}}"] \\
    \Db(\bsw)\arrow[r, "\Db(\rhosw)" ] & \Db(\bsw)
    \end{tikzcd}
\end{equation}
which follows from adjointness of the global intersection form with multiplication by elements of $R$. As $D_{\underline{w}}$ is an isomorphism it follows that 
$H^1(\underline{\hom}_{R^e}(K( \rho_s^e), \mathrm{BS}(\underline{w} ))  ) = \mathrm{coker \ } \rhosw\cong \mathrm{coker \ } \Db(\rhosw) $. From \cref{keycor} $\im \rhosw$ is a free right $R-$module and so we have a SES
\[ 0\to \Db(\im \rhosw)\xrightarrow{\Db(\rhosw)} \Db(\bsw)\to \Db(\ker \rhosw)\to 0 \]
and thus it follows that $\mathrm{coker \ } \Db(\rhosw)\cong \Db(\ker \rhosw)$.
\end{proof}

\subsubsection{Computation of $H^\bullet(\underline{\hom}_{R^e}(K( \rho_t t(\rho_t)^e , \rho_s^e),  \mathrm{BS}(\underline{w})))$}
\label{mainsection}

Using \cref{h1rhos}, \cref{keyeq} which is the $E_1$ page of the spectral sequence computing \\ $H^\bullet(\underline{\hom}_{R^e}(K( \rho_t t(\rho_t)^e , \rho_s^e),  \mathrm{BS}(\underline{w}) ))$ as a right $R-$module now reads

\begin{equation}
\label{keyeq2}
\begin{tikzcd}
& \ker \rhosw (3) & \Db(\ker \rhosw) (5)\\
 &\boxed{\ker \rhosw\, (-1)} \arrow[u, "\rho_t t(\rho_t)^e(\underline{w})"]& \Db(\ker \rhosw)(1)  \arrow[u, "D_{\underline{w}}(\rho_t t(\rho_t)^e(\underline{w}))|_{\ker \rhosw}", swap]
\end{tikzcd}
\end{equation}
We claim that both vertical arrows above are 0. From \cref{keycor} we know that $\ker \rhosw$ has a basis given by
 \[\set{\ll{f}=\overline{LL}_{\underline{w}, \underline{f}}(c_{bot}) \, | r(\underline{f})=id \textnormal{ or }t }\]

It follows that $\ker \rhosw \subseteq \ker \rhottw$ aka $\rho_t t(\rho_t)^e(\underline{w})|_{\ker \rhosw}=0$ as we can always slide $\rho_t t(\rho_t)$ over to the other side.\\

For the right hand $\rho_t t(\rho_t)^e$ note that replacing $\bsw$ with $\ker \rhosw$ in \cref{globalcommdiag} still produces a commutative diagram. It follows that for any $v\in \ker\rhosw$ we have
\[ D_{\underline{w}}(\rho_t t(\rho_t)^e)|_{\ker \rhosw}(\abrac{v, -})=\abrac{\rho_t t(\rho_t)^e(v), -  }|_{\ker \rhosw}=\abrac{v, \rho_t t(\rho_t)^e(-)  }|_{\ker \rhosw}=0 \]
where the last equality follows from the previous paragraph. Thus from \cref{prehochBSW} we see that

\begin{theorem}
\label{maincohoiso}
When $m_{st}=\infty$ and $\Lambda_{st}=\kb$ we have an isomorphism of right $R-$modules, 
\begin{align*}
\ext^{0, \bullet}_{R^e}(B_t, \mathrm{BS}(\underline{w}))&\cong \ker\rhosw \underline{1 } (-1)\cong \ker\rhosw (-1) \\
\ext^{1, \bullet}_{R^e}(B_t, \mathrm{BS}(\underline{w}))&\cong
\ker\rhosw\underline{\rho_t t(\rho_t)} (-1)\oplus  \widetilde{\Db(\ker\rhosw )}\underline{\rho_s}(-1)\\
&\cong  \ker\rhosw (3)\oplus  \widetilde{\Db(\ker\rhosw )}(1)  \\
\ext^{2, \bullet}_{R^e}(B_t, \mathrm{BS}(\underline{w}))&\cong \Db(\ker\rhosw) \underline{\rho_s \wedge \rho_t t(\rho_t)}(-1) \cong \Db(\ker\rhosw) (5) 
\end{align*}
\end{theorem}
\begin{proof}
The fact that both arrows going up in \cref{keyeq2} are 0 immediately gives the result for $\ext^{0, \bullet}_{R^e}(B_t, \mathrm{BS}(\underline{w}))$ and $\ext^{2, \bullet}_{R^e}(B_t, \mathrm{BS}(\underline{w}))$. For $\ext^{1, \bullet}_{R^e}(B_t, \mathrm{BS}(\underline{w}))$, the spectral sequence gives the filtration 
\[ 0\to \ker\rhosw (3)\to \ext^{1, \bullet}_{R^e}(B_t, \mathrm{BS}(\underline{w}))\to  \Db(\ker\rhosw )(1)\to 0 \]
 and since $\Db(\ker\rhosw )(1)$ is a free right $R-$module the SES splits.  
\end{proof}

\begin{remark}
 The maps $\Db(\ker\rhosw )\underline{\rho_s}$ are not chain maps and thus our notation $\widetilde{\Db(\ker\rhosw )}\underline{\rho_s}$ means we need to choose a $v\in \bs(\un{w})$ s.t. $\rho_tt(\rho_t)^e(w)=\rho_s^e(v)$ for $w\in \Db(\ker\rhosw )$. However after we introduce $\Phi_t^{\un{w}}$ in \cref{newgensectgen}, we will have a preferred choice. Namely, if $\mathrm{LL}_{w}$ is a light leaf morphism sending $c_{bot}\in \bs(\un{v})$ to $w$, then $\widetilde{w}\underline{\rho_s}$ will be the morphism $\mathrm{LL}_{w}\circ \Phi_t^{\un{v}}$.
\end{remark}

\section{New Generator and Relations: $m_{st}=\infty$}
\label{newgensect}


We will continue to assume that \cref{assump0}, \cref{assump1}, and \cref{assump2} hold in this section.

\subsection{Affine Dimension 1 Calculations}

\begin{definition}
An expression is called non repeating if there are no subexpressions of the form $ss\ldots$ or $tt\ldots$.
\end{definition}

\begin{definition}
Let $|\underline{w}|$ be the number of elements in the expression $\underline{w}$.
\end{definition}

\begin{lemma}
\label{lowestdeglem}
Suppose $m_{st}=\infty$ and $\underline{w}$ is a non repeating expression. If $|\underline{w}|$ is odd, then the lowest internal degree element in $\hom_{R^e}(R, \bs(\underline{w}))$ is of degree 1. If $|\underline{w}|$ is even, then the lowest internal degree element in $\hom_{R^e}(R, \bs(\underline{w}))$ is of degree 2.
\end{lemma}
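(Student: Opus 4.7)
The plan is to reduce the claim to a combinatorial statement about defects of subexpressions. By the double leaves theorem of \cite{DC} (transferred via the equivalence $\mathscr{D}(\hf, W_\infty) \simeq \bsbim(\hf, W_\infty)$), the space $\hom_{R^e}(R, \bs(\underline{w}))$ is a free right $R$-module whose basis consists of the flipped light leaves $\overline{\mathrm{LL}}_{\underline{w}, F}$ indexed by subexpressions $F \subset \underline{w}$ with $r(F) = \id$, each sitting in internal degree equal to the defect $d(F) = \#U_0(F) - \#D_0(F)$. Since $R$ is concentrated in non-negative internal degrees, the minimum internal degree of $\hom_{R^e}(R, \bs(\underline{w}))$ equals $\min\{d(F) : F \subset \underline{w},\ r(F) = \id\}$, and the lemma reduces to computing this minimum.

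First I would establish a parity constraint. Since $r(F) = \id$ forces $\#U_1(F) = \#D_1(F)$ (the net length change along the move steps vanishes), we have $\#U_0(F) + \#D_0(F) = n - 2\#U_1(F)$ and hence $d(F)$ has the same parity as $n$. Equivalently, one can rewrite $d(F) = \sum_{i=1}^n \sigma_i$, where $\sigma_i = +1$ if $\ell(x_{i-1} s_i) > \ell(x_{i-1})$ and $\sigma_i = -1$ otherwise. Given the parity, the desired bound ``$d(F) \geq 1$ if $n$ odd, $d(F) \geq 2$ if $n$ even'' follows from the single strict inequality $d(F) \geq 1$.

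For the upper bound I exhibit explicit minimizers. If $\underline{w} = (s, t, s, \ldots, s)$ has odd length, the subexpression $F = (1, 0, \ldots, 0, 1)$ satisfies $r(F) = s \cdot s = \id$, and tracking the Bruhat stroll along the bi-infinite Cayley graph of $W_\infty$ gives $d(F) = 1$. For $n$ even, the subexpression $F = (1, 0, \ldots, 0, 1, 0)$ with $1$'s at positions $1$ and $n-1$ yields $d(F) = 2$ by a similar computation.

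For the lower bound $d(F) \geq 1$ I would induct on $n$, analyzing $f_1$. If $f_1 = 0$, then $x_1 = \id$ and $\sigma_1 = +1$, so the truncation $F' = (f_2, \ldots, f_n)$ is a subexpression of the non-repeating word $(s_2, \ldots, s_n)$ of length $n-1$ with $r(F') = \id$; the inductive hypothesis gives $d(F) = 1 + d(F') \geq 2$. If $f_1 = 1$, the walk departs $\id$ and, since the Cayley graph of $W_\infty$ is a bi-infinite line, must remain on one half-line until first returning to $\id$ at some smallest time $k \geq 3$; one checks directly that $\sigma_1 = +1$ and $\sigma_k = -1$. The crux is then to show that the excursion contribution $\sum_{i=2}^{k-1}\sigma_i \geq 0$, which is a parallel return statement for walks in a half-line, and can be folded into the main induction via a strengthened hypothesis that allows walks to begin and end at any fixed element of $W_\infty$. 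Combining this with the outer induction applied to the tail walk $x_k, \ldots, x_n$ from $\id$ to $\id$ in $(s_{k+1}, \ldots, s_n)$ of length $n - k$ completes the argument. The main obstacle is setting up the strengthened hypothesis cleanly so that the half-line excursion case reduces to the return-to-$\id$ case rather than producing a new combinatorial problem.
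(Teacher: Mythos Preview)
Your approach is correct but takes a genuinely different route from the paper. You reduce to the combinatorics of defects of subexpressions $F\subset\underline{w}$ with $r(F)=\id$, whereas the paper argues diagrammatically: it reduces the odd case to the even case via adjunction and $B_tB_t\cong B_t(1)\oplus B_t(-1)$, and for even $|\underline{w}|$ uses that in $\mathscr{D}_\infty$ there are no $2m$-valent vertices, so any morphism $R\to\bs(\underline{w})$ with $\underline{w}$ alternating must split horizontally as $a\otimes c$ with $a\in\hom(R,\bs(\underline{w}'))$, $c\in\hom(R,\bs(\underline{w}''))$ for a nontrivial concatenation $\underline{w}=(\underline{w}',\underline{w}'')$, allowing a short induction on length. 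The paper's argument is quicker and exploits the one-color structure of the diagrammatic category directly; yours is more self-contained combinatorially and in principle only needs the light leaves basis, not the full description of morphism spaces.

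Your lower bound sketch is the one place that needs tightening. As you note, the excursion inequality $\sum_{i=2}^{k-1}\sigma_i\ge 0$ does not follow by naively shifting the half-line walk back to $\id$ (the $\sigma$'s do not transform correctly at the boundary), and when $k=n$ it only yields $d(F)\ge 0$, so parity is essential there. A cleaner way to close the gap, avoiding the strengthened hypothesis altogether: show that in an alternating word every maximal run of $D$-steps is immediately preceded by a $U_0$ and contains at most one $D_0$ (necessarily at its end), so $\#D_0\le\#(\text{runs})\le\#U_0$; then observe that the \emph{last} step must be either $U_0$ (if $f_n=0$) or $D_1$ (if $f_n=1$), and in either case one of these inequalities is strict. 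This gives $d(F)=\#U_0-\#D_0\ge 1$ directly without induction on excursions.
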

\begin{proof}
It suffices to show the case when $|\underline{w}|$ is even as when $|\underline{w}|$ is odd $\underline{w}=(t,\ldots, t)$ or $(s,\ldots ,s)$ and we have the following natural isomorphism of graded vector spaces
\begin{align*}
\hom_{R^e}(R, \bs(t, \ldots, t))&\cong \hom_{R^e}(B_t, \bs(t, \ldots))\cong \hom_{R^e}(B_t\otimes_R B_t,  \bs( \ldots))\\
&\cong \hom_{R^e}(B_t,  \bs(t, \ldots)(1))\oplus \hom_{R^e}(B_t,  \bs(t, \ldots)(-1))   
\end{align*}
and so the lowest internal degree element in $\hom_{R^e}(R, \bs(t, \ldots, t))$ is precisely 1 less than the lowest internal degree element in $\hom_{R^e}(B_t,  \bs(t, \ldots))$,\\

We now proceed by induction on $|\underline{w}|$ and use the diagrammatic description of $\hom_{R^e}(R, \bs(\underline{w}))$. As $|\underline{w}|$ is even the first and last element of $\underline{w}$ are not the same and so the lowest internal degree element in $\hom_{R^e}(R, \bs(\underline{w}))$ must decompose as
\[
\ \boxed{\phantom{a}a\phantom{a}} \ \  \boxed{\phantom{a}c\phantom{a}} 
\]
where $\ \boxed{\phantom{a}a\phantom{a}} \in \hom_{R^e}(R, \bs(\underline{w^\prime}))$ and $\ \boxed{\phantom{a}c\phantom{a}} \in\hom_{R^e}(R, \bs(\underline{w^{\prime \prime}})) $ such that $(\underline{w^\prime}, \underline{w^{\prime \prime}})=\underline{w}$ because morphisms are only generated by red and blue trivalent vertices and dots when $m_{st}=\infty$. First assume that both $|\underline{w^\prime}|$ and $|\underline{w^{\prime \prime}}|$ are odd. Notice that $\ \boxed{\phantom{a}a\phantom{a}}$ has to be the lowest internal degree element in $\hom_{R^e}(R, \bs(\underline{w^\prime}))$ as otherwise we can replace it with the lowest. Same for $\ \boxed{\phantom{a}c\phantom{a}}$. Therefore by induction they both have degree 1 and thus the lowest degree element in $\hom_{R^e}(R, \bs(\underline{w}))$ has degree 2. \\

Now if $|\underline{w^\prime}|$ and $|\underline{w^{\prime \prime}}|$ are both even, one can show that it can't be the lowest internal degree element as there will exist dots (because $m_{st}=\infty$)  which can be turned into trivalent vertices, which lowers the degree. 
\end{proof}

\begin{lemma}
\label{lowestdeglem2}
For $m_{st}=\infty$ let $\underline{w}$ be any expression and let $m(t, \underline{w})$ be any non repeating subexpression of $t\underline{w}$ such that $|m(t, \underline{w})|$ is maximal among all non repeating subexpressions of $t\underline{w}$. If $|m(t, \underline{w})|$ is odd, then the lowest internal degree element of $\ker \rhosw (-1)$ is $-|\underline{w}|+|m(t, \underline{w})|$. If $|m(t, \underline{w})|$ is even, then the lowest internal degree element of $\ker \rhosw (-1)$ is $1-|\underline{w}|+|m(t, \underline{w})|$.   
\end{lemma}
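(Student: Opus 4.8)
The plan is to transport the statement first to ordinary Hochschild cohomology and then to a non-repeating expression. Combining \cref{maincohoiso} (which gives $\ker\rhosw(-1)\cong\ext^{0,\bullet}_{R^e}(B_t,\bs(\underline{w}))$) with the biadjunction isomorphism $\ext^{\bullet,\bullet}_{R^e}(B_t\otimes_R M,N)\cong\ext^{\bullet,\bullet}_{R^e}(M,B_t\otimes_R N)$ proved above, with $M=R$ and $N=\bs(\underline{w})$, yields an isomorphism of bigraded right $R$-modules
\[ \ker\rhosw(-1)\;\cong\;\hom_{R^e}\bigl(R,\ B_t\otimes_R\bs(\underline{w})\bigr)\;=\;\hom_{R^e}\bigl(R,\ \bs(t\underline{w})\bigr). \]
So it suffices to compute the lowest internal degree occurring in $\hom_{R^e}(R,\bs(t\underline{w}))$.

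Next I would collapse $\bs(t\underline{w})$ onto a Bott--Samelson bimodule for a non-repeating expression. Write the word $t\underline{w}$ in its run decomposition $u_1^{k_1}u_2^{k_2}\cdots u_r^{k_r}$ with $u_i\in\set{s,t}$, $u_i\neq u_{i+1}$, $k_i\ge 1$, and $\sum_i k_i=|\underline{w}|+1$. An elementary argument shows $r$ is the maximal length of a non-repeating subexpression of $t\underline{w}$: assigning to each letter of a non-repeating subexpression the index of the run containing it gives a strictly increasing map into $\set{1,\dots,r}$, since two letters of the same run are equal. Thus we may take $m(t,\underline{w})=(u_1,\dots,u_r)$ and $r=|m(t,\underline{w})|$. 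Iterating $B_x\otimes_R B_x\cong B_x(1)\oplus B_x(-1)$ gives $B_x^{\otimes k}\cong\bigoplus_{j=0}^{k-1}B_x(k-1-2j)^{\oplus\binom{k-1}{j}}$, and distributing $\otimes_R$ over these direct sums produces a decomposition
\[ \bs(t\underline{w})\;\cong\;\bigoplus_{(j_1,\dots,j_r)}\bs\bigl(m(t,\underline{w})\bigr)\Bigl(\textstyle\sum_{i=1}^r(k_i-1-2j_i)\Bigr)^{\oplus\prod_i\binom{k_i-1}{j_i}} \]
in which every summand is a shift of the single bimodule $\bs(m(t,\underline{w}))$, with largest shift $d_{\max}=\sum_i(k_i-1)=|\underline{w}|+1-|m(t,\underline{w})|$.

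Finally, apply $\hom_{R^e}(R,-)$ (pulling the grading shifts out of $\hom$); since shifting a graded module by $(d)$ lowers all its internal degrees by $d$, the summands with shift $<d_{\max}$ contribute only in strictly higher degrees, so the lowest internal degree of $\hom_{R^e}(R,\bs(t\underline{w}))$ equals that of $\hom_{R^e}(R,\bs(m(t,\underline{w})))$ minus $d_{\max}$. As $m(t,\underline{w})$ is non-repeating and $m_{st}=\infty$, \cref{lowestdeglem} identifies this latter lowest degree as $1$ if $|m(t,\underline{w})|$ is odd and $2$ if it is even; subtracting $d_{\max}=|\underline{w}|+1-|m(t,\underline{w})|$ gives $-|\underline{w}|+|m(t,\underline{w})|$ and $1-|\underline{w}|+|m(t,\underline{w})|$ respectively, which is the claim. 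I expect the only delicate points to be bookkeeping: confirming the biadjunction isomorphism carries no spurious grading shift (so that the $(-1)$ in $\ker\rhosw(-1)$ is accounted for exactly once) and that all the Bott--Samelson summands above are genuinely shifts of the same bimodule $\bs(m(t,\underline{w}))$; neither is difficult, but both must be checked carefully to pin down the constants.
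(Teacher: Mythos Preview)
Your proposal is correct and follows essentially the same route as the paper: identify $\ker\rhosw(-1)$ with $\hom_{R^e}(B_t,\bs(\underline{w}))\cong\hom_{R^e}(R,\bs(t\underline{w}))$ via biadjunction, collapse repeated letters in $t\underline{w}$ using $B_x\otimes_R B_x\cong B_x(1)\oplus B_x(-1)$ to reduce to $\bs(m(t,\underline{w}))$ with a total shift of $|\underline{w}|+1-|m(t,\underline{w})|$, and then invoke \cref{lowestdeglem}. Your write-up is slightly more explicit than the paper's (which simply records the graded-rank identity $\grk\hom_{R^e}(B_t,\bsw)=(v+v^{-1})^{1+|\underline{w}|-|m(t,\underline{w})|}\grk\hom_{R^e}(R,\bs(m(t,\underline{w})))$ and applies the previous lemma), but the content is the same.
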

\begin{proof}
As $\ker \rhosw (-1)\cong \hom_{R^e}(B_t, \bsw)$ it suffices to show the lowest degree in $\hom_{R^e}(B_t, \bsw)$ is as prescribed above. Note
\[ \hom_{R^e}(B_t, \bsw)\cong \hom_{R^e}(R,B_t\otimes_R \bsw) \]
Choose some $m(t, \underline{w})$ as defined above. Then the part of $t\underline{w}$ excluding $m(t, \underline{w})$ has to be of the form $ss\ldots$ or $tt\ldots$. As $B_s\otimes_R B_s\cong B_s(1)\oplus B_s(-1)$, we can then simplify $B_t\otimes_R \bsw$ to $\bs(m(t, \underline{w}))$ at the cost of some grading shifts. As there are exactly $|t\underline{w}|-|m(t, \underline{w})|=1+|\underline{w}|-|m(t, \underline{w})|$ places in $t\underline{w}$ where we need to apply this relation, it follows that
\[ \grk \ \hom_{R^e}(B_t, \bsw)= \grk \ (v+v^{-1})^{1+|\underline{w}|-|m(t, \underline{w})|}\hom_{R^e}(R,\bs(m(t, \underline{w}))) \]
Now apply the previous lemma. 
\end{proof}

\begin{prop}
\label{1dimcriterion}
For $m_{st}=\infty$, $\ext^{1, -(|\underline{w}|+1)}_{R^e}(B_t, \mathrm{BS}(\underline{w}))$ is a 1 dimensional $\kb$ module when $|m(t, \underline{w})|\ge 4$.
\end{prop}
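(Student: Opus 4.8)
The plan is to read off $\ext^{1,\bullet}_{R^e}(B_t,\bsw)$ from the decomposition of \cref{maincohoiso} and then to locate the two summands inside the single internal degree $-(|\underline{w}|+1)$ using \cref{keycor} and \cref{lowestdeglem2}. Under the running assumption $\Lambda_{st}=\kb$, \cref{maincohoiso} gives an isomorphism of graded right $R$-modules
\[ \ext^{1,\bullet}_{R^e}(B_t,\bsw)\;\cong\;\ker\rhosw\,(3)\;\oplus\;\Db(\ker\rhosw)\,(1), \]
so it suffices to show that the first summand vanishes in internal degree $-(|\underline{w}|+1)$ while the second is one-dimensional there. I expect the degree $-(|\underline{w}|+1)$ to be low enough that only the ``bottom'' of $\Db(\ker\rhosw)\,(1)$ survives.

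For the first summand I would invoke \cref{lowestdeglem2}: when $|m(t,\underline{w})|\ge 4$, the lowest internal degree occurring in $\ker\rhosw\,(-1)$ is $-|\underline{w}|+|m(t,\underline{w})|$ or $1-|\underline{w}|+|m(t,\underline{w})|$ according to the parity of $|m(t,\underline{w})|$, and in either case this is at least $5-|\underline{w}|$. Passing from the shift $(-1)$ to the shift $(3)$ lowers degrees by $4$, so the lowest internal degree occurring in $\ker\rhosw\,(3)$ is at least $1-|\underline{w}|$, which is strictly larger than $-(|\underline{w}|+1)$; hence this summand contributes nothing in degree $-(|\underline{w}|+1)$. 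This is the only place where the hypothesis $|m(t,\underline{w})|\ge 4$ enters: for $|m(t,\underline{w})|\le 3$ the bottom of $\ker\rhosw\,(3)$ also lands in degree $-(|\underline{w}|+1)$.

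For the second summand, \cref{keycor} says that $\ker\rhosw$ is a free right $R$-module whose homogeneous basis $\{\ll{f}\mid r(\underline{f})=\id\text{ or }t\}$ is a subset of the light leaves basis of $\bsw$. The key point I would verify is that the top-degree generator of $\ker\rhosw$ is unique and has internal degree exactly $|\underline{w}|$. Indeed, $\deg\ll{f}$ equals the defect of the subexpression $\underline{f}$, which is always $\le|\underline{w}|$, has the same parity as $|\underline{w}|$, and equals $|\underline{w}|$ precisely for the all-zero subexpression $\underline{f}=(0,\dots,0)$; for that subexpression $r(\underline{f})=\id$ and $\ll{f}=c_{top}$, the unique top-degree generator of the whole Bott-Samelson bimodule. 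Granting this, $\Db$ together with the shift $(1)$ turns $\ker\rhosw$ into a free right $R$-module all of whose generators lie in internal degree $\ge-(|\underline{w}|+1)$, with exactly one generator in degree $-(|\underline{w}|+1)$. A free generator of internal degree $e$ contributes $R_{-(|\underline{w}|+1)-e}$ to the degree $-(|\underline{w}|+1)$ part; since $R$ vanishes in negative degrees, only the bottom generator contributes, and it contributes exactly $R_0=\kb$. Adding the two summands gives $\dim_{\kb}\ext^{1,-(|\underline{w}|+1)}_{R^e}(B_t,\bsw)=1$.

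The main obstacle is the bookkeeping in the last step, specifically the standard but slightly delicate fact that the defect of a subexpression computes the internal degree of the corresponding light leaf element of $\bsw$ and is uniquely maximised (with value $|\underline{w}|$) by the all-zero subexpression. I would cite this from the light leaves formalism in \cite{EMTW20}; the rest is a direct combination of \cref{maincohoiso}, \cref{keycor}, and \cref{lowestdeglem2}.
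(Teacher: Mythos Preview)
Your proposal is correct and follows essentially the same route as the paper: use the splitting from \cref{maincohoiso}, kill the $\ker\rhosw(3)$ summand in degree $-(|\underline{w}|+1)$ via \cref{lowestdeglem2}, and show the $\Db(\ker\rhosw)(1)$ summand is one-dimensional there because $\ker\rhosw$ has a unique top-degree generator $c_{top}$ in degree $|\underline{w}|$. The paper phrases the last step through the global intersection form isomorphism $D_{\underline{w}}\colon \bsw\cong \Db(\bsw)$, identifying the dual generator as $1(\underline{w})$, but this is the same content. One small correction: $\deg\ll{f}$ is the defect of $\underline{f}$ minus $\ell(r(\underline{f}))$, not the defect alone; your conclusion that it is uniquely maximised at $|\underline{w}|$ by $\underline{f}=(0,\dots,0)$ is nonetheless correct, most transparently from the $01$-basis description.
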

\begin{proof}
Under the decomposition from \cref{maincohoiso}
\[\ext^{1, -(|\underline{w}|+1)}_{R^e}(B_t, \mathrm{BS}(\underline{w})))\cong \ker\rhosw (3)_{-|\underline{w}|-1}\oplus  \widetilde{\Db(\ker\rhosw )}(1)_{-|\underline{w}|-1}\]
$ \Db(\ker\rhosw )(1)_{-|\underline{w}|-1}$ is a $1$ dimensional $\kb$ module, as $1(\underline{w})$ is the lowest degree element in $\Db(\bs(\underline{w}))\cong \bs(\underline{w})$ and we showed above that $1(\underline{w})\in \Db(\ker\rhosw)$. Thus it suffices to show the lowest degree element in $\ker \rhosw (3)$ is greater than this. By \cref{lowestdeglem2}, we see that when $|m(t, \underline{w})|$ is odd, the lowest degree element in $\ker \rhosw (3)$ will be $-|\underline{w}|+|m(t, \underline{w})|-4$ and thus we want 
\[ -|\underline{w}|+|m(t, \underline{w})|-4>-|\underline{w}|-1\implies |m(t, \underline{w})|>3 \]
Similarily we find that when $|m(t, \underline{w})|$ is even, we want 
\[ -|\underline{w}|+|m(t, \underline{w})|-3>-|\underline{w}|-1\implies |m(t, \underline{w})|>2 \]
and thus $|m(t, \underline{w})|\ge 4$ encompasses both cases above. 
\end{proof}


\subsection{The New Generator}
\label{newgensectgen}

\begin{lemma}
\label{clarifyinglem}
Suppose we have a double complex $C^{\bullet, \bullet}$ with differentials and terms pictured below
\begin{center}
\begin{tikzcd}
    Z \ar[r, "f^\prime"] & W \\
    \boxed{X} \ar[r, "f"] \ar[u, "g"] & Y\arrow[u, "g^\prime", swap] 
\end{tikzcd}
\end{center}
(where $C^{0,0}=X$ is boxed) such that
\begin{enumerate}[(1)]
    \item $\ker f \subset \ker g$
    \item $\im g^\prime \subset \im f^\prime$
    \item $\ker f^\prime=0$
\end{enumerate}
Then $H^1(\mathrm{Tot}(C^{\bullet, \bullet}))\cong Y/\im f$. Precisely this means that
\begin{itemize}
    \item Any $y \in Y$ extends to a unique 1-cocycle $(y, z) \in Y \times Z$ in $\mathrm{Tot}(C^{\bullet, \bullet})$
    \item two 1-cocycles $(y, z)$ and $(y', z')$ define the same class in $H^1(\mathrm{Tot}(C^{\bullet, \bullet}))\iff y \equiv y'$ in $Y/im(f)$
\end{itemize}
\end{lemma}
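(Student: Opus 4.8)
The plan is to unwind the definition of the total complex in low degrees and track cocycles and coboundaries explicitly, using the three hypotheses to pin down exactly what survives. Recall that for the double complex $C^{\bullet,\bullet}$ with $C^{0,0}=X$, $C^{1,0}=Y$, $C^{0,1}=Z$, $C^{1,1}=W$, the total complex has $\mathrm{Tot}^0 = X$, $\mathrm{Tot}^1 = Y\oplus Z$, $\mathrm{Tot}^2 = W$ (together with whatever sits in other spots, but those are zero here). The degree-$0$ differential is $x\mapsto (f(x), g(x))$ and the degree-$1$ differential is $(y,z)\mapsto g'(y)\pm f'(z)$ (sign irrelevant for the argument). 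So $H^1(\mathrm{Tot}) = \ker\bigl((y,z)\mapsto g'(y)\pm f'(z)\bigr)\big/\im\bigl(x\mapsto (f(x),g(x))\bigr)$.

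First I would establish the claim that every $y\in Y$ extends uniquely to a $1$-cocycle. Given $y$, we need $z\in Z$ with $f'(z) = \mp g'(y)$. By hypothesis (2), $\im g'\subset \im f'$, so $g'(y)\in\im f'$, hence such a $z$ exists; by hypothesis (3), $\ker f' = 0$, so $z$ is unique. This gives a well-defined map $Y\to Z$, $y\mapsto z(y)$, and $(y, z(y))$ is the unique $1$-cocycle with first coordinate $y$. Conversely every $1$-cocycle has this form, so the set of $1$-cocycles is in bijection with $Y$ via the first projection.

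Next I would compare coboundaries. An element of $\im d^0$ is $(f(x), g(x))$ for $x\in X$. Under the bijection above, this corresponds to $y = f(x)$, i.e. to the submodule $\im f\subset Y$ — but one must check consistency, namely that $g(x)$ really is the unique $z$ associated to $f(x)$, which holds because $d^1 d^0 = 0$ forces $(f(x),g(x))$ to be a cocycle and uniqueness does the rest. Therefore two cocycles $(y,z(y))$ and $(y',z(y'))$ are cohomologous iff $(y-y', z(y)-z(y'))\in\im d^0$ iff $y-y'\in\im f$ (the second coordinate being automatically determined). This yields $H^1(\mathrm{Tot}(C^{\bullet,\bullet}))\cong Y/\im f$, and simultaneously proves the two bulleted refinements.

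The one place where hypothesis (1) enters — and where I would be most careful — is in verifying that the displayed description of $H^1$ is not accidentally larger or smaller than $Y/\im f$: specifically (1) guarantees that the map $X\to Y$, $x\mapsto f(x)$, has the property that distinct classes in $Y/\im f$ genuinely give distinct cohomology classes, since if $f(x)\in\im f$ were to be killed for a spurious reason it would be because some $x\in\ker f$ contributed a nonzero $g(x)$; $\ker f\subset\ker g$ rules this out, so the coboundary relation on first coordinates is exactly "differ by $\im f$" with no extra collapsing. I expect the main (though modest) obstacle is simply bookkeeping the signs in $d^1$ and making sure the uniqueness-of-$z$ argument is deployed at each step rather than just once; there is no deep content, but the statement bundles together an isomorphism and two concrete consequences, so the proof should be written to make each of the three visibly follow from hypotheses (1), (2), (3) respectively.
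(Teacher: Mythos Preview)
Your argument is correct and is exactly the direct unwinding the paper alludes to with ``straightforward from assumptions.'' Your paragraph on where hypothesis~(1) enters is slightly off, though harmless: in fact (1) is automatic from (3) together with the double-complex relation $g'f=\pm f'g$, since $x\in\ker f$ gives $f'(g(x))=\pm g'(f(x))=0$ and hence $g(x)=0$ by injectivity of $f'$.
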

\begin{proof}
Straightforward from assumptions. 
\end{proof}

\begin{remark}
Even without the assumptions $(1)-(3)$ above, if $y\in \im (f)$ and $(y, z)$ is any cocycle, then $(y, z)\equiv (0, 0)$ in $H^1(\mathrm{Tot}(C^{\bullet, \bullet}))$ and similarly if $z\in \im (g)$ and $(y, z)$ is any cocycle, then $(y, z)\equiv (0, 0)$ in $H^1(\mathrm{Tot}(C^{\bullet, \bullet}))$. This follows from the definition of $(y,z)$ being a cocycle. 
\end{remark}

\begin{corollary}
\label{clarifyinglemcor}
Suppose $|m(t, \underline{w})|\ge 4$. Then for any $u\in \bsw_{-|\underline{w}|}$, $\exists ! v\in  \bs(\underline{w})_{-|\underline{w}|+2}$ such that the map ${}_{tt}^v\psi_s^u: \underline{\rho_s}(1) \boxtimes R^e\oplus \underline{\rho_t t(\rho_t)}(1)\boxtimes R^e  \to \bsw$ defined by
\[ {}_{tt}^v\psi_s^u(\underline{\rho_t t(\rho_t)}\boxtimes 1\otimes 1)=v\quad \quad {}_{tt}^v\psi_s^u(\underline{\rho_s}\boxtimes 1\otimes 1)=u \]
is a cocycle in $\underline{\hom}_{R^e}^{1, -(|\underline{w}|+1)}(K_t, \bsw)$ and any relation in $\ext_{R^e}^{1, -(|\underline{w}|+1)}(B_t, \bsw)$ is determined by evaluating on $\underline{\rho_s}\boxtimes 1\otimes 1$.
\end{corollary}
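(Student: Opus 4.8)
The plan is to apply \cref{clarifyinglem} to the internal degree $-(|\underline{w}|+1)$ strand of the double complex \cref{keydoublecomplex}. Its total complex is the degree $-(|\underline{w}|+1)$ part of $\underline{\hom}_{R^e}(K_t,\mathrm{BS}(\underline{w}))$, which (via the free resolution $q_t\colon K_t\to B_t$) computes $\ext^{\bullet,-(|\underline{w}|+1)}_{R^e}(B_t,\mathrm{BS}(\underline{w}))$. In the notation of \cref{clarifyinglem}, $X=\mathrm{BS}(\underline{w})(-1)$ sits in cohomological degree $0$, $Y=\mathrm{BS}(\underline{w})(1)$ records the value of a cochain on $\underline{\rho_s}\boxtimes 1\otimes 1$, $Z=\mathrm{BS}(\underline{w})(3)$ records its value on $\underline{\rho_t t(\rho_t)}\boxtimes 1\otimes 1$, and $W=\mathrm{BS}(\underline{w})(5)$, with $f=f'=\rhosw$ and $g=g'=\rhottw$. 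Restricting to total internal degree $-(|\underline{w}|+1)$ replaces $X,Y,Z,W$ by $\mathrm{BS}(\underline{w})_{-|\underline{w}|-2}$, $\mathrm{BS}(\underline{w})_{-|\underline{w}|}$, $\mathrm{BS}(\underline{w})_{-|\underline{w}|+2}$, $\mathrm{BS}(\underline{w})_{-|\underline{w}|+4}$, so that a $1$-cochain is precisely a pair $(u,v)={}_{tt}^v\psi_s^u$ with $u$, $v$ in the degrees prescribed in the statement.

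Next I would check the three hypotheses of \cref{clarifyinglem} in this degree. For (1): since each Bott--Samelson bimodule is a free right $R$-module generated in internal degrees $\geq -|\underline{w}|$, we get $X=\mathrm{BS}(\underline{w})_{-|\underline{w}|-2}=0$, so $\ker f\subseteq\ker g$ holds trivially; this also makes $\mathrm{Tot}^0$ vanish in this degree, so there are no nonzero coboundaries. For (2): $\im g'\subseteq\im f'$ is exactly the vanishing of the right vertical arrow of \cref{keyeq2}, shown in the proof of \cref{maincohoiso} to follow from $\ker\rhosw\subseteq\ker\rhottw$ together with self-adjointness of the global intersection form, namely $\langle \rhottw(x),b\rangle=\langle x,\rhottw(b)\rangle=0$ for $b\in\ker\rhosw$; this passes to the relevant degrees because $\rho_s^e$ and $\rho_t t(\rho_t)^e$ are homogeneous. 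The substantive point is (3): injectivity of $f'=\rhosw$ on $\mathrm{BS}(\underline{w})_{-|\underline{w}|+2}$, i.e.\ that $\ker\rhosw$ contains no element of internal degree $-|\underline{w}|+2$. This is where $|m(t,\underline{w})|\geq 4$ enters: by \cref{lowestdeglem2} (as used in the proof of \cref{1dimcriterion}), the lowest internal degree occurring in $\ker\rhosw$ is $-|\underline{w}|+|m(t,\underline{w})|-1$ when $|m(t,\underline{w})|$ is odd and $-|\underline{w}|+|m(t,\underline{w})|$ when it is even, and in both cases $|m(t,\underline{w})|\geq 4$ forces this to be $\geq -|\underline{w}|+4>-|\underline{w}|+2$. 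Keeping the parities straight is the only real bookkeeping; I do not anticipate any other obstacle.

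Granting (1)--(3), \cref{clarifyinglem} yields that every $u\in Y=\mathrm{BS}(\underline{w})_{-|\underline{w}|}$ extends to a unique $1$-cocycle $(u,v)$ in $\underline{\hom}^{1,-(|\underline{w}|+1)}_{R^e}(K_t,\mathrm{BS}(\underline{w}))$, which is the asserted existence and uniqueness of $v$ making ${}_{tt}^v\psi_s^u$ a cocycle. For the final clause, since $\im f=0$ the second bullet of \cref{clarifyinglem} says two such cocycles define the same class in $\ext^{1,-(|\underline{w}|+1)}_{R^e}(B_t,\mathrm{BS}(\underline{w}))$ iff their values on $\underline{\rho_s}\boxtimes 1\otimes 1$ agree; combined with the absence of coboundaries, this means every class has a unique cocycle representative and any $\kb$-linear relation among such classes is equivalent to the corresponding relation among their values on $\underline{\rho_s}\boxtimes 1\otimes 1$.
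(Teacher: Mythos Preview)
Your proposal is correct and follows essentially the same approach as the paper: apply \cref{clarifyinglem} to the double complex \cref{keydoublecomplex} in internal degree $-(|\underline{w}|+1)$, verify the three hypotheses, and read off the conclusion. The only minor variation is in checking condition~(1): the paper cites the general inclusion $\ker\rhosw\subseteq\ker\rhottw$ established in \cref{mainsection}, whereas you observe that in this particular degree the $X$-term $\mathrm{BS}(\underline{w})_{-|\underline{w}|-2}$ already vanishes, which makes (1) trivial and also immediately kills coboundaries; this is a slightly cleaner shortcut but not a different method.
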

\begin{proof}
The results will follow from applying \cref{clarifyinglem} to \cref{keydoublecomplex} in internal degree $-(|\underline{w}|+1)$ with $f, f^\prime=\rhosw$ and $g, g^\prime=\rhottw$. Condition $(1)$ was shown in \cref{mainsection}, $(2)$ follows from self-duality and $(3)$ follows from the proof of \cref{1dimcriterion} where it was shown that $\ker \rhosw (3)_{-(|\underline{w}|+1)}=0$ when $|m(t, \underline{w})|\ge 4$.
\end{proof}

\begin{definition}
Suppose $|m(t, \underline{w})|\ge 4$. Let $u=1(\underline{w})\in \bsw_{-|\underline{w}|}$ in \cref{clarifyinglem} and define \\
$\Phi_t^{\underline{w}}\in \ext^{1, -(|\underline{w}|+1)}_{R^e}(B_t, \mathrm{BS}(\underline{w}))$ to be
\[\Phi_t^{\underline{w}}\:=\sbrac{{}_{tt}^{v}\psi_{s}^{1(\underline{w})}  }\quad v \textnormal{ the unique element in }\bsw_{-|\underline{w}|+2} \textnormal{ such that } \rhosw(v)=\rhottw(1(\underline{w}))\]
\end{definition}

Except when $|\underline{w}|$ is small, we will not give a description for what the corresponding $v$ should be above, and \cref{clarifyinglem} essentially tells us we don't need to. We will denote $\Phi_t^{\underline{w}}$ diagrammatically as 

\begin{center}
\begin{tikzpicture}[scale=0.6]
	       \draw[violet] (1.1,-0.1) to[out=-90,in=-180] (2,-1);
	       \draw[violet] (2.9,-0.1) to[out=-90,in=0] (2,-1);
	       \foreach \x in {2} \draw[blue] (\x,-1) -- (\x,-2);
	       \node[rhdot] at (2,-1) {};
	       \node at (2,-0.4) {$\underline{w}$};
	    \end{tikzpicture} or
    \begin{tikzpicture}[scale=0.6]
	       \draw[violet] (1.1,-0.1) to[out=-90,in=-180] (2,-1);
	       \draw[violet] (2.9,-0.1) to[out=-90,in=0] (2,-1);
	       \foreach \x in {2} \draw[blue] (\x,-1) -- (\x,-2);
	       \node[rhdot] at (2,-1) {};
	       \node at (2,-0.4) {$\ldots$};
	    \end{tikzpicture}
\end{center}
where the purple lines indicate either $s$ or $t$. The red hollow dot is used to indicate that we will typically only know what $\Phi_t^{\underline{w}}$ does on $\underline{\rho_s}\boxtimes 1\otimes 1$. Likwise for $\Phi_s^{\underline{w}}$ we will color the hollow dot in the middle blue to denote as seen in the left below. In the case when the color of the bottom strand is not stated, we will color the hollow dot in the middle yellow as seen on the right below
\begin{center}
\begin{tikzpicture}[scale=0.6]
	       \draw[violet] (1.1,-0.1) to[out=-90,in=-180] (2,-1);
	       \draw[violet] (2.9,-0.1) to[out=-90,in=0] (2,-1);
	       \foreach \x in {2} \draw[red] (\x,-1) -- (\x,-2);
	       \node[bhdot] at (2,-1) {};
	       \node at (2,-0.4) {$\underline{w}$};
	    \end{tikzpicture}\qquad 
    \begin{tikzpicture}[scale=0.6]
	       \draw[violet] (1.1,-0.1) to[out=-90,in=-180] (2,-1);
	       \draw[violet] (2.9,-0.1) to[out=-90,in=0] (2,-1);
	       \foreach \x in {2} \draw[violet] (\x,-1) -- (\x,-2);
	       \node[yhdot] at (2,-1) {};
	       \node at (2,-0.4) {$\underline{w}$};
	    \end{tikzpicture}
\end{center}

\begin{remark}
The isomorphism
\begin{equation}
\label{kwresolution}
\hom_{K^b(\mathrm{Proj}(R^e))}(K_t, \mathrm{KS}(\underline{w}))\xrightarrow{q_{\underline{w}}\circ -}\hom_{D^b(R^e)}(K_t, \bsw)     
\end{equation} 
allows us to lift $\Phi_t^{\underline{w}}$ to a morphism $\widetilde{\Phi_t^{\underline{w}}}\in \underline{\hom}^{1,-(|\underline{w}+1)}_{R^e}( K_t , \mathrm{KS}(\underline{w}) )$ which one needs in order to compute relations in $\bsbimext(\hf, W_\infty)$. However like in \cref{chainliftsection} we don't need to find the entire chain lift and it suffices to work with just  $\widetilde{\Phi_t^{\underline{w}}}^0: K_t^{[-1]}\to \mathrm{KS}(\underline{w})^{[0]}$ (the notation here means that the map goes from cohomological degree $-1$ of $K_t$ to cohomological degree 0 of $K(\underline{w})$) which is defined as
\[\widetilde{\Phi_t^{\underline{w}}}^0:={}_{tt}^{ a \otimes \ldots \otimes d+\ldots}\psi_{s}^{1\otimes \ldots \otimes 1}: \urtt\boxtimes R^e \oplus \urhos \boxtimes R^e \to \underline{1}\boxtimes  R^{\otimes |\underline{w}|+1}  \]
 where $v=a\otimes_{s_1}\ldots\otimes_{s_m}d+\ldots$, 
In other words, we have replaced an expression $a\otimes_{s_1} \ldots \otimes_{s_m} d \in \bsw$ with $a\otimes \ldots \otimes d  \in R^{\otimes |\underline{w}|+1}$.
\end{remark}

\begin{remark}
To prevent notation overload, some of the relations in the following subsections are ``incorrectly written". Specifically, on one side of a relation will end up in $\mathrm{KS}(\underline{w})$ for some $\underline{w}=(s_1, \ldots, s_m)$ and the other side will end up in $\mathrm{KS}(\underline{w})\otimes_R K_\varnothing$ or $ K_\varnothing \otimes_R \mathrm{KS}(\underline{w})$, etc. This happens when the relation involves the counit $\widetilde{\epsilon_s}: K_s\to K_\varnothing$ for instance. One then needs to apply a chain lift of the right unitor $\widetilde{\theta_s}$ or the left unitor $\widetilde{\lambda_s}$ to one side to have actual equality. We will do this in the calculations by replacing $ \id_{\underline{w}}\otimes_R\widetilde{\epsilon_s}$ with 
$$  \id_{\underline{w}}\otimes_R\widetilde{\epsilon_s^\prime}:=(\id_{(s_1, \ldots, s_{m_1})}\otimes_R \widetilde{\theta_{s_m}})\circ(\id_{\underline{w}}\otimes_R\widetilde{\epsilon_s}):\mathrm{KS}(\underline{w})\otimes_R K_s\to \mathrm{KS}(\underline{w})\otimes_R K_\varnothing \to \mathrm{KS}(\underline{w})$$
in the case of $\mathrm{KS}(\underline{w})\otimes_R K_\varnothing$ in our calculations. In fact, we will always end up in the cohomological degree 0 part of $\mathrm{KS}(\underline{w})\otimes_R K_\varnothing$ after applying $\id_{\underline{w}}\otimes_R\widetilde{\epsilon_s}$ and so explicitly we then apply  
$$(\id_{(s_1, \ldots, s_{m_1})}\otimes_R \widetilde{\theta_{s_m}}^0)(\underline{1}\boxtimes a_1\otimes \ldots \otimes a_{m+1}\otimes  a_{m+2})=\underline{1}\boxtimes a_1\otimes \ldots \otimes a_{m+1}a_{m+2}$$
and similarily with $ K_\varnothing \otimes_R \mathrm{KS}(\underline{w})$, etc.
\end{remark}

\subsection{Low Strand Relations}

Recall $c_s=\frac{1}{2}(\alpha_s\otimes_s 1+1\otimes_s \alpha_s)\in B_s$. For the expression $tsts$ we have $m(tsts)=tsts$ and $\ell(tsts)\ge 4$. Therefore $\Phi_t^{(s,t,s)}$ is defined. 

\begin{lemma}
\label{cocyclerep}
$ {}_{tt}^{\scalemath{0.75}{-a_{st}\otimes_s \rho_t\otimes_t 1\otimes_s 1+c_s\otimes_t 1\otimes_s 1+1\otimes_s 1\otimes_t 1\otimes_s \alpha_s-1\otimes_s 1\otimes_t c_s} }\psi_{s}^{1\otimes_s 1\otimes_t 1 \otimes_s 1}$ is a cocycle representative for $\Phi_{t}^{(s,t,s)}$ 
\begin{center}
    \begin{tikzpicture}[scale=0.6]
	       \draw[red] (1.1,0.1) to[out=-90, in=180] (2,-1);
	       \draw[blue] (2,0.1) -- (2,-1);
	       \draw[red] (2.9,0.1) to[out=-90, in=0](2,-1);
	       \draw[blue] (2,-2) -- (2,-1);
	       \node[rhdot] at (2,-1) {};
	\end{tikzpicture}
\end{center}
\end{lemma}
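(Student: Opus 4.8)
The plan is to reduce the statement, via \cref{clarifyinglemcor}, to a single equation in $\bs(s,t,s)$, and then to verify that equation by a short polynomial-forcing computation.

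First note that $m(t,(s,t,s))=tsts$, so $|m(t,(s,t,s))|=4\ge 4$ and $\Phi_t^{(s,t,s)}$ is defined. By its very definition it is the class of ${}_{tt}^{v}\psi_s^{1(\underline w)}$, where $1(\underline w)=c_{id}\otimes_R c_{id}\otimes_R c_{id}=1\otimes_s 1\otimes_t 1\otimes_s 1$ and $v$ is \emph{the} element of $\bs(s,t,s)_{-1}$ satisfying the cocycle equation $\rho_s^e(v)=\rho_t t(\rho_t)^e(1(\underline w))$; existence and uniqueness of such a $v$, together with the fact that the resulting class in $\ext^{1,-4}_{R^e}(B_t,\bs(s,t,s))$ is pinned down by the value on $\un{\rho_s}\boxtimes 1\otimes 1$, are exactly the content of \cref{clarifyinglemcor} (applicable since $|m(t,(s,t,s))|\ge 4$). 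The cochain displayed in the statement has its $\un{\rho_s}$-value equal to $1\otimes_s 1\otimes_t 1\otimes_s 1=1(\underline w)$, so it only remains to check that the displayed element $v$ solves
\[
\rho_s\cdot v - v\cdot\rho_s \;=\; \rho_t t(\rho_t)\otimes_s 1\otimes_t 1\otimes_s 1 \;-\; 1\otimes_s 1\otimes_t 1\otimes_s\rho_t t(\rho_t)
\]
in $\bs(s,t,s)=R\otimes_{R^s}R\otimes_{R^t}R\otimes_{R^s}R(3)$.

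To verify this I would expand both sides by polynomial forcing. On the right-hand side one slides $\rho_t t(\rho_t)$ from the leftmost slot to the rightmost one: since $\rho_t t(\rho_t)$ is $t$-invariant it passes through the blue slot untouched, while across each red slot it contributes a Demazure correction governed by $\partial_s(\rho_t t(\rho_t))=-a_{st}\rho_t$ and $s(\rho_t t(\rho_t))=\rho_t t(\rho_t)+a_{st}\rho_t\alpha_s$; \cref{assump1} (so $\rho_t\in(V^*)^s$ and $\rho_s\in(V^*)^t$) is what keeps these corrections short. On the left-hand side one expands $\rho_s\cdot(-)-(-)\cdot\rho_s$ on each of the four summands of the displayed $v$, forcing $\rho_s$ past the strands (again $\rho_s$ is transparent to the blue slot, with $\partial_s(\rho_s)=1$). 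Both reductions land in the $01$-basis (equivalently the light-leaves basis) of $\bs(s,t,s)$, and matching coefficients confirms the identity; since $v$ is unique this identifies the displayed cochain with $\Phi_t^{(s,t,s)}$, and the accompanying picture is simply its chain lift $\widetilde{\Phi_t^{(s,t,s)}}^{\,0}$, obtained by replacing each $\otimes_{s_i}$ by $\otimes_\kb$.

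The only real obstacle is the bookkeeping: pushing the degree-$4$ element $\rho_t t(\rho_t)$ across three tensor factors of two colours, and $\rho_s$ across the summands of $v$, while correctly tracking the Demazure corrections. Two facts make this mechanical rather than delicate. First, in internal degree $-(|\underline w|+1)=-4$ the relevant kernel vanishes — this is exactly the statement $\ker\rho_s^e(\underline w)\,(3)_{-(|\underline w|+1)}=0$ recorded in the proof of \cref{1dimcriterion} — so any $v$ that passes the check is automatically \emph{the} unique correct one, and no separate uniqueness argument is needed. Second, in the $01$-basis only a handful of terms ever appear, so the whole verification amounts to recording the forcing moves for $\rho_s$ and $\rho_t t(\rho_t)$ past a single red or blue strand and then assembling them.
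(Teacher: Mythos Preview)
Your proposal is correct and follows essentially the same approach as the paper: both invoke \cref{clarifyinglemcor} (the paper cites \cref{clarifyinglem} directly) to reduce to verifying the single cocycle equation $\rho_s^e(v)=\rho_t t(\rho_t)^e(1\otimes_s 1\otimes_t 1\otimes_s 1)$, and then leave this to a direct computation. Your write-up is in fact more detailed than the paper's, which simply says ``which one can check by a brute force calculation.''
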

\begin{proof}
By \cref{clarifyinglem} it suffices to show
\[ \rhos{s}{3}{}(-a_{st} \otimes_s \rho_t\otimes_t 1\otimes_s 1+c_s\otimes_t 1\otimes_s 1+1\otimes_s 1\otimes_t 1\otimes_s \alpha_s-1\otimes_s 1\otimes_t c_s)=\rhott{s}{3}{} (1\otimes_s 1\otimes_t 1 \otimes_s 1) \]
which one can check by a brute force calculation.
\end{proof}

\begin{lemma}
\label{easyobserv}
$s(\rho_s)+a_{ts}\rho_t+\rho_s\in (V^*)^{s,t}$
\end{lemma}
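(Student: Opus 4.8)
The plan is simply to verify that the element $\beta := s(\rho_s)+a_{ts}\rho_t+\rho_s$ is fixed by both reflections $s$ and $t$ acting on $V^*$, since by definition $(V^*)^{s,t}$ is their simultaneous fixed subspace; membership is all that is asked.

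First I would record the contragredient action coming from the realization: for $f\in V^*$ one has $s(f)=f-\langle\alpha_s^\vee,f\rangle\alpha_s$, and likewise for $t$. Feeding in \cref{assump1} (so that $\langle\alpha_s^\vee,\rho_s\rangle=\langle\alpha_t^\vee,\rho_t\rangle=1$ and $\langle\alpha_s^\vee,\rho_t\rangle=\langle\alpha_t^\vee,\rho_s\rangle=0$), together with $a_{st}=\langle\alpha_s^\vee,\alpha_t\rangle$ and $a_{ts}=\langle\alpha_t^\vee,\alpha_s\rangle$, yields the explicit formulas $s(\rho_s)=\rho_s-\alpha_s$, $t(\rho_s)=\rho_s$, $s(\rho_t)=\rho_t$, $t(\rho_t)=\rho_t-\alpha_t$, $s(\alpha_s)=-\alpha_s$, $t(\alpha_s)=\alpha_s-a_{ts}\alpha_t$, $t(\alpha_t)=-\alpha_t$. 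In particular $\beta=2\rho_s-\alpha_s+a_{ts}\rho_t$, and then the two invariance checks are each a single line: $s(\beta)=2(\rho_s-\alpha_s)+\alpha_s+a_{ts}\rho_t=\beta$, and $t(\beta)=2\rho_s-(\alpha_s-a_{ts}\alpha_t)+a_{ts}(\rho_t-\alpha_t)=\beta$. Hence $\beta\in(V^*)^{s,t}$.

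There is no genuine obstacle here; the only thing to get right is the bookkeeping of the dual action and the pairings, which is precisely what \cref{assump1} was arranged to make transparent. As motivation I would also include the coordinate-free version: $\rho_s+s(\rho_s)$ is automatically $s$-invariant (it is a symmetrization over $\langle s\rangle$), one computes $(\id-t)(\rho_s+s(\rho_s))=-a_{ts}\alpha_t=-a_{ts}(\id-t)(\rho_t)$, and $\rho_t\in(V^*)^s$; so adding $a_{ts}\rho_t$ cancels the $t$-defect while staying inside $(V^*)^s$, whence the sum lies in $(V^*)^s\cap(V^*)^t=(V^*)^{s,t}$.
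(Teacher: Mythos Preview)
Your proof is correct and is exactly the ``easy check'' the paper has in mind: verify directly, using the contragredient action and \cref{assump1}, that the element is fixed by both $s$ and $t$. Your coordinate-free remark at the end is a nice bonus but not needed.
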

\begin{proof}
Easy check. 
\end{proof}

\begin{prop}
\label{4extrotation}
$\Phi_{t}^{(s,t,s)}$ is signed rotation invariant. Specifically,
\[ (\id_t\otimes_R \id_{(s, t)} \otimes_R \epsilon_{s})\circ (\id_t\otimes_R \id_{(s, t)} \otimes_R \mu_{s}) \circ (\id_t \otimes_R \Phi_t^{(s,t,s)}\otimes_R \id_{s} )\circ (\delta_t\otimes_R \id_{s})\circ (\eta_t\otimes_R \id_{s}) =-\Phi_{s}^{(t, s, t)}  \]
And similarly, 
\[ (\epsilon_{s}\otimes_R \id_{(t,  s)} \otimes_R \id_{t})\circ (\mu_{s}\otimes_R \id_{(t,s)} \otimes_R \id_{t}) \circ (\id_{s} \otimes_R \Phi_t^{(s,t,s)}\otimes_R \id_{t} )\circ ( \id_{s}\otimes_R \delta_t)\circ (\id_{s}\otimes_R \eta_t) =-\Phi_{s}^{(t,s,t)}  \]
\end{prop}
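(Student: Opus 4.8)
### Proof Proposal for Proposition \ref{4extrotation}

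\textbf{Strategy.} The statement asserts that the ``rotation'' of $\Phi_t^{(s,t,s)}$ by wrapping with cups and caps (one blue $\eta_t/\delta_t$ and one red $\mu_s/\epsilon_s$) produces $-\Phi_s^{(t,s,t)}$. By \Cref{1dimcriterion}, since $m(t,\underline{w}) = tsts$ has length $4 \ge 4$, the space $\ext^{1,-4}_{R^e}(B_s, \mathrm{BS}(t,s,t))$ is one-dimensional over $\kb$ (note $|\underline{w}|+1 = 4$ here), and $\Phi_s^{(t,s,t)}$ is a chosen nonzero element of it. So the rotated morphism, which lives in the same $\ext$ group (one checks the bidegree: the cups/caps contribute internal degree $+1$ each from $\eta,\delta$ and $-1$ each from $\mu,\epsilon$, and cohomological degree $0$ throughout except the single $\Phi_t^{(s,t,s)}$ which carries $(1,-4)$; the net shift from rotating $B_t \leadsto B_s$ is degree-preserving by the biadjunction), is necessarily a scalar multiple of $\Phi_s^{(t,s,t)}$. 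The entire content is to compute that scalar and check it equals $-1$.

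\textbf{Key steps.} First I would reduce to a chain-level computation. Using the cocycle representative for $\Phi_t^{(s,t,s)}$ from \Cref{cocyclerep} and the explicit chain lifts $\widetilde{\eta_t}, \widetilde{\delta_t}, \widetilde{\mu_s}, \widetilde{\epsilon_s}$ (from \Cref{chainliftsection} and \cite{M}), I would assemble the composite as a map of complexes $K_s \to K(t,s,t)$ and compute its degree-$0$ component $\widetilde{(\cdots)}^0$, i.e. its action on $\urhos \boxtimes 1 \otimes 1$ (and, for bookkeeping, on $\urtt \boxtimes 1 \otimes 1$). By \Cref{clarifyinglemcor}, a class in $\ext^{1,-4}_{R^e}(B_s, \mathrm{BS}(t,s,t))$ is \emph{determined by its value on the $\underline{\rho_t}$-generator} (the blue analogue of $\underline{\rho_s}$, since the roles of $s$ and $t$ swap). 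So it suffices to show that the composite, evaluated on this generator, equals $-1 \otimes_t 1 \otimes_s 1 \otimes_t 1 = -1(\underline{t,s,t})$ modulo $\im(\rho_t^e(\underline{t,s,t}))$ — matching the defining value of $-\Phi_s^{(t,s,t)}$.

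\textbf{Carrying it out.} The computation proceeds by tracking a single tensor $1 \otimes_s 1 \in B_s$ (the generator of $K_s$ at cohomological degree $0$) through the composite: $\eta_t \otimes \id_s$ inserts a $B_t$ via $1 \mapsto 1\otimes_t 1$ then a coproduct $\delta_t$, then $\Phi_t^{(s,t,s)}$ acts on the middle three strands turning $B_t \otimes B_s \otimes B_t \otimes B_s$ data into $\mathrm{BS}(s,t,s)$ data via the formula in \Cref{cocyclerep}, then $\mu_s$ merges the two outer red strands and $\epsilon_s$ kills the resulting $B_s$. The key simplifications are: the Frobenius/biadjunction relations for red ($\epsilon_s \circ \mu_s \circ (\delta_s \text{ stuff})$ collapsing to multiplication and the $\partial_s$-twisted slide), the fact (\Cref{easyobserv}) that $s(\rho_s) + a_{ts}\rho_t + \rho_s \in (V^*)^{s,t}$ which lets several $\rho$-terms be absorbed or killed, and reduction modulo $\im \rho_t^e$. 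The sign $-1$ should emerge from the super-exchange law (the Koszul sign incurred when sliding the degree-$(1,-4)$ morphism $\Phi_t^{(s,t,s)}$ past the degree-shifting cup/cap data) combined with the sign already present in the \Cref{cocyclerep} representative. The second displayed identity (rotating on the other side) is proven by the mirror-image computation, or more slickly by applying the first identity together with the known rotation-invariance of the Elias--Khovanov generators and an involution/flip symmetry of the category — I would state it follows ``by an entirely analogous computation'' rather than repeat it.

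\textbf{Main obstacle.} The genuine difficulty is \emph{bookkeeping of signs and unitors}: one must correctly insert the chain lifts $\widetilde{\theta}, \widetilde{\lambda}$ of the unitors (as flagged in the remark preceding \Cref{4extrotation}), track every Koszul sign from the super-exchange law, and verify the brute-force identity analogous to the one in \Cref{cocyclerep} — namely that the computed value of the composite on $\urhos \boxtimes 1 \otimes 1$ differs from $-(1\otimes_t 1 \otimes_s 1 \otimes_t 1)$ by something in $\im(\rho_t^e(\underline{t,s,t}))$. There is no conceptual obstruction — the one-dimensionality from \Cref{1dimcriterion} guarantees the answer is $\pm\Phi_s^{(t,s,t)}$ — so the whole proof is pinning down that the sign is $-$, which is exactly the kind of calculation the paper does by ``brute force'' with diagrammatic assistance.
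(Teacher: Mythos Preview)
Your approach is correct and matches the paper's: use one-dimensionality from \cref{1dimcriterion} to reduce to computing a scalar, then pin down that scalar by a chain-level evaluation on the generator $\underline{\rho_t}\boxtimes 1\otimes 1$ of $K_s^{[-1]}$, checking that the composite sends it to $-1(\underline{t,s,t})$.

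Two points to correct. First, in your ``Key steps'' paragraph you name the generators to evaluate on as $\underline{\rho_s}$ and $\underline{\rho_t t(\rho_t)}$, but those are the degree $-1$ generators of $K_t$; since the rotated morphism has domain $K_s$, the correct generators are $\underline{\rho_t}$ and $\underline{\rho_s s(\rho_s)}$ (you do get this right one sentence later). Second, your speculation that the sign $-1$ arises from the super-exchange law is wrong: every morphism in the composite other than $\Phi_t^{(s,t,s)}$ itself has cohomological degree $0$, so no Koszul signs appear. The $-1$ comes purely from the arithmetic of the chain lifts acting on the explicit cocycle of \cref{cocyclerep}; in the paper's computation, after applying $\widetilde{\mu_s}^0$ and then $\widetilde{\epsilon_s^\prime}^0$ to the terms $1\otimes 1\otimes 1\otimes \alpha_s - 1\otimes 1\otimes c_s$ from the $v$-part, one obtains $-\partial_s(\alpha_s)+\partial_s(-s(\rho_s)) = -2+1 = -1$ (the $\rho_t$-terms are killed along the way using \cref{easyobserv}). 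If you look for the sign in the super-exchange law you will not find it.
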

\begin{proof}
We prove the first equality as the second is quite similar (and in fact follows from the first). Diagrammatically we want to show
\begin{center}
    \begin{tikzpicture}[scale=0.6]
	       \draw[red] (1.1,0.1) to[out=-90,in=180](2,-1);
	       \draw[blue] (2,0.1) -- (2,-1);
	       \draw[red] (2.9,0.1) to[out=-90,in=0] (2,-1);
	       \draw[red] (2.9,0.1) to[out=90,in=180] (3.2,0.5) to[out=0,in=90] (3.5,-0.1);
	       \draw[red] (3.5,-0.1) -- (3.5, -2);
	       \draw[blue] (0.8,0.1) to[out=-90,in=-180] (1.5,-1.75) to[out=0, in=90] (2,-1);
	       \node[rhdot] at (2,-1) {};
	    \end{tikzpicture}\ \raisebox{0.6cm}{$= \ - \hspace{-1ex}$}  \begin{tikzpicture}[scale=0.6]
	       \draw[blue] (1.1,0.1) to[out=-90, in=180] (2,-1);
	       \draw[red] (2,0.1) -- (2,-1);
	       \draw[blue] (2.9,0.1) to[out=-90, in=0] (2,-1);
	       \draw[red] (2,-2) -- (2,-1);
	       \node[bhdot] at (2,-1) {};
	\end{tikzpicture}
\end{center}
From \cref{clarifyinglemcor} it suffices to show both sides agree on $\underline{\rho_t}\boxtimes 1\otimes 1$. As $(V^*)^t=\kb \rho_s \oplus (V^*)^{s,t}$, From \cref{easyobserv} we have that $\rho_t+t(\rho_t)=-a_{st}\rho_s+r$ where $r= \rho_t+t(\rho_t)+a_{st}\rho_s\in (V^*)^{s,t} $. The first part of the diagram will be the series of maps
\begin{center}
\begin{tikzcd}
K_t\otimes_R K_t \otimes_R K_s &  
\begin{tabular}{c}
$(-a_{st}\underline{\rho_s^{ (1)}}-a_{st}\underline{\rho_s^{ (2)}})\boxtimes  \rho_t\otimes 1\otimes 1\otimes  1 +(\underline{r^{ (1)}}+\underline{r^{ (2)}})\boxtimes \rho_t\otimes 1\otimes 1\otimes  1 $ \\
$- (\urtt^{ (1)}+ \urtt^{ (2)})\boxtimes 1\otimes 1 \otimes 1\otimes 1+\underline{\rho_t^{(3)}}\boxtimes ( \rho_t \otimes 1 \otimes 1 \otimes 1- 1\otimes 1\otimes t(\rho_t)\otimes 1)$
\end{tabular}
\\ 
K_t\otimes_R K_s \arrow[u, "\widetilde{\delta_t}\otimes_R \id_s"]& \underline{\rho_t+t(\rho_t)^{(1)}}\boxtimes \rho_t\otimes 1\otimes  1- \urtt^{(1)}\boxtimes 1\otimes 1\otimes 1 +\underline{\rho_t^{(2)}}\boxtimes \rho_t\otimes 1 \otimes 1 -\underline{\rho_t^{(2)}}\boxtimes 1\otimes t(\rho_t) \otimes 1  \arrow[u] \\
K_\varnothing \otimes_R K_s \arrow[u, "\widetilde{\eta_t}\otimes_R \id_s"]& \underline{\rho_t^{(1)}}\boxtimes 1\otimes 1\otimes 1+\underline{\rho_t^{(2)}}\boxtimes 1\otimes 1 \otimes 1\arrow[u]  \\
K_s\arrow[u, "\widetilde{\tau_s}^1"]& \underline{\rho_t}\boxtimes 1\otimes 1 \arrow[u]
\end{tikzcd}    
\end{center}
where we have used the chain lifts defined in \cref{chainliftsection}. Be aware that since $\underline{\rho_t^{(2)}}= \underline{1}\otimes_R \un{\rho_t}$, we have to apply $\widetilde{\eta_t}^0\otimes_R \id_s$ to $\underline{\rho_t^{(2)}}\boxtimes 1\otimes 1\otimes 1$ when going from the third line to the the second line from the top. At the next step we apply
\[ \id_t\otimes_R  {}_{t}^{-a_{st}\rho_t \otimes 1\otimes 1\otimes 1+c_s\otimes 1\otimes 1+1\otimes 1\otimes 1\otimes \alpha_s-1\otimes 1\otimes c_s }\psi_{s}^{1\otimes 1\otimes 1 \otimes 1}\otimes_R \id_s \]
so only the $\underline{{\phantom{\rho_s}}^{(2)}}$ terms survive. Because $r\in (V^*)^{s,t} $, the $\underline{r^{(2)}}$ term also disappears. Thus, the next series of maps will be 
\begin{center}
\begin{tikzcd}
K_t\otimes_R  \mathrm{KS}(s,t)  & \underline{1}\boxtimes \left( -1\otimes (1\otimes 1\otimes 2)+1\otimes (1\otimes 1\otimes 1)\right)=-\underline{1}\boxtimes 1\otimes 1\otimes 1\otimes 1\\ 
K_t\otimes_R  \mathrm{KS}(s,t) \otimes_R K_s \arrow[u, "\id_t\otimes_R \id_{(s,t)}\otimes_R \widetilde{\epsilon_s^\prime}^0"]& \underline{1}\boxtimes \left(0+0+0-1\otimes (1\otimes 1\otimes  \partial_s(\alpha_s))\otimes 1+1\otimes ( 1\otimes 1\otimes  \partial_s(-s(\rho_s)) ) \otimes 1\right)
\arrow[u] \\
K_t\otimes_R  \mathrm{KS}(s,t,s) \otimes_R K_s \arrow[u, "\id_t\otimes_R \id_{(s,t)}\otimes_R \widetilde{\mu_s}^0"]&  \arrow[u] 
\begin{tabular}{c}
$\underline{1}\boxtimes \left(-a_{st} \rho_t\otimes  (1 \otimes 1\otimes 1\otimes 1)\otimes  1\right.$\\
$- \left.1\otimes (-a_{st} \otimes \rho_t\otimes 1\otimes 1+c_s\otimes 1\otimes 1+1\otimes 1\otimes 1\otimes \alpha_s-1\otimes 1\otimes c_s)\otimes 1\right)$
\end{tabular}
\\
K_t\otimes_R K_t \otimes_R K_s\arrow[u, "\id_t \otimes_R \widetilde{\Phi_t^{(s,t,s)}}^0\otimes_R\id"]& -a_{st}\underline{\rho_s^{ (2)}}\boxtimes  \rho_t\otimes (1\otimes 1)\otimes  1 - \urtt^{ (2)}\boxtimes 1\otimes (1 \otimes 1)\otimes 1\arrow[u]
\end{tikzcd}    
\end{center}
which is exactly equal to $-\widetilde{\Phi_s^{(t,s,t)}}^0(\underline{\rho_t}\boxtimes 1\otimes 1)$ as desired.
\end{proof}

\begin{prop}
\label{4extreduct}
We have the following reduction identity in $\ext^{1, -3}_{R^e}(B_t, B_s B_t)$.
\[ (\id_s\otimes_R \id_t\otimes_R \epsilon_s)\circ \Phi_t^{(s,t,s)}= (\eta^{\ext}_s\otimes_R \id_t)\circ \tau_t -(\eta_s\otimes_R \id_t)\circ (\id_\varnothing\otimes_R \phi_t)\circ \tau_t \]
Diagrammatically this will be of the form,
\begin{center}
    \begin{tikzpicture}[scale=0.6]
	       \draw[red] (1.1,0.1) to[out=-90, in=180] (2,-1);
	       \draw[blue] (2,0.1) -- (2,-1);
	       \draw[red] (2.9,0.1) to[out=-90, in=0](2,-1);
	       \draw[blue] (2,-2) -- (2,-1);
	       \node[rhdot] at (2,-1) {};
	       \node[rdot] at (2.9,0.1) {};
	\end{tikzpicture} \raisebox{0.5cm}{$= \ $}
 \begin{tikzpicture}[scale=0.6]
	       \draw[red] (1.1,0.1) -- (1.1,-1);
	       \draw[blue] (2,0.1) -- (2,-1);
	       \draw[blue] (2,-2) -- (2,-1);
	       \node[rhdot] at (1.1,-1) {};
	\end{tikzpicture}\raisebox{0.6cm}{$\ \ - \ \ $}
\begin{tikzpicture}[scale=0.6]
	       \draw[red] (1.1,0.1) -- (1.1,-1);
	       \draw[blue] (2,0.1) -- (2,-1);
	       \draw[blue] (2,-2) -- (2,-1);
	       \node[rdot] at (1.1,-1) {};
	       \node[bhdot] at (2,-1.3) {};
	\end{tikzpicture}
\end{center}
\end{prop}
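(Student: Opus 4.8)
The plan is to mimic the proof of \cref{4extrotation}: produce explicit cocycle representatives for both sides in $\underline{\hom}^{1,-3}_{R^e}(K_t,\bs(s,t))$ and compare them directly, working only with the degree-$0$ chain-lift components $K_t^{[-1]}\to K(s,t)^{[0]}$, i.e.\ with the values on the two Koszul generators $\un{\rho_s}\boxtimes 1\otimes 1$ and $\un{\rho_t t(\rho_t)}\boxtimes 1\otimes 1$ of $K_t^{-1}$.

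First I would set up the comparison. Both sides lie in $\ext^{1,-3}_{R^e}(B_t,B_sB_t)=H^1(\underline{\hom}_{R^e}(K_t,\bs(s,t)))$. Since $\bs(s,t)$ is concentrated in even internal degrees $\ge-2$, the corner $X=\bs(s,t)\,\un{1}\,(-1)$ of the double complex \cref{keydoublecomplex} for $\un{w}=(s,t)$ vanishes in internal degree $-3$ (it equals $\bs(s,t)_{-4}=0$), so there are no nonzero coboundaries in $\underline{\hom}^{1,-3}_{R^e}(K_t,\bs(s,t))$. Hence two classes in $\ext^{1,-3}_{R^e}(B_t,B_sB_t)$ are equal iff any chosen cocycle representatives are \emph{equal}, which, a representative being a map out of $K_t^{-1}$, amounts to checking equality of the values on $\un{\rho_s}\boxtimes 1\otimes 1$ and $\un{\rho_t t(\rho_t)}\boxtimes 1\otimes 1$. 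Note that here $|m(t,(s,t))|=3<4$, so \cref{clarifyinglemcor} does \emph{not} apply and one genuinely cannot get away with comparing only the $\un{\rho_s}$-component; indeed \cref{maincohoiso} shows $\ext^{1,-3}_{R^e}(B_t,B_sB_t)$ is $2$-dimensional, the extra dimension living in the $\un{\rho_t t(\rho_t)}$-slot. I would build a representative for each side from the chain lifts of \cref{chainliftsection}, inserting the unitor $\widetilde{\epsilon_s^\prime}$ exactly as in the remark on ``incorrectly written'' relations. As for the $\un{\rho_s}$-values: composing the cocycle of \cref{cocyclerep} with $\id_{(s,t)}\otimes_R\widetilde{\epsilon_s^\prime}$, the left side sends $\un{\rho_s}\boxtimes 1\otimes 1\mapsto\widetilde{\epsilon_s^\prime}(1\otimes_s1\otimes_t1\otimes_s1)=1\otimes_s1\otimes_t1$. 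On the right, $\eta_s^{\ext}=\hunit$ of \cref{hochbs} sends $\un{\rho_s}\boxtimes1\otimes1\mapsto 1\otimes_s1$ and annihilates the remaining Koszul generators of $K_\varnothing$, so with the inverse-unitor lift $\widetilde{\tau_t}$ one gets that $(\eta_s^{\ext}\otimes_R\id_t)\circ\tau_t$ also yields $1\otimes_s1\otimes_t1$, while $(\eta_s\otimes_R\id_t)\circ(\id_\varnothing\otimes_R\phi_t)\circ\tau_t$ vanishes on $\un{\rho_s}\boxtimes1\otimes1$ because $\phi_t=-\gamma_t^\vee\lfrown(-)$ kills $\un{\rho_s}$ (as $\gamma_t^\vee(\un{\rho_s})=0$, since $\rho_s\in(V^*)^t$). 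So both sides agree on $\un{\rho_s}\boxtimes1\otimes1$.

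The computational heart is the $\un{\rho_t t(\rho_t)}$-values. On the left I would apply $\id_{(s,t)}\otimes_R\widetilde{\epsilon_s^\prime}$ to the four-term element $v$ of \cref{cocyclerep}; the terms $1\otimes_s1\otimes_t1\otimes_s\alpha_s$ and $-1\otimes_s1\otimes_tc_s$ collapse and cancel under $\widetilde{\epsilon_s^\prime}$, leaving an explicit element of $\bs(s,t)_0$ coming from the first two terms. On the right I would push $\un{\rho_t t(\rho_t)}\boxtimes1\otimes1$ through $\widetilde{\tau_t}$ and then through $\eta_s^{\ext}\otimes_R\id_t$ (resp.\ through $\id_\varnothing\otimes_R\phi_t$ and then $\eta_s\otimes_R\id_t$): the $\phi_t$-summand contributes since $\gamma_t^\vee(\un{\rho_t t(\rho_t)})=1$, giving $-\un{1}\boxtimes 1\otimes 1$ before applying $\eta_s\otimes_R\id_t$, and the $\eta_s^{\ext}$-summand contributes via the correction term of $\widetilde{\tau_t}$ forced by $\rho_t t(\rho_t)^e$ lying in the ideal of $R^e$ generated by $\rho_s^e$ and the $e_i^e$ (here one uses $s(\rho_t)=\rho_t$, $t(\rho_t)=\rho_t-\alpha_t$, $\abrac{\alpha_s^\vee,\rho_t}=0$, and $\alpha_t\equiv a_{st}\rho_s\bmod(V^*)^s$, as in \cref{easyobserv}). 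Collecting these contributions and simplifying with the $B_s$- and $B_t$-bimodule relations should show the two values coincide, which together with Step $2$ finishes the proof.

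The hard part will be this last step: correctly determining the chain lift $\widetilde{\tau_t}$ on the ``special'' Koszul generator $\un{\rho_t t(\rho_t)}$ — in particular its correction term in the $K_\varnothing$ tensor factor, which is where the $-a_{st}\rho_t$-type contribution enters after applying $\eta_s^{\ext}$ — and then keeping track of the Koszul signs and the unitor bookkeeping as it passes through $\eta_s^{\ext}\otimes_R\id_t$ and $\id_\varnothing\otimes_R\phi_t$. As in \cref{4extrotation}, once everything is set up this is a finite but somewhat lengthy brute-force verification.
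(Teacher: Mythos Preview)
Your proposal is correct and follows essentially the same approach as the paper: both the paper and you check the two cocycle representatives agree on $\un{\rho_s}\boxtimes 1\otimes 1$ and $\un{\rho_t t(\rho_t)}\boxtimes 1\otimes 1$ using the explicit representative from \cref{cocyclerep} and the chain lifts $\widetilde{\tau_t}$, $\widetilde{\eta_s^{\ext}}$, $\widetilde{\phi_t}$, $\widetilde{\eta_s}$, $\widetilde{\epsilon_s^\prime}$ from \cref{chainliftsection}. Your observation that there are no coboundaries in this internal degree is a clean way to justify why cocycle-level equality suffices, though the paper simply exhibits equal representatives directly; the computational content is identical.
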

\begin{proof}
We have to check that both sides agree on both $\underline{\rho_s}\boxtimes 1\otimes 1$ and $\urtt\boxtimes 1\otimes 1$ as $\dim_\kb \ext^{1, -3}_{R^e}(B_t, B_s B_t)\neq 1$. The RHS applied to $\underline{\rho_s}\boxtimes 1\otimes 1$ will be 
\begin{align*}
\paren{\widetilde{\eta_s^{\ext}}^1\otimes_R \id_t}\circ \widetilde{\tau_t}^1(\underline{\rho_s}\boxtimes 1\otimes 1)&= \paren{\widetilde{\eta_s^{\ext}}^1\otimes_R \id_t}(\underline{\rho_s^{(1)}}\boxtimes 1\otimes 1\otimes 1+\underline{\rho_s^{(2)}}\boxtimes 1\otimes 1\otimes 1) =\underline{1}\boxtimes 1\otimes 1\otimes 1  \\
(\widetilde{\eta_s}^0\otimes_R \id_t)\circ (\id_\varnothing\otimes_R \widetilde{\phi_t}^1)\circ \widetilde{\tau_t}^1(\underline{\rho_s}\boxtimes 1\otimes 1)&=(\widetilde{\eta_s}^0\otimes_R \id_t)\circ (\id_\varnothing\otimes_R \widetilde{\phi_t}^1)(\underline{\rho_s^{(1)}}\boxtimes 1\otimes 1\otimes 1+\underline{\rho_s^{(2)}}\boxtimes 1\otimes 1\otimes 1)\\
&=0
\end{align*}
and thus the sum agrees with $(\id_s\otimes_R \id_t\otimes_R \widetilde{\lambda_s}^0)\circ(\id_s\otimes_R \id_t\otimes_R \widetilde{\epsilon_s}^0)\circ \widetilde{\Phi_t^{(s,t,s)}}^0(\urhos\boxtimes 1\otimes 1)$. Similarly we see that 
\begin{align*}
\paren{\widetilde{\eta_s^{\ext}}^1\otimes_R \id_t}\circ \widetilde{\tau_t}^1(\urtt\boxtimes 1\otimes 1)&= \paren{\widetilde{\eta_s^{\ext}}^1\otimes_R \id_t}(\underline{\rho_t^{(1)}}\boxtimes t(\rho_t)\otimes 1\otimes 1+ \underline{t(\rho_t)^{(1)}}\boxtimes 1\otimes \rho_t \otimes 1+\underline{\gamma_t^{(2)}}\boxtimes 1\otimes 1 \otimes 1) \\
&=\abrac{\alpha_s^\vee, t(\rho_t)}\underline{1}\boxtimes 1\otimes \rho_t\otimes 1 =-a_{st}\underline{1}\boxtimes 1\otimes \rho_t\otimes 1
\end{align*}
while 
\begin{align*}
&(\widetilde{\eta_s}^0\otimes_R \id_t)\circ (\id_\varnothing\otimes_R \widetilde{\phi_t}^1)\circ \widetilde{\tau_t}^1(\urtt\boxtimes 1\otimes 1)\\
=&(\widetilde{\eta_s}^0\otimes_R \id_t)\circ (\id_\varnothing\otimes_R \widetilde{\phi_t}^1)(\underline{\rho_t^{(1)}}\boxtimes t(\rho_t)\otimes 1\otimes 1+ \underline{t(\rho_t)^{(1)}}\boxtimes 1\otimes \rho_t \otimes 1+\underline{\gamma_t^{(2)}}\boxtimes 1\otimes 1 \otimes 1)\\
=&(\widetilde{\eta_s}^0\otimes_R \id_t)(-\underline{1}\boxtimes 1\otimes 1\otimes 1)=-\underline{1}\boxtimes c_s\otimes 1
\end{align*}
On the other hand the LHS applied to $\urtt\boxtimes 1\otimes 1$ is
\begin{align*}
& (\id_s\otimes_R \id_t\otimes_R \widetilde{\epsilon_s^\prime}^0)\circ \widetilde{\Phi_t^{(s,t,s)}}^0(\urtt\boxtimes 1\otimes 1)\\
=&  (\id_s\otimes_R \id_t\otimes_R \widetilde{\epsilon_s^\prime}^0)\paren{\underline{1}\boxtimes -a_{st} \otimes \rho_t\otimes 1\otimes 1+c_s\otimes 1\otimes 1+1\otimes 1\otimes 1\otimes \alpha_s-1\otimes 1\otimes c_s}\\
=&\underline{1}\boxtimes-a_{st} \otimes \rho_t\otimes 1+c_s\otimes 1
\end{align*}
as $\widetilde{\epsilon_s^\prime}^0(c_s)=\rho_s-s(\rho_s)=\alpha_s$. But this is exactly 
\[ \paren{\widetilde{\eta_s^{\ext}}^1\otimes_R \id_t}\circ \widetilde{\tau_t}^1(\urtt\boxtimes 1\otimes 1) -(\widetilde{\eta_s}^0\otimes_R \id_t)\circ (\id_\varnothing\otimes_R \widetilde{\phi_t}^1)\circ \widetilde{\tau_t}^1(\urtt\boxtimes 1\otimes 1) \]
and thus we are done.
\end{proof}

\begin{corollary}
\label{4extreductcor}
We have the following reduction identity in $\ext^{1, -3}_{R^e}(B_t, B_t B_s)$.
\[ (\epsilon_s\otimes_R \id_t\otimes_R \id_s)\circ \Phi_t^{(s,t,s)}= (\id_t\otimes_R \eta^{\ext}_s)\circ \sigma_t -( \id_t\otimes_R \eta_s)\circ ( \phi_t\otimes_R \id_\varnothing)\circ \sigma_t \]
Diagrammatically this will be of the form,
\begin{center}
    \begin{tikzpicture}[scale=0.6]
	       \draw[red] (1.1,0.1) to[out=-90, in=180] (2,-1);
	       \draw[blue] (2,0.1) -- (2,-1);
	       \draw[red] (2.9,0.1) to[out=-90, in=0] (2,-1);
	       \draw[blue] (2,-2) -- (2,-1);
	       \node[rhdot] at (2,-1) {};
	       \node[rdot] at (1.1,0.1) {};
	\end{tikzpicture} \raisebox{0.5cm}{$= \ $}
\begin{tikzpicture}[scale=0.6]
	       \draw[blue] (1.1,0.1) -- (1.1,-2);
	       \draw[red] (2,0.1) -- (2,-1);
	       \node[rhdot] at (2,-1) {};
	\end{tikzpicture}\raisebox{0.5cm}{$\ - \ $}
\begin{tikzpicture}[scale=0.6]
	       \draw[blue] (1.1,0.1) -- (1.1,-2);
	       \draw[red] (2,0.1) -- (2,-1);
	       \node[rdot] at (2,-1) {};
	       \node[bhdot] at (1.1,-1.4) {};
	\end{tikzpicture}
\end{center}
\end{corollary}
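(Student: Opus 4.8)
The plan is to prove \cref{4extreductcor} by the same kind of direct chain-level computation used for \cref{4extreduct}, of which it is the horizontal mirror image: the red dot now sits on the left leg of $\Phi_t^{(s,t,s)}$ rather than the right, and the inverse left unitor $\tau_t$ is replaced by the inverse right unitor $\sigma_t$. Since $\dim_\kb\ext^{1,-3}_{R^e}(B_t,B_tB_s)\neq 1$, the simplification of \cref{clarifyinglemcor} does not apply, so — exactly as in \cref{4extreduct} — I would verify the identity by checking that both sides, regarded as cocycles in $\underline{\hom}^{1,-3}_{R^e}(K_t,K(t,s))$, agree after evaluating on each of the two exterior generators $\urhos\boxtimes 1\otimes 1$ and $\urtt\boxtimes 1\otimes 1$ of $K_t$ in cohomological degree $-1$. (Alternatively one could hope to deduce \cref{4extreductcor} from \cref{4extreduct} by invoking the horizontal-flip symmetry of the diagrammatic calculus, using that the diagram for $\Phi_t^{(s,t,s)}$ is left-right symmetric; but confirming that this flip fixes $\Phi_t^{(s,t,s)}$ on the nose, with no stray super sign, requires essentially the same evaluation, so I would do the computation directly.)

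For the left-hand side I would take the explicit cocycle representative of $\Phi_t^{(s,t,s)}$ from \cref{cocyclerep}, whose degree-$0$ chain lift $\widetilde{\Phi_t^{(s,t,s)}}^0$ sends $\urhos\boxtimes 1\otimes 1\mapsto 1\otimes_s 1\otimes_t 1\otimes_s 1$ and $\urtt\boxtimes 1\otimes 1\mapsto -a_{st}\otimes_s\rho_t\otimes_t 1\otimes_s 1+c_s\otimes_t 1\otimes_s 1+1\otimes_s 1\otimes_t 1\otimes_s\alpha_s-1\otimes_s 1\otimes_t c_s$, and then post-compose on the leftmost tensor factor with the (unitor-corrected) dotted counit, for which $(1\otimes_s 1)\mapsto 1$ and $c_s\mapsto\rho_s-s(\rho_s)=\alpha_s$ as in the remarks preceding \cref{cocyclerep}. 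Reading this off termwise gives an explicit element of $K(t,s)$ on each of the two generators, with the value on $\urhos\boxtimes 1\otimes 1$ being $1(\un{(t,s)})$.

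For the right-hand side I would use the chain lifts from \cref{chainliftsection}: the chain lift $\widetilde{\sigma_t}$ of the inverse right unitor for $B_t$ — the mirror of the formula for $\widetilde{\tau_t}$ used in \cref{4extreduct}, with the two factors of $K_\varnothing\otimes_R K_t$ interchanged — together with $\widetilde{\eta_s^{\ext}}$, $\widetilde{\eta_s}$ and $\widetilde{\phi_t}$. Evaluating $(\id_t\otimes_R\eta_s^{\ext})\circ\sigma_t$ and $(\id_t\otimes_R\eta_s)\circ(\phi_t\otimes_R\id_\varnothing)\circ\sigma_t$ on $\urhos\boxtimes 1\otimes 1$ and on $\urtt\boxtimes 1\otimes 1$ reduces, just as in \cref{4extreduct}, to the numerology $\abrac{\alpha_s^\vee,\rho_t}=0$ and $\abrac{\alpha_s^\vee,t(\rho_t)}=-a_{st}$, after which the difference of these two terms matches the left-hand side computed above. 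The step requiring the most care — and essentially the only nontrivial one — is the bookkeeping of Koszul/super signs when commuting the cohomological-degree-$1$ maps ($\widetilde{\phi_t}^1$, $\widetilde{\eta_s^{\ext}}^1$, and the degree-raising component $\widetilde{\Phi_t^{(s,t,s)}}^0$) past the unitor corrections, and correctly transposing the formula for $\widetilde{\tau_t}$ into that for $\widetilde{\sigma_t}$; once the conventions are pinned down the remaining algebra is routine.
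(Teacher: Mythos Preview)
Your direct chain-level computation would work and is a legitimate proof, but the paper takes a shorter route: it deduces \cref{4extreductcor} (and likewise \cref{4extreductcor2}, \cref{4extreductcor3}) by \emph{rotating} the already-established identity \cref{4extreduct} and invoking the signed rotation invariance of $\Phi_t^{(s,t,s)}$ proved in \cref{4extrotation}. Concretely, adding caps and cups to \cref{4extreduct} and using \cref{4extrotation} twice moves the red dot from the top-right strand to the top-left strand, with the two signs $(-1)\cdot(-1)=+1$ cancelling, so the right-hand side transforms correspondingly into the right-hand side of \cref{4extreductcor}. Your alternative suggestion of a ``horizontal flip'' is essentially this argument, and your concern about stray super signs is exactly what \cref{4extrotation} resolves: the point of proving rotation invariance first is that one then never has to repeat the evaluation on $\urhos$ and $\urtt$. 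So both approaches are valid; yours is self-contained but duplicates the work of \cref{4extreduct}, while the paper's is a one-line deduction once \cref{4extrotation} is in hand.
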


\begin{corollary}
\label{4extreductcor2}
We have the following reduction identity in $\ext^{1, -3}_{R^e}(B_t, B_s B_s)$.
\[ (\id_s\otimes_R \epsilon_t\otimes_R \id_s)\circ \Phi_t^{(s,t,s)}=  -\delta_s\circ \eta_s\circ \epsilon_t^{\ext} +\delta_s\circ \eta_s^{\ext}\circ \epsilon_t  \]
Diagrammatically this will be of the form,
\begin{center}
    \begin{tikzpicture}[scale=0.6]
	       \draw[red] (1.1,0.1) to[out=-90, in=180] (2,-1);
	       \draw[blue] (2,0.1) -- (2,-1);
	       \draw[red] (2.9,0.1) to[out=-90, in=0] (2,-1);
	       \draw[blue] (2,-2) -- (2,-1);
	       \node[rhdot] at (2,-1) {};
	       \node[bdot] at (2,0.1) {};
	\end{tikzpicture} \raisebox{0.5cm}{$= \ -$}
\begin{tikzpicture}[scale=0.6]
	       \draw[blue] (1.5,-1.3) -- (1.5,-2);
	       \draw[red] (1.5,-0.4) -- (1.5,-0.9);
	       \draw[red] (1.5,-0.4) -- (0.9,0);
	       \draw[red] (1.5,-0.4) -- (2.1,0);
	       \node[rdot] at (1.5,-0.9) {};
	       \node[bhdot] at (1.5,-1.3) {};
	\end{tikzpicture}\raisebox{0.5cm}{$\ + \ $}
\begin{tikzpicture}[scale=0.6]
	       \draw[blue] (1.5,-1.3) -- (1.5,-2);
	       \draw[red] (1.5,-0.4) -- (1.5,-0.9);
	       \draw[red] (1.5,-0.4) -- (0.9,0);
	       \draw[red] (1.5,-0.4) -- (2.1,0);
	       \node[rhdot] at (1.5,-0.9) {};
	       \node[bdot] at (1.5,-1.3) {};
	\end{tikzpicture}
\end{center}
\end{corollary}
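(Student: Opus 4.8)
\textbf{Proof proposal for \cref{4extreductcor2}.}

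The plan is to verify the identity at chain level, evaluating both sides on the two generators $\urhos\boxtimes 1\otimes 1$ and $\urtt\boxtimes 1\otimes 1$ of $K_t$ in cohomological degree $-1$. As in the proof of \cref{4extreduct} for $B_sB_t$, the group $\ext^{1,-3}_{R^e}(B_t, B_sB_s)$ is not one-dimensional, so \cref{clarifyinglemcor} does not apply and it is genuinely necessary to test both generators; otherwise the argument follows the template of \cref{4extreduct} and \cref{4extreductcor}.

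For the left-hand side I would take the explicit cocycle representative $\widetilde{\Phi_t^{(s,t,s)}}^0$ of \cref{cocyclerep} and postcompose with the unitor-corrected middle counit $\id_s\otimes_R\widetilde{\epsilon_t^\prime}^0\otimes_R\id_s\colon K(s,t,s)\to K(s,s)$ (playing the same role here as $\id_{\underline w}\otimes_R\widetilde{\epsilon_s^\prime}$ did in \cref{4extreduct}), which collapses the middle $K_t$-factor, merges the adjacent tensor slots, and sends $c_t\mapsto\alpha_t$. Applying this to $\widetilde{\Phi_t^{(s,t,s)}}^0(\urhos\boxtimes 1\otimes 1)=\underline 1\boxtimes 1\otimes 1\otimes 1\otimes 1$ and to the four-term value $\widetilde{\Phi_t^{(s,t,s)}}^0(\urtt\boxtimes 1\otimes 1)$ recorded in \cref{cocyclerep} yields two explicit elements of $K(s,s)$ in cohomological degree $0$.

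For the right-hand side I would assemble the chain composite $-\widetilde{\delta_s}\circ\widetilde{\eta_s}\circ\widetilde{\epsilon_t^{\ext}}+\widetilde{\delta_s}\circ\widetilde{\eta_s^{\ext}}\circ\widetilde{\epsilon_t}$ from the lifts recalled in \cref{Maksect} and \cref{chainliftsection}: here $\widetilde{\eta_s^{\ext}}=\alpha_s^\vee\lfrown(-)$, one may take $\widetilde{\epsilon_t^{\ext}}=\widetilde{\epsilon_t}\circ\widetilde{\phi_t}$ (the ``up'' univalent Hochschild dot), and $\widetilde{\eta_s},\widetilde{\epsilon_t},\widetilde{\delta_s}$ are the $A_1$ chain lifts. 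Since only the cohomological-degree-$1$ pieces $\widetilde{\epsilon_t^{\ext}}^1$, $\widetilde{\eta_s^{\ext}}^1$, $\widetilde{\phi_t}^1$ can survive into cohomological degree $0$ of $K(s,s)$, evaluating this composite on $\urhos\boxtimes 1\otimes 1$ and $\urtt\boxtimes 1\otimes 1$ is a short computation; the elementary pairing identities (such as $\abrac{\alpha_s^\vee,t(\rho_t)}=-a_{st}$, coming from \cref{assump1}) already invoked in \cref{4extreduct} are exactly what make the two sides match. Comparing with the left-hand side values then gives the claimed equality in $\underline{\hom}^{1,-3}_{R^e}(K_t, K(s,s))$, hence in $\ext^{1,-3}_{R^e}(B_t, B_sB_s)$.

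The only real difficulty is bookkeeping: tracking the Koszul signs coming from the super-monoidal structure when the various cohomological-degree-$1$ morphisms are slid past one another, applying the unitor corrections consistently on the side that momentarily lands in $K(s,s)\otimes_R K_\varnothing$ (or $K_\varnothing\otimes_R K(s,s)$), and keeping the normalizations of $\widetilde{\delta_s},\widetilde{\eta_s},\widetilde{\epsilon_t}$ and their Ext-enhanced analogues straight. There is no conceptual obstacle — the verification is a finite brute-force check of exactly the same flavour as the proofs of \cref{4extreduct} and \cref{4extreductcor}.
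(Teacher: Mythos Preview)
Your direct chain-level verification is a valid approach and would certainly work, but it is not how the paper proceeds. The paper derives this corollary (together with \cref{4extreductcor} and \cref{4extreductcor3}) in one line from \cref{4extreduct} by invoking the signed rotation invariance of $\Phi_t^{(s,t,s)}$ established in \cref{4extrotation}: rotating the dot from the rightmost $s$-strand in \cref{4extreduct} around to the middle $t$-strand produces exactly the present identity, and the sign flip in front of the two terms on the right is explained by the fact that rotation past a colour change picks up a minus sign.

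The trade-off is clear. Your route is self-contained and mirrors the brute-force proof of \cref{4extreduct} itself, at the cost of repeating essentially the same computation with a different counit inserted. The paper's route is much shorter because it recycles the work already done in \cref{4extreduct} and \cref{4extrotation}; once rotation invariance is available, all three corollaries come for free. In practice the rotation argument also scales better to the higher-strand analogues, whereas a direct check becomes increasingly unwieldy.
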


\begin{corollary}
\label{4extreductcor3}
We have the following reduction identity in $\ext^{1, -3}_{R^e}(R, B_s B_t B_s)$.
\[ \Phi_t^{(s,t,s)}\circ \eta_t  =  - (\id_s \otimes_R \eta_t^{\ext}\otimes_R \id_s)\circ \delta_s\circ \eta_s +(\id_s \otimes_R \eta_t\otimes_R \id_s)\circ \delta_s\circ \eta_s^{\ext}\ \]
Diagrammatically this will be of the form,
\begin{center}
    \begin{tikzpicture}[scale=0.6]
	       \draw[red] (1.1,0.1) to[out=-90, in=180] (2,-1);
	       \draw[blue] (2,0.1) -- (2,-1);
	       \draw[red] (2.9,0.1) to[out=-90, in=0](2,-1);
	       \draw[blue] (2,-2) -- (2,-1);
	       \node[rhdot] at (2,-1) {};
	       \node[bdot] at (2,-2) {};
	\end{tikzpicture} \raisebox{0.5cm}{$= \ -$}
	\raisebox{0.05cm}{
\begin{tikzpicture}[scale=0.6]
	       \draw[red] (1.1,-0.1) to[out=-90, in=-180] (2,-1.5) to[out=0, in=-90] (2.9, -0.1);
	       \draw[blue] (2,-0.1) -- (2,-0.9);
	       \node[bhdot] at (2,-0.9) {};
	\end{tikzpicture}} \raisebox{0.5cm}{$ \ + \ $}
\begin{tikzpicture}[scale=0.6]
	       \draw[red] (1.1,-0.1) to[out=-90, in=-180] (2,-1.5) to[out=0, in=-90] (2.9, -0.1);
	       \draw[blue] (2,-0.1) -- (2,-0.9);
	       \node[rhdot] at (2,-1.5) {};
	       \node[bdot] at (2,-0.9) {};
	\end{tikzpicture}
\end{center}
\end{corollary}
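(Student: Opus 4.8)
The plan is to deduce \cref{4extreductcor3} from \cref{4extreduct} by precomposing with $\eta_t$ and then pinning down two scalar coefficients via a dimension count. First I would precompose both sides of \cref{4extreduct}, which lives in $\ext^{1,-3}_{R^e}(B_t,B_sB_t)$, with the unit $\eta_t\colon R\to B_t$. On the right hand side $\widetilde{\tau_t}\circ\widetilde{\eta_t}$ is just $\eta_t$ read through the (inverse) left unitor, so that $(\eta^{\ext}_s\otimes_R\id_t)\circ\tau_t\circ\eta_t=\eta^{\ext}_s\otimes_R\eta_t$, and since $\phi_t\circ[\widetilde{\eta_t}]=\eta^{\ext}_t$ (the identity $\hunit=\eta^{\ext}_t$ recalled after \cref{Mmaintheorem}) we get $(\eta_s\otimes_R\id_t)\circ(\id_\varnothing\otimes_R\phi_t)\circ\tau_t\circ\eta_t=\eta_s\otimes_R\eta^{\ext}_t$. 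This produces the auxiliary identity
\[(\id_s\otimes_R\id_t\otimes_R\epsilon_s)\circ\Phi_t^{(s,t,s)}\circ\eta_t=\eta^{\ext}_s\otimes_R\eta_t-\eta_s\otimes_R\eta^{\ext}_t\qquad\text{in }\ext^{1,-2}_{R^e}(R,B_sB_t),\]
which I will call $(\star)$. (One could equally precompose \cref{4extreductcor2} with $\eta_t$ and simplify the barbells $\epsilon_t\circ\eta_t$ and $\epsilon^{\ext}_t\circ\eta_t=\epsilon_t\circ\eta^{\ext}_t$, but \cref{4extreduct} gives the cleaner route since no barbell relations are needed.)

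Next I would verify that $\ext^{1,-3}_{R^e}(R,B_sB_tB_s)$ is $2$-dimensional over $\kb$. Stripping the two outer $B_s$'s with the adjunction isomorphisms of \cref{maincomptutesec} (all degree-preserving since $B_s,B_t$ are self-dual) gives $\ext^{\bullet,\bullet}_{R^e}(R,B_sB_tB_s)\cong\ext^{\bullet,\bullet}_{R^e}(B_t,B_sB_s)$, and \cref{maincohoiso} with $\underline{w}=(s,s)$ identifies the cohomological degree $1$ part with $\ker\rho_s^e(s,s)(3)\oplus\Db(\ker\rho_s^e(s,s))(1)$. As in the proof of \cref{1dimcriterion} (here $|m(t,(s,s))|=2$), the lowest internal degree occurring in each of these two free rank-one-generated summands is $-3$, and each contributes exactly a one-dimensional $\kb$-space there, so $\dim_\kb\ext^{1,-3}_{R^e}(R,B_sB_tB_s)=2$. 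A degree count places both morphisms on the right of \cref{4extreductcor3} in $\ext^{1,-3}_{R^e}(R,B_sB_tB_s)$, and they are linearly independent (the next step shows their images under $\id_s\otimes_R\id_t\otimes_R\epsilon_s$ are), hence form a basis.

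Finally I would write $\Phi_t^{(s,t,s)}\circ\eta_t=a\cdot(\id_s\otimes_R\eta^{\ext}_t\otimes_R\id_s)\circ\delta_s\circ\eta_s+b\cdot(\id_s\otimes_R\eta_t\otimes_R\id_s)\circ\delta_s\circ\eta^{\ext}_s$ for unknowns $a,b\in\kb$ and apply the cap $\id_s\otimes_R\id_t\otimes_R\epsilon_s$. Using the Frobenius counit axiom $(\id_{B_s}\otimes_R\epsilon_s)\circ\delta_s=\id_{B_s}$ (valid in $\bsbimext(\hf,W_\infty)$ since it holds in $\bsbim(\hf,W_\infty)$ and chain-lifts) together with the super-exchange law — whose signs are all trivial here, the only odd morphisms always being moved past even ones — the right hand side collapses to $a\cdot(\eta_s\otimes_R\eta^{\ext}_t)+b\cdot(\eta^{\ext}_s\otimes_R\eta_t)$, while by $(\star)$ the left hand side equals $\eta^{\ext}_s\otimes_R\eta_t-\eta_s\otimes_R\eta^{\ext}_t$. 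Since $\eta_s\otimes_R\eta^{\ext}_t$ and $\eta^{\ext}_s\otimes_R\eta_t$ are linearly independent in $\ext^{1,-2}_{R^e}(R,B_sB_t)$ (the Hochschild dot sits on different strands, so they lie in complementary summands of \cref{maincohoiso} for $\underline{w}=(s,t)$), we read off $a=-1$ and $b=1$, which is exactly \cref{4extreductcor3}.

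The step I expect to be the main obstacle is making the dimension count in the second paragraph airtight: one needs $\ext^{1,-3}_{R^e}(R,B_sB_tB_s)$ to be \emph{exactly} $2$-dimensional (not larger), so that $\Phi_t^{(s,t,s)}\circ\eta_t$ is forced into the span of the two proposed morphisms and a single cap determines it; this rests on carefully tracking internal-degree shifts through \cref{maincohoiso}. The Koszul/unitor sign bookkeeping in deriving $(\star)$ is routine. As a fallback, \cref{4extreductcor3} can also be checked directly at chain level, computing $\widetilde{\Phi_t^{(s,t,s)}}^0\circ\widetilde{\eta_t}$ on the generators $\underline{\rho_s}\boxtimes 1\otimes 1$ and $\underline{\rho_t}\boxtimes 1\otimes 1$ of $K_\varnothing$ in cohomological degree $-1$ and comparing with a chain lift of the right hand side, exactly in the style of the proof of \cref{4extreduct}.
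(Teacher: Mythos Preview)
Your approach is correct but takes a considerably longer route than the paper's. The paper dispatches all three corollaries (\cref{4extreductcor}, \cref{4extreductcor2}, \cref{4extreductcor3}) in a single sentence: each is obtained from \cref{4extreduct} by rotating the diagram, using the signed rotation invariance of $\Phi_t^{(s,t,s)}$ established in \cref{4extrotation}; the sign flip in the last two identities is exactly the sign picked up when the rotation crosses a color change. No dimension count, no auxiliary identity $(\star)$, and no coefficient-solving are needed --- rotation automatically transports the identity \cref{4extreduct} to its rotated forms.

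Your method (precompose \cref{4extreduct} with $\eta_t$ to get $(\star)$, prove the target space is $2$-dimensional, then cap to read off coefficients) does work, but the dimension-count paragraph has two imprecisions worth fixing. First, ``stripping the two outer $B_s$'s'' via adjunction gives $\ext^{\bullet,\bullet}_{R^e}(B_s,B_tB_s)$ or $\ext^{\bullet,\bullet}_{R^e}(B_sB_s,B_t)$, not $\ext^{\bullet,\bullet}_{R^e}(B_t,B_sB_s)$; you need either a further duality or simply to invoke the $s$-version of \cref{maincohoiso} with $\underline{w}=(t,s)$. Second, $\ker\rho_s^e(s,s)$ is free of rank $2$ over $R$, not rank $1$: the subexpressions of $(s,s)$ with $r(\underline{f})\in\{\id,t\}$ are $(0,0)$ and $(1,1)$, giving generators in internal degrees $2$ and $0$. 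It so happens that after the shifts each summand still contributes exactly one dimension in degree $-3$, so your total of $2$ is right, but the phrase ``rank-one-generated summands'' is inaccurate. Once these are corrected your argument goes through; it is just that \cref{4extrotation} already does all this work for free.
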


The above corollaries all follow by rotating \cref{4extreduct} and using \cref{4extrotation}. Because  \cref{4extrotation} says that $\Phi_s^{(s,t,s)}$ is only signed rotation invariant the signs switch in the last two identities. 

\subsection{High Strand Relations}

\begin{definition}
Given an expression $\underline{w}=(s_1, \ldots, s_i, \ldots, s_m)$, define the expresssions 
\begin{align*}
    \underline{\widehat{w}^i}&=(s_1, \ldots, s_{i-1}, s_{i+1}, \ldots, s_m) & |\underline{\widehat{w}^i} |=m-1\\
    \underline{\widecheck{w}^i}&=(s_1, \ldots,s_{i-1}, s_i, s_{i},s_{i+1} \ldots, s_m) & |\underline{\widecheck{w}^i}|=m+1
\end{align*}
\end{definition}

\begin{lemma}
\label{newgenreduct}
Let $\underline{w}=(s_1, \ldots, s_m)$ and suppose that $\ell(m(t \underline{\widehat{w}^i}) )\ge 4$, then 
\[  (\id_{(s_1, \ldots, s_{i-1})}\otimes_R \epsilon_{s_i}  \otimes_R \id_{(s_{i+1}, \ldots, s_m)} )\circ \Phi_{t}^{(s_1, \ldots, s_{i-1}, s_i, s_{i+1}, \ldots, s_m)}= \Phi_{t}^{\underline{\widehat{w}^i}}  \]

Diagrammatically, this will be of the form
\begin{center}
    \begin{tikzpicture}[scale=0.6]
	       \draw[violet] (1.1,-0.1) to[out=-90,in=-180] (2,-1);
	       \draw[violet] (2.9,-0.1) to[out=-90,in=0] (2,-1);
	       \foreach \x in {2} \draw[blue] (\x,-1) -- (\x,-2);
	       \draw[violet] (2,-0.1)--(2,-1);
	       \node[rhdot] at (2,-1) {};
	       \node[pdot] at (2,-0.1) {};
	       \node at (1.55,-0.4) {$\ldots$};
	       \node at (2.4,-0.4) {$\ldots$};
	    \end{tikzpicture}\raisebox{0.5cm}{$=$}\begin{tikzpicture}[scale=0.6]
	       \draw[violet] (1.1,-0.1) to[out=-90,in=-180] (2,-1);
	       \draw[violet] (2.9,-0.1) to[out=-90,in=0] (2,-1);
	       \foreach \x in {2} \draw[blue] (\x,-1) -- (\x,-2);
	       \node[rhdot] at (2,-1) {};
	       \node at (2,-0.4) {$\underline{\widehat{w}^i}$};
	    \end{tikzpicture}
\end{center}
\end{lemma}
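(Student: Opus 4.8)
The plan is to reduce the identity to the rigidity statement in \cref{clarifyinglemcor}, by tracking a single cocycle representative through the map $\ext^{1,\bullet}_{R^e}(B_t, \bs(\underline{w}))\to\ext^{1,\bullet+1}_{R^e}(B_t, \bs(\underline{\widehat{w}^i}))$ induced by $\epsilon_{s_i}$. First I would dispose of two preliminaries. Since $\underline{\widehat{w}^i}$ is obtained from $\underline{w}$ by deleting the letter $s_i$, every non-repeating subexpression of $t\underline{\widehat{w}^i}$ is also a non-repeating subexpression of $t\underline{w}$; hence $|m(t,\underline{w})|\ge|m(t,\underline{\widehat{w}^i})| = \ell(m(t\underline{\widehat{w}^i}))\ge 4$, so that $\Phi_t^{\underline{w}}$ is defined (and the left-hand side makes sense), while the hypothesis also legitimizes $\Phi_t^{\underline{\widehat{w}^i}}$ and the application of \cref{clarifyinglemcor} to $\underline{\widehat{w}^i}$. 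Secondly, the counit $\epsilon_{s_i}$ has internal degree $1$, so $(\id_{(s_1,\dots,s_{i-1})}\otimes_R\epsilon_{s_i}\otimes_R\id_{(s_{i+1},\dots,s_m)})\circ\Phi_t^{\underline{w}}$ lands in internal degree $-(|\underline{w}|+1)+1 = -(|\underline{\widehat{w}^i}|+1)$, exactly the graded piece occupied by $\Phi_t^{\underline{\widehat{w}^i}}$, so the two sides are at least comparable.

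Next I would regard $e:=\id_{(s_1,\dots,s_{i-1})}\otimes_R\epsilon_{s_i}\otimes_R\id_{(s_{i+1},\dots,s_m)}$ as an honest $R$-bimodule map $\bs(\underline{w})\to\bs(\underline{\widehat{w}^i})$ via the canonical identification $\bs(s_1,\dots,s_{i-1})\otimes_R R\otimes_R\bs(s_{i+1},\dots,s_m)\cong\bs(\underline{\widehat{w}^i})$. Post-composition with $e$ induces the map on $\ext$ appearing in the statement --- this is the usual compatibility of composition in $\bsbimext(\hf,W_\infty)$ with the resolution $q_{\underline{w}}$ --- and at the cochain level, on $\underline{\hom}^{1,\bullet}_{R^e}(K_t,-)$, it is simply post-composition of the cocycle with $e$. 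The one computation to carry out here is routine: since $1(\underline{w}) = c_{id}\otimes_R\cdots\otimes_R c_{id}$ and $\epsilon_{s_i}(c_{id}) = \epsilon_{s_i}(1\otimes_{s_i}1) = 1$, the map $e$ sends $1(\underline{w})$ to $1(\underline{\widehat{w}^i})$.

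I would then conclude as follows. By definition $\Phi_t^{\underline{w}} = [{}_{tt}^{v}\psi_{s}^{1(\underline{w})}]$, so applying $e$ produces a cocycle in $\underline{\hom}^{1,-(|\underline{\widehat{w}^i}|+1)}_{R^e}(K_t,\bs(\underline{\widehat{w}^i}))$ whose value on $\urhos\boxtimes 1\otimes 1$ equals $e(1(\underline{w})) = 1(\underline{\widehat{w}^i})$. Since $|m(t,\underline{\widehat{w}^i})|\ge 4$, \cref{clarifyinglemcor} (together with \cref{clarifyinglem}) applies with $\underline{w}$ replaced by $\underline{\widehat{w}^i}$: a cocycle in this bidegree is the \emph{unique} extension of its value on $\urhos\boxtimes 1\otimes 1$, and a class in $\ext^{1,-(|\underline{\widehat{w}^i}|+1)}_{R^e}(B_t,\bs(\underline{\widehat{w}^i}))$ is determined by that value. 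As $\Phi_t^{\underline{\widehat{w}^i}}$ is by construction the class of ${}_{tt}^{v'}\psi_{s}^{1(\underline{\widehat{w}^i})}$, whose value on $\urhos\boxtimes 1\otimes 1$ is again $1(\underline{\widehat{w}^i})$, uniqueness forces $e\circ{}_{tt}^{v}\psi_{s}^{1(\underline{w})} = {}_{tt}^{v'}\psi_{s}^{1(\underline{\widehat{w}^i})}$ (in particular $e(v) = v'$), whence $(\id_{(s_1,\dots,s_{i-1})}\otimes_R\epsilon_{s_i}\otimes_R\id_{(s_{i+1},\dots,s_m)})\circ\Phi_t^{\underline{w}} = \Phi_t^{\underline{\widehat{w}^i}}$.

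There is no deep obstacle here: once one observes that the defining normalization of $\Phi_t^{-}$ lives entirely in the value on $\urhos\boxtimes 1\otimes 1$, the rigidity of \cref{clarifyinglemcor} makes the identity automatic, with the triviality $\epsilon_{s_i}(c_{id})=1$ doing the only real work. The points that require care are purely bookkeeping: checking that $\ell(m(t\underline{\widehat{w}^i}))\ge 4$ suffices to legitimize \emph{both} $\Phi_t^{\underline{w}}$ and the application of \cref{clarifyinglemcor} to $\underline{\widehat{w}^i}$, and tracking the internal-degree shift by $1$ coming from $\epsilon_{s_i}$ so that the two sides land in the same graded component; everything else is formal.
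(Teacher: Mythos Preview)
Your proof is correct and follows essentially the same approach as the paper's: both reduce to checking the value on $\underline{\rho_s}\boxtimes 1\otimes 1$ via the one-dimensionality of $\ext^{1,-(|\underline{\widehat{w}^i}|+1)}_{R^e}(B_t,\bs(\underline{\widehat{w}^i}))$ (the paper cites this directly, you cite it through \cref{clarifyinglemcor}), and both finish with the observation that $\epsilon_{s_i}$ sends $1(\underline{w})$ to $1(\underline{\widehat{w}^i})$. Your version is more explicit about the bookkeeping (that $|m(t,\underline{w})|\ge 4$ follows from the hypothesis, and the degree shift), which the paper leaves implicit.
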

\begin{proof}
Since $\dim_\kb \ext^{1, -(|\underline{\widehat{w}^i}| +1) }_{R^e}(B_t, \bs(\underline{\widehat{w}^i} ))=1 $ so it suffices to check both sides agree on $\underline{\rho_s}\boxtimes 1\otimes 1$. This will then follow from the fact that $\widetilde{\epsilon_{s_i}^\prime}^0(1\otimes 1)=1$.
\end{proof}

\begin{remark}
The lemma above doesn't apply to $\Phi_t^{(s,t,s)}$, as seen in \cref{4extreduct}.
\end{remark}

\begin{lemma}
\label{newgenmult}
Let $\underline{w}=(s_1, \ldots, s_m)$ and suppose $|m(t, \underline{w})|\ge 4$, then
\[ (\id_{(s_1, \ldots, s_{i-1})}\otimes_R \delta_{s_i}  \otimes_R \id_{(s_{i+1}, \ldots, s_m)} )\circ \Phi_t^{\underline{w}}=\Phi_t^{\underline{\widecheck{w}^i}}  \]
Diagrammatically this will be of the form
\begin{center}
    \begin{tikzpicture}[scale=0.6]
	       \draw[violet] (1.1,-0.1) to[out=-90,in=-180] (2,-1);
	       \draw[violet] (2.9,-0.1) to[out=-90,in=0] (2,-1);
	       \foreach \x in {2} \draw[blue] (\x,-1) -- (\x,-2);
	       \draw[violet] (2,-0.1)--(2,-1);
	       \draw[violet] (2,-0.1)--(1.4,0.6);
	       \draw[violet] (2,-0.1)--(2.6,0.6);
	       \node[rhdot] at (2,-1) {};
	       \node at (1.55,-0.4) {$\ldots$};
	       \node at (2.4,-0.4) {$\ldots$};
	    \end{tikzpicture}\raisebox{0.5cm}{$=$}\begin{tikzpicture}[scale=0.6]
	       \draw[violet] (1.1,-0.1) to[out=-90,in=-180] (2,-1);
	       \draw[violet] (2.9,-0.1) to[out=-90,in=0] (2,-1);
	       \foreach \x in {2} \draw[blue] (\x,-1) -- (\x,-2);
	       \node[rhdot] at (2,-1) {};
	       \node at (2,-0.4) {$\underline{\widecheck{w}^i}$};
	    \end{tikzpicture}
\end{center}
\end{lemma}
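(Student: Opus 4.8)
The plan is to follow the short argument used for \cref{newgenreduct}: pass to cocycle representatives and reduce the whole identity to a single evaluation, using that the target $\ext$-group is $1$-dimensional.

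First I would check that the right-hand side makes sense and that both sides land in the same small group. Deleting one of the two copies of $s_i$ in $\underline{\widecheck{w}^i}=(s_1,\ldots,s_i,s_i,\ldots,s_m)$ recovers $\underline{w}$, so any non-repeating subexpression of $t\underline{w}$ is also a non-repeating subexpression of $t\underline{\widecheck{w}^i}$; hence $|m(t,\underline{\widecheck{w}^i})|\ge |m(t,\underline{w})|\ge 4$. Consequently $\Phi_t^{\underline{\widecheck{w}^i}}$ is defined, \cref{1dimcriterion} gives $\dim_\kb \ext^{1,-(|\underline{\widecheck{w}^i}|+1)}_{R^e}(B_t,\bs(\underline{\widecheck{w}^i}))=1$, and \cref{clarifyinglemcor} applies to $\underline{\widecheck{w}^i}$. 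Since $\delta_{s_i}$ has cohomological degree $0$ and internal degree $-1$, and $|\underline{\widecheck{w}^i}|=|\underline{w}|+1$, the left-hand side $(\id\otimes_R\delta_{s_i}\otimes_R\id)\circ\Phi_t^{\underline{w}}$ also lies in $\ext^{1,-(|\underline{\widecheck{w}^i}|+1)}_{R^e}(B_t,\bs(\underline{\widecheck{w}^i}))$.

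By \cref{clarifyinglemcor} (for $\underline{\widecheck{w}^i}$) it then suffices to check that cocycle representatives of the two sides agree on $\underline{\rho_s}\boxtimes 1\otimes 1$. The chain lift $\widetilde{\Phi_t^{\underline{w}}}^0$ sends $\underline{\rho_s}\boxtimes 1\otimes 1$ to $\underline{1}\boxtimes 1\otimes\cdots\otimes 1$, a representative of $1(\underline{w})=c_{id}\otimes_R\cdots\otimes_R c_{id}$, and similarly $\widetilde{\Phi_t^{\underline{\widecheck{w}^i}}}^0$ sends it to a representative of $1(\underline{\widecheck{w}^i})$. Hence the identity reduces to $(\id_{(s_1,\ldots,s_{i-1})}\otimes_R\widetilde{\delta_{s_i}}^0\otimes_R\id_{(s_{i+1},\ldots,s_m)})\paren{1(\underline{w})}=1(\underline{\widecheck{w}^i})$, i.e.\ to the statement that the cohomological degree $0$ part of the chain lift of the comultiplication sends the $01$-basis generator $c_{id}$ of $B_{s_i}$ to $c_{id}\otimes_R c_{id}$. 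This is part of the Frobenius structure of $B_{s_i}$ and is recorded by the explicit chain lifts of \cref{chainliftsection}.

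The only step I expect to require genuine care is this last one: that the comultiplication hits $c_{id}$ with coefficient exactly $1$ and no correction terms; everything else is degree bookkeeping together with the appeals to \cref{1dimcriterion} and \cref{clarifyinglemcor}. If one wishes to avoid citing the normalization of the Frobenius structure, the coefficient can be pinned down as follows: on internal-degree grounds $\delta_{s_i}(c_{id})$ is forced to be a scalar multiple $\lambda\,c_{id}\otimes_R c_{id}$ (the unique $01$-basis vector of $B_{s_i}\otimes_R B_{s_i}$ in internal degree $-2$), so $(\id\otimes_R\delta_{s_i}\otimes_R\id)\circ\Phi_t^{\underline{w}}=\lambda\,\Phi_t^{\underline{\widecheck{w}^i}}$; post-composing with $\id\otimes_R\epsilon_{s_i}\otimes_R\id$ and using the counit relation $(\epsilon_{s_i}\otimes\id)\circ\delta_{s_i}=\id$ on one side and \cref{newgenreduct} (applicable since $\ell(m(t\underline{w}))\ge 4$) on the other gives $\lambda\,\Phi_t^{\underline{w}}=\Phi_t^{\underline{w}}$, hence $\lambda=1$ since $\Phi_t^{\underline{w}}\neq 0$.
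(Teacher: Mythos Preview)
Your proof is correct and follows exactly the approach the paper intends: the paper's own proof is the single line ``Proceeds similarly to \cref{newgenreduct},'' and you have faithfully unpacked that by invoking \cref{1dimcriterion} and \cref{clarifyinglemcor} to reduce to the evaluation on $\underline{\rho_s}\boxtimes 1\otimes 1$, which boils down to $\widetilde{\delta_{s_i}}^0(1\otimes 1)=1\otimes 1\otimes 1$ from \cref{chainliftsection}. Your alternative coefficient check via the counit relation and \cref{newgenreduct} is a nice redundancy but unnecessary, since the chain lift of $\delta_{s_i}$ gives $\lambda=1$ directly.
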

\begin{proof}
Proceeds similarly to \cref{newgenreduct}.
\end{proof}

\begin{remark}
The expression $(-1)^{|m(ts_m)|-1}$ appearing below is 0 if $s_m=t$ and 1 if $s_m=s$. 
\end{remark}

\begin{theorem}
\label{newgenrotation}
$\Phi_t^{\underline{w}}$ is signed rotation invariant when $|m(t, \underline{w})|\ge 4$. Specifically, letting $\underline{w}=(s_1, \ldots, s_m)$ we have 
\begin{align*}
&(\id_t\otimes_R \id_{(s_1, \ldots, s_{m-1})} \otimes_R \epsilon_{s_m})\circ (\id_t\otimes_R \id_{(s_1, \ldots, s_{m-1})} \otimes_R \mu_{s_m}) \circ (\id_t \otimes_R \Phi_t^{\underline{w}}\otimes_R \id_{s_m} )\circ (\delta_t\otimes_R \id_{s_m})\circ (\eta_t\otimes_R \id_{s_m})\\ =&(-1)^{|m(ts_m)|-1}\Phi_{s_m}^{(t, s_1, \ldots, s_{m-1})}
\end{align*} 
In other words, if $s_m\neq t$, then rotating $\Phi_t^{\underline{w}}$ clockwise will pick up a negative sign. And similarly, 
\begin{align*}
&(\epsilon_{s_1}\otimes_R \id_{(s_2, \ldots, s_{m})} \otimes_R \id_{t})\circ (\mu_{s_1}\otimes_R \id_{(s_2, \ldots, s_{m})} \otimes_R \id_{t}) \circ (\id_{s_1} \otimes_R \Phi_t^{\underline{w}}\otimes_R \id_{t} )\circ ( \id_{s_1}\otimes_R \delta_t)\circ (\id_{s_1}\otimes_R \eta_t)\\ =&(-1)^{|m(ts_1)|-1}\Phi_{s_1}^{(s_2, \ldots, s_{m}, t)}     
\end{align*}  
In other words, if $s_1\neq t$, then rotating $\Phi_t^{\underline{w}}$ counterclockwise will pick up a negative sign.
\end{theorem}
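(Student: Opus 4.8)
The plan is to deduce both displayed identities from the base case $\underline{w}=(s,t,s)$ (which is \cref{4extrotation}) by a single chain-level evaluation, exactly in the spirit of that proof. First I would record the easy bookkeeping: each side of the first identity is a class in $\ext^{1,-(|\underline{w}|+1)}_{R^e}(B_{s_m},\bs(t,s_1,\dots,s_{m-1}))$, since $\Phi_t^{\underline{w}}$ has bidegree $(1,-(|\underline{w}|+1))$ and the auxiliary morphisms $\eta_t,\delta_t,\mu_{s_m},\epsilon_{s_m}$ are all of cohomological degree $0$ and contribute internal degree zero in total. Under the hypothesis $|m(t,\underline{w})|\ge 4$ one checks (a small combinatorial fact about maximal non-repeating subexpressions of the cyclically shifted word $s_m\,t\,s_1\cdots s_{m-1}$, obtained with the same counting as in \cref{lowestdeglem2}) that $|m(s_m,(t,s_1,\dots,s_{m-1}))|\ge 4$ as well, so by \cref{1dimcriterion} the target space $\ext^{1,-(|\underline{w}|+1)}_{R^e}(B_{s_m},\bs(t,s_1,\dots,s_{m-1}))$ is a one-dimensional $\kb$-module with distinguished generator $\Phi_{s_m}^{(t,s_1,\dots,s_{m-1})}$. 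Hence by \cref{clarifyinglemcor} it suffices to check that the two sides agree after evaluating their chain lifts on the single generator $\underline{\rho_{s'_m}}\boxtimes 1\otimes 1$ of $K_{s_m}$, where $s'_m$ denotes the simple reflection other than $s_m$.

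Next I would carry out that evaluation. Composing the chain lifts $\widetilde{\tau_{s_m}}$, $\widetilde{\eta_t}\otimes\id$, $\widetilde{\delta_t}\otimes\id$, $\id_t\otimes\widetilde{\Phi_t^{\underline{w}}}^0\otimes\id$, $\id\otimes\widetilde{\mu_{s_m}}$, $\id\otimes\widetilde{\epsilon'_{s_m}}$ from \cref{chainliftsection} and the defining formula for $\widetilde{\Phi_t^{\underline{w}}}^0$, and starting from $\underline{\rho_{s'_m}}\boxtimes 1\otimes 1$, the computation is governed by two inputs: (i) $\widetilde{\Phi_t^{\underline{w}}}^0$ sends $\underline{\rho_s}\boxtimes 1\otimes\cdots\otimes 1$ to $\underline{1}\boxtimes 1\otimes\cdots\otimes 1$, i.e. to $1(\underline{w})$ read in $R^{\otimes|\underline{w}|+1}$; and (ii) $\rho_t+t(\rho_t)\equiv -a_{st}\rho_s$ modulo $(V^*)^{s,t}$ (the analogue of \cref{easyobserv}). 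Input (ii) ensures that, after applying $\widetilde{\eta_t}$ and $\widetilde{\delta_t}$, the only exterior generator of the doubled $K_t$ reaching $\widetilde{\Phi_t^{\underline{w}}}^0$ with a contribution that is not annihilated by the subsequent $\widetilde{\epsilon'_{s_m}}$ is the $\underline{\rho_s}$-generator; in particular the \emph{unknown} element $v$ — the $\underline{\rho_t t(\rho_t)}$-component of $\widetilde{\Phi_t^{\underline{w}}}^0$ — never contributes to the final answer, which is precisely why \cref{clarifyinglemcor} is enough. By (i) the surviving contribution is $1(\underline{w})$, and the caps $\widetilde{\mu_{s_m}},\widetilde{\epsilon'_{s_m}}$ collapse the last two strands, landing on $\pm\,1(t,s_1,\dots,s_{m-1})=\pm\,\widetilde{\Phi_{s_m}^{(t,s_1,\dots,s_{m-1})}}^0(\underline{\rho_{s'_m}}\boxtimes 1\otimes 1)$. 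The sign is $(-1)^{|m(ts_m)|-1}$: it is extracted from the Demazure-operator arithmetic in the last step (mirroring how the sign $-1$ appears in \cref{4extrotation} via $\widetilde{\epsilon'_{s_m}}^0(c_{s_m})=\alpha_{s_m}$ interacting with $\widetilde{\mu_{s_m}}$), and equals $+1$ precisely when $s_m=t$, in which case the terminal strand has the pivot colour and no sign arises. The second identity follows by the left–right mirror of this computation, or equivalently by applying the first identity together with the left–right symmetry of the entire construction.

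The main obstacle I anticipate is the chain-level bookkeeping across this composite of six lifts — in particular, verifying cleanly that the undetermined $v$ drops out for an \emph{arbitrary} $\underline{w}$ (not just $\underline{w}=(s,t,s)$, where $v$ is explicit), and pinning down the sign $(-1)^{|m(ts_m)|-1}$ uniformly rather than case by case. A tempting but more delicate alternative is an induction on $|\underline{w}|$: when $\underline{w}$ has a repeated letter away from its ends one rewrites $\Phi_t^{\underline{w}}$ via \cref{newgenmult} and slides the rotation past the comultiplication to reduce the length; when the repeat sits at an end one uses the Frobenius relation for $\mu_{s_m}\circ(\delta_{s_m}\otimes\id)$. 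This confines the genuinely new content to non-repeating $\underline{w}$ — but those have unbounded length, so the non-repeating case still has to be treated by the direct computation above; I therefore expect the chain-level evaluation to be unavoidable in the end, and would present it as the core of the proof.
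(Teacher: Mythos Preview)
Your strategy is sound for the case $s_m=t$ and in fact matches the paper's proof there: one evaluates on $\underline{\rho_s}\boxtimes 1\otimes 1$, and because $\rho_s\in (V^*)^t$ the chain lifts $\widetilde{\tau_t},\widetilde{\eta_t},\widetilde{\delta_t}$ only produce $\underline{\rho_s^{(j)}}$-type exterior generators, so $\widetilde{\Phi_t^{\underline{w}}}^0$ is hit exclusively on its known $\underline{\rho_s}$-slot and the unknown $v$ genuinely never appears.

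The gap is in the case $s_m=s$. There one must evaluate on $\underline{\rho_t}\boxtimes 1\otimes 1\in K_s$, and the decomposition $\rho_t+t(\rho_t)=-a_{st}\rho_s+r$ from \cref{easyobserv} forces $\widetilde{\delta_t}$ to produce \emph{both} a $\underline{\rho_s^{(2)}}$-term and a $\underline{\rho_tt(\rho_t)^{(2)}}$-term. Your claim that the $v$-contribution is annihilated by $\widetilde{\mu_{s}},\widetilde{\epsilon'_{s}}$ is exactly backwards: revisit the explicit computation in \cref{4extrotation}, where after $\id_t\otimes_R\id_{(s,t)}\otimes_R\widetilde{\mu_s}^0$ the only surviving terms are $\partial_s(\alpha_s)$ and $\partial_s(-s(\rho_s))$, both coming from the last two summands of $v$ in \cref{cocyclerep}, while the $1(\underline{w})$-contribution $-a_{st}\rho_t\otimes 1^{\otimes 4}\otimes 1$ dies because $\partial_s(1)=0$. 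So for general $\underline{w}$ with $s_m=s$ the direct chain evaluation \emph{requires} $v$, which is not known explicitly beyond the base case.

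The paper circumvents this by a different argument when $s_m=s$: since the target space is one-dimensional, the rotated morphism equals $c\cdot\Phi_{s}^{(t,s_1,\dots,s_{m-1})}$ for some scalar $c$; one then post-composes with enough counits (using \cref{newgenreduct}, which is legitimate since $|m(t,\underline{\widehat{w}^i})|\ge 4$ persists) to strip all strands except an $(s,t,s)$ pattern, reducing to \cref{4extrotation} and reading off $c=-1$. Your ``tempting alternative'' induction is closer in spirit to this, but the actual reduction is via counits rather than comultiplications, and it handles the non-repeating case directly rather than leaving it as the hard residual case.
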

\begin{proof}
We will prove the first equality as the second is quite similar. First suppose $s_m=t$. Diagrammatically we want to show
\begin{center}
    \begin{tikzpicture}[scale=0.6]
	       \draw[violet] (1.1,-0.1) to[out=-90,in=-180] (2,-1);
	       \draw[blue] (2.9,-0.1) to[out=-90,in=0] (2,-1);
	       \draw[blue] (2.9,-0.1) to[out=90,in=180] (3.2,0.5) to[out=0,in=90] (3.5,-0.1);
	       \draw[blue] (3.5,-0.1) -- (3.5, -2);
	       \draw[blue] (0.9,-0.1) to[out=-90,in=-180] (1.5,-1.75) to[out=0, in=90] (2,-1);
	       \node[rhdot] at (2,-1) {};
	       \node at (2,-0.4) {$\ldots$};
	    \end{tikzpicture}\ \raisebox{0.6cm}{=} \ \begin{tikzpicture}[scale=0.6]
	       \draw[blue] (1.1,-0.1) to[out=-90,in=-180] (2,-1);
	       \draw[violet] (2.9,-0.1) to[out=-90,in=0] (2,-1);
	       \foreach \x in {2} \draw[blue] (\x,-1) -- (\x,-2);
	       \node[rhdot] at (2,-1) {};
	       \node at (2,-0.4) {$\ldots$};
	    \end{tikzpicture}
\end{center}
As $\dim_\kb \ext_{R^e}^{1, -(|\underline{w}|+1)}(B_t, \bsw)=1$ it suffices to check both sides agree on $\underline{\rho_s}\boxtimes 1\otimes 1$. The first part of the diagram will be the series of maps
\begin{center}
\begin{tikzcd}
K_t\otimes_R K_t \otimes_R K_t &  
\begin{tabular}{c}
$\underline{\rho_s^{ (1)}}\boxtimes \paren{ \rho_t\otimes 1\otimes 1\otimes  1 -1\otimes 1\otimes t(\rho_t)\otimes 1}+\underline{\rho_s^{ (2)}}\boxtimes \paren{ \rho_t\otimes 1\otimes 1\otimes  1 -1\otimes 1\otimes t(\rho_t)\otimes 1}$ \\
$+\underline{\rho_s^{(3)}}\boxtimes \paren{ \rho_t\otimes 1\otimes 1\otimes  1 -1\otimes 1\otimes t(\rho_t)\otimes 1}$
\end{tabular}
\\ 
K_t\otimes_R K_t \arrow[u, "\widetilde{\delta_t}^1\otimes_R \id_t"]& \underline{\rho_s^{(1)}}\boxtimes \paren{ \rho_t\otimes 1\otimes  1 -1\otimes t(\rho_t)\otimes 1}+\underline{\rho_s^{(2)}}\boxtimes(\rho_t\otimes 1 \otimes 1-1\otimes t(\rho_t)\otimes 1) \arrow[u] \\
K_\varnothing \otimes_R K_t \arrow[u, "\widetilde{\eta_t}^1\otimes_R \id_t"]& \underline{\rho_s^{(1)}}\boxtimes 1\otimes 1\otimes 1+\underline{\rho_s^{(2)}}\boxtimes 1\otimes 1\otimes 1 \arrow[u]  \\
K_t\arrow[u, "\widetilde{\tau_t}^1"]& \underline{\rho_s}\boxtimes 1\otimes 1 \arrow[u]
\end{tikzcd}    
\end{center}
where we have used the chain lifts defined in \cref{chainliftsection} and that $\rho_s\in (V^*)^t$.\\

At the next step we apply $\id \otimes_R \widetilde{\Phi_t^{\underline{w}}}^0\otimes_R\id $ and therefore only the $\underline{\rho_s^{(2)}}$ term survives. The next part of the diagram will be the series of maps
\begin{center}
\begin{tikzcd}
K_t\otimes_R  \mathrm{KS}(s_1, \ldots, s_{m-1})  & \underline{1}\boxtimes  1\otimes 1^{\otimes |\underline{w}|-1}\otimes 1\\ 
K_t\otimes_R  \mathrm{KS}(s_1, \ldots, s_{m-1}) \otimes_R K_t \arrow[u, "\id_t\otimes_R \id_{(s_1, \ldots, s_{m-1})}\otimes_R \widetilde{\epsilon_t^\prime}^0"]& \underline{1}\boxtimes \paren{0-1\otimes 1^{\otimes |\underline{w}|-1}\otimes \partial_t(t(\rho_t))\otimes 1} \arrow[u] \\
K_t\otimes_R  \mathrm{KS}(\underline{w}) \otimes_R K_t \arrow[u, "\id_t\otimes_R \id_{(s_1, \ldots, s_{m-1})}\otimes_R \widetilde{\mu_t}^0"]&  \arrow[u]\underline{1}\boxtimes \paren{ \rho_t \otimes 1^{\otimes |\underline{w}|+1} \otimes 1-1 \otimes 1^{\otimes |\underline{w}|}\otimes t(\rho_t)\otimes 1} \\
K_t\otimes_R K_t \otimes_R K_t\arrow[u, "\id_t \otimes_R \widetilde{\Phi_t^{\underline{w}}}^0\otimes_R\id"]& \underline{\rho_s^{ (2)}}\boxtimes \paren{ \rho_t\otimes 1\otimes 1\otimes  1 -1\otimes 1\otimes t(\rho_t)\otimes 1} \arrow[u]
\end{tikzcd}    
\end{center}
which is exactly equal to $\widetilde{\Phi_t^{(t, s_1, \ldots, s_{m-1})}}^0( \underline{\rho_s}\boxtimes 1\otimes 1 )$. \\

Now suppose $s_m=s$. Diagrammatically we want to show
\begin{center}
    \begin{tikzpicture}[scale=0.6]
	       \draw[violet] (1.1,-0.1) to[out=-90,in=-180] (2,-1);
	       \draw[red] (2.9,-0.1) to[out=-90,in=0] (2,-1);
	       \draw[red] (2.9,-0.1) to[out=90,in=180] (3.2,0.5) to[out=0,in=90] (3.5,-0.1);
	       \draw[red] (3.5,-0.1) -- (3.5, -2);
	       \draw[blue] (0.9,-0.1) to[out=-90,in=-180] (1.5,-1.75) to[out=0, in=90] (2,-1);
	       \node[rhdot] at (2,-1) {};
	       \node at (2,-0.4) {$\ldots$};
	    \end{tikzpicture}\ \raisebox{0.6cm}{$= \ -$}  \begin{tikzpicture}[scale=0.6]
	       \draw[blue] (1.1,-0.1) to[out=-90,in=-180] (2,-1);
	       \draw[violet] (2.9,-0.1) to[out=-90,in=0] (2,-1);
	       \foreach \x in {2} \draw[red] (\x,-1) -- (\x,-2);
	       \node[bhdot] at (2,-1) {};
	       \node at (2,-0.4) {$\ldots$};
	    \end{tikzpicture}
\end{center}
One can check that $|m(s, t, s_1, \ldots, s_{m-1})|\ge 4$ and thus $\dim_\kb \ext_{R^e}^{1, -(|\underline{w}|+1)}(B_{s_m}, \bs(t, s_1, \ldots, s_{m-1}))=1$. It follows that 
\begin{center}
    \begin{tikzpicture}[scale=0.6]
	       \draw[violet] (1.1,-0.1) to[out=-90,in=-180] (2,-1);
	       \draw[red] (2.9,-0.1) to[out=-90,in=0] (2,-1);
	       \draw[red] (2.9,-0.1) to[out=90,in=180] (3.2,0.5) to[out=0,in=90] (3.5,-0.1);
	       \draw[red] (3.5,-0.1) -- (3.5, -2);
	       \draw[blue] (0.9,-0.1) to[out=-90,in=-180] (1.5,-1.75) to[out=0, in=90] (2,-1);
	       \node[rhdot] at (2,-1) {};
	       \node at (2,-0.4) {$\ldots$};
	    \end{tikzpicture}\ \raisebox{0.6cm}{$= \ c$} \ \begin{tikzpicture}[scale=0.6]
	       \draw[blue] (1.1,-0.1) to[out=-90,in=-180] (2,-1);
	       \draw[violet] (2.9,-0.1) to[out=-90,in=0] (2,-1);
	       \foreach \x in {2} \draw[red] (\x,-1) -- (\x,-2);
	       \node[bhdot] at (2,-1) {};
	       \node at (2,-0.4) {$\ldots$};
	    \end{tikzpicture}
\end{center}

Because $|m(s, t, s_1, \ldots, s_{m-1})|\ge 4$, there is a subexpression of the form $stst$. Therefore there must be a subexpression $st$ in $(s_1, \ldots, s_{m-1})$ say at $(\ldots, s_i, \ldots, s_j, \ldots)$. Now apply 
$$\id_t\otimes_R\epsilon_{s_1}\otimes_R \epsilon_{s_2} \otimes_R\ldots \otimes_R\id_{s_i}\otimes_R\ldots \otimes_R \id_{s_j} \otimes_R \ldots \otimes_R \epsilon_{s_{m-1}} $$ 
to both sides. Using \cref{newgenreduct} we end up with the equation
\begin{center}
    \begin{tikzpicture}[scale=0.6]
	       \draw[red] (1.1,0.1)to[out=-90, in=180](2,-1);
	       \draw[blue] (2,0.1) -- (2,-1);
	       \draw[red] (2.9,0.1) to[out=-90, in=0] (2,-1);
	       \draw[red] (2.9,0.1) to[out=90,in=180] (3.2,0.5) to[out=0,in=90] (3.5,-0.1);
	       \draw[red] (3.5,-0.1) -- (3.5, -2);
	       \draw[blue] (0.7,0.1) to[out=-90,in=-180] (1.5,-1.75) to[out=0, in=90] (2,-1);
	       \node[rhdot] at (2,-1) {};
	    \end{tikzpicture}\ \raisebox{0.6cm}{$= c$} \ \begin{tikzpicture}[scale=0.6]
	       \draw[blue] (1.1,0.1) to[out=-90, in=180] (2,-1);
	       \draw[red] (2,0.1) -- (2,-1);
	       \draw[blue] (2.9,0.1) to[out=-90, in=0] (2,-1);
	       \draw[red] (2,-2) -- (2,-1);
	       \node[bhdot] at (2,-1) {};
	\end{tikzpicture}
\end{center}
And from \cref{4extrotation} it follows that $c=-1$ as desired.
\end{proof}

\begin{corollary}
\label{newgencor}
We have the following equality when $|m(t, \underline{w})|\ge 4$.
\[  (\id_t \otimes_R \Phi_t^{\underline{w}} )\circ (\delta_t)\circ (\eta_t) = (-1)^{|m(ts_m)|-1} ( \Phi_{s_m}^{(t, s_1, \ldots, s_{m-1})} \otimes_R \id_{s_m})\circ (\delta_{s_m})\circ (\eta_{s_m})  \]
Diagrammatically this will be
\begin{center}
    \begin{tikzpicture}[scale=0.6]
	       \draw[violet] (1.1,-0.1) to[out=-90,in=-180] (2,-1);
	       \draw[violet] (2.9,-0.1) to[out=-90,in=0] (2,-1);
	       \draw[blue] (0.9,-0.1) to[out=-90,in=-180] (1.45,-1.75) to[out=0, in=-90] (2,-1);
	       \node[rhdot] at (2,-1) {};
	       \node at (2,-0.4) {$\ldots$};
	    \end{tikzpicture}\ \raisebox{0.6cm}{$= (-1)^{|m(ts_m)|-1}$} \ \begin{tikzpicture}[scale=0.6]
	       \draw[blue] (1.1,-0.1) to[out=-90,in=-180] (2,-1);
	       \draw[violet] (2.9,-0.1) to[out=-90,in=0] (2,-1);
	       \draw[violet] (2,-1) to[out=-90,in=-180] (2.45,-1.75) to[out=0, in=-90] (3.1,-0.1);
	       \node[yhdot] at (2,-1) {};
	       \node at (2,-0.4) {$\ldots$};
	       \node at (3.5,-1.5) {$s_m$};
	    \end{tikzpicture}
\end{center}
\end{corollary}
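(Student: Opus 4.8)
The plan is to obtain this as a corollary of the first (``clockwise'') rotation identity in \cref{newgenrotation} by attaching an $s_m$-cup and collapsing the resulting zig-zag. Write $\underline w=(s_1,\dots,s_m)$. By \cref{newgenrotation} the composition
\[
(\id_t\otimes_R\id_{(s_1,\dots,s_{m-1})}\otimes_R\epsilon_{s_m})\circ(\id_t\otimes_R\id_{(s_1,\dots,s_{m-1})}\otimes_R\mu_{s_m})\circ(\id_t\otimes_R\Phi_t^{\underline w}\otimes_R\id_{s_m})\circ(\delta_t\otimes_R\id_{s_m})\circ(\eta_t\otimes_R\id_{s_m})
\]
equals $(-1)^{|m(ts_m)|-1}\,\Phi_{s_m}^{(t,s_1,\dots,s_{m-1})}$; the hypothesis $|m(t,\underline w)|\ge 4$ ensures every $\Phi$ occurring is defined (for $\Phi_{s_m}^{(t,s_1,\dots,s_{m-1})}$ one needs $|m(s_m,t,s_1,\dots,s_{m-1})|\ge 4$, which follows exactly as in the proof of \cref{newgenrotation}).

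First I would tensor both sides of that identity on the right with $\id_{s_m}$ and precompose both sides with the $s_m$-cup $\delta_{s_m}\circ\eta_{s_m}\colon R\to B_{s_m}\otimes_R B_{s_m}$. On the right-hand side this produces $(-1)^{|m(ts_m)|-1}(\Phi_{s_m}^{(t,s_1,\dots,s_{m-1})}\otimes_R\id_{s_m})\circ\delta_{s_m}\circ\eta_{s_m}$, the right-hand side of the statement (using $\bs(t,s_1,\dots,s_{m-1})\otimes_R B_{s_m}=B_t\otimes_R\bs(\underline w)$). For the left-hand side, note that the leading two maps $\eta_t\otimes_R\id_{s_m}$ and $\delta_t\otimes_R\id_{s_m}$ contribute the $t$-cup $(\delta_t\circ\eta_t)\otimes_R\id_{s_m}$, so after $\id_t\otimes_R\Phi_t^{\underline w}\otimes_R\id_{s_m}$ we already see $(\id_t\otimes_R\Phi_t^{\underline w})\circ\delta_t\circ\eta_t$ tensored (on the right) with the two $B_{s_m}$-strands coming out of the inserted cup; meanwhile the trailing $\epsilon_{s_m}\circ\mu_{s_m}$ is the $s_m$-cap, and it joins the right-hand $s_m$-strand produced by $\Phi_t^{\underline w}$ (the last letter of $\bs(\underline w)$) to the left leg of the inserted $s_m$-cup. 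Since the (co)unit and (co)multiplication morphisms $\eta_s,\epsilon_s,\mu_s,\delta_s$ are all cohomologically even, the super-exchange law reduces to the ordinary one, and the cup may be slid up into place beneath the cap without incurring a sign ($\Phi_t^{\underline w}$ is only tensored beside the strand being straightened, never commuted through it). The cup--cap pair on $B_{s_m}$ is then annihilated by the snake relation $(\epsilon_{s_m}\circ\mu_{s_m}\otimes_R\id_{s_m})\circ(\id_{s_m}\otimes_R\delta_{s_m}\circ\eta_{s_m})=\id_{s_m}$, valid in $\bsbim(\hf,W_\infty)$ hence in $\bsbimext(\hf,W_\infty)$, and what remains is precisely $(\id_t\otimes_R\Phi_t^{\underline w})\circ\delta_t\circ\eta_t$. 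Diagrammatically this whole step is just the isotopy that pulls the $s_m$-strand straight.

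The only delicate point is the sign: one must confirm that attaching the cup, sliding it into position, and applying the snake relation in the \emph{super}monoidal category $\bsbimext(\hf,W_\infty)$ contribute no extra $-1$, so that $(-1)^{|m(ts_m)|-1}$ is inherited verbatim from \cref{newgenrotation}. This is immediate from the observation above that every morphism used in the straightening is even, the sole odd morphism $\Phi_t^{\underline w}$ never being moved past another morphism. (The reductions \cref{newgenreduct} and \cref{newgenmult} are not needed for this argument.)
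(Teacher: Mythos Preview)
Your proof is correct and follows exactly the same approach as the paper's one-line proof (``Follows from adding a cup on the bottom right in \cref{newgenrotation}''): you tensor the rotation identity on the right with $\id_{s_m}$, precompose with an $s_m$-cup, and straighten the resulting zig-zag via the snake relation. Your additional care in checking that no super-sign appears (since all the (co)unit/(co)multiplication morphisms involved are cohomologically even) is a nice touch the paper leaves implicit.
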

\begin{proof}
Follows from adding a cup on the bottom right in \cref{newgenrotation}.
\end{proof}

\begin{corollary}
\label{newgencyclic}
Assume that $|m(t, \underline{w})|\ge 4$. Then $\Phi_t^{\underline{w}}$ is cyclic. Diagramatically this means
\begin{equation}
\label{newgencycliceq}
\raisebox{-0.8cm}{
    \begin{tikzpicture}[scale=0.6]
	       \draw[blue] (0.8,1.3) to[out=-85,in=-180] (1.4,-1.75) to[out=0, in=-90] (2,-1);
	       \draw[violet] (1.1,-0.1) to[out=-90,in=-180] (2,-1);
	       \draw[violet] (1.1,-0.1) to[out=90,in=-180] (2.8,1.4) to[out=0,in=90] (4.5,-0.1);
	       \draw[violet] (4.5,-0.1) -- (4.5, -2);
	       \draw[violet] (2.9,-0.1) to[out=-90,in=0] (2,-1);
	       \draw[violet] (2.9,-0.1) to[out=90,in=180] (3.2,0.5) to[out=0,in=90] (3.5,-0.1);
	       \node at (3.5,-2.3) {$s_m$};
	       \draw[violet] (3.5,-0.1) -- (3.5, -2);
	       \node[rhdot] at (2,-1) {};
	       \node at (2,-0.4) {$\ldots$};
	       \node at (4,-1) {$\ldots$};
	       \node at (4.5,-2.3) {$ s_1$};
	    \end{tikzpicture}\ \raisebox{1cm}{$= $} \ \begin{tikzpicture}[scale=0.6]
	       \draw[violet] (2.9,-0.1) to[out=90,in=0] (1.2,1.4) to[out=180,in=90] (-0.5, -0.1);
	       \draw[violet] (-0.5,-0.1) -- (-0.5, -2);
	       \draw[violet] (1.1,-0.1) to[out=-90,in=-180] (2,-1);
	       \draw[violet] (1.1,-0.1) to[out=90,in=0] (0.8,0.5) to[out=-180,in=90] (0.5, -0.1) ;
	       \draw[violet] (2.9,-0.1) to[out=-90,in=0] (2,-1);
	       \draw[violet] (0.5,-0.1) -- (0.5, -2);
	       \draw[blue] (2,-1) to[out=-90,in=-180] (2.6,-1.75) to[out=0, in=-90] (3.2,1.3);
	       \node[rhdot] at (2,-1) {};
	       \node at (2,-0.4) {$\ldots$};
	       \node at (0,-0.9) {$\ldots$};
	       \node at (-0.5,-2.3) {$s_m$};
	       \node at (0.5,-2.3) {$ s_1$};
	    \end{tikzpicture}}
\end{equation}
\end{corollary}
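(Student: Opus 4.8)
The plan is to reduce \eqref{newgencycliceq} to a rank-one comparison and then feed it to the one-click rotation relation \cref{newgenrotation}. Both sides of \eqref{newgencycliceq} are morphisms with the same source and the same target: reading off the two diagrams, each is an element of $\hom^{1,\bullet}_{\bsbimext(\hf,W_\infty)}(\bs(\underline a),\bs(\underline b))$ for the \emph{same} pair of expressions $\underline a,\underline b$ obtained from $\underline w$ (and $t$) by the indicated bending. Using that each $B_{s_i}\otimes_R(-)$ is biadjoint to itself, with unit and counit the cups and caps, together with the isomorphism \eqref{kwresolution}, I would transport this $\hom$-space to $\ext^{1,-N}_{R^e}(B_t,\bs(\underline w{}'))$ for a cyclic rearrangement $\underline w{}'$ of the letters of $t\underline w$. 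Since these rotations only permute the letters of $t\underline w$ cyclically, $|m(t,\underline w{}')|=|m(t,\underline w)|\ge 4$, so by \cref{1dimcriterion} this $\kb$-module is one-dimensional. Hence, exactly as in \cref{clarifyinglemcor}, it suffices to check that the two sides of \eqref{newgencycliceq} agree on the single test element $\underline{\rho_s}\boxtimes 1\otimes 1$ (or $\underline{\rho_t}\boxtimes 1\otimes 1$, depending on the colour of the bottom strand) — equivalently, that both have the same, necessarily nonzero, leading coefficient.

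\textbf{The comparison.} I would then peel off one boundary strand at a time. The left-hand side of \eqref{newgencycliceq} is obtained from $\Phi_t^{\underline w}$ by rotating around one way, the right-hand side by rotating around the other way; each individual one-click rotation is governed by \cref{newgenrotation} (or its cupped-off form \cref{newgencor}), and contributes the scalar $(-1)^{|m(ts_i)|-1}$ — which is trivial when the moved letter equals $t$ and equals $-1$ when it equals $s$. Thus both sides equal the same rotated generator up to a product of such signs, and the product accumulated going one way and the product accumulated going the other way each compute the parity of the number of letters equal to $s$ among the strands that were moved; these parities coincide, so the two sides agree. At chain level this is literally the bookkeeping already done in the proof of \cref{newgenrotation}: one runs $\underline{\rho_s}\boxtimes 1\otimes 1$ through the chain lifts $\widetilde{\eta_\bullet}$, $\widetilde{\delta_\bullet}$, $\widetilde{\mu_\bullet}$, $\widetilde{\epsilon_\bullet^\prime}$ and $\widetilde{\Phi_t^{\underline w}}{}^0$ and reads off the surviving summand.

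\textbf{The main obstacle.} The delicate point is the sign bookkeeping in the super-monoidal setting: $\eta_s,\delta_s,\mu_s,\epsilon_s$ and $\Phi_t^{\underline w}$ all carry (possibly nonzero) cohomological degree, so every time two strands are isotoped past one another the super exchange law $(h\otimes k)\circ(f\otimes g)=(-1)^{|k||f|}(h\circ f)\otimes(k\circ g)$ introduces a Koszul sign, and one must check these are compatible with the $(-1)^{|m(ts_i)|-1}$ factors produced by \cref{newgenrotation}. The cleanest way to avoid grinding through all of these is not to rotate all the way around in a single chain computation, but to invoke \cref{newgenrotation} and \cref{newgencor} as black boxes and thereby reduce the whole statement to the purely combinatorial fact that the two sign products over a cyclic word in $\{s,t\}$ coincide — the analytic content having already been absorbed into \cref{newgenrotation}.
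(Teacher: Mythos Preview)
Your overall strategy coincides with the paper's: repeatedly apply \cref{newgenrotation} (and its cupped variant \cref{newgencor}) to relate the two mates of $\Phi_t^{\underline w}$, and then show the accumulated sign is $+1$. The reduction to a one-dimensional $\ext$-space via \cref{1dimcriterion} is unnecessary once you use \cref{newgenrotation} as a black box (it gives exact equalities, not just proportionality), and indeed the paper does not invoke it here.

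The genuine gap is in your sign bookkeeping. You assert that a single clockwise rotation contributes $(-1)^{|m(ts_i)|-1}$, which you read as ``$+1$ if the moved letter is $t$, $-1$ if it is $s$'', and then conclude that the total sign is the parity of the number of $s$'s moved. This misreads \cref{newgenrotation}: after one rotation the bottom strand changes colour from $t$ to $s_m$, so the next rotation contributes $(-1)^{|m(s_m s_{m-1})|-1}$, not $(-1)^{|m(t s_{m-1})|-1}$. In other words, each step contributes $-1$ precisely when there is a \emph{colour change} between the current bottom colour and the newly moved strand, not when the moved strand happens to equal $s$. For instance, with $\underline w=(s,t,s)$ three clockwise rotations give sign $(-1)^3=-1$, whereas your rule (two $s$'s moved) predicts $+1$. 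The paper handles this correctly via the additivity
\[
\bigl(|m(t\cdots s_i)|-1\bigr)+\bigl(|m(s_i s_{i-1})|-1\bigr)=|m(t\cdots s_i s_{i-1})|-1,
\]
which collapses the product of signs to $(-1)^{|m(t\underline w^{-1}t)|-1}$, and then checks directly that $|m(t\underline w^{-1}t)|-1$ is always even. Your final sentence (``the two sign products over a cyclic word coincide'') is in the right spirit but does not by itself show the common sign is $+1$; that parity check is exactly the missing step.
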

\begin{proof}
We can apply \cref{newgenrotation} repeatedly to obtain 
\begin{center}
    \begin{tikzpicture}[scale=0.6]
	       \draw[blue] (0.8,1.3) to[out=-85,in=-180] (1.4,-1.75) to[out=0, in=-90] (2,-1);
	       \draw[violet] (1.1,-0.1) to[out=-90,in=-180] (2,-1);
	       \draw[violet] (1.1,-0.1) to[out=90,in=-180] (2.8,1.4) to[out=0,in=90] (4.5,-0.1);
	       \draw[violet] (4.5,-0.1) -- (4.5, -2);
	       \draw[violet] (2.9,-0.1) to[out=-90,in=0] (2,-1);
	       \draw[violet] (2.9,-0.1) to[out=90,in=180] (3.2,0.5) to[out=0,in=90] (3.5,-0.1);
	       \draw[violet] (3.5,-0.1) -- (3.5, -2);
	       \draw[violet] (3.5, -2) to[out=-90, in=0] (2,-3) to[out=-180, in=-90] (0.5,-2);
	       \draw[violet] (0.5,-2) -- (0.5, 1.3);
	       \draw[violet] (4.5, -2) to[out=-90, in=0] (2,-3.5) to[out=-180, in=-90] (-0.5,-2);
	       \draw[violet] (-0.5,-2) -- (-0.5, 1.3);
	       \node[rhdot] at (2,-1) {};
	       \node at (2,-0.4) {$\ldots$};
	       \node at (4,-1) {$\ldots$};
	       \node at (0,-1) {$\ldots$};
	       \node at (1.1,-2.1) {$s_m$};
	       \node at (-1,-2.1) {$ s_1$};
	    \end{tikzpicture}\ \  \raisebox{1.5cm}{$= (-1)^{\ell(|t\underline{w}^{-1})|-1}$} \
	    \raisebox{0.5cm}{\begin{tikzpicture}[scale=1.1]
	       \draw[violet] (1.1,-0.1) to[out=-90,in=-180] (2,-1);
	       \draw[blue] (2.9,-0.1) to[out=-90,in=0] (2,-1);
	       \draw[violet] (0.9,-0.1) to[out=-90,in=-180] (1.45,-1.75) to[out=0, in=-90] (2,-1);
	       \node[yhdot] at (2,-1) {};
	       \node at (2,-0.4) {$\ldots$};
	       \node at (0.7,-0.5) {$s_1$};
	    \end{tikzpicture}}
\end{center}
(Note $|m(t\ldots s_i)|-1+|m(s_i s_{i-1})|-1=|m( t\ldots s_i s_{i-1})|-1$.) Now apply \cref{newgencor}, cap off the strands $s_1, \ldots, s_m$ to the left and use isotopy to arrive at
\begin{equation*}
\raisebox{-0.8cm}{
    \begin{tikzpicture}[scale=0.6]
	       \draw[blue] (0.8,1.3) to[out=-85,in=-180] (1.4,-1.75) to[out=0, in=-90] (2,-1);
	       \draw[violet] (1.1,-0.1) to[out=-90,in=-180] (2,-1);
	       \draw[violet] (1.1,-0.1) to[out=90,in=-180] (2.8,1.4) to[out=0,in=90] (4.5,-0.1);
	       \draw[violet] (4.5,-0.1) -- (4.5, -2);
	       \draw[violet] (2.9,-0.1) to[out=-90,in=0] (2,-1);
	       \draw[violet] (2.9,-0.1) to[out=90,in=180] (3.2,0.5) to[out=0,in=90] (3.5,-0.1);
	       \node at (3.5,-2.3) {$s_m$};
	       \draw[violet] (3.5,-0.1) -- (3.5, -2);
	       \node[rhdot] at (2,-1) {};
	       \node at (2,-0.4) {$\ldots$};
	       \node at (4,-1) {$\ldots$};
	       \node at (4.5,-2.3) {$ s_1$};
	    \end{tikzpicture}\ \raisebox{1cm}{$= (-1)^{|m(t\underline{w}^{-1}t )|-1}$} \ \begin{tikzpicture}[scale=0.6]
	       \draw[violet] (2.9,-0.1) to[out=90,in=0] (1.2,1.4) to[out=180,in=90] (-0.5, -0.1);
	       \draw[violet] (-0.5,-0.1) -- (-0.5, -2);
	       \draw[violet] (1.1,-0.1) to[out=-90,in=-180] (2,-1);
	       \draw[violet] (1.1,-0.1) to[out=90,in=0] (0.8,0.5) to[out=-180,in=90] (0.5, -0.1) ;
	       \draw[violet] (2.9,-0.1) to[out=-90,in=0] (2,-1);
	       \draw[violet] (0.5,-0.1) -- (0.5, -2);
	       \draw[blue] (2,-1) to[out=-90,in=-180] (2.6,-1.75) to[out=0, in=-90] (3.2,1.3);
	       \node[rhdot] at (2,-1) {};
	       \node at (2,-0.4) {$\ldots$};
	       \node at (0,-0.9) {$\ldots$};
	       \node at (-0.5,-2.3) {$s_m$};
	       \node at (0.5,-2.3) {$ s_1$};
	    \end{tikzpicture}}
\end{equation*}
One can then check that in all cases, $|m(t\underline{w}^{-1}t )|-1$ is always even.
\end{proof}

For more background on  cyclicity we refer the reader to \cite[Chapter 7.5]{EMTW20} or \cite[Section 4.4]{Lau10}. The main upshot of having a cyclic morphism is the following theorem

\begin{theorem}[{Cockett–Koslowski—Seely \cite{CKS}}]
\label{cks}
Given a string diagram $D$ representing a cyclic 2-morphism between 1-morphisms with chosen biadjoints, any isotopy of $D$ fixing endpoints represents the same 2-morphism.
\end{theorem}
We should be careful in applying this theorem however. Let $\partial$ denote the diagram on the RHS below and note that
\begin{equation}
\label{isoeq}
    \begin{tikzpicture}[xscale=0.4,yscale=0.4,thick,baseline]
\draw[red] (-1.8,-0.5) to[out=90, in=135] (-1,1.5);
 \draw[red] (-1,1.5) -- (0,0.5);
 \draw[red] (1,1.5) -- (0,0.5);
 \draw[red] (0,0.5) -- (0,-0.5);
 \draw[red] (0,-0.5) to[out=-90, in=180] (1, -1) to[out=0, in=-90]  (2,1.5);
\end{tikzpicture} \ \textnormal{ is isotopic ``fixing endpoints" to }  \begin{tikzpicture}[xscale=0.4,yscale=0.4,thick,baseline]
 \draw[red] (-1,1.5) -- (0,0.5);
 \draw[red] (1,1.5) -- (0,0.5);
 \draw[red] (0,0.5) -- (0,-0.5);
\end{tikzpicture} 
\end{equation} 
and these do turn out to be equal in $\mathscr{D}(\hf, S_2)$. However, this is not a consequence of cyclicity! The diagram on the RHS is technically not a string diagram, so \cref{cks} doesn't apply. A string diagram representing the RHS is drawn below \vspace{-2ex}

\begin{center}
     \begin{tikzpicture}[xscale=0.4,yscale=0.4,thick,baseline]
  \draw[red] (0,0.5) -- (0,1.5);
 \draw[red] (0,0.5) -- (0,-0.5);
 \node[dot] at (0,0.5) {};
  \node at (0,-0.9) {$s$};
 \node at (0.6,0.5) {$\partial$};
  \node at (0,1.8) {$ss$};
\end{tikzpicture} 
\end{center} 
and now it is clear that the LHS of \cref{isoeq} isn't an isotopy of the above string diagram. In general if adding a cap on top and a cup on bottom ``doesn't change" the morphism, we will say it is rotation invariant. For example, \cref{isoeq} are equal due to rotation invariance.  Rotation invariance of a morphism greatly simplifies the graphical calculus for a morphism because it implies we don't need to do this bookkeeping above on how the morphism was constructed, we only need to care about the color(s) of the morphism (See the proof of \cref{welldeflem}).

\begin{theorem}
\label{newgensquare}
Let $\underline{w}=(s_1, \ldots, s_m)$ be such that $|m(t, \underline{w})|\ge 4$ and for a fixed $i$, consider expresssions $\underline{v}$ such that $\ell(m(s_i\underline{v}))\ge 4$. Then the following relation holds in $\ext^{2,-|\underline{w}|-|\underline{v}|-2}_{R^e}(B_t, \bs (s_1,\ldots s_{i-1}, \underline{v}, s_{i+1}, \ldots, s_m ) )$
\[ (\id_{s_1} \otimes_R \cdots \otimes_R \id_{s_{i-1}}\otimes_R \Phi_{s_i}^{\underline{v}} \otimes_R \id_{s_{i+1}}\otimes_R \cdots \otimes_R \id_{s_m}   ) \circ \Phi_t^{\underline{w}} =0\]
\end{theorem}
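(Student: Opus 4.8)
Set $\underline{u}=(s_1,\dots,s_{i-1},\underline{v},s_{i+1},\dots,s_m)$, so $|\underline{u}|=|\underline{w}|+|\underline{v}|-1$ and the composition lies in $\ext^{2,-|\underline{u}|-3}_{R^e}(B_t,\bs(\underline{u}))$. The plan is to write down a chain-level representative of the composition and show it represents $0$, using the description of $\ext^2$ from \cref{maincohoiso}. Since $|m(t,\underline{w})|\ge 4$, \cref{maincohoiso} gives $\ext^{2,\bullet}_{R^e}(B_t,\bs(\underline{u}))\cong\Db(\ker\rho_s^e(\underline{u}))(5)$, and (with $\underline{u}$ in place of $\underline{w}$) the vertical differential into the top-right corner of \cref{keyeq2} vanishes, so $\ext^2_{R^e}(B_t,\bs(\underline{u}))$ is the total-degree-$2$ cohomology of the total complex of \cref{keydoublecomplex}. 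A total-degree-$2$ cochain is just its value $c\in\bs(\underline{u})$ on the top exterior generator $\underline{\rho_s\wedge\rho_t t(\rho_t)}\boxtimes 1\otimes 1$, and it represents $0$ precisely when $c\in\im\rho_s^e(\underline{u})$. Thus it suffices to show $c\in\im\rho_s^e(\underline{u})$.

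\emph{Identifying the contributing components.} Pick chain lifts $\widetilde{\Phi_t^{\underline{w}}}\colon K_t\to K(\underline{w})$ and $\widetilde{\Phi_{s_i}^{\underline{v}}}\colon K_{s_i}\to K(\underline{v})$ of cohomological degree $1$ as in \S\ref{newgensect}, and set $\widetilde{\Psi}=\id^{\otimes(i-1)}\otimes_R\widetilde{\Phi_{s_i}^{\underline{v}}}\otimes_R\id^{\otimes(m-i)}\colon K(\underline{w})\to K(\underline{u})$; then $q_{\underline{u}}\circ\widetilde{\Psi}\circ\widetilde{\Phi_t^{\underline{w}}}$ represents the composition. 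As $K_t$ is concentrated in cohomological degrees $-2,-1,0$ and $K(\underline{w}),K(\underline{u})$ in nonpositive degrees, the value of $\widetilde{\Psi}\circ\widetilde{\Phi_t^{\underline{w}}}$ on the degree $-2$ generator passes through $K(\underline{w})^{[-1]}=\bigoplus_{j=1}^{m}K_{s_1}^{[0]}\otimes_R\cdots\otimes_R K_{s_j}^{[-1]}\otimes_R\cdots\otimes_R K_{s_m}^{[0]}$ and lands in $K(\underline{u})^{[0]}$. On each summand with $j\neq i$, $\widetilde{\Psi}$ applies $\widetilde{\Phi_{s_i}^{\underline{v}}}$ to $K_{s_i}^{[0]}$, whose image lies in $K(\underline{v})^{[1]}=0$; so only the $j=i$ summand survives, on which $\widetilde{\Psi}$ acts through the $(-1\!\to\!0)$-component $\widetilde{\Phi_{s_i}^{\underline{v}}}^{0}$ in slot $i$. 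Hence $c$ equals $q_{\underline{u}}$ of $\widetilde{\Phi_{s_i}^{\underline{v}}}^{0}$ (in slot $i$) applied to the $j=i$-part of $\widetilde{\Phi_t^{\underline{w}}}\bigl(\underline{\rho_s\wedge\rho_t t(\rho_t)}\boxtimes 1\otimes 1\bigr)\in K(\underline{w})^{[-1]}$.

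\emph{Finishing, and the main obstacle.} Since $q_{\underline{w}}\colon K(\underline{w})\to\bs(\underline{w})$ is a resolution, $H^{-1}(K(\underline{w}))=0$; combined with the chain-map identity for $\widetilde{\Phi_t^{\underline{w}}}$ and the cocycle relation $\rho_s^e(\underline{w})(v)=\rho_t t(\rho_t)^e(\underline{w})(1(\underline{w}))$ defining $\Phi_t^{\underline{w}}$ (\cref{clarifyinglemcor}), the element $\widetilde{\Phi_t^{\underline{w}}}\bigl(\underline{\rho_s\wedge\rho_t t(\rho_t)}\boxtimes 1\otimes 1\bigr)\in K(\underline{w})^{[-1]}$ differs from a $d_{K(\underline{w})}$-boundary by an explicit term obtained by applying $\rho_s^e$ to chain lifts of $1(\underline{w})$ and of the correction element $v$. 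Pushing this through $\widetilde{\Psi}$: the boundary part maps to a $d_{K(\underline{u})}$-boundary and so dies under $q_{\underline{u}}$, while the remaining term is an $\rho_s^e(\underline{u})$-multiple; hence $c\in\im\rho_s^e(\underline{u})$ and the composition vanishes. The genuine difficulty is keeping track of the correction elements $v$ (for $\Phi_t^{\underline{w}}$) and $v'$ (for $\Phi_{s_i}^{\underline{v}}$), which come with no closed formula: the hypotheses $|m(t,\underline{w})|\ge 4$ and $\ell(m(s_i\underline{v}))\ge 4$ enter exactly here, forcing---as in \cref{1dimcriterion}---the low internal-degree parts of the relevant kernels $\ker\rho_s^e(-)$ to vanish, which is what pins these corrections down to $\rho_s^e$-multiples so that they cannot contribute to the class in $\mathrm{coker}\,\rho_s^e(\underline{u})$. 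As a more diagrammatic alternative one may induct on $|\underline{v}|+|\underline{w}|$: by \cref{newgenreduct} and \cref{newgenmult}, a counit or comultiplication applied to any letter of $\underline{v}$ or $\underline{w}$ is absorbed into one of the two hollow-dot vertices, reducing to the base case of minimal $\underline{v},\underline{w}$, which is handled by the computation above.
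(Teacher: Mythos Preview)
Your approach is vastly more complicated than necessary, and the ``finishing'' paragraph never actually completes the argument---you gesture at correction terms becoming $\rho_s^e$-multiples but do not verify this. The paper's proof is one line: the entire target group vanishes. By \cref{maincohoiso}, $\ext^{2,\bullet}_{R^e}(B_t,\bs(\underline{u}))\cong \Db(\ker\rho_s^e(\underline{u}))(5)$, and the lowest internal degree in which this is nonzero is governed by the element $1(\underline{u})$, whose degree is strictly greater than the degree $-|\underline{w}|-|\underline{v}|-2$ in question. Hence $\ext^{2,-|\underline{w}|-|\underline{v}|-2}_{R^e}(B_t,\bs(\underline{u}))=0$, and the composition is zero because there is nothing nonzero for it to be.

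You actually brush against this idea when you write that the hypotheses force ``the low internal-degree parts of the relevant kernels to vanish,'' but you use it only to control correction terms rather than recognizing it kills the whole group. There is no need to track chain lifts, no need to analyze which summands of $K(\underline{w})^{[-1]}$ contribute, and no need for an induction on $|\underline{v}|+|\underline{w}|$. The degree count $-|\underline{u}|>-|\underline{w}|-|\underline{v}|-2$ (equivalently $|\underline{u}|=|\underline{w}|+|\underline{v}|-1<|\underline{w}|+|\underline{v}|+2$) is all that is required.
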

WLOG assume that $s_i=t$. Diagramatically, this is of the form
\begin{center}
    \begin{tikzpicture}[scale=0.6]
	       \draw[violet] (1.1,-0.1) to[out=-90,in=-180] (2,-1);
	       \draw[violet] (2.9,-0.1) to[out=-90,in=0] (2,-1);
	       \foreach \x in {2} \draw[blue] (\x,-1) -- (\x,-2);
	       \draw[blue] (2,-0.1)--(2,-1);
	       \draw[violet] (1.4,0.6) to [out=-90, in=-180] (2,-0.1) ;
	       \draw[violet] (2.6,0.6) to [out=-90, in=0] (2,-0.1);
	        \node[rhdot] at (2,-0.1) {};
	       \node[rhdot] at (2,-1) {};
	       \node at (2,0.3) {$\underline{v}$};
	       \node at (1.55,-0.4) {$\ldots$};
	       \node at (2.4,-0.4) {$\ldots$};
	    \end{tikzpicture}\raisebox{0.4cm}{$=0$}
\end{center}
\begin{proof}
This will be true by degree reasons. Namely from \cref{maincohoiso} we know that 
\[ \ext^{2,-|\underline{w}|-|\underline{v}|-2}_{R^e}(B_t, \bs (s_1,\ldots s_{i-1}, \underline{v}, s_{i+1}, \ldots, s_m ) )\cong \Db( \ker \rho_s^e(s_1,\ldots s_{i-1}, \underline{v}, s_{i+1}, \ldots, s_m) )_{-|\underline{w}|-|\underline{v}|-2} \]
But notice that the lowest degree element on the RHS is $1(s_1,\ldots s_{i-1}, \underline{v}, s_{i+1}, \ldots, s_m)$ which has degree $-|\underline{w}|-|\underline{v}|$ and thus the RHS above is 0
\end{proof}

\begin{theorem}
Let $\underline{w}=(s_1, \ldots, s_m)$ be such that $|m(t, \underline{w})|\ge 4$, then the following relation holds in $\ext_{R^e}^{2, -|\underline{w}|-4}(B_t, \bsw)$
\[ (\phi_{s_1}\otimes_R\id_{s_2} \otimes_R \cdots \otimes_R \id_{s_m}) \circ \Phi_t^{\underline{w}}= (\id_{s_1}\otimes_R \phi_{s_2} \otimes_R \cdots \otimes_R \id_{s_m}) \circ \Phi_t^{\underline{w}} \]
When $s_1=s$ and $s_2=t$, diagrammatically this will be of the form
\[  \begin{tikzpicture}[scale=0.7]
	       \draw[red] (1,0.2) to[out=-90,in=-180] (2,-1);
	       \node[rhdot] at (1.05,-0.2) {};
	       \draw[violet] (3,0.2) to[out=-90,in=0] (2,-1);
	       \draw[blue] (2,0.2) -- (2,-1);
	       \foreach \x in {2} \draw[blue] (\x,-1) -- (\x,-2);
	       \node[rhdot] at (2,-1) {};
	       \node at (2.4,-0.4) {$\ldots$};
	    \end{tikzpicture}  \raisebox{0.5cm}{ \ = \ } 
	    \begin{tikzpicture}[scale=0.7]
	       \draw[red] (1,0.2) to[out=-90,in=-180] (2,-1);
	       \draw[violet] (3,0.2) to[out=-90,in=0] (2,-1);
	       \draw[blue] (2,0.2) -- (2,-1);
	       \foreach \x in {2} \draw[blue] (\x,-1) -- (\x,-2);
	       \node[bhdot] at (2,-0.2) {};
	       \node[rhdot] at (2,-1) {};
	       \node at (2.4,-0.4) {$\ldots$};
	    \end{tikzpicture}\]
\end{theorem}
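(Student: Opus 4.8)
The plan is to reduce to a minimal two-color word and then finish by a short manipulation in a small $\ext$-group. First, by \cref{maincohoiso} the target group $\ext^{2,\bullet}_{R^e}(B_t,\bsw)$ is isomorphic to $\Db(\ker\rhosw)(5)$, and the composite $(\phi_{s_1}\otimes_R\cdots)\circ\Phi_t^{\underline w}$ has bidegree $(2,-|\underline w|-5)$, so it lives in the bottom piece $\Db(\ker\rhosw)_{-|\underline w|}$. By \cref{keycor} the right $R$-basis $\{\ll{f}:r(\underline f)=\id\text{ or }t\}$ of $\ker\rhosw$ contains a unique element of top internal degree $|\underline w|$, namely $\ll{0\cdots 0}=c_{top}$ (every other light leaf uses a light-leaf move of degree $\le 0$), so $\Db(\ker\rhosw)_{-|\underline w|}=\kb\cdot 1(\underline w)$ is one-dimensional over $\kb$. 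Thus both sides of the asserted identity are $\kb$-multiples of one fixed class. If $s_1=s_2$ the identity is immediate, since $\phi_{s_1}\otimes_R\id_{s_2}$ and $\id_{s_1}\otimes_R\phi_{s_2}$ already agree on $B_{s_1}\otimes_R B_{s_1}$ by the one-color Hochschild jumping relation \cref{eqn:hochjump} (which holds in $\bsbimext(\hf,S_2)$ by \cref{Mmaintheorem}). So assume $s_1\neq s_2$; we treat $s_1=s$, $s_2=t$ in detail, the case $s_1=t$, $s_2=s$ being entirely analogous with minimal word $(t,s,t,s)$ in place of $(s,t,s)$.

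Next I would peel off all but three strands. Since $|m(t,\underline w)|\ge 4$ and $\underline w$ begins $(s,t,\dots)$, one checks combinatorially that $\underline w$ contains the subexpression $(s,t,s)$ using positions $1,2,k$ for some $k\ge 3$ (otherwise $\underline w=(s,t,t,\dots,t)$ and $|m(t,\underline w)|=3$). For any strand $j\notin\{1,2,k\}$, removing it with $\epsilon_{s_j}$ intertwines $\Phi_t^{\underline w}$ with $\Phi_t^{\underline{\widehat{w}^j}}$ by \cref{newgenreduct} — the hypothesis $\ell(m(t\underline{\widehat{w}^j}))\ge 4$ survives because the chosen subexpression does — and $\epsilon_{s_j}$ acts on a tensor factor disjoint from those carrying $\phi_{s_1},\phi_{s_2}$, so it commutes with post-composition by either of them (any super-signs cancel on the two sides). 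On the relevant internal degree the induced map $\ext^2_{R^e}(B_t,\bsw)\to\ext^2_{R^e}(B_t,\bs(\underline{\widehat{w}^j}))$ is a map of one-dimensional $\kb$-modules sending $[1(\underline w)]\mapsto[1(\underline{\widehat{w}^j})]\neq 0$, hence an isomorphism; so the identity for $\underline w$ is equivalent to the identity for $\underline{\widehat{w}^j}$. Iterating, it suffices to treat $\underline w=(s,t,s)$.

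For the base case one must show $(\phi_s\otimes_R\id_t\otimes_R\id_s)\circ\Phi_t^{(s,t,s)}=(\id_s\otimes_R\phi_t\otimes_R\id_s)\circ\Phi_t^{(s,t,s)}$. Cap the third strand with $\epsilon_s$: by \cref{maincohoiso} the group $\ext^2_{R^e}(B_t,B_sB_t)$ is one-dimensional over $\kb$ in the relevant degree (spanned by $[1(s,t)]$), and capping sends $[1(s,t,s)]\mapsto[1(s,t)]\neq 0$, so it is enough to prove the capped identity. Since $\epsilon_s$ on strand $3$ commutes with $\phi_s$ on strand $1$ and with $\phi_t$ on strand $2$, substituting \cref{4extreduct} the capped identity becomes
\[(\phi_s\otimes_R\id_t)\circ\Psi=(\id_s\otimes_R\phi_t)\circ\Psi,\qquad \Psi:=(\eta_s^{\ext}\otimes_R\id_t)\circ\tau_t-(\eta_s\otimes_R\id_t)\circ(\id_\varnothing\otimes_R\phi_t)\circ\tau_t.\]
Expanding both sides with the super exchange law and simplifying via $\phi_s\circ\eta_s=\eta_s^{\ext}$ (\cref{Mmaintheorem}), $\phi_s\circ\eta_s^{\ext}=\phi_s^2\circ\eta_s=0$, and $\phi_t^2=0$ (\cref{eqn:hdot-square}), both sides collapse to $-(\eta_s^{\ext}\otimes_R\phi_t)\circ\tau_t$, which proves the identity.

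The step I expect to be the main obstacle is the super-sign bookkeeping in the base case: the super exchange law introduces factors such as $(-1)^{|\phi_t|\,|\eta_s^{\ext}|}$, and it is precisely the $(-1)$ picked up when commuting $\phi_t$ past $\eta_s^{\ext}$ on the right-hand side that forces the two expansions to agree rather than differ by a sign. A secondary point requiring care is the combinatorial claim in the reduction step together with the parallel base case $s_1=t$: there the minimal word $(t,s,t,s)$ can only be reduced by \cref{newgenreduct} through removal of the first strand, so one must either run a direct computation as above using a high-strand analogue of \cref{4extreduct}, or deduce it from the $(s,t,s)$ case together with the signed rotation invariance of $\Phi$ (\cref{newgenrotation}).
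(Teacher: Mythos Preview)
Your approach is correct for the case $s_1=s$, $s_2=t$ (the sign bookkeeping in your base case checks out: both sides reduce to $-(\eta_s^{\ext}\otimes\phi_t)\circ\tau_t$), but it is genuinely different from the paper's and somewhat more laborious. The paper does not reduce to a minimal word at all. Instead it works directly with $\underline w$: it ``pops'' the red Hochschild dot off strand $1$ onto a side branch via the one-color relation \cref{eqn:hdot-trivalent-a}, then applies \cref{4extreduct} to that univalent red Hochschild dot. This produces two terms: a blue Hochschild dot on strand $2$ (after re-absorbing the red side branch), plus a term in which a second ext-valent vertex is stacked on top of $\Phi_t^{\underline w}$. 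The latter term vanishes by \cref{newgensquare}, and the proof is complete. The key lemma you do \emph{not} use is \cref{newgensquare}; it is precisely what makes the paper's argument uniform in $\underline w$ and insensitive to whether $s_1=s$ or $s_1=t$.

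Your reduction strategy, by contrast, forces a case split. As you correctly observe, when $s_1=t$ the minimal word satisfying $|m(t,\underline w)|\ge 4$ with the first two strands fixed is $(t,s,t,s)$, not a three-letter word, and neither strand $3$ nor $4$ can be peeled off by \cref{newgenreduct} (removing either drops $|m(t,\cdot)|$ below $4$). So the $(t,s)$ case is not finished; your two suggested fixes (a direct four-strand computation, or an appeal to \cref{newgenrotation}) are both plausible but neither is carried out. This is a real, if minor, gap. The cleanest repair is simply to adopt the paper's manipulation (pop out, apply \cref{4extreduct}, kill with \cref{newgensquare}), which handles both color orderings at once.
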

\begin{proof}
If $s_1=s_2$ then this follows from 1 color Hochschild jumping. Otherwise wlog assume that $s_1=s$ and $s_2=t$. Then popping the red Hochschild out and applying \cref{4extreduct} and \cref{newgensquare} it follows that
\begin{equation}
\label{2colorjumpeq}
    \begin{tikzpicture}[scale=0.7]
	       \draw[red] (1,0.2) to[out=-90,in=-180] (2,-1);
	       \node[rhdot] at (1.05,-0.2) {};
	       \draw[violet] (3,0.2) to[out=-90,in=0] (2,-1);
	       \draw[blue] (2,0.2) -- (2,-1);
	       \foreach \x in {2} \draw[blue] (\x,-1) -- (\x,-2);
	       \node[rhdot] at (2,-1) {};
	       \node at (2.4,-0.4) {$\ldots$};
	    \end{tikzpicture} \raisebox{0.5cm}{ \ = \ }\begin{tikzpicture}[scale=0.7]
	       \draw[red] (1,0.2) to[out=-90,in=-180] (2,-1);
	       \draw[red] (1.05,-0.1) -- (1.6,-0.4);
	       \node[rhdot] at (1.6,-0.4) {};
	       \draw[violet] (3,0.2) to[out=-90,in=0] (2,-1);
	       \draw[blue] (2,0.2) -- (2,-1);
	       \foreach \x in {2} \draw[blue] (\x,-1) -- (\x,-2);
	       \node[rhdot] at (2,-1) {};
	       \node at (2.4,-0.4) {$\ldots$};
	    \end{tikzpicture}  \raisebox{0.5cm}{ \ = \ } \begin{tikzpicture}[scale=0.7]
	       \draw[red] (1,0.2) to[out=-90,in=-180] (2,-1);
	       \draw[red] (1.05,-0.1) -- (1.6,-0.4);
	       \node[rdot] at (1.6,-0.4) {};
	       \draw[violet] (3,0.2) to[out=-90,in=0] (2,-1);
	       \draw[blue] (2,0.2) -- (2,-1);
	       \foreach \x in {2} \draw[blue] (\x,-1) -- (\x,-2);
	       \node[rhdot] at (2,-1) {};
	       \node[bhdot] at (2,-0.2) {};
	       \node at (2.4,-0.4) {$\ldots$};
	    \end{tikzpicture} \raisebox{0.5cm}{ \ $+$ \ }
	    \begin{tikzpicture}[scale=0.7]
	       \draw[red] (1,0.2) to[out=-90,in=-180] (2,-1);
	       \draw[red] (1.05,-0.1) -- (2,-0.5);
	       \draw[red] (2.3,0.1) -- (2,-0.5);
	       \node[rdot] at (2.3, 0.1) {};
	       \draw[violet] (3,0.2) to[out=-90,in=0] (2,-1);
	       \draw[blue] (2,0.2) -- (2,-1);
	       \foreach \x in {2} \draw[blue] (\x,-1) -- (\x,-2);
	       \node[rhdot] at (2,-1) {};
	       \node[rhdot] at (2,-0.5) {};
	       \node at (2.4,-0.4) {$\ldots$};
	    \end{tikzpicture}\raisebox{0.5cm}{ \ $=$ \ }
	    \begin{tikzpicture}[scale=0.7]
	       \draw[red] (1,0.2) to[out=-90,in=-180] (2,-1);
	       \draw[violet] (3,0.2) to[out=-90,in=0] (2,-1);
	       \draw[blue] (2,0.2) -- (2,-1);
	       \foreach \x in {2} \draw[blue] (\x,-1) -- (\x,-2);
	       \node[bhdot] at (2,-0.2) {};
	       \node[rhdot] at (2,-1) {};
	       \node at (2.4,-0.4) {$\ldots$};
	    \end{tikzpicture}
\end{equation}

\end{proof}

\subsection{Ext Valent Morphisms}
\begin{definition}
For any two expressions $\underline{w}=(w_1, \ldots, w_m)$ and $\underline{v}=(v_1, \ldots, v_i, \ldots, v_j, \ldots,  v_n)$ in $s$ and $t$ such that $|m(\underline{v}^{-1}\underline{w})|\ge 4$ define the morphism  $\Omega_{\underline{v}}^{\underline{w}} $ as follows. First choose an anchor, which can be any term $v_i$ in the expression $\underline{v}$ such that $v_i=t$. Then twist $\Phi_t^{ (v_{i-1}, \ldots, v_1, \underline{w}, v_n, \ldots, v_{i+1})}$ using cups and caps until you end up with a morphism in $\ext^{1, -(n+m)}_{R^e}(\bs(\underline{v}),\bs(\underline{w}))$. Diagrammatically, we let
\begin{center}
\raisebox{0.6cm}{$\Omega_{\underline{v}}^{\underline{w}}  =$}
     \begin{tikzpicture}[scale=0.75]
	       \draw[violet] (1.1,-0.1) -- (2,-1);
	       \draw[violet] (2.9,-0.1) -- (2,-1);
	       \draw[violet] (1.1,-2) -- (2,-1);
	       \draw[violet] (2.9,-2) -- (2,-1);
	       \node at (2,-0.4) {$\underline{w}$};
	       \node at (2,-1.6) {$\underline{v}$};
	       \node[rhdot] at (2,-1) {};
	    \end{tikzpicture}
\end{center}
Similarly define $\Upsilon_{\underline{v}}^{\underline{w}}$ by first choosing an anchor, which instead is now any term $v_j$ in $\underline{v}$ such that $v_j=s$ and then twist $\Phi_s^{ (v_{j-1}, \ldots, v_1, \underline{w}, v_n, \ldots, v_{j+1})}$ until you end up with a morphism in $\ext^{1, -(n+m)}_{R^e}(\bs(\underline{v}),\bs(\underline{w}))$. 
Diagrammatically, we let
\begin{center}
\raisebox{0.6cm}{$\Upsilon_{\underline{v}}^{\underline{w}}  =$}
     \begin{tikzpicture}[scale=0.75]
	       \draw[violet] (1.1,-0.1) -- (2,-1);
	       \draw[violet] (2.9,-0.1) -- (2,-1);
	       \draw[violet] (1.1,-2) -- (2,-1);
	       \draw[violet] (2.9,-2) -- (2,-1);
	       \node at (2,-0.4) {$\underline{w}$};
	       \node at (2,-1.6) {$\underline{v}$};
	       \node[bhdot] at (2,-1) {};
	    \end{tikzpicture}
\end{center}
\end{definition}

\begin{lemma}
\label{welldeflem}
$\Omega_{\underline{v}}^{\underline{w}} $ and $\Upsilon_{\underline{v}}^{\underline{w}}$ are well defined, i.e. is independent of the choice of the anchor. (We emphasize that the anchor for $ \Omega_{\underline{v}}^{\underline{w}} $ has to be blue while the anchor for $\Upsilon_{\underline{v}}^{\underline{w}}$ has to be red.)
\end{lemma}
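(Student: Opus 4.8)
The plan is to reduce the statement to a claim about moving the base point on a cyclic word and to control the resulting signs using \cref{newgenrotation}. Read the colours of the strands meeting the central $\Phi$-vertex as a cyclic sequence $C$ (the strands of $\underline w$ in order, then the strands of $\underline v$ in reverse order). Choosing the blue anchor $v_i$ amounts to cutting $C$ just after $v_i$ and unfolding: this recovers $\Phi_t^{\underline w_i}$ with $\underline w_i:=(v_{i-1},\ldots,v_1,\underline w,v_n,\ldots,v_{i+1})$, and $\Omega_{\underline v}^{\underline w}$ built at $v_i$ is $\Phi_t^{\underline w_i}$ pre- and post-composed with a fixed pattern of the (co)units $\eta,\epsilon,\mu,\delta$ of the self-biadjunctions of the $B_{v_j}$, which bends the legs of $\underline w_i$ outside the $\underline w$-block down to the bottom. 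The hypothesis $|m(\underline v^{-1}\underline w)|\ge 4$ guarantees that this morphism and all the intermediate ones below are defined, since every word that occurs is a cyclic rotation of $C$ and hence (because $|m|$ of a cyclic word is minimized on a cut between monochromatic blocks) still has $|m|\ge 4$.

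\textbf{Comparison of anchors.} First I would compare two blue anchors $v_i$ and $v_k$ by sliding the cut point of $C$ from just after $v_i$ to just after $v_k$, one strand at a time along the arc running through $\underline v$. Each elementary slide is an instance of the signed rotation identity \cref{newgenrotation} (equivalently, after capping off a strand, of \cref{newgencor}), which replaces $\Phi_c^{(\cdots)}$ by $\pm\Phi_{c'}^{(\cdots)}$ with sign $(-1)^{|m(cc')|-1}$, this sign being $-1$ exactly when $c\ne c'$. Iterating over the arc, $\Phi_t^{\underline w_i}$ is carried to $(-1)^{\sigma}\Phi_t^{\underline w_k}$, where $\sigma$ is the number of colour changes in the subword $(v_i,v_{i+1},\ldots,v_k)$ of $\underline v$, together with a composite of cups and caps interpolating the two folding patterns. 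Since $v_i=v_k=t$, this is a word over $\{s,t\}$ with coinciding endpoints, so $\sigma=\sum_{j=i}^{k-1}(v_j-v_{j+1})=v_i-v_k=0$ in $\mathbb Z/2$, i.e. $(-1)^{\sigma}=+1$; this is the same parity observation that forces $|m(t\underline w^{-1}t)|-1$ to be even in the proof of \cref{newgencyclic}. I would also note that, although $\Phi\in\ext^1$ is odd so that the super-exchange law can in principle produce Koszul signs when its legs are bent, these are already incorporated into the statement of \cref{newgenrotation}, so no extra correction arises.

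\textbf{Packaging via cyclicity.} Finally I would invoke cyclicity to conclude. By \cref{newgencyclic} the morphism $\Phi_t^{\underline w}$ is cyclic, and the remaining generating morphisms (dots, trivalent vertices, Hochschild dots, exterior boxes) are cyclic as well, so \cref{cks} applies: a diagram built from these is determined, up to isotopy fixing its endpoints, by its underlying picture. The diagrams defining $\Omega_{\underline v}^{\underline w}$ at $v_i$ and at $v_k$ have the same boundary data — $\underline w$ along the top and $\underline v$ along the bottom — and differ only by sliding the $\Phi$-vertex, i.e. by exactly the rotation analysed above, whose net sign was shown to be $+1$; hence they represent the same $2$-morphism, and $\Omega_{\underline v}^{\underline w}$ is independent of the blue anchor. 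Running the identical argument with $s$ and $t$ interchanged shows $\Upsilon_{\underline v}^{\underline w}$ is independent of the red anchor.

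\textbf{Main obstacle.} The main difficulty is the sign bookkeeping in the middle step: one must be sure that the signs produced by iterating \cref{newgenrotation}, together with any Koszul signs coming from the super-monoidal structure, multiply to $+1$. The parity fact that a $\{s,t\}$-word with coinciding endpoints has an even number of colour changes is precisely what makes this work, and it is exactly why the anchor colour is forced to be $t$ for $\Omega_{\underline v}^{\underline w}$ (and $s$ for $\Upsilon_{\underline v}^{\underline w}$): a mixed choice of anchors would correspond to a word with distinct endpoints and an odd number of colour changes, and the construction would genuinely depend on it up to sign.
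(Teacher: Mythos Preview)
Your proposal is correct and follows essentially the same approach as the paper: both arguments iterate the signed rotation identity \cref{newgenrotation} to compare two blue anchors, then observe that the accumulated sign is $(-1)^{\sigma}$ with $\sigma$ the number of colour changes between the anchors, which is even since both anchors are $t$; cyclicity (\cref{newgencyclic}) and \cref{cks} are then invoked to identify the two diagrams. The only cosmetic difference is that the paper first reduces WLOG (by adding cups and caps and using adjunction) to the case $\underline v=(t,\ldots,t)$ with anchors at the first and last positions, whereas you handle the general pair of anchors directly.
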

\begin{proof}
We prove this for $ \Omega_{\underline{v}}^{\underline{w}} $ as $\Upsilon_{\underline{v}}^{\underline{w}}$ is quite similar. WLOG (add appropriate caps and cups and use adjunction) assume that $\un{v}=(t, \ldots, t)$ and that our two different anchors are the first and last $t$ in $\un{v}$ and let $(\Omega_{\underline{v}}^{\underline{w}})_1$ and $(\Omega_{\underline{v}}^{\underline{w}})_2$ be the associated morphisms. Since $\Phi_t^{\un{w}}$ is cyclic by \cref{newgencyclic}, we can use \cref{cks} to bring $(\Omega_{\underline{v}}^{\underline{w}})_1$ to the LHS below and  $(\Omega_{\underline{v}}^{\underline{w}})_2$ to the RHS below 
\begin{equation}
\label{anchoreq}
\raisebox{0.8cm}{$(\Omega_{\underline{v}}^{\underline{w}})_1=$} 
\begin{tikzpicture}[scale=0.6]
	       \draw[blue] (2,-1)-- (2,-2);
	       \draw[violet] (1.1,1.3) to[out=-90,in=-180] (2,-1);
	       \draw[violet] (2,1.3) -- (2, -1);
	       \draw[blue] (2,-1) to[out=80,in=-180] (3.4,1.8) to[out=0,in=90] (4.7,-0.1);
	       \draw[blue] (4.7,-0.1) -- (4.7, -2);
	       \draw[violet] (2.9,-0.1) to[out=-90,in=0] (2,-1);
	       \draw[violet] (2.9,-0.1) to[out=90,in=180] (3.2,0.5) to[out=0,in=90] (3.5,-0.1);
	       \draw[violet] (2,-1) to[out=50,in=180] (3.2,1.4) to[out=0,in=90] (4.4, -0.1);
	       \draw[violet] (4.4,-0.1) -- (4.4, -2);
	       \draw[violet] (3.5,-0.1) -- (3.5, -2);
	       \node[rhdot] at (2,-1) {};
	       \node at (1.6,0) {$\underline{w}$};
	       \node at (3.9,-1) {$\ldots $};
	    \end{tikzpicture} \ \ \raisebox{0.8cm}{$ \overset{?}{=} $} \ \ 
\begin{tikzpicture}[scale=0.6]
            \draw[blue] (2,-1) to[out=100,in=0] (0.8,1.8) to[out=180,in=90] (-0.7, -0.1);
	       \draw[blue] (-0.7,-0.1) -- (-0.7, -2);
	       \draw[violet] (2,-1) to[out=120,in=0] (0.8,1.4) to[out=180,in=90] (-0.4, -0.1);
	       \draw[violet] (-0.4,-0.1) -- (-0.4, -2);
	       \draw[violet] (1.1,-0.1) to[out=-90,in=-180] (2,-1);
	       \draw[violet] (1.1,-0.1) to[out=90,in=0] (0.8,0.5) to[out=-180,in=90] (0.5, -0.1) ;
	       \draw[violet] (2,1.3) -- (2, -1);
	       \draw[violet] (2.9,1.3) to[out=-90,in=0] (2,-1);
	       \draw[violet] (0.5,-0.1) -- (0.5, -2);
	       \draw[blue] (2,-1) -- (2,-2);
	       \node[rhdot] at (2,-1) {};
	       \node at (2.35,0) {$\underline{w}$};
	       \node at (0,-1) {$\ldots $};
	    \end{tikzpicture}
\raisebox{0.8cm}{$=(\Omega_{\underline{v}}^{\underline{w}})_2$} 	    
\end{equation}
        If $\Phi_t^{\un{w}}$ were rotation invariant, then we could conclude that both sides were equal above. But \cref{newgenrotation} only says that $\Phi_t^{\un{w}}$ is signed rotation invariant so the two sides are the same up to a sign. However, note that there must be an even number of color changes in the expression $\un{v}=(t, \ldots, t)$ and so the sign must be $+1$ and thus we have equality as desired.  
\end{proof}

\begin{remark}
$\Upsilon_{\underline{v}}^{\underline{w}}=-\Omega_{\underline{v}}^{\underline{w}}$ as a consequence of \cref{newgenrotation}. For the most part we will work with $\Omega_{\underline{v}}^{\underline{w}}$ and occasionally use $\Upsilon_{\underline{v}}^{\underline{w}}$ as needed. Also note that the same argument shows that one can choose an anchor from a spot in the top expression $\underline{w}$ as well and $\underline{v}$ or $\underline{w}$ can be the empty expression as well.
\end{remark}

\begin{remark}
Up until now we were free to interchange $t$ with $s$ in all of our results but as seen in the definition of $\Omega_{\underline{v}}^{\underline{w}}$ and $\Upsilon_{\underline{v}}^{\underline{w}}$ we \underline{cannot} interchange $t$ and $s$. Also note that we have used curved lines with a red dot with a red hollow dot to diagrammatically denote $\Phi_t^{\underline{w}}$ and straight lines with a red hollow dot for $\Omega_{\underline{t}}^{\underline{w}}$. However since
\begin{center}
\raisebox{0.6cm}{$\Omega_{\underline{t}}^{\underline{w}}  =$}
     \begin{tikzpicture}[scale=0.75]
	       \draw[violet] (1.1,-0.1) -- (2,-1);
	       \draw[violet] (2.9,-0.1) -- (2,-1);
	       \draw[blue] (2,-2) -- (2,-1);
	       \node at (2,-0.4) {$\underline{w}$};
	       \node[rhdot] at (2,-1) {};
	    \end{tikzpicture}\raisebox{0.6cm}{$  = \  \hspace{-1ex}$}
	\begin{tikzpicture}[scale=0.75]
	       \draw[violet] (1.1,-0.1) to[out=-90,in=-180] (2,-1);
	       \draw[violet] (2.9,-0.1) to[out=-90,in=0] (2,-1);
	       \foreach \x in {2} \draw[blue] (\x,-1) -- (\x,-2);
	       \node[rhdot] at (2,-1) {};
	       \node at (2,-0.4) {$\underline{w}$};
	    \end{tikzpicture}\raisebox{0.6cm}{$  =\Phi_{t}^{\underline{w}} $}
\end{center}
there is no ambiguity in the diagrammatic picture if one cannot determine if a line is straight or not. 
\end{remark}

\begin{example}
If $v_n=t$ and $v_1=s$, then one possible presentation of $\Omega_{\underline{v}}^{\underline{w}}$ and $\Upsilon_{\underline{v}}^{\underline{w}}$ is given below to the left and right, respectively.
\begin{center}
    \raisebox{1.1cm}{$ \Omega_{\underline{v}}^{\underline{w}} =$} \ \begin{tikzpicture}[scale=0.6]
	       \draw[red] (2,-1) to[out=110,in=0] (0.75,1.4) to[out=180,in=90] (-0.5, -0.1);
	       \draw[red] (-0.5,-0.1) -- (-0.5, -2);
	       \draw[violet] (1.1,-0.1) to[out=-90,in=-180] (2,-1);
	       \draw[violet] (1.1,-0.1) to[out=90,in=0] (0.8,0.5) to[out=-180,in=90] (0.5, -0.1) ;
	       \draw[violet] (2,1.3) -- (2, -1);
	       \draw[violet] (2.9,1.3) to[out=-90,in=0] (2,-1);
	       \draw[violet] (0.5,-0.1) -- (0.5, -2);
	       \draw[blue] (2,-1) -- (2,-2);
	       \node[rhdot] at (2,-1) {};
	       \node at (2.4,-0.4) {$\ldots$};
	       \node at (0,-0.9) {$\ldots$};
	       \node at (2.1,1.6) {$\scalemath{0.9}{w_{1}}$};
	       \node at (2.9,1.6) {$\scalemath{0.9}{w_{m}}$};
	       \node at (-0.5,-2.3) {$s$};
	       \node at (0.5,-2.3) {$ v_{n-1}$};
	       \node at (2,-2.3) {$ t$};
	    \end{tikzpicture}\quad \quad \raisebox{1.1cm}{$ \Upsilon_{\underline{v}}^{\underline{w}} =$} \
	\begin{tikzpicture}[scale=0.6]
	       \draw[red] (2,-1)-- (2,-2);
	       \draw[violet] (1.1,1.3) to[out=-90,in=-180] (2,-1);
	       \draw[violet] (2,1.3) -- (2, -1);
	       \draw[blue] (2,-1) to[out=70,in=-180] (3.4,1.4) to[out=0,in=90] (4.5,-0.1);
	       \draw[blue] (4.5,-0.1) -- (4.5, -2);
	       \draw[violet] (2.9,-0.1) to[out=-90,in=0] (2,-1);
	       \draw[violet] (2.9,-0.1) to[out=90,in=180] (3.2,0.5) to[out=0,in=90] (3.5,-0.1);
	       \node at (3.5,-2.3) {$v_{n-1}$};
	       \draw[violet] (3.5,-0.1) -- (3.5, -2);
	       \node[bhdot] at (2,-1) {};
	       \node at (1.6,-0.4) {$\ldots$};
	       \node at (2,-2.3) {$s$};
	       \node at (1,1.6) {$\scalemath{0.9}{w_{1}}$};
	       \node at (1.9,1.6) {$\scalemath{0.9}{w_{m}}$};
	       \node at (4,-1) {$\ldots$};
	       \node at (4.5,-2.3) {$ t$};
	    \end{tikzpicture}
\end{center}

\end{example}

A huge advantage of working with $\Omega_{\underline{v}}^{\underline{w}}$ is that 

\begin{lemma}
\label{omegarotation}
$\Omega_{\underline{v}}^{\underline{w}}$ is rotation invariant. Let $\underline{v}=(v_1, \ldots, v_n)$ and $\underline{w}=(w_1, \ldots, w_n)$. Diagrammatically, this means
\begin{equation}
\raisebox{-6ex}{
 \begin{tikzpicture}[scale=0.75]
	       \draw[violet] (1.1,-0.1) -- (2,-1);
	       \draw[violet] (2.9,-0.1) -- (2,-1);
	       \draw[violet] (1.1,-2) -- (2,-1);
	       \draw[violet] (1.1,-0.1) to[out=90,in=0] (0.8,0.5) to[out=-180,in=90] (0.5, -0.1) ;
	       \draw[violet] (0.5,-0.1) -- (0.5,-2);
	       \draw[violet] (2.9,-2) -- (2,-1);
	       \draw[violet] (2.9, -2) to[out=-90,in=-180] (3.2,-2.6) to[out=0,in=-90] (3.5, -2);
	       \draw[violet] (3.5,-2) -- (3.5,-0.1);
	       \node at (2,-0.4) {$\underline{w}$};
	       \node at (2,-1.6) {$\underline{v}$};
	       \node[rhdot] at (2,-1) {};
	    \end{tikzpicture}\raisebox{1cm}{ \ \  = \ }\raisebox{0.3cm}{\begin{tikzpicture}[scale=0.8]
	       \draw[violet] (1.1,-0.1) -- (2,-1);
	       \draw[violet] (2.9,-0.1) -- (2,-1);
	       \draw[violet] (1.1,-2) -- (2,-1);
	       \draw[violet] (2.9,-2) -- (2,-1);
	       \node at (2,-0.2) {$\s{(\underline{\widehat{w}^1}, v_n)}$};
	       \node at (2,-1.8) {$\s{(w_1, \underline{\widehat{v}^n})}$};
	       \node[rhdot] at (2,-1) {};
	    \end{tikzpicture}}}
\end{equation}
\begin{equation}
    \raisebox{-6ex}{
 \begin{tikzpicture}[scale=0.75]
	       \draw[violet] (1.1,-0.1) -- (2,-1);
	       \draw[violet] (2.9,-0.1) -- (2,-1);
	       \draw[violet] (1.1,-2) -- (2,-1);
	       \draw[violet] (1.1,-2) to[out=-90,in=0] (0.8,-2.6) to[out=-180,in=-90] (0.5, -2) ;
	       \draw[violet] (0.5,-0.1) -- (0.5,-2);
	       \draw[violet] (2.9,-2) -- (2,-1);
	       \draw[violet] (2.9, -0.1) to[out=90,in=-180] (3.2,0.5) to[out=0,in=90] (3.5, -0.1);
	       \draw[violet] (3.5,-2) -- (3.5,-0.1);
	       \node at (2,-0.4) {$\underline{w}$};
	       \node at (2,-1.6) {$\underline{v}$};
	       \node[rhdot] at (2,-1) {};
	    \end{tikzpicture}\raisebox{1cm}{ \ \  = \ }\raisebox{0.3cm}{\begin{tikzpicture}[scale=0.8]
	       \draw[violet] (1.1,-0.1) -- (2,-1);
	       \draw[violet] (2.9,-0.1) -- (2,-1);
	       \draw[violet] (1.1,-2) -- (2,-1);
	       \draw[violet] (2.9,-2) -- (2,-1);
	       \node at (2,-0.2) {$\s{(v_1, \underline{\widehat{w}^n})}$};
	       \node at (2,-1.8) {$\s{(\underline{\widehat{v}^1}, w_n)}$};
	       \node[rhdot] at (2,-1) {};
	    \end{tikzpicture}}}
\end{equation}
\end{lemma}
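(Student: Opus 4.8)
The plan is to derive both identities from the signed rotation invariance of $\Phi_t^{\underline{w}}$ established in \cref{newgenrotation}, from its cyclicity (\cref{newgencyclic}), and from the isotopy principle \cref{cks}, following closely the template of the proof of \cref{welldeflem}. I will carry out the argument for the first identity; the second is entirely symmetric (and may alternatively be obtained from the first by pre- and post-composing with the appropriate cups and caps).

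First I would unwind both sides using the definition of $\Omega$. Since $|m(\underline{v}^{-1}\underline{w})|\ge 4$, the cyclic word built from $\underline{w}$ and $\underline{v}$ has at least two occurrences of $t$, so outside the low-strand exceptional cases — in which one instead argues directly from \cref{4extrotation} together with the reduction lemmas \cref{newgenreduct} and \cref{newgenmult} — one may pick an occurrence of $t$ disjoint from the two strands $w_1$ and $v_n$ that the statement moves, and use it, via \cref{welldeflem} and the remark following it, as a common anchor for both $\Omega_{\underline{v}}^{\underline{w}}$ and its would-be rotation $\Omega_{(w_1,\underline{\widehat{v}^n})}^{(\underline{\widehat{w}^1},v_n)}$. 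With this choice each side is exhibited as a system of cups and caps applied to one and the same morphism $\Phi_t^{(\cdots)}$, and passing from the right-hand side to the left-hand side is an isotopy of the underlying planar diagram that fixes endpoints, modulo the source-versus-target bookkeeping discussed after \cref{cks}.

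The next step is to invoke \cref{cks}: since every $1$-morphism in $\bsbimext(\hf,W_\infty)$ has a biadjoint and $\Phi_t^{(\cdots)}$ is cyclic by \cref{newgencyclic}, two such isotopic diagrams represent the same $2$-morphism up to the sign created by reinterpreting which legs are sources and which are targets; by \cref{newgenrotation} each elementary rotation of a strand $s_j$ across the $\Phi$-vertex contributes a factor $(-1)^{|m(\,\cdot\, s_j)|-1}$, so the accumulated discrepancy is a product of such factors indexed by the strands swept in the course of the rotation.

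The remaining, and I expect only genuinely delicate, point is to verify that this accumulated sign is $+1$. As in \cref{welldeflem}, after reducing by adjunction to the situation in which the portion of the boundary traversed by the rotation is a string of $t$'s, the sign equals $(-1)$ raised to the number of color changes met along the closed loop that the moving strands trace out; since that loop returns to its starting color, this number is even, the sign is trivial, and the two morphisms coincide. Everything else is a formal consequence of cyclicity, \cref{cks}, and the anchor-independence already proved in \cref{welldeflem}.
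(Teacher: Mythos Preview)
Your argument is correct and follows the same underlying idea as the paper, but you are redoing work that has already been packaged into \cref{welldeflem}. The paper's proof is one sentence: choose any blue strand as the anchor for $\Omega_{\underline{v}}^{\underline{w}}$; then the left-hand diagram, which is that same $\Phi_t^{(\cdots)}$ with one additional cap and cup, is by definition a presentation of the right-hand side $\Omega_{(w_1,\underline{\widehat{v}^n})}^{(\underline{\widehat{w}^1},v_n)}$ (same cyclic word, same anchor), and well-definedness of $\Omega$ finishes the argument. In particular, there is no need to choose the anchor disjoint from $w_1$ and $v_n$, no low-strand exceptional cases, and no separate sign verification---all of that was absorbed once and for all into the proof of \cref{welldeflem}.
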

\begin{proof}
This follows from the definition of $\Omega_{\underline{v}}^{\underline{w}}$ being well defined. Choose some blue strand to be the anchor for $\Omega_{\underline{v}}^{\underline{w}}$. Then the LHS above can be used as the definition of the RHS above.
\end{proof}

\begin{theorem}[$\Omega_{\underline{v}}^{\underline{w}}$ absorbs morphisms]
\label{newgenabsorb}
The following identities hold in $\bsbimext(\hf, W_\infty)$ whenever both sides of the equalities are defined. 
\begin{equation}
\raisebox{-1cm}{
     \begin{tikzpicture}[scale=0.75]
	       \draw[violet] (1,-0.1) -- (2,-1);
	       \draw[violet] (3,-0.1) -- (2,-1);
	       \draw[violet] (0.8,-2.2) -- (2,-1);
	       \draw[violet] (3.2,-2.2) -- (2,-1);
	       \draw[violet] (2,-1) -- (2,-1.7);
	       \draw[violet] (1.7,-2.2) -- (2,-1);
	       \draw[violet] (2,-1) -- (2.3,-2.2);
	       \node[pdot] at (2,-1.7) {};
	       \node[rhdot] at (2,-1) {};
	       \node at (2,-0.3) {$\underline{w}$};
	       \node at (1.4,-1.95) {$\ldots$};
	       \node at (2.6,-1.95) {$\ldots$};
	       \node at (2,-2.1) {$\scalemath{0.9}{v_i}$};
	       \node at (1.7,-2.4) {$\scalemath{0.7}{v_{i-1}}$};
	       \node at (2.5,-2.4) {$\scalemath{0.7}{v_{i+1}}$};
	    \end{tikzpicture}}=\raisebox{-1cm}{
     \begin{tikzpicture}[scale=0.75]
	       \draw[violet] (1,-0.1) -- (2,-1);
	       \draw[violet] (3,-0.1) -- (2,-1);
	       \draw[violet] (0.8,-2.2) -- (2,-1);
	       \draw[violet] (3.2,-2.2) -- (2,-1);
	       \draw[violet] (1.7,-2.2) -- (2,-1);
	       \draw[violet] (2,-1) -- (2.3,-2.2);
	       \node[rhdot] at (2,-1) {};
	       \node at (2,-0.3) {$\underline{w}$};
	       \node at (1.4,-1.95) {$\ldots$};
	       \node at (2.6,-1.95) {$\ldots$};
	       \node at (1.7,-2.4) {$\scalemath{0.7}{v_{i-1}}$};
	       \node at (2.5,-2.4) {$\scalemath{0.7}{v_{i+1}}$};
	    \end{tikzpicture}} \qquad \qquad \quad
	 \raisebox{-0.8cm}{
     \begin{tikzpicture}[scale=0.75]
	       \draw[violet] (0.8,0.2) -- (2,-1);
	       \draw[violet] (3.2,0.2) -- (2,-1);
	       \draw[violet] (1,-2) -- (2,-1);
	       \draw[violet] (3,-2) -- (2,-1);
	       \draw[violet] (2,-1) -- (2,-0.25);
	       \draw[violet] (1.65,0.2) -- (2,-1);
	       \draw[violet] (2,-1) -- (2.35,0.2);
	       \node[pdot] at (2,-0.25) {};
	       \node[rhdot] at (2,-1) {};
	       \node at (2,-1.7) {$\underline{v}$};
	       \node at (1.4,-0.1) {$\ldots$};
	       \node at (2.6,-0.1) {$\ldots$};
	       \node at (2,-0.05) {$\scalemath{0.8}{w_i}$};
	       \node at (1.7,0.35) {$\scalemath{0.7}{w_{i-1}}$};
	       \node at (2.5,0.35) {$\scalemath{0.7}{w_{i+1}}$};
	    \end{tikzpicture}}\raisebox{-0.2cm}{=}\raisebox{-0.8cm}{
     \begin{tikzpicture}[scale=0.75]
	       \draw[violet] (0.8,0.2) -- (2,-1);
	       \draw[violet] (3.2,0.2) -- (2,-1);
	       \draw[violet] (1,-2) -- (2,-1);
	       \draw[violet] (3,-2) -- (2,-1);
	       \draw[violet] (1.65,0.2) -- (2,-1);
	       \draw[violet] (2,-1) -- (2.35,0.2);
	       \node[rhdot] at (2,-1) {};
	       \node at (2,-1.7) {$\underline{v}$};
	       \node at (1.4,-0.1) {$\ldots$};
	       \node at (2.6,-0.1) {$\ldots$};
	       \node at (1.7,0.35) {$\scalemath{0.7}{w_{i-1}}$};
	       \node at (2.5,0.35) {$\scalemath{0.7}{w_{i+1}}$};
	    \end{tikzpicture}}
\end{equation}
\begin{equation}
\raisebox{-1cm}{
    \begin{tikzpicture}[scale=0.75]
	       \draw[violet] (1,-0.1) -- (2,-1);
	       \draw[violet] (3,-0.1) -- (2,-1);
	       \draw[violet] (0.8,-2.2) -- (2,-1);
	       \draw[violet] (3.2,-2.2) -- (2,-1);
	       \draw[violet] (2,-1) -- (2,-1.7);
	       \draw[violet] (1.7,-2.2) -- (2,-1.7);
	       \draw[violet] (2,-1.7) -- (2.3,-2.2);
	       \node[rhdot] at (2,-1) {};
	       \node at (2,-0.3) {$\underline{w}$};
	       \node at (1.4,-1.95) {$\ldots$};
	       \node at (2.6,-1.95) {$\ldots$};
	       \node at (1.7,-2.4) {$\scalemath{0.8}{v_{i}}$};
	       \node at (2.5,-2.4) {$\scalemath{0.8}{v_{i}}$};
	    \end{tikzpicture}}=\raisebox{-1cm}{
     \begin{tikzpicture}[scale=0.75]
	       \draw[violet] (1,-0.1) -- (2,-1);
	       \draw[violet] (3,-0.1) -- (2,-1);
	       \draw[violet] (0.8,-2.2) -- (2,-1);
	       \draw[violet] (3.2,-2.2) -- (2,-1);
	       \draw[violet] (1.7,-2.2) -- (2,-1);
	       \draw[violet] (2,-1) -- (2.3,-2.2);
	       \node[rhdot] at (2,-1) {};
	       \node at (2,-0.3) {$\underline{w}$};
	       \node at (1.4,-1.95) {$\ldots$};
	       \node at (2.6,-1.95) {$\ldots$};
	       \node at (1.7,-2.4) {$\scalemath{0.8}{v_{i}}$};
	       \node at (2.5,-2.4) {$\scalemath{0.8}{v_{i}}$};
	    \end{tikzpicture}} \qquad \qquad \quad
	 \raisebox{-0.8cm}{
     \begin{tikzpicture}[scale=0.75]
	       \draw[violet] (0.8,0.2) -- (2,-1);
	       \draw[violet] (3.2,0.2) -- (2,-1);
	       \draw[violet] (1,-2) -- (2,-1);
	       \draw[violet] (3,-2) -- (2,-1);
	       \draw[violet] (2,-1) -- (2,-0.25);
	       \draw[violet] (1.65,0.2) -- (2,-0.25);
	       \draw[violet] (2,-0.25) -- (2.35,0.2);
	       \node[rhdot] at (2,-1) {};
	       \node at (2,-1.7) {$\underline{v}$};
	       \node at (1.4,-0.1) {$\ldots$};
	       \node at (2.6,-0.1) {$\ldots$};
	       \node at (1.6,0.35) {$\scalemath{0.8}{w_{i}}$};
	       \node at (2.5,0.35) {$\scalemath{0.8}{w_{i}}$};
	    \end{tikzpicture}}\raisebox{-0.2cm}{=}\raisebox{-0.8cm}{
     \begin{tikzpicture}[scale=0.75]
	       \draw[violet] (0.8,0.2) -- (2,-1);
	       \draw[violet] (3.2,0.2) -- (2,-1);
	       \draw[violet] (1,-2) -- (2,-1);
	       \draw[violet] (3,-2) -- (2,-1);
	       \draw[violet] (1.65,0.2) -- (2,-1);
	       \draw[violet] (2,-1) -- (2.35,0.2);
	       \node[rhdot] at (2,-1) {};
	       \node at (2,-1.7) {$\underline{v}$};
	       \node at (1.4,-0.1) {$\ldots$};
	       \node at (2.6,-0.1) {$\ldots$};
	       \node at (1.6,0.35) {$\scalemath{0.8}{w_{i}}$};
	       \node at (2.5,0.35) {$\scalemath{0.8}{w_{i}}$};
	    \end{tikzpicture}}
\end{equation}
\begin{equation}
\raisebox{-1cm}{
    \begin{tikzpicture}[scale=0.75]
	       \draw[violet] (1.2,-0.2) -- (2,-1);
	       \draw[violet] (2.8,-0.2) -- (2,-1);
	       \draw[violet] (1.2,-1.8) -- (2,-1);
	       \draw[violet] (2.8,-1.8) -- (2,-1);
	       \draw[violet] (1.2,-1.8) -- (1.2,-2.4);
	       \draw[violet] (1.2,-1.8) -- (0.7,-1.3);
	       \draw[violet] (0.7,-0.2) -- (0.7,-1.3);
	       \draw[violet] (2.8,-1.8) -- (2.8, -2.4);
	       \node at (2,-0.4) {$\underline{w}$};
	       \node at (2,-1.6) {$\underline{v}$};
	       \node at (1.2,-2.7) {$\scalemath{0.9}{v_1}$};
	       \node[rhdot] at (2,-1) {};
	    \end{tikzpicture}} \raisebox{0.4cm}{=}
\raisebox{-0.3cm}{
\begin{tikzpicture}[scale=0.75]
	       \draw[violet] (1.1,-0.1) -- (2,-1);
	       \draw[violet] (2.9,-0.1) -- (2,-1);
	       \draw[violet] (1.1,-2) -- (2,-1);
	       \draw[violet] (2.9,-2) -- (2,-1);
	       \node at (2,-0.3) {$\s{(v_1, \underline{w})}$};
	       \node at (2,-1.6) {$\underline{v}$};
	       \node[rhdot] at (2,-1) {};
	    \end{tikzpicture}} \qquad \qquad \quad 
\raisebox{-0.8cm}{
    \begin{tikzpicture}[scale=0.75]
	       \draw[violet] (1.2,-0.2) -- (2,-1);
	       \draw[violet] (2.8,-0.2) -- (2,-1);
	       \draw[violet] (1.2,-1.8) -- (2,-1);
	       \draw[violet] (2.8,-1.8) -- (2,-1);
	       \draw[violet] (1.2,-0.2) -- (1.2,0.4);
	       \draw[violet] (1.2,-0.2) -- (0.7,-0.5);
	       \draw[violet] (0.7,-0.5) -- (0.7,-1.8);
	       \draw[violet] (2.8,-0.2) -- (2.8, 0.4);
	       \node at (2,-0.4) {$\underline{w}$};
	       \node at (2,-1.6) {$\underline{v}$};
	       \node at (1.2,0.6) {$\scalemath{0.9}{w_1}$};
	       \node[rhdot] at (2,-1) {};
	    \end{tikzpicture}} \raisebox{-0.1cm}{=}
\raisebox{-0.8cm}{
\begin{tikzpicture}[scale=0.75]
	       \draw[violet] (1.1,-0.2) -- (2,-1);
	       \draw[violet] (2.9,-0.2) -- (2,-1);
	       \draw[violet] (1.1,-2.1) -- (2,-1);
	       \draw[violet] (2.9,-2.1) -- (2,-1);
	       \node at (2,-1.8) {$\s{(w_1, \underline{v})}$};
	       \node at (2,-0.35) {$\underline{w}$};
	       \node[rhdot] at (2,-1) {};
	    \end{tikzpicture}}
\end{equation}
\end{theorem}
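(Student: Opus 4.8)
The plan is to reduce each of the four identities to the corresponding absorption statement for the one‑boundary morphism $\Phi_t^{\underline{w}}$ already established in \cref{newgenreduct} and \cref{newgenmult}, by unravelling $\Omega_{\underline{v}}^{\underline{w}}$ through its definition and invoking its rotation invariance (\cref{omegarotation}) and anchor‑independence (\cref{welldeflem}). The structural point that makes this work is that, exactly as in the proof of \cref{1dimcriterion}, whenever $\Omega_{\underline{v}}^{\underline{w}}$ is defined the ambient group $\ext^{1,-(|\underline{v}|+|\underline{w}|)}_{R^e}(\bs(\underline{v}),\bs(\underline{w}))$ is identified, via \cref{maincohoiso} and the biadjunction $\ext^{\bullet,\bullet}_{R^e}(B_t\otimes_R M,N)\cong\ext^{\bullet,\bullet}_{R^e}(M,B_t\otimes_R N)$, with $\ext^{1,-(|\underline{W}|+1)}_{R^e}(B_t,\bs(\underline{W}))$ for the unravelled expression $\underline{W}=(v_{j-1},\dots,v_1,\underline{w},v_n,\dots,v_{j+1})$ at an anchor $v_j=t$, and this last group is one‑dimensional over $\kb$; a morphism into it is pinned down by the value of any chain lift on $\underline{\rho_s}\boxtimes 1\otimes 1$ as in \cref{clarifyinglemcor}. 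Hence each identity comes down to a single equality of cocycle representatives, and since the twist by cups and caps relating $\Omega_{\underline{v}}^{\underline{w}}$ to $\Phi_t^{\underline{W}}$ is a composition with degree‑$0$ morphisms (the cups and caps of $\bsbim\hookrightarrow\bsbimext$), that equality transports verbatim from one of the $\Phi_t$‑relations already in hand.

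First I would prove the dot‑absorption identity. Given the strand to be killed — lying in $\underline{v}$ or in $\underline{w}$ — choose an anchor $v_j=t$ disjoint from it; such an anchor exists whenever both sides are defined, since otherwise the unravelled $|m(t,\underline{W})|$ would already be below $4$ and $\Omega$ itself would not exist. Unravelling through the definition and using the zig‑zag relations, the univalent vertex on that strand becomes a counit $\epsilon_{s_i}$ postcomposed with $\Phi_t^{\underline{W}}$, and the claim becomes $(\id\otimes_R\epsilon_{s_i}\otimes_R\id)\circ\Phi_t^{\underline{W}}=\Phi_t^{\underline{\widehat{W}^i}}$, which is \cref{newgenreduct}: the hypothesis "both sides defined" is precisely the requirement $\ell(m(t\,\underline{\widehat{W}^i}))\ge 4$ there, so one never falls into the exceptional regime of \cref{4extreduct}. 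Re‑folding the cups and caps turns $\Phi_t^{\underline{\widehat{W}^i}}$ back into $\Omega_{\underline{\widehat{v}^i}}^{\underline{w}}$ (respectively $\Omega_{\underline{v}}^{\underline{\widehat{w}^i}}$). The merge/comultiplication identity is identical with \cref{newgenmult} in place of \cref{newgenreduct}; here "both sides defined" again coincides with the hypothesis $|m(t,\underline{W})|\ge 4$ of \cref{newgenmult}, because extending $\underline{v}$ or $\underline{w}$ by a repeated letter can only increase $|m(\cdot)|$.

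The two corner identities I would deduce from the merge identity together with \cref{omegarotation}. In $\underline{W}=(v_{j-1},\dots,v_1,\underline{w},v_n,\dots,v_{j+1})$ the letter $v_1$ sits immediately before the block $\underline{w}$, so applying \cref{newgenmult} there produces $\Phi_t$ on $(v_{j-1},\dots,v_1,v_1,\underline{w},v_n,\dots,v_{j+1})$; re‑folding the cups and caps with the new copy of $v_1$ routed onto the target side yields $\Omega_{\underline{v}}^{(v_1,\underline{w})}$, and reading "split then re‑fold" graphically is exactly the corner trivalent vertex drawn in the statement. The sign is $+1$ by the parity observation used in \cref{welldeflem} — the re‑folding goes around a cap–cup pair enclosing an even number of colour changes — and when $v_1=t$ is the only available anchor one instead absorbs the split into the anchor strand, which is the role played by $\widetilde{\delta_t}$ in the proof of \cref{newgenrotation}. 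The companion identity, routing $w_1$ onto the bottom boundary, is the mirror image.

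The main obstacle is the edge‑case bookkeeping: one must check, configuration by configuration, that "both sides of the identity are defined" translates, for some legal choice of anchor, into the exact hypotheses of \cref{newgenreduct}, \cref{newgenmult}, or \cref{newgenrotation} for the unravelled $\Phi_t$; that no configuration forces a three‑ or four‑strand $\Phi_t^{(s,t,s)}$‑type situation while the reduced side is still defined; and that the re‑folding is genuinely independent of how the cups and caps are drawn — but this last point is \cref{welldeflem} applied to the finitely many shapes of absorbed morphism, so it introduces no new computation.
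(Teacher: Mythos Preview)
Your proposal is correct and follows the same approach as the paper: unravel $\Omega_{\underline{v}}^{\underline{w}}$ to a $\Phi_t$ via a choice of anchor, apply \cref{newgenreduct} or \cref{newgenmult} there, and re-fold using rotation invariance (\cref{omegarotation}/\cref{welldeflem}). The paper's proof is the one-line ``Follows from rotation invariance of $\Phi_s^{\underline{w}}$ along with \cref{newgenreduct} and \cref{newgenmult}''; your write-up simply makes explicit the anchor-selection and edge-case bookkeeping that this sentence suppresses, and your appeal to the one-dimensionality of the target Ext group is already baked into \cref{newgenreduct} and \cref{newgenmult} rather than needed again here.
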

\begin{proof}
Follows from rotation invariance of $\Phi_s^{\underline{w}}$ along with \cref{newgenreduct} and \cref{newgenmult}.
\end{proof}

One particular choice of $\underline{w}$ and $\underline{v}$ in $\Omega_{\underline{v}}^{\underline{w}}$ will be essential to us, namely following \cite{DC} we define

\begin{definition}
Let $\usmt{s}{m}{}=\overbrace{\underline{st\cdots} }^{m \text{ terms}}$ where there are $m$ terms in the expression that alternate between $s$ and $t$ starting with $s$. Also let $\smt{s}{m}{}$ be the corresponding element in $W$. Define $\usmt{}{m}{s}$ and $\smt{}{m}{s}$ similarly. 
\end{definition}

\begin{definition}[$2k-$extvalent]
For $k\ge 2$, define the red $2k-$extvalent morphism to be \  $\Omega_{\usmt{t}{k}{}}^{\usmt{s}{k}{}}$ or \  $\Omega_{\usmt{s}{k}{}}^{\usmt{t}{k}{}}\in \ext^{1, -2k}_{R^e}(\bs( \usmt{t}{k}{}), \bs(\usmt{s}{k}{}) )$ which we will denote diagrammatically by either
\begin{center}
    \begin{tikzpicture}[scale=0.75]
	       \draw[red] (1.1,-0.1) -- (2,-1);
	       \draw[blue] (1.5,-0.1) -- (2,-1);
	       \draw[violet] (2.9,-0.1) -- (2,-1);
	       \draw[blue] (1.1,-2) -- (2,-1);
	       \draw[red] (1.5,-2) -- (2,-1);
	       \draw[violet] (2.9,-2) -- (2,-1);
	       \node at (2.1,-0.4) {$\ldots$};
	       \node at (2.1,-1.6) {$\ldots$};
	       \node[rhdot] at (2,-1) {};
	    \end{tikzpicture}\quad \quad  \begin{tikzpicture}[scale=0.75]
	       \draw[blue] (1.1,-0.1) -- (2,-1);
	       \draw[red] (1.5,-0.1) -- (2,-1);
	       \draw[violet] (2.9,-0.1) -- (2,-1);
	       \draw[red] (1.1,-2) -- (2,-1);
	       \draw[blue] (1.5,-2) -- (2,-1);
	       \draw[violet] (2.9,-2) -- (2,-1);
	       \node at (2.1,-0.4) {$\ldots$};
	       \node at (2.1,-1.6) {$\ldots$};
	       \node[rhdot] at (2,-1) {};
	    \end{tikzpicture}
\end{center}
This is well defined as $|m(\usmt{t}{k}{}^{-1}\usmt{s}{k}{})|=|\usmt{t}{k}{}^{-1}\usmt{s}{k}{}|=2k\ge 4$. Also for $k\ge 2$, define the blue $2k-$extvalent morphism to be \  $\Upsilon_{\usmt{t}{k}{}}^{\usmt{s}{k}{}}$ or $\Upsilon_{\usmt{s}{k}{}}^{\usmt{t}{k}{}}\in \ext^{1, -2k}_{R^e}(\bs( \usmt{t}{k}{}), \bs(\usmt{s}{k}{}) )$ which we will denote diagrammatically by
\begin{center}
    \begin{tikzpicture}[scale=0.75]
	       \draw[red] (1.1,-0.1) -- (2,-1);
	       \draw[blue] (1.5,-0.1) -- (2,-1);
	       \draw[violet] (2.9,-0.1) -- (2,-1);
	       \draw[blue] (1.1,-2) -- (2,-1);
	       \draw[red] (1.5,-2) -- (2,-1);
	       \draw[violet] (2.9,-2) -- (2,-1);
	       \node at (2.1,-0.4) {$\ldots$};
	       \node at (2.1,-1.6) {$\ldots$};
	       \node[bhdot] at (2,-1) {};
	    \end{tikzpicture}\quad \quad  \begin{tikzpicture}[scale=0.75]
	       \draw[blue] (1.1,-0.1) -- (2,-1);
	       \draw[red] (1.5,-0.1) -- (2,-1);
	       \draw[violet] (2.9,-0.1) -- (2,-1);
	       \draw[red] (1.1,-2) -- (2,-1);
	       \draw[blue] (1.5,-2) -- (2,-1);
	       \draw[violet] (2.9,-2) -- (2,-1);
	       \node at (2.1,-0.4) {$\ldots$};
	       \node at (2.1,-1.6) {$\ldots$};
	       \node[bhdot] at (2,-1) {};
	    \end{tikzpicture}
\end{center}
\end{definition}

The reason why the $2k$ extvalent morphisms are essential is because they can be used to generate the rest of the $\Omega_{\underline{v}}^{\underline{w}}$ morphisms. Namely,

\begin{lemma}
$\Omega_{\underline{v}}^{\underline{w}}$ can be constructed as the composition of $2k$ red extvalent morphisms, along with the generating morphisms of $\bsbim(\hf, W_\infty)$. 
\end{lemma}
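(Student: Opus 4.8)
The plan is to argue by strong induction on $|\underline{v}|+|\underline{w}|$, using only relations already in hand — the absorption identities of \cref{newgenabsorb}, the rotation invariance of \cref{omegarotation}, and the well-definedness from \cref{welldeflem} — to peel Bott--Samelson generators off $\Omega_{\underline{v}}^{\underline{w}}$ until what remains is, up to cups and caps, a red $2k$-extvalent morphism. It is convenient to track the \emph{cyclic word} $c(\underline{v},\underline{w})=(v_1,\dots,v_n,w_m,\dots,w_1)$ read clockwise around the extvalent vertex; the red $2k$-extvalent morphisms are exactly the $\Omega_{\underline{v}}^{\underline{w}}$ with $|\underline{v}|=|\underline{w}|=k\ge 2$ and $c(\underline{v},\underline{w})$ cyclically alternating in $s,t$.

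First I would record the effect of three moves on this family. (i) By \cref{omegarotation}, rotating a boundary strand across the extvalent vertex changes the top/bottom split but leaves $c(\underline{v},\underline{w})$ fixed (up to cyclic shift) and costs only a cup or cap, which is a generator of $\bsbim(\hf,W_\infty)$. (ii) By the doubling identities in \cref{newgenabsorb}, if some letter of $\underline{v}$ (or of $\underline{w}$) is repeated, then $\Omega_{\underline{v}}^{\underline{w}}$ equals $\Omega_{\underline{v}'}^{\underline{w}'}$ with one repeated letter deleted, composed with a single trivalent vertex; combined with (i) to slide a junction repetition onto one side, this lets us delete any cyclically adjacent repetition and strictly decrease $|\underline{v}|+|\underline{w}|$. (iii) Conversely, by the dot identities in \cref{newgenabsorb}, inserting a letter $x=t$ into $\underline{v}$ and then killing it with a start dot recovers $\Omega_{\underline{v}}^{\underline{w}}$ from the larger morphism $\Omega_{\underline{v}^{+}}^{\underline{w}}$, which is still defined since the inequality $|m((\underline{v}^{+})^{-1}\underline{w})|\ge 4$ only improves upon enlarging.

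The reduction then proceeds as follows. If $|\underline{v}|+|\underline{w}|$ is odd, apply move (iii) once to replace $\Omega_{\underline{v}}^{\underline{w}}$ by $\Omega_{\underline{v}^{+}}^{\underline{w}}$ precomposed with a start dot, where $\Omega_{\underline{v}^{+}}^{\underline{w}}$ has even total boundary length. Now apply move (ii) repeatedly to delete every cyclically adjacent repetition; since the alphabet has two letters, the resulting cyclic word is alternating of even length $2k$, and $2k\ge 4$ because the original $\Omega$ was defined. Finally, apply move (i) finitely many times to rebalance the two sides to length $k$ each; the cyclic word being alternating of even length forces the two length-$k$ alternating words to begin with opposite letters, so the morphism we have arrived at is by definition one of $\Omega_{\usmt{t}{k}{}}^{\usmt{s}{k}{}}$ or $\Omega_{\usmt{s}{k}{}}^{\usmt{t}{k}{}}$, i.e.\ a red $2k$-extvalent morphism. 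Collecting all the start dots, trivalent vertices, cups, and caps peeled off along the way exhibits the original $\Omega_{\underline{v}}^{\underline{w}}$ as a composite of $2k$-extvalent morphisms with generators of $\bsbim(\hf,W_\infty)$, which closes the induction (the base $|\underline{v}|+|\underline{w}|=4$, and the degenerate cases where $\underline{v}$ or $\underline{w}$ is empty, being checked directly: there $\underline{v}^{-1}\underline{w}$ is already alternating of length $4$ and a single rotation puts it in $4$-extvalent form).

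Since moves (i)--(iii) are immediate from the cited results, the only real content is the combinatorial bookkeeping: the initial parity correction, the interleaving of moves (i) and (ii) needed to remove junction repetitions, and the check that this process terminates at a cyclic word of length $\ge 4$ rather than collapsing further. I expect this endgame to be the main — though modest — obstacle; everything else is formal manipulation with the relations of \cref{newgenabsorb} and \cref{omegarotation}.
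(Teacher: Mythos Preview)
Your proof is correct and follows essentially the same route as the paper: the paper's argument is the single sentence ``Any two adjacent terms in $\underline{v}$ or $\underline{w}$ that are the same can be replaced using \cref{newgenabsorb},'' and your moves (i)--(ii) make this explicit, with the added rotation to handle junction repetitions and rebalance to equal top/bottom lengths. Your parity correction (move (iii)) is harmless but unnecessary, since a cyclically alternating word in two letters automatically has even length; otherwise the arguments coincide.
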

\begin{proof}
Any two adjacent terms in $\underline{v}$ or $\underline{w}$ that are the same can be replaced using \cref{newgenabsorb}.
\end{proof}

\begin{example}
Let us give a partial description for two possible chain maps representing the $4$ extvalent morphism $\Omega_{(t,s)}^{(s,t)}\in \ext^{1, -4}_{R^e}(B_t B_s, B_s B_t)$. The complex $K_t K_s$ in homological degree 1 has a decomposition given by
\[ \underline{\rho_s^{(1)}}\boxtimes R^{ee}\bigoplus \underline{\rho_t t(\rho_t)^{(1)}}\boxtimes R^{ee}\bigoplus\underline{\rho_t^{(2)}}\boxtimes R^{ee}\bigoplus \underline{\rho_s s(\rho_s)^{(2)}}\boxtimes R^{ee} \]
By definition one possible presentation is given by
\begin{center}
    \raisebox{0.5cm}{$\Omega_{(t,s)}^{(s,t)}=$}\begin{tikzpicture}[scale=0.6]
	       \draw[red] (1.1,0.1) -- (2,-1);
	       \draw[blue] (2,0.3) -- (2,-1);
	       \draw[red] (2.9,-0.1) -- (2,-1);
	       \draw[red] (2.9,-0.1) to[out=90,in=180] (3.2,0.5) to[out=0,in=90] (3.5,-0.1);
	       \draw[red] (3.5,-0.1) -- (3.5, -2);
	       \foreach \x in {2} \draw[blue] (\x,-1) -- (\x,-2);
	       \node[rhdot] at (2,-1) {};
	    \end{tikzpicture}
\end{center}
so a possible partial chain lift of $\Omega_{(t,s)}^{(s,t)}$ is given by $\omega_{(t,s)}^{(s,t)}: K_tK_s^{[1]}\to K_s K_t^{[0]}$ 
\begin{align*}
\omega_{(t,s)}^{(s,t)}\paren{\underline{\rho_s^{(1)}}\boxtimes 1\otimes h \otimes 1}&=\underline{1}\boxtimes 1\otimes 1\otimes \partial_s(h)  \\
\omega_{(t,s)}^{(s,t)}\paren{\underline{\rho_t t(\rho_t)^{(1)}}\boxtimes 1\otimes h \otimes 1}&=\underline{1}\boxtimes -a_{st}\otimes \rho_t\otimes \partial_s(h)+c_s\otimes\partial_s(h)+1\otimes 1\otimes (h-\alpha_s\partial_s(h))\\
\omega_{(t,s)}^{(s,t)}\paren{\underline{\rho_t^{(2)}}\boxtimes 1\otimes h \otimes 1}&=0\\
 \omega_{(t,s)}^{(s,t)}\paren{\underline{\rho_s s(\rho_s)^{(2)}}\boxtimes 1\otimes h \otimes 1 }&=0 
\end{align*}
On the other hand, we have that $\Omega_{(t,s)}^{(s,t)}=-\Upsilon_{(t,s)}^{(s,t)}$ so another presentation is given by 

\begin{center}
    \raisebox{0.5cm}{$\Omega_{(t,s)}^{(s,t)}= - \ $}\begin{tikzpicture}[scale=0.6]
	       \draw[blue] (1.1,-0.1) -- (2,-1);
	       \draw[blue] (0.5,-0.1) to[out=90,in=180] (0.8,0.5) to[out=0,in=90] (1.1,-0.1);
	       \draw[blue] (0.5,-0.1) -- (0.5,-2);
	       \draw[red] (2,0.3) -- (2,-1);
	       \draw[blue] (2.9,0.1) -- (2,-1);
	       \foreach \x in {2} \draw[red] (\x,-1) -- (\x,-2);
	       \node[bhdot] at (2,-1) {};
	    \end{tikzpicture}
\end{center}
so another possible partial chain lift $-\upsilon_{(t,s)}^{(s,t)}: K_tK_s^{[1]}\to K_s K_t^{[0]}$ is given by
\begin{align*}
-\upsilon_{(t,s)}^{(s,t)}\paren{\underline{\rho_s^{(1)}}\boxtimes 1\otimes h \otimes 1}&=0  \\
-\upsilon_{(t,s)}^{(s,t)}\paren{\underline{\rho_t t(\rho_t)^{(1)}}\boxtimes 1\otimes h \otimes 1}&=0\\
-\upsilon_{(t,s)}^{(s,t)}\paren{\underline{\rho_t^{(2)}}\boxtimes 1\otimes h \otimes 1}&=-\underline{1}\boxtimes\partial_t(h)\otimes 1\otimes 1\\
-\upsilon_{(t,s)}^{(s,t)}\paren{\underline{\rho_s s(\rho_s)^{(2)}}\boxtimes 1\otimes h \otimes 1}&=-\underline{1}\boxtimes (h-a_{ts}\rho_s\partial_t(h))\otimes 1 \otimes 1+\partial_t(h)\otimes 1\otimes \alpha_t-\partial_t(h)\otimes c_t
\end{align*}

\end{example}

\subsection{Cohomology Relations}

\begin{theorem}
Suppose $k\ge 2$ and let $t_i=(\usmt{t}{k}{})_i$ the $i-$th spot in the expression $\usmt{t}{k}{}$ and similarly let $s_i=(\usmt{s}{k}{})_i$. Then we have the following relation in $\ext_{R^e}^{1, -2k+2 }(\bs( \usmt{t}{k}{}),\bs(\usmt{s}{k}{}) )$\vspace{-2ex}
\begin{equation}
\label{cohorelseq}
\raisebox{0.2cm}{\scalemath{1.05}{\boxed{[k]\alpha_s \ }}}
\raisebox{-0.5cm}{
\begin{tikzpicture}[scale=0.8]
	       \draw[red] (1.1,-0.1) -- (2,-1);
	       \draw[blue] (1.5,-0.1) -- (2,-1);
	       \draw[violet] (2.9,-0.1) -- (2,-1);
	       \draw[blue] (1.1,-2) -- (2,-1);
	       \draw[red] (1.5,-2) -- (2,-1);
	       \draw[violet] (2.9,-2) -- (2,-1);
	       \node[rhdot] at (2,-1) {};
	       \node at (2.1,-0.4) {$\ldots$};
	       \node at (2.1,-1.6) {$\ldots$};
	    \end{tikzpicture}}
\raisebox{0.2cm}{\scalemath{1.05}{=-\sum_{i=1}^{k-1} \boxed{ [k-i]} \ }}
\raisebox{-0.8cm}{
    \begin{tikzpicture}[scale=0.7]
	       \draw[red] (1,-0.1) -- (2,-1);
	       \draw[blue] (1.5,-0.1) -- (2,-1);
	       \draw[violet] (3,-0.1) -- (2,-1);
	       \draw[blue] (0.8,-2.2) -- (2,-1);
	       \draw[violet] (3.2,-2.2) -- (2,-1);
	       \draw[myyellow] (2,-1) -- (2,-1.5);
	       \draw[violet] (2,-2.2) -- (2,-1.8);
	       \draw[myyellow] (1.7,-2.2) to[out=90,in=180] (2,-1.5) to[out=0,in=90] (2.3,-2.2);
	       \node[pdot] at (2,-1.8) {};
	       \node[rhdot] at (2,-1) {};
	       \node at (2.1,-0.3) {$\ldots$};
	       \node at (1.4,-1.95) {$\ldots$};
	       \node at (2.6,-1.95) {$\ldots$};
	       \node at (2,-2.5) {$t_i$};
	    \end{tikzpicture}}\raisebox{0.2cm}{ \scalemath{1.05}{+\sum_{i=1}^{k}\boxed{[k+1-i]}}}
	    \raisebox{-0.4cm}{
	    \begin{tikzpicture}[scale=0.7]
	       \draw[red] (0.8,0.1) -- (2,-1);
	       \draw[violet] (3.2,0.1) -- (2,-1);
	       \draw[blue] (1,-2) -- (2,-1);
	       \draw[violet] (3,-2) -- (2,-1);
	       \draw[red] (2,-1) -- (1.5,-2);
	       \draw[myyellow] (2,-1) -- (2,-0.5);
	       \draw[violet] (2,0.2) -- (2,-0.2);
	       \draw[myyellow] (1.7,0.1) to[out=-90,in=180] (2,-0.5) to[out=0,in=-90] (2.3,0.1);
	       \node at (2,0.35) {$s_i$};
	       \node[pdot] at (2,-0.2) {};
	       \node[rhdot] at (2,-1) {};
	       \node at (2.6,-0.15) {$\ldots$};
	       \node at (1.4,-0.15) {$\ldots$};
	       \node at (2.1,-1.7) {$\ldots$};
	    \end{tikzpicture}} 
\end{equation}
\begin{equation}
\label{cohorelteq}
\raisebox{0.2cm}{\scalemath{1.05}{\boxed{[k]\alpha_t \ }}}
\raisebox{-0.5cm}{
\begin{tikzpicture}[scale=0.8]
	       \draw[red] (1.1,-0.1) -- (2,-1);
	       \draw[blue] (1.5,-0.1) -- (2,-1);
	       \draw[violet] (2.9,-0.1) -- (2,-1);
	       \draw[blue] (1.1,-2) -- (2,-1);
	       \draw[red] (1.5,-2) -- (2,-1);
	       \draw[violet] (2.9,-2) -- (2,-1);
	       \node[rhdot] at (2,-1) {};
	       \node at (2.1,-0.4) {$\ldots$};
	       \node at (2.1,-1.6) {$\ldots$};
	    \end{tikzpicture}}
\raisebox{0.2cm}{\scalemath{1.05}{=-\sum_{i=1}^{k-1} \boxed{ [k-i]} \ }}
\raisebox{-0.4cm}{
	    \begin{tikzpicture}[scale=0.7]
	       \draw[red] (0.8,0.1) -- (2,-1);
	       \draw[violet] (3.2,0.1) -- (2,-1);
	       \draw[blue] (1,-2) -- (2,-1);
	       \draw[violet] (3,-2) -- (2,-1);
	       \draw[red] (2,-1) -- (1.5,-2);
	       \draw[myyellow] (2,-1) -- (2,-0.5);
	       \draw[violet] (2,0.2) -- (2,-0.2);
	       \draw[myyellow] (1.7,0.1) to[out=-90,in=180] (2,-0.5) to[out=0,in=-90] (2.3,0.1);
	       \node at (2,0.35) {$s_i$};
	       \node[pdot] at (2,-0.2) {};
	       \node[rhdot] at (2,-1) {};
	       \node at (2.6,-0.15) {$\ldots$};
	       \node at (1.4,-0.15) {$\ldots$};
	       \node at (2.1,-1.7) {$\ldots$};
	    \end{tikzpicture}} \raisebox{0.2cm}{ \scalemath{1.05}{+\sum_{i=1}^{k}\boxed{[k+1-i]}}}
	    \raisebox{-0.8cm}{
    \begin{tikzpicture}[scale=0.7]
	       \draw[red] (1,-0.1) -- (2,-1);
	       \draw[blue] (1.5,-0.1) -- (2,-1);
	       \draw[violet] (3,-0.1) -- (2,-1);
	       \draw[blue] (0.8,-2.2) -- (2,-1);
	       \draw[violet] (3.2,-2.2) -- (2,-1);
	       \draw[myyellow] (2,-1) -- (2,-1.5);
	       \draw[violet] (2,-2.2) -- (2,-1.8);
	       \draw[myyellow] (1.7,-2.2) to[out=90,in=180] (2,-1.5) to[out=0,in=90] (2.3,-2.2);
	       \node[pdot] at (2,-1.8) {};
	       \node[rhdot] at (2,-1) {};
	       \node at (2.1,-0.3) {$\ldots$};
	       \node at (1.4,-1.95) {$\ldots$};
	       \node at (2.6,-1.95) {$\ldots$};
	       \node at (2,-2.5) {$t_i$};
	    \end{tikzpicture}}
\end{equation}
where the purple lines are either red or blue and the yellow lines will be the corresponding opposite color. 
\end{theorem}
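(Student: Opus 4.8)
The plan is to prove \cref{cohorelseq} directly and to deduce \cref{cohorelteq} from it by the parallel computation (or by a rotation, using \cref{newgenrotation} and \cref{omegarotation}), so I only discuss \cref{cohorelseq} below.

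\emph{Reduction.} First I would choose a $t$-coloured anchor inside the bottom word $\usmt{t}{k}{}$ and unroll: by \cref{omegarotation} the red $2k$-extvalent morphism becomes $\Phi_t^{\underline u}$ for the alternating — hence reduced — expression $\underline u$ of length $2k-1$, which satisfies $|m(t,\underline u)| = 2k \ge 4$. Applying the same rotation to all of \cref{cohorelseq} (legitimate since $\Omega$ is rotation invariant) transports it into an identity in $\ext^{1,-2k+2}_{R^e}(B_t,\bs(\underline u))$; and each summand on the right-hand side — the $2k$-extvalent morphism with a dot and a trivalent vertex attached at one fixed spot of the top or bottom word — is rewritten with the absorption relations \cref{newgenabsorb}, \cref{newgenreduct}, \cref{newgenmult} as $\Phi_t$ of a shorter (alternating) word pre- or post-composed with an honest trivalent vertex or dot.

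\emph{Cocycle computation.} Next I would pass to cocycle representatives in $\un{\hom}^1_{R^e}(K_t,\bs(\underline u))$. Since $\bs(\underline u)$ sits in cohomological degree $0$ while $K_t$ occupies degrees $-2,-1,0$, such a cocycle is pinned down by its values on the two generators $\un{\rho_s}\boxtimes 1\otimes 1$ and $\urtt\boxtimes 1\otimes 1$ of $K_t^{-1}$; and by \cref{maincohoiso} the class of the cocycle is recovered from those two values, the first read modulo $\im\rho_s^e(\underline u)$ in $\Db(\ker\rho_s^e(\underline u))$ and the second inside $\ker\rho_s^e(\underline u)$. On the left, $[k]\alpha_s\cdot\Phi_t^{\underline u}$ sends the two generators to $[k]\alpha_s\cdot 1(\underline u)$ and $[k]\alpha_s\cdot v_0$, with $v_0$ the element from the definition of $\Phi_t^{\underline u}$. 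For each term on the right I would feed the chain lifts of $\eta_s,\epsilon_s,\mu_s,\delta_s$ from \cref{chainliftsection} into the known cocycle representative of the relevant $\Phi_t$ and evaluate on those same two generators.

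\emph{Comparison and main obstacle.} Everything is then expanded in the light-leaves (equivalently $01$-) basis of $\bs(\underline u)$ as a right $R$-module; the governing input is the formula $r(\underline e)^{-1}(\rho_s)-\rho_s = -\sum_j \paren{[2j+1]\alpha_s + [2j]\alpha_t}$ from the proof of \cref{keycor}, whose partial sums telescope into exactly the coefficients $[k-i]$ and $[k+1-i]$ once one re-expresses the left-multiplication by $[k]\alpha_s$ in the right-hand basis; on the $\urtt$-generator one additionally uses the inclusion $\ker\rho_s^e(\underline u)\subseteq\ker\rho_t t(\rho_t)^e(\underline u)$ established in \cref{mainsection}. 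It is natural to package this as an induction on $k$, with base case $k=2$ supplied by \cref{4extreduct}, its corollaries, and the explicit partial chain lift of $\Omega_{(t,s)}^{(s,t)}$ recorded above, the induction step then closing via the two-coloured quantum-number recursion $[k] = [2][k-1] - [k-2]$. The delicate point — and the reason the right-hand side of \cref{cohorelseq} is as intricate as it is — is that, unlike the lowest-degree relations (e.g.\ \cref{newgenrotation}, where \cref{1dimcriterion} gives a one-dimensional $\hom$-space), $\ext^{1,-2k+2}_{R^e}(B_t,\bs(\underline u))$ has both a $\Db(\ker\rho_s^e(\underline u))(1)$ part and a nonzero $\ker\rho_s^e(\underline u)(3)$ part in this degree, so the identity genuinely has to be verified on both generators, and one must track carefully the Koszul signs coming from the chain lifts and from the extvalent/exterior-box conventions while identifying the resulting Bott--Samelson elements.
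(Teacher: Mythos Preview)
Your plan is plausible but takes a substantially harder route than the paper. You propose to verify the identity directly at the level of cocycles in $\un{\hom}^1_{R^e}(K_t,\bs(\underline u))$, evaluating both sides on $\un{\rho_s}\boxtimes 1\otimes 1$ and $\urtt\boxtimes 1\otimes 1$, expanding in the light-leaves basis, and closing an induction via the quantum-number recursion. This \emph{can} be made to work, but as you already flag, the target $\ext$-group is not one-dimensional in this degree, so you really do have to match both generators and track all the Koszul signs coming from the chain lifts; your sketch of the induction step via $[k]=[2][k-1]-[k-2]$ is also not yet sharp enough to see why the broken terms line up with the correct diagrams.

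The paper sidesteps all of this with a diagrammatic trick. It starts from the \emph{trivially true} relation
\[
\boxed{\rho_s}\ \Phi_t^{\usmt{s}{2k-1}{}}\;-\;\Phi_t^{\usmt{s}{2k-1}{}}\ \boxed{\rho_s}\;=\;0
\]
in $\ext^{1}_{R^e}(B_t,\bs(\usmt{s}{2k-1}{}))$, which holds because $\rho_s\in (V^*)^t$ so $\rho_s$ slides freely across the bottom $t$-strand (this is a degree-zero identity in $\bsbim$, hence also in $\bsbimext$). Now one applies polynomial forcing: push the left $\rho_s$ rightwards $k-1$ times and the right $\rho_s$ leftwards $k$ times. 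Each time a strand is broken the coefficient is $\partial_s((ts)^m\rho_s)=[2m+1]$ or $\partial_t(s(ts)^m\rho_s)=[2m+2]$, computed directly from \cref{stsrhos}--\cref{tsrhos}. After rotating the left $k-1$ strands down (using \cref{omegarotation}), the two surviving unbroken terms combine to $\bigl((ts)^{(k-1)/2}(\rho_s)-s(ts)^{(k-1)/2}(\rho_s)\bigr)=[k]\alpha_s$ times the $2k$-extvalent, while the broken terms, after \cref{newgenreduct} and \cref{newgenmult}, give exactly the right-hand side of \cref{cohorelseq}. Equation \cref{cohorelteq} is obtained from the same starting identity by instead rotating the right $k-1$ strands down and then rotating $180^\circ$.

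The payoff of the paper's argument is that it never touches a cocycle: the entire computation is polynomial forcing plus the absorption lemmas, and the quantum-number coefficients fall out of the Demazure-operator calculation rather than from matching basis elements. Your approach would recover the same answer, but at the cost of a delicate bookkeeping exercise that the diagrammatic argument avoids entirely.
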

\begin{proof}
Suppose $k$ is odd ($k$ even will proceed similarly except use the blue extvalent morphism instead) so that the bottom right strand above will be blue. Then we claim \cref{cohorelseq} can be obtained by rotating down the left $k-1$ strands of the following equation.
\begin{equation}
\label{cohoreleq1}
\raisebox{0.8cm}{$\boxed{\rho_s} \ $}
    \begin{tikzpicture}[scale=0.6]
	       \draw[red] (0.8,-0.1) to[out=-90,in=-180] (2,-1);
	       \draw[red] (3.2,-0.1) to[out=-90,in=0] (2,-1);
	       \foreach \x in {2} \draw[blue] (\x,-1) -- (\x,-2);
	       \node[rhdot] at (2,-1) {};
	       \node at (2,-0.3) {$\usmt{s}{2k-1}{}$};
	    \end{tikzpicture}\raisebox{0.5cm}{$- \ $}
	\begin{tikzpicture}[scale=0.6]
	       \draw[red] (0.8,-0.1) to[out=-90,in=-180] (2,-1);
	       \draw[red] (3.2,-0.1) to[out=-90,in=0] (2,-1);
	       \foreach \x in {2} \draw[blue] (\x,-1) -- (\x,-2);
	       \node[rhdot] at (2,-1) {};
	       \node at (2,-0.3) {$\usmt{s}{2k-1}{}$};
	    \end{tikzpicture}\raisebox{0.8cm}{$\ \boxed{\rho_s}  $}\raisebox{0.5cm}{$\ =0$}
\end{equation}
Specifically, apply polynomial forcing to move the left $\rho_s$ to the right $k-1$ times. The coefficient when we break the $2m+1$th line starting from the left will be of the form $\partial_t(s(ts)^m(\rho_s))$ and the coefficient when we break the $2m$th line starting from the left will be of the form $\partial_s((ts)^m(\rho_s))$. Similarly, apply polynomial forcing to move the right $\rho_s$ to the left $k$ times. The coefficients will be the same as above, except we have negative signs and we start counting from the right. Using \cref{stsrhos} and \cref{tsrhos} we see that
\begin{align*}
\partial_t(s(ts)^m(\rho_s))&=\frac{s(ts)^m(\rho_s)-(ts)^{m+1}(\rho_s)}{\alpha_t}=[2m+2]  \\
\partial_s((ts)^m(\rho_s))&=\frac{(ts)^m(\rho_s)-s(ts)^{m}(\rho_s)}{\alpha_s} =[2m+1] 
\end{align*}
and now rotating the left $k-1$ strands of the LHS of \cref{cohoreleq1} the only terms that are unbroken are of the form 
\begin{center}
\raisebox{0.5cm}{\scalemath{1.0}{\boxed{(ts)^{(k-1)/2}(\rho_s) }} \ }
\raisebox{0cm}{
\begin{tikzpicture}[scale=0.7]
	       \draw[red] (1.1,-0.1) -- (2,-1);
	       \draw[blue] (1.5,-0.1) -- (2,-1);
	       \draw[red] (2.9,-0.1) -- (2,-1);
	       \draw[blue] (1.1,-2) -- (2,-1);
	       \draw[red] (1.5,-2) -- (2,-1);
	       \draw[blue] (2.9,-2) -- (2,-1);
	       \node[rhdot] at (2,-1) {};
	       \node at (2.1,-0.4) {$\ldots$};
	       \node at (2.1,-1.6) {$\ldots$};
	    \end{tikzpicture}}\raisebox{0.5cm}{\scalemath{1.0}{- \, \boxed{  s(ts)^{(k-1)/2}(\rho_s) }} \ }
\raisebox{0cm}{
\begin{tikzpicture}[scale=0.7]
	       \draw[red] (1.1,-0.1) -- (2,-1);
	       \draw[blue] (1.5,-0.1) -- (2,-1);
	       \draw[red] (2.9,-0.1) -- (2,-1);
	       \draw[blue] (1.1,-2) -- (2,-1);
	       \draw[red] (1.5,-2) -- (2,-1);
	       \draw[blue] (2.9,-2) -- (2,-1);
	       \node[rhdot] at (2,-1) {};
	       \node at (2.1,-0.4) {$\ldots$};
	       \node at (2.1,-1.6) {$\ldots$};
	    \end{tikzpicture}}\raisebox{0.5cm}{\scalemath{1.0}{$= \ $\boxed{ [k]\alpha_s }} }
\raisebox{0cm}{
	   \begin{tikzpicture}[scale=0.7]
	       \draw[red] (1.1,-0.1) -- (2,-1);
	       \draw[blue] (1.5,-0.1) -- (2,-1);
	       \draw[red] (2.9,-0.1) -- (2,-1);
	       \draw[blue] (1.1,-2) -- (2,-1);
	       \draw[red] (1.5,-2) -- (2,-1);
	       \draw[blue] (2.9,-2) -- (2,-1);
	       \node[rhdot] at (2,-1) {};
	       \node at (2.1,-0.4) {$\ldots$};
	       \node at (2.1,-1.6) {$\ldots$};
	    \end{tikzpicture}}
\end{center}
Rearranging and applying \cref{newgenreduct}, \cref{newgenmult} will yield \cref{cohoreleq1}. A similar calculation applies when $k$ is even aka the bottom right strand will be red. \\

Now we will show why \cref{cohoreleq1} is true in $\ext_{R^e}^{1, -2k+2}(K_t, \bs(\usmt{s}{2k-1}{} ))$. Because $\bsbimext(\hf, W_\infty)$ is a supermonoidal category, we can move both boxed $\rho_s$ down in \cref{cohoreleq1} , and since $\rho_s\in (V^*)^t$ we have that
\begin{equation}
\label{rhotslideeq}
    \boxed{\rho_s} \bzero -\bzero \  \boxed{\rho_s}=0 
\end{equation} 
in $\bsbim(\hf, W_\infty)$. By the fully faithful embedding it follows that \cref{rhotslideeq} also holds in $\bsbimext(\hf, W_\infty)$, i.e there are chain homotopies $K_t\to K_t$ giving rise to \cref{rhotslideeq}. \\

\cref{cohorelteq} can also be obtained from \cref{cohoreleq1} by instead rotating the right $k-1$ strands down and then rotating the diagram by $180^\circ$ and applying \cref{omegarotation}.
\end{proof}

\begin{corollary}
For each $k$, adding a red and blue cap anywhere in \cref{cohorelseq} or \cref{cohorelteq} will give the cohomology relation for $k-1$.
\end{corollary}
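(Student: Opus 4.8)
The plan is to reduce the statement to the corresponding reduction for the auxiliary relation \cref{cohoreleq1}, from which both \cref{cohorelseq} and \cref{cohorelteq} were produced. Recall that \cref{cohoreleq1} is the identity $\boxed{\rho_s}\,\Phi_t^{\usmt{s}{2k-1}{}}=\Phi_t^{\usmt{s}{2k-1}{}}\,\boxed{\rho_s}$ in $\ext^{1,-2k+2}_{R^e}(K_t,\bs(\usmt{s}{2k-1}{}))$, valid because $\rho_s\in (V^*)^t$, and that \cref{cohorelseq} and \cref{cohorelteq} are obtained from it by forcing the two copies of $\rho_s$ across $k-1$, resp. $k$, strands, recognising the broken terms through \cref{newgenreduct} and \cref{newgenmult}, and rotating via \cref{omegarotation}; in particular the coefficients $[k]\alpha_s$, $[k-i]$, $[k+1-i]$ are dictated entirely by that computation through the evaluations $\partial_s((ts)^m(\rho_s))=[2m+1]$ and $\partial_t(s(ts)^m(\rho_s))=[2m+2]$ recorded there (cf. \cref{stsrhos}, \cref{tsrhos}). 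Since every move in the derivation (polynomial forcing, \cref{newgenreduct}, \cref{newgenmult}, \cref{omegarotation}) is reversible, \cref{cohoreleq1} and the pair \cref{cohorelseq}/\cref{cohorelteq} are equivalent relations for each $k$.

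The first step is to make precise what ``adding a red and blue cap'' does: on the extvalent morphism it shortens the underlying $\Phi$-expression by two oppositely coloured end strands (degree-wise each cap contributes $(0,1)$, taking $\ext^{1,-2k}$ to $\ext^{1,-2k+2}$, which is exactly the drop from the red $2k$-extvalent to the red $2(k-1)$-extvalent). Applied to \cref{cohoreleq1}, the two caps remove the last two strands of $\usmt{s}{2k-1}{}$; by \cref{newgenreduct} this sends $\Phi_t^{\usmt{s}{2k-1}{}}$ to $\Phi_t^{\usmt{s}{2k-3}{}}$, and since post-composition with $\epsilon_s,\epsilon_t$ is $R^e$-linear it commutes with the boxes $\boxed{\rho_s}$; hence the capped level-$k$ version of \cref{cohoreleq1} \emph{is} \cref{cohoreleq1} at level $k-1$. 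Running the identical forcing-and-rotation derivation on the latter produces \cref{cohorelseq} (resp. \cref{cohorelteq}) with $k$ replaced by $k-1$; because the caps slide freely past every trivalent vertex and dot generated along the way by \cref{newgenabsorb} and past the $\Omega$-vertices by \cref{omegarotation}, the capped level-$k$ relation and the level-$(k-1)$ relation agree term for term. In particular $[k]$ is replaced by $[k-1]$ and the index ranges shift by one automatically: this is simply the output of the same computation with one fewer forcing step. The word ``anywhere'' is covered by rotation invariance of $\Omega$ (\cref{omegarotation}) together with cyclicity of $\Phi$ (\cref{newgencyclic}): any placement of the red and blue caps is isotopic to the standard end placement used above, so all choices give the same reduced relation, and the symmetric argument with the roles of $s$ and $t$ exchanged handles \cref{cohorelteq}.

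The main obstacle is making the ``term for term'' matching rigorous if one argues directly on \cref{cohorelseq}/\cref{cohorelteq} instead of going through \cref{cohoreleq1}: one must track each of the roughly $2k$ summands under the two caps — most are absorbed into a smaller $\Omega$ by \cref{newgenreduct}, \cref{newgenmult}, \cref{newgenabsorb}, a few vanish by the degree argument behind \cref{newgensquare} (using \cref{maincohoiso}), and the extreme summands of the two sums must recombine with the reduced left-hand side — and one must verify that the reindexing of the families $([k-i])_{i=1}^{k-1}$ and $([k+1-i])_{i=1}^{k}$ down to $([k-1-i])_{i=1}^{k-2}$ and $([k-i])_{i=1}^{k-1}$, along with all intervening signs, comes out exactly. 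Routing the proof through \cref{cohoreleq1} sidesteps this bookkeeping at the cost of re-invoking the somewhat lengthy forcing computation, and that is the version I would write up, remarking afterwards that the direct check is then a consistency statement.
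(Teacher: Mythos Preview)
Your proposal is correct and follows essentially the same route as the paper: the paper's one-line proof is ``Follows from \cref{cohoreleq1} and \cref{newgenreduct},'' which is exactly your core reduction (capping \cref{cohoreleq1} reduces $\Phi_t^{\usmt{s}{2k-1}{}}$ to $\Phi_t^{\usmt{s}{2k-3}{}}$ via \cref{newgenreduct}, and then the same derivation yields the level-$(k-1)$ relation). Your additional remarks on rotation invariance justifying ``anywhere'' and on the bookkeeping obstacle in a direct term-by-term check are accurate elaborations that the paper leaves implicit.
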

\begin{proof}
Follows from \cref{cohoreleq1} and \cref{newgenreduct}.
\end{proof}

\begin{example}
For $k=2$ one can use \cref{4extreduct}, \cref{4extreductcor}, and \cref{4extreductcor2} to arrive at the relations
\begin{equation}
\raisebox{0.6cm}{$\ \  \boxed{[2]\alpha_s} \ \ $}
\begin{tikzpicture}[scale=0.6]
	       \draw[red] (1.2,0) -- (2,-1);
	       \draw[blue] (2.8,0) -- (2,-1);
	       \draw[blue] (1.2,-2) -- (2,-1);
	       \draw[red] (2.8,-2) -- (2,-1);
	       \node[rhdot] at (2,-1) {};
	\end{tikzpicture}
    \raisebox{0.6cm}{$\ \ =- [2] \ $}
	\begin{tikzpicture}[scale=0.6]
	       \draw[red] (1.3,0) -- (1.3,-1);
	       \draw[blue] (2.7,0) -- (2,-1);
	       \draw[blue] (1.3,-2) -- (2,-1);
	       \draw[red] (2.7,-2) -- (2.7,-1);
	       \node[bhdot] at (2,-1) {};
	       \node[rdot] at (2.7,-1) {};
	       \node[rdot] at (1.3,-1) {};
	\end{tikzpicture}
    \raisebox{0.6cm}{$\ \ + [2] \ \ $}
    \begin{tikzpicture}[scale=0.6]
	       \draw[red] (1.3,0) -- (1.3,-1);
	       \draw[blue] (2.7,0) -- (2,-1);
	       \draw[blue] (1.3,-2) -- (2,-1);
	       \draw[red] (2.7,-2) -- (2.7,-1);
	       \node[rhdot] at (2.7,-1) {};
	       \node[rdot] at (1.3,-1) {};
	\end{tikzpicture} 
	\raisebox{0.6cm}{$\ \  + \ \ $}
    \begin{tikzpicture}[scale=0.6]
	       \draw[red] (1.3,0) -- (2,-1);
	       \draw[blue] (2.7,0) -- (2.7,-1);
	       \draw[blue] (1.3,-2) -- (1.3,-1);
	       \draw[red] (2.7,-2) -- (2,-1);
	       \node[bhdot] at (2.7,-1) {};
	       \node[bdot] at (1.3,-1) {};
	\end{tikzpicture}\raisebox{0.6cm}{$\ \  - \ \ $}
	\begin{tikzpicture}[scale=0.6]
	       \draw[red] (1.3,0) -- (2,-1);
	       \draw[blue] (2.7,0) -- (2.7,-1);
	       \draw[blue] (1.3,-2) -- (1.3,-1);
	       \draw[red] (2.7,-2) -- (2,-1);
	       \node[bdot] at (2.7,-1) {};
	       \node[bhdot] at (1.3,-1) {};
	\end{tikzpicture}
\end{equation}
\begin{equation}
\label{4extcohorelt}
\raisebox{0.6cm}{$\ \  \boxed{[2]\alpha_t} \ \ $}
\begin{tikzpicture}[scale=0.6]
	       \draw[red] (1.2,0) -- (2,-1);
	       \draw[blue] (2.8,0) -- (2,-1);
	       \draw[blue] (1.2,-2) -- (2,-1);
	       \draw[red] (2.8,-2) -- (2,-1);
	       \node[rhdot] at (2,-1) {};
	\end{tikzpicture}
	\raisebox{0.6cm}{$\ \ = -[2]\ \ $}
    \begin{tikzpicture}[scale=0.6]
	       \draw[red] (1.3,0) -- (2,-1);
	       \draw[blue] (2.7,0) -- (2.7,-1);
	       \draw[blue] (1.3,-2) -- (1.3,-1);
	       \draw[red] (2.7,-2) -- (2,-1);
	       \node[bhdot] at (2.7,-1) {};
	       \node[bdot] at (1.3,-1) {};
	\end{tikzpicture}\raisebox{0.6cm}{$\ \  +[2] \ \ $}
	\begin{tikzpicture}[scale=0.6]
	       \draw[red] (1.3,0) -- (2,-1);
	       \draw[blue] (2.7,0) -- (2.7,-1);
	       \draw[blue] (1.3,-2) -- (1.3,-1);
	       \draw[red] (2.7,-2) -- (2,-1);
	       \node[bdot] at (2.7,-1) {};
	       \node[bdot] at (1.3,-1) {};
	       \node[rhdot] at (2,-1) {};
	\end{tikzpicture}
    \raisebox{0.6cm}{$\ \  + \ $}
	\begin{tikzpicture}[scale=0.6]
	       \draw[red] (1.3,0) -- (1.3,-1);
	       \draw[blue] (2.7,0) -- (2,-1);
	       \draw[blue] (1.3,-2) -- (2,-1);
	       \draw[red] (2.7,-2) -- (2.7,-1);
	       \node[rdot] at (2.7,-1) {};
	       \node[rhdot] at (1.3,-1) {};
	\end{tikzpicture}
    \raisebox{0.6cm}{$\ \ - \ \ $}
    \begin{tikzpicture}[scale=0.6]
	       \draw[red] (1.3,0) -- (1.3,-1);
	       \draw[blue] (2.7,0) -- (2,-1);
	       \draw[blue] (1.3,-2) -- (2,-1);
	       \draw[red] (2.7,-2) -- (2.7,-1);
	       \node[rhdot] at (2.7,-1) {};
	       \node[rdot] at (1.3,-1) {};
	\end{tikzpicture} 
\end{equation}
and the RHS looks similar to the Jones Wenzl projector (see Example 5.10 in Soergel Calculus). Moreover if we cap off appropriately, we can recover the rank 1 cohomology relation (assuming Hochschild jumping and barbell). 
\begin{center}
\raisebox{0.6cm}{$\ \   \ \ \boxed{[2]\alpha_s \alpha_t}$}
	\raisebox{0.4cm}{
	\begin{tikzpicture}[scale=0.6]
	       \draw[red] (2,1.2) -- (2,0);
	       \node[rhdot] at (2,0) {};
	\end{tikzpicture}}\raisebox{0.6cm}{$\ \  - \ \ \boxed{[2]\alpha_s}\dboxed{\alpha_t^\vee}$}
	\raisebox{0.4cm}{
	\begin{tikzpicture}[scale=0.6]
	       \draw[red] (2,1.2) -- (2,0);
	       \node[rdot] at (2,0) {};
	\end{tikzpicture}}
    \raisebox{0.6cm}{$\ \ = \boxed{[2]\alpha_s} \ \ $}
\begin{tikzpicture}[scale=0.6]
	       \draw[red] (1.2,0) -- (2,-1);
	       \draw[blue] (2.8,0) -- (2,-1);
	       \draw[blue] (1.2,-2) -- (2,-1);
	       \draw[red] (2.8,-2) -- (2,-1);
	       \node[rhdot] at (2,-1) {};
	       \node[bdot] at (1.2,-2) {};
	       \node[bdot] at (2.8,0) {};
	       \node[rdot] at (2.8,-2) {};
	\end{tikzpicture}\raisebox{0.6cm}{$\ \  = \ \ -\boxed{[2]\alpha_s}\dboxed{ \alpha_t^\vee}$}
	\raisebox{0.4cm}{
	\begin{tikzpicture}[scale=0.6]
	       \draw[red] (2,1.2) -- (2,0);
	       \node[rdot] at (2,0) {};
	\end{tikzpicture}}\raisebox{0.6cm}{$\ \  +\ \ \boxed{[2]\alpha_t}\dboxed{ \alpha_s^\vee}$}
	\raisebox{0.4cm}{
	\begin{tikzpicture}[scale=0.6]
	       \draw[red] (2,1.2) -- (2,0);
	       \node[rdot] at (2,0) {};
	\end{tikzpicture}}
\end{center}
where we have applied \cref{4extreductcor3} on the LHS. We can then cancel $[2]\alpha_t$ from both sides to arrive at the rank 1 cohomology relation.
\end{example}

\section{Computation of $\ext^{\bullet, \bullet}_{R^e}(R, B_w)$ for $m_{st}=\infty$}
\label{extindecompsect}
We will continue to assume that \cref{assump0}, \cref{assump1}, and \cref{assump2} hold in this section.

\begin{theorem}
\label{indecompcohograd}
Assume all quantum numbers are invertible. Then there is an isomorphism of right $R$ modules. 
\begin{align*}
\ext^{0, \bullet}_{R^e}(R, B_{\smt{t}{k}{}})&\cong R(-k) \\
\ext^{1, \bullet}_{R^e}(R, B_{\smt{t}{k}{}})&\cong R(4-k)\oplus R(k) \\
\ext^{2, \bullet}_{R^e}(R, B_{\smt{t}{k}{}})&\cong R(k+4)
\end{align*}
\end{theorem}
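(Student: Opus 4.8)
The plan is to bootstrap \cref{maincohoiso} --- which computes $\ext^{\bullet,\bullet}_{R^e}(B_t,\bs(\un w))$ for a Bott--Samelson bimodule --- down to the indecomposable $B_{\smt{t}{k}{}}$. The bridge is the self-adjunction isomorphism $\ext^{i}_{R^e}(B_t\otimes_R M,N)\cong\ext^{i}_{R^e}(M,B_t\otimes_R N)$ from the start of \cref{maincomptutesec}, together with the standard decompositions of dihedral Bott--Samelson bimodules $B_t\otimes_R B_{\smt{s}{k-1}{}}\cong B_{\smt{t}{k}{}}\oplus B_{\smt{t}{k-2}{}}$ and, more generally, $\bs(\usmt{s}{m}{})\cong\bigoplus_{0\le j\le m,\ j\equiv m\ (2)}B_{\smt{s}{j}{}}$ with $B_{\smt{s}{0}{}}=R$ (each with multiplicity one and no grading shift, since $m_{st}=\infty$). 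First I would record the classical fact that $\ext^{0,\bullet}_{R^e}(R,B_{\smt{t}{k}{}})=\hom^{\bullet}_{R^e}(R,B_{\smt{t}{k}{}})\cong R(-k)$: the standard filtration of $B_{\smt{t}{k}{}}$ contains exactly one copy of the diagonal bimodule, sitting in internal degree $k$, and $\hom_{R^e}(R,R_x)=0$ for $x\neq\id$ because $R$ is a domain. (For $k=1$ this is also the cohomological degree $0$ part of \cref{hochbs}.)

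For $\ext^{1}$ and $\ext^{2}$ I would induct on $k$. Applying the self-adjunction with $M=R$, $N=B_{\smt{s}{k-1}{}}$ gives $\ext^{i}_{R^e}(B_t,B_{\smt{s}{k-1}{}})\cong\ext^{i}_{R^e}(R,B_{\smt{t}{k}{}})\oplus\ext^{i}_{R^e}(R,B_{\smt{t}{k-2}{}})$, while additivity of $\ext^{i}_{R^e}(B_t,-)$ over $\bs(\usmt{s}{m}{})\cong\bigoplus_j B_{\smt{s}{j}{}}$ expresses the completely known $\ext^{i}_{R^e}(B_t,\bs(\usmt{s}{m}{}))$ of \cref{maincomptutesec} in terms of the $\ext^{i}_{R^e}(B_t,B_{\smt{s}{j}{}})$. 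Two nested alternating--sum (M\"obius) inversions then reduce the computation of $\grk\,\ext^{i}_{R^e}(R,B_{\smt{t}{k}{}})$ to the graded ranks of $\ker\rho_s^e(\usmt{s}{m}{})$ for $m\le k-1$. By \cref{keycor} the latter equals the subexpression sum $\sum_{\un f\subset\usmt{s}{m}{},\ r(\un f)\in\{\id,t\}}v^{\deg\overline{\mathrm{LL}}_{\usmt{s}{m}{},\un f}}$, which one evaluates combinatorially (a Dyck-path count on the Bruhat line of $W_\infty$); feeding this into \cref{maincohoiso}, and using that $1(\un w)$ is the lowest-degree element of $\Db(\ker\rho_s^e(\un w))$ as in the proof of \cref{1dimcriterion}, yields $\grk\,\ext^{i}_{R^e}(B_t,\bs(\usmt{s}{m}{}))$, and the two inversions give the claimed graded ranks. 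Freeness is automatic: each $\ext^{i}_{R^e}(R,B_{\smt{t}{k}{}})$ is, by the above, a direct summand of a finitely generated free graded right $R$-module (freeness on the Bott--Samelson side is part of \cref{maincohoiso}), hence projective, hence free since $R$ is a polynomial ring; so its graded rank determines it.

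To pin down the three shifts cleanly without grinding through every subexpression count, I would instead argue by duality once $\ext^{0}\cong R(-k)$ is in hand: $B_{\smt{t}{k}{}}$ is self-dual, $\Db B_{\smt{t}{k}{}}\cong B_{\smt{t}{k}{}}$, and $R$ is a smooth (here two-dimensional) polynomial ring, so Hochschild cohomology satisfies a Van den Bergh--type duality $\ext^{i}_{R^e}(R,M)\cong\Db\bigl(\ext^{2-i}_{R^e}(R,M)\bigr)$ twisted by the dualizing bimodule $\omega_R\cong R\,(4)[-2]$. Applied to $M=B_{\smt{t}{k}{}}$ this forces $\ext^{2,\bullet}_{R^e}(R,B_{\smt{t}{k}{}})\cong\Db(R(-k))\,(4)[-2]$, i.e. $R(k+4)$ in cohomological degree $2$, and shows $\ext^{1,\bullet}_{R^e}(R,B_{\smt{t}{k}{}})$ is self-dual up to the shift $(4)$, hence of the form $R(a)\oplus R(4-a)$; a single graded-rank comparison (e.g. the lowest Hochschild $1$-cocycle, which the adjunction identifies with $1(\usmt{s}{k-1}{})$-type data from \cref{maincohoiso}) gives $a=k$.

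The main obstacle is bookkeeping rather than conceptual: tracking the internal- and cohomological-degree shifts ($\dsbrac{\cdot}$, $(\cdot)$, $[\cdot]$) correctly through the adjunction and the two M\"obius inversions so that the answer comes out exactly $R(-k)$, $R(4-k)\oplus R(k)$, $R(k+4)$; and, if one takes the duality shortcut, making the graded Van den Bergh duality precise in the bimodule category and identifying $\omega_R$ with the right shift of $R$. All the substantive content has already been done in \cref{maincomptutesec}; this statement is the extraction of the indecomposable summand, and the only place real care is needed is in the combinatorics of subexpressions of $\usmt{s}{m}{}$ and the resulting Laurent-polynomial identities.
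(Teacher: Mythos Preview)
Your proposal is correct in principle, but the paper's proof is much shorter because you missed a structural shortcut already encoded in \cref{maincohoiso}. The paper observes that \cref{maincohoiso}, applied to $\un w=\usmt{t}{k}{}$, gives all three Ext groups of $\bs(\usmt{t}{k}{})$ in the form
\[
\ext^0 \cong M,\qquad \ext^1 \cong M(4)\oplus \Db(M),\qquad \ext^2 \cong \Db(M)(4),
\]
where $M=\ker\rho_s^e(\usmt{s}{k-1}{})(-1)$. This pattern is preserved under passing to direct summands: if $M\cong N\oplus P$ as free graded $R$-modules, then $\Db(M)\cong\Db(N)\oplus\Db(P)$ and the same three formulas hold for $N$ and for $P$ separately. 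Decomposing $\bs(\usmt{t}{k}{})\cong B_{\smt{t}{k}{}}\oplus\bigoplus_{y<\smt{t}{k}{}}B_y^{\oplus h_y}$ and subtracting off the lower summands (which by induction already satisfy the pattern) therefore forces $B_{\smt{t}{k}{}}$ to satisfy it too. Since Soergel's Hom formula gives $\ext^0_{R^e}(R,B_{\smt{t}{k}{}})\cong R(-k)$, the other two degrees follow instantly: $\ext^1\cong R(-k)(4)\oplus\Db(R(-k))=R(4-k)\oplus R(k)$ and $\ext^2\cong R(k)(4)=R(k+4)$.

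Compared with this, your two M\"obius inversions plus Dyck-path subexpression counts would eventually reproduce the same graded ranks but with real work; your Van den Bergh duality argument is morally the same duality phenomenon (and would succeed), but it imports an external statement whose content here is exactly the $\Db$-symmetry already visible in \cref{maincohoiso}. So nothing in your outline is wrong, but the paper's proof avoids all the combinatorics and never needs to compute $\grk\,\ker\rho_s^e$ explicitly.
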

\begin{proof}
Proceed by induction on $k$. $k=1$ follows from  \cref{maincohoiso}. From \cref{maincohoiso} we know that
\begin{align}
\nonumber
\ext^{0, \bullet}_{R^e}(R, \mathrm{BS}(\usmt{t}{k}{}))&\cong  \ker\rhos{s}{k-1}{} (-1) \\
\label{ext1bw}
\ext^{1, \bullet}_{R^e}(R, \mathrm{BS}(\usmt{t}{k}{}))&\cong \ker\rhos{s}{k-1}{} (-1)(4)\oplus \Db(\ker\rhos{s}{k-1}{} (-1) ) \\
\label{ext2bw}
\ext^{2, \bullet}_{R^e}(R, \mathrm{BS}(\usmt{t}{k}{}))&\cong  \Db(\ker\rhos{s}{k-1}{} (-1) )(4)
\end{align}
In other words, $\ext^{\bullet, \bullet}_{R^e}(R, \mathrm{BS}(\usmt{t}{k}{}))$ satisfies the pattern as prescribed by the theorem. Because all quantum numbers are invertible, we have a decomposition 
\begin{equation}
\label{bsdecompeq}
\bs(\usmt{t}{k}{})\cong B_{\smt{t}{k}{}}\bigoplus_{y<\smt{t}{k}{}} B_{y}^{\oplus h_y}
\end{equation}

By Soergel's Hom formula we know that $\ext^{0, \bullet}_{R^e}(R, B_{\smt{t}{k}{}})\cong R(-k)$. Therefore
\[ \ker \rhos{s}{k-1}{}(-1)=\ext^{0, \bullet}_{R^e}(R, \bs(\usmt{t}{k}{})) \cong  R(-k)\bigoplus  \ext^{0, \bullet}_{R^e}(R, B_{y}^{\oplus h_y}) \]
By induction $\ext^{\bullet, \bullet}_{R^e}(R, B_{y}^{\oplus h_y})$ will also satisfy the pattern as prescribed by the theorem and so subtracting $\ext^{\bullet, \bullet}_{R^e}(R, B_{y}^{\oplus h_y})$ from both sides of \cref{ext1bw} and \cref{ext2bw} we complete the induction.
\end{proof}

Recall from \cref{maincohoiso} that we have the isomorphism
$$ \ext^{1, \bullet}_{R^e}(B_t, \mathrm{BS}(\underline{w}))\cong
\ker\rhosw\underline{\rho_t t(\rho_t)} (-1)\oplus  \widetilde{\Db(\ker\rhosw )}\underline{\rho_s}(-1) $$

\begin{lemma}
\label{ext1kerbasis}
The set of all morphisms of the form below
\begin{equation}
\raisebox{-1cm}{\begin{tikzpicture}
\node (pq1) [dashed, shape=rectangle,
                     draw=black, semithick,
                     text width=0.1\linewidth,
                     align=center] {$\mathrm{L}_{  \usmt{s}{k-1}{}, \underline{f} }$};
                     \draw[blue] (0,-1) -- (0,-0.6);
 \node[bhdot] at (0,-0.6) {};
\end{tikzpicture}} \  \ \textnormal{where} \ r(\underline{f})=\id  \qquad \qquad \quad  \raisebox{-1cm}{\ \  \begin{tikzpicture}
\draw[blue] (2.2,-1) to[out=-90,in=90] (2.2,-1.7) ;
\node[bhdot] at (2.2, -1.35) {};
\node at (2.2,-0.65) [dashed, shape=rectangle,
                     draw=black, semithick,
                     text width=0.1\linewidth,
                     align=center] {$\mathrm{L}_{  \usmt{s}{k-1}{}, \underline{f} }$};
\end{tikzpicture}} \ \  \textnormal{where} \  r(\underline{f})=t 
\end{equation}
give a right $R$ basis for the $\ker\rhosw\underline{\rho_t t(\rho_t)} (-1)$ part of $\ext^{1, \bullet}_{R^e}(B_t, \mathrm{BS}(\underline{w}))$. 
\end{lemma}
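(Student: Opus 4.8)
The plan is to trace through the isomorphism of \cref{maincohoiso} to identify the claimed morphisms with the natural basis of $\ker \rho_s^e(\un{w})$ from \cref{keycor}. Recall that \cref{maincohoiso} gives
\[
\ext^{1,\bullet}_{R^e}(B_t, \bs(\un{w})) \cong \ker \rho_s^e(\un{w})\, \un{\rho_t t(\rho_t)}(-1) \,\oplus\, \Db(\ker\rho_s^e(\un{w}))\,\un{\rho_s}(-1),
\]
and that the first summand arises (via \cref{clarifyinglem} applied to the double complex \cref{keydoublecomplex}) as exactly those cocycles supported on the $\un{\rho_t t(\rho_t)}$-factor, i.e.\ maps $K_t \to \bs(\un{w})$ sending $\un{\rho_t t(\rho_t)}\boxtimes 1\otimes 1 \mapsto b$ for $b \in \ker\rho_s^e(\un{w})_{\bullet}$ and $\un{\rho_s}\boxtimes 1\otimes 1\mapsto 0$. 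So it suffices to show that the two families of diagrams displayed in the lemma, read as elements of $\ext^{1,\bullet}_{R^e}(B_t,\bs(\un{w}))$, land in this first summand and that, under the correspondence $[{}_{tt}^{b}\psi_s^{0}] \mapsto b$, they map to the basis $\set{\ll{f} \mid r(\un{f}) = \id \text{ or } t}$ of $\ker\rho_s^e(\un{w})$ provided by \cref{keycor}.

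First I would compute what each diagram does to $\un{\rho_s}\boxtimes 1\otimes 1$ and to $\un{\rho_t t(\rho_t)}\boxtimes 1\otimes 1$ at chain level. For the left-hand family, the diagram is the (flipped) light leaf $\overline{\mathrm{LL}}_{\usmt{s}{k-1}{}, \un{f}}$ post-composed with a blue Hochschild dot $\hunit$ on the bottom $t$-strand; since $\hunit = \eta_t^{\ext} = \phi_t\circ[\widetilde{\eta_t}]$ by the discussion after \cref{Mmaintheorem}, and $\eta_t^{\ext}$ as a map $K_\varnothing \to K_t$ is $\alpha_t^\vee \lfrown (-)$, the relevant chain lift sends $\un{\rho_t}\boxtimes 1\otimes 1 \mapsto 1\otimes_t 1$ and kills the $(V^*)^t$-part — so after identifying $K_t = K(\rho_t t(\rho_t)^e, \rho_s^e)(1)$ and recalling $\rho_s \in (V^*)^t$, this composite evaluates to $\ll{f}$ (up to a unit) on $\un{\rho_t t(\rho_t)}\boxtimes 1\otimes 1$ and to $0$ on $\un{\rho_s}\boxtimes 1\otimes 1$ when $r(\un{f})=\id$. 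For the right-hand family, the blue Hochschild dot sits on a blue cap attached below the light leaf; using the univalent-dot conventions \cref{eqn:hupdot-hdowndot-def} and the relation $\hunit = \eta_t^{\ext}$ together with the cup/cap adjunction $\delta_t, \eta_t$, one checks similarly that this composite evaluates to $\ll{f}$ (with $r(\un{f}) = t$) on $\un{\rho_t t(\rho_t)}\boxtimes 1\otimes 1$ and to $0$ on $\un{\rho_s}\boxtimes 1\otimes 1$. The fact that these vanish on $\un{\rho_s}\boxtimes 1\otimes 1$ is precisely the statement that they land in the $\ker\rho_s^e(\un{w})\un{\rho_t t(\rho_t)}$ summand; alternatively, as noted in \cref{clarifyinglemcor}, the class in $\ext^1$ is determined by its value on $\un{\rho_s}\boxtimes 1\otimes 1$ together with its image $b$, and here $b = \ll{f}\in \ker\rho_s^e(\un{w})$ lies in the correct summand by construction.

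Finally I would invoke \cref{keycor} directly: it asserts that $\set{\ll{f}\mid r(\un{f})=\id \text{ or } t}$ is a right $R$-basis of $\ker\rho_s^e(\un{w})$. Since the isomorphism $\ker\rho_s^e(\un{w})\un{\rho_t t(\rho_t)}(-1)\xrightarrow{\sim}(\text{first summand of }\ext^1)$ sending $b \mapsto [{}_{tt}^{b}\psi_s^0]$ is right $R$-linear (it is contraction along the exterior generator, hence $R^e$-, in particular right-$R$-, linear), and since the computations above show the displayed diagrams are exactly the images of the $\ll{f}$ under this isomorphism, the displayed diagrams form a right $R$-basis of the first summand, as claimed.

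The main obstacle I anticipate is the careful bookkeeping in the second computation: one must chase the blue cap through the chain lifts $\widetilde{\delta_t}$, $\widetilde{\eta_t}$ (from \cref{chainliftsection}) and the Koszul-sign conventions to confirm that the Hochschild dot on the cap genuinely produces $\ll{f}$ with $r(\un{f})=t$ on $\un{\rho_t t(\rho_t)}\boxtimes 1\otimes 1$ and nothing on $\un{\rho_s}\boxtimes 1\otimes 1$ — the analogous low-strand verifications in \cref{4extreduct} and \cref{4extrotation} show this is a genuine (if routine) computation rather than a formality. Everything else is a direct appeal to \cref{maincohoiso}, \cref{keycor}, and the relation $\hunit = \eta_s^{\ext}$.
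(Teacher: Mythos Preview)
Your overall strategy is correct and matches the paper's: identify the displayed morphisms with the cocycles ${}_{tt}^{\ll{f}}\psi_s^0$ and then invoke \cref{keycor}. However, you have misread the diagrams and consequently overcomplicated the chain-level check. In the first family the short blue strand terminates in a hollow dot at its \emph{upper} end, so the relevant morphism is the counit-type univalent Hochschild dot $\epsilon_t\circ\phi_t:B_t\to R$, not $\hunit=\eta_t^{\ext}:R\to B_t$; and in the second family the blue strand simply carries a \emph{bivalent} Hochschild dot connecting the light-leaf box (whose bottom boundary is $t$ since $r(\un{f})=t$) to the $B_t$ boundary---no cups, caps, or adjunctions are involved.

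The paper's argument is accordingly a one-liner: the bivalent Hochschild dot $\phi_t$, viewed as a class in $\ext^1_{R^e}(B_t,B_t)$, is exactly ${}_{tt}^{1\otimes_t 1}\psi_s^0$ (it sends $\un{\rho_tt(\rho_t)}\boxtimes 1\otimes 1\mapsto 1\otimes_t 1=c_{bot}$ and kills $\un{\rho_s}\boxtimes 1\otimes 1$ since $\rho_s\in(V^*)^t$). Post-composing with the flipped light leaf $\overline{\mathrm{LL}}_{\usmt{s}{k-1}{},\un f}$ therefore yields ${}_{tt}^{\ll{f}}\psi_s^0$ in both cases (for $r(\un f)=\id$ one additionally uses $\epsilon_t(c_{bot})=1$). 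Since \cref{keycor} says $\{\ll{f}:r(\un f)\in\{\id,t\}\}$ is a right $R$-basis of $\ker\rho_s^e(\un w)$, the result follows. Your anticipated ``careful bookkeeping'' through $\widetilde{\delta_t},\widetilde{\eta_t}$ is not needed.
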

\begin{proof}
By definition, the $\ker\rhosw\underline{\rho_t t(\rho_t)} (-1)$ part of $\ext^{1, \bullet}_{R^e}(B_t, \mathrm{BS}(\underline{w}))$ in \cref{maincohoiso} consists of elements of the form ${}_{tt}^{a}\psi_s^0$ where $a\in \ker\rhosw\subset \bsw$ and thus a basis is given by $\set{{}_{tt}^{b}\psi_s^0}_{b\in \mathscr{B}}$ where $\mathscr{B}$ is a right $R$ basis for $ \ker\rhosw\subset \bsw$. Because 
\[  \bhzero= {}_{tt}^{1\otimes 1}\psi_s^0\]
and by definition $\ll{f}$ is the image of $c_{bot}$ and so it follows that the morphisms in the lemma are precisely given by ${}_{tt}^{\ll{f}}\psi_s^0$ and thus by \cref{keycor} we are done.
\end{proof}

\begin{lemma}
\label{ext1dkerbasis}
The set of all morphisms of the form below
\begin{equation*}
 \raisebox{-0.8cm}{\ \  
\begin{tikzpicture}
\draw[blue]  (2,-2.2) to[out=90, in=-90] (2,-1.6);
\draw[violet] (1.4, -1.1) to[out=-90, in=-180] (2, -1.6);
\draw[violet] (2.6, -1.1) to[out=-90, in=0] (2, -1.6);
\node at (2, -1.3) {$\scalemath{0.8}{r(\underline{f})}$};
\node[rhdot] at (2, -1.6) {};
\node at (2.1,-0.65) [dashed, shape=rectangle,
                     draw=black, semithick,
                     text width=0.1\linewidth,
                     align=center] {$\mathrm{L}_{  \usmt{s}{k-1}{}, \underline{f} }$};
\end{tikzpicture}}\quad r(\underline{f}) \textnormal{ can be anything}
\end{equation*}
span the $\widetilde{\Db(\ker\rhosw )}\underline{\rho_s}(-1)$ part of $\ext^{1, \bullet}_{R^e}(B_t, \mathrm{BS}(\underline{w}))$ as a right $R$ module. 
\end{lemma}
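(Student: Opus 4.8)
# Proof Proposal

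The plan is to use the decomposition from \cref{maincohoiso}, which identifies the relevant summand as $\Db(\ker\rhosw)\underline{\rho_s}(-1)$, together with \cref{clarifyinglemcor}-style reasoning: a morphism in $\ext^{1,\bullet}_{R^e}(B_t,\bsw)$ landing in the $\underline{\rho_s}$ part is a cocycle ${}_{tt}^{0}\psi_s^{u}$ (in the notation of \cref{clarifyinglemcor}, with the $\underline{\rho_t t(\rho_t)}$ value set to $0$), determined up to coboundary by its value $u$ on $\underline{\rho_s}\boxtimes 1\otimes 1$. Under the isomorphism $D_{\underline{w}}:\bsw\xrightarrow{\sim}\Db(\bsw)$ coming from the global intersection form, the subspace $\Db(\ker\rhosw)$ is identified (as in the proof of \cref{h1rhos}) with $\mathrm{coker}\,\rhosw\cong\bsw/\im\rhosw$. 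So the content of the lemma is: the classes of the light leaves $\{\ll{f}\}_{\underline{f}\subset\usmt{s}{k-1}{}}$, with a blue Hochschild dot attached at the bottom (and then rotated into the $\Omega$-shape as drawn), span $\bsw/\im\rhosw$ as a right $R$-module.

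First I would recall that $\{\ll{f} \mid \underline{f}\subset \usmt{s}{k-1}{}\}$ is, by the light leaves theorem (Section 12.4 of \cite{EMTW20}), a right $R$-basis for the whole of $\bsw$. Therefore their images certainly span the quotient $\bsw/\im\rhosw$ — this is immediate, since a spanning set of a module always spans any quotient. The only real work is to match this up with the diagrammatic description: I would check that the morphism drawn in the statement — light leaf $\ll{f}$, then a blue Hochschild dot, then the cups/caps turning $\Phi_t$ into $\Omega^{r(\underline{f})}_{\ldots}$ — is, under the explicit cocycle formulas of \cref{clarifyinglemcor} and the computation in the remark following \cref{explicitisoeq}, precisely the element of $\ext^{1,\bullet}_{R^e}(B_t,\bsw)$ whose $\underline{\rho_s}$-component corresponds to $[\ll{f}]\in\bsw/\im\rhosw\cong\Db(\ker\rhosw)$ under $D_{\underline{w}}$. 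Concretely, attaching the blue Hochschild dot $\eta_t^{\ext}=\hunit$ (colored $t$) composes with $\ll{f}$ to give the map sending $\underline{\rho_s}\boxtimes 1\otimes 1\mapsto \ll{f}$, which is exactly the cocycle representative; the subsequent rotation is just the definition of $\Omega$ via cyclicity (\cref{newgencyclic}, \cref{welldeflem}).

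The key steps, in order: (1) invoke \cref{maincohoiso} to reduce to showing the stated morphisms span the $\Db(\ker\rhosw)\underline{\rho_s}(-1)$ summand; (2) use \cref{h1rhos}'s identification $\Db(\ker\rhosw)\cong\bsw/\im\rhosw$ via $D_{\underline{w}}$; (3) translate the diagram in the statement into its algebraic cocycle representative ${}_{tt}^{0}\psi_s^{\ll{f}}$ using the chain-level formulas and the fact that $\hunit$ (in color $t$) is the map $\underline{\rho_s}\mapsto 1\otimes_t 1$; (4) conclude by noting $\{\ll{f}\}$ spans $\bsw$, hence spans the quotient. I expect step (3) — the precise diagrammatic-to-algebraic bookkeeping, including getting the signs and the $\underline{\rho_s}$ vs.\ $\underline{\rho_t t(\rho_t)}$ components right after the rotation — to be the main obstacle, though it is essentially a matter of carefully unwinding definitions already set up in \cref{clarifyinglemcor}, \cref{newgenrotation}, and the explicit isomorphism \cref{explicitisoeq}; there is no new conceptual difficulty, and in particular no linear-independence claim is being made here (that is presumably deferred, since $\Db(\ker\rhosw)$ has smaller rank than the number of all $\ll{f}$, so these generators are far from a basis).
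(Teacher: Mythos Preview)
Your overall strategy is correct and matches the paper's (which simply says ``Similar to \cref{ext1kerbasis}''): identify the summand with $\bsw/\im\rhosw$ via \cref{h1rhos}, check that each drawn morphism has $\underline{\rho_s}$-value $\ll{f}$, and conclude since the light leaves span all of $\bsw$.

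However, your step (3) misreads the diagram. There is no separate Hochschild dot and no cup/cap rotation into an $\Omega$: the red hollow dot on a blue strand fanning out to $r(\underline{f})$ is precisely the notation for $\Phi_t^{r(\underline{f})}$ itself (see the diagrammatic convention introduced immediately after its definition). The morphism depicted is simply $\overline{\mathrm{LL}}_{\underline{w},\underline{f}}\circ\Phi_t^{r(\underline{f})}$. Your claimed mechanism --- that ``$\hunit$ (in color $t$) is the map $\underline{\rho_s}\mapsto 1\otimes_t 1$'' --- is false: by \cref{assump1} one has $\langle\alpha_t^\vee,\rho_s\rangle=0$, so $\eta_t^{\ext}$ annihilates $\underline{\rho_s}$ (it sends $\underline{\rho_t}\mapsto 1\otimes_t 1$; you have swapped the colors in \cref{hochbs}). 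The correct reason the composite sends $\underline{\rho_s}\boxtimes 1\otimes 1\mapsto\ll{f}$ is the \emph{defining} property of $\Phi_t^{r(\underline{f})}$, which sends $\underline{\rho_s}\boxtimes 1\otimes 1\mapsto 1(r(\underline{f}))=c_{bot}$, followed by $\ll{f}=\overline{\mathrm{LL}}_{\underline{w},\underline{f}}(c_{bot})$. A minor point: the cocycle representative is ${}_{tt}^{v}\psi_s^{\ll{f}}$ for the appropriate $v$, not ${}_{tt}^{0}\psi_s^{\ll{f}}$ (the latter is a cocycle only when $\rhottw(\ll{f})=0$), though this does not affect the spanning argument.
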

\begin{proof}
Similar to \cref{ext1kerbasis}. See the remark after \cref{maincohoiso} as well.
\end{proof}

However, note that when $r(\underline{f})\in\set{s,t, ts, st, tst}$ the first/bottom morphism $\Phi_t^{r(\underline{f})}$ in \cref{ext1dkerbasis} technically isn't defined! In this case, there is more than one possible choice of $v_f$ such that ${}_{tt}^{v_f}\psi_s^{1(r(\underline{f}))}$ is a cocycle. However the proof of \cref{ext1dkerbasis} only really needs that the bottom/first morphism sends $\underline{\rho_s}\boxtimes 1\otimes 1$ to  $\underline{1}\boxtimes 1\otimes 1(r(\underline{f}))$. Thus when $r(\underline{f})$ is in the set above we can define
\[  \Phi_t^{r(\underline{f})}:= \textnormal{ add dot morphisms to } \Phi_t^{\underline{w}} \textnormal{ until the top boundary is }r(\underline{f})  \]
For example, 
\begin{center}
\begin{tikzpicture}[scale=0.6]
	       \draw[red] (2,0.1) -- (2,-1);
	       \draw[blue] (2,-2) -- (2,-1);
	       \node[rhdot] at (2,-1) {};
	\end{tikzpicture} \raisebox{2.5ex}{ \ := \ }
\begin{tikzpicture}[scale=0.6]
	       \draw[red] (1.1,0.1) to[out=-90, in=180] (2,-1);
	       \draw[blue] (2,0.1) -- (2,-1);
	       \draw[red] (2.9,0.1) to[out=-90, in=0](2,-1);
	       \draw[blue] (2,-2) -- (2,-1);
	       \node[rhdot] at (2,-1) {};
	       \node[rdot] at (2.9,0.1) {};
	       \node[bdot] at (2,0.1) {};
	\end{tikzpicture} 
	\raisebox{2.5ex}{ \ = \ }
\begin{tikzpicture}[scale=0.6]
	       \draw[red] (1.1,0.1) to[out=-90, in=180] (2,-1);
	       \draw[blue] (2,0.1) -- (2,-1);
	       \draw[red] (2.9,0.1) to[out=-90, in=0](2,-1);
	       \draw[blue] (2,-2) -- (2,-1);
	       \node[rhdot] at (2,-1) {};
	       \node[rdot] at (1.1,0.1) {};
	       \node[bdot] at (2,0.1) {};
	\end{tikzpicture} 
\end{center}
One can check that this definition is well defined using \cref{4extreductcor}, \cref{4extreductcor2}, etc.

\begin{definition}
A pitchfork is a morphism depicted on the left below. A generalized pitchfork is looks like a pitchfork but the middle can have any number of dot morphisms. Such an example is depicted on the right below. \vspace{-4ex}
\begin{center}
    \includegraphics[scale=0.45]{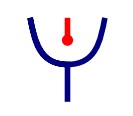}\qquad \qquad 
    \includegraphics[scale=0.4]{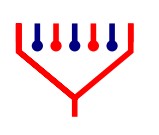}
\end{center}
\end{definition}

\begin{lemma}
\label{gpitchtopitch}
Generalized pitchforks can be written as the sum of morphisms that end with pitchforks somewhere on the top.
\end{lemma}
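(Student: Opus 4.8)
The plan is to argue by a downward induction on the position of the trivalent vertex (the "prongs" of the fork) relative to the top boundary, pushing the dot morphisms that separate the fork from the top down past whatever generators sit above them. A generalized pitchfork, by definition, is a region of a diagram that looks like a trivalent vertex $\mu_{s_i}$ (or its reflection) whose single output strand then passes through a sequence of dot morphisms $d_1, \ldots, d_\ell$ (univalent or bivalent dots, Hochschild dots, exterior boxes — but no further splittings) before reaching the top. If $\ell = 0$ the generalized pitchfork is already an honest pitchfork and there is nothing to prove, so this is the base case.

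For the inductive step, suppose the topmost dot $d_\ell$ is not literally at the top boundary; then there is some generator $g$ sitting above it (or the strand just continues to the boundary, in which case we are again done). The key observation is that a single dot morphism on a strand colored $c$ commutes or slides past every generator it could possibly meet from below, at the cost of introducing lower-order terms that themselves end in pitchforks. Concretely: a bivalent dot (ordinary or Hochschild) slides freely along its strand by isotopy and by the relations \cref{eqn:hdot-trivalent-a}, \cref{eqn:hdot-trivalent-b}; an ending (univalent) dot sitting just below a trivalent vertex of the same color turns the configuration into a different pitchfork via the unit/counit relations; an exterior box slides past using the exterior forcing relation \cref{eqn:exterior-forcing} (producing one term with $s(x)$ in the box and one term with $\partial_s(x)$ together with a Hochschild dot pair, neither of which increases the number of dots stacked above the original fork). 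In every case the outcome is a $\kb$-linear combination of diagrams in each of which the generalized pitchfork has been moved strictly closer to the top (its separating dot-string has been shortened by at least one), plus possibly diagrams which are genuinely simpler (the fork has been absorbed into a smaller fork, or eliminated). Applying the inductive hypothesis to each summand finishes the step.

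First I would set up the precise statistic to induct on — most cleanly, the number of generators strictly above the top prong-vertex of the generalized pitchfork along its output strand, or equivalently the length $\ell$ of the dot-string — and then do a short case analysis on the type of the generator immediately above the topmost dot: (i) a polynomial/exterior box, handled by \cref{eqn:exterior-forcing} and \cref{eqn:exterior-boxes-mult}; (ii) an ending dot of the same color, giving directly a pitchfork at the top; (iii) a trivalent vertex, handled by \cref{eqn:hdot-trivalent-a}–\cref{eqn:hdot-trivalent-b} together with the ordinary Soergel relations for sliding dots past trivalent vertices; (iv) a $2m_{st}$-valent or $2k$-extvalent morphism, handled by the absorption identities \cref{newgenabsorb} and the definition of the $\Omega$-morphisms. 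In each case one cites the relevant relation to rewrite and then invokes induction.

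The main obstacle I expect is case (iv), and more generally making sure the bookkeeping of "lower-order terms that still end in pitchforks" is genuinely closed — i.e. that no rewrite ever produces a term with a strictly longer dot-string above a fork, or a term with a new, deeper generalized pitchfork that the induction cannot reach. The cleanest way to handle this is to choose a well-founded order on diagrams (e.g. lexicographic on (number of trivalent/extvalent vertices, $\ell$)) so that every relation used is manifestly order-decreasing; the exterior forcing relation is the delicate one since it introduces a Hochschild dot pair, but those are bivalent dots which slide freely and do not add to the fork-height statistic, so the order still decreases. Once that ordering is pinned down the rest is routine application of the already-established relations, so I would not grind through the individual rewrites in the write-up but simply indicate the representative cases.
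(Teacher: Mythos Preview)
Your proposal misidentifies both the object and the setting. A generalized pitchfork lives entirely in the ordinary diagrammatic Hecke category $\mathscr{D}(\hf,W)$ (cohomological degree zero): a pitchfork is a trivalent vertex whose two output prongs are adjacent at the top boundary, and a \emph{generalized} pitchfork is the same except that between the two prongs there may sit some ordinary univalent dot morphisms (startdots) of other colors. The ``dot morphisms'' in the definition are plain startdots, not Hochschild dots or exterior boxes, and there is no extvalent vertex in sight. Your induction on a string of dots along the \emph{single} output strand of the trivalent is based on a misreading of where the dots sit; consequently the case analysis you propose (exterior forcing, Hochschild sliding, absorption into $\Omega$-morphisms) is simply irrelevant to the statement.

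The paper does not prove this lemma at all: it just cites Claim~5.26 of \cite{DC}, where the argument is carried out. The actual mechanism there is the one-color and two-color relations of $\mathscr{D}(\hf,W)$ (polynomial forcing and the nil-Hecke/needle relations) that let you slide a startdot sitting between the prongs out past one of them, at the cost of terms that already contain an honest pitchfork. If you want to supply a self-contained proof rather than a citation, the correct inductive statistic is the number of dots trapped between the prongs, and the step is a single application of polynomial forcing to the innermost dot; no Ext machinery enters.
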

\begin{proof}
This was shown in the proof of Claim 5.26 in \cite{DC}.
\end{proof}

\begin{theorem}
\label{algextindecomp}
Assume all quantum numbers are invertible. Then $\forall k\ge 1$we have an isomorphism of right $R-$modules
\begin{align*}
\ext^{0, \bullet}_{R^e}(R, B_{\smt{t}{k}{}})&\cong  \overset{
\begin{tikzpicture}
        \node at (-0.5,0) [dashed, shape=rectangle,
                     draw=black, semithick,
                     text width=7ex,
                     align=center] {$\jw_{\usmt{t}{k}{} }$};
\end{tikzpicture}
} { \bunit \runit \ldots \vunit }\  R \\
\ext^{1, \bullet}_{R^e}(R, B_{\smt{t}{k}{}})&\cong \overset{
\begin{tikzpicture}
        \node at (-0.5,0) [dashed, shape=rectangle,
                     draw=black, semithick,
                     text width=7ex,
                     align=center] {$\jw_{\usmt{t}{k}{} }$};
\end{tikzpicture}
}  {\bhunit \runit \ldots \vunit } \ R\oplus   \ 
\raisebox{-0.2cm}{\begin{tikzpicture}[scale=0.6]
	       \draw[red] (1.1,-0.1) to[out=-90,in=-180] (2,-1);
	       \draw[violet] (2.9,-0.1) to[out=-90,in=0] (2,-1);
	       \draw[blue] (0.9,-0.1) to[out=-90,in=-180] (1.45,-1.35) to[out=0, in=-90] (2,-1);
	       \node[rhdot] at (2,-1) {};
	       \node at (2,-0.4) {$\scalemath{0.9}{\usmt{s}{k-1}{}}$};
        \node at (1.9,0.55) [dashed, shape=rectangle,
                     draw=black, semithick,
                     text width=7ex,
                     align=center] {$\jw_{\usmt{t}{k}{} }$};
	    \end{tikzpicture}} \ R\\
\ext^{2, \bullet}_{R^e}(R, B_{\smt{t}{k}{}})&\cong \  \raisebox{-0.2cm}{\begin{tikzpicture}[scale=0.6]
	       \draw[red] (1.1,-0.1) to[out=-90,in=-180] (2,-1);
	       \draw[violet] (2.9,-0.1) to[out=-90,in=0] (2,-1);
	       \draw[blue] (0.9,-0.1) to[out=-90,in=-180] (1.45,-1.35) to[out=0, in=-90] (2,-1);
	       \node[rhdot] at (2,-1) {};
	       \node[bhdot] at (1.45,-1.35) {};
	       \node at (2,-0.4) {$\scalemath{0.9}{\usmt{s}{k-1}{}}$};
        \node at (2,-0.4) {$\scalemath{0.9}{\usmt{s}{k-1}{}}$};
        \node at (1.9,0.55) [dashed, shape=rectangle,
                     draw=black, semithick,
                     text width=7ex,
                     align=center] {$\jw_{\usmt{t}{k}{} }$};
	    \end{tikzpicture}} \ R
\end{align*}
\end{theorem}
\begin{proof}
$\ext^{0, \bullet}_{R^e}(R, B_{\smt{t}{k}{}})$ was shown in the diagrammatic category in \cite{DC} Claim 5.26 and because of the equivalence, it also applies in the algebraic category. By \cite[Theorem 9.22]{EMTW20} $\jw_{\usmt{t}{k}{}}$ is the projector for $ B_{\smt{t}{k}{}}$ inside $\bs(\usmt{t}{k}{})$, by the definition of the Karoubian completion we have that 
\[ \ext^{i, \bullet}_{R^e}(R, B_{\smt{t}{k}{}})= \jw_{\usmt{t}{k}{} }\circ \ext^{i, \bullet}_{R^e}(R, \bs(\usmt{t}{k}{})) \]
Applying the adjunction
\[ \ext^{1, \bullet}_{R^e}(B_t, \bs(\usmt{s}{k-1}{})) \cong  \ext^{1, \bullet}_{R^e}(R, \bs(\usmt{t}{k}{})) \]
to \cref{ext1kerbasis} shows that the $``\ker \rho_s^e(\usmt{s}{k-1}{})"$ part of $\ext^{1, \bullet}_{R^e}(R, \bs(\usmt{t}{k}{}))$ has a right $R$ basis given by 
\begin{equation}
\label{kerindecompeq}
\bhunit \ \  \raisebox{-0.4cm}{\begin{tikzpicture}
\node (pq1) [dashed, shape=rectangle,
                     draw=black, semithick,
                     text width=0.1\linewidth,
                     align=center] {$\mathrm{L}_{  \usmt{s}{k-1}{}, \underline{f} }$};
\end{tikzpicture}} \  \ \textnormal{where} \ r(\underline{f})=\id \quad   \raisebox{-0.8cm}{\ \  \begin{tikzpicture}
\draw[blue] (0.9,-0.3) to[out=-90,in=-180] (1.45,-1.35) to[out=0, in=-90] (2,-1);
\node[bhdot] at (1.45, -1.35) {};
\node at (2.2,-0.65) [dashed, shape=rectangle,
                     draw=black, semithick,
                     text width=0.1\linewidth,
                     align=center] {$\mathrm{L}_{  \usmt{s}{k-1}{}, \underline{f} }$};
\end{tikzpicture}} \ \  \textnormal{where} \  r(\underline{f})=t
\end{equation}
We want to show that all such morphisms are in the $R$ span of  \bhunit \runit \ldots \vunit \ after postcomposing by $ \jw_{\usmt{t}{k}{}}$. \cref{maincohoiso} shows that $R$ acts freely on $\ext^{1, \bullet}_{R^e}(R, \bs(\usmt{t}{k}{}))$ and thus it suffices to show that 
\begin{center}
$ \jw_{\usmt{t}{k}{}}\circ \boxed{\alpha_t}$\bhunit \ \  \raisebox{-0.4cm}{\begin{tikzpicture}
\node (pq1) [dashed, shape=rectangle,
                     draw=black, semithick,
                     text width=0.1\linewidth,
                     align=center] {$\mathrm{L}_{  \usmt{s}{k-1}{}, \underline{f} }$};
\end{tikzpicture}} \, , \ \ 
$ \jw_{\usmt{t}{k}{}}\circ \boxed{\alpha_t}$\raisebox{-0.8cm}{\ \  \begin{tikzpicture}
\draw[blue] (0.9,-0.3) to[out=-90,in=-180] (1.45,-1.35) to[out=0, in=-90] (2,-1);
\node[bhdot] at (1.45, -1.35) {};
\node at (2.2,-0.65) [dashed, shape=rectangle,
                     draw=black, semithick,
                     text width=0.1\linewidth,
                     align=center] {$\mathrm{L}_{  \usmt{s}{k-1}{}, \underline{f} }$};
\end{tikzpicture}} \  $\in  \jw_{\usmt{t}{k}{}}\circ $\bhunit \runit \ldots \vunit \ $R$
\end{center}
But applying the $1-$color cohomology relation $\boxed{\alpha_t}$\bhunit=$\dboxed{\alpha_t^\vee} \bunit$, we see that all the morphisms above factor as 
\[   \jw_{\usmt{t}{k}{}}\circ \mathrm{L}_{  \usmt{t}{k}{}, \underline{f^\prime} }  \circ \alpha_t^\vee, \quad \quad r(\underline{f^\prime})=\id \]
Because $\set{\mathrm{L}_{  \usmt{t}{k}{}, \underline{f^\prime} }}_{r(\underline{f^\prime})=\id}$ is a right $R$ basis for $\ext^{0, \bullet}_{R^e}(R, \bs(\usmt{t}{k}{})) $, it follows that $\set{ \jw_{\usmt{t}{k}{}}\circ \mathrm{L}_{  \usmt{t}{k}{}, \underline{f^\prime} }}_{r(\underline{f^\prime})=\id}$ is spanned by $ \jw_{\usmt{t}{k}{}}\circ $ \bunit \runit \ldots \vunit \  as a right $R$ module from the previous calculation of $\ext^{0, \bullet}_{R^e}(R, B_{\smt{t}{k}{}})$. Thus we see that the $``\ker \rho_s^e(\usmt{s}{k-1}{})"$ part of $\ext^{1, \bullet}_{R^e}(R, \bs(\usmt{t}{k}{}))$ is spanned by $ \jw_{\usmt{t}{k}{}}\circ $ \bhunit \runit \ldots \vunit \  as a right $R$ module.\\

Similarily \cref{ext1dkerbasis} shows that the $``\Db(\ker \rho_s^e(\usmt{s}{k-1}{}))"$ part of $\ext^{1, \bullet}_{R^e}(R, \bs(\usmt{t}{k}{}))$ is generated as a right $R$ module by elements of the form
\begin{equation}
\label{Dindecompeq}
\Pi(\underline{f})=
    \raisebox{-0.8cm}{\ \  
\begin{tikzpicture}
\draw[blue] (0.9,-0.3) to[out=-90,in=-180] (1.55,-1.95) to[out=0, in=-90] (2,-1.6);
\draw[violet] (1.4, -1.1) to[out=-90, in=-180] (2, -1.6);
\draw[violet] (2.6, -1.1) to[out=-90, in=0] (2, -1.6);
\node at (2, -1.3) {$\scalemath{0.8}{r(\underline{f})}$};
\node[rhdot] at (2, -1.6) {};
\node at (2.2,-0.65) [dashed, shape=rectangle,
                     draw=black, semithick,
                     text width=0.1\linewidth,
                     align=center] {$\mathrm{L}_{  \usmt{s}{k-1}{}, \underline{f} }$};
\end{tikzpicture}}\quad r(\underline{f}) \textnormal{ can be anything}
\end{equation}
We want to show that all such morphisms are in the right $R$ span of \raisebox{-0.2cm}{\begin{tikzpicture}[scale=0.6]
	       \draw[red] (1.1,-0.1) to[out=-90,in=-180] (2,-1);
	       \draw[violet] (2.9,-0.1) to[out=-90,in=0] (2,-1);
	       \draw[blue] (0.9,-0.1) to[out=-90,in=-180] (1.45,-1.35) to[out=0, in=-90] (2,-1);
	       \node[rhdot] at (2,-1) {};
	       \node at (2,-0.4) {$\scalemath{0.9}{\usmt{s}{k-1}{}}$};
	    \end{tikzpicture}} 
after postcomposing by $ \jw_{\usmt{t}{k}{}}$. \\

\textbf{Step 1:} First notice that if $\mathrm{L}_{  \usmt{s}{k-1}{}, \underline{f} }$ contains a pitchfork (see \cite{DC} Section 5.3.4 for a picture), then $ \jw_{\usmt{t}{k}{}}\circ \Pi(\underline{f})=0$ and we are done. In fact, by \cref{gpitchtopitch} the same is true if $\mathrm{L}_{  \usmt{s}{k-1}{}, \underline{f} }$ contains a generalized pitchfork. \\
\textbf{Step 2:} \underline{In the affine case}, we claim that all light leaves without generalized pitchforks must be of the form 
\begin{equation}
\label{step2eq}
    \raisebox{0.25cm}{\runit \ \  $\bullet \bullet \bullet $ \vunit} \yzero \ \  \Diamond \Diamond \Diamond \vzero \quad \textnormal{or} \quad \raisebox{0.25cm}{\runit \ \  $\bullet \bullet \bullet $ \vunit} \yzero \ \ \Diamond \Diamond \Diamond \vzero  \raisebox{0.25cm}{\, \yunit}
\end{equation}
where $\bullet \bullet \bullet $ means the morphism consists of \underline{only dot morphisms} and $\Diamond \Diamond \Diamond$ means the morphism consists of \\
\underline{only identity morphisms} (aka straight lines). This is because in the light leaf algorithm, \underline{in the affine case}, we generate generalized pitchforks whenever a decoration is of the form $D0$ or $D1$. As a result, we only need to consider the light leaves that are made up of only dots ($U0$) and lines ($U1$). However, suppose that at step $i$ of our stroll, the corresponding decorations ends in $U1 U0$. It will follow the decoration at the next step has to be either $D0$ or $D1$ but this isn't allowed, or $i=k-2$ and we end the algorithm at the next step. In other words, we must have all lines after the appearance of the first line or all lines with a dot at the end, which is exactly what is depicted in \cref{step2eq}.\\
\textbf{Step 3:} For $\mathrm{L}_{  \usmt{s}{k-1}{}, \underline{f} }$ in the form in \cref{step2eq}, we claim that $\jw_{\usmt{t}{k}{}}\circ \Pi(\underline{f})$  is in the right $R$ span of
\begin{equation}
   \jw_{\usmt{t}{k}{}}\circ  \raisebox{-0.8cm}{\ \  
\begin{tikzpicture}
\draw[blue] (0.9,-0.75) to[out=-90,in=-180] (1.55,-1.95) to[out=0, in=-90] (2,-1.6);
\draw[violet] (1.4, -0.75) to[out=-90, in=-180] (2, -1.6);
\draw[violet] (2.6, -0.75) to[out=-90, in=0] (2, -1.6);
\node at (2, -1) {$\scalemath{1}{ \Diamond \Diamond \Diamond}$};
\node[rhdot] at (2, -1.6) {};
\end{tikzpicture}}
\raisebox{0.1cm}{\, \yunit $\ \bullet \bullet \bullet  $ \vunit }
\end{equation}
WLOG we will work with the first expression in \cref{step2eq}. Suppose that the first line in \cref{step2eq}(The yellow line)is colored blue, then $  \jw_{\usmt{t}{k}{}}\circ \Pi(\underline{f})$ will be equal to
\begin{equation}
   \jw_{\usmt{t}{k}{}}\circ  \raisebox{-0.8cm}{\ \  
\begin{tikzpicture}
\draw[blue] (0.9,-0.75) to[out=-90,in=-180] (1.55,-1.95) to[out=0, in=-90] (2,-1.6);
\draw[violet] (3, -0.75) to[out=-90, in=0] (2, -1.6);
\draw[blue] (2,-1.6)--(2,-0.75);
\node at (1.5, -1) {$\scalemath{1}{ \bullet \bullet \bullet  }$};
\node[rhdot] at (2, -1.6) {};
\node at (2.45,-1) {$\ \Diamond \Diamond \Diamond $ };
\end{tikzpicture}} \ =    \jw_{\usmt{t}{k}{}}\circ  \raisebox{-0.8cm}{\ \  
\begin{tikzpicture}
\draw[blue] (0.9,-0.75)  to[out=-90, in=180] (2,-1.4);
\draw[violet] (3, -0.75) to[out=-90, in=0] (2, -1.8);
\draw[blue] (2,-1.8)--(2,-0.75);
\node at (1.5, -1) {$\scalemath{1}{ \bullet \bullet \bullet  }$};
\node[rhdot] at (2, -1.8) {};
\node at (2.45,-1) {$\ \Diamond \Diamond \Diamond $ };
\end{tikzpicture}}=0
\end{equation}
where we have used \cref{omegarotation} and \cref{newgenabsorb}. Now suppose that the first line is colored red. If $\mathrm{L}_{  \usmt{s}{k-1}{}, \underline{f} }$ starts with a red line we are done. Otherwise $\mathrm{L}_{  \usmt{s}{k-1}{}, \underline{f} }$ starts with a red dot and so applying fusion and then polynomial forcing and eliminating all diagrams with a generalized pitchforks we see that

\begin{align*}
   \jw_{\usmt{t}{k}{}}\circ  \raisebox{-1cm}{\ \  
\begin{tikzpicture}[scale=1.2]
\draw[blue] (0.8,-0.75) to[out=-90,in=-180] (1.55,-1.95) to[out=0, in=-90] (2,-1.6);
\draw[red, thick] (1,-0.75)--(1, -1.1);
\node[rdot] at (1,-1.1) {};
\draw[violet] (3, -0.75) to[out=-90, in=0] (2, -1.6);
\draw[red] (2,-1.6)--(2,-0.75);
\node at (1.5, -0.9) {$\scalemath{1}{ \bullet \bullet \bullet  }$};
\node[rhdot] at (2, -1.6) {};
\node at (2.45,-1) {$\ \Diamond \Diamond \Diamond $ };
\end{tikzpicture}} \ &=    \jw_{\usmt{t}{k}{}}\circ  \raisebox{-1.2cm}{\ \ 
\begin{tikzpicture}[scale=1.2]
\draw[blue] (0.8,-0.75) to[out=-90,in=-180] (1.55,-2.15) to[out=0, in=-90] (2,-1.8);
\node at (1.7,-1.25) {$\scalemath{0.8}{\boxed{\rho_s}}$};
\draw[red] (0.95,-0.75) to[out=-90, in=180] (2,-1.6);
\draw[violet] (3, -0.75) to[out=-90, in=0] (2, -1.8);
\draw[red] (2,-1.8)--(2,-0.75);
\node at (1.5, -0.9) {$\scalemath{1}{ \bullet \bullet \bullet  }$};
\node[rhdot] at (2, -1.8) {};
\node at (2.45,-1) {$\ \Diamond \Diamond \Diamond $ };
\end{tikzpicture}}-\jw_{\usmt{t}{k}{}}\circ
\raisebox{-1.2cm}{\ \ 
\begin{tikzpicture}[scale=1.2]
\draw[blue] (0.8,-0.75) to[out=-90,in=-180] (1.55,-2.15) to[out=0, in=-90] (2,-1.8);
\node at (1.5,-1.7) {$\scalemath{0.8}{\boxed{s(\rho_s)}}$};
\draw[red] (0.95,-0.75) to[out=-90, in=180] (2,-1.4);
\draw[violet] (3, -0.75) to[out=-90, in=0] (2, -1.8);
\draw[red] (2,-1.8)--(2,-0.75);
\node at (1.5, -0.9) {$\scalemath{1}{ \bullet \bullet \bullet  }$};
\node[rhdot] at (2, -1.8) {};
\node at (2.45,-1) {$\ \Diamond \Diamond \Diamond $ };
\end{tikzpicture}} \\
&= \jw_{\usmt{t}{k}{}}\circ  \raisebox{-1.2cm}{\ \ 
\begin{tikzpicture}[scale=1.2]
\draw[blue] (0.8,-0.75) to[out=-90,in=-180] (1.55,-2.15) to[out=0, in=-90] (2,-1.8);
\draw[red] (0.95,-0.75) to[out=-90, in=180] (2,-1.8);
\draw[violet] (3, -0.75) to[out=-90, in=0] (2, -1.8);
\draw[red, thick] (2,-0.75)--(2, -1.1);
\node[rdot] at (2,-1.1) {};
\node at (1.5, -0.9) {$\scalemath{1}{ \bullet \bullet \bullet  }$};
\node[rhdot] at (2, -1.8) {};
\node at (2.45,-1) {$\ \Diamond \Diamond \Diamond $ };
\end{tikzpicture}}
\end{align*}
But the diagram above is essentially the same as what we started with except we have one more line to the left of $\bullet \bullet \bullet $. We can therefore repeat this process until all the lines $\Diamond \Diamond \Diamond$ are to the left of $\bullet \bullet \bullet $ as desired. \\
\textbf{Step 4: Cohomology Reduction} The previous step shows it suffices to prove morphisms of the form
\begin{center}
$\Pi( (\overbrace{1,\ldots, 1}^{j \  1's}, 0, \ldots, 0 ))=$
     \raisebox{-0.8cm}{\ \  
\begin{tikzpicture}
\draw[blue] (0.9,-0.85) to[out=-90,in=-180] (1.55,-1.95) to[out=0, in=-90] (2,-1.6);
\draw[red] (1.4, -0.85) to[out=-90, in=-180] (2, -1.6);
\draw[violet] (2.6, -0.85) to[out=-90, in=0] (2, -1.6);
\node at (2, -1.2) {$\usmt{s}{j}{}$};
\node[rhdot] at (2, -1.6) {};
\end{tikzpicture}}\yunit $\ \cdots$ \vunit
\end{center}
are in the right $R$ span of \raisebox{-0.2cm}{\begin{tikzpicture}[scale=0.6]
	       \draw[violet] (1.1,-0.1) to[out=-90,in=-180] (2,-1);
	       \draw[violet] (2.9,-0.1) to[out=-90,in=0] (2,-1);
	       \draw[blue] (0.9,-0.1) to[out=-90,in=-180] (1.45,-1.35) to[out=0, in=-90] (2,-1);
	       \node[rhdot] at (2,-1) {};
	       \node at (2,-0.4) {$\scalemath{0.9}{\usmt{s}{k-1}{}}$};
	    \end{tikzpicture}} 
after applying $ \jw_{\usmt{t}{k}{}}$. Note that \cref{cohoreleq1} will tell us 
\begin{equation}
\label{jeq}
     \raisebox{-0.8cm}{\ \  
\begin{tikzpicture}
\draw[blue] (0.9,-0.85) to[out=-90,in=-180] (1.55,-1.95) to[out=0, in=-90] (2,-1.6);
\draw[red] (1.4, -0.85) to[out=-90, in=-180] (2, -1.6);
\draw[violet] (2.6, -0.85) to[out=-90, in=0] (2, -1.6);
\node at (2, -1.2) {$\usmt{s}{j}{}$};
\node[rhdot] at (2, -1.6) {};
\end{tikzpicture}}\yunit = \textnormal{(Morphisms with a pitchfork)}-\raisebox{-0.8cm}{\ \  
\begin{tikzpicture}
\draw[blue] (0.9,-0.85) to[out=-90,in=-180] (1.55,-1.95) to[out=0, in=-90] (2,-1.6);
\draw[red] (1.4, -0.85) to[out=-90, in=-180] (2, -1.6);
\draw[violet] (2.6, -0.85) to[out=-90, in=0] (2, -1.6);
\draw[myyellow, thick] (2, -1.6) to[out=0, in=-90] (2.8, -0.85);
\node at (2, -1.2) {$\usmt{s}{j}{}$};
\node[rhdot] at (2, -1.6) {};
\end{tikzpicture}}\ \boxed{\usmt{s}{j}{}(\rho_s)-\rho_s }
\end{equation}
in cohomology. As before, the morphisms with a pitchfork go to 0 after applying $ \jw_{\usmt{t}{k}{} }$ and so the LHS of \cref{jeq} is in the right $R$ span of $\Pi( (\overbrace{1,\ldots, 1}^{j+1 \  1's}, 0, \ldots, 0 ))$. We can keep repeating this until $j=k-2$ which means everything is in the right $R$ span of \raisebox{-0.2cm}{\begin{tikzpicture}[scale=0.6]
	       \draw[violet] (1.1,-0.1) to[out=-90,in=-180] (2,-1);
	       \draw[violet] (2.9,-0.1) to[out=-90,in=0] (2,-1);
	       \draw[blue] (0.9,-0.1) to[out=-90,in=-180] (1.45,-1.35) to[out=0, in=-90] (2,-1);
	       \node[rhdot] at (2,-1) {};
	       \node at (2,-0.4) {$\scalemath{0.9}{\usmt{s}{k-1}{}}$};
	    \end{tikzpicture}} as desired.\\
	    
We have just shown that $\set{   \jw_{\usmt{t}{k}{} }\circ \bhunit \runit \ldots \vunit, \jw_{\usmt{t}{k}{} }\circ \raisebox{-0.2cm}{\begin{tikzpicture}[scale=0.6]
	       \draw[red] (1.1,-0.1) to[out=-90,in=-180] (2,-1);
	       \draw[violet] (2.9,-0.1) to[out=-90,in=0] (2,-1);
	       \draw[blue] (0.9,-0.1) to[out=-90,in=-180] (1.45,-1.35) to[out=0, in=-90] (2,-1);
	       \node[rhdot] at (2,-1) {};
	       \node at (2,-0.4) {$\scalemath{0.9}{\usmt{s}{k-1}{}}$};
	    \end{tikzpicture}} }$ spans $\ext^{1, \bullet}_{R^e}(R, B_{\smt{t}{k}{}})$. However, since they also have the correct degrees as specified in \cref{indecompcohograd}, they will also be linearly independent over $R$, and thus give a basis for $\ext^{1, \bullet}_{R^e}(R, B_{\smt{t}{k}{}})$.\\
	    
Similar reasoning works for the computation for $\ext^{2, \bullet}_{R^e}(R, B_{\smt{t}{k}{}})$; \cref{maincohoiso} shows that $\ext^{2, \bullet}_{R^e}(R, \bs(\usmt{t}{k}{}))$ is generated as a right $R$ module by elements of the form
\begin{equation}
\label{ext2lightleaf}
\Pi(\underline{f})=
    \raisebox{-0.8cm}{\ \  
\begin{tikzpicture}
\draw[blue] (0.9,-0.3) to[out=-90,in=-180] (1.55,-1.95) to[out=0, in=-90] (2,-1.6);
\draw[violet] (1.4, -1.1) to[out=-90, in=-180] (2, -1.6);
\draw[violet] (2.6, -1.1) to[out=-90, in=0] (2, -1.6);
\node at (2, -1.3) {$\scalemath{0.8}{r(\underline{f})}$};
\node[rhdot] at (2, -1.6) {};
\node[bhdot] at (1.55, -1.95) {};
\node at (2.2,-0.65) [dashed, shape=rectangle,
                     draw=black, semithick,
                     text width=0.1\linewidth,
                     align=center] {$\mathrm{L}_{  \usmt{s}{k-1}{}, \underline{f} }$};
\end{tikzpicture}}\quad r(\underline{f}) \textnormal{ can be anything}
\end{equation}
so we can proceed similarly as above. 
\end{proof}

\section{Ext Dihedral  Diagrammatics: $m_{st}=\infty$ }
\label{diagraminftysect}

For each color $s$ and $t$ let $\Dext_s=\Dext(\hf, S_2=\abrac{s})$ be the strict $\kb-$linear supermonoidal category as defined in \cref{A_1diagdef}.

\begin{definition}
\label{affinedef}
Let $\Dext_\infty:=\Dext(\hf, W_\infty)$ be the strict $\kb$ linear supermonoidal category associated to a realization $\hf$ of the infinite dihedral group $(W_\infty, S)$, where $S=\set{s,t}$ is the set of simple reflections, satisfying \cref{assump1} and \cref{assump2} defined as follows. 
\begin{itemize}
    \item \textbf{Objects} of $\Dext_\infty$ are expressions $\underline{w}=(s_1, \ldots, s_n)$ with $s_i\in S$ where the monoidal structure is given by concatenation. The color red will correspond to $s$ and the color blue will correspond to $t$.
    \item \textbf{Morphism spaces} in $\Dext_\infty$ are bigraded $\kb$ modules. For a morphism $\alpha$ homogeneous of total degree $(\ell, n)$, $\ell$ will be the \underline{cohomological degree} while $n$ will be the \underline{internal or Soergel degree}. Let $|\alpha|=\ell$ the cohomological degree. $\Dext_\infty$ will then be supermonoidal for the cohomological grading. Specifically, $\otimes$ will satisfy the following \underline{super exchange law}
    \[ (h\otimes k)\circ (f\otimes g)=(-1)^{|k||f|} (h\circ f)\otimes (k\circ g) \]
    $\hom_{\Dext_\infty}(\underline{v}, \underline{w})$ will be the free $\kb$ module generated by horizontally and vertically concatenating colored graphs built from certain generating morphisms, such that the bottom and top boundaries are $\underline{v}$ and $\underline{w}$\footnote{All generating morphisms will be properly embedded in the planar strip $\mathbb{R} \times [0,1]$ meaning each edge ends in a vertex or in the boundary of the strip. The bottom(top) boundary of the morphism will then be the intersection with $\mathbb{R}\times \set{0}$ ($\mathbb{R} \times \set{1}$). }, respectively. The generating morphisms will be: For the color red we have
    \begin{center}
        \begin{tabular}{c|c|c|c|c|c}
            generator &  \runit  & \raisebox{1ex}{\rcounit} & \rmult & \rcomult & \rhzero  \\[1ex]
            name & Startdot & Enddot & Merge & Split & (bivalent) Hochschild dot \\[1ex]
            bidegree& $(0,1)$  & $(0,1)$ & $(0, -1)$ & $(0,-1)$ & $(1, -4)$
        \end{tabular}
    \end{center}
    and likewise for the color blue. In addition we will have the generators
    \begin{center}
        \begin{tabular}{c|c|c|c|c}
            generator &  $\boxed{f}$  & $\dboxed{\xi }$ & \raisebox{-2ex}{\begin{tikzpicture}[scale=0.6]
	       \draw[red] (1.1,-0.1) -- (2,-1);
	       \draw[blue] (1.5,-0.1) -- (2,-1);
	       \draw[violet] (2.9,-0.1) -- (2,-1);
	       \draw[blue] (1.1,-2) -- (2,-1);
	       \draw[red] (1.5,-2) -- (2,-1);
	       \draw[violet] (2.9,-2) -- (2,-1);
	       \node at (2.1,-0.4) {$\scalemath{0.8}{\ldots}$};
	       \node at (2.1,-1.6) {$\scalemath{0.8}{\ldots}$};
	       \node[rhdot, minimum size=3.4mm] at (2,-1) {};
	       \node at (2,-1) {$\scalemath{0.7}{2k}$};
	    \end{tikzpicture}}  & \raisebox{-2ex}{\begin{tikzpicture}[scale=0.6]
	       \draw[blue] (1.1,-0.1) -- (2,-1);
	       \draw[red] (1.5,-0.1) -- (2,-1);
	       \draw[violet] (2.9,-0.1) -- (2,-1);
	       \draw[red] (1.1,-2) -- (2,-1);
	       \draw[blue] (1.5,-2) -- (2,-1);
	       \draw[violet] (2.9,-2) -- (2,-1);
	       \node at (2.1,-0.4) {$\scalemath{0.8}{\ldots}$};
	       \node at (2.1,-1.6) {$\scalemath{0.8}{\ldots}$};
	       \node[rhdot, minimum size=3.4mm] at (2,-1) {};
	       \node at (2,-1) {$\scalemath{0.7}{2k}$};
	    \end{tikzpicture}} \\[1ex]
            name & Box & Exterior Box & red $2k-$extvalent, $k\ge 2$  & red $2k-$extvalent, $k\ge 2$   \\[1ex]
            bidegree& $(0,\deg f)$  & $(|\xi|,-2|\xi|)$ & $(1, -2k)$  & $(1, -2k)$ 
        \end{tabular}
    \end{center}
    Here $f\in R$ while $\xi \in \Lambda^\vee$ are homogeneous elements. For example elements $\xi\in V\dsbrac{-1}$ have bidegree $(1, -2)$. The colors in the red $2k-$extvalent generator alternate between red and blue with $k$ strands on the bottom and $k$ strands on top. The circle in the middle will \underline{always} be red. 
    
    \item \textbf{Relations} in $\Dext_\infty$ are as follows. The generating morphisms for the color $s$ will satisfy the relations of $\Dext_s$ as specified in \cite {M} Section 4, and likewise for the color $t$ and $\Dext_t$. We will also have $2-$color relations involving the red $2k-$extvalent generator which will be given in the subsection below.
\end{itemize}
\end{definition}

\subsection{$2-$ color Relations}
\label{affinediagrel}

All relations in this subsection will hold with the color of the strands switched. 

\textbf{Rotation Invariance:}

\begin{equation}
\raisebox{-6ex}{
 \begin{tikzpicture}[scale=0.75]
	       \draw[red] (1.1,-0.1) -- (2,-1);
	       \draw[blue] (1.5,-0.1) -- (2,-1);
	       \draw[red] (2.9,-0.1) -- (2,-1);
	       \draw[blue] (1.1,-2) -- (2,-1);
	       \draw[red] (1.5,-2) -- (2,-1);
	       \draw[red] (1.1,-0.1) to[out=90,in=0] (0.8,0.5) to[out=-180,in=90] (0.5, -0.1) ;
	       \draw[red] (0.5,-0.1) -- (0.5,-2);
	       \draw[blue] (2.9,-2) -- (2,-1);
	       \draw[blue] (2.9, -2) to[out=-90,in=-180] (3.2,-2.6) to[out=0,in=-90] (3.5, -2);
	       \draw[blue] (3.5,-2) -- (3.5,-0.1);
	       \node at (2.1,-0.4) {$\ldots$};
	       \node at (2.1,-1.6) {$\ldots$};
	       \node[rkhdot] at (2,-1) {};
	       \node at (2,-1) {$\s{2k}$};
	    \end{tikzpicture}\raisebox{1cm}{ \ \  = \ }\raisebox{0.3cm}{\begin{tikzpicture}[scale=0.8]
	       \draw[blue] (1.1,-0.1) -- (2,-1);
	       \draw[red] (1.5,-0.1) -- (2,-1);
	       \draw[blue] (2.9,-0.1) -- (2,-1);
	       \draw[red] (1.1,-2) -- (2,-1);
	       \draw[blue] (1.5,-2) -- (2,-1);
	       \draw[red] (2.9,-2) -- (2,-1);
	       \node at (2.1,-0.4) {$\ldots$};
	       \node at (2.1,-1.6) {$\ldots$};
	       \node[rkhdot] at (2,-1) {};
	       \node at (2,-1) {$\s{2k}$};
	    \end{tikzpicture}}\raisebox{1cm}{ \ \  = \ } \begin{tikzpicture}[scale=0.75]
	       \draw[red] (1.1,-0.1) -- (2,-1);
	       \draw[blue] (1.5,-0.1) -- (2,-1);
	       \draw[red] (2.9,-0.1) -- (2,-1);
	       \draw[blue] (1.1,-2) -- (2,-1);
	       \draw[red] (1.5,-2) -- (2,-1);
	       \draw[blue] (1.1,-2) to[out=-90,in=0] (0.8,-2.6) to[out=-180,in=-90] (0.5, -2) ;
	       \draw[blue] (0.5,-0.1) -- (0.5,-2);
	       \draw[blue] (2.9,-2) -- (2,-1);
	       \draw[red] (2.9, -0.1) to[out=90,in=-180] (3.2,0.5) to[out=0,in=90] (3.5, -0.1);
	       \draw[red] (3.5,-2) -- (3.5,-0.1);
	       \node at (2.1,-0.4) {$\ldots$};
	       \node at (2.1,-1.6) {$\ldots$};
	       \node[rkhdot] at (2,-1) {};
	       \node at (2,-1) {$\s{2k}$};
	    \end{tikzpicture} }\qquad \qquad \textnormal{ if }k \textnormal{ is odd}
\end{equation}

\begin{equation}
\raisebox{-6ex}{
 \begin{tikzpicture}[scale=0.75]
	       \draw[red] (1.1,-0.1) -- (2,-1);
	       \draw[blue] (1.5,-0.1) -- (2,-1);
	       \draw[blue] (2.9,-0.1) -- (2,-1);
	       \draw[blue] (1.1,-2) -- (2,-1);
	       \draw[red] (1.5,-2) -- (2,-1);
	       \draw[red] (1.1,-0.1) to[out=90,in=0] (0.8,0.5) to[out=-180,in=90] (0.5, -0.1) ;
	       \draw[red] (0.5,-0.1) -- (0.5,-2);
	       \draw[red] (2.9,-2) -- (2,-1);
	       \draw[red] (2.9, -2) to[out=-90,in=-180] (3.2,-2.6) to[out=0,in=-90] (3.5, -2);
	       \draw[red] (3.5,-2) -- (3.5,-0.1);
	       \node at (2.1,-0.4) {$\ldots$};
	       \node at (2.1,-1.6) {$\ldots$};
	       \node[rkhdot] at (2,-1) {};
	       \node at (2,-1) {$\s{2k}$};
	    \end{tikzpicture}\raisebox{1cm}{ \ \  = \ }\raisebox{0.3cm}{\begin{tikzpicture}[scale=0.8]
	       \draw[blue] (1.1,-0.1) -- (2,-1);
	       \draw[red] (1.5,-0.1) -- (2,-1);
	       \draw[red] (2.9,-0.1) -- (2,-1);
	       \draw[red] (1.1,-2) -- (2,-1);
	       \draw[blue] (1.5,-2) -- (2,-1);
	       \draw[blue] (2.9,-2) -- (2,-1);
	       \node at (2.1,-0.4) {$\ldots$};
	       \node at (2.1,-1.6) {$\ldots$};
	       \node[rkhdot] at (2,-1) {};
	       \node at (2,-1) {$\s{2k}$};
	    \end{tikzpicture}}\raisebox{1cm}{ \ \  = \ } \begin{tikzpicture}[scale=0.75]
	       \draw[red] (1.1,-0.1) -- (2,-1);
	       \draw[blue] (1.5,-0.1) -- (2,-1);
	       \draw[blue] (2.9,-0.1) -- (2,-1);
	       \draw[blue] (1.1,-2) -- (2,-1);
	       \draw[red] (1.5,-2) -- (2,-1);
	       \draw[blue] (1.1,-2) to[out=-90,in=0] (0.8,-2.6) to[out=-180,in=-90] (0.5, -2) ;
	       \draw[blue] (0.5,-0.1) -- (0.5,-2);
	       \draw[red] (2.9,-2) -- (2,-1);
	       \draw[blue] (2.9, -0.1) to[out=90,in=-180] (3.2,0.5) to[out=0,in=90] (3.5, -0.1);
	       \draw[blue] (3.5,-2) -- (3.5,-0.1);
	       \node at (2.1,-0.4) {$\ldots$};
	       \node at (2.1,-1.6) {$\ldots$};
	       \node[rkhdot] at (2,-1) {};
	       \node at (2,-1) {$\s{2k}$};
	    \end{tikzpicture} }\qquad \qquad \textnormal{ if }k \textnormal{ is even}
\end{equation}

Rotation invariance will imply that the red $2k-$extvalent morphism is cyclic. Hochschild dots were shown to be cyclic in \cite {M} Section 4, and the other generating morphisms are also cyclic so by Proposition 7.18 in \cite{EMTW20} an isotopy class of diagrams in $\Dext_\infty$ unambiguously represents a morphism in $\Dext_\infty$. As a result, any diagram isotopic or rotationally equivalent to the remaining relations will also  hold in $\Dext_\infty$.

\textbf{4$-$Ext Reduction:}

\begin{equation}
\label{diag4extreduct}
    \begin{tikzpicture}[scale=0.7]
	       \draw[red] (1.2,0) -- (2,-1);
	       \draw[blue] (2.8,0) -- (2,-1);
	       \draw[blue] (1.2,-2) -- (2,-1);
	       \draw[red] (2.8,-2) -- (2,-1);
	       \node[rkhdot] at (2,-1) {};
	       \node at (2,-1) {$4$};
	       \node[rdot] at (2.8,-2) {};
	    \end{tikzpicture}\raisebox{0.6cm}{ \ = \ }
	    \begin{tikzpicture}[scale=0.7]
	       \draw[red] (1.1,0.1) -- (1.1,-1);
	       \draw[blue] (2,0.1) -- (2,-1);
	       \draw[blue] (2,-2) -- (2,-1);
	       \node[rhdot] at (1.1,-1) {};
	\end{tikzpicture}\raisebox{0.6cm}{$\ \ - \ \ $}
\begin{tikzpicture}[scale=0.7]
	       \draw[red] (1.1,0.1) -- (1.1,-1);
	       \draw[blue] (2,0.1) -- (2,-1);
	       \draw[blue] (2,-2) -- (2,-1);
	       \node[rdot] at (1.1,-1) {};
	       \node[bhdot] at (2,-1) {};
	\end{tikzpicture}
\end{equation}

\begin{equation}
\label{diag4extreduct2}
    \begin{tikzpicture}[scale=0.7]
	       \draw[red] (1.2,0) -- (2,-1);
	       \draw[blue] (2.8,0) -- (2,-1);
	       \draw[blue] (1.2,-2) -- (2,-1);
	       \draw[red] (2.8,-2) -- (2,-1);
	       \node[rkhdot] at (2,-1) {};
	       \node at (2,-1) {$4$};
	       \node[bdot] at (1.2,-2) {};
	    \end{tikzpicture}\raisebox{0.6cm}{ \ $= - $\ }
	    \begin{tikzpicture}[scale=0.6]
	       \draw[red] (1.1,0.1) -- (1.1,-2);
	       \draw[blue] (2,0.1) -- (2,-1);
	       \node[bhdot] at (2,-1) {};
	\end{tikzpicture}\raisebox{0.6cm}{$\ + \ $}
\begin{tikzpicture}[scale=0.6]
	       \draw[red] (1.1,0.1) -- (1.1,-2);
	       \draw[blue] (2,0.1) -- (2,-1);
	       \node[bdot] at (2,-1) {};
	       \node[rhdot] at (1.1,-1) {};
	\end{tikzpicture}
\end{equation}

\textbf{Higher Ext Reduction:}

\begin{equation}
\label{highextreduct}
    \raisebox{-4ex}{\begin{tikzpicture}[scale=0.8]
	       \draw[blue] (1.1,-0.1) -- (2,-1);
	       \draw[red] (1.4,-0.1) -- (2,-1);
              \draw[blue] (1.7,-0.1) -- (2,-1);
	       \node[rdot] at (1.4,-0.1) {};
	       \draw[violet] (2.9,-0.1) -- (2,-1);
	       \draw[red] (1.1,-2) -- (2,-1);
	       \draw[blue] (1.5,-2) -- (2,-1);
	       \draw[violet] (2.9,-2) -- (2,-1);
	       \node at (2.1,-0.4) {$\ldots$};
	       \node at (2.1,-1.6) {$\ldots$};
	       \node[rkhdot] at (2,-1) {};
	       \node at (2,-1) {$\s{2k}$};
	    \end{tikzpicture}\raisebox{0.7cm}{ \ = \ }\begin{tikzpicture}[scale=0.8]
	       \draw[blue] (1.5,-0.4) -- (2,-1);
	       \draw[blue] (1.5,-0.4) -- (1.8,0);
	       \draw[blue] (1.5,-0.4) -- (1.2,0);
	       \draw[violet] (2.9,-0.1) -- (2,-1);
	       \draw[red] (1.1,-2) -- (2,-1);
	       \draw[blue] (1.5,-2) -- (2,-1);
	       \draw[violet] (2.9,-2) -- (2,-1);
	       \node at (2.1,-0.4) {$\ldots$};
	       \node at (2.1,-1.6) {$\ldots$};
	       \node[rbkhdot] at (2,-1) {};
	       \node at (2,-1) {$\scalemath{0.6}{2k-2}$};
	    \end{tikzpicture}}\qquad \qquad k \ge 3
\end{equation}

\textbf{Ext Valent Annihilation:}

\begin{equation}
\label{extvalentann}
    \begin{tikzpicture}[scale=0.8]
	       \draw[blue] (1.1,-0.1) -- (2,-1);
	       \draw[red] (1.5,-0.1) -- (2,-1);
	       \draw[violet] (2,-1) -- (4,-1);
	       \draw[red] (1.1,-2) -- (2,-1);
	       \draw[blue] (1.5,-2) -- (2,-1);
	       \draw[violet] (2.9,-2) -- (2,-1);
	       \node at (2.1,-0.4) {$\ldots$};
	       \node at (2.1,-1.6) {$\ldots$};
	       \node[rkhdot] at (2,-1) {};
	       \node at (2,-1) {$\s{2k}$};
	       \draw[red] (4.5,-0.1) -- (4,-1);
	       \draw[blue] (4.9,-0.1) -- (4,-1);
	       \draw[violet] (3.1,-2) -- (4,-1);
	       \draw[blue] (4.5,-2) -- (4,-1);
	       \draw[red] (4.9,-2) -- (4,-1);
	       \node at (3.9,-0.4) {$\ldots$};
	       \node at (3.9,-1.6) {$\ldots$};
	       \node[rkhdot] at (4,-1) {};
	       \node at (4,-1) {$\s{2n}$};
	    \end{tikzpicture}\raisebox{0.7cm}{= \ 0}
\end{equation}

\subsubsection{Further Relations and Morphisms}

\begin{lemma}
The following relations follow from the defining relations above.\\
{\upshape \textbf{More Higher Ext Reduction:}}
\begin{equation}
\label{morehighextreduct}
    \raisebox{-4ex}{\begin{tikzpicture}[scale=0.8]
	       \draw[blue] (1.1,-0.1) -- (2,-1);
	       \draw[red] (1.5,-0.1) -- (2,-1);
	       \node[rdot] at (1.5,-0.1) {};
	       \node[bdot] at (1.1,-0.1) {};
	       \draw[violet] (2.9,-0.1) -- (2,-1);
	       \draw[red] (1.1,-2) -- (2,-1);
	       \draw[blue] (1.5,-2) -- (2,-1);
	       \draw[violet] (2.9,-2) -- (2,-1);
	       \node at (2.1,-0.4) {$\ldots$};
	       \node at (2.1,-1.6) {$\ldots$};
	       \node[rkhdot] at (2,-1) {};
	       \node at (2,-1) {$\s{2k}$};
	    \end{tikzpicture}\raisebox{0.7cm}{ \ = \ }\begin{tikzpicture}[scale=0.8]
	       \draw[blue] (1.2,-0.1) -- (2,-1);
	       \draw[violet] (2.9,-0.1) -- (2,-1);
	       \draw[red] (1.1,-2) -- (2,-1);
	       \draw[blue] (1.5,-2) -- (2,-1);
	       \draw[violet] (2.9,-2) -- (2,-1);
	       \node at (2,-0.4) {$\ldots$};
	       \node at (2.1,-1.6) {$\ldots$};
	       \node[rbkhdot] at (2,-1) {};
	       \node at (2,-1) {$\scalemath{0.6}{2k-2}$};
	    \end{tikzpicture}}\qquad \qquad k \ge 3
\end{equation}

{\upshape \textbf{2$-$color Hochschild Jumping:}}

\begin{equation}
\label{2colorhochjump}
    \raisebox{-4ex}{\begin{tikzpicture}[scale=0.8]
	       \draw[blue] (1.1,-0.1) -- (2,-1);
	       \draw[red] (1.5,-0.1) -- (2,-1);
	       \draw[violet] (2.9,-0.1) -- (2,-1);
	       \draw[red] (1.1,-2) -- (2,-1);
	       \draw[blue] (1.5,-2) -- (2,-1);
	       \draw[violet] (2.9,-2) -- (2,-1);
	       \node at (2.1,-0.4) {$\ldots$};
	       \node at (2.1,-1.6) {$\ldots$};
	       \node[bhdot] at (1.4,-0.43) {};
	       \node[rkhdot] at (2,-1) {};
	       \node at (2,-1) {$\s{2k}$};
	    \end{tikzpicture}\raisebox{0.7cm}{ \ = \ }\begin{tikzpicture}[scale=0.8]
	        \draw[blue] (1.1,-0.1) -- (2,-1);
	       \draw[red] (1.5,-0.1) -- (2,-1);
	       \draw[violet] (2.9,-0.1) -- (2,-1);
	       \draw[red] (1.1,-2) -- (2,-1);
	       \draw[blue] (1.5,-2) -- (2,-1);
	       \draw[violet] (2.9,-2) -- (2,-1);
	       \node[rhdot] at (1.67,-0.43) {};
	       \node at (2.1,-0.4) {$\ldots$};
	       \node at (2.1,-1.6) {$\ldots$};
	       \node[rkhdot] at (2,-1) {};
	       \node at (2,-1) {$\s{2k}$};
	    \end{tikzpicture}}
\end{equation}
{\upshape \textbf{2$-$color Cohomology:}}

\begin{equation}
\label{diacohorelseq}
\raisebox{0.2cm}{\scalemath{1.05}{\boxed{[k]\alpha_s \ }}}
\raisebox{-0.5cm}{
\begin{tikzpicture}[scale=0.8]
	       \draw[red] (1.1,-0.1) -- (2,-1);
	       \draw[blue] (1.5,-0.1) -- (2,-1);
	       \draw[violet] (2.9,-0.1) -- (2,-1);
	       \draw[blue] (1.1,-2) -- (2,-1);
	       \draw[red] (1.5,-2) -- (2,-1);
	       \draw[violet] (2.9,-2) -- (2,-1);
	       \node[rkhdot] at (2,-1) {};
	       \node at (2,-1) {$\s{2k}$};
	       \node at (2.1,-0.4) {$\ldots$};
	       \node at (2.1,-1.6) {$\ldots$};
	    \end{tikzpicture}}
\raisebox{0.2cm}{\scalemath{1.05}{=-\sum_{i=1}^{k-1} \boxed{ [k-i]} \ }}
\raisebox{-0.8cm}{
    \begin{tikzpicture}[scale=0.75]
	       \draw[red] (1,-0.1) -- (2,-1);
	       \draw[blue] (1.5,-0.1) -- (2,-1);
	       \draw[violet] (3,-0.1) -- (2,-1);
	       \draw[blue] (0.8,-2.2) -- (2,-1);
	       \draw[violet] (3.2,-2.2) -- (2,-1);
	       \draw[myyellow] (2,-1) -- (2,-1.6);
	       \draw[violet] (2,-2.2) -- (2,-1.9);
	       \draw[myyellow] (1.7,-2.2) to[out=90,in=180] (2,-1.6) to[out=0,in=90] (2.3,-2.2);
	       \node[pdot] at (2,-1.9) {};
	       \node[rbkhdot] at (2,-1) {};
	       \node at (2,-1) {$\scalemath{0.65}{2k-2}$};
	       \node at (2.1,-0.3) {$\ldots$};
	       \node at (1.4,-1.95) {$\ldots$};
	       \node at (2.6,-1.95) {$\ldots$};
	       \node at (2,-2.5) {$t_i$};
	    \end{tikzpicture}}\raisebox{0.2cm}{ \scalemath{1.05}{+\sum_{i=1}^{k}\boxed{[k+1-i]}}}
	    \raisebox{-0.4cm}{
	    \begin{tikzpicture}[scale=0.75]
	       \draw[red] (0.8,0.1) -- (2,-1);
	       \draw[violet] (3.2,0.1) -- (2,-1);
	       \draw[blue] (1,-2) -- (2,-1);
	       \draw[violet] (3,-2) -- (2,-1);
	       \draw[red] (2,-1) -- (1.5,-2);
	       \draw[myyellow] (2,-1) -- (2,-0.4);
	       \draw[violet] (2,0.2) -- (2,-0.1);
	       \draw[myyellow] (1.7,0.1) to[out=-90,in=180] (2,-0.4) to[out=0,in=-90] (2.3,0.1);
	       \node at (2,0.35) {$s_i$};
	       \node[pdot] at (2,-0.1) {};
	       \node[rbkhdot] at (2,-1) {};
	       \node at (2,-1) {$\scalemath{0.65}{2k-2}$};
	       \node at (2.6,-0.15) {$\ldots$};
	       \node at (1.4,-0.15) {$\ldots$};
	       \node at (2.1,-1.7) {$\ldots$};
	    \end{tikzpicture}} 
\end{equation}

\begin{equation}
\label{diacohorelteq}
\raisebox{0.2cm}{\scalemath{1.05}{\boxed{[k]\alpha_t \ }}}
\raisebox{-0.5cm}{
\begin{tikzpicture}[scale=0.8]
	       \draw[red] (1.1,-0.1) -- (2,-1);
	       \draw[blue] (1.5,-0.1) -- (2,-1);
	       \draw[violet] (2.9,-0.1) -- (2,-1);
	       \draw[blue] (1.1,-2) -- (2,-1);
	       \draw[red] (1.5,-2) -- (2,-1);
	       \draw[violet] (2.9,-2) -- (2,-1);
	       \node[rkhdot] at (2,-1) {};
	       \node at (2,-1) {$\s{2k}$};
	       \node at (2.1,-0.4) {$\ldots$};
	       \node at (2.1,-1.6) {$\ldots$};
	    \end{tikzpicture}}
\raisebox{0.2cm}{\scalemath{1.05}{=-\sum_{i=1}^{k-1} \boxed{ [k-i]} \ }}
\raisebox{-0.4cm}{
	    \begin{tikzpicture}[scale=0.7]
	       \draw[red] (0.8,0.1) -- (2,-1);
	       \draw[violet] (3.2,0.1) -- (2,-1);
	       \draw[blue] (1,-2) -- (2,-1);
	       \draw[violet] (3,-2) -- (2,-1);
	       \draw[red] (2,-1) -- (1.5,-2);
	       \draw[myyellow] (2,-1) -- (2,-0.4);
	       \draw[violet] (2,0.2) -- (2,-0.1);
	       \draw[myyellow] (1.7,0.1) to[out=-90,in=180] (2,-0.4) to[out=0,in=-90] (2.3,0.1);
	       \node at (2,0.35) {$s_i$};
	       \node[pdot] at (2,-0.1) {};
	       \node[rbkhdot] at (2,-1) {};
	       \node at (2,-1) {$\scalemath{0.65}{2k-2}$};
	       \node at (2.6,-0.15) {$\ldots$};
	       \node at (1.4,-0.15) {$\ldots$};
	       \node at (2.1,-1.7) {$\ldots$};
	    \end{tikzpicture}} \raisebox{0.2cm}{ \scalemath{1.05}{+\sum_{i=1}^{k}\boxed{[k+1-i]}}}
	    \raisebox{-0.8cm}{
    \begin{tikzpicture}[scale=0.7]
	       \draw[red] (1,-0.1) -- (2,-1);
	       \draw[blue] (1.5,-0.1) -- (2,-1);
	       \draw[violet] (3,-0.1) -- (2,-1);
	       \draw[blue] (0.8,-2.2) -- (2,-1);
	       \draw[violet] (3.2,-2.2) -- (2,-1);
	       \draw[myyellow] (2,-1) -- (2,-1.6);
	       \draw[violet] (2,-2.2) -- (2,-1.9);
	       \draw[myyellow] (1.7,-2.2) to[out=90,in=180] (2,-1.6) to[out=0,in=90] (2.3,-2.2);
	       \node[pdot] at (2,-1.9) {};
	       \node[rbkhdot] at (2,-1) {};
	       \node at (2,-1) {$\scalemath{0.65}{2k-2}$};
	       \node at (2.1,-0.3) {$\ldots$};
	       \node at (1.4,-1.95) {$\ldots$};
	       \node at (2.6,-1.95) {$\ldots$};
	       \node at (2,-2.5) {$t_i$};
	    \end{tikzpicture}}
\end{equation}
\end{lemma}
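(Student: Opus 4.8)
The plan is to transcribe into the purely diagrammatic category $\Dext_\infty$ the algebraic proofs of the relations \eqref{cohorelseq}--\eqref{cohorelteq} and of \cref{newgenreduct}, \cref{newgenmult}, \cref{newgenabsorb}, taking care that at each step only a defining relation of $\Dext_\infty$ is invoked: the one-color relations of $\Dext_s$ and $\Dext_t$, rotation invariance, the $4$-ext reductions \eqref{diag4extreduct}--\eqref{diag4extreduct2}, higher ext reduction \eqref{highextreduct}, and ext valent annihilation \eqref{extvalentann}. I would treat the three families in the order stated, as each uses the previous. The first, \eqref{morehighextreduct}, is immediate: post-compose \eqref{highextreduct} at the top with a blue end-dot on the leftmost of the two blue strands emitted by the split on its right-hand side. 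On the left of \eqref{highextreduct} this produces exactly the extra blue dot in the left-hand side of \eqref{morehighextreduct}, while on the right the blue end-dot collapses the split to the identity, by the blue relation ``a split capped by an end-dot is the identity'', leaving the bare $(2k-2)$-extvalent of \eqref{morehighextreduct}.

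For \eqref{2colorhochjump} I would induct on $k$. The base case $k=2$ is a finite diagrammatic computation: rewrite both sides using \eqref{diag4extreduct} and \eqref{diag4extreduct2} to eliminate the $4$-extvalent, and then match the results using only the one-color Hochschild relations --- the barbell \eqref{eqn:hbarbell}, dot annihilation \eqref{eqn:hdot-square}, the slides past trivalent vertices \eqref{eqn:hdot-trivalent-a}--\eqref{eqn:hdot-trivalent-b}, and one-color Hochschild jumping \eqref{eqn:hochjump}. For $k\ge 3$ one applies \eqref{highextreduct} (or a rotation of it) to replace the $2k$-extvalent by a $(2k-2)$-extvalent attached to a one-color trivalent vertex; the Hochschild dot is then pushed through that trivalent vertex via \eqref{eqn:hdot-trivalent-a}--\eqref{eqn:hdot-trivalent-b}, the inductive hypothesis transports it from a blue strand to a red strand of the smaller extvalent, and \eqref{highextreduct} is run backwards. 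Keeping track of which strand the Hochschild dot sits on, together with the parity of $k$ (which fixes the colors at the two ends of the extvalent), is the only piece of genuine bookkeeping here.

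For the two cohomology relations I would transcribe the proof of \eqref{cohorelseq}. The first point is a diagrammatic version of \eqref{cohoreleq1}: the box $\boxed{\rho_s}$ on the leftmost $s$-strand of the relevant extvalent-type morphism equals $\boxed{\rho_s}$ on its rightmost $s$-strand. This is legitimate in $\Dext_\infty$ because, by \cref{assump1}, $\rho_s\in(V^*)^t$, so one-color polynomial forcing (with $t(\rho_s)=\rho_s$ and $\partial_t(\rho_s)=0$) lets $\boxed{\rho_s}$ pass freely through any blue strand, in particular down the blue leg at the center of the extvalent and back up the other side. Next, on each side, drag $\boxed{\rho_s}$ back across the top strands of the $2k$-extvalent by iterated one-color polynomial forcing ($k-1$ applications on one side, $k$ on the other): passing a red strand yields an unbroken term carrying the appropriate $W$-translate of $\rho_s$ plus a broken strand weighted by $\partial_s$ of that translate, and symmetrically for blue strands with $\partial_t$. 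By \eqref{stsrhos} and \eqref{tsrhos} these $\partial$-coefficients are precisely the $2$-colored quantum numbers $[2m+1]$ and $[2m+2]$, and the two fully-unbroken terms combine, again by \eqref{stsrhos}--\eqref{tsrhos}, into $\boxed{[k]\alpha_s}$ times the $2k$-extvalent. Finally each broken strand adjacent to the extvalent is rewritten, via \eqref{highextreduct} and \eqref{morehighextreduct} (and \eqref{diag4extreduct}--\eqref{diag4extreduct2} in the lowest cases), as a $(2k-2)$-extvalent with two adjacent legs joined by a cup and a dot on the remaining external strand --- exactly the terms on the right of \eqref{diacohorelseq}. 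Rearranging gives \eqref{diacohorelseq}; repeating the argument on the opposite side and rotating the picture by $180^\circ$ (permitted by rotation invariance of the extvalent) gives \eqref{diacohorelteq}.

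I expect the main obstacle to be the low-rank cases rather than the generic argument: $k=2$ of \eqref{2colorhochjump}, and $k=2,3$ of \eqref{diacohorelseq}--\eqref{diacohorelteq}, where \eqref{highextreduct} and \eqref{morehighextreduct} are not yet applicable and one must instead verify the identities by hand against the explicit reductions \eqref{diag4extreduct}--\eqref{diag4extreduct2} and the $k=3$ instance of \eqref{highextreduct}, getting every sign right. A secondary but ever-present concern is to ensure that each intermediate manipulation --- sliding a box past the center of an extvalent, turning a broken strand into a smaller extvalent, isotoping $\Phi$-type morphisms --- is justified by a named relation of $\Dext_\infty$, since the equivalence $\Dext_\infty\simeq\bsbimext(\hf,W_\infty)$ is not yet in force at this stage of the paper.
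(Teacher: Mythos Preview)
Your treatment of \eqref{morehighextreduct} and of the cohomology relations \eqref{diacohorelseq}--\eqref{diacohorelteq} matches the paper's: the first is indeed immediate from \eqref{highextreduct}, and the second is exactly the transcription of the polynomial-forcing argument behind \eqref{cohorelseq}--\eqref{cohorelteq}, starting from the fact that \eqref{cohoreleq1} holds in $\Dext_\infty$ since $\rho_s\in(V^*)^t$.

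Where you diverge from the paper is \eqref{2colorhochjump}, and here your proposal has a gap. Your base case asks to ``eliminate the $4$-extvalent'' using \eqref{diag4extreduct}--\eqref{diag4extreduct2}, but those relations only apply when a \emph{solid} dot sits on a strand of the $4$-extvalent; a Hochschild dot alone gives you no purchase. Likewise your inductive step invokes \eqref{highextreduct} to pass from the $2k$-extvalent to a $(2k-2)$-extvalent, but \eqref{highextreduct} again requires a solid dot on a strand of the $2k$-extvalent, which you do not have. So neither step can be carried out as written.

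The paper's argument is not inductive at all: it is the calculation displayed in \eqref{2colorjumpeq}, which works uniformly for every $k\ge 2$. One ``pops out'' the Hochschild dot on the red strand by inserting a red trivalent with a dotted leg (a one-color identity) and sliding the Hochschild dot onto that leg via \eqref{eqn:hdot-trivalent-a}--\eqref{eqn:hdot-trivalent-b}, producing a univalent red Hochschild dot adjacent to the blue strand. Then $4$-Ext reduction \eqref{diag4extreduct} rewrites this as two terms: one with a solid red startdot and a blue Hochschild dot on the blue strand (this is what you want, after reabsorbing the trivalent), and one with a $4$-extvalent stacked on the $2k$-extvalent. The stacked term vanishes by Ext Valent Annihilation \eqref{extvalentann}. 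That is the whole proof, and it uses only defining relations.
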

\begin{proof}
\cref{morehighextreduct} clearly follows from \cref{highextreduct}. \cref{2colorhochjump} follows from $4-$Ext reduction and Ext Valent Annihilation (see \cref{2colorjumpeq}). \cref{diacohorelseq} and \cref{diacohorelteq}
 follow by using polynomial forcing, i.e. see the proof of \cref{cohorelseq} as \cref{cohoreleq1} also holds in $\Dext_\infty$.
 \end{proof}
 
\begin{definition}
Given an expression $\underline{w}$, let $mc(\underline{w})$ be any non repeating subexpression of $\underline{w}$ whose first and last terms are different such that $|mc(\underline{w})|$ is maximal. If $\underline{w}$ consists of only 1 color, we set $|mc(\underline{w})|=1$.
\end{definition}

For example, we have that $mc(tst)=ts$ or $st$. Note that besides when $|mc(\underline{w})|=1$, $|mc(\underline{w})|$ is always even. 

\begin{definition}
Given any two expressions $\underline{v}$ and $\underline{w}$, we will define a morphism in $\hom^{1, \bullet}_{\Dext_\infty}(\underline{v}, \underline{w})$ as follows. First suppose that $|mc(\underline{v}^{-1}\underline{w})|=2k$ for some $k\in \mathbb{Z}^{\ge 2}$. Then define the morphism
\begin{center}
    \begin{tikzpicture}[scale=0.8]
	       \draw[violet] (1.1,-0.1) -- (2,-1);
	       \draw[violet] (2.9,-0.1) -- (2,-1);
	       \draw[violet] (1.1,-2) -- (2,-1);
	       \draw[violet] (2.9,-2) -- (2,-1);
	       \node at (2,-0.4) {$\underline{w}$};
	       \node at (2,-1.6) {$\underline{v}$};
	       \node[rkhdot] at (2,-1) {};
	       \node at (2,-1) {$\s{2k}$};
	    \end{tikzpicture}\raisebox{0.7cm}{$=$ add merge, splits to}
	    \begin{tikzpicture}[scale=0.8]
	       \draw[blue] (1.1,-0.1) -- (2,-1);
	       \draw[red] (1.5,-0.1) -- (2,-1);
	       \draw[blue] (2.9,-0.1) -- (2,-1);
	       \draw[red] (1.1,-2) -- (2,-1);
	       \draw[blue] (1.5,-2) -- (2,-1);
	       \draw[red] (2.9,-2) -- (2,-1);
	       \node at (2.1,-0.4) {$\ldots$};
	       \node at (2.1,-1.6) {$\ldots$};
	       \node[rkhdot] at (2,-1) {};
	       \node at (2,-1) {$\s{2k}$};
	    \end{tikzpicture}\raisebox{0.7cm}{ and rotate until colors on bottom (top) =$\underline{v} \ (\underline{w})$ respectively}
\end{center}
This is independent of how one chooses to rotate or add merge and splits by Proposition 7.17 in \cite{EMTW20}. If $|mc(\underline{v}^{-1}\underline{w})|<4$, set
\[ \begin{tikzpicture}[scale=0.7]
	       \draw[red] (2,0) -- (2,-1);
	       \draw[blue] (2,-2) -- (2,-1);
	       \node[rkhdot] at (2,-1) {};
	       \node at (2,-1) {$\s{2}$};
	\end{tikzpicture}\raisebox{0.6cm}{$ \quad :=$}\begin{tikzpicture}[scale=0.7]
	       \draw[red] (1.1,-0.1) -- (2,-1);
	       \draw[blue] (2,0.1) -- (2,-1);
	       \draw[red] (2.9,-0.1) -- (2,-1);
	       \draw[blue] (2,-2) -- (2,-1);
	       \node[rkhdot] at (2,-1) {};
	       \node at (2,-1) {$\s{4}$};
	       \node[bdot] at (2,0.1) {};
	       \node[rdot] at (2.9,-0.1) {};
	\end{tikzpicture} \raisebox{0.6cm}{$ \quad =$}\begin{tikzpicture}[scale=0.7]
	       \draw[red] (1.1,-0.1) -- (2,-1);
	       \draw[blue] (2,0.1) -- (2,-1);
	       \draw[red] (2.9,-0.1) -- (2,-1);
	       \draw[blue] (2,-2) -- (2,-1);
	       \node[rkhdot] at (2,-1) {};
	       \node at (2,-1) {$\s{4}$};
	       \node[bdot] at (2,0.1) {};
	       \node[rdot] at (1.1,-0.1) {};
	\end{tikzpicture}  \qquad \quad  \begin{tikzpicture}[scale=0.7]
	       \draw[blue] (2,-2) -- (2,-1);
	       \node[rkhdot] at (2,-1) {};
	       \node at (2,-1) {$\s{1}$};
	\end{tikzpicture}\raisebox{0.6cm}{$ \quad =$}\begin{tikzpicture}[scale=0.7]
	       \draw[red] (1.1,-0.1) -- (2,-1);
	       \draw[blue] (2,0.1) -- (2,-1);
	       \draw[red] (2.9,-0.1) -- (2,-1);
	       \draw[blue] (2,-2) -- (2,-1);
	       \node[rkhdot] at (2,-1) {};
	       \node at (2,-1) {$\s{4}$};
	       \node[bdot] at (2,0.1) {};
	       \node[rdot] at (2.9,-0.1) {};
	       \node[rdot] at (1.1,-0.1) {};
	\end{tikzpicture}   \]
and repeat the same definition as above. 
\end{definition}

\begin{example}
The RHS of \cref{highextreduct} is the morphism on the left below while the RHS of \cref{morehighextreduct} is the morphism on the right below
\[  \begin{tikzpicture}[scale=0.8]
	       \draw[violet] (1.1,-0.1) -- (2,-1);
	       \draw[violet] (2.9,-0.1) -- (2,-1);
	       \draw[violet] (1.1,-2) -- (2,-1);
	       \draw[violet] (2.9,-2) -- (2,-1);
	       \node at (2,-0.3) {$\scalemath{0.6}{(t,\usmt{t}{k-2}{})}$};
	       \node at (1.9,-1.7) {$\s{\usmt{s}{k}{}}$};
	       \node[rbkhdot] at (2,-1) {};
	       \node at (2,-1) {$\scalemath{0.6}{2k-2}$};
	    \end{tikzpicture} \qquad \qquad \qquad \begin{tikzpicture}[scale=0.8]
	       \draw[violet] (1.1,-0.1) -- (2,-1);
	       \draw[violet] (2.9,-0.1) -- (2,-1);
	       \draw[violet] (1.1,-2) -- (2,-1);
	       \draw[violet] (2.9,-2) -- (2,-1);
	       \node at (2,-0.3) {$\s{\usmt{t}{k-2}{}}$};
	       \node at (1.9,-1.7) {$\s{\usmt{s}{k}{}}$};
	       \node[rbkhdot] at (2,-1) {};
	       \node at (2,-1) {$\scalemath{0.6}{2k-2}$};
	    \end{tikzpicture}\]
\end{example}

\begin{example}
Here are possible presentations for two morphisms that we will use later
\begin{equation}
    \begin{tikzpicture}[scale=0.8]
	       \draw[blue] (1.1,-0.1) -- (2,-1);
	       \draw[red] (2.9,-0.1) -- (2,-1);
	       \node at (2,-0.4) {$\s{\usmt{t}{2k}{}}$};
	       \node[rkhdot] at (2,-1) {};
	       \node at (2,-1) {$\s{2k}$};
	    \end{tikzpicture}\raisebox{0.1cm}{= \ \  } \raisebox{-7ex}{\begin{tikzpicture}[scale=0.75]
        \draw[blue] (2.9,-2) to[out=-90,in=0] (1.2,-3.5) to[out=180,in=-90] (-0.5, -2);
	       \draw[blue] (-0.5,-0.1) -- (-0.5, -2);
	       \draw[violet] (1.1,-2) to[out=-90,in=0] (0.8,-2.5) to[out=-180,in=-90] (0.5, -2) ;
	       \draw[violet] (0.5,-0.1) -- (0.5,-2);
	       \draw[myyellow] (1.1,-0.1) -- (2,-1);
	       \draw[red] (2.9,-0.1) -- (2,-1);
	       \draw[violet] (1.1,-2) -- (2,-1);
	       \draw[blue] (2.9,-2) -- (2,-1);
	       \node at (2,-0.3) {$\ldots$};
	       \node at (2,-1.7) {$\ldots$};
	       \node[rkhdot] at (2,-1) {};
	       \node at (2,-1) {$\s{2k}$};
	    \end{tikzpicture}} \qquad \qquad  \begin{tikzpicture}[scale=0.8]
	       \draw[blue] (1.1,-0.1) -- (2,-1);
	       \draw[blue] (2.9,-0.1) -- (2,-1);
	       \node at (2,-0.3) {$\s{\usmt{t}{2k+1}{}}$};
	       \node[rkhdot] at (2,-1) {};
	       \node at (2,-1) {$\s{2k}$};
	    \end{tikzpicture}\raisebox{0.1cm}{= \ \  } \raisebox{-7ex}{\begin{tikzpicture}[scale=0.75]
        \draw[blue] (2.9,-2) to[out=-90,in=0] (1.2,-3.5) to[out=180,in=-90] (-0.5, -2);
	       \draw[blue] (-0.5,-0.1) -- (-0.5, -2);
	       \draw[violet] (1.1,-2) to[out=-90,in=0] (0.8,-2.5) to[out=-180,in=-90] (0.5, -2) ;
	       \draw[violet] (0.5,-0.1) -- (0.5,-2);
	       \draw[myyellow] (1.1,-0.1) -- (2,-1);
	       \draw[red] (2.9,-0.1) -- (2,-1);
	       \draw[violet] (1.1,-2) -- (2,-1);
	       \draw[blue] (2.9,-2) -- (2,-1);
	       \draw[blue] (2.9,-2) -- (3.8,-1);
	       \draw[blue] (3.8,-0.1) -- (3.8,-1);
	       \node at (2,-0.3) {$\ldots$};
	       \node at (2,-1.7) {$\ldots$};
	       \node[rkhdot] at (2,-1) {};
	       \node at (2,-1) {$\s{2k}$};
	    \end{tikzpicture}} 
\end{equation}  
\end{example}

\subsection{Equivalence}
To distinguish between indecomposable objects in the diagrammatic and bimodule categories, we will let $\Bs_w$ denote the indecomposable corresponding to $w\in W_\infty$ in $\kar(\Dext_\infty)$ while $B_w$ will denote the indecomposable Soergel Bimodule corresponding to $w$. For this section we will also let $\bsbimext:=\bsbimext(\hf, W_\infty)$.

\begin{theorem}
\label{affineequiv}
Assume all quantum numbers are invertible and that \cref{assump0}, \cref{assump1}, and \cref{assump2} hold. Define the $\kb-$linear functor $\Fext_{\infty}: \Dext_\infty\to  \bsbimext(\hf, W_\infty) $ on objects by $\Fext_{\infty}((s))=K_s$, $\Fext_{\infty}((t))=K_t$ and extended monoidally. On morphisms $\Fext_{\infty}$ is defined as in \cref{Mmaintheorem} on the subcategories $\Dext_s$ and $\Dext_t$ and additionally sends
\[  \Fext_{\infty}\paren{ \raisebox{-3ex}{\begin{tikzpicture}[scale=0.6]
	       \draw[blue] (1.1,-0.1) -- (2,-1);
	       \draw[red] (1.5,-0.1) -- (2,-1);
	       \draw[violet] (2.9,-0.1) -- (2,-1);
	       \draw[red] (1.1,-2) -- (2,-1);
	       \draw[blue] (1.5,-2) -- (2,-1);
	       \draw[violet] (2.9,-2) -- (2,-1);
	       \node at (2.1,-0.4) {$\scalemath{0.8}{\ldots}$};
	       \node at (2.1,-1.6) {$\scalemath{0.8}{\ldots}$};
	       \node[rhdot, minimum size=3.4mm] at (2,-1) {};
	       \node at (2,-1) {$\scalemath{0.7}{2k}$};
	    \end{tikzpicture}} }=\Omega_{\usmt{s}{k}{}}^{\usmt{t}{k}{}} \qquad \qquad  \Fext_{\infty}\paren{ \raisebox{-3ex}{\begin{tikzpicture}[scale=0.6]
	       \draw[red] (1.1,-0.1) -- (2,-1);
	       \draw[blue] (1.5,-0.1) -- (2,-1);
	       \draw[violet] (2.9,-0.1) -- (2,-1);
	       \draw[blue] (1.1,-2) -- (2,-1);
	       \draw[red] (1.5,-2) -- (2,-1);
	       \draw[violet] (2.9,-2) -- (2,-1);
	       \node at (2.1,-0.4) {$\scalemath{0.8}{\ldots}$};
	       \node at (2.1,-1.6) {$\scalemath{0.8}{\ldots}$};
	       \node[rhdot, minimum size=3.4mm] at (2,-1) {};
	       \node at (2,-1) {$\scalemath{0.7}{2k}$};
	    \end{tikzpicture}} }=\Omega_{\usmt{t}{k}{}}^{\usmt{s}{k}{}} \qquad \qquad \forall k \ge 2  \]
Then $\Fext_\infty$ is well defined and furthermore will be a monoidal equivalence. 
\end{theorem}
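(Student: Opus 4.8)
The strategy follows the template of the dihedral equivalence in \cite{DC}: I would first establish that $\Fext_\infty$ is well defined, then deduce essential surjectivity and fullness + faithfulness from a Hom space computation, bootstrapping off the results of the previous sections.

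\emph{Step 1: Well-definedness.} I need to check that $\Fext_\infty$ respects all the defining relations of $\Dext_\infty$. The $1$-color relations hold by \cref{Mmaintheorem} since $\Fext_\infty$ restricts to $\mathcal{F}^\ext$ on $\Dext_s$ and $\Dext_t$. For the $2$-color relations, I would go through the list in \cref{affinediagrel}: rotation invariance of the $2k$-extvalent generators maps to \cref{omegarotation} (using that $\Omega_{\underline{v}}^{\underline{w}}$ is honestly rotation invariant, while $\Phi$ was only signed rotation invariant — the parity argument in \cref{welldeflem} handles the sign); the $4$-Ext reduction relations \cref{diag4extreduct}, \cref{diag4extreduct2} map to \cref{4extreduct} and its corollaries \cref{4extreductcor}, \cref{4extreductcor2} after unwinding the definition $\Omega_{(t,s)}^{(s,t)}=\Phi_t^{(s,t,s)}$ composed with the appropriate cap/cups; Higher Ext Reduction \cref{highextreduct} and \cref{morehighextreduct} map to \cref{newgenabsorb} (the "$\Omega$ absorbs morphisms" theorem, together with \cref{newgenreduct}); Ext Valent Annihilation \cref{extvalentann} maps to \cref{newgensquare}; and the $2$-color cohomology relations \cref{diacohorelseq}, \cref{diacohorelteq} map to \cref{cohorelseq}, \cref{cohorelteq}. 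Since each diagrammatic generator was \emph{chosen} to correspond to an explicit morphism in $\bsbimext$ via $\Omega_{\usmt{s}{k}{}}^{\usmt{t}{k}{}}$ or $\Omega_{\usmt{t}{k}{}}^{\usmt{s}{k}{}}$, and since the definition of the diagrammatic $2k$-extvalent morphism for arbitrary boundary words (via merges/splits and rotations) mirrors how $\Omega_{\underline{v}}^{\underline{w}}$ is built from extvalent morphisms, the verification is largely bookkeeping that the relations we proved algebraically are exactly the ones we imposed diagrammatically.

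\emph{Step 2: Essential surjectivity.} This is immediate: $\Fext_\infty$ hits every $K_s$ and $K_t$ and every Bott-Samelson complex $K(\underline{w})$ by monoidality, and $\bsbimext$ is by definition generated by these.

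\emph{Step 3: Fullness and faithfulness.} As in \cite{DC}, by a standard argument (biadjunction, reducing to Hochschild cohomology, and the fact that double leaves span diagrammatic Hom spaces over $R$ by \cite{SC}) it suffices to establish the isomorphism
\[ \hom^{\bullet, \bullet}_{\Dext_\infty}(\Bs_\varnothing, \Bs_w) \cong \ext^{\bullet, \bullet}_{R^e}(R, B_w)  \]
compatibly with $\Fext_\infty$. Using that all quantum numbers are invertible we have the Jones-Wenzl decomposition $\mathbbm{1}_{\usmt{t}{k}{}} = \jw_{\usmt{t}{k}{}} \oplus (\text{lower terms})$ in both categories, so both sides decompose according to the summands $\Bs_y$ (resp. $B_y$) for $y \le \smt{t}{k}{}$. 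On the bimodule side, \cref{algextindecomp} gives an explicit right-$R$ basis for $\ext^{\bullet, \bullet}_{R^e}(R, B_{\smt{t}{k}{}})$ in terms of $\jw_{\usmt{t}{k}{}}$ composed with Hochschild dots and $\Phi_t$-type morphisms. On the diagrammatic side, I would show that the analogous diagrams (the images of those same basis elements under a section of $\Fext_\infty$) span $\hom^{\bullet,\bullet}_{\Dext_\infty}(\Bs_\varnothing, \Bs_{\smt{t}{k}{}})$ over $R$ — this is where the diagrammatic relations of \cref{affinediagrel} get used to reduce arbitrary diagrams, essentially re-running the light-leaves/pitchfork reduction argument of \cref{algextindecomp} but now inside $\Dext_\infty$. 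Since $\Fext_\infty$ sends the spanning set to a known $R$-basis, it is automatically an isomorphism of right $R$-modules, giving faithfulness and fullness simultaneously; one then upgrades from indecomposables to Bott-Samelson objects as in the dihedral case.

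\emph{Main obstacle.} The hard part is Step 3, specifically showing that the proposed diagrams \emph{span} $\hom^{\bullet,\bullet}_{\Dext_\infty}(\Bs_\varnothing, \Bs_w)$ using only the relations available in $\Dext_\infty$ — the algebraic proof of \cref{algextindecomp} freely used facts like "$R$ acts freely on $\ext^{1}$" and the explicit cocycle computations, which are not directly available diagrammatically, so one must find purely diagrammatic substitutes (the cohomology relations \cref{diacohorelseq}, \cref{diacohorelteq} play the role of the polynomial-forcing computation \cref{cohoreleq1}, and Higher Ext Reduction plays the role of the absorption lemmas). A secondary subtlety is bookkeeping the Koszul signs introduced by the super-exchange law when comparing compositions on the two sides, but this is the kind of thing that, as in \cite{M}, works out by a parity count once set up carefully.
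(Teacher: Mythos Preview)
Your proposal is correct and follows essentially the same approach as the paper: verify well-definedness relation by relation (Step~1), reduce by biadjunction and Jones--Wenzl to the Hom space computation on indecomposables (Steps~2--3), and show that the explicit generators from \cref{algextindecomp} span the diagrammatic side, with linear independence inherited from the bimodule side via $\Fext_\infty$. The one piece you have not yet articulated, which is the technical heart of the paper's argument, is the local ``untrapping'' procedure: given a Hochschild dot or $2k$-extvalent vertex enclosed by a strand of either color, one must show it can be pushed to the bottom of the diagram using only the defining relations (via $1$-color jumping, $4$-Ext reduction, and a fusion/polynomial-forcing trick to bring the extvalent vertex onto the same connected component as its trapping line so that Higher Ext Reduction applies) --- once that is done, the rest of your outline goes through exactly as you describe.
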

\begin{proof}
The defining relations  of $\Dext_\infty$ in \cref{affinediagrel} all hold in $\bsbimext(\hf, W_\infty)$ as shown in \cref{newgensect}. Therefore $\Fext_{\infty}$ is well defined and monoidal by construction. Because both $s$ and $K_s$ have biadjoints in their respective categories, it suffices to check $\hom^{\bullet, \bullet}_{\Dext_\infty}(\varnothing,\underline{w} ) \cong \hom_{\bsbimext}(R, \bs(\underline{w}))$ for all expresssions $\underline{w}$. As all quantum numbers are invertible the Jones-Wenzl projectors $ \jw_{\usmt{t}{2k}{} }$ are defined for all $k$ and as shown in \cite{DC} Section 5.4.2, the object $\underline{w}$ in $\mathscr{D}_\infty$ decomposes in $\kar(\mathscr{D}_\infty)$ into indecomposables exactly as $\bs(\underline{w})$ decomposes in $\bsbim$ into indecomposables. In other words the decomposition in \cref{bsdecompeq} holds with $\bs(\usmt{t}{k}{})$ replaced by $\usmt{t}{k}{}$ and $B_y$ replaced by $\Bs_y$. It follows that we just need to check the isomorphism on indecomposables, aka for all $w\in W_\infty$
\[  \hom^{\bullet, \bullet}_{\Dext_\infty}(\Bs_\varnothing, \Bs_w) \cong \hom_{\bsbimext}(K_\varnothing, K_w)=\ext^{\bullet, \bullet}_{R^e}(R, B_w)  \]
 The exterior forcing relation in $\Dext_\infty$ shows that $\xi\in \Lambda_{st}$ acts freely on $\hom^{\bullet, \bullet}_{\Dext_\infty}(\Bs_\varnothing, \Bs_w)$ in agreement with what happens in the bimodule category. Therefore we can assume that $\boxed{\Lambda_{st}=\kb}$ so that \cref{algextindecomp} gives us a description of the RHS in terms of diagramatics already. WLOG we can assume $w$ starts with $t$ and in fact we can asuume $w=\usmt{t}{2j}{}$ (the odd case will proceed exactly the same, but the color of the last strand below will be blue). It follows that we need to show that 
\begin{align*}
\hom^{0, \bullet}_{\Dext_\infty}(\Bs_\varnothing, \Bs_{\smt{t}{2j}{}})&=  \jw_{\usmt{t}{2j}{} }\circ \bunit \runit \ldots \runit\  R \\
\hom^{1, \bullet}_{\Dext_\infty}(\Bs_\varnothing, \Bs_{\smt{t}{2j}{}})&=  \jw_{\usmt{t}{2j}{} }\circ  \bhunit \runit \ldots \runit\ R\oplus  \jw_{\usmt{t}{2j}{} }\circ  \ 
\raisebox{-0.2cm}{\begin{tikzpicture}[scale=0.6]
	       \draw[blue] (1.1,-0.1) -- (2,-1);
	       \draw[red] (2.9,-0.1) -- (2,-1);
	       \node[rkhdot] at (2,-1) {};
	       \node at (2,-1) {$\s{2j}$};
	       \node at (2,-0.3) {$\scalemath{0.7}{\usmt{t}{2j}{}}$};
	    \end{tikzpicture}} \ R \\
\hom^{2, \bullet}_{\Dext_\infty}(\Bs_\varnothing, \Bs_{\smt{t}{2j}{}})&= \ \jw_{\usmt{t}{2j}{} }\circ \raisebox{-0.2cm}{\begin{tikzpicture}[scale=0.6]
	       \draw[blue] (1.1,-0.1) -- (2,-1);
	       \draw[red] (2.9,-0.1) -- (2,-1);
	       \node[rkhdot] at (2,-1) {};
	       \node at (2,-1) {$\s{2j}$};
	       \node[bhdot] at (1.4, -0.4) {};
	       \node at (2,-0.3) {$\scalemath{0.7}{\usmt{t}{2j}{}}$};
	    \end{tikzpicture}} \ R
\end{align*}
\textbf{Step 1:} The case $\hom^{0, \bullet}_{\Dext_\infty}(\Bs_\varnothing, \Bs_{\usmt{t}{2j}{}})$ was already done in \cite{DC}. 
For $\hom^{1, \bullet}_{\Dext_\infty}(\Bs_\varnothing, \Bs_{\usmt{t}{2j}{}})$, notice that the proof of \cref{algextindecomp} was entirely diagrammatic. It used the equivalence $\mathscr{D}_\infty\to \bsbim(\hf, W_\infty) $ so that diagrammatics can be used to prove results in the bimodule category along with a rotated version of the cohomology relations \cref{cohorelseq}, \cref{cohorelteq} which correspond to \cref{diacohorelseq}, \cref{diacohorelteq} in the diagrammatic category. Therefore the proof of \cref{algextindecomp} can be applied to show $\jw_{\usmt{t}{2j}{} }\circ  \bhunit \runit \ldots \runit$ and $\jw_{\usmt{t}{2j}{} }\circ  \ 
\raisebox{-0.2cm}{\begin{tikzpicture}[scale=0.6]
	       \draw[blue] (1.1,-0.1) -- (2,-1);
	       \draw[red] (2.9,-0.1) -- (2,-1);
	       \node[rkhdot] at (2,-1) {};
	       \node at (2,-1) {$\s{2j}$};
	       \node[bhdot] at (1.4, -0.4) {};
	       \node at (2,-0.3) {$\scalemath{0.7}{\usmt{t}{2j}{}}$};
	    \end{tikzpicture}} $
span $\hom^{1, \bullet}_{\Dext_\infty}(\Bs_\varnothing, \Bs_{\usmt{t}{2j}{}})$ as a right $R$ module if we can show that any morphism in $\hom^{1, \bullet}_{\Dext_\infty}(\varnothing, \,  \usmt{t}{2j}{})$ can be written as a $R$ linear combination of diagrams of the form 
\begin{equation}
\label{ext1lightleaves}
\bhunit \ \  \raisebox{-0.4cm}{\begin{tikzpicture}
\node (pq1) [dashed, shape=rectangle,
                     draw=black, semithick,
                     text width=0.1\linewidth,
                     align=center] {$\mathrm{L}_{  \usmt{s}{j-1}{}, \underline{f} }$};
\end{tikzpicture}} \  \ \textnormal{where} \ r(\underline{f})=\id  \quad   \raisebox{-0.8cm}{\ \  \begin{tikzpicture}
\draw[blue] (0.9,-0.3) to[out=-90,in=-180] (1.45,-1.35) to[out=0, in=-90] (2,-1);
\node[bhdot] at (1.45, -1.35) {};
\node at (2.2,-0.65) [dashed, shape=rectangle,
                     draw=black, semithick,
                     text width=0.1\linewidth,
                     align=center] {$\mathrm{L}_{  \usmt{s}{j-1}{}, \underline{f} }$};
\end{tikzpicture}} \ \  \textnormal{where} \  r(\underline{f})=t \quad \raisebox{-0.8cm}{\ \  
\begin{tikzpicture}
\draw[blue] (0.9,-0.3) to[out=-90,in=-180] (1.55,-1.95) to[out=0, in=-90] (2,-1.6);
\draw[violet] (1.4, -1.1) to[out=-90, in=-180] (2, -1.6);
\draw[violet] (2.6, -1.1) to[out=-90, in=0] (2, -1.6);
\node at (2, -1.3) {$\scalemath{0.8}{r(\underline{f})}$};
\node[rhdot] at (2, -1.6) {};
\node at (2.2,-0.65) [dashed, shape=rectangle,
                     draw=black, semithick,
                     text width=0.1\linewidth,
                     align=center] {$\mathrm{L}_{  \usmt{s}{j-1}{}, \underline{f} }$};
\end{tikzpicture}}\quad r(\underline{f}) \textnormal{ is anything}
\end{equation}
using the relations in \cref{affinediagrel}. Linear independence then follows by applying $\Fext_{\infty}$ and noting the corresponding morphisms in the bimodule category forms an $R$ basis.\\

\textbf{Step 2: }By cohomological degree reasons any diagram in $\hom^{1, \bullet}_{\Dext_\infty}(\varnothing,\,  \usmt{t}{2j}{})$ either has exactly one exterior box $\dboxed{\xi}$ where $\xi\in V$, or a Hochschild dot, or a red $2k-$extvalent map. As we assumed $\Lambda_{st}=\kb$, $V=\kb \alpha_s^\vee \oplus \kb \alpha_t^\vee$. Using the 1 color cohomology relation
\[ \runit \ \dboxed{\alpha_s^\vee}  =\rhunit \ \boxed{\alpha_s} \qquad \qquad \bunit \ \dboxed{\alpha_t^\vee}  =\bhunit \ \boxed{\alpha_t} \]
and \cref{eqn:exterior-boxes-mult} we can reduce any diagram with an exterior box to a right $R$ linear sum of diagrams with Hochschild dots. \\
\textbf{Step 3:} Given a diagram with a hochschild dot or a red $2k-$extvalent morphism at the \underline{bottom}, we claim it can be written as a right $R$ linear sum of diagrams in \cref{ext1lightleaves}. We first do the case of Hochschild dots. Using \cref{4extreduct} any red hochschild dot can be converted to a blue hochschild dot and a $4-$ext valent vertex. Now given a diagram $D$ with a blue hochschild dot at the bottom, the rest of the diagram is some morphism in $\hom_{\mathscr{D}_\infty}(t, \usmt{t}{2k}{} )$. From \cite{SC} we know that double leaves form a right $R$ basis for this. The only possible light leaves for the bottom part of the double leaf map are $\scalemath{0.9}{\bzero}$ and $\raisebox{1.7ex}{\bcounit}$. Thus $D$ is a $R$ linear sum of diagrams of the form
\begin{equation}
  \raisebox{-1cm}{\ \  \begin{tikzpicture}
\draw[blue] (2.2,-1) to[out=-90,in=90] (2.2,-1.5) ;
\node[bhdot] at (2.2, -1.5) {};
\node at (2.2,-0.65) [dashed, shape=rectangle,
                     draw=black, semithick,
                     text width=0.1\linewidth,
                     align=center] {$\mathrm{L}_{  \usmt{t}{j}{}, \underline{f} }$};
\end{tikzpicture}} \ \  \textnormal{where} \  r(\underline{f})=t\qquad \qquad \quad   \raisebox{-1cm}{\begin{tikzpicture}
\node (pq1) [dashed, shape=rectangle,
                     draw=black, semithick,
                     text width=0.1\linewidth,
                     align=center] {$\mathrm{L}_{  \usmt{t}{j}{}, \underline{f} }$};
                     \draw[blue] (0,-1) -- (0,-0.6);
 \node[bdot] at (0,-0.6) {};
  \node[bhdot] at (0,-1) {};
\end{tikzpicture}} \  \ \textnormal{where} \ r(\underline{f})=\id
\end{equation}
Clearly diagrams in the form on the left above are in the span of diagrams in \cref{ext1lightleaves} while for diagrams in the form on the right above we can use 1 color hochschild jumping \cref{eqn:hochjump} to move the blue hochschild dot onto a blue strand in $\mathrm{L}_{  \usmt{t}{j}{}, \underline{f} }$ and then replacing the barbell with $\alpha_t$ so that the result is again clearly in the the span of diagrams in \cref{ext1lightleaves}.\\

Now, given a diagram with a red $2k-$extvalent morphism at the bottom, the rest of the diagram is some morphism in $\hom_{\mathscr{D}_\infty}(\usmt{t}{2k}{}, \usmt{t}{2j}{} )$ which has a right $R$ basis given by double leaves. Because of \cref{highextreduct} and \cref{diag4extreduct}, one can show that pitchforks and therefore generalized pitchforks kill the red $2k-$extvalent morphism. Thus, as in Step 2 of the proof of \cref{algextindecomp} the bottom light leaf of the double leaf map on top of the red $2k-$extvalent consists of only dots and straight lines. But this means that our diagram is exactly in the form of the right most morphism in \cref{ext1lightleaves}.\\
\textbf{Step 4: } Now given a diagram with a Hochschild dot, if it's not at the bottom of the diagram, then it must be trapped by a line, hereafter referred to as the trapping line. If the trapping line has the same color as the Hochschild dot, then we can use 1-color Hochschild Jumping to move the Hocschild dot onto the line. Otherwise if the trapping line is a different color, we can then use $4-$ext reduction \cref{diag4extreduct} to move the Hochschild dot onto the trapping line at the cost of a red $4-$extvalent morphism, as seen below
\begin{center}
	\raisebox{0.05cm}{
\begin{tikzpicture}[scale=0.6]
           \draw[dotted, thick] (2,-0.5) circle (40pt);
	       \draw[blue] (0.9,0) to[out=-90, in=-180] (2,-1.5) to[out=0, in=-90] (3.1, 0);
	       \draw[red] (2,0) -- (2,-0.9);
	       \node[rhdot] at (2,-0.9) {};
	       \node at (1.4,-0.4) {$\cdots$};
	       \node at (2.6,-0.4) {$\cdots$};
	\end{tikzpicture}} \raisebox{0.7cm}{$ \ = \ $}
\begin{tikzpicture}[scale=0.6]
           \draw[dotted, thick] (2,-0.5) circle (40pt);
	       \draw[blue] (0.9,0) to[out=-90, in=-180] (2,-1.5) to[out=0, in=-90] (3.1, 0);
	       \draw[red] (2,0) -- (2,-0.9);
	       \node at (1.4,-0.4) {$\cdots$};
	       \node at (2.6,-0.4) {$\cdots$};
	       \node[bhdot] at (2,-1.5) {};
	       \node[rdot] at (2,-0.9) {};
	\end{tikzpicture}
	 \raisebox{0.7cm}{$ \ + \ $}
	 \raisebox{-1ex}{
	\begin{tikzpicture}[scale=0.6]
	\draw[dotted, thick] (2,-1) circle (50pt);
	       \draw[blue] (0.9,0) to[out=-90, in=-180] (2,-1.2) to[out=0, in=-90] (3.1, 0);
	       \draw[red] (2,0) -- (2,-1.2);
	       \draw[red] (2,-1.2) -- (2,-2.2);
	       \node[rdot] at (2,-2.2) {};
	       \node[rkhdot] at (2,-1.2) {};
	       \node at (2,-1.2) {$4$};
	       \node at (1.4,-0.4) {$\cdots$};
	       \node at (2.6,-0.4) {$\cdots$};
	\end{tikzpicture}}
\end{center}
\begin{center}
	\raisebox{0.05cm}{
\begin{tikzpicture}[scale=0.6]
\draw[dotted, thick] (2,-0.5) circle (40pt);
	       \draw[red] (0.9,0) to[out=-90, in=-180] (2,-1.5) to[out=0, in=-90] (3.1, 0);
	       \draw[blue] (2,0) -- (2,-0.9);
	       \node[bhdot] at (2,-0.9) {};
	       \node at (1.4,-0.4) {$\cdots$};
	       \node at (2.6,-0.4) {$\cdots$};
	\end{tikzpicture}} \raisebox{0.7cm}{$ \ = \ $}
\begin{tikzpicture}[scale=0.6]
\draw[dotted, thick] (2,-0.5) circle (40pt);
	       \draw[red] (0.9,0) to[out=-90, in=-180] (2,-1.5) to[out=0, in=-90] (3.1, 0);
	       \draw[blue] (2,0) -- (2,-0.9);
	       \node at (1.4,-0.4) {$\cdots$};
	       \node at (2.6,-0.4) {$\cdots$};
	       \node[rhdot] at (2,-1.5) {};
	       \node[bdot] at (2,-0.9) {};
	\end{tikzpicture}
	 \raisebox{0.7cm}{$ \ + \ $}
	 \raisebox{-1ex}{
	\begin{tikzpicture}[scale=0.6]
	\draw[dotted, thick] (2,-1) circle (50pt);
	       \draw[red] (0.9,0) to[out=-90, in=-180] (2,-1.2) to[out=0, in=-90] (3.1, 0);
	       \draw[blue] (2,0) -- (2,-1.2);
	       \draw[blue] (2,-1.2) -- (2,-2.2);
	       \node[bdot] at (2,-2.2) {};
	       \node[rkhdot] at (2,-1.2) {};
	       \node at (2,-1.2) {$4$};
	       \node at (1.4,-0.4) {$\cdots$};
	       \node at (2.6,-0.4) {$\cdots$};
	\end{tikzpicture}}
\end{center}
Therefore it suffices to consider the case when a red $2k-$extvalent morphism is trapped by a line. First suppose that the red $2k-$extvalent morphism is on the same connected component as the trapping line, as seen below. We can then apply \cref{highextreduct}, higher ext reduction so that the red $2k-$extvalent absorbs the trapping line and therefore moves further to the bottom.
\begin{equation}
\label{absorbtrap}
\raisebox{-8ex}{
    \begin{tikzpicture}[scale=0.65]
	\draw[dotted, thick] (2,-0.8) circle (60pt);
	       \draw[red] (0.9,0) to[out=-90, in=-180] (2,-1.2) to[out=0, in=-90] (3.1, 0);
	       \draw[blue] (2, -1.2) -- (2, -2.4);
	       \node[rkhdot] at (2,-1.2) {};
	       \node at (2,-1.2) {$\s{2k}$};
	       \node at (2,-0.4) {$\cdots$};
	       \draw[blue] (0.3,0) to[out=-90, in=-180] (2,-2.4) to[out=0, in=-90] (3.7, 0);
	\end{tikzpicture}\raisebox{1.1cm}{$ \ = \ $}
	\begin{tikzpicture}[scale=0.65]
	\draw[dotted, thick] (2,-1) circle (60pt);
	       \draw[red] (0.9,0) to[out=-90, in=-180] (2,-1.2) to[out=0, in=-90] (3.1, 0);
	       \draw[red] (2, -1.2) -- (2, -2.4);
	       \node[rdot] at (2,-2.4) {};
	       \node at (2,-0.4) {$\cdots$};
	       \draw[blue] (0.3,0) to[out=-90, in=-180] (1.1,-2.4) to[out=0, in=-90] (1.9, -1.2);
	       \draw[blue] (2.1,-1.2) to[out=-90, in=-180] (2.9,-2.4) to[out=0, in=-90] (3.7, 0);
	       \node[rbkhdot] at (2,-1.2) {};
	       \node at (2,-1.2) {$\scalemath{0.6}{2k+2}$};
	\end{tikzpicture}}
\end{equation}
On the other hand, if the the red $2k-$extvalent morphism is not on the same connected component as the trapping line, we can first apply \cref{morehighextreduct} to introduce a red and blue dot into the red $2k-$extvalent and then apply fusion to obtain
\begin{equation}
\label{samecompeq}
\raisebox{-8ex}{
    \begin{tikzpicture}[scale=0.65]
	\draw[dotted, thick] (2,-0.8) circle (60pt);
	       \draw[red] (0.9,0) to[out=-90, in=-180] (2,-1.2);
	       \draw[blue]  (2,-1.2) to[out=0, in=-90] (3.1, 0);
	       \node[rkhdot] at (2,-1.2) {};
	       \node at (2,-1.2) {$\s{2k}$};
	       \node at (2,-0.4) {$\cdots$};
	       \draw[blue] (0.3,0) to[out=-90, in=-180] (2,-2.4) to[out=0, in=-90] (3.7, 0);
	\end{tikzpicture}\raisebox{1.1cm}{$ \ = \ $}
	\begin{tikzpicture}[scale=0.65]
	\draw[dotted, thick] (2,-0.8) circle (60pt);
	       \draw[red] (0.9,0) to[out=-90, in=-180] (2,-1.2);
	       \draw[blue]  (2,-1.2) to[out=0, in=-90] (3.1, 0);
	       \draw[blue] (2, -1.2) -- (1.6,-2);
	       \draw[red] (2, -1.2) -- (2.4,-2); 
	       \node[bdot] at (1.6, -2) {};
	       \node[rdot] at (2.4, -2) {};
	       \node[rbkhdot] at (2,-1.2) {};
	       \node at (2,-1.2) {$\scalemath{0.6}{2k+2}$};
	       \node at (2,-0.4) {$\cdots$};
	       \draw[blue] (0.3,0) to[out=-90, in=-180] (2,-2.4) to[out=0, in=-90] (3.7, 0);
	\end{tikzpicture}\raisebox{1.1cm}{$ \ = \ $}
	\begin{tikzpicture}[scale=0.7]
	\draw[dotted, thick] (2,-0.8) circle (57pt);
	       \draw[red] (0.9,0) to[out=-90, in=-180] (2,-1.2);
	       \draw[blue]  (2,-1.2) to[out=0, in=-90] (3.1, 0);
	       \draw[blue] (2, -1.2) -- (1.25,-2.2);
	       \draw[red] (2, -1.2) -- (2.4,-2); 
	       \node[rdot] at (2.4, -2) {};
	       \node[rbkhdot] at (2,-1.2) {};
	       \node at (2,-1.2) {$\scalemath{0.6}{2k+2}$};
	       \node at (2,-0.4) {$\cdots$};
	       \node at (1.1,-1.4) {$ \scalemath{0.7}{\boxed{\rho_t}}$};
	       \draw[blue] (0.3,0) to[out=-90, in=-180] (2,-2.4) to[out=0, in=-90] (3.7, 0);
	\end{tikzpicture}\raisebox{1.1cm}{$ \ - \ $}
	\begin{tikzpicture}[scale=0.7]
	\draw[dotted, thick] (2,-0.8) circle (57pt);
	       \draw[red] (0.9,0) to[out=-90, in=-180] (2,-1.2);
	       \draw[blue]  (2,-1.2) to[out=0, in=-90] (3.1, 0);
	       \draw[blue] (2, -1.2) -- (1.25,-2.2);
	       \draw[red] (2, -1.2) -- (2.6,-1.8); 
	       \node[rdot] at (2.6, -1.8) {};
	       \node[rbkhdot] at (2,-1.2) {};
	       \node at (2,-1.2) {$\scalemath{0.6}{2k+2}$};
	       \node at (2,-0.4) {$\cdots$};
	       \node at (2,-2) {$ \scalemath{0.6}{\boxed{t(\rho_t)}}$};
	       \draw[blue] (0.3,0) to[out=-90, in=-180] (2,-2.4) to[out=0, in=-90] (3.7, 0);
	\end{tikzpicture}}
\end{equation}
Because the red $2k-$extvalent is not on the same connected component as the trapping line, the polynomials $\rho_t$ and $t(\rho_t)$ are free to slide all the way to the top of the diagram. As a result, locally we end up with diagrams that exactly look like the LHS of \cref{absorbtrap} and so we are done. \\

\textbf{Step 5: }Similarly, for $\hom^{2, \bullet}_{\Dext_\infty}(\Bs_\varnothing, \Bs_{\usmt{t}{2k}{}})$, the proof of \cref{algextindecomp} can be applied if we can show any possible morphism in $\hom^{2, \bullet}_{\Dext_\infty}(\Bs_\varnothing, \Bs_{\usmt{t}{2k}{}})$ can be written as a $R$ linear combination of diagrams of the form \cref{ext2lightleaf}. Again by cohomological degree reasons, any diagram in $\hom^{2, \bullet}_{\Dext_\infty}(\Bs_\varnothing, \Bs_{\usmt{t}{2k}{}})$ has exactly two subdiagrams from the list 
\[ \raisebox{2.5ex}{\rhzero} , \quad \raisebox{2.5ex}{\bhzero},\quad  \raisebox{-1ex}{ \begin{tikzpicture}[scale=0.6]
	       \draw[red] (1.1,-0.1) -- (2,-1);
	       \draw[blue] (1.5,-0.1) -- (2,-1);
	       \draw[violet] (2.9,-0.1) -- (2,-1);
	       \draw[blue] (1.1,-2) -- (2,-1);
	       \draw[red] (1.5,-2) -- (2,-1);
	       \draw[violet] (2.9,-2) -- (2,-1);
	       \node at (2.1,-0.4) {$\scalemath{0.8}{\ldots}$};
	       \node at (2.1,-1.6) {$\scalemath{0.8}{\ldots}$};
	       \node[rhdot, minimum size=3.4mm] at (2,-1) {};
	       \node at (2,-1) {$\scalemath{0.7}{2k}$};
	    \end{tikzpicture}}  \]
Using the diagrammatic reductions above, we can assume both subdiagrams are at the bottom of the diagram. Using fusion and $4-$Ext Reduction \cref{diag4extreduct} we can assume that both subdiagrams are on the same connected component. Now Hochschild Annihilation \cref{eqn:hdot-square} and Ext Valent Annihilation \cref{extvalentann} show that the subdiagrams must be distinct. One of those subdiagrams must be a red $2k-$extvalent morphism, as red and blue Hochschild dots cannot be on the same connected component without the presence of a red $2k-$extvalent morphism. But now we are done, as one can use $2-$color Hochschild Jumping \cref{2colorhochjump} to move the Hochschild dot so that our diagram is of the form 
\begin{center}
    \begin{tikzpicture}
\draw[blue] (0.9,-0.3) to[out=-90,in=-180] (1.55,-1.95) to[out=0, in=-90] (2,-1.6);
\draw[violet] (1.4, -0.9) to[out=-90, in=-180] (2, -1.6);
\draw[violet] (2.6, -0.9) to[out=-90, in=0] (2, -1.6);
\node at (2, -1.2) {$\scalemath{0.9}{\underline{v}}$};
\node[rkhdot] at (2, -1.6) {};
\node at (2,-1.6) {$\scalemath{0.7}{2j}$};
\node[bhdot] at (1.55, -1.95) {};
\node at (2.2,-0.65) [dashed, shape=rectangle,
                     draw=black, semithick,
                     text width=0.1\linewidth,
                     align=center] {$\phantom{L}\cdots\phantom{L}$};
\end{tikzpicture} \qquad \quad \raisebox{0.8cm}{where $|mc(t\underline{v})|=2j$}
\end{center}
and since the top portion of the diagram is a morphism in $\mathscr{D}_\infty$, we can proceed as in Step 3. 
\end{proof}

\section{Computations and Relations for $m_{st}<\infty$}
\label{finitesect}

In the finite case we will have that $(ts)^{m_{st}}=1$ and in addition to \cref{assump0},\cref{assump1}, \cref{assump2} we will also add the following two assumptions
\begin{assumption}
 $\hf$ is a faithful realization of the finite dihedral group $W_{m_{st}}$. As noted in \cite{DC} Section 1.3, this means that $q^{2m_{st}}=1$ where $q$ is a primitive $2m_{st}$ root of unity and $[m_{st}]=0$.
\end{assumption}

\begin{assumption}[lesser invertibility]
For all $k<m_{st}$, $[k]$ is invertible in $\kb$. 
\end{assumption}

As before, we can take $\Lambda_{st}=\kb$. As an aside in \cite{DC} Elias assumes

\begin{assumption}[Local non-degeneracy]
Whenever $m_{st}<\infty$, $4-a_{ts}a_{st}$ is invertible in $\kb$.
\end{assumption}

and this will imply \cref{assump1}, as one can then take(in \cite{DC} this was denoted $\omega_s$)
\[ \rho_s=\frac{2\alpha_s-a_{ts}\alpha_t}{4-a_{ts}a_{st}} \]

Recall that now there is a $2m_{st}-$valent morphism which we denote by $v_t^s(m_{st})$ sending
\[\raisebox{4ex}{$v_t^s(m_{st})=$} \begin{tikzpicture}[scale=0.75]
	       \draw[red] (1.1,-0.1) -- (2,-1);
	       \draw[blue] (1.5,-0.1) -- (2,-1);
	       \draw[myyellow] (2.9,-0.1) -- (2,-1);
	       \draw[blue] (1.1,-2) -- (2,-1);
	       \draw[red] (1.5,-2) -- (2,-1);
	       \draw[violet] (2.9,-2) -- (2,-1);
	       \node at (2.1,-0.4) {$\ldots$};
	       \node at (2.1,-1.6) {$\ldots$};
	    \end{tikzpicture}  \qquad \qquad \raisebox{4ex}{\begin{tikzcd}
	    1(\usmt{s}{m_{st}}{})\\
	    1(\usmt{t}{m_{st}}{})\arrow[u]
	    \end{tikzcd}} \]
Outside of $m_{st}=2$, this isn't enough to completely define the $2m_{st}$ valent morphism, but this will suffice for our purposes. Many of the results from the affine case will still hold such as the following analogue of \cref{keycor} 

\begin{lemma}
\label{kerfinitelem}
When $m_{st}<\infty$, the image of $\rhosw: \bsw\to \bsw$ is a free right $R$ module and the kernel is also a free right $R$ module with basis given by $\set{\ll{f} \, | r(\underline{f})=\id \textnormal{ or }t }$.
\end{lemma}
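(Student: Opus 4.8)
The plan is to mimic the proof of Corollary~\ref{keycor}, checking that the only place the assumption $m_{st}=\infty$ was used --- namely the nonvanishing of the diagonal entries $r(\underline{e})^{-1}(\rho_s)-\rho_s$ of $\rhosw$ in the light leaves basis --- still goes through in the finite case under the lesser invertibility assumption. First I would recall that Lemma~\ref{keylemma}, which establishes that $\rhosw$ is upper triangular with diagonal entries $r(\underline{e})^{-1}(\rho_s)-\rho_s$ when the light leaves are ordered by $r(\underline{f})$ along a refinement of the Bruhat order, is purely formal: its proof (Proposition 12.26 in \cite{EMTW20}) makes no reference to whether $W$ is finite or infinite, so it holds verbatim for $W_{m_{st}}$. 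The only subtlety is that in the finite case the reduced expression for $r(\underline{f})$ is no longer unique, but as remarked in the excerpt just after the definition of the light leaf map, this only changes $\overline{\mathrm{LL}}_{\underline{w},\underline{f}}$ by postcomposition with $2m_{st}$-valent morphisms, which does not affect the triangularity statement.

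The heart of the argument is then to verify that $r(\underline{e})^{-1}(\rho_s)-\rho_s\neq 0$ for all $r(\underline{e})\notin\{\id,t\}$ in $W_{m_{st}}$. The elements of $W_{m_{st}}$ other than $\id$ and $t$ are exactly $s(ts)^m$ for $0\le m$ and $(ts)^m$ for $1\le m$, with the ranges now capped by $m<m_{st}$ (and with the longest element having two expressions, which is harmless). Formulas \eqref{stsrhos} and \eqref{tsrhos} still hold since they are algebraic identities in $R$ depending only on $a_{st}=a_{ts}$, and the computation of $O_m, E_m$ in the proof of Corollary~\ref{keycor} gives $\eqref{stsrhos}=0\iff q^{2m+2}=1$ and $\eqref{tsrhos}=0\iff q^{2m}=1$. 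Since $q$ is a primitive $2m_{st}$-th root of unity, $q^{2j}=1$ forces $m_{st}\mid j$; for $\eqref{stsrhos}$ this means $m+1\ge m_{st}$, i.e. $m\ge m_{st}-1$, which is outside the allowed range $0\le m< m_{st}-1$ for $s(ts)^m\notin\{\id,t,\text{longest}\}$ --- and one checks the longest element case separately using that $[m_{st}]=0$ does not make the relevant sum vanish (only $[m_{st}-1]$-type terms appear), or more simply notes $s(ts)^{m_{st}-1}=(ts)^{m_{st}}\cdot(\text{stuff})$ lands at an element $\neq \id, t$ whose length is $<$ that of the whole expression. For $\eqref{tsrhos}$, $q^{2m}=1$ forces $m\ge m_{st}$, again outside the range $1\le m\le m_{st}-1$. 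Here $\alpha_s,\alpha_t$ are linearly independent by \cref{assump2}, so nonvanishing of the coefficient sums implies nonvanishing of the element.

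Given this, the rest of the proof of Corollary~\ref{keycor} carries over unchanged: the vectors $\{\rhosw(\ll{f}) \mid r(\underline{f})\neq \id, t\}$ assemble into a matrix that is (after reordering) upper triangular with nonzero diagonal entries, hence are linearly independent over the domain $R$, so $\im\rhosw$ is free; and the diagrammatic observation that $\ll{f}\in\ker\rhosw$ whenever $r(\underline{f})\in\{\id,t\}$ (one can slide $\rho_s$ across the red/blue strands protruding below the light leaf) shows $\ker\rhosw$ is exactly the span of $\{\ll{f}\mid r(\underline{f})=\id\text{ or }t\}$, which is therefore a basis. I expect the main --- really the only --- obstacle to be the bookkeeping around the longest element $w_0$ of $W_{m_{st}}$, which has two reduced expressions and sits at the boundary of the ranges above; one must confirm that $w_0^{-1}(\rho_s)-\rho_s$ is genuinely nonzero (equivalently that the relevant truncated sums $O_{m}$ or $E_m$ do not vanish at the critical value of $m$), using that $q$ being a \emph{primitive} $2m_{st}$-th root of unity rules out the accidental coincidences, rather than merely $q^{2m_{st}}=1$.
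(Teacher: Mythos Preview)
Your approach is the same as the paper's: reduce to Corollary~\ref{keycor} and verify that the diagonal entries $r(\underline{e})^{-1}(\rho_s)-\rho_s$ vanish only for $r(\underline{e})\in\{\id,t\}$. Your range-restriction argument is correct, but you introduce an unnecessary and partly erroneous digression about the longest element. The paper handles the nonvanishing more cleanly and with no case analysis: by \eqref{vanishingeq} and the fact that $q$ is a primitive $2m_{st}$-th root of unity, \eqref{stsrhos} vanishes iff $m=m_{st}\ell-1$ and \eqref{tsrhos} vanishes iff $m=m_{st}\ell$ for some $\ell\in\Zb^+$. But $(ts)^{m_{st}}=1$ in $W_{m_{st}}$, so $s(ts)^{m_{st}\ell-1}=t$ and $(ts)^{m_{st}\ell}=\id$ --- the diagonal entry vanishes \emph{precisely} when the group element is $\id$ or $t$, so there is nothing further to check. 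In particular your parenthetical claim that $s(ts)^{m_{st}-1}$ ``lands at an element $\neq\id,t$'' is false (it equals $t$), and the longest element --- which is $s(ts)^{(m_{st}-1)/2}$ for $m_{st}$ odd or $(ts)^{m_{st}/2}$ for $m_{st}$ even --- sits strictly inside your ranges and requires no separate treatment.
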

\begin{proof}
\cref{keylemma} holds regardless of what $m_{st}$ is and so in the proof of \cref{keycor} we just need to show that \cref{stsrhos} isn't 0 for $s(ts)^m\neq \id \textnormal{ or }t $ and \cref{tsrhos} isn't 0 for $(ts)^m\neq \id \textnormal{ or }t $. By \cref{vanishingeq} and since $q$ is a primitive $2m_{st}$ root of unity we see that
\[ \cref{stsrhos}=0 \iff m=m_{st}\ell -1 \qquad \quad \cref{tsrhos}=0 \iff m=m_{st}\ell \qquad \ell \in \mathbb{Z}^+\]
But we have that
\[ s(ts)^{m_{st}\ell -1}=t (ts)^{m_{st}\ell}=t\qquad \quad (ts)^{m_{st}\ell}=\id  \]
and thus our claim is proven. 
\end{proof}

The rest of \cref{maincomptutesec} holds mutatis mutandis and so we also have that

\begin{theorem}
\label{maincohoisofin}
When $m_{st}<\infty$ and $\Lambda_{st}=\kb$ we have an isomorphism of right $R-$modules, 
\begin{align*}
\ext^{0, \bullet}_{R^e}(B_t, \mathrm{BS}(\underline{w}))&\cong \ker\rhosw \underline{1 } (-1)\cong \ker\rhosw (-1) \\
\ext^{1, \bullet}_{R^e}(B_t, \mathrm{BS}(\underline{w}))&\cong
\ker\rhosw\underline{\rho_t t(\rho_t)} (-1)\oplus  \widetilde{\Db(\ker\rhosw )}\underline{\rho_s}(-1)\\
&\cong  \ker\rhosw (3)\oplus  \widetilde{\Db(\ker\rhosw )}(1)  \\
\ext^{2, \bullet}_{R^e}(B_t, \mathrm{BS}(\underline{w}))&\cong \Db(\ker\rhosw) \underline{\rho_s \wedge \rho_t t(\rho_t)}(-1) \cong \Db(\ker\rhosw) (5) 
\end{align*}
\end{theorem}

 \begin{remark}
 The statements above for the finite case mirror the affine case word for word, but there's a subtle difference. Namely in the finite case, for a given expression $\underline{w}$ and subexpression $\underline{f}$, the light leaf morphism $\mathrm{L}_{  \underline{w}, \underline{f} }$ is possibly different than the affine case. For example, consider $\underline{w}=(s,t,s,t,s)$ and $\underline{f}=(1,1,1,1,1)$. When $m_{st}=\infty$ The light leaf algorithm then returns the diagram on the left below
 \begin{center}
     \raisebox{0.8cm}{\scalemath{2.1}{\rzero \  \bzero \ \rzero \  \bzero \ \rzero}  \qquad \qquad \qquad}
     \begin{tikzpicture}[scale=0.45, thick]
           \draw[red] (1.1,2) -- (2,1);
           \draw[blue] (2,2) -- (2,1);
           \draw[red] (2.9,2) -- (2,1);
           \draw[blue] (1.1,0) -- (2,1);
           \draw[red] (2,0) -- (2,1);
           \draw[blue] (2.9,0) -- (2,1);
           \draw[blue] (2.9,0) to[out=-90, in=180] (3.2,-0.5) to[out=0, in=-90] (3.5,2);
           \draw[red] (2,0) to[out=-90, in=180] (3,-1) to[out=0, in=-90] (4,2);
           \draw[blue] (1.1,0) --(1.1, -1);
     \end{tikzpicture}
 \end{center}
 while for $m_{st}=3$ the light leaf algorithm returns the diagram on the right above. In particular $r((1,1,1,1,1))=t$ and so by \cref{kerfinitelem} for $m_{st}=3$, $\mathrm{L}_{  (s,t,s,t,s), (1,1,1,1,1) }\in \ker \rho_s^e((s,t,s,t,s))$ and so by \cref{maincohoisofin} corresponds to a morphism in $\ext^{0, \bullet}_{R^e}(B_t, \mathrm{BS}(s,t,s,t,s))$. Of course this is nothing more the $6-$valent morphism, but where 2 of the strands have been twisted up.\\
 
To summarize, in the finite case $\ker \rhosw$ is possibly larger and consequently the groups $\ext^{\bullet, \bullet}_{R^e}(B_t, \mathrm{BS}(\underline{w}))$ are also possibly larger.
 \end{remark}

\subsection{Dimension 1 Calculations}

\begin{lemma}
\label{lowestdeglemfin}
Suppose $m_{st}<\infty$ and suppose $\underline{w}$ is a non repeating expression. If $|\underline{w}|<2m_{st}$, then the lowest internal degree element in $\hom_{R^e}(R, \bsw)$ is of degree 1, 2 if $|\underline{w}|$ is odd, even respectively. If $|\underline{w}|\ge 2m_{st}$ the lowest internal degree element in $\hom_{R^e}(R, \bsw )$ is of degree $2m_{st}-|\underline{w}|$.
\end{lemma}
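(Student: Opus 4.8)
The plan is to proceed exactly as in the affine \cref{lowestdeglem}, using induction on $|\underline{w}|$ together with the diagrammatic description of $\hom_{R^e}(R, \bs(\underline{w}))$, but keeping track of the new phenomenon that occurs once $|\underline{w}|$ reaches $2m_{st}$, namely that the $2m_{st}$-valent morphism $v_t^s(m_{st})$ becomes available as a generator. First I would reduce to the even case as in \cref{lowestdeglem}: when $|\underline{w}|$ is odd a non-repeating expression is $(s,t,s,\ldots)$ or $(t,s,t,\ldots)$, and the same adjunction $\hom_{R^e}(R, \bs(t,\ldots,t)) \cong \hom_{R^e}(B_t, \bs(t,\ldots)) \cong \hom_{R^e}(B_t, \bs(t,\ldots)(1)) \oplus \hom_{R^e}(B_t, \bs(t,\ldots)(-1))$ shifts the problem down by one strand and one degree, so the odd case follows from the even case for a shorter expression (and one checks the claimed degree formula is consistent under this shift: $2m_{st} - (|\underline{w}|-1) - 1 = 2m_{st} - |\underline{w}|$).

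For the even case I would induct on $|\underline{w}|$. A lowest-degree morphism in $\hom_{R^e}(R, \bs(\underline{w}))$ is built from the generators of $\bsbim(\hf, W_{m_{st}})$; when $|\underline{w}| < 2m_{st}$ the only generators available (other than dots, trivalent vertices and polynomials, which raise degree) are the same red and blue trivalent vertices and dots as in the affine case, so the argument of \cref{lowestdeglem} applies verbatim and gives degree $1$ or $2$. The new feature is that once $|\underline{w}| \ge 2m_{st}$, a diagram may contain a $2m_{st}$-valent vertex, so the decomposition $\boxed{a}\ \boxed{c}$ into horizontally concatenated pieces (with $(\underline{w^\prime}, \underline{w^{\prime\prime}}) = \underline{w}$) is no longer forced; instead a lowest-degree diagram either (a) still decomposes horizontally as two pieces, in which case induction as in \cref{lowestdeglem} applies, or (b) contains a $2m_{st}$-valent vertex, whose two boundaries are $\smt{s}{m_{st}}{}$ and $\smt{t}{m_{st}}{}$. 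In case (b) the lowest-degree such diagram is (a twisted form of) $v_t^s(m_{st})$ composed with minimal morphisms connecting the $2m_{st}$ boundary strands to the rest of $\underline{w}$; since $v_t^s(m_{st})$ has degree $0$ and absorbs/reduces pairs of adjacent equal colors, I would show the minimal cost is exactly $2m_{st} - |\underline{w}|$ — heuristically, writing $\underline{w}$ (non-repeating of length $\ge 2m_{st}$) means $\underline{w}$ contains a reduced expression for the longest element $w_0$, whose Soergel bimodule has lowest self-Hom degree controlled by $\ell(w_0) = m_{st}$, and Soergel's Hom formula gives $\hom_{R^e}(R, B_{w_0})\cong R(-m_{st})$, so after accounting for the $|\underline{w}| - m_{st}$ extra pairs of strands (each contributing a shift by $(v+v^{-1})$, i.e. a lowest shift of $-1$ per extra pair, there being $|\underline{w}| - m_{st}$ extra strands relative to $\smt{}{m_{st}}{}$... ) one obtains lowest degree $-m_{st} - (|\underline{w}| - m_{st})\cdot 0$ — I will need to recompute this bookkeeping carefully, but the target is $2m_{st} - |\underline{w}|$.

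Cleaner than the purely diagrammatic count, I expect the most efficient route for case (b) is to mirror \cref{lowestdeglem2}: use $\hom_{R^e}(R, \bs(\underline{w})) \cong \hom_{R^e}(B_t, \bs(\underline{w}^\prime))$ for a suitable subword, simplify $B_t \otimes_R \bs(\underline{w}^\prime)$ using $B_s \otimes_R B_s \cong B_s(1)\oplus B_s(-1)$ down to $\bs(m c(\cdot))$ where $mc$ picks out the maximal non-repeating/alternating subexpression, which for $|\underline{w}|\ge 2m_{st}$ will have length exactly $2m_{st}$ (since $W_{m_{st}}$ is finite, no alternating word is longer than $2m_{st}-1$ when reduced, but as an \emph{expression} inside $\bs$ one keeps a full $\smt{s}{m_{st}}{}=\smt{t}{m_{st}}{}$, i.e. a length-$m_{st}$ reduced word appears, and $\bs$ of a non-reduced alternating word of length $2m_{st}$ contains $B_{w_0}$ as its top summand), then invoke Soergel's Hom formula $\hom_{R^e}(R,B_{w_0})\cong R(-m_{st})$ together with the grading shifts accumulated in the $B_s\otimes B_s$ reductions. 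The main obstacle I anticipate is precisely this last bookkeeping: correctly identifying which alternating subexpression survives and tallying the grading shifts from collapsing the $|\underline{w}| - 2m_{st}$-many (roughly) repeated-color reductions so that they sum to exactly $2m_{st} - |\underline{w}|$ rather than being off by a sign or a constant. I would handle it by doing the length-$2m_{st}$ base case by hand (there the claim is that the lowest degree is $0$, which should match $\hom_{R^e}(R, B_{w_0})(\text{shift})$ directly), and then running the same inductive collapse as in \cref{lowestdeglem2}, each extra pair of strands contributing a factor $v+v^{-1}$ to the graded rank, hence lowering the minimal degree by one per added strand beyond $2m_{st}$, which is consistent with $2m_{st} - |\underline{w}|$.
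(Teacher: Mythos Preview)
Your argument for the range $|\underline{w}|<2m_{st}$ has a genuine gap. You claim that ``the only generators available \ldots\ are the same red and blue trivalent vertices and dots as in the affine case,'' but this is not true: the $2m_{st}$-valent vertex is a generator of $\mathscr{D}_{m_{st}}$ regardless of how short the boundary word $\underline{w}$ is, and a diagram from $\varnothing$ to $\underline{w}$ can contain such a vertex internally with some of its strands capped or bent around. You have not ruled out the possibility that such a diagram achieves a lower internal degree than any purely one-color diagram. The paper sidesteps this entirely by arguing at the level of graded ranks: for $|\underline{w}|<2m_{st}$ the graded rank of $\hom_{R^e}(R,\bs(\underline{w}))$, computed via Soergel's Hom formula and the standard pairing on the Hecke algebra, coincides with the affine computation because the braid relation is never invoked. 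Hence the lowest degree is the same as in \cref{lowestdeglem}, with no diagrammatic analysis needed.

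For $|\underline{w}|\ge 2m_{st}$ your plan is workable but far more complicated than necessary, and you yourself flag the bookkeeping as an obstacle. The paper's argument is much shorter: for the lower bound, observe that adjunction plus a startdot gives a degree-one map $\hom_{R^e}(R,\bs(\underline{u}))\to \hom_{R^e}(R,B_s\otimes_R\bs(\underline{u}))$, so adding a single strand can lower the minimal degree by at most one; iterating from the case $|\underline{w}|=2m_{st}-1$ (where the minimum is $1$) gives the bound $2m_{st}-|\underline{w}|$ immediately. For the upper bound one simply exhibits a morphism of that degree, built by stacking $2m_{st}$-valent vertices and splits as in \cref{8valentto6examp}. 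No induction on the structure of diagrams, no decomposition into horizontal pieces, and no case analysis on whether a $2m_{st}$-valent vertex is present are required.
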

\begin{proof}
For $|\underline{w}|<2m_{st}$, $ \grk \ \hom_{R^e}(R, \bsw)$ agrees with the affine case (one can use Soergel's Hom formula and then use Lemma 3.19 in \cite{EMTW20} and biadjointness of $b_s$ with the standard form so that one never needs to use the braid relation in the finite Hecke algebra) so this follows from \cref{lowestdeglem}. Now, let $\mathrm{LD}(\bs(\underline{v}), \bs(\underline{u}))$ be the lowest degree morphism in  $\hom_{R^e}(\bs(\underline{v}), \bs(\underline{u}))$. Given any morphism $\hom_{R^e}(B_s, \bs(\underline{u}) )$ we can produce a morphism in $\hom_{R^e}(R, \bs(\underline{u}) )$ by adding a dot to the bottom of $B_s$. As a result, we see that
\[ \mathrm{LD}(R,B_s\otimes_R \bs(\underline{w}))=\mathrm{LD}(B_s, \bs(\underline{u}))\ge \mathrm{LD}(R, \bs(\underline{u}))-1 \]
Therefore each time we tensor with $B_s$ or $B_t$ the lowest degree drops by at most 1. It follows that when $|\underline{w}|\ge 2m_{st}$, the lowest degree element in $\hom_{R^e}(R, \bsw )$ is at least degree $2m_{st}-|\underline{w}|$. One can actually show it's also at most $2m_{st}-|\underline{w}|$ by explicity producing a morphism in $\hom_{R^e}(R, \bsw )$ of this degree which we leave to the reader. For a hint, look at the RHS of \cref{8valentto6examp}.
\end{proof}

\begin{prop}
\label{1dimcritfinite}
For $2<m_{st}<\infty$, $\ext^{1, -(|\underline{w}|+1)}_{R^e}(B_t, \mathrm{BS}(\underline{w}))$ is a 1 dimensional $\kb$ module when $|m(t, \underline{w})|\ge 4$
\end{prop}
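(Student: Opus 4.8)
The plan is to mimic the proof of \cref{1dimcriterion} from the affine case, adapting the degree bookkeeping using the finite-case dimension estimates. By \cref{maincohoisofin} we have
\[
\ext^{1, -(|\underline{w}|+1)}_{R^e}(B_t, \mathrm{BS}(\underline{w})) \cong \ker\rhosw\,(3)_{-|\underline{w}|-1}\oplus  \Db(\ker\rhosw )(1)_{-|\underline{w}|-1},
\]
so it suffices to show that the first summand vanishes in this degree while the second is one-dimensional. For the $\Db(\ker\rhosw)(1)$ piece: since $1(\underline{w})$ is the lowest-degree element of $\bs(\underline{w})\cong \Db(\bs(\underline{w}))$, sitting in internal degree $-|\underline{w}|$, and (by the argument in \cref{mainsection}, which holds mutatis mutandis in the finite case) $1(\underline{w})\in \Db(\ker\rhosw)$, the graded piece $\Db(\ker\rhosw)(1)_{-|\underline{w}|-1}$ is exactly one-dimensional, spanned by $1(\underline{w})$.

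The main work is to bound below the lowest internal degree of $\ker\rhosw\,(3)$ and show it exceeds $-|\underline{w}|-1$ when $|m(t,\underline{w})|\ge 4$. First I would prove the finite analogue of \cref{lowestdeglem2}: using $\ker \rhosw\,(-1)\cong \hom_{R^e}(B_t, \bsw)\cong \hom_{R^e}(R, B_t\otimes_R \bsw)$ and reducing $B_t\otimes_R \bsw$ to $\bs(m(t,\underline{w}))$ via $B_s\otimes_R B_s\cong B_s(1)\oplus B_s(-1)$ (at the cost of $(v+v^{-1})^{1+|\underline{w}|-|m(t,\underline{w})|}$ grading shifts), then applying \cref{lowestdeglemfin} to $\hom_{R^e}(R, \bs(m(t,\underline{w})))$ where $m(t,\underline{w})$ is non-repeating. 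Here the finite case splits into two regimes depending on whether $|m(t,\underline{w})| < 2m_{st}$ or $|m(t,\underline{w})| \ge 2m_{st}$: in the first regime \cref{lowestdeglemfin} gives the same bound (degree $1$ or $2$) as the affine \cref{lowestdeglem}, so the affine computation in \cref{1dimcriterion} carries over verbatim and yields the conclusion for $|m(t,\underline{w})|\ge 4$; in the second regime \cref{lowestdeglemfin} gives lowest degree $2m_{st}-|m(t,\underline{w})|$ for $\hom_{R^e}(R,\bs(m(t,\underline{w})))$, so the lowest degree in $\ker\rhosw\,(3)$ becomes $-|\underline{w}|+|m(t,\underline{w})|+2m_{st}-|m(t,\underline{w})|-1-4+3 = -|\underline{w}|+2m_{st}-2$, and since $m_{st}\ge 2$ this is $\ge -|\underline{w}|+2 > -|\underline{w}|-1$, as required.

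Assembling these: in all cases with $|m(t,\underline{w})|\ge 4$ the lowest internal degree of $\ker\rhosw\,(3)$ is strictly greater than $-|\underline{w}|-1$, hence $\ker\rhosw\,(3)_{-|\underline{w}|-1}=0$, and combining with the one-dimensionality of $\Db(\ker\rhosw)(1)_{-|\underline{w}|-1}$ gives that $\ext^{1,-(|\underline{w}|+1)}_{R^e}(B_t,\mathrm{BS}(\underline{w}))$ is a one-dimensional $\kb$-module. The hypothesis $m_{st}>2$ is presumably used only to ensure that the relevant $2m_{st}$-valent morphisms and the reduction machinery behave as stated (the case $m_{st}=2$ being degenerate); the inequality $m_{st}\ge 2$ suffices for the degree bound itself. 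The main obstacle is getting the finite-case analogue of \cref{lowestdeglem2} exactly right — in particular tracking the grading shifts from the $B_s^{\otimes 2}$ reductions carefully and verifying that the non-repeating expression $m(t,\underline{w})$ produced really does control the lowest degree (this uses that capping off via dots can only increase degree, so the lowest-degree generator cannot involve any of the "collapsed" repeated letters non-trivially).
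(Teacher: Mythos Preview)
Your approach is exactly the paper's: use the decomposition from \cref{maincohoisofin}, port \cref{lowestdeglem2} to the finite case by replacing \cref{lowestdeglem} with \cref{lowestdeglemfin}, and split into the regimes $|m(t,\underline{w})|<2m_{st}$ (identical to the affine argument) and $|m(t,\underline{w})|\ge 2m_{st}$.

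However, your degree bookkeeping in the second regime is off by $3$, and this matters. Tracing through: the lowest degree in $\hom_{R^e}(R,\bs(m(t,\underline{w})))$ is $2m_{st}-|m(t,\underline{w})|$; after the $(v+v^{-1})^{1+|\underline{w}|-|m(t,\underline{w})|}$ reduction, the lowest degree in $\ker\rhosw(-1)\cong\hom_{R^e}(B_t,\bsw)$ is $2m_{st}-1-|\underline{w}|$; passing from $\ker\rhosw(-1)$ to $\ker\rhosw(3)$ lowers the lowest degree by $4$ (exactly as in the proof of \cref{1dimcriterion}), giving $2m_{st}-5-|\underline{w}|$, not $2m_{st}-2-|\underline{w}|$. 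The required inequality is then
\[
2m_{st}-5-|\underline{w}| \;>\; -|\underline{w}|-1,
\]
i.e.\ $m_{st}>2$. So the hypothesis $m_{st}>2$ is \emph{not} incidental machinery for $2m_{st}$-valent vertices as you suggest---it is precisely what makes the degree bound hold in the regime $|m(t,\underline{w})|\ge 2m_{st}$. For $m_{st}=2$ the inequality is an equality and the argument genuinely fails (indeed the paper treats $m_{st}=2$ separately). Fix the arithmetic and drop the final remark, and your proof matches the paper's.
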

\begin{proof}
The proof proceeds similarly to \cref{1dimcriterion} as by \cref{maincohoisofin} we still have the decomposition
\[\ext^{1, -(|\underline{w}|+1)}_{R^e}(B_t, \mathrm{BS}(\underline{w})))\cong \ker\rhosw (3)_{-|\underline{w}|-1}\oplus  \widetilde{\Db(\ker\rhosw )}(1)_{-|\underline{w}|-1}\]
We then have that an analogue of \cref{lowestdeglem2} holds by noting that the proof of \cref{lowestdeglem2} still applies for $m_{st}<\infty$ except we need to replace \cref{lowestdeglem} with \cref{lowestdeglemfin} in the last step. As a result, when $4\le |m(t, \underline{w})|<2m_{st}$ we are in the same situation as in \cref{1dimcriterion} while for $|m(t, \underline{w})|\ge 2m_{st}$, one needs to check
\[ -1-|\underline{w}|+|m(t, \underline{w})|+2m_{st}-|m(t, \underline{w})|-4>-|\underline{w}|-1 \]
which is true when $m_{st}>2$ as desired. 
\end{proof}

\begin{corollary}
\label{stillholdfinite}
All the generators/relations in \cref{newgensect} for $m_{st}=\infty$ still exist/hold for $2<m_{st}<\infty$. 
\end{corollary}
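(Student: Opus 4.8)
\textbf{Proof proposal for \cref{stillholdfinite}.}

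The plan is to go through \cref{newgensect} essentially line by line and check that every argument survives the passage from $m_{st}=\infty$ to $2<m_{st}<\infty$, invoking the finite analogues already established at this point in \cref{finitesect}. The key observation is that the entire construction in \cref{newgensect} rests on exactly three inputs: (1) the isomorphism of \cref{maincohoiso} describing $\ext^{\bullet,\bullet}_{R^e}(B_t,\bs(\underline w))$ in terms of $\ker\rhosw$ and its dual, (2) the fact that $\ker\rhosw$ is a free right $R$-module, and (3) the one-dimensionality of $\ext^{1,-(|\underline w|+1)}_{R^e}(B_t,\bs(\underline w))$ when $|m(t,\underline w)|\ge 4$, which is what makes $\Phi_t^{\underline w}$ well defined and makes all the ``check on $\underline\rho_s\boxtimes 1\otimes 1$'' arguments work. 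Input (1) is \cref{maincohoisofin}; input (2) is \cref{kerfinitelem}; input (3) is \cref{1dimcritfinite}. So the bulk of the work is already done, and what remains is bookkeeping.

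Concretely, first I would note that the definition of $\Phi_t^{\underline w}$ via \cref{clarifyinglem} and \cref{clarifyinglemcor} carries over verbatim, since \cref{clarifyinglem} is a purely formal statement about double complexes and its hypotheses (1)--(3) were verified in the finite case in the proof of \cref{1dimcritfinite} (condition (3)) together with \cref{maincohoisofin} (condition (1), via $\ker\rhosw\subseteq\ker\rhottw$) and self-duality (condition (2)). Next, the low-strand relations (\cref{4extreduct}, \cref{4extrotation}, and their corollaries) are proved by brute-force chain-level computations in a fixed small expression; these computations take place in $K_s, K_t$ and the Bott--Samelson complexes and never reference $m_{st}$, so they hold unchanged. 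The high-strand relations (\cref{newgenreduct}, \cref{newgenmult}, \cref{newgenrotation}, \cref{newgencor}, \cref{newgencyclic}, \cref{newgensquare}, \cref{newgenabsorb}, and the cohomology relations \cref{cohorelseq}--\cref{cohorelteq}) are all reduced, via one-dimensionality, to checking agreement on $\underline\rho_s\boxtimes 1\otimes 1$ (or to degree counts, in the case of \cref{newgensquare}), using only polynomial forcing, \cref{stsrhos}, \cref{tsrhos}, and the quantum-number identities $\partial_t(s(ts)^m(\rho_s))=[2m+2]$, $\partial_s((ts)^m(\rho_s))=[2m+1]$ — all of which hold in any realization. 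The one point to flag is the well-definedness of the $2k$-extvalent morphisms $\Omega^{\usmt{s}{k}{}}_{\usmt{t}{k}{}}$: this requires $|m(\usmt{t}{k}{}^{-1}\usmt{s}{k}{})|=2k\ge 4$, and one must observe that for $2<m_{st}<\infty$ the expressions $\usmt{s}{k}{},\usmt{t}{k}{}$ with $k\le m_{st}$ (which is all we need, since $[k]$ is only invertible for $k<m_{st}$ and the decompositions only involve $\jw_{\usmt{t}{k}{}}$ for such $k$) still have $|\usmt{t}{k}{}^{-1}\usmt{s}{k}{}|=2k$, so the argument of \cref{welldeflem} (which uses cyclicity, \cref{cks}, and the parity of color changes) goes through identically; the extvalent annihilation and higher-ext reduction relations likewise only use facts already re-established.

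The main obstacle I anticipate is not any single hard step but rather the subtlety flagged in the remark following \cref{maincohoisofin}: in the finite case the light-leaf morphisms $\mathrm{L}_{\underline w,\underline f}$ — and hence the explicit basis elements of $\ker\rhosw$ appearing in \cref{ext1kerbasis}, \cref{ext1dkerbasis}, \cref{algextindecomp} — can differ from the affine case by $2m_{st}$-valent morphisms, and $\ker\rhosw$ itself can be strictly larger. For \cref{stillholdfinite} as stated this is harmless, because the statement only concerns the \emph{generators and relations} of \cref{newgensect}, which are formulated in terms of $\Phi_t^{\underline w}$ and the extvalent morphisms rather than in terms of an explicit light-leaf basis; but I would include a sentence making explicit that \cref{kerfinitelem} still gives a right-$R$ basis of $\ker\rhosw$ indexed by subexpressions $\underline f$ with $r(\underline f)=\id$ or $t$, so that the proofs of \cref{newgenabsorb}, \cref{cohorelseq}, etc., which only use the \emph{existence} of such a basis and the forcing relations, are unaffected. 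In short, the proof is: ``inspect each result of \cref{newgensect}; its proof uses only \cref{maincohoiso}, freeness of $\ker\rhosw$, one-dimensionality, and formal identities; replace these by \cref{maincohoisofin}, \cref{kerfinitelem}, \cref{1dimcritfinite}; done.''
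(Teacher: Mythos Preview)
Your proposal is correct and matches the paper's (implicit) approach: the corollary is stated without proof in the paper precisely because, as you identify, every argument in \cref{newgensect} rests only on the three ingredients \cref{maincohoiso}, freeness of $\ker\rhosw$, and one-dimensionality, which are re-established for $2<m_{st}<\infty$ as \cref{maincohoisofin}, \cref{kerfinitelem}, and \cref{1dimcritfinite}. One small over-caution: your parenthetical restricting to $k\le m_{st}$ for the $2k$-extvalent morphisms is unnecessary, since $|m(\usmt{t}{k}{}^{-1}\usmt{s}{k}{})|=2k$ is a statement about expressions (not group elements) and so holds for all $k\ge 2$ regardless of $m_{st}$; the quantum-number invertibility you mention is used only later for the Jones--Wenzl projectors, not for the relations of \cref{newgensect}.
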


We will deal with the case $m_{st}=2$ separately below.

\subsection{The Case $m_{st}=2$}
By assumption, $[m_{st}]=0$ and thus as $m_{st}=2$ we see that $[2]=a_{st}=a_{ts}=0$. As a result, we see that $s(\alpha_t)=\alpha_t$ and consequently $\rho_tt(\rho_t)\in R^{s,t}$ and therefore $\rho_tt(\rho_t)^e=0$. As a result, ${}_{tt}^0\psi_s^u$ are now chain maps (see \cref{keydoublecomplex}) for any $u\in \bs(\un{w})$.
${}_{tt}^0\psi_s^u$ are now chain maps
\begin{theorem}
When $m_{st}=2$, we have an \underline{equality} of right $R$ modules
\begin{align*}
\ext^{0, \bullet}_{R^e}(B_t, \mathrm{BS}(\underline{w}))&= \ker\rhosw \underline{1 } (-1)\\
\ext^{1, \bullet}_{R^e}(B_t, \mathrm{BS}(\underline{w}))&=
\ker\rhosw\underline{\rho_t t(\rho_t)} (-1)\oplus  \Db(\ker\rhosw )\underline{\rho_s}(-1)\\
\ext^{2, \bullet}_{R^e}(B_t, \mathrm{BS}(\underline{w}))&= \Db(\ker\rhosw ) \underline{\rho_s \wedge \rho_t t(\rho_t)}(-1)
\end{align*}
\end{theorem}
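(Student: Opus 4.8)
The key simplification in the case $m_{st}=2$ is that $[2]=a_{st}=a_{ts}=0$, which forces $s(\alpha_t)=\alpha_t$, hence $\rho_t t(\rho_t)\in R^{s,t}$, so the commutator $\rho_t t(\rho_t)^e$ vanishes identically as an element of $R^e$. The plan is to feed this vanishing directly into the double complex \eqref{keydoublecomplex} (equivalently \eqref{keyeq}), whose total complex computes $\ext^{\bullet,\bullet}_{R^e}(B_t, \bs(\un{w}))$ by \cref{prehochBSW} (with $\Lambda_{st}=\kb$). Since the vertical differentials in \eqref{keydoublecomplex} are exactly multiplication by $\rho_t t(\rho_t)^e(\un{w})$, setting them to zero means the total complex \emph{splits} as the direct sum of its two columns, each of which is just the Koszul complex $\underline{\hom}_{R^e}(K(\rho_s^e), \bs(\un{w}))$ with an appropriate exterior shift by $\un{\rho_t t(\rho_t)}$. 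Consequently the cohomology is computed columnwise with no spectral-sequence differential to worry about: there is no extension problem, and we get an honest equality (not merely an isomorphism) of right $R$-modules.

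Concretely, I would first record that $\rho_t t(\rho_t)^e=0$ in $R^e$ (a one-line check using $s(\alpha_t)=\alpha_t$ and the definition $a^e = a\otimes 1 - 1\otimes a$, noting $\rho_t t(\rho_t) = \rho_t\cdot s(\rho_t)$ is fixed by both $s$ and $t$; this is exactly the computation already sketched in the paragraph preceding the theorem). Then in the double complex \eqref{keydoublecomplex} every vertical arrow is literally the zero map, so
\[
\mathrm{Tot}\bigl(\eqref{keydoublecomplex}\bigr) \;=\; \underline{\hom}_{R^e}(K(\rho_s^e),\bs(\un{w}))\,\un{1}(-1)\;\oplus\;\underline{\hom}_{R^e}(K(\rho_s^e),\bs(\un{w}))\,\un{\rho_t t(\rho_t)}(-1)
\]
as complexes of right $R$-modules, where the second summand sits one cohomological degree higher (because of the exterior generator $\un{\rho_t t(\rho_t)}$, which carries cohomological degree $1$). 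Taking cohomology degreewise and using that $H^0(\underline{\hom}_{R^e}(K(\rho_s^e),\bs(\un{w}))) = \ker\rhosw$ and $H^1(\underline{\hom}_{R^e}(K(\rho_s^e),\bs(\un{w}))) = \bsw/\im\rhosw$ — the latter being literally the cokernel of the single map $\rhosw$, which is what the complex $\underline{\hom}_{R^e}(K(\rho_s^e),\bs(\un{w}))$ computes in degree $1$ (compare the proof of \cref{h1rhos}, but without invoking \cref{keycor}, since we no longer need freeness of $\im\rhosw$) — one reads off the three stated formulas: $\ext^0$ comes from the $\un{1}(-1)$-column in cohomological degree $0$; $\ext^1$ is the direct sum of the $\un{1}(-1)$-column in degree $1$ (giving $(\bsw/\im\rhosw)\un{\rho_s}(-1)$ after the internal-degree bookkeeping that turns $\un{\rho_s}$ into the shift, exactly as in \cref{maincohoiso}) and the $\un{\rho_t t(\rho_t)}(-1)$-column in degree $0$ (giving $\ker\rhosw\,\un{\rho_t t(\rho_t)}(-1)$); and $\ext^2$ is the $\un{\rho_t t(\rho_t)}(-1)$-column in degree $1$, i.e. $(\bsw/\im\rhosw)\,\un{\rho_s\wedge\rho_t t(\rho_t)}(-1)$.

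The only genuinely substantive points are (i) justifying that the decomposition is as \emph{complexes}, not just a vanishing of cohomology in isolation — but this is immediate because the horizontal differential in \eqref{keydoublecomplex} is $\rhosw$ in \emph{both} rows and commutes with the (now-trivial) vertical structure, so the total complex is a genuine direct sum of the two rows viewed as copies of $\underline{\hom}_{R^e}(K(\rho_s^e),\bs(\un{w}))$; and (ii) confirming the internal grading shifts match the stated degrees, which is the same bookkeeping already carried out in \cref{maincohoiso} and \eqref{ktcomplex}. I do not expect any real obstacle here: the $m_{st}=2$ case is \emph{easier} than the generic finite case precisely because the filtration on $\ext^1$ that appeared in \cref{maincohoiso} now splits on the nose (there is nothing to split — the two contributions live in different columns of a split double complex), so the result is an equality rather than an isomorphism, and none of the linear-independence arguments from \cref{keycor} (which relied on $\hf$ being a faithful realization of $W_\infty$, hence on the $2k$-extvalent degeneracy not occurring) are needed. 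If anything, the one thing to be careful about is that $\bsw/\im\rhosw$ need not be free over $R$ in this degenerate case — but the statement of the theorem already reflects this by writing the quotient rather than a dual module, so no additional care beyond honest cokernel computation is required.
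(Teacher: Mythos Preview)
Your proposal is correct and follows exactly the approach the paper takes: the paragraph preceding the theorem observes that $\rho_t t(\rho_t)\in R^{s,t}$ kills the vertical differentials in \eqref{keydoublecomplex}, and the theorem is then stated without further proof, implicitly leaving the reader to read off the cohomology columnwise as you do. One small wording issue: $\rho_t t(\rho_t)^e$ is not zero in $R^e$ itself (it is a nonzero element $\rho_t t(\rho_t)\otimes 1 - 1\otimes \rho_t t(\rho_t)$); what vanishes is its action $\rho_t t(\rho_t)^e(\un{w})$ on $\bs(\un{w})$, since $\rho_t t(\rho_t)\in R^{s,t}$ slides across every tensor factor --- but the paper commits the same abuse, and your actual argument uses only the vanishing of the map, so nothing is harmed (also note a typo: you wrote $\rho_t\cdot s(\rho_t)$ where you meant $\rho_t\cdot t(\rho_t)$).
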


Specifically, the difference between the above theorem and \cref{maincohoisofin} is that the $\widetilde{\Db(\ker\rhosw )}\underline{\rho_s}(-1)$ part of $\ext^{1, \bullet}_{R^e}(B_t, \mathrm{BS}(\underline{w}))$ can now be described as $\sbrac{{}_{tt}^0\psi_s^u}$ for $u\in \bsw$. One computes that 
\[ {}_{tt}^0\psi_s^{1\otimes_s 1}=\overset{ \rhunit}{\bcounit} \implies{}_{tt}^0\psi_s^{1\otimes_s 1 \otimes_t 1} =\raisebox{-3ex}{\begin{tikzpicture}[scale=0.45]
	       \draw[red] (1.1,0.1) -- (1.1,-1);
	       \draw[blue] (2,0.1) -- (2,-1);
	       \draw[blue] (2,-2) -- (2,-1);
	       \node[rhdot] at (1.1,-1) {};
	\end{tikzpicture}} \implies  {}_{tt}^0\psi_s^{1\otimes_s 1 \otimes_t 1\otimes_s 1}=\raisebox{-3ex}{\begin{tikzpicture}[scale=0.45]
	       \draw[blue] (1.1,-0.1) --(2,-1);
	       \draw[red] (2.9,-0.1) -- (2,-1);
	       \draw[blue] (2,-1) -- (2.9, -1.9);
	       \draw[red] (0.8,-0.1) to[out=-90,in=-180] (1.4,-1.75) to[out=0, in=-90] (2,-1);
	       \draw[blue] (2.9,-1.9) --(2.9,-2.9);
	       \node[rhdot] at (1.4,-1.75) {};
	    \end{tikzpicture}} \]
when $m_{st}=2$. In other words, blue and red Hochschild dots, along with the generating morphisms of $\bsbim(\hf, W_2)$, generate all of the morphisms in $\bsbimext(\hf, W_2)$. $\dim_\kb \ext_{R^e}^{1,-4}(B_t, B_sB_tB_s)\neq 1$ when $m_{st}=2$ and one has that
\[ \raisebox{-3ex}{\begin{tikzpicture}[scale=0.45]
	       \draw[blue] (1.1,-0.1) --(2,-1);
	       \draw[red] (2.9,-0.1) -- (2,-1);
	       \draw[blue] (2,-1) -- (2.9, -1.9);
	       \draw[red] (0.8,-0.1) to[out=-90,in=-180] (1.4,-1.75) to[out=0, in=-90] (2,-1);
	       \draw[blue] (2.9,-1.9) --(2.9,-2.9);
	       \node[rhdot] at (1.4,-1.75) {};
	    \end{tikzpicture}} \neq \raisebox{-3ex}{\begin{tikzpicture}[scale=0.6]
	       \draw[red] (1.1,0.1) to[out=-90, in=180] (2,-1);
	       \draw[blue] (2,0.1) -- (2,-1);
	       \draw[red] (2.9,0.1) to[out=-90, in=0](2,-1);
	       \draw[blue] (2,-2) -- (2,-1);
	       \node[rhdot] at (2,-1) {};
	\end{tikzpicture}} \]
\begin{lemma}
For $m_{st}=2$, we have the following relation in $\ext^{1,-4}_{R^e}(B_s B_t, B_t B_s)$
\begin{equation}
\label{2hochslide}
    \begin{tikzpicture}[scale=0.6]
	       \draw[blue] (1.2,-0.1) -- (2,-1);
	       \draw[red] (2.8,-0.1) -- (1.2,-2);
	       \draw[blue] (2.8,-2) -- (2,-1);
	       \node[rhdot] at (1.5,-1.625) {};
	    \end{tikzpicture}\raisebox{0.5cm}{$  =$}
	\begin{tikzpicture}[scale=0.6]
	       \draw[blue] (1.2,-0.1) -- (2,-1);
	       \draw[red] (2.8,-0.1) -- (1.2,-2);
	       \draw[blue] (2.8,-2) -- (2,-1);
	       \node[rhdot] at (2.4,-0.55) {};
	    \end{tikzpicture}
\end{equation}
\end{lemma}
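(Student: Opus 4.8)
The statement is an equality in $\ext^{1,-4}_{R^e}(B_s B_t, B_t B_s)$, so the first thing I would do is invoke the preceding theorem for $m_{st}=2$ to get an honest chain-level model. Since $\rho_t t(\rho_t)^e = 0$ here, the complex $K_t$ computing $\ext_{R^e}^{\bullet,\bullet}(B_t,-)$ simplifies, and the $\ker\rho_s^e$/cokernel description shows that a morphism in $\ext^{1,\bullet}_{R^e}(B_sB_t, B_tB_s)$ is, after the adjunction $\ext^{1,\bullet}_{R^e}(B_sB_t,B_tB_s)\cong \ext^{1,\bullet}_{R^e}(B_t, B_sB_tB_s)\cong\ext^{1,\bullet}_{R^e}(B_t,\bs(s,t,s))$, determined by a cocycle ${}_{tt}^{0}\psi_s^{u}$ with $u\in\bs(s,t,s)$ read off by evaluating on $\underline{\rho_s}\boxtimes 1\otimes 1$. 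So the plan is: translate both diagrams in \cref{2hochslide} into the algebraic category via $\Fext$ (the $m_{st}=\infty$ generators and relations still make sense and, by the previous theorem, the Hochschild dots together with the usual $\bsbim(\hf,W_2)$-generators generate everything), express each side as a cocycle of the form ${}_{tt}^0\psi_s^u$, and check the two values of $u$ agree.

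\textbf{Key steps, in order.} First, pick chain lifts: for the left-hand diagram, the red Hochschild dot sits on a red strand that is ``cupped and capped'' past a blue strand on the left, so its chain lift is $\widetilde{\eta_s^{\ext}}$ composed with the lift of the blue merge/split and the relevant unitors; for the right-hand diagram the red Hochschild dot is on the red strand on the right, and its chain lift differs only by sliding it along the red strand through the crossing region. Second, since $[2]=a_{st}=a_{ts}=0$ we have $s(\alpha_t)=\alpha_t$ and $\rho_t t(\rho_t)\in R^{s,t}$, so the exterior forcing / polynomial forcing manipulations that normally produce correction terms (the $\partial$-terms) all collapse — this is exactly what makes ${}_{tt}^0\psi_s^u$ a cocycle and is the structural reason the two sides should be literally equal rather than equal up to lower-order corrections. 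Third, compute the image of $\underline{\rho_s}\boxtimes 1\otimes 1$ under each side: both should land on the same element of $\bs(s,t,s)$ (essentially $1\otimes_s1\otimes_t1\otimes_s1$ up to the rotations built into the cup/cap), using $\abrac{\alpha_s^\vee,\rho_s}=1$, $\abrac{\alpha_t^\vee,\rho_s}=0$, $\rho_s\in(V^*)^t$. Fourth, conclude by the uniqueness clause: in the $m_{st}=2$ case the relevant $\ext^1$ group is free and a morphism of this cohomological and internal degree is determined by its value on $\underline{\rho_s}\boxtimes 1\otimes 1$ (cf.\ the mechanism of \cref{clarifyinglemcor}), so matching that single value forces the equality.

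\textbf{Main obstacle.} The genuinely delicate point is the bookkeeping of signs and of the relative position of the exterior/Koszul generators: the super exchange law and the Koszul sign rule mean that moving the Hochschild dot past a crossing can a priori introduce a sign, and one must verify it does not. Concretely, I expect the crux to be showing that in $\ext^{1,-4}_{R^e}(B_sB_t,B_tB_s)$ the two chain maps differ by an exact term (a homotopy built from the $m_{st}=2$ collapse $\rho_t t(\rho_t)^e=0$) rather than by $-1$. An alternative, possibly cleaner, route I would keep in reserve is to observe that this is the $\Omega$-type/$2$-extvalent degeneration at $k=2$: when $m_{st}=2$ the $4$-extvalent morphism $\Omega_{(t,s)}^{(s,t)}$ degenerates (its defining data involves $\rho_t t(\rho_t)^e$, now zero), and \cref{2hochslide} can be read as the statement that the red Hochschild dot slides freely across the $m_{st}=2$ ``crossing'' $v_t^s(2)$ — which would follow from $\widetilde{\eta_s^{\ext}}$ commuting with the chain lift of $v_t^s(2)$, a computation made trivial by $s(\alpha_t)=\alpha_t$. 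Either way the real work is one short explicit cocycle computation plus a sign check; no deep input beyond the preceding theorems is needed.
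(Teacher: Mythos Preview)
Your plan is essentially the paper's own argument: pass by adjunction/rotation to $\ext^{1,\bullet}_{R^e}(B_t,\bs(s,t,s))$, compute that each side is the cocycle ${}_{tt}^{0}\psi_s^{1\otimes_s 1\otimes_t 1\otimes_s 1}$ (the $[2]=0$ collapse being precisely what kills the $\underline{\rho_t t(\rho_t)}$-component), and then rotate back using rotation invariance of the $4$-valent morphism.

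One correction: your fourth step, invoking a ``uniqueness clause'' \`a la \cref{clarifyinglemcor}, is both unnecessary and not available here. That mechanism rests on the $1$-dimensionality result \cref{1dimcriterion}/\cref{1dimcritfinite}, which explicitly requires $m_{st}>2$; for $m_{st}=2$ the group $\ext^{1,-4}_{R^e}(B_t,\bs(s,t,s))$ is \emph{not} $1$-dimensional, and a class is not determined by its $\underline{\rho_s}$-value alone. Fortunately you do not need it: once you have shown (as you propose in step two) that each side is of the form ${}_{tt}^{0}\psi_s^{u}$, you have already pinned down the $\underline{\rho_t t(\rho_t)}$-component as zero, and matching the $\underline{\rho_s}$-values then gives literal equality of cocycles. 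The paper does exactly this, recording the key vanishing as $\rhunit(\widetilde{\tau_t}^{\,1}(\underline{\rho_t t(\rho_t)}\boxtimes 1\otimes 1))=0$ (a direct consequence of $[2]=0$), and then invoking rotation invariance of $v_t^s(2)$ rather than any dimension count.
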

\begin{proof}
Because $[2]=0$, we have that $\rhunit(\widetilde{\tau_t}^1(\urtt\boxtimes 1\otimes 1))=0$ and one can then compute that the following equalities hold
\[  \begin{tikzpicture}[scale=0.6]
	       \draw[blue] (1.1,-0.1) --(2,-1);
	       \draw[red] (2.9,-0.1) -- (2,-1);
	       \draw[blue] (2,-1) -- (2.9, -1.9);
	       \draw[red] (0.8,-0.1) to[out=-90,in=-180] (1.4,-1.75) to[out=0, in=-90] (2,-1);
	       \draw[blue] (2.9,-1.9) --(2.9,-2.9);
	       \node[rhdot] at (1.4,-1.75) {};
	    \end{tikzpicture} \raisebox{4ex}{$= \ {}_{tt}^{0 }\psi_{s}^{ 1\otimes_s 1\otimes_t 1\otimes_s 1}$ = \ } \begin{tikzpicture}[scale=0.6]
	       \draw[red] (1.1,-0.1) --(2,-1);
	       \draw[blue] (2.9,-0.1) -- (2,-1);
	       \draw[blue] (2,-1) -- (1.1, -1.9);
	       \draw[red] (3.2,-0.1) to[out=-90,in=0] (2.6,-1.75) to[out=-180, in=-90] (2,-1);
	       \draw[blue] (1.1,-1.9) --(1.1,-2.9);
	       \node[rhdot] at (2.6,-1.75) {};
	    \end{tikzpicture} \]
and so the lemma follows from rotating the top leftmost red strand down and applying rotation invariance of the 4 valent morphism. 
\end{proof}

\begin{corollary}
We have the following relation in $\ext^{2,-4}_{R^e}(B_t B_s, B_s B_t)$
\begin{equation}
    \begin{tikzpicture}[scale=0.6]
	       \draw[blue] (1.2,-0.1) -- (2,-1);
	       \draw[red] (1.2,-2) -- (1.8,-1.25);
	       \draw[blue] (2.8,-2) -- (2,-1);
	       \node[rhdot] at (1.8,-1.25) {};
	    \end{tikzpicture}\raisebox{0.5cm}{$  =$}
	\begin{tikzpicture}[scale=0.6]
	       \draw[blue] (1.2,-0.1) -- (2,-1);
	       \draw[red] (1.2,-2) -- (2.4,-0.55);
	       \draw[blue] (2.8,-2) -- (2,-1);
	       \node[rhdot] at (2.4,-0.55) {};
	    \end{tikzpicture}
\end{equation}
\end{corollary}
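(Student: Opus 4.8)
The plan is to obtain this identity by rotating \cref{2hochslide} a quarter turn. Concretely, the previous lemma gives an equality in $\ext^{1,-4}_{R^e}(B_s B_t, B_t B_s)$ between a red Hochschild dot on the lower-left red strand and one on the upper-right red strand. The corollary wants the analogous statement where the ``incoming'' red strand enters from the bottom rather than the left, i.e.\ an element of $\ext^{2,-4}_{R^e}(B_t B_s, B_s B_t)$: the extra cohomological degree comes from the fact that in the corollary the diagram has a cup on the red strand so that it now runs from a bottom boundary point (in $B_t B_s$) up through the $4$-valent vertex and out a top boundary point (in $B_s B_t$), adding one more strand that must ``jump'' across.

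The key steps, in order, are: first, take \cref{2hochslide} and attach a red cup at the bottom, bending the lower-left red endpoint around so that it enters from below; by cyclicity of all the generating morphisms of $\bsbim(\hf, W_2)$ (so isotopy invariance holds, as used throughout \cite{M} and \cref{Maksect}) this is a legitimate morphism manipulation. Second, observe that after this bending the left-hand side of \cref{2hochslide} becomes exactly the left-hand side of the corollary (red Hochschild dot trapped below the $4$-valent vertex), and the right-hand side of \cref{2hochslide} becomes the right-hand side of the corollary (red Hochschild dot slid up onto the outgoing red strand above the vertex). Third, check the bookkeeping: the attached cup raises the cohomological degree by exactly one (Hochschild dots have bidegree $(1,-4)$ and cups/caps have bidegree $(0,0)$ in the cohomological slot, the $2$-valent morphisms being degree $0$ cohomologically), so the target group is indeed $\ext^{2,-4}_{R^e}(B_t B_s, B_s B_t)$ and the internal degree stays $-4$; no sign appears because $4$-valent morphisms and cups are cohomological degree $0$ and Hochschild dots, though odd, are only moved past even-degree things in this manipulation.

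The main obstacle — really the only subtle point — will be verifying that the rotation does not introduce a sign via the super-exchange law $(h\otimes k)\circ(f\otimes g)=(-1)^{|k||f|}(h\circ f)\otimes(k\circ g)$. One must be careful that the cup being attached and the Hochschild dot are never ``interchanged'' in a way that would pick up $(-1)^{|{\rm hdot}||{\rm hdot}|}$ or similar; since we are attaching a single cup (cohomological degree $0$) and the only odd morphism present is the one Hochschild dot, every reassociation/interchange needed is of an odd morphism past an even one, which is sign-free. Once that is confirmed the proof is a one-line appeal to rotation invariance of the $4$-valent vertex together with \cref{2hochslide}, exactly as the sentence ``the lemma follows from rotating the top leftmost red strand down'' in the proof of \cref{2hochslide} itself. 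I would therefore write: ``Apply a red cup to the bottom of both sides of \cref{2hochslide} and use rotation invariance of the $4$-valent morphism.''
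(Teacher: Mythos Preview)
Your rotation argument does not produce the corollary from \cref{2hochslide}. The essential difference between the two statements is that in the lemma the red Hochschild dot is \emph{bivalent} (the red strand passes through it and continues to the top-right boundary), whereas in the corollary the red strand \emph{terminates} at the hollow dot --- it is the univalent Hochschild dot $\epsilon_s\circ\phi_s$. Attaching cups or caps has bidegree $(0,0)$, so rotation can neither change the cohomological degree nor convert a bivalent Hochschild dot into a univalent one; your explanation of the jump in cohomological degree (``adding one more strand that must jump across'') is simply not how the grading works in this category.

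The paper instead applies the solid red enddot $\epsilon_s$ to the top-right strand of both sides of \cref{2hochslide}. On the right-hand side this caps the bivalent Hochschild dot, producing the univalent Hochschild dot above the $4$-valent vertex --- exactly the corollary's right-hand side. On the left-hand side one now has a $4$-valent vertex carrying an ordinary red dot on its top-right leg; two-color dot contraction for $m_{st}=2$ collapses this to a straight blue strand with a red enddot on the surviving red leg, and combined with the bivalent Hochschild dot already sitting on that leg one obtains the univalent Hochschild dot below the blue strand, which is the corollary's left-hand side. (As an aside, the displayed Ext group in the corollary appears to be misstated: the diagram together with this derivation place the relation in $\ext^{1,-3}_{R^e}(B_sB_t, B_t)$, which may be what led you to search for an extra source of cohomological degree.)
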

\begin{proof}
Follows from adding a dot to the top right red strand of \cref{2hochslide} and applying two-color dot contraction.
\end{proof}

\subsection{The Case $m_{st}>2$}

Besides the generators and relations from \cref{newgensect}, the presence of the generator $v_t^s(m_{st})$ will result in additional relations in $\bsbimext(\hf, W_{m_{st}})$ not present in  $\bsbimext(\hf, W_\infty)$, namely

\begin{lemma}
\label{2mstabsorbalg}
Suppose that $m_{st}>3$, or when $m_{st}=3$, we have that $\underline{v}$ contains at least one $s$. Then we have the following relation in $\ext_{R^e}^{1, -(m_{st} +|\underline{v}|+1)}(B_t, \bs( \usmt{t}{m_{st}}{} , \underline{v} ) \, )$.
\[ (v_s^t(m_{st})\otimes_R \id_{\underline{v}} )  \circ \,  \Phi_{t}^{ ( \usmt{s}{m_{st}}{} , \underline{v} ) }=   \Phi_{t}^{ ( \usmt{t}{m_{st}}{} , \underline{v} ) } \]
Diagrammatically this will be of the form
\begin{center}
    \raisebox{-0.7cm}{\begin{tikzpicture}[scale=0.7]
          \draw[blue] (0.8,1.5) -- (1.4, 0.8);
           \draw[red] (1.4,0.8) -- (1.1, 1.5);
           \draw[myyellow] (2,1.5) -- (1.4, 0.8);
           \draw[red] (0.8,0.1) -- (1.4, 0.8);
           \draw[blue] (1.1,0.1) -- (1.4, 0.8);
           \draw[violet] (2,0.1) -- (1.4, 0.8);
           \draw[violet] (2, -1) to[out=40, in=-90] (2.2, 1.5);
	       \draw[red] (0.8,0.1) to[out=-90,in=-180] (2,-1);
	       \draw[blue] (1.1,0.1) to[out=-90,in=135] (2,-1);
	       \draw[violet] (2,0.1) to[out=-90,in=90] (2,-1);
	       \draw[violet] (3.2,0.1) to[out=-90,in=0] (2,-1);
	       \draw[violet] (3.2,0.1) -- (3.2,1.5);
	       \foreach \x in {2} \draw[blue] (\x,-1) -- (\x,-2);
	       \node[rhdot] at (2,-1) {};
	       \node at (1.5,1.3) {$\s{\ldots}$};
	       \node at (1.5, 0.2) {$\s{\ldots}$};
	       \node at (2.7, 0.2) {$\underline{v}$};
	    \end{tikzpicture}}=\raisebox{-0.7cm}{\begin{tikzpicture}[scale=0.7]
           \draw[blue] (0.8,0.1) -- (0.8, 1.5);
           \draw[red] (1.1,0.1) -- (1.1, 1.5);
           \draw[myyellow] (2,0.1) -- (2, 1.5);
           \draw[violet] (2, -1) to[out=40, in=-90] (2.2, 1.5);
	       \draw[blue] (0.8,0.1) to[out=-90,in=-180] (2,-1);
	       \draw[red] (1.1,0.1) to[out=-90,in=135] (2,-1);
	       \draw[myyellow] (2,0.1) to[out=-90,in=90] (2,-1);
	       \draw[violet] (3.2,0.1) to[out=-90,in=0] (2,-1);
	       \draw[violet] (3.2,0.1) -- (3.2,1.5);
	       \foreach \x in {2} \draw[blue] (\x,-1) -- (\x,-2);
	       \node[rhdot] at (2,-1) {};
	       \node at (1.5, 0.2) {$\s{\ldots}$};
	       \node at (2.7, 0.2) {$\underline{v}$};
	    \end{tikzpicture}}
\end{center}
\end{lemma}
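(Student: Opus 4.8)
The plan is to prove this the same way almost every relation involving $\Phi_t^{\underline{w}}$ has been proved in this paper: reduce to a dimension-$1$ statement, and then check both sides agree on the single relevant basis vector $\underline{\rho_s}\boxtimes 1\otimes 1$. First I would verify that the target $\ext$ group is one-dimensional. By \cref{maincohoisofin} (the finite analogue of \cref{maincohoiso}), the relevant $\ext^{1,\bullet}_{R^e}(B_t, \bs(\usmt{t}{m_{st}}{},\underline{v}))$ decomposes as $\ker\rho_s^e(\usmt{t}{m_{st}}{},\underline{v})(3)\oplus \Db(\ker\rho_s^e(\usmt{t}{m_{st}}{},\underline{v}))(1)$, and I need to show that in internal degree $-(m_{st}+|\underline{v}|+1)$ only the $\Db$ part contributes, with dimension $1$. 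This is exactly the finite-case dimension count; the hypothesis ``$m_{st}>3$, or $m_{st}=3$ and $\underline{v}$ contains at least one $s$'' is precisely what is needed so that $|m(t,(\usmt{t}{m_{st}}{},\underline{v}))|\ge 4$ and so that the lowest-degree element of the $\ker$ summand (after the shift) lies strictly above degree $-(m_{st}+|\underline{v}|+1)$ — the subtle point being that in the finite case $\ker\rho_s^e$ can be larger than in the affine case (as flagged in the remark after \cref{maincohoisofin}), so I would invoke \cref{lowestdeglemfin} and the finite analogue of \cref{lowestdeglem2} rather than the affine versions. The reason the $m_{st}=3$, $\underline{v}=\varnothing$ case is excluded is that then $\usmt{t}{3}{}$ together with the $6$-valent twist forces $|m(t,\usmt{t}{3}{})|=3<4$, so $\Phi_t$ isn't even defined there.

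Once one-dimensionality is established, the relation follows by checking both sides on $\underline{\rho_s}\boxtimes 1\otimes 1$. For the right-hand side, by definition $\Phi_t^{(\usmt{t}{m_{st}}{},\underline{v})}$ sends $\underline{\rho_s}\boxtimes 1\otimes 1$ to $\underline{1}\boxtimes 1(\usmt{t}{m_{st}}{},\underline{v})$. For the left-hand side, I would use a chain lift: $\Phi_t^{(\usmt{s}{m_{st}}{},\underline{v})}$ sends $\underline{\rho_s}\boxtimes 1\otimes 1$ to $\underline{1}\boxtimes 1(\usmt{s}{m_{st}}{},\underline{v})$, and then I post-compose with a chain lift of $v_s^t(m_{st})\otimes_R\id_{\underline{v}}$. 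The $2m_{st}$-valent morphism $v_t^s(m_{st})$ was set up precisely so that $1(\usmt{t}{m_{st}}{})\mapsto 1(\usmt{s}{m_{st}}{})$ (equivalently its rotation $v_s^t$ does the analogous thing at the level of $c_{bot}$-type elements), so that at the cohomological degree $0$ level the composite sends $\underline{\rho_s}\boxtimes 1\otimes 1$ to $\underline{1}\boxtimes 1(\usmt{t}{m_{st}}{},\underline{v})$, matching the right-hand side. Since morphisms in $\ext^{1,-(m_{st}+|\underline{v}|+1)}_{R^e}(B_t,\bs(\usmt{t}{m_{st}}{},\underline{v}))$ are determined by their value on $\underline{\rho_s}\boxtimes 1\otimes 1$ (by the $\dim=1$ statement, or more precisely by the relevant case of \cref{clarifyinglemcor}), we conclude.

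The main obstacle, I expect, is not the degree $0$ matching — that is a direct computation with the explicit description of $v_t^s(m_{st})$ — but rather pinning down the dimension count cleanly in the finite case. Specifically, I need the finite analogue of \cref{lowestdeglem2} for the subexpression $m(t,(\usmt{t}{m_{st}}{},\underline{v}))$, and I have to be careful about which non-repeating subexpression is maximal when $|\underline{v}|$ interacts with the already-saturated $\usmt{t}{m_{st}}{}$ block; the combinatorics of ``how much of $t\cdot(\usmt{t}{m_{st}}{},\underline{v})$ can be made non-repeating'' is where the $\underline{v}$-contains-an-$s$ hypothesis really bites when $m_{st}=3$. A secondary, more bookkeeping-flavored obstacle is the usual ``incorrectly written'' issue flagged in the remarks: the composite $(v_s^t(m_{st})\otimes_R\id_{\underline{v}})\circ\Phi_t^{(\usmt{s}{m_{st}}{},\underline{v})}$ may land in $K(\usmt{t}{m_{st}}{},\underline{v})\otimes_R K_\varnothing$ rather than $K(\usmt{t}{m_{st}}{},\underline{v})$ on the nose, so I would insert the appropriate chain lift of a unitor (as in the $\widetilde{\epsilon_s^\prime}$ convention) to make the equality literal — but this does not affect the cohomology class and so is purely cosmetic. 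Once these points are handled, the proof is a two-line check, and I would write it as such, deferring the dimension lemma to \cref{1dimcritfinite} and its evident extension.
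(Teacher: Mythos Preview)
Your proposal is correct and follows exactly the paper's approach: the paper's proof is the two-line version of what you wrote, observing that the hypotheses force $|m(t,(\usmt{t}{m_{st}}{},\underline{v}))|\ge 4$, invoking \cref{1dimcritfinite} for one-dimensionality, and then noting that $(v_s^t(m_{st})\otimes_R\id_{\underline{v}})$ sends $1(\usmt{s}{m_{st}}{},\underline{v})$ to $1(\usmt{t}{m_{st}}{},\underline{v})$. Your worries about the dimension count and unitor bookkeeping are already absorbed into \cref{1dimcritfinite} and the standing conventions, so the paper does not spell them out.
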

\begin{proof}
The conditions in the lemma will imply that $|m(t,  \usmt{t}{m_{st}}{} , \underline{v} )|\ge 4$ and so $\dim_\kb \ext_{R^e}^{1, -(m_{st} +|\underline{v}|+1)}(B_t, \bs( \usmt{t}{m_{st}}{} , \underline{v} ) \, )=1 $ by \cref{1dimcritfinite}. Now the lemma will follow by noting that $(v_s^t(m_{st})\otimes_R \id_{\underline{v}} )$ sends $1( \usmt{s}{m_{st}}{} , \underline{v} )$ to $1( \usmt{t}{m_{st}}{} , \underline{v} )$.
\end{proof}

\begin{lemma}
For $m_{st}=3$, we have the following relation in $\ext_{R^e}^{1, -4}(B_t, B_t B_s B_t)$.
\[ v_s^t(3)  \circ \,  \Phi_{t}^{ ( s,t, s ) }=   (\id_t \otimes_R \eta_s^{\ext}\otimes_R \id_t)\circ \delta_t -(\id_t \otimes_R \eta_s\otimes_R \id_t)\circ \delta_t\circ \phi_t  \]
\begin{center}
\raisebox{-0.2cm}{
    \begin{tikzpicture}[scale=0.6]
           \draw[blue] (1.1,2) -- (2,1);
           \draw[red] (2,2) -- (2,1);
           \draw[blue] (2.9,2) -- (2,1);
           \draw[red] (1.1,0) -- (2,1);
           \draw[blue] (2,0) -- (2,1);
           \draw[red] (2.9,0) -- (2,1);
	       \draw[red] (1.1,0) -- (2,-1);
	       \draw[blue] (2,0) -- (2,-1);
	       \draw[red] (2.9,0) -- (2,-1);
	       \draw[blue] (2,-2) -- (2,-1);
	       \node[rhdot] at (2,-1) {};
	\end{tikzpicture}} \raisebox{0.8cm}{$= \ $}
	\raisebox{0.05cm}{
\begin{tikzpicture}[scale=0.6]
	       \draw[blue] (1.1,-0.1) to[out=-90, in=-180] (2,-1.5) to[out=0, in=-90] (2.9, -0.1);
	       \draw[red] (2,-0.1) -- (2,-0.9);
	       \draw[blue] (2,-2.5) -- (2,-1.5);
	       \node[rhdot] at (2,-0.9) {};
	\end{tikzpicture}} \raisebox{0.8cm}{$ \ - \ $}
\begin{tikzpicture}[scale=0.6]
	       \draw[blue] (1.1,-0.1) to[out=-90, in=-180] (2,-1.5) to[out=0, in=-90] (2.9, -0.1);
	       \draw[red] (2,-0.1) -- (2,-0.9);
	       \draw[blue] (2,-2.5) -- (2,-1.5);
	       \node[bhdot] at (2,-2) {};
	       \node[rdot] at (2,-0.9) {};
	\end{tikzpicture}
\end{center}
\end{lemma}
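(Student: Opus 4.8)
The statement to prove is a relation in the one-dimensional $\kb$-module $\ext_{R^e}^{1,-4}(B_t, B_tB_sB_t)$ for $m_{st}=3$; here the target has $|m(t,(t,s,t))|=4$, so \cref{1dimcritfinite} applies and $\dim_\kb \ext_{R^e}^{1,-4}(B_t, B_tB_sB_t)=1$. Since the space is one-dimensional, the plan is to verify the identity by evaluating both sides on a single distinguished generator and checking they agree up to no scalar. By \cref{clarifyinglemcor} (which continues to hold in the finite case by \cref{stillholdfinite}), any class in $\ext_{R^e}^{1,-(|\underline{w}|+1)}(B_t, \bs(\underline{w}))$ is determined by its value on $\underline{\rho_s}\boxtimes 1\otimes 1$; here $\underline{w}=(t,s,t)$ and $|\underline{w}|+1=4$, so it suffices to show that a chain lift of the left-hand side and a chain lift of the right-hand side send $\underline{\rho_s}\boxtimes 1\otimes 1$ to the same element of $K(t,s,t)$ in cohomological degree $0$.

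First I would take the cocycle representative for $\Phi_t^{(s,t,s)}$ provided by \cref{cocyclerep}, namely
\[
{}_{tt}^{\scalemath{0.75}{-a_{st}\otimes_s \rho_t\otimes_t 1\otimes_s 1+c_s\otimes_t 1\otimes_s 1+1\otimes_s 1\otimes_t 1\otimes_s \alpha_s-1\otimes_s 1\otimes_t c_s}}\psi_s^{1\otimes_s 1\otimes_t 1\otimes_s 1},
\]
and compose it with a chain lift of the $2m_{st}$-valent generator $v_s^t(3):K(s,t,s)\to K(t,s,t)$; only the degree-$0$ part $\widetilde{v_s^t(3)}^0$ is needed, and since $a_{st}=a_{ts}=-[2]$ with $[2]$ invertible here (we are in the $m_{st}>2$ branch), the relevant coefficients are explicit. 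Using the Demazure-type reductions one extracts the image of $\underline{\rho_s}\boxtimes 1\otimes 1$ under the composite. On the other side I would lift the right-hand side: the term $(\id_t\otimes_R \eta_s^{\ext}\otimes_R\id_t)\circ\delta_t$ uses the chain lifts $\widetilde{\delta_t}$ (from \cref{chainliftsection}) and $\widetilde{\eta_s^{\ext}}=\alpha_s^\vee\lfrown(-)$, while the term $(\id_t\otimes_R\eta_s\otimes_R\id_t)\circ\delta_t\circ\phi_t$ uses $\widetilde{\phi_t}=-\gamma_t^\vee\lfrown(-)$ together with the ordinary chain lift $\widetilde{\eta_s}$; again only the action in the appropriate low cohomological degrees is required. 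This is precisely the style of computation carried out in the proof of \cref{4extreductcor} (which is \cref{4extreduct} rotated), and indeed the present identity is essentially \cref{4extreductcor} composed on the top with $v_s^t(3)$ using \cref{2mstabsorbalg}-type absorption, so I expect the cleanest route is to deduce it directly: apply $\widetilde{v_s^t(3)}$ to the relation of \cref{4extreductcor} (reading it in $\ext^{1,-3}_{R^e}(B_t,B_tB_s)$) after tensoring with $\id_t$ and then recognizing that $v_s^t(3)$ glued onto $\Omega_{(s,t)}^{(t,s)}$-type pieces collapses the $2$-valent and $4$-valent morphisms on the right via the $1$-color relations and a $6$-valent identity in $\mathscr{D}(\hf,W_3)$.

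The main obstacle will be the bookkeeping of Koszul signs and the precise combinatorial form of $\widetilde{v_s^t(3)}^0$: unlike the infinite case, here we must use an honest chain lift of a $2m_{st}$-valent morphism, which is only partially pinned down (as the excerpt notes, $v_t^s(m_{st})$ is not fully determined outside $m_{st}=2$), so I would be careful to arrange the argument so that only the value $v_s^t(3):1(\usmt{s}{3}{})\mapsto 1(\usmt{t}{3}{})$ and the degree count are used — exactly as in the proof of \cref{2mstabsorbalg} — rather than the full formula. Once the right-hand side is rewritten so that each summand is visibly a chain map built from previously-lifted generators, matching the single value on $\underline{\rho_s}\boxtimes 1\otimes 1$ is a finite and mechanical check; the one-dimensionality from \cref{1dimcritfinite} then upgrades this pointwise equality to equality of classes in $\ext_{R^e}^{1,-4}(B_t,B_tB_sB_t)$, completing the proof.
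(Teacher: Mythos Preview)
There is a genuine gap in your dimension count. You claim $|m(t,(t,s,t))|=4$, but $t\underline{w}=(t,t,s,t)$ has maximal non-repeating subexpression $(t,s,t)$ of length $3$, so $|m(t,(t,s,t))|=3<4$. Hence neither \cref{1dimcritfinite} nor \cref{clarifyinglemcor} applies. In fact one computes (using $\bs(t,s,t)\cong B_{tst}\oplus B_t$ for $m_{st}=3$ and Soergel's hom formula) that $\ker\rho_s^e(t,s,t)$ is free over $R$ with generators in degrees $3,1,1,-1$, so
\[
\ext^{1,-4}_{R^e}(B_t,B_tB_sB_t)\;\cong\;\ker\rho_s^e(t,s,t)_{-1}\;\oplus\;\Db(\ker\rho_s^e(t,s,t))_{-3}\;\cong\;\kb\oplus\kb
\]
is two-dimensional. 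Evaluating on $\underline{\rho_s}\boxtimes 1\otimes 1$ alone therefore cannot determine the class; you would also need the value on $\underline{\rho_t t(\rho_t)}\boxtimes 1\otimes 1$, and for the left-hand side that requires the action of $\widetilde{v_s^t(3)}^0$ on the complicated element $v$ from \cref{cocyclerep}, not just on $1(\usmt{s}{3}{})$. Your ``cleanest route'' also does not type-check as written: \cref{4extreductcor} lands in $\ext^{1,-3}(B_t,B_tB_s)$, and $v_s^t(3)$ has source $B_sB_tB_s$, so one cannot simply post-compose.

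The paper sidesteps the two-dimensionality entirely by working one strand up. It applies $(\id_{t,s,t}\otimes\epsilon_s)$ to the already-established identity $(v_s^t(3)\otimes_R\id_s)\circ\Phi_t^{(s,t,s,s)}=\Phi_t^{(t,s,t,s)}$ from \cref{2mstabsorbalg}; both sides there live in the one-dimensional space with $|m(t,(t,s,t,s))|=4$. After capping, the left side becomes $v_s^t(3)\circ\Phi_t^{(s,t,s)}$ by \cref{newgenreduct}. For the right side one uses the absorption identity $\Phi_t^{(t,s,t,s)}=(\id_t\otimes\Phi_t^{(s,t,s)})\circ\delta_t$ (an instance of \cref{newgenabsorb}), then applies \cref{4extreduct} inside and slides $\phi_t$ through the trivalent via \cref{eqn:hdot-trivalent-b} to obtain $(\id_t\otimes\eta_s^{\ext}\otimes\id_t)\circ\delta_t-(\id_t\otimes\eta_s\otimes\id_t)\circ\delta_t\circ\phi_t$. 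No chain lift of $v_s^t(3)$ beyond its defining value on $1(\usmt{s}{3}{})$ is ever needed.
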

\begin{proof}
  Apply $(\id_{t,s,t}\otimes \epsilon_s)$ to the equality
\[ (v_s^t(3)\otimes_R \id_s )  \circ \,  \Phi_{t}^{ (s,t,s , s ) }=   \Phi_{t}^{ (t,s,t , s ) } \]
from \cref{2mstabsorbalg} and simplify using \cref{4extreduct}.
\end{proof}

Another difference between $\bsbimext(\hf, W_{m_{st}})$ and $\bsbimext(\hf, W_\infty)$ is that in $\bsbimext(\hf, W_{m_{st}})$ for any $k$, the red/blue $2k-$extvalent morphisms can be generated from the $2m_{st}-$extvalent morphism and generators of $\bsbim(\hf, W_{m_{st}})$. We already know that when $k< m_{st}$, then the red $2k-$extvalent morphism can be obtained from the red $2m_{st}-$extvalent morphism by using \cref{newgenreduct}, while for $k>m_{st}$ we have

\begin{lemma}
$\Phi_t^{\usmt{s}{2k-1}{}} $ can be expressed as a composition of $\Phi_t^{\usmt{s}{2m_{st}-1}{}}$, comultiplication, and the $2m_{st}-$valent morphism when $k>m_{st}$. 
\end{lemma}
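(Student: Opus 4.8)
The plan is to bootstrap from the $m_{st}=\infty$ picture. Write $k = m_{st} + r$ with $1 \le r$, so that $\usmt{s}{2k-1}{}$ contains the reduced-subword pattern $\usmt{s}{2m_{st}-1}{}$ near its left end followed by a tail of length $2r$. The key observation is that $\Phi_t^{\usmt{s}{2k-1}{}}$ is the essentially unique element of $\ext^{1,-(|\usmt{s}{2k-1}{}|+1)}_{R^e}(B_t, \bs(\usmt{s}{2k-1}{}))$ (it is one-dimensional by \cref{1dimcritfinite}, since $|m(t,\usmt{s}{2k-1}{})| = 2m_{st} \ge 4$ once $m_{st} \ge 2$, and indeed equals $2m_{st}$ which is $\ge 4$), characterized by sending $\un{\rho_s}\boxtimes 1 \otimes 1 \mapsto \un{1}\boxtimes 1(\usmt{s}{2k-1}{})$. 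So the whole proof reduces to exhibiting \emph{some} morphism $B_t \to \bs(\usmt{s}{2k-1}{})$ built from $\Phi_t^{\usmt{s}{2m_{st}-1}{}}$, comultiplications $\delta_s, \delta_t$, and the $2m_{st}$-valent morphism $v_t^s(m_{st})$, which has the right bidegree and does the right thing on $\un{\rho_s}\boxtimes 1 \otimes 1$.

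First I would set up the candidate morphism explicitly. Starting from $\Phi_t^{\usmt{s}{2m_{st}-1}{}} \in \ext^{1,-2m_{st}}_{R^e}(B_t, \bs(\usmt{s}{2m_{st}-1}{}))$, I postcompose with the $2m_{st}$-valent braid morphism $v_t^s(m_{st})$ or $v_s^t(m_{st})$ placed appropriately on a sub-block of the expression $\usmt{s}{2m_{st}-1}{}$ (more precisely on a $\usmt{}{m_{st}}{\bullet}$-subword), which converts a string of length $m_{st}$ in one alternating color into the other alternating color --- this is exactly the mechanism already used in \cref{2mstabsorbalg}, where $(v_s^t(m_{st}) \otimes_R \id_{\un{v}}) \circ \Phi_t^{(\usmt{s}{m_{st}}{}, \un{v})} = \Phi_t^{(\usmt{t}{m_{st}}{}, \un{v})}$. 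Iterating such braid insertions, combined with comultiplication morphisms $\delta_s$ (which duplicate strands, via \cref{newgenmult}: $\id \otimes_R \delta_{s_i} \otimes_R \id$ applied to $\Phi_t^{\un{w}}$ gives $\Phi_t^{\un{\widecheck{w}^i}}$), I can lengthen the top expression from $\usmt{s}{2m_{st}-1}{}$ to $\usmt{s}{2k-1}{}$: each application of $\delta$ increases the strand count by one and each $2m_{st}$-valent move re-braids a block to keep the colors alternating, so after $2r$ steps I reach the expression $\usmt{s}{2k-1}{}$. The bidegree bookkeeping is automatic: $\delta$'s have bidegree $(0,-1)$, the $2m_{st}$-valent morphism has bidegree $(0, 1 - m_{st})$ or similar (it is a degree-$0$-cohomological, internal-degree morphism determined by $\bsbim$), and $\Phi_t^{\usmt{s}{2m_{st}-1}{}}$ carries the single unit of cohomological degree, so the composite lands in cohomological degree $1$ and the internal degree works out to $-(|\usmt{s}{2k-1}{}|+1) = -2k$.

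The remaining task is to check the composite acts correctly on $\un{\rho_s}\boxtimes 1 \otimes 1$, i.e. sends it to $\un{1}\boxtimes 1(\usmt{s}{2k-1}{})$. Since $\Phi_t^{\usmt{s}{2m_{st}-1}{}}$ sends $\un{\rho_s}\boxtimes 1\otimes 1$ to $\un{1}\boxtimes 1(\usmt{s}{2m_{st}-1}{})$, and $1(\un{w})$ (the all-$c_{id}$ element) is visibly sent by $\delta_{s_i}$ and by the $2m_{st}$-valent morphism to the corresponding $1(\un{w}')$ (the $2m_{st}$-valent map sends $1(\usmt{t}{m_{st}}{})$ to $1(\usmt{s}{m_{st}}{})$ by definition, and comultiplication clearly sends $c_{id}$ to $c_{id} \otimes_{R^s} c_{id}$ up to the light-leaf conventions), the composite tracks $\un{\rho_s}\boxtimes 1\otimes 1 \mapsto \un{1}\boxtimes 1(\usmt{s}{2k-1}{})$ as required. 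By \cref{clarifyinglemcor} / the one-dimensionality in \cref{1dimcritfinite}, this determines the class uniquely, so the composite equals $\Phi_t^{\usmt{s}{2k-1}{}}$.

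The main obstacle I anticipate is purely organizational rather than conceptual: keeping careful track of \emph{where} on the long expression each $2m_{st}$-valent morphism and each $\delta$ is inserted so that the colors genuinely alternate at every intermediate stage (the $2m_{st}$-valent morphism only converts a full alternating block of length $m_{st}$, so one has to verify such a block is always available), and confirming that the light-leaf description of $1(\un{w})$ is compatible with these insertions. One clean way to package this is to induct on $r$: assuming $\Phi_t^{\usmt{s}{2(m_{st}+r-1)-1}{}}$ is so expressible, apply one $\delta$ followed by one $2m_{st}$-valent re-braiding to get $\Phi_t^{\usmt{s}{2(m_{st}+r)-1}{}}$, invoking \cref{2mstabsorbalg} and \cref{newgenmult} at each step and checking the action on $\un{\rho_s}\boxtimes 1\otimes 1$ via one-dimensionality. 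I expect the inductive step to be a short diagrammatic computation once the picture is drawn.
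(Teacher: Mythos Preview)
Your proposal is correct and follows essentially the same route as the paper: exploit the one-dimensionality of $\ext^{1,-2k}_{R^e}(B_t,\bs(\usmt{s}{2k-1}{}))$ from \cref{1dimcritfinite}, then exhibit a composite of $\Phi_t^{\usmt{s}{2m_{st}-1}{}}$ with degree-zero Soergel morphisms ($2m_{st}$-valent vertices and splits) that sends $\un{\rho_s}\boxtimes 1\otimes 1$ to $\un{1}\boxtimes 1(\usmt{s}{2k-1}{})$, which the paper phrases as postcomposing with an internal-degree-$0$ map $\alpha$ carrying $1(\usmt{s}{2m_{st}-1}{})$ to $1(\usmt{s}{2k-1}{})$. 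Two small corrections: $|m(t,\usmt{s}{2k-1}{})|=2k$ (the whole expression $\usmt{t}{2k}{}$ is already non-repeating), not $2m_{st}$; and the $2m_{st}$-valent morphism has bidegree $(0,0)$, which is what makes the internal-degree bookkeeping trivially work out.
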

\begin{proof}
We first give an example in the case when $m_{st}=3$ and $2k-1=7$. We claim that
\begin{equation}
\label{8valentto6examp}
    \begin{tikzpicture}[scale=0.7]
	       \draw[red] (0.8,0.1) to[out=-90,in=-180] (2,-1);
	       \draw[blue] (1.2,0.1) to[out=-90,in=150] (2,-1);
	       \draw[red] (1.6,0.1) to[out=-90,in=120] (2,-1);
	       \draw[blue] (2,0.1) to[out=-90,in=90] (2,-1);
	       \draw[red] (2.4,0.1) to[out=-90,in=60] (2,-1);
	       \draw[blue] (2.7,0.1) to[out=-90,in=30] (2,-1);
	       \draw[red] (3.2,0.1) to[out=-90,in=0] (2,-1);
	       \foreach \x in {2} \draw[blue] (\x,-1) -- (\x,-2);
	       \node[rhdot] at (2,-1) {};
	    \end{tikzpicture}\raisebox{0.6cm}{$= \ $}
	    \raisebox{-0.7cm}{\begin{tikzpicture}[scale=0.7]
	    \draw[blue] (2, 1.3) to[out=40, in=-90] (2.3, 2.5);
	    \draw[red] (0.8,2.5) -- (1.4, 1.9);
           \draw[blue] (1.4,1.9) -- (1.4, 2.5);
           \draw[red] (2,2.5) -- (1.4, 1.9);
           \draw[blue] (0.8,1.3) -- (1.4, 1.9);
           \draw[red] (1.4,1.3) -- (1.4, 1.9);
           \draw[blue] (2,1.3) -- (1.4, 1.9);
          \draw[blue] (0.8,1.3) -- (1.4, 0.7);
           \draw[red] (1.4,0.7) -- (1.4, 1.3);
           \draw[blue] (2,1.3) -- (1.4, 0.7);
           \draw[red] (0.8,0.1) -- (1.4, 0.7);
           \draw[blue] (1.4,0.1) -- (1.4, 0.7);
           \draw[red] (2,0.1) -- (1.4, 0.7);
           \draw[red] (2, 0.1) to[out=40, in=-90] (2.6, 2.5);
	       \draw[red] (0.8,0.1) to[out=-90,in=-180] (2,-1);
	       \draw[blue] (1.4,0.1) to[out=-90,in=135] (2,-1);
	       \draw[red] (2,0.1) to[out=-90,in=90] (2,-1);
	       \draw[blue] (2.9,2.5) to[out=-90,in=45] (2,-1);
	       \draw[red] (3.2,0.1) to[out=-90,in=0] (2,-1);
	       \draw[red] (3.2,0.1) -- (3.2,2.5);
	       \foreach \x in {2} \draw[blue] (\x,-1) -- (\x,-2);
	       \node[rhdot] at (2,-1) {};
	    \end{tikzpicture}}
\end{equation}
This follows from the facts that the morphism above the red $6-$extvalent on the RHS sends $1(s,t,s,t,s)$ to $1(s,t,s,t,s,t,s)$ and $\dim_\kb \ext^{1,-8}_{R^e}(B_t, \bs(s,t,s,t,s,t,s))=1$ by \cref{1dimcritfinite}\\

In general, the red $2k-$extvalent can be realized as the postcomposition of the red $2m_{st}-$extvalent by an internal degree 0 morphism $\alpha$ that sends $1(\usmt{s}{2m_{st}-1}{})$ to $1(\usmt{s}{2k-1}{})$. One can easily generalize the RHS of \cref{8valentto6examp} to produce such an $\alpha$.
\end{proof}


\subsection{Computation of $\ext^{\bullet, \bullet}_{R^e}(R, B_w)$ for $m_{st}<\infty$}

\begin{theorem}
\label{finindecompcohograd}
Assuming lesser invertibility, for all $1\le k\le m_{st}$, there is an isomorphism of right $R$ modules. 
\begin{align*}
\ext^{0, \bullet}_{R^e}(R, B_{\smt{t}{k}{}})&\cong R(-k) \\
\ext^{1, \bullet}_{R^e}(R, B_{\smt{t}{k}{}})&\cong R(4-k)\oplus R(k) \\
\ext^{2, \bullet}_{R^e}(R, B_{\smt{t}{k}{}})&\cong R(k+4)
\end{align*}
\end{theorem}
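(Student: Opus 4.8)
The statement is the exact finite-case analogue of \cref{indecompcohograd}, and the proof should mirror that one almost verbatim, with the single caveat that the inductive range is now $1\le k\le m_{st}$ rather than all $k$. First I would establish the base case $k=1$, which follows from \cref{maincohoisofin} exactly as $k=1$ in \cref{indecompcohograd} follows from \cref{maincohoiso}: here $\bs(t)=B_t$ and $\ker\rho_s^e(\underline\varnothing)$ is all of $R$, so the three Ext groups are $R(-1)$, $R(3)\oplus R(1)$, and $R(5)$, matching the claimed pattern at $k=1$. Then I would induct on $k$.

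\textbf{Inductive step.} Fix $k$ with $2\le k\le m_{st}$ and assume the result for all smaller indices. By \cref{maincohoisofin} applied to $\underline w=\usmt{t}{k}{}$ (so that $\ker\rho_s^e(\usmt{t}{k}{})$ appears, using that $\usmt{t}{k}{}$ has $s$ in position $k-1$ from the left when read as $B_t\otimes_R\bs(\usmt{s}{k-1}{})$), we get
\[
\ext^{0, \bullet}_{R^e}(R, \bs(\usmt{t}{k}{}))\cong  \ker\rho_s^e(\usmt{s}{k-1}{})(-1),
\]
\[
\ext^{1, \bullet}_{R^e}(R, \bs(\usmt{t}{k}{}))\cong \ker\rho_s^e(\usmt{s}{k-1}{})(3)\oplus \Db(\ker\rho_s^e(\usmt{s}{k-1}{}))(1),
\]
\[
\ext^{2, \bullet}_{R^e}(R, \bs(\usmt{t}{k}{}))\cong  \Db(\ker\rho_s^e(\usmt{s}{k-1}{}))(5),
\]
so in particular $\ext^{\bullet,\bullet}_{R^e}(R,\bs(\usmt{t}{k}{}))$ already satisfies the prescribed pattern. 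By lesser invertibility all quantum numbers $[j]$ with $j<m_{st}$ are invertible, and since $k\le m_{st}$ the element $\smt{t}{k}{}$ is still ``small enough'' that the decomposition
\[
\bs(\usmt{t}{k}{})\cong B_{\smt{t}{k}{}}\bigoplus_{y<\smt{t}{k}{}} B_{y}^{\oplus h_y}
\]
holds (this is the $m_{st}<\infty$ analogue of \cref{bsdecompeq}; one may invoke it from \cite{DC} in the finite dihedral case for $k\le m_{st}$, or observe that it follows from the diagrammatic decomposition since the Jones--Wenzl projector $\jw_{\usmt{t}{k}{}}$ is defined as long as $[j]$ is invertible for $j<k\le m_{st}$). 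Every $y<\smt{t}{k}{}$ occurring is of the form $\smt{t}{j}{}$ or $\smt{}{j}{t}$ with $j<k$, so by induction $\ext^{\bullet,\bullet}_{R^e}(R,B_y^{\oplus h_y})$ satisfies the pattern. By Soergel's Hom formula $\ext^{0,\bullet}_{R^e}(R,B_{\smt{t}{k}{}})\cong R(-k)$; subtracting the (inductively known) contributions of the lower summands from the Ext-groups of $\bs(\usmt{t}{k}{})$ displayed above then forces
\[
\ext^{1, \bullet}_{R^e}(R, B_{\smt{t}{k}{}})\cong R(4-k)\oplus R(k),\qquad
\ext^{2, \bullet}_{R^e}(R, B_{\smt{t}{k}{}})\cong R(k+4),
\]
completing the induction.

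\textbf{Expected obstacle.} The one genuine point of care, as opposed to the purely formal transcription of \cref{indecompcohograd}'s argument, is justifying the direct-sum decomposition \cref{bsdecompeq} in the finite setting and checking that no extra summands intervene for $k\le m_{st}$: in the finite dihedral group one has $\smt{t}{m_{st}}{}=\smt{}{m_{st}}{t}=w_0$ and the Bruhat intervals behave differently than in $W_\infty$, so one must confirm that $\bs(\usmt{t}{k}{})$ still contains $B_{\smt{t}{k}{}}$ with multiplicity one and all other summands $B_y$ have $y<\smt{t}{k}{}$ of strictly smaller length. This is exactly the range where lesser invertibility guarantees the relevant Jones--Wenzl projectors exist, so the decomposition is available; I would cite the corresponding computation in \cite{DC} Section 5.4 (finite case) rather than reprove it. Everything else — the use of \cref{maincohoisofin}, Soergel's Hom formula, and the subtraction argument — is identical to the affine proof.
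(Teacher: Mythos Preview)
Your proposal is correct and follows exactly the paper's approach: the paper's proof is literally ``Same as \cref{indecompcohograd} where \cref{maincohoiso} is replaced by \cref{maincohoisofin},'' and you have spelled out precisely that induction, including the one genuine finite-case check (availability of the decomposition \cref{bsdecompeq} for $k\le m_{st}$ under lesser invertibility).
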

\begin{proof}
Same as \cref{indecompcohograd} where \cref{maincohoiso} is replaced by \cref{maincohoisofin}.
\end{proof}

\begin{theorem}
\label{finalgextdecomp}
Assuming lesser invertibility, for all $1\le k\le m_{st}$, we have an isomorphism of right $R-$modules
\begin{align*}
\ext^{0, \bullet}_{R^e}(R, B_{\smt{t}{k}{}})&\cong  \jw_{\usmt{t}{k}{} }\circ \bunit \runit \ldots \vunit\  R \\
\ext^{1, \bullet}_{R^e}(R, B_{\smt{t}{k}{}})&\cong  \jw_{\usmt{t}{k}{} }\circ  \bhunit \runit \ldots \vunit\ R\oplus  \jw_{\usmt{t}{k}{} }\circ  \ 
\raisebox{-0.2cm}{\begin{tikzpicture}[scale=0.6]
	       \draw[red] (1.1,-0.1) to[out=-90,in=-180] (2,-1);
	       \draw[violet] (2.9,-0.1) to[out=-90,in=0] (2,-1);
	       \draw[blue] (0.9,-0.1) to[out=-90,in=-180] (1.45,-1.35) to[out=0, in=-90] (2,-1);
	       \node[rhdot] at (2,-1) {};
	       \node at (2,-0.4) {$\scalemath{0.9}{\usmt{s}{k-1}{}}$};
	    \end{tikzpicture}} \ R\\
\ext^{2, \bullet}_{R^e}(R, B_{\smt{t}{k}{}})&\cong \ \jw_{\usmt{t}{k}{} }\circ \raisebox{-0.2cm}{\begin{tikzpicture}[scale=0.6]
	       \draw[red] (1.1,-0.1) to[out=-90,in=-180] (2,-1);
	       \draw[violet] (2.9,-0.1) to[out=-90,in=0] (2,-1);
	       \draw[blue] (0.9,-0.1) to[out=-90,in=-180] (1.45,-1.35) to[out=0, in=-90] (2,-1);
	       \node[rhdot] at (2,-1) {};
	       \node[bhdot] at (1.45,-1.35) {};
	       \node at (2,-0.4) {$\scalemath{0.9}{\usmt{s}{k-1}{}}$};
	    \end{tikzpicture}} \ R
\end{align*}
\end{theorem}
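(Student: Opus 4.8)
The plan is to mimic the proof of \cref{algextindecomp} essentially verbatim, checking at each step that the only new ingredient needed for the infinite case --- namely the non-vanishing computations in \cref{keycor} together with the cohomology relation \cref{cohoreleq1} --- has a finite-case analogue. The starting point is \cref{maincohoisofin}, which gives the same decomposition of $\ext^{i, \bullet}_{R^e}(R, \bs(\usmt{t}{k}{}))$ in terms of $\ker \rhos{s}{k-1}{}$ and its dual as \cref{maincohoiso} did in the infinite case. Since we are assuming lesser invertibility, all quantum numbers $[j]$ for $j < m_{st}$ are invertible, so for $1\le k\le m_{st}$ we still have the decomposition \cref{bsdecompeq} of $\bs(\usmt{t}{k}{})$ into indecomposables with multiplicities given by the same Kazhdan-Lusztig combinatorics (this is exactly the range in which \cite{DC} established the decomposition). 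Hence the reduction to computing $\ext^{i,\bullet}_{R^e}(R, B_{\smt{t}{k}{}}) = \jw_{\usmt{t}{k}{}}\circ \ext^{i,\bullet}_{R^e}(R, \bs(\usmt{t}{k}{}))$ via induction on $k$, subtracting off the contributions of $B_y$ for $y < \smt{t}{k}{}$, goes through word for word once we know the degree bounds of \cref{finindecompcohograd}.

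\textbf{Key steps.} First I would invoke \cref{finindecompcohograd} to pin down the graded ranks of $\ext^{i,\bullet}_{R^e}(R, B_{\smt{t}{k}{}})$, so that the argument reduces to exhibiting an explicit spanning set in the correct degree and then observing that a spanning set of the right size over a domain is automatically a basis. Second, for $\ext^{0}$, the result is already in \cite{DC} Claim 5.26 and transfers via the equivalence $\mathscr{D}(\hf, W_{m_{st}})\simeq \bsbim(\hf, W_{m_{st}})$; no new work. Third, for $\ext^{1}$, I would run the four-step argument from the proof of \cref{algextindecomp}: (Step 1) reduce via \cref{ext1kerbasis} and \cref{ext1dkerbasis} to light-leaf-type generators; (Step 2) observe that light leaves without generalized pitchforks are of the restricted form \cref{step2eq} --- here one must be slightly careful, since in the finite case the light leaf algorithm for a reduced expression of length $\ge 2m_{st}$ differs from the affine one, but for $k\le m_{st}$ the relevant expressions $\usmt{s}{k-1}{}$ have length $k-1 < 2m_{st}$, so the light leaf morphisms coincide with the affine ones and the pitchfork analysis is unchanged; (Step 3) push lines past dots using fusion and polynomial forcing exactly as before; (Step 4) cohomology reduction, which uses \cref{cohoreleq1} --- and \cref{cohoreleq1} holds in the finite case because its proof only uses polynomial forcing and the non-vanishing of \cref{stsrhos}, \cref{tsrhos} in the relevant range, which is precisely \cref{kerfinitelem}. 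Fourth, for $\ext^{2}$, the same reasoning applies starting from \cref{ext2lightleaf}. Finally, conclude by the degree/domain argument that the spanning set is a basis.

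\textbf{Main obstacle.} The genuinely delicate point is making sure that all the intermediate lemmas from \cref{newgensect} --- in particular the new generator $\Phi_t^{\underline{w}}$, its rotation invariance (\cref{newgenrotation}), its absorption properties (\cref{newgenabsorb}), and the cohomology relations (\cref{cohorelseq}, \cref{cohorelteq}) --- are actually available in the finite case. By \cref{stillholdfinite} this holds for $2 < m_{st} < \infty$, and the definitions make sense for expressions $\underline{w}$ with $|m(t,\underline{w})| \ge 4$; since all the $\underline{w}$ appearing in the spanning-set argument for $k \le m_{st}$ satisfy $|m(t,\underline{w})| = |\underline{w}| \le 2m_{st}$, the $1$-dimensionality criterion \cref{1dimcritfinite} applies and everything goes through. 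The case $m_{st}=2$ must be excluded from this theorem's range in the sense that $k \le m_{st} = 2$ only involves $B_{\smt{t}{1}{}} = B_t$ and $B_{\smt{t}{2}{}}$, both of which are covered by \cref{maincohoisofin} directly --- but one should double-check that the $[2]=0$ degeneration documented in the $m_{st}=2$ subsection doesn't break the spanning argument; it does not, because for $k=1,2$ no genuine $2k$-extvalent morphism with $k\ge 2$ enters. So the real work is bookkeeping: verifying that every citation to a lemma of \cref{newgensect} used inside the proof of \cref{algextindecomp} remains valid, which is exactly what \cref{stillholdfinite} and \cref{1dimcritfinite} were set up to guarantee.
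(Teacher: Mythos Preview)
Your proposal is correct and takes essentially the same approach as the paper: the proof of \cref{algextindecomp} carries over verbatim once one observes that for $k\le m_{st}$ the light leaf morphisms $\mathrm{L}_{\usmt{s}{k-1}{},\underline{f}}$ coincide with their affine counterparts (since $|\usmt{s}{k-1}{}|=k-1<m_{st}$ is reduced and no $2m_{st}$-valent vertex appears), with \cref{indecompcohograd} replaced by \cref{finindecompcohograd}. Your more detailed bookkeeping of which lemmas from \cref{newgensect} are invoked is accurate, though the paper compresses all of this into a single sentence.
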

\begin{proof}
When $k\le m_{st}$, the light leaf morphisms $\mathrm{L}_{  \usmt{s}{k-1}{}, \underline{f} }$ appearing in the proof in \cref{algextindecomp} coincide with the light leaf morphisms in the affine case, and so the proof of \cref{algextindecomp} still applies with \cref{indecompcohograd} replaced by \cref{finindecompcohograd}.
\end{proof}

\section{Ext Dihedral  Diagrammatics: $m_{st}<\infty$ }
\label{diagramfinitesect}

For this section let $\mathscr{D}_{m_{st}}:=\mathscr{D}_{m_{st}}(\hf, W_{m_{st}})$ the $2-$color diagrammatic Hecke category as defined in \cite{DC}. 

\begin{definition}
Let $\Dext_{m_{st}}:=\Dext_{m_{st}}(\hf, W_{m_{st}})$ be the strict $\kb$ linear supermonoidal category associated to a realization $\hf$ of the finite dihedral group $(W_{m_{st}}, S)$, where $S=\set{s,t}$ is the set of simple reflections, satisfying \cref{assump1} and \cref{assump2} defined as follows. 
\begin{itemize}
    \item \textbf{Objects} are the same as in $\Dext_\infty$ as defined in \cref{affinedef}.
    \item \begin{enumerate}[(a)]
        \item \textbf{Morphism Spaces when $m_{st}=2$} are again bigraded $\kb$ modules of total degree $(\ell,n)$ where $\ell$ is the cohomological degree and $n$ is the internal or Soergel degree. They are generated by 
        \begin{center}
        \begin{tabular}{c|c|c|c}
            generator &   \raisebox{-2ex}{\begin{tikzpicture}[scale=0.55]
	       \draw[blue] (1.2,-0.1) -- (2,-1);
	       \draw[red] (2.8,-0.1) -- (1.2,-2);
	       \draw[blue] (2.8,-2) -- (2,-1);
	    \end{tikzpicture}} & \raisebox{-2ex}{\begin{tikzpicture}[scale=0.55]
	       \draw[red] (1.2,-0.1) -- (2,-1);
	       \draw[blue] (2.8,-0.1) -- (1.2,-2);
	       \draw[red] (2.8,-2) -- (2,-1);
	    \end{tikzpicture}} \\[2ex]
            name  & $4-$valent & $4-$valent   \\
            bidegree& $(0,0)$  & $(0,0)$
        \end{tabular}
    \end{center}
        along with the generating morphisms in \cref{affinedef} \underline{except} for the red $2k-$extvalent morphisms.\\
        \textbf{Relations in $\Dext_2$} are as follows. All relations in $\mathscr{D}_2$ will be satisfied (see \cite{DC} Section 6.2) and the generating morphisms for the color $s$ will satisfy the relations of $\Dext_s$ as specified in \cite {M} Section 4, and likewise for the color $t$ and $\Dext_t$. We will also have the following additional relation.\\
        
    \textbf{4$-$valent Hochschild Sliding:}
    \begin{equation}
    \label{4valhochslide}
    \raisebox{-4ex}{
    \begin{tikzpicture}[scale=0.7]
	       \draw[blue] (1.2,-0.1) -- (2,-1);
	       \draw[red] (2.8,-0.1) -- (1.2,-2);
	       \draw[blue] (2.8,-2) -- (2,-1);
	       \node[rhdot] at (1.5,-1.625) {};
	    \end{tikzpicture}\raisebox{0.5cm}{$  =$}
	\begin{tikzpicture}[scale=0.7]
	       \draw[blue] (1.2,-0.1) -- (2,-1);
	       \draw[red] (2.8,-0.1) -- (1.2,-2);
	       \draw[blue] (2.8,-2) -- (2,-1);
	       \node[rhdot] at (2.4,-0.55) {};
	    \end{tikzpicture}}
\end{equation}
    \item \textbf{Morphism Spaces when $m_{st}>2$} are bigraded as in the case $m_{st}=2$ above. They are generated by 
    \begin{center}
        \begin{tabular}{c|c|c|c}
            generator &   \raisebox{-2ex}{\begin{tikzpicture}[scale=0.6]
	       \draw[red] (1.1,-0.1) -- (2,-1);
	       \draw[blue] (1.5,-0.1) -- (2,-1);
	       \draw[violet] (2.9,-0.1) -- (2,-1);
	       \draw[blue] (1.1,-2) -- (2,-1);
	       \draw[red] (1.5,-2) -- (2,-1);
	       \draw[violet] (2.9,-2) -- (2,-1);
	       \node at (2.1,-0.4) {$\scalemath{0.8}{\ldots}$};
	       \node at (2.1,-1.6) {$\scalemath{0.8}{\ldots}$};
	    \end{tikzpicture}}  & \raisebox{-2ex}{\begin{tikzpicture}[scale=0.6]
	       \draw[blue] (1.1,-0.1) -- (2,-1);
	       \draw[red] (1.5,-0.1) -- (2,-1);
	       \draw[violet] (2.9,-0.1) -- (2,-1);
	       \draw[red] (1.1,-2) -- (2,-1);
	       \draw[blue] (1.5,-2) -- (2,-1);
	       \draw[violet] (2.9,-2) -- (2,-1);
	       \node at (2.1,-0.4) {$\scalemath{0.8}{\ldots}$};
	       \node at (2.1,-1.6) {$\scalemath{0.8}{\ldots}$};
	    \end{tikzpicture}} \\[2ex]
	    name & $2m_{st}-$valent & $2m_{st}-$valent \\
	    bidegree & $(0,0)$ & $(0,0)$ 
	    \end{tabular}
	    \end{center}
	    along with \underline{all} the generating morphisms in \cref{affinedef}.\\
	   \textbf{Relations in $\Dext_{m_{st}}$} are as follows. All relations in $\mathscr{D}_{m_{st}}$ will be satisfied. The generating morphisms above will satisfy \underline{all} relations in \cref{diagraminftysect} along with the relation.\\
	   
	   \textbf{$2m_{st}$ Absorption($m_{st}>2$)}
	   \begin{equation}
	   \label{2mstabsorb}
    \raisebox{-1.4cm}{\begin{tikzpicture}[scale=0.8]
	       \draw[red] (1.1,0) -- (2,-1);
	       \draw[blue] (1.5,0) -- (2,-1);
	       \draw[myyellow] (2.9,0) -- (2,-1);
	       \draw[blue] (1.1,-2) -- (2,-1);
	       \draw[red] (1.5,-2) -- (2,-1);
	       \draw[violet] (2.9,-2) -- (2,-1);
	       \node at (2.1,0) {$\ldots$};
	       \node at (2.1,-1.6) {$\ldots$};
	       \draw[blue] (1.1,1.9) -- (2,1);
	       \draw[red] (1.5,1.9) -- (2,1);
	       \draw[violet] (2.9,1.9) -- (2,1);
	       \draw[red] (1.1,0) -- (2,1);
	       \draw[blue] (1.5,0) -- (2,1);
	       \draw[myyellow] (2.9,0) -- (2,1);
	       \node at (2.1,1.7) {$\ldots$};
	       \node[rkhdot] at (2,1) {};
	       \node at (2,1) {$\scalemath{0.8}{2k}$};
	    \end{tikzpicture}\raisebox{1.6cm}{\quad = \quad }
	  \raisebox{0.65cm}{
	 \begin{tikzpicture}[scale=1.1]
	       \draw[red] (1.5,-0.1) -- (2,-1);
	       \draw[myyellow] (2.5,-0.1) -- (2,-1);
	       \draw[red] (1.5,-2) -- (2,-1);
	       \draw[myyellow] (2.5,-2) -- (2,-1);
	       \draw[blue] (1.2,-1) -- (2,-1);
	       \draw[blue] (1.2,-0.1) -- (1.2,-2);
	       \draw[violet] (2.8,-1) -- (2,-1);
	       \draw[violet] (2.8,-0.1) -- (2.8,-2);
	       \node at (2,-0.4) {$\ldots$};
	       \node at (2,-1.6) {$\ldots$};
	       \node[rbkhdot] at (2,-1) {};
	       \node at (2,-1) {$\scalemath{0.65}{2k-2}$};
	    \end{tikzpicture}}} \qquad \qquad k > \frac{m_{st}+1}{2}
\end{equation}
	   
    \end{enumerate}
\end{itemize}
\end{definition}

\subsection{Equivalence}

\begin{theorem}
\label{finequiv}
Assume lesser invertibility. Define the functor $\Fext_{m_{st}}: \Dext_{m_{st}}\to  \bsbimext(\hf, W_{m_{st}}) $ where
\[ \Fext_{m_{st}} \paren{ \raisebox{-3ex}{\begin{tikzpicture}[scale=0.6]
	       \draw[red] (1.1,-0.1) -- (2,-1);
	       \draw[blue] (1.5,-0.1) -- (2,-1);
	       \draw[violet] (2.9,-0.1) -- (2,-1);
	       \draw[blue] (1.1,-2) -- (2,-1);
	       \draw[red] (1.5,-2) -- (2,-1);
	       \draw[violet] (2.9,-2) -- (2,-1);
	       \node at (2.1,-0.4) {$\scalemath{0.8}{\ldots}$};
	       \node at (2.1,-1.6) {$\scalemath{0.8}{\ldots}$};
	    \end{tikzpicture}} }= v_t^s(m_{st}) \qquad \qquad \Fext_{m_{st}} \paren{ \raisebox{-3ex}{\begin{tikzpicture}[scale=0.6]
	       \draw[blue] (1.1,-0.1) -- (2,-1);
	       \draw[red] (1.5,-0.1) -- (2,-1);
	       \draw[violet] (2.9,-0.1) -- (2,-1);
	       \draw[red] (1.1,-2) -- (2,-1);
	       \draw[blue] (1.5,-2) -- (2,-1);
	       \draw[violet] (2.9,-2) -- (2,-1);
	       \node at (2.1,-0.4) {$\scalemath{0.8}{\ldots}$};
	       \node at (2.1,-1.6) {$\scalemath{0.8}{\ldots}$};
	    \end{tikzpicture}}  }=v_s^t(m_{st}) \]
and the rest of the generating morphisms and on objects as in \cref{affineequiv}. Then $\Fext_{m_{st}}$ will be well defined and furthermore be a monoidal equivalence. 
\end{theorem}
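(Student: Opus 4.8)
The plan is to follow the exact same strategy as the proof of \cref{affineequiv}, exploiting the fact that the finite case only introduces the $2m_{st}$-valent morphism (and the relations governing its interaction with the Ext generators) on top of what was already verified. First I would check that $\Fext_{m_{st}}$ is well defined: the relations of $\mathscr{D}_{m_{st}}$ hold in $\bsbim(\hf, W_{m_{st}})$ by the equivalence of \cite{DC}, the relations of $\Dext_s$ and $\Dext_t$ hold by \cref{Mmaintheorem}, the $2$-color relations from \cref{diagraminftysect} still hold by \cref{stillholdfinite}, and the new relation \cref{2mstabsorb} holds by \cref{2mstabsorbalg}. In the special case $m_{st}=2$ one instead invokes \cref{4valhochslide} which holds in the bimodule category by the lemma proving \cref{2hochslide}. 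Since $\Fext_{m_{st}}$ is monoidal by construction and both the objects $(s),(t)$ and their images $K_s,K_t$ have biadjoints, full faithfulness reduces as before to checking
\[ \hom^{\bullet, \bullet}_{\Dext_{m_{st}}}(\varnothing, \underline{w}) \cong \ext^{\bullet, \bullet}_{R^e}(R, \bs(\underline{w})) \]
for all expressions $\underline{w}$, which by lesser invertibility (so the Jones--Wenzl projectors $\jw_{\usmt{t}{k}{}}$ exist for $k<m_{st}$) and the decomposition of \cite{DC} Section 5.4 further reduces to the case of indecomposables $\Bs_w$, $w\in W_{m_{st}}$.

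For $w=\smt{t}{k}{}$ with $k\le m_{st}$, I would show that the diagrammatic Hom spaces $\hom^{i,\bullet}_{\Dext_{m_{st}}}(\Bs_\varnothing, \Bs_{\smt{t}{k}{}})$ are spanned as right $R$-modules by exactly the generators listed in \cref{finalgextdecomp}, and then conclude linear independence by applying $\Fext_{m_{st}}$ and noting the images form an $R$-basis on the bimodule side. The spanning argument is the same five-step reduction used in the proof of \cref{affineequiv}: cohomological-degree considerations force any diagram in $\hom^1$ to contain exactly one exterior box, one Hochschild dot, or one $2k$-extvalent morphism; exterior boxes are absorbed via the $1$-color cohomology relations and \cref{eqn:exterior-boxes-mult}; Hochschild dots and extvalent morphisms trapped by a line are pushed to the bottom using $4$-Ext reduction \cref{diag4extreduct}, higher-Ext reduction \cref{highextreduct}, and polynomial forcing exactly as in Steps 3--4 there; and once a generator sits at the bottom, the remainder is an honest morphism in $\mathscr{D}_{m_{st}}$ which has a double-leaves $R$-basis by \cite{SC}, after which generalized pitchforks are killed by $\jw_{\usmt{t}{k}{}}$ (\cref{gpitchtopitch}) and the cohomology relation \cref{diacohorelseq}/\cref{diacohorelteq} reduces to the single surviving generator. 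The $\hom^2$ case is handled identically via Step 5, using Ext Valent Annihilation \cref{extvalentann} and $2$-color Hochschild Jumping \cref{2colorhochjump}. Crucially, the light leaf morphisms $\mathrm{L}_{\usmt{s}{k-1}{}, \underline{f}}$ for $k\le m_{st}$ are literally the same as in the affine case (no braid moves are needed yet), so the proof of \cref{algextindecomp}, hence of \cref{finalgextdecomp}, transfers verbatim.

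The genuinely new content, and the main obstacle, is handling the $2m_{st}$-valent generator itself: one must verify that $\Fext_{m_{st}}$ is still essentially surjective and full when $\underline{w}$ is long enough that $\bs(\underline{w})$ decomposes using $\jw_{\usmt{t}{m_{st}}{}}=0$, i.e. that the diagrammatic relations \cref{2mstabsorb} together with the ordinary $\mathscr{D}_{m_{st}}$ relations suffice to normalize any diagram containing both a $2m_{st}$-valent vertex and an Ext generator. The key inputs are the two lemmas at the end of the excerpt: that $\Phi_t^{\usmt{s}{2k-1}{}}$ for $k>m_{st}$ factors through $\Phi_t^{\usmt{s}{2m_{st}-1}{}}$, comultiplications, and the $2m_{st}$-valent morphism (proved by the dimension count \cref{1dimcritfinite} plus the explicit $\alpha$ generalizing \cref{8valentto6examp}), and the absorption lemma \cref{2mstabsorbalg}. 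Using these, whenever a diagram in $\hom^1$ or $\hom^2$ contains an extvalent morphism, one first uses $2m_{st}$-Absorption and the factorization to reduce its ``width'' back below $2m_{st}$, landing in the range already controlled by the affine-case argument; I would need to check this reduction terminates and that no new $R$-linear relations among the claimed basis elements are introduced — this bookkeeping (rather than any single hard computation) is where the proof requires real care. Once spanning is established for all indecomposables $\Bs_w$, comparing graded dimensions against \cref{maincohoisofin}/\cref{finalgextdecomp} gives linear independence and hence the isomorphism, completing the proof that $\Fext_{m_{st}}$ is a monoidal equivalence.
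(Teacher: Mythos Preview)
Your overall strategy matches the paper's: verify well-definedness, reduce to indecomposables $\Bs_{\smt{t}{k}{}}$ with $k\le m_{st}$, run the affine five-step reduction on diagrams with no $2m_{st}$-valent vertex, and then handle the interaction between the Ext generators and the $2m_{st}$-valent. The gap is in this last step. You propose to use $2m_{st}$-Absorption together with the factorization lemma ``to reduce the width back below $2m_{st}$.'' But the factorization lemma (that $\Phi_t^{\usmt{s}{2k-1}{}}$ for $k>m_{st}$ is built from the $2m_{st}$-extvalent, comultiplications, and $2m_{st}$-valents) is a statement in the \emph{bimodule} category; in $\Dext_{m_{st}}$ every $2k$-extvalent is a primitive generator, so this factorization is not a diagrammatic relation and cannot be used to normalize diagrams. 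Also, the parenthetical ``$\jw_{\usmt{t}{m_{st}}{}}=0$'' is false --- that projector picks out $B_{w_0}$ --- and essential surjectivity is never at issue since objects are just expressions.

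More seriously, the direction of your sketch is backwards. The paper's local move when a $2k$-extvalent is trapped by a $2m_{st}$-valent vertex is to \emph{grow} the extvalent: first use fusion (as in \cref{samecompeq}) to put them on the same component and attach one strand, then alternate polynomial forcing with the absorb-trap move \cref{absorbtrap} to attach further strands one at a time, each step increasing $k$ by one --- with the polynomials produced always forced to the top of the diagram, so they cause no obstruction. Once $m_{st}$ strands are attached (so $k>(m_{st}+1)/2$), the $2m_{st}$-Absorption relation \cref{2mstabsorb} swallows the $2m_{st}$-valent vertex and the extvalent passes through. The paper carries this out explicitly for $m_{st}=3$ in \cref{2kextpast2m1}--\cref{2kextpast2m2}; the general pattern is identical. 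A trapped Hochschild dot is handled by first converting it to a $4$-extvalent via \cref{diag4extreduct}/\cref{diag4extreduct2} and then running the same argument (for $m_{st}=2$ one uses \cref{4valhochslide} directly). Your ``bookkeeping'' remark undersells this: it is precisely the concrete mechanism you have not supplied.
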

\begin{proof}
The defining relations  of $\Dext_{m_{st}}$ in \cref{affinediagrel} all hold in $\bsbimext(\hf, W_{m_{st}})$ as shown in \cref{newgensect} and \cref{finitesect}. Therefore $\Fext_{m_{st}}$ is well defined and as in \cref{affineequiv} it suffices to check that for all $w\in W_{m_{st}}$
\[  \hom^{\bullet, \bullet}_{\Dext_{m_{st}}}(\Bs_\varnothing, \Bs_w) \cong \hom_{\bsbimext}(K_\varnothing, K_w)=\ext^{\bullet, \bullet}_{R^e}(R, B_w)  \]
WLOG we can assume that $w=\usmt{t}{2j}{}$ and $2j\le m$ as $\usmt{t}{m_{st}}{}$ is the longest element in $W_{m_{st}}$. Then by \cref{finalgextdecomp} we need to show
\begin{align*}
\hom^{0, \bullet}_{\Dext_{m_{st}}}(\Bs_\varnothing, \Bs_{\smt{t}{2j}{}})&=  \jw_{\usmt{t}{2j}{} }\circ \bunit \runit \ldots \runit\  R \\
\hom^{1, \bullet}_{\Dext_{m_{st}}}(\Bs_\varnothing, \Bs_{\smt{t}{2j}{}})&=  \jw_{\usmt{t}{2j}{} }\circ  \bhunit \runit \ldots \runit\ R\oplus  \jw_{\usmt{t}{2j}{} }\circ  \ 
\raisebox{-0.2cm}{\begin{tikzpicture}[scale=0.6]
	       \draw[blue] (1.1,-0.1) -- (2,-1);
	       \draw[red] (2.9,-0.1) -- (2,-1);
	       \node[rkhdot] at (2,-1) {};
	       \node at (2,-1) {$\s{2j}$};
	       \node at (2,-0.3) {$\scalemath{0.7}{\usmt{t}{2j}{}}$};
	    \end{tikzpicture}} \ R \\
\hom^{2, \bullet}_{\Dext_{m_{st}}}(\Bs_\varnothing, \Bs_{\smt{t}{2j}{}})&= \ \jw_{\usmt{t}{2j}{} }\circ \raisebox{-0.2cm}{\begin{tikzpicture}[scale=0.6]
	       \draw[blue] (1.1,-0.1) -- (2,-1);
	       \draw[red] (2.9,-0.1) -- (2,-1);
	       \node[rkhdot] at (2,-1) {};
	       \node at (2,-1) {$\s{2j}$};
	       \node[bhdot] at (1.4, -0.4) {};
	       \node at (2,-0.3) {$\scalemath{0.7}{\usmt{t}{2j}{}}$};
	    \end{tikzpicture}} \ R
\end{align*}
 As in the affine case in \cref{affineequiv}, we just need to show that any diagram in $\hom^{\bullet, \bullet}_{\Dext_{m_{st}}}(\Bs_\varnothing, \Bs_{\smt{t}{2j}{}})$ with a Hochschild dot or a red $2k-$extvalent morphism (for all $k\ge 2$) can be written as a right $R$ linear combination of diagrams with a Hochschild dot or a red $2k-$extvalent morphism at the bottom of the diagram. If the diagram if it doesn't contain the $2m_{st}-$valent vertex we are done, as we can just use the proof of \cref{affineequiv}. Call such diagrams ext$-\infty$ diagrams. It follows that we just need to show that when the red $2k-$extvalent morphism or Hochschild dot is trapped by the $2m_{st}-$valent morphism we can move the Hochschild dot and red $2k-$valent morphism further down modulo ext$-\infty$ diagrams. \\
 
Suppose we have a red $2k-$extvalent morphism trapped by a $2m_{st}-$valent morphism. We first demonstrate the case when $m=3$. Using \cref{samecompeq} we can assume that the $4-$extvalent morphism is on the same component as the $6-$valent morphism as seen on the left below. Applying fusion, we then see that
\begin{equation}
\label{2kextpast2m1}
\raisebox{-10ex}{
	\begin{tikzpicture}[scale=0.65]
	\draw[dotted, thick] (2.8,-2) circle (87pt);
	       \draw[red] (0.9,0) to[out=-90, in=-180] (2,-1);
	       \draw[blue]  (2,-1) to[out=0, in=-90] (3.1, 0);
	       \draw[blue] (2, -1) -- (2,-2);
	       \draw[red] (2, -1) -- (2.6,-1.5); 
	       \node[rdot] at (2.6, -1.5) {};
	       \node[rkhdot] at (2,-1) {};
	       \node at (2,-1) {$\scalemath{0.7}{2k}$};
	       \node at (2,-0.4) {$\cdots$};
	       \draw[blue] (0.6,0) to[out=-90, in=185] (3.4,-2.2); 
	       \draw[red] (3.4,-2.2) -- (3.4, 0);
	       \draw[blue] (3.4,-2.2) -- (4.8,0); 
	       \draw[red] (3.4,-2.2) -- (4.8,-4.2); 
	       \draw[blue] (3.4,-2.2) -- (3.4,-4.6); 
	       \draw[red] (3.4,-2.2) -- (1.6,-4.6); 
	\end{tikzpicture}\raisebox{1.6cm}{$ \ = \ $}
	\begin{tikzpicture}[scale=0.65]
	\draw[dotted, thick] (2.8,-2) circle (87pt);
	       \draw[red] (0.9,0) to[out=-90, in=-180] (2,-1);
	       \draw[blue]  (2,-1) to[out=0, in=-90] (3.1, 0);
	       \draw[blue] (2, -1) -- (2,-2);
	       \draw[red] (2, -1) -- (3.4,-1.3); 
	       \node at (2.8,-1.7) {$ \scalemath{0.8}{\boxed{\rho_s}}$};
	       \node[rkhdot] at (2,-1) {};
	       \node at (2,-1) {$\scalemath{0.7}{2k}$};
	       \node at (2,-0.4) {$\cdots$};
	       \draw[blue] (0.6,0) to[out=-90, in=185] (3.4,-2.2); 
	       \draw[red] (3.4,-2.2) -- (3.4, 0);
	       \draw[blue] (3.4,-2.2) -- (4.8,0); 
	       \draw[red] (3.4,-2.2) -- (4.8,-4.2); 
	       \draw[blue] (3.4,-2.2) -- (3.4,-4.6); 
	       \draw[red] (3.4,-2.2) -- (1.6,-4.6); 
	\end{tikzpicture}\raisebox{1.6cm}{$ \ - \ $}
	\begin{tikzpicture}[scale=0.65]
	\draw[dotted, thick] (2.8,-2) circle (87pt);
	       \draw[red] (0.9,0) to[out=-90, in=-180] (2,-1);
	       \draw[blue]  (2,-1) to[out=0, in=-90] (3.1, 0);
	       \draw[blue] (2, -1) -- (2,-2);
	       \draw[red] (2, -1) to[out=-85, in=180] (3.4,-2); 
	       \node at (2.9,-1.3) {$ \scalemath{0.65}{\boxed{s(\rho_s)}}$};
	       \node[rkhdot] at (2,-1) {};
	       \node at (2,-1) {$\scalemath{0.7}{2k}$};
	       \node at (2,-0.4) {$\cdots$};
	       \draw[blue] (0.6,0) to[out=-90, in=185] (3.4,-2.2); 
	       \draw[red] (3.4,-2.2) -- (3.4, 0);
	       \draw[blue] (3.4,-2.2) -- (4.8,0); 
	       \draw[red] (3.4,-2.2) -- (4.8,-4.2); 
	       \draw[blue] (3.4,-2.2) -- (3.4,-4.6); 
	       \draw[red] (3.4,-2.2) -- (1.6,-4.6); 
	\end{tikzpicture}}
\end{equation}
Now in the first diagram on the RHS of \cref{2kextpast2m1}, use polynomial forcing to move the $\rho_s$ to the right resulting in 
\begin{equation}
\label{2kextpast2m2}
\raisebox{-10ex}{
\begin{tikzpicture}[scale=0.65]
	\draw[dotted, thick] (2.8,-2) circle (87pt);
	       \draw[red] (0.9,0) to[out=-90, in=-180] (2,-1);
	       \draw[blue]  (2,-1) to[out=0, in=-90] (3.1, 0);
	       \draw[blue] (2, -1) -- (2,-2);
	       \draw[red] (2, -1) -- (3.4,-1.3); 
	       \node at (2.8,-1.7) {$ \scalemath{0.8}{\boxed{\rho_s}}$};
	       \node[rkhdot] at (2,-1) {};
	       \node at (2,-1) {$\scalemath{0.7}{2k}$};
	       \node at (2,-0.4) {$\cdots$};
	       \draw[blue] (0.6,0) to[out=-90, in=185] (3.4,-2.2); 
	       \draw[red] (3.4,-2.2) -- (3.4, 0);
	       \draw[blue] (3.4,-2.2) -- (4.8,0); 
	       \draw[red] (3.4,-2.2) -- (4.8,-4.2); 
	       \draw[blue] (3.4,-2.2) -- (3.4,-4.6); 
	       \draw[red] (3.4,-2.2) -- (1.6,-4.6); 
	\end{tikzpicture}\raisebox{1.6cm}{$ \ = \ $}\begin{tikzpicture}[scale=0.65]
	\draw[dotted, thick] (2.8,-2) circle (87pt);
	       \draw[red] (0.9,0) to[out=-90, in=-180] (2,-1);
	       \draw[blue]  (2,-1) to[out=0, in=-90] (3.1, 0);
	       \draw[blue] (2, -1) -- (2,-2);
	       \draw[red] (2, -1) to[out=-10, in=-90] (3.4,0); 
	       \node[rkhdot] at (2,-1) {};
	       \node at (2,-1) {$\scalemath{0.7}{2k}$};
	       \node at (2,-0.4) {$\cdots$};
	       \draw[red] (3.4, -2.2) -- (3.4, -1.3);
	       \node[rdot] at (3.4,-1.3) {};
	       \draw[blue] (0.6,0) to[out=-90, in=185] (3.4,-2.2); 
	       \draw[blue] (3.4,-2.2) -- (4.8,0); 
	       \draw[red] (3.4,-2.2) -- (4.8,-4.2); 
	       \draw[blue] (3.4,-2.2) -- (3.4,-4.6); 
	       \draw[red] (3.4,-2.2) -- (1.6,-4.6); 
	\end{tikzpicture}\raisebox{1.6cm}{$ \ - \ $}
	\begin{tikzpicture}[scale=0.65]
	\draw[dotted, thick] (2.8,-2) circle (87pt);
	       \draw[red] (0.9,0) to[out=-90, in=-180] (2,-1);
	       \draw[blue]  (2,-1) to[out=0, in=-90] (3.1, 0);
	       \draw[blue] (2, -1) -- (2,-2);
	       \draw[red] (2, -1) -- (3.4,-1.3); 
	       \node at (4,-0.8) {$ \scalemath{0.7}{\boxed{s(\rho_s)}}$};
	       \node[rkhdot] at (2,-1) {};
	       \node at (2,-1) {$\scalemath{0.7}{2k}$};
	       \node at (2,-0.4) {$\cdots$};
	       \draw[blue] (0.6,0) to[out=-90, in=185] (3.4,-2.2); 
	       \draw[red] (3.4,-2.2) -- (3.4, 0);
	       \draw[blue] (3.4,-2.2) to[out=0, in=-90] (4.8,0); 
	       \draw[red] (3.4,-2.2) -- (4.8,-4.2); 
	       \draw[blue] (3.4,-2.2) -- (3.4,-4.6); 
	       \draw[red] (3.4,-2.2) -- (1.6,-4.6); 
	\end{tikzpicture}
}
\end{equation}
The first diagram on the RHS of \cref{2kextpast2m2} reduces to ext$-\infty$ diagrams using two-color dot contraction while the second diagram on the RHS of \cref{2kextpast2m2} is essentially the same as the second diagram on the RHS of \cref{2kextpast2m1} modulo \underline{polynomials that are at the top of the diagram}. Now apply \cref{absorbtrap} and again modulo polynomials both diagrams reduce to
\begin{equation}
    \raisebox{-10ex}{
    \begin{tikzpicture}[scale=0.65]
	\draw[dotted, thick] (2.8,-2) circle (87pt);
	       \draw[red] (0.9,0) to[out=-90, in=-180] (2,-1);
	       \draw[blue]  (2,-1) to[out=0, in=-90] (3.1, 0);
	       \draw[blue] (2, -1) -- (2,-2);
	       \draw[red] (2, -1) to[out=-10, in=-90] (3.4,0); 
	        \draw[red] (3.4, -2.2) -- (2, -1);
	        \draw[blue] (2,-1) -- (2.9,-1.3);
	        \node[bdot] at (2.9,-1.3) {} ;
	        \node[rbkhdot] at (2,-1) {};
	       \node at (2,-1) {$\scalemath{0.65}{2k+2}$};
	       \node at (2,-0.4) {$\cdots$};
	       \draw[blue] (0.6,0) to[out=-90, in=185] (3.4,-2.2); 
	       \draw[blue] (3.4,-2.2) -- (4.8,0); 
	       \draw[red] (3.4,-2.2) -- (4.8,-4.2); 
	       \draw[blue] (3.4,-2.2) -- (3.4,-4.6); 
	       \draw[red] (3.4,-2.2) -- (1.6,-4.6); 
	\end{tikzpicture}
    }
\end{equation}
We are now in same position as the LHS of \cref{2kextpast2m1} except the number of strands of the red $2k-$extvalent morphism connected to the $6-$valent morphism has gone up. We can repeat this argument one more time so that 3 strands of the red $2k-$extvalent morphism are now connected to the $6-$valent morphism, and after another application of \cref{absorbtrap} for the first top blue strand we can apply $2m_{st}-$Absorption \cref{2mstabsorb} because \underline{the only polynomials that appear} are at the top. \\

For general $m_{st}\ge 3$, the argument is the same, where we keep applying the reductions in \cref{2kextpast2m1} and \cref{2kextpast2m2} until we have $m_{st}$ strands connecting the red $2k-$extvalent morphism and the $2m_{st}$ valent morphism such that the only polynomials that appear are at the top of the diagram. Using $4-$ext reductions \cref{diag4extreduct}, \cref{diag4extreduct2}, we can also move Hochschild dots past the $2m_{st}-$valent morphism at the cost of a red $4-$extvalent morphism and so we are done. Note for $m_{st}=2$, we don't have red $2k-$extvalent morphisms but we can always move Hochschild dots past the $4-$valent morphism using \cref{4valhochslide}. 
 \end{proof}

\appendix

\begin{appendices}

\section{Lifting to Chain Level}
\label{chainliftsection}

We will now define (partial) chain lifts of various bimodule morphisms in $\bsbim(\hf, W)$. Specifically, given a morphism $f$ between Bott-Samelson bimodules, we will define a (partial) morphism between the corresponding Bott-Samelson complexes lifting $f$. As our maps are required to be $R^e$ linear it suffices to define the chain maps on the inner tensor factors of $R^e, R^{ee},$ etc. In addition, as all of the bimodule maps we are lifting are of the form $f: R\otimes_{s/t}\ldots \otimes_{s/t}R\to R\otimes_{s/t}\ldots \otimes_{s/t}R $, we can always define $\widetilde{f}^0:  R\otimes\ldots \otimes R\to R\otimes \ldots \otimes R $ by the exact same formula as $f$ but with $\otimes_{s/t}$ replaced by $\otimes$.

\subsection{Unit}
The unit map of $B_s$ will be $\eta_s: R\to B_s$ where $\eta_s(1)=\rho_s\otimes_s 1-1\otimes_s s(\rho_s)$. We want to find $\widetilde{\eta_s}^1$ making the following diagram commute.

\begin{center}
\begin{tikzcd}
\cdots\arrow[r]& \un{V^*}(-2)  \boxtimes R^e  \arrow[r, "d"]\arrow[d,dotted, " \widetilde{\eta_s}^1 "]& \underline{1}\boxtimes R^e\arrow[r] \arrow[d,dotted,"\widetilde{\eta_s}^0"]& R \arrow[d, "\Delta"]\\
\cdots\arrow[r] & (\un{(V^*)^s}(-2) \oplus \kb \un{\rho_s s(\rho_s)}(-4)) \boxtimes R^e(1)  \arrow[r, "d"] & \un{1}\boxtimes R^e(1)\arrow[r] & B_s 
\end{tikzcd}
\end{center}
Recall that $V^*=(V^*)^s\oplus \kb \rho_s$. We then claim that the following definition will make the diagram commute
\begin{align*}
\widetilde{\eta_s}^1(\underline{r}\boxtimes 1\otimes 1)&=\underline{r}\boxtimes \rho_s\otimes 1-\underline{r}\boxtimes 1\otimes s(\rho_s) \quad \textnormal{ if }r\in (V^*)^s \\
\widetilde{\eta_s}^1(\underline{\rho_s} \boxtimes 1\otimes 1)&=(\underline{\rho_s+s(\rho_s)})\boxtimes \rho_s\otimes 1-\underline{\rho_s s(\rho_s)} \boxtimes 1\otimes 1
\end{align*}
This is clear if $r\in (V^*)^s$. For $\rho_s$ note that $\widetilde{\eta_s}^0(d( \underline{\rho_s}\boxtimes 1\otimes 1 ))$ is given by
\begin{center}
\begin{tikzcd}
\underline{\rho_s} \boxtimes 1\otimes 1  \arrow[d]\arrow[r]& \un{1}\boxtimes\paren{\rho_s\otimes 1- 1 \otimes \rho_s } \arrow[d]\\
? \arrow[r]& \un{1}\boxtimes\paren{\rho_s \rho_s\otimes 1 -\rho_s\otimes s(\rho_s)-\rho_s\otimes \rho_s+1\otimes \rho_s s(\rho_s)}
\end{tikzcd}
\end{center}
and one quickly computes that $d$ applied to our definition above yields the expression on the bottom right. 

\subsection{Multiplication}
The multiplication map of $B_s$ is $\mu_s: R\otimes_s R\otimes_s R(2) \to R\otimes_s R(1)$ where $\mu_s(f\otimes_s g\otimes_s h)= \partial_s(g) f\otimes_s h $. We want to find $\widetilde{\mu_s}^1$ making this diagram commute. Let $W=((V^*)^s(-2) \oplus \kb \rho_s s(\rho_s)(-4)) $. 

\begin{center}
\begin{tikzcd}
\cdots\arrow[r]& \un{W^{(1)}} \boxtimes R^{ee}(2) \bigoplus  \un{W^{(2)}} \boxtimes R^{ee}(2)\arrow[r, "d"]\arrow[d,dotted, " \widetilde{\mu_s}^1 "]&  \un{1}\boxtimes R^{ee}(2)\arrow[r] \arrow[d,dotted,"\widetilde{\mu_s}^0"]& B_s\otimes_R B_s \arrow[d, "\mu_s"]\\
\cdots\arrow[r] & \un{W} \boxtimes R^e(1)  \arrow[r, "d"] & \un{1}\boxtimes R^e(1)\arrow[r] & B_s 
\end{tikzcd}
\end{center}
  
We claim that the following definition will work. \vspace{-3ex}

\begin{align*}
\widetilde{\mu_s}^1(\underline{r^{(1)}}\boxtimes 1\otimes h \otimes 1)&=0 \quad \textnormal{ if }r\in (V^*)^s \\
\widetilde{\mu_s}^1(\underline{\rho_s s(\rho_s)^{(1)}} \boxtimes 1\otimes h \otimes 1 )&= 0\\
\widetilde{\mu_s}^1(\underline{r^{(2)}}\boxtimes 1\otimes h\otimes 1 )&= \underline{r}\boxtimes \partial_s(h)\otimes 1  \quad \textnormal{ if }r\in (V^*)^s\\
\widetilde{\mu_s}^1(\underline{\rho_s s(\rho_s)^{(2)}}\boxtimes 1\otimes h\otimes 1 )&= \underline{\rho_s s(\rho_s)}\boxtimes \partial_s(h)\otimes 1
\end{align*}

\subsection{Counit}

The counit map of $B_s$ will be $\epsilon_s: R\otimes_s R(1)\to R$ where $\epsilon_s(f\otimes_s g)=fg$ so we want to find $\widetilde{\epsilon_s}^1$ making the following diagram commute. Again, let $W=((V^*)^s(-2) \oplus \kb \rho_s s(\rho_s)(-4)) $. 
\begin{center}
\begin{tikzcd}
\cdots\arrow[r] & \un{W} \boxtimes R^e(1)  \arrow[r, "d"] \arrow[d,dotted, " \widetilde{\epsilon_s}^1 "]& \un{1}\boxtimes R^e(1)\arrow[r] \arrow[d,dotted,"\widetilde{\epsilon_s}^0"] & B_s \arrow[d, "\epsilon_s"]\\
\cdots\arrow[r]& \un{ V^*}(-2) \boxtimes R^{e}\arrow[r, "d"]&\un{1}\boxtimes R^{e}\arrow[r]& R 
\end{tikzcd}
\end{center}
In \cite {M} it was shown that the following definition makes the diagram commute
\begin{align*}
\widetilde{\epsilon_s}^1(\underline{r}\boxtimes 1\otimes 1)&=\underline{r}\boxtimes 1\otimes 1 \qquad  \qquad\textnormal{ if }r\in (V^*)^s \\
\widetilde{\epsilon_s}^1(\underline{\rho_s s(\rho_s)} \boxtimes 1\otimes 1)&=\underline{\rho_s}\boxtimes s(\rho_s)\otimes 1+\underline{s(\rho_s)} \boxtimes 1\otimes \rho_s
\end{align*}

\subsection{Comultiplication}

The comultiplication map of $B_s$ will be $\delta_s: R\otimes_s R(1)\to R\otimes_sR\otimes_s R(2)$ where $\delta_s(f\otimes_s g)= f\otimes_s 1\otimes_s g $ so we want to find $\widetilde{\delta_s}^1$ making the following diagram commute. Again, let $W=((V^*)^s(-2) \oplus \kb \rho_s s(\rho_s)(-4)) $. 

\begin{center}
\begin{tikzcd}
\cdots\arrow[r] & \un{W} \boxtimes R^e(1)  \arrow[r, "d"] \arrow[d,dotted, " \widetilde{\delta_s}^1 "]& \un{1} \boxtimes R^e(1)\arrow[r] \arrow[d,dotted,"\widetilde{\delta_s}^0"] & B_s \arrow[d, "\delta_s"]\\
\cdots\arrow[r]& \un{W^{(1)}} \boxtimes R^{ee}(2) \bigoplus \un{ W^{(2)}} \boxtimes R^{ee}(2)\arrow[r, "d"]& \un{1} \boxtimes R^{ee}(2)\arrow[r]& B_s\otimes_R B_s 
\end{tikzcd}
\end{center}
 We then claim that the following definition will work.\vspace{-4ex}

\begin{align*}
\widetilde{\delta_s}^1( \underline{r}\boxtimes 1\otimes 1)&=\underline{r^{(1)}}\boxtimes 1\otimes  1\otimes 1+ \underline{r^{(2)}}\boxtimes 1\otimes 1\otimes 1  \qquad \textnormal{ if }r\in (V^*)^s \\
\widetilde{\delta_s}^1(\underline{\rho_s s(\rho_s)} \boxtimes 1\otimes 1 )&=\underline{\rho_s s(\rho_s)^{(1)}}  \boxtimes 1\otimes 1\otimes 1+\underline{\rho_s s(\rho_s)^{(2)}} \boxtimes 1\otimes  1\otimes 1 
\end{align*}
This is an easy check left to the reader.

\subsection{Inverse of Left Unitor}

The inverse of the left unitor $\lambda_s$ for $B_s$ is the map $\tau_s: B_s\to R\otimes_R B_s$ where $\tau_s(f\otimes_s g)=f\otimes_R 1 \otimes_s g$, so we want to find $\widetilde{\tau_s}^1$ making the following diagram commute. Again, let $W=((V^*)^s(-2) \oplus \kb \rho_s s(\rho_s)(-4)) $. 

\begin{center}
\begin{tikzcd}
\cdots\arrow[r] & \un{W} \boxtimes R^e(1)  \arrow[r, "d"] \arrow[d,dotted, " \widetilde{\tau_s}^1 "]& \un{1} \boxtimes  R^e(1)\arrow[r] \arrow[d,dotted,"\widetilde{\tau_s}^0"] & B_s \arrow[d, "\tau_s"]\\
\cdots\arrow[r]& \un{(V^*)^{(1)}}(-2) \boxtimes R^{ee}(1) \bigoplus  W^{(2)} \boxtimes R^{ee}(2)\arrow[r, "d"]& \un{1} \boxtimes  R^{ee}(1)\arrow[r]& R\otimes_R B_s 
\end{tikzcd}
\end{center}
 In \cite {M} it was shown that the following definition will make the diagram commute \vspace{-4ex}

\begin{align*}
\widetilde{\tau_s}^1(\underline{r}\boxtimes 1\otimes 1)&=\underline{r}^{(1)}\boxtimes 1\otimes 1\otimes 1 + \underline{r}^{(2)}\boxtimes 1\otimes 1\otimes 1   \qquad  \qquad \text{ if }r\in (V^*)^s \\
\widetilde{\tau_s}^1(\underline{\rho_s s(\rho_s)}\boxtimes 1\otimes 1)&=\underline{\rho_s^{(1)}}\boxtimes s(\rho_s)\otimes 1\otimes 1+ \underline{s(\rho_s)^{(1)}}\boxtimes 1\otimes \rho_s \otimes 1+\underline{\rho_s s(\rho_s)^{(2)}}\boxtimes 1\otimes 1 \otimes 1
\end{align*}

\subsection{Inverse of Right Unitor}

The inverse of the right unitor $\theta_s$ for $B_s$ is the map $\sigma_s: B_s\to B_s\otimes_R R$ where $\sigma_s(f\otimes_s g)=f\otimes_s 1 \otimes_R g$, so we want to find $\widetilde{\sigma_s}^0,\widetilde{\sigma_s}^1$ making the following diagram commute. Again, let $W=((V^*)^s(-2) \oplus \kb \rho_s s(\rho_s)(-4)) $. 

\begin{center}
\begin{tikzcd}
\cdots\arrow[r] & \un{W} \boxtimes R^e(1)  \arrow[r, "d"] \arrow[d,dotted, " \widetilde{\sigma_s}^1 "]& \un{1} \boxtimes R^e(1)\arrow[r] \arrow[d,dotted,"\widetilde{\sigma_s}^0"] & B_s \arrow[d, "\sigma_s"]\\
\cdots\arrow[r]&  \un{W^{(1)}} \boxtimes R^{ee}(1)\bigoplus \un{(V^*)^{(2)}}(-2) \boxtimes R^{ee}(1)   \arrow[r, "d"]& \un{1} \boxtimes R^{ee}(1)\arrow[r]& B_s\otimes_R R 
\end{tikzcd}
\end{center}

 In \cite {M} it was shown that the following definition will make the diagram commute \vspace{-4ex}

\begin{align*}
\widetilde{\sigma_s}^1(\underline{r}\boxtimes 1\otimes 1)&=\underline{r}^{(1)}\boxtimes 1\otimes 1\otimes 1 + \underline{r}^{(2)}\boxtimes 1\otimes 1\otimes 1   \qquad  \qquad \text{ if }r\in (V^*)^s \\
\widetilde{\sigma_s}^1(\underline{\rho_s s(\rho_s)}\boxtimes 1\otimes 1)&=\underline{\rho_s s(\rho_s)^{(1)}}\boxtimes 1\otimes 1 \otimes 1+ \underline{s(\rho_s)^{(2)}}\boxtimes 1\otimes  \rho_s\otimes 1 +\underline{\rho_s^{(2)}}\boxtimes 1\otimes 1\otimes s(\rho_s)
\end{align*}

\section{HOMFLY Homology Calculations}
\label{appendixhomfly}

We refer the reader to \cite[Chapter 21.6]{EMTW20} for the definition of HOMFLY homology $\hhh$. As shown in \cite[Theorem 1]{Kho07}, the reduced HOMFLY homology $\overline{\hhh}$ is computed using the geometric realization $\hf_{geo}$ of $S_n$. As a result, the complex $F(\beta_L)$ associated to a braid $\beta_L$ lives in $K^b(\sbim(\hf_{geo}, S_n))$ and $R$ is a polynomial ring in $n-1$ variables instead of $n$. In this section we will be computing reduced HOMFLY homology. The reduced HOMFLY homology categorifies the reduced HOMFLY polynomial $\overline{P(a,q)}$ defined via the skein relation
\[ a^{-1}\overline{P(a,q)}(L_+)-a\overline{P(a,q)}(L_-) = (q-q^{-1})\overline{P(a,q)}(L_0) \]
and the normalization $\overline{P(a,q)}(\mathrm{unknot})=1$. 
\begin{definition}
Define $d_s=\frac{1}{2}(\alpha_s\otimes_s 1-1\otimes_s \alpha_s )\in R\otimes_{R^s}R$.
\end{definition}

\begin{example}
Let us review the calculation of the Poincare series for $\overline{\hhh}$ of the Hopf link $L2a1$. One possible braid representative is $\sigma_1^2$. By \cite[Eq 19.32]{EMTW20} the corresponding Rouquier complex is of the form\footnote{Recall we box the term in cohomological degree 0.} \vspace{-1ex}
\[ F(\sigma_1^2)\simeq \boxed{B_s(-1)} \xrightarrow{\ds}B_s(1)\xrightarrow{\rcounit}R(2) \]
As a result, one can calculate that $ \overline{\hhh}^{A=0}(F(\sigma_1^2)), \overline{\hhh}^{A=1}(F(\sigma_1^2))$ will be the cohomology of the following complexes
\[ \overline{\hhh}^{A=0}(F(\sigma_1^2)):\boxed{R(-2)}\xrightarrow{0} R\xrightarrow{\alpha_s}R(2) \qquad \overline{\hhh}^{A=1}(F(\sigma_1^2)):\boxed{R(2)}\xrightarrow{0} R(4)\xrightarrow{\id}R(4) \]
where $R=\kb[\alpha_s]$ as $(\sigma_1)^2$ is a braid on 2 strands. The Poincare series $\overline{\mathscr{P}(A,Q, T)}(F(\sigma_1^2))$ for $\overline{\hhh}(F(\sigma_1^2))$ will be
\[ \overline{\mathscr{P}(A,Q, T)}(F(\sigma_1^2))= \paren{\frac{Q^{2}}{1-Q^2}+Q^{-2}T^2}+A\paren{\frac{Q^{-2}}{1-Q^2}} \]
To recover the HOMFLY polynomial $P(a,q)$  up to a unit in $\mathbb{Z}\dsbrac{A^{\pm 1}, Q^{\pm 1}, T^{\pm 1}}$ we make the following substitutions
\begin{equation}
\label{subeq}
    A= - a^{2}q^2, T=-1, Q=q 
\end{equation}   
Applying this to $\overline{\mathscr{P}(A,Q, T)}(F(\sigma_1^2))$ we obtain
\[ \overline{\mathscr{P}(A,Q, T)}(F(\sigma_1^2))|_{\cref{subeq}}=\frac{q^2+q^{-2}-1-a^{2}}{1-q^2}=\frac{q^2+q^{-2}-1-a^{2}}{-q(q-q^{-1})}  \]
while using the definition of the reduced HOMFLY polynomial above we obtain 
\[ \overline{P(a,q)}(\widehat{\sigma_1^2})=a\left(\frac{q^2+q^{-2}-1-a^{2}}{q-q^{-1}}\right)\]
which after multiplying by the unit $-(aq)^{-1}$ agrees with $\overline{\mathscr{P}(A,Q, T)}(F(\sigma_1^2))|_{\cref{subeq}}$. \\ 
\end{example}

Our next example is a braid on 3 strands. Here $\hf_{geo}$ is the geometric realization of $S_3$ so $R=\kb[\alpha_s, \alpha_t]$ where 
$$s=(1 \ 2), \ t=(2 \ 3)\in S_3\qquad \alpha_s=x_1-x_2, \ \alpha_t=x_2-x_3 \qquad  \alpha_s^\vee=x_1^*-x_2^*, \ \alpha_t^\vee=x_2^*-x_3^*$$ 

\begin{example}
\label{exam:2hopf}
A braid representative for the connect sum of two Hopf links $L2a1\#L2a1$ is given by $\sigma_1^2\sigma_2^2$. The corresponding Rouquier complex is homotopic to
\begin{equation}
\label{roucomplex}
\scalemath{0.92}{
    \boxed{B_sB_t(-2)} \xrightarrow{ \begin{pmatrix}
\rzero \ \dt \\
 \ds \ \bzero
\end{pmatrix} } B_s B_t\oplus B_s B_t \xrightarrow{\begin{pmatrix}
 \rzero \ \bcounit &  0 \\
 \ds \bzero &   -\rzero \dt     \\[1em]
 0       &  \rcounit \bzero        
\end{pmatrix}} B_s(1)\oplus B_sB_t(2)\oplus B_t(1) \xrightarrow{ \begin{pmatrix}
\ds & - \rzero \bcounit & 0 \\
 0  &  \rcounit \bzero  & \dt
\end{pmatrix} } B_s(3)\oplus B_t(3) \xrightarrow{\begin{pmatrix}
 \rcounit &\bcounit 
\end{pmatrix}} R(4)} 
\end{equation}

Using \cref{finalgextdecomp} and \cref{hochbs} we have the following 

\begin{lemma}
\begin{align*}
\hh^{0}(B_s)&=  \runit \, R=R(-1)  & \hh^{0}(B_t)&=  \bunit \, R=R(-1)  \\
\hh^{1}(B_s)&=  \rhunit  \, R\oplus \runit\ \raisebox{-1ex}{ \dbox{\scalemath{0.9}{\alpha_t^\vee}}}  \, R =R(3)\oplus R(1) & \hh^{1}(B_t)&=  \bhunit  \, R\oplus \bunit\ \raisebox{-1ex}{ \dbox{\scalemath{0.9}{\alpha_s^\vee}}}  \, R =R(3)\oplus R(1) \\
\hh^{2}(B_s)&=  \rhunit\ \raisebox{-1ex}{ \dbox{\scalemath{0.9}{\alpha_t^\vee}}}  \, R =R(5) & \hh^{2}(B_t)&=  \bhunit\ \raisebox{-1ex}{ \dbox{\scalemath{0.9}{\alpha_s^\vee}}}  \, R =R(5)
\end{align*}
\end{lemma}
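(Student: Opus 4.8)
The lemma to prove computes $\hh^\bullet(B_s)$ and $\hh^\bullet(B_t)$ as right $R$-modules, where $R = \kb[\alpha_s,\alpha_t]$ is the geometric realization of $S_3$. By symmetry under swapping $s \leftrightarrow t$ it suffices to handle $B_s$. The plan is to apply \cref{hochbs}, which gives $\ext_{R^e}^{\bullet,\bullet}(R,B_s) \cong H^\bullet(\underline{\hom}_{R^e}(K(\rho_s^e),B_s)) \otimes_\kb \Lambda^\bullet(\un{V}\dsbrac{-1}/\kb\alpha_s^\vee)$, together with the explicit description of the two-term complex $\underline{\hom}_{R^e}(K(\rho_s^e),B_s)$ computed in the proof of \cref{hochbs}.

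First I would unwind what $\Lambda^\bullet(\un V \dsbrac{-1}/\kb\alpha_s^\vee)$ is in this realization. Since $\dim V = 2$ with basis $\alpha_s^\vee, \alpha_t^\vee$ (the geometric realization of $S_3$ has rank $n=2$), the quotient $V/\kb\alpha_s^\vee$ is one-dimensional spanned by the image of $\alpha_t^\vee$, so $\Lambda^\bullet(\un V\dsbrac{-1}/\kb\alpha_s^\vee) = \kb \oplus \kb\,\un{\alpha_t^\vee}\dsbrac{-1}$, contributing a free generator in bidegree $(0,0)$ and one in bidegree $\deg(\alpha_t^\vee\dsbrac{-1}) = (1,-2)$, the latter realized diagrammatically as the exterior box $\dboxed{\alpha_t^\vee}$. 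Next, from the proof of \cref{hochbs}, $H^0(\underline{\hom}_{R^e}(K(\rho_s^e),B_s)) = R\,\unit$ living in internal degree $+1$ (from the shift $(1)$ in $B_s = R(1)\oplus R(-1)$ — actually one must track: $\unit = c_{id}$ sits in degree $(0,1)$), and $H^1 = R\,\hunit$ where $\hunit$ has bidegree $(1,-3)$. So as right $R$-modules $H^0 = R(-1)$ and $H^1 = R(3)$ after accounting for grading conventions $R = \mathrm{Sym}(V^*(-2))$.

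Then I would assemble the tensor product degree by degree. Cohomological degree $0$: only $H^0 \otimes \kb \cdot 1$ contributes, giving $\hh^0(B_s) = \runit\, R \cong R(-1)$. Cohomological degree $1$: two contributions, $H^1 \otimes \kb\cdot 1$ giving $\rhunit\, R \cong R(3)$, and $H^0 \otimes \kb\,\un{\alpha_t^\vee}\dsbrac{-1}$ giving $\runit\,\dboxed{\alpha_t^\vee}\, R \cong R(-1)(-2) = R(1)$ — here one checks the box has bidegree $(1,-2)$ so it shifts cohomological degree up by $1$ and internal degree by $-2$, and combined with $\runit$ in degree $(0,1)$ lands in bidegree $(1,-1)$, i.e. $R(1)$ as a free right module. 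Cohomological degree $2$: only $H^1 \otimes \kb\,\un{\alpha_t^\vee}\dsbrac{-1}$ contributes, $\rhunit\,\dboxed{\alpha_t^\vee}\,R \cong R(3)(-2) = R(5)$. This matches the stated answer. The only genuine care needed — and the place I'd expect bookkeeping errors — is consistently tracking the two separate gradings $[1]$ and $(1)$ through the shift $\dsbrac{1} = [1](-2)$ and through the internal degrees of $\alpha_s,\alpha_t \in V^*$ being $2$ in $R = \mathrm{Sym}(V^*(-2))$; there is no conceptual obstacle, since \cref{hochbs} does all the real work.
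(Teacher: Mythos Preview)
Your approach is correct and essentially the same as the paper's: the paper simply cites \cref{hochbs} (together with \cref{finalgextdecomp}, which for $k=1$ reduces to the same computation), and you have unpacked exactly that citation with the right degree bookkeeping. The grading checks you flag as the only delicate point are indeed the only content here, and you have them right.
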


\begin{lemma}
\begin{align*}
\hh^{0}(R)&=  \raisebox{-1ex}{ \dbox{\scalemath{0.9}{1}}} R=R &\hh^{0}(B_sB_t)&=  \runit \bunit \, R =R(-2) \\
\hh^{1}(R)&=  \raisebox{-1ex}{ \dbox{\scalemath{0.9}{\alpha_s^\vee}}} R \oplus  \raisebox{-1ex}{ \dbox{\scalemath{0.9}{\alpha_t^\vee}}} R =R(2)\oplus R(2) &\hh^{1}(B_sB_t)&=  \rhunit \bunit \, R\oplus  \runit \bhunit \, R=R(2)\oplus R(2)  \\
\hh^{2}(R)&=  \raisebox{-1ex}{ \dbox{\scalemath{0.9}{\alpha_s^\vee}}}\raisebox{-1ex}{ \dbox{\scalemath{0.9}{\alpha_t^\vee}}} \, R=R(4)&\hh^{2}(B_sB_t)&=  \rhunit \bhunit \, R=R(6)
\end{align*}
\end{lemma}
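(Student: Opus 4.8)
The final statement to prove is the "second lemma" in the HOMFLY homology example, computing $\hh^\bullet(R)$ and $\hh^\bullet(B_sB_t)$ for the geometric realization of $S_3$, where $R = \kb[\alpha_s,\alpha_t]$. Here is the plan.

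\textbf{Setup and reduction.} Recall that $\hh^k(M) = \ext^k_{R^e}(R, M)$, so I would compute these using the projective resolution $q_\varnothing : K_\varnothing \to R$ and the Bott--Samelson complexes $K_s$, $K_sK_t$. For $\hh^\bullet(R)$ this is essentially immediate: since $\hf_{geo}$ is the geometric realization of $S_3$, we have $n = 2$ and $V^* = \kb\alpha_s^\vee \oplus \kb\alpha_t^\vee$ (or rather $V$ has basis the fundamental coweights, but the relevant exterior algebra is $\Lambda^\bullet(V\dsbrac{-1})$ of rank $2$). Then $\ext^\bullet_{R^e}(R,R) = H^\bullet(\un\hom_{R^e}(K_\varnothing, R))$, and since $e_i^e = 0$ in $R$ for all $i$, \cref{prehochbs} gives $\un\hom_{R^e}(K_\varnothing, R) \cong R \otimes_\kb \Lambda^\bullet(\un{V}\dsbrac{-1})$ with zero differential. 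Tracking the degree shifts $\dsbrac{-1} = [1](-2)$ on a rank-$2$ space yields exactly $\hh^0(R) = R$, $\hh^1(R) = R(2)^{\oplus 2}$, $\hh^2(R) = R(4)$, with the stated exterior-box generators $\dbox{1}$, $\dbox{\alpha_s^\vee}$, $\dbox{\alpha_t^\vee}$, $\dbox{\alpha_s^\vee}\dbox{\alpha_t^\vee}$. This matches the general principle that $\ext^{\bullet,\bullet}_{R^e}(R,R)$ is controlled by $\hom$ tensored with the exterior algebra (\cref{extindecompsect}).

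\textbf{The $B_sB_t$ computation.} For $\hh^\bullet(B_sB_t) = \ext^\bullet_{R^e}(R, B_sB_t)$ I would use the adjunction $\ext^{\bullet,\bullet}_{R^e}(R, B_sB_t) \cong \ext^{\bullet,\bullet}_{R^e}(B_t, \bs((s)))$ (from the Corollary following the biadjointness lemma: $B_s \otimes (-)$ is biadjoint to itself), and then apply \cref{maincohoisofin} with $\un w = (s)$ (a single strand, $m_{st} = 3 < \infty$). One computes $\ker\rho_s^e((s))$: by \cref{kerfinitelem} the kernel has right-$R$ basis $\{\ll f : r(\underline f) = \id \text{ or } t\}$; for $\un w = (s)$ the only subexpression with $r(\underline f) \in \{\id, t\}$ is $\underline f = (0)$ with $r = \id$, so $\ker\rho_s^e((s)) = c_{id}R \cong R(-1)$, and it is self-dual. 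Feeding this into \cref{maincohoisofin} gives $\ext^0 \cong \ker(-1) \cong R(-2)$, $\ext^1 \cong \ker(3) \oplus \Db(\ker)(1) \cong R(2) \oplus R(2)$, $\ext^2 \cong \Db(\ker)(5) \cong R(6)$. Then I would translate the generators through the adjunction and the Koszul-complex descriptions to obtain the diagrammatic representatives $\runit\bunit$, $\rhunit\bunit$, $\runit\bhunit$, $\rhunit\bhunit$ as stated; this uses that $\hunit = \eta_s^\ext$ and the explicit cocycle representatives from \cref{newgensect}. Alternatively, one could compute directly from the double complex \cref{keydoublecomplex} with $\bsw = B_t$, but the adjunction route is cleaner.

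\textbf{Main obstacle.} The bookkeeping of the grading shifts is the only real subtlety: one must carefully track the combined shift $\dsbrac 1 = [1](-2)$ through $K_\varnothing$, $K_s$, $K_t$, and the $(1)$-shifts built into $K_s = K(\rho_s s(\rho_s)^e, e_1^e)(1)$, and verify that the cohomological degrees line up so that $\ext^0$, $\ext^1$, $\ext^2$ land in internal degrees $-2$, $2$ (twice), $6$ respectively and not something off by a shift. The homological algebra itself is routine given \cref{prehochbs}, \cref{maincohoisofin}, and \cref{kerfinitelem}; the identification of the specific diagrammatic generators with the algebraic cocycles is a matter of unwinding the definitions of $\eta_s^\ext$, $\phi_s$, and the univalent Hochschild dots, together with the action of exterior boxes described in the subsection on \cref{newgensect}. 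I would present the $\hh^\bullet(R)$ case in one or two lines and the $\hh^\bullet(B_sB_t)$ case by invoking the adjunction plus \cref{maincohoisofin}, leaving the degree verification to the reader.
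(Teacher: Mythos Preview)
Your proposal is correct and arrives at the right answers, but it takes a slightly longer route than the paper and contains one notational slip.

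\textbf{The slip.} For $\underline w = (s)$ you correctly identify that the only subexpression with $r(\underline f)\in\{\id,t\}$ is $\underline f = (0)$, but the corresponding light leaf basis element is $\ll{(0)} = c_s$ (the all-zeros subexpression always gives $c_{top}$, as noted just before the definition of $1(\underline w)$), not $c_{id}$. The degree you quote, $\ker\rho_s^e((s))\cong R(-1)$, is the degree of $c_s R$, so your subsequent grading arithmetic is right; only the name of the generator is off.

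\textbf{Comparison with the paper.} The paper does not spell out a proof: it simply points to \cref{finalgextdecomp} (and \cref{hochbs}). The reason this suffices for $B_sB_t$ is that for $S_3$ the word $st$ is reduced of length $2 < m_{st}=3$, so $B_sB_t \cong B_{st}$ is already indecomposable, and \cref{finindecompcohograd}/\cref{finalgextdecomp} with $k=2$ immediately give $R(-2)$, $R(2)\oplus R(2)$, $R(6)$ together with the listed diagrammatic generators. Your route---adjunction to $\ext(B_t,B_s)$ and then \cref{maincohoisofin} with $\underline w=(s)$---is perfectly valid and recovers the same numbers, but it re-derives by hand a special case of what \cref{finalgextdecomp} already packages. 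For $\hh^\bullet(R)$ your Koszul/\cref{prehochbs} argument is exactly what underlies the paper's one-line citation, so there is no real difference there.

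One small remark on the adjunction you invoke: the Corollary in the paper is stated for the left action, $\ext(B_t\otimes M,N)\cong\ext(M,B_t\otimes N)$; to get $\ext(R,B_sB_t)\cong\ext(B_t,B_s)$ you are using the right-sided analogue $\ext(M\otimes B_t,N)\cong\ext(M,N\otimes B_t)$, which holds by the same biadjointness argument but is not literally the statement cited.
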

The complex for $\overline{\hhh}^{A=0}$ will then be (for notation purposes, let $\overline{ \runit (n)}= \runit \, R(n)$, etc)

\[ \boxed{\overline{\runit\bunit(-2)}} \xrightarrow{ \begin{pmatrix}
0 \\
0
\end{pmatrix} } \overline{\runit\bunit}\oplus \overline{\runit\bunit} \xrightarrow{\begin{pmatrix}
\scalemath{0.9}{\alpha_t} &  0 \\
 0 &   0     \\
 0       &  \scalemath{0.9}{\alpha_s}        
\end{pmatrix}} \overline{\runit(1)}\oplus \overline{\runit\bunit(2)}\oplus \overline{\bunit(1)} \xrightarrow{ \begin{pmatrix}
0 & -  \scalemath{0.9}{\alpha_t}  & 0 \\
 0  &  \scalemath{0.9}{\alpha_s}   & 0
\end{pmatrix} } \overline{\runit(3)}\oplus \overline{\bunit(3)} \xrightarrow{\begin{pmatrix}
 \scalemath{0.9}{\alpha_s} & \scalemath{0.9}{\alpha_t} 
\end{pmatrix}} \overline{ \boxed{1} (4)} \]

As right $R$ modules, the nonzero cohomology will then be
\[ \paren{\overline{\hhh}^{A=0}}^{T=0}=\runit\bunit \, R(-2)=R(-4), \quad \paren{\overline{\hhh}^{A=0}}^{T=2}=\scalemath{0.9}{\frac{\runit R(1)}{(\alpha_t)}}\raisebox{0.5ex}{$\oplus$} \, \scalemath{0.9}{\frac{\bunit R(1)}{(\alpha_s)}} =\scalemath{1.6}{\sfrac{R}{(\alpha_t)}}\raisebox{0.5ex}{$\oplus$} \, \scalemath{1.6}{\sfrac{R}{(\alpha_s)}} , \quad \paren{\overline{\hhh}^{A=0}}^{T=4}=\boxed{1} \kb (4)=\kb(4) \]
with corresponding Poincare series $ \displaystyle \frac{Q^4}{(1-Q^2)^2} +\frac{2T^{2}  }{1-Q^2} +Q^{-4}T^4 $. The complex for $\overline{\hhh}^{A=1}$ will be 
\begin{align*}
\scalemath{0.9}{\boxed{ \overline{\rhunit \bunit (-2)} \oplus \overline{\runit \bhunit (-2)} }} &\xrightarrow{ \begin{pmatrix}
0 & 0 \\
0 & 0 \\
0 & 0\\
0 & 0
\end{pmatrix} } \scalemath{0.9}{\paren{\overline{\rhunit \bunit } \oplus \overline{\runit \bhunit } } }\oplus\scalemath{0.9}{\paren{\overline{\rhunit \bunit } \oplus \overline{\runit \bhunit } } } \xrightarrow{\begin{pmatrix}
\scalemath{0.9}{\alpha_t} &  0 & 0 & 0 \\
0                  &        1  & 0 & 0\\
 0 &   0   & 0 & 0     \\
  0 &   0   & 0 & 0     \\
 0   & 0 & 0    &  \scalemath{0.9}{\alpha_s} \\
 0  &  0 & 1  & 0
\end{pmatrix}} \scalemath{0.9}{\paren{\overline{\rhunit (1) } \oplus \overline{\runit \scalemath{0.8}{\dboxed{\alpha_t^\vee}} (1) } } }\oplus \scalemath{0.9}{\paren{\overline{\rhunit \bunit (2) } \oplus \overline{\runit \bhunit(2) } } }\oplus \scalemath{0.9}{\paren{\overline{\bhunit (1) } \oplus \overline{\bunit \scalemath{0.8}{\dboxed{\alpha_s^\vee}} (1) } }}  \\
&\xrightarrow{ \begin{pmatrix}
0 & 0 & -  \scalemath{0.9}{\alpha_t} &0  & 0 & 0 \\
0 & 0 &0                             &-1  & 0 & 0 \\
 0  & 0 & 0 & \scalemath{0.9}{\alpha_s} & 0   & 0 \\
  0  & 0 & 1 &                       0  & 0   & 0 
\end{pmatrix} }\scalemath{0.9}{\paren{\overline{\rhunit (3) } \oplus \overline{\runit \scalemath{0.8}{\dboxed{\alpha_t^\vee}} (3) } } }\oplus \scalemath{0.9}{\paren{\overline{\bhunit (3) } \oplus \overline{\bunit \scalemath{0.8}{\dboxed{\alpha_s^\vee}} (3)}} } \xrightarrow{\begin{pmatrix}
  1 & 0  & 0 & \scalemath{0.9}{\alpha_t} \\
  0 & \scalemath{0.9}{\alpha_s} & 1 & 0
\end{pmatrix}} \overline{\scalemath{0.8}{\dboxed{\alpha_s^\vee}} (4)}\oplus\overline{\scalemath{0.8}{\dboxed{\alpha_t^\vee}} (4)}
\end{align*} 

As right $R$ modules, the nonzero cohomology will then be
\[ \paren{\overline{\hhh}^{A=1}}^{T=0}= \rhunit \bunit R(-2) \oplus \runit \bhunit R(-2)=R\oplus R, \quad \paren{\overline{\hhh}^{A=1}}^{T=2}= \scalemath{0.9}{\frac{\rhunit R(1)}{(\alpha_t)}}\oplus \scalemath{0.9}{\frac{\bhunit R(1)}{(\alpha_s)}}   =\scalemath{1.6}{\sfrac{R(4)}{(\alpha_t)}}\raisebox{0.5ex}{$\oplus$} \, \scalemath{1.6}{\sfrac{R(4)}{(\alpha_s)}} \]
with corresponding Poincare series  $\displaystyle \frac{2}{(1-Q^2)^2} +\frac{2T^{2}Q^{-4}}{1-Q^2} $. The complex for $\overline{\hhh}^{A=2}$ will  be

\[ \boxed{ \overline{\rhunit \bhunit (-2)} } \xrightarrow{ \begin{pmatrix}
0 \\
0
\end{pmatrix} } \overline{\rhunit \bhunit }  \oplus \overline{\rhunit \bhunit} \xrightarrow{\begin{pmatrix}
1 &  0 \\
 0 &   0     \\
 0       &  1       
\end{pmatrix}}\overline{\rhunit \scalemath{0.8}{\dboxed{\alpha_t^\vee}} (1)}\oplus \overline{\rhunit \bhunit (2)} \oplus\overline{\bhunit \scalemath{0.8}{\dboxed{\alpha_s^\vee}} (1)} \xrightarrow{ \begin{pmatrix}
0 & -  1  & 0 \\
 0  &  1   & 0
\end{pmatrix} } \overline{\rhunit \scalemath{0.8}{\dboxed{\alpha_t^\vee}} (3)}\oplus \overline{\bhunit \scalemath{0.8}{\dboxed{\alpha_s^\vee}} (3)}\xrightarrow{\begin{pmatrix}
 1 & 1
\end{pmatrix}} \overline{\scalemath{0.8}{\dboxed{\alpha_s^\vee}} \scalemath{0.8}{\dboxed{\alpha_t^\vee}} (4)} \]
As right $R$ modules, the nonzero cohomology will then be
\[ \paren{\overline{\hhh}^{A=2}}^{T=0}=\rhunit \bhunit R(-2)=R(4) \quad \textnormal{with corresponding Poincare series } \frac{Q^{-4}}{(1-Q^2)^2}\]
Putting this all together we obtain

\[ \overline{\mathscr{P}(A,Q, T)}(F(\sigma_1^2\sigma_2^2))= \paren{\frac{Q^4}{(1-Q^2)^2} +\frac{2T^{2}  }{1-Q^2} +Q^{-4}T^4}+A\paren{\frac{2}{(1-Q^2)^2} +\frac{2T^{2}Q^{-4}}{1-Q^2}} + A^2\paren{\frac{Q^{-4}}{(1-Q^2)^2}} \]
which upon closer inspection is nothing more than $\overline{\mathscr{P}(A,Q, T)}(F(\sigma_1^2))^2$! This is consistent with the following result of Rasmussen

\end{example}

\begin{prop}[{\cite[Lemma 7.8]{Ras15}}]
\label{raslemma}
Given two braids $\beta_1$ and $\beta_2$, as graded vector spaces, we have that 
\[ \overline{\hhh}(\beta_1\#\beta_2)= \overline{\hhh}(\beta_1)\otimes_{\kb}  \overline{\hhh}(\beta_2)\]
\end{prop}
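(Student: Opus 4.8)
The statement to prove is that for two links $L_1, L_2$, as bigraded $\kb$-vector spaces $\overline{\hhh}(L_1 \# L_2) \cong \overline{\hhh}(L_1) \otimes_\kb \overline{\hhh}(L_2)$. Since this is attributed to \cite[Lemma 7.8]{Ras15}, the cleanest route is to reconstruct the argument from the structure of the Rouquier complex and the behaviour of Hochschild (co)homology, which in our setting is exactly $\ext^{\bullet,\bullet}_{R^e}(R, -)$. The plan is to first set up notation: write $L_1 = \widehat{\beta_1}$ as the closure of a braid $\beta_1 \in B_{n_1}$ and $L_2 = \widehat{\beta_2}$ with $\beta_2 \in B_{n_2}$; then the connect sum $L_1 \# L_2$ can be presented as the closure of the braid $\beta_1 \sqcup \beta_2 \in B_{n_1 + n_2 - 1}$ obtained by juxtaposing the two braids so that they share exactly one common strand. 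The corresponding Rouquier complex is $F(\beta_1 \sqcup \beta_2) \simeq F(\beta_1) \otimes_R F(\beta_2)$ where the tensor is over the polynomial ring $R$ of the shared strand (in the reduced/geometric setting), precisely because on the shared strand the two braids do not interact.

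First I would recall that $\overline{\hhh}(L)$ is computed by applying $\hh^\bullet(-) = \ext^{\bullet,\bullet}_{R^e}(R,-)$ term by term to $F(\beta_L)$ and taking cohomology of the resulting complex. The key algebraic input is a Künneth-type statement: for complexes $C_1^\bullet$ of $R_1$-bimodules and $C_2^\bullet$ of $R_2$-bimodules built from Bott--Samelson bimodules (so everything is free as a one-sided module), one has a natural isomorphism $\hh^\bullet(C_1^\bullet \otimes_{R_{\mathrm{shared}}} C_2^\bullet) \cong \hh^\bullet(C_1^\bullet) \otimes_\kb \hh^\bullet(C_2^\bullet)$. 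This should be proved in two stages. Stage one: for single Soergel bimodules $M_1, M_2$ over the respective rings, show $\hh^\bullet(M_1 \otimes_{\kb[x]} M_2) \cong \hh^\bullet(M_1) \otimes_\kb \hh^\bullet(M_2)$ using the explicit Koszul resolutions — here one combines the Koszul complex $K_\varnothing$ resolutions (from \cref{Kvarnothingeq}) for each factor, noting that the Hochschild homology of a tensor product over the shared polynomial variable splits because the Koszul differentials for the two sets of variables are independent; this is morally the same mechanism as \cref{prehochbs}, where a factored Koszul complex yields a tensor-product decomposition of $\underline{\hom}$. Stage two: bootstrap from single bimodules to complexes by a spectral sequence / total complex argument — since all the modules in sight are free over one side, all the relevant $\mathrm{Tor}$ and higher $\ext$ vanishing needed for flatness holds, and the Künneth spectral sequence for the double complex $\hh^\bullet(C_1^\bullet \otimes C_2^\bullet)$ degenerates.

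Then I would assemble: $\overline{\hhh}(L_1 \# L_2) = H^\bullet\big(\hh^\bullet(F(\beta_1) \otimes_{\kb[x]} F(\beta_2))\big) \cong H^\bullet\big(\hh^\bullet(F(\beta_1)) \otimes_\kb \hh^\bullet(F(\beta_2))\big)$, and then one more ordinary Künneth formula over the field $\kb$ (no Tor terms since $\kb$ is a field, as $\kb$ is an integral domain and we can reduce to its fraction field, or one simply works with $\kb$ a field as in the HOMFLY setting) gives $\cong H^\bullet(\hh^\bullet(F(\beta_1))) \otimes_\kb H^\bullet(\hh^\bullet(F(\beta_2))) = \overline{\hhh}(L_1) \otimes_\kb \overline{\hhh}(L_2)$. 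One should be slightly careful that the grading conventions (the cohomological degree $[1]$, internal degree $(1)$, and the Hochschild/$A$-degree) all match up under the identification, but this is bookkeeping.

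\textbf{The main obstacle} I expect is Stage one — getting the clean splitting $\hh^\bullet(M_1 \otimes_{\kb[x]} M_2) \cong \hh^\bullet(M_1) \otimes_\kb \hh^\bullet(M_2)$ with all gradings correct. The subtlety is that Hochschild homology of $M_1 \otimes_{\kb[x]} M_2$ is taken over $(R_1 \otimes_{\kb[x]} R_2)^e$, which is \emph{not} simply $R_1^e \otimes R_2^e$ — the shared variable $x$ appears on both sides. The resolution of $R_1 \otimes_{\kb[x]} R_2$ as a bimodule over itself involves a Koszul complex on the variables of $R_1$, the variables of $R_2$, \emph{and} only one copy of the shared variable $x$ (not two), so one has to check carefully that this single-copy Koszul factor on $x$ interacts correctly with the two bimodule structures. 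This is exactly where the ``shared strand'' geometry is doing work, and it is the place where a naive argument would double-count the contribution of $x$. I would handle it by writing down the explicit Koszul resolution of $R_1 \otimes_{\kb[x]} R_2$ over its enveloping algebra and verifying the decomposition directly, in the spirit of the computations in \cref{maincomptutesec}; alternatively, one can cite \cite{Ras15} for precisely this point and only sketch the verification. Everything downstream (the spectral sequence degeneration, the field Künneth) is then routine.
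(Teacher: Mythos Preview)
The paper does not actually prove this proposition: it is stated purely as a citation to \cite[Lemma 7.8]{Ras15}, with no argument given, serving only to explain the coincidence observed in the preceding example. Your outline is a reasonable reconstruction of the standard argument (braid presentation of the connect sum sharing one strand, K\"unneth for Hochschild homology over the shared polynomial variable, then K\"unneth over $\kb$), and your identification of the ``shared variable'' step as the one needing care is accurate; but there is nothing in the paper to compare it against.
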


\subsection{$\overline{\hhh}(T(3,-3))$}

In contrast to the preceding subsection, the diagrammatic computations presented herein involve a greater degree of complexity due to the differentials mapping into and out from $\hh^\bullet(B_{sts})$. It is recommended that the reader possess a solid understanding of the relations established in this paper and in \cite{DC} before continuing.

A braid representative for $T(3,-3)$ is given by $\paren{\sigma_1^{-1}\sigma_2^{-1}}^3$ so $R=\kb[\alpha_s, \alpha_t]$. From \cite[Theorem B]{L3deg} $\overline{\hhh}(T(3,-3))^{T=i}=0$ for $i=-6,-5$ while \cite[Theorem 4.2]{L3deg} says that at $i=-4$ 
\[ \paren{\overline{\hhh}(T(3,-3))^{A=0}}^{T=-4}=  0, \qquad \paren{\overline{\hhh}(T(3,-3))^{A=1}}^{T=-4}= 0, \qquad \paren{\overline{\hhh}(T(3,-3))^{A=2}}^{T=-4}=  \kb(2)  \]

Thus it remains to compute the last 4 $T-$degrees. Before proceeding, we need the following lemmas

\begin{lemma}
\label{hhsts}
For $m_{st}=3$, let \rect denote the Jones-Wenzl projector $\jw_{\usmt{t}{3}{}}$. Then
\begin{align*}
 \hh^{0}(B_{tst})&=  \stackon{\hspace{-0.5ex}\bunit \runit \bunit }{\rect}\, R    =R(-3) \\
\hh^{1}(B_{tst})&=  \stackon{\hspace{-0.5ex}\bhunit \runit \bunit}{\rect} \, R  \oplus \stackon{\hspace{-0.5ex}\bhpitchcupin}{\rect} \, R  = R(1)\oplus R(3) \\
\hh^{2}(B_{tst})&= \stackon{\hspace{-0.5ex}\bhhpitchcupout}{\rect} \, R =R(7)
\end{align*}
\end{lemma}
\begin{proof}
  Apply \cref{finalgextdecomp} and use 4-Ext reduction and that $  \jw_{\usmt{t}{3}{}}\circ\bhpitchcupout=0 $.
\end{proof}

\begin{lemma}[{\cite[Lemma 3.4]{L3deg}, \cite[Corollary 1.12]{GHM19}}]
\label{lem:dualcomplex}
    Let $\beta$ be a braid on $3$ strands. For $F(\beta)\in K^b(\hf_{geo}, S_3)$ we have an isomorphism of complexes 
    \[\hh^k(F(\beta^\vee)) \cong \un{\hom}_R^\bullet(\hh^{2-k}(F(\beta), R) (4) \]
    where $\beta^\vee$ flips all crossings in $\beta$ from positive to negative and vice versa.
\end{lemma}

We will now compute the first 4 terms of the complexes $\hh^k(F((\sigma_1\sigma_2)^3))$ as $R-$modules and then use \cref{lem:dualcomplex} to compute the cohomology for the mirror. \cite[Section 9.2.1]{EH17} computed the minimal complex for the Rouquier complex $F((\sigma_1\sigma_2)^3)$ and so using \cref{hhsts} we see that the complex for $\overline{\hhh}^{A=0}((\sigma_1\sigma_2)^3)$ will be

\begin{align*}
   \boxed{ \scalemath{0.84}{\overline{\stackon{\hspace{-0.5ex}\runit \bunit \runit }{\rect}(-3)}} }\xrightarrow{\begin{pmatrix}
       0 \\
       0
   \end{pmatrix}}  \scalemath{0.84}{\overline{\stackon{\hspace{-0.5ex}\runit \bunit \runit }{\rect}(-1)}} \oplus \scalemath{0.84}{\overline{\stackon{\hspace{-0.5ex}\runit \bunit \runit }{\rect}(-1)}} \xrightarrow{\begin{pmatrix}
    \alpha_t & -\alpha_s \\
    -\alpha_t & \alpha_s \\
    \alpha_s\alpha_t+\alpha_t^2 & 0 \\
    0 & \alpha_s\alpha_t+\alpha_s^2
\end{pmatrix}} \scalemath{0.84}{\overline{\stackon{\hspace{-0.5ex}\runit \bunit \runit }{\rect}(1)}}\oplus \scalemath{0.84}{\overline{\stackon{\hspace{-0.5ex}\runit \bunit \runit }{\rect}(1)}}\oplus \overline{\runit(1)}\oplus\overline{\bunit(1)} \\
\xrightarrow{\begin{pmatrix}
    -\alpha_t & -\alpha_t & 0 & 0 \\
 \alpha_s+\alpha_t & 0 & -1 & 1 \\
 0 & \alpha_s+\alpha_t& 1 & -1
\end{pmatrix} }\scalemath{0.84}{\overline{\stackon{\hspace{-0.5ex}\runit \bunit \runit }{\rect}(3)}}\oplus \overline{\runit \bunit(2)} \oplus \overline{\bunit \runit(2)} \xrightarrow{\begin{pmatrix}
    \alpha_s+\alpha_t & \alpha_t & \alpha_t \\
    \alpha_s+\alpha_t & \alpha_t & \alpha_t
\end{pmatrix}} \overline{\runit \bunit(4)}\oplus \overline{\runit \bunit(4)}
\end{align*} 

Let $(-)^\vee=\hom_R(-, R)$. The nonzero cohomology of $D^\bullet_0$, the $R-$dual\footnote{Reverse the arrows, transpose the matrix, and negate $Q,T$ gradings.} of the complex above, will be
\[  \paren{\overline{\hhh}^{A=0}(D^\bullet_0)}^{T=0}=\paren{\scalemath{0.84}{\stackon{\hspace{-0.5ex}\runit \bunit \runit }{\rect}R(-3)}}^\vee =R(6), \qquad  \paren{\overline{\hhh}^{A=0}(D^\bullet_0)}^{T=-1}=\frac{ \paren{\scalemath{0.84}{\stackon{\hspace{-0.5ex}\runit \bunit \runit }{\rect}R(-1)}}^\vee \oplus \paren{\scalemath{0.84}{\stackon{\hspace{-0.5ex}\runit \bunit \runit }{\rect}R(-1)} }^\vee}{(\alpha_t, -\alpha_s)^T R(4)+(\alpha_s\alpha_t+\alpha_t^2, 0)^T R(4)} \]
\[ \paren{\overline{\hhh}^{A=0}(D^\bullet_0)}^{T=-2}= \paren{\paren{\scalemath{0.84}{\stackon{\hspace{-0.5ex}\runit \bunit \runit }{\rect}}+ \scalemath{0.84}{\stackon{\hspace{-0.5ex}\runit \bunit \runit }{\rect}}} R(1)}^\vee / (\alpha_s, \alpha_t)=\kb(2)   \]

To compute the Poincare series at $T=-1$, notice we have a degree 0 isomorphism. 
\[  R(-2)\oplus R(-4)\xrightarrow{\sim} (\alpha_t, -\alpha_s)^T R+(\alpha_s\alpha_t+\alpha_t^2, 0)^T R,\qquad \qquad (1,0)\mapsto (\alpha_t, -\alpha_s)^T, \ (0,1)\mapsto (\alpha_s\alpha_t+\alpha_t^2, 0)^T \]
Thus the Poincare series at $T=-1$ will then be
\[  \frac{2Q^{-4}}{(1-Q^2)^2}-\frac{Q^{-2}}{(1-Q^2)^2}-\frac{1}{(1-Q^2)^2}=\frac{Q^{-4}(2+Q^2)}{1-Q^2} \]
Thus the total Poincare series for $ \overline{\hhh}(T(3,-3))^{A=2}$ after accounting for the $Q^{-4}$ shift from \cref{lem:dualcomplex} is 
\[ A^2\paren{ T^{-4} Q^{-2}+T^{-2} Q^{-6} + \frac{T^{-1}Q^{-8}(2+Q^2)}{1-Q^2} +\frac{Q^{-10}}{(1-Q^2)^2} } \]
Before proceeding for $A=1$, we need
\begin{equation}
    \stackon{\hspace{-0.5ex}\runit \bhunit \runit }{\rect}=  \stackon{\hspace{-0.5ex}\rhpitchcupin}{\rect} (\alpha_s+\alpha_t)+ \stackon{\hspace{-0.5ex}\rhunit \bunit \runit }{\rect}
\end{equation}

The complex computing $\overline{\hhh}^{A=1}((\sigma_1\sigma_2)^3)$ will then be

\begin{gather*}
   \boxed{ \scalemath{0.84}{\overline{\stackon{\hspace{-0.5ex}\rhunit \bunit \runit }{\rect}(-3)}} \oplus \scalemath{0.84}{\overline{\stackon{\hspace{-0.5ex}\rhpitchcupin}{\rect}(-3)}} }\underset{\hh^1(d^0)}{\xrightarrow{\begin{pmatrix}
       0 & 0 \\
       0 & 0 \\
       0 & 0 \\
       0 & 0 
   \end{pmatrix}}} \scalemath{0.84}{\overline{\stackon{\hspace{-0.5ex}\rhunit \bunit \runit }{\rect}(-1)}}\oplus  \scalemath{0.84}{\overline{\stackon{\hspace{-0.5ex}\rhpitchcupin}{\rect}(-1)}}    \oplus \scalemath{0.84}{\overline{\stackon{\hspace{-0.5ex}\rhunit \bunit \runit }{\rect}(-1)}} \oplus  \scalemath{0.84}{\overline{\stackon{\hspace{-0.5ex}\rhpitchcupin}{\rect}(-1)}}  \underset{\hh^1(d^1)}{\xrightarrow{\begin{pmatrix}
       \alpha_t & 0 & -\alpha_s & 0 \\
       0 & \alpha_t & 0 & -\alpha_s \\
       -\alpha_t & 0 & \alpha_s & 0 \\
       0 & -\alpha_t & 0 & \alpha_s \\
       \alpha_s \alpha_t +\alpha_t^2 & -\alpha_t & 0 & 0 \\
       0 & 1 & 0 & 0 \\
       0 & 0 & 0 & \alpha_s \\
       0 & 0 & \alpha_s+\alpha_t & -1
   \end{pmatrix}}} \\
   \scalemath{0.84}{\overline{\stackon{\hspace{-0.5ex}\rhunit \bunit \runit }{\rect}(1)}}\oplus \scalemath{0.84}{\overline{\stackon{\hspace{-0.5ex}\rhpitchcupin}{\rect}(1)}}\oplus \scalemath{0.84}{\overline{\stackon{\hspace{-0.5ex}\rhunit \bunit \runit }{\rect}(1)}}\oplus \scalemath{0.84}{\overline{\stackon{\hspace{-0.5ex}\rhpitchcupin}{\rect}(1)}}  \oplus \overline{\rhunit (1) } \oplus \overline{\runit \scalemath{0.8}{\dboxed{\alpha_t^\vee}} (1) }   \oplus \overline{\bhunit (1) } \oplus \overline{\bunit \scalemath{0.8}{\dboxed{\alpha_s^\vee}} (1) } \underset{\hh^1(d^2)}{\xrightarrow{\begin{pmatrix}
       -\alpha_t & 0 & -\alpha_t & 0 & 0 & 0& 0 & 0\\
       0 & -\alpha_t & 0 & -\alpha_t & 0 & 0 & 0 & 0 \\
       \alpha_s + \alpha_t & -1 & 0 & 0 & -1 & 0 & 0 & \alpha_s \\
       0 & 1 & 0 & 0 & 0 & -\alpha_t & 1 & 0 \\
       0 & 0 & 0 & 1 & 0 & \alpha_t & -1 & 0 \\
       0 & 0 & \alpha_s +\alpha_t & -1 & 1 & 0 & 0 & -\alpha_s 
   \end{pmatrix}}} \\
   \scalemath{0.84}{\overline{\stackon{\hspace{-0.5ex}\rhunit \bunit \runit }{\rect}(3)}}\oplus \scalemath{0.84}{\overline{\stackon{\hspace{-0.5ex}\rhpitchcupin}{\rect}(3)}} \oplus \overline{\rhunit \bunit (2)} \oplus \overline{\runit \bhunit (2)} \oplus \overline{\bhunit \runit (2)} \oplus \overline{\bunit \rhunit (2)}\underset{\hh^1(d^3)}{\xrightarrow{\begin{pmatrix}
       \alpha_s+\alpha_t & -1 & \alpha_t & 0 & 0 & \alpha_t \\
       0 & 1 & 0 & \alpha_t & \alpha_t & 0 \\
       0 & 1 & 0 & \alpha_t & \alpha_t & 0 \\
       \alpha_s +\alpha_t  & -1 & \alpha_t & 0 & 0 & \alpha_t
   \end{pmatrix}}} \overline{\rhunit \bunit (4)} \oplus \overline{\runit \bhunit (4)} \oplus \overline{\bhunit \runit (4)} \oplus \overline{\bunit \rhunit (4)}
\end{gather*}

Let $D^\bullet_1$ be the $R-$dual of the complex above.

\begin{itemize}
    \item $\paren{\overline{\hhh}^{A=1}(D^\bullet_1)}^{T=0}=\paren{\scalemath{0.84}{\stackon{\hspace{-0.5ex}\rhunit \bunit \runit }{\rect}R(-3)} \oplus \scalemath{0.84}{\stackon{\hspace{-0.5ex}\rhpitchcupin}{\rect}R(-3)}   }^\vee =R(2)\oplus R$. Poincare series will be $\frac{Q^{-2}+1}{(1-Q^2)^2}$. 
    \item  \vspace{-3ex}
    \[ \im \hh^1(d^1)^T=  \begin{pmatrix}
        \alpha_t \\
        0 \\
        -\alpha_s \\
        0
    \end{pmatrix}R\oplus \begin{pmatrix}
        0 \\
        \alpha_t \\
        0 \\
        -\alpha_s
    \end{pmatrix}R\oplus \begin{pmatrix}
        0 \\
        1 \\
        0 \\
        0
    \end{pmatrix}R\oplus \begin{pmatrix}
        0 \\
        0 \\
        \alpha_s+\alpha_t \\
        -1
    \end{pmatrix}R, \qquad \begin{pmatrix}
        0 \\
        \alpha_t \\
        0 \\
        -\alpha_s
    \end{pmatrix}= \alpha_t \begin{pmatrix}
        0 \\
        1 \\
        0 \\
        0
    \end{pmatrix}+\alpha_s \begin{pmatrix}
        0 \\
        0 \\
        \alpha_s+\alpha_t \\
        -1
    \end{pmatrix}-\alpha_s(\alpha_s+\alpha_t)\begin{pmatrix}
        0 \\
        0 \\
        1 \\
        0
    \end{pmatrix} \]
  $$\implies\paren{\overline{\hhh}^{A=1}(D^\bullet_1)}^{T=-1}= \frac{ \paren{\scalemath{0.84}{\stackon{\hspace{-0.5ex}\rhunit \bunit \runit }{\rect}R(-1)}}^\vee \oplus \paren{\scalemath{0.84}{\stackon{\hspace{-0.5ex}\rhunit \bunit \runit }{\rect}R(-1)} }^\vee}{(\alpha_t, -\alpha_s)^T R+(0, -\alpha_s\alpha_t+\alpha_s^2)^T R}  \implies \textnormal{Poincare series}=  \frac{2-Q^2-Q^4}{(1-Q^2)^2}=\frac{2+Q^2}{1-Q^2}$$
  \item \vspace{-1ex}\[ \ker \hh^1(d^1)^T=\set{\begin{pmatrix}
      a-(\alpha_s+\alpha_t)d \\
      b \\
      a\\
      c\\
      d\\
      -\alpha_t(b-c-d)\\
      b-c-d \\
      -\alpha_s d
  \end{pmatrix}}=\begin{pmatrix}
      1 \\
      0 \\
      1 \\
      0 \\
      0 \\
      0 \\
      0 \\
      0 
  \end{pmatrix}R \oplus \begin{pmatrix}
      0 \\
      1 \\
      0 \\
      0 \\
      0 \\
      -\alpha_t \\
      1 \\
      0 
  \end{pmatrix}R \oplus \begin{pmatrix}
      0 \\
      0 \\
      0 \\
      1 \\
      0 \\
      -\alpha_t \\
      -1 \\
      0 
  \end{pmatrix}R \oplus \begin{pmatrix}
      -(\alpha_s+\alpha_t) \\
      0 \\
      0 \\
      0 \\
      1 \\
      \alpha_t \\
      -1 \\
      -\alpha_s 
  \end{pmatrix}R \]
  \[ \im \hh^1(d^2)^T=\begin{pmatrix}
      -\alpha_t \\
      0 \\
      -\alpha_t \\
      0 \\
      0 \\
      0 \\
      0 \\
      0
  \end{pmatrix}R\oplus \begin{pmatrix}
      0 \\
      1 \\
      0 \\
      0 \\
      0 \\
      -\alpha_t \\
      1 \\
      0
  \end{pmatrix}R \oplus \begin{pmatrix}
      0 \\
      0 \\
      0 \\
      1 \\
      0 \\
      \alpha_t \\
      -1 \\
      0
  \end{pmatrix}R\oplus\begin{pmatrix}
      \alpha_s+\alpha_t \\
      -1 \\
      0 \\
      0 \\
      -1 \\
      0 \\
      0 \\
      \alpha_s
  \end{pmatrix}R \oplus \begin{pmatrix}
      0 \\
      0 \\
      \alpha_s+\alpha_t \\
      -1 \\
      1 \\
      0 \\
      0 \\
      -\alpha_s
  \end{pmatrix}R   \]
  Let $\vec{v_1}, \ldots, \vec{v_4}$ be the basis vectors for $\ker \hh^1(d^1)^T$ above and similarly let $\vec{w_1}, \ldots, \vec{w_4}$ be the basis vectors for $\im \hh^1(d^2)^T$. Consider the change of basis in $\im \hh^1(d^2)^T$ given by $\vec{w_4}\mapsto -\vec{w_4}-\vec{w_2}=\vec{v_4}$ and $\vec{w_5}\mapsto \vec{w_5}+\vec{w_4}+\vec{w_3}+\vec{w_2}+\vec{w_1}=\alpha_s \vec{v_1}$. Thus
  \[ \paren{\overline{\hhh}^{A=1}(D^\bullet_1)}^{T=-2}=\paren{\paren{\scalemath{0.84}{\stackon{\hspace{-0.5ex}\rhunit \bunit \runit }{\rect}}+ \scalemath{0.84}{\stackon{\hspace{-0.5ex}\rhunit \bunit \runit }{\rect}}} R(1)}^\vee / (\alpha_s, \alpha_t)=\kb(-2)\implies \textnormal{Poincare series}=Q^2 \]
  \item $\paren{\overline{\hhh}^{A=1}(D^\bullet_1)}^{T=-3}=0$.
\end{itemize}
Thus the total Poincare series for $ \overline{\hhh}(T(3,-3))^{A=1}$ after accounting for the $Q^{-4}$ shift from \cref{lem:dualcomplex} is 
\[ A\paren{ T^{-2}Q^{-2}+\frac{T^{-1} Q^{-4}(2+Q^2)}{1-Q^2}+\frac{Q^{-6}+Q^{-4}}{(1-Q^2)^2} } \]

Before proceeding for $A=2$, we need
\begin{equation}
    \stackon{\hspace{-0.5ex}\rhunit \bhunit \runit }{\rect}=  \stackon{\hspace{-0.5ex}\rhhpitchcupout}{\rect} (\alpha_s+\alpha_t)
\end{equation}

The complex computing $\overline{\hhh}^{A=2}((\sigma_1\sigma_2)^3)$ will then be

\begin{gather*}
    \boxed{\scalemath{0.84}{\overline{\stackon{\hspace{-0.5ex}\rhhpitchcupout}{\rect}(-3)}}}\xrightarrow{\begin{pmatrix}
        0 \\
        0
    \end{pmatrix}}\scalemath{0.84}{\overline{\stackon{\hspace{-0.5ex}\rhhpitchcupout}{\rect}(-1)}}\oplus \scalemath{0.84}{\overline{\stackon{\hspace{-0.5ex}\rhhpitchcupout}{\rect}(-1)}}\xrightarrow{\begin{pmatrix}
        \alpha_t & - \alpha_s \\
        -\alpha_t & \alpha_s \\
        1 & 0 \\
        0 & 1
    \end{pmatrix}} \scalemath{0.84}{\overline{\stackon{\hspace{-0.5ex}\rhhpitchcupout}{\rect}(1)}}\oplus \scalemath{0.84}{\overline{\stackon{\hspace{-0.5ex}\rhhpitchcupout}{\rect}(1)}} \oplus \overline{\rhunit \scalemath{0.8}{\dboxed{\alpha_t^\vee}}(1)}\oplus \overline{\bhunit \scalemath{0.8}{\dboxed{\alpha_s^\vee}}(1)}\xrightarrow{\begin{pmatrix}
        -\alpha_t & -\alpha_t & 0 & 0 \\
        1 & 0 & -\alpha_t & \alpha_s \\
        0 & 1 & \alpha_t & -\alpha_s
    \end{pmatrix}} \\
    \scalemath{0.84}{\overline{\stackon{\hspace{-0.5ex}\rhhpitchcupout}{\rect}(3)}}\oplus \overline{\rhunit\bhunit(2)}\oplus \overline{\bhunit\rhunit(2)}\xrightarrow{\begin{pmatrix}
        1 & \alpha_t & \alpha_t \\
        1 & \alpha_t & \alpha_t
    \end{pmatrix}} \overline{\rhunit\bhunit(4)}\oplus \overline{\bhunit \rhunit (4)}
\end{gather*}
The nonzero cohomology of $D^\bullet_2$, the $R-$dual of the complex above will then be
\[\paren{\overline{\hhh}^{A=2}(D^\bullet_2)}^{T=0}=\paren{\scalemath{0.84}{\stackon{\hspace{-0.5ex}\rhhpitchcupout }{\rect}R(-3)}}^\vee =R(-4)\]
The total Poincare series for $ \overline{\hhh}(T(3,-3))^{A=0}$ after accounting for the $Q^{-4}$ shift from \cref{lem:dualcomplex} is $\frac{1}{(1-Q^2)^2}$. Putting this all together, we obtain
\begin{gather}
\overline{\mathscr{P}(A,Q, T)}(F((\sigma_1^{-1}\sigma_2^{-1})^{3}) )=  \frac{1}{(1-Q^2)^2}+
A\paren{ T^{-2}Q^{-2}+\frac{T^{-1} Q^{-4}(2+Q^2)}{1-Q^2}+\frac{Q^{-6}+Q^{-4}}{(1-Q^2)^2} }\nonumber\\
+A^2\paren{ T^{-4} Q^{-2}+T^{-2} Q^{-6} 
+ \frac{T^{-1}Q^{-8}(2+Q^2)}{1-Q^2} +\frac{Q^{-10}}{(1-Q^2)^2} } 
\end{gather}
The HOMFLY polynomial for $L6n1\set{0,1}$, the braid closure of $(\sigma_1^{-1}\sigma_2^{-1})^3$ is computed in \cite{LM} and is
\[\overline{P(a,q)}(L6n1\set{0,1})= -\frac{3}{v^6} + \frac{3}{v^4} + \frac{1}{v^8 z^2} - \frac{2}{v^6 z^2} + \frac{1}{v^4 z^2} - \frac{z^2}{v^6} + \frac{4 z^2}{v^4} + \frac{z^4}{v^4} \]
where $z=q-q^{-1}$ and $v=a$. After mutliplying by the unit $a^8/q^2$, this agrees with $\overline{\mathscr{P}(A,Q, T)}(F((\sigma_1^{-1}\sigma_2^{-1})^{3}) )|_{\cref{subeq}}$.
\section{Gomi's Trace}
\label{gomisect}

Before going into Gomi's trace, we should mention that there are usually two versions/presentations of the Hecke algebra $\mathbf{H}_W$ of a Coxeter group $W$ in the literature. As originally defined in \cite{KL79}, version 1 of $\mathbf{H}_W$ is the associative algebra over $\Zb[q, q^{-1}]$ on generators $\set{T_s}$ satisfying the braid relation and the quadratic relation 
\[ T_s^2=(q-1)T_s+q\]
In \cite{Soe97}, Soergel gives the second presentation of $\mathbf{H}_W$ as the associative algebra over $\Zb[v, v^{-1}]$ on generators $\set{\delta_s}$ satisfying the braid relation and the quadratic relation 
\[ \delta_s^2=(v^{-1}-v)\delta_s +1 \]
and the two presentations are isomorphic under the $\Zb-$linear map $\delta_w\mapsto v^{\ell(w)} T_w$ and $v\mapsto q^{-1/2}$ after tensoring version 1 by $\otimes_\Zb \Zb[q^{1/2}, q^{-1/2}]$. Under this isomorphism we also have $\delta_s^{-1}=v^{-1}T_s^{-1}$. For $w\in W=W_m$, the KL-basis of $H_{W_m}$ can be explicitly expressed in the standard basis as  
\begin{equation}
\label{bwbasis}
     b_w=\sum_{y\le w} v^{\ell(w)-\ell(y)} \delta_y = v^{\ell(w)} \sum_{y\le w} T_y  
\end{equation}

\cite{Gom06} uses version 1 of $\mathbf{H}_W$ while \cite{EMTW20} uses version 2 and as such we will be using both presentations interchangeably in what follows. Let $\hh^{a,b}(M)=\ext_{R^e}^{a,b}(R,M)$ and $B_w\in \sbim(\hf_{geo}, W_m)$.

\begin{definition}
Define a trace $\epsilon_t: \mathbf{H}_{W_m}\to \Zb[q^{1/2},q^{-1/2}][t]$ by first defining $\epsilon_t$ on the KL-basis via
\[ \epsilon_t(b_w)=(1-q^{-1})^2\sum_{a,b} \dim_\kb \hh^{a,b}(B_w)q^{-b/2}t^a= \mathrm{grk}_R^t(\hh^{\bullet, \bullet}({B_w})) \]
    and extend $\Zb[q^{1/2},q^{-1/2}]$ linearly where $\mathrm{grk}_R^t(M)=av^k t^n$ if $M\cong R(-k)^{\oplus a}[-n]$. 
\end{definition}

\begin{remark}
In order for $\epsilon_t$ to be $\Zb[q^{1/2}, q^{-1/2}]$ linear, at the categorical level we should define $q^{1/2}[B_w]:=[B_w(1)]$. Consequently this means that we should define $v^{-1}[B_w]:=[B_w(1)]$ which is the opposite of what is written in \cite[Section 4.8]{EMTW20}. In fact, Soergel's Hom formula \cite[Theorem 5.27]{EMTW20} is only true when using our convention above, rather than the convention in \cite{EMTW20}. 
\end{remark}

As we are in the geometric realization, \cref{finindecompcohograd} tells us that for $1\le k\le m$

\begin{equation}
\label{epsiloneq}
\epsilon_t(b_{\smt{t}{k}{}})=\epsilon_t(b_{\smt{s}{k}{}})=q^{-k/2} +(q^{(4-k)/2}+q^{k/2})t+q^{(k+4)/2}t^2  
\end{equation}

\begin{lemma}
\label{c2lem}
\begin{enumerate}[(a)]
    \item For all $2\le k \le m-1$
\begin{equation}
\label{partaeq}
    (v+v^{-1})\epsilon_t( b_{\smt{s}{k}{}}) =\epsilon_t(b_{\smt{s}{k+1}{}})+\epsilon_t(b_{\smt{s}{k-1}{}}) 
\end{equation}
    \item $\epsilon_t(\delta_{\smt{t}{k}{}})=\epsilon_t(\delta_{\smt{s}{k}{}})$.
    \item $\epsilon_t( \delta_{\smt{s}{k+1}{}} )=(v^{-1}-v)\epsilon_t( \delta_{\smt{s}{k}{}})+\epsilon_t(\delta_{\smt{s}{k-1}{}})$ for all $2\le k \le m-1$.
\end{enumerate}
\end{lemma}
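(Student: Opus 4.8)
The plan is to derive all three parts of the lemma from the explicit formula \cref{epsiloneq} for $\epsilon_t(b_{\smt{t}{k}{}})$ together with the change-of-basis formula \cref{bwbasis} expressing the Kazhdan--Lusztig basis in the standard basis. Part (a) is a purely combinatorial identity among the rational functions appearing in \cref{epsiloneq}: substituting $v+v^{-1}=q^{-1/2}+q^{1/2}$ and the closed forms for $\epsilon_t(b_{\smt{s}{k}{}})$, both sides become (after clearing the common denominator $(1-q)^2$) Laurent polynomials in $q^{1/2}$ and $t$, and one checks equality coefficient by coefficient. I would present this as a direct verification, noting that the constraint $2 \le k \le m-1$ guarantees that $k-1, k, k+1$ all lie in the range $\{1,\ldots,m-1\}$ where \cref{epsiloneq} is valid (this is the only place the range restriction is used).

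For part (b), I would invert the triangular relation \cref{bwbasis}. Since $b_w = v^{\ell(w)}\sum_{y \le w} T_y = \sum_{y \le w} v^{\ell(w)-\ell(y)}\delta_y$, Möbius inversion in the Bruhat order expresses $\delta_w$ as an integer linear combination (with coefficients in $\Zb[v,v^{-1}]$) of the $b_y$ for $y \le w$. For the dihedral group the Bruhat order below $\smt{t}{k}{}$ is a chain $\id < t < ts < \cdots < \smt{t}{k}{}$, so this inversion is completely explicit; the key point is that the multiset of elements $\{y : y \le \smt{t}{k}{}\}$ and the exponents $\ell(\smt{t}{k}{}) - \ell(y)$ are \emph{identical} to those for $\smt{s}{k}{}$ under the obvious length-preserving bijection swapping $s \leftrightarrow t$. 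Combined with part (a)'s symmetric statement $\epsilon_t(b_{\smt{t}{j}{}}) = \epsilon_t(b_{\smt{s}{j}{}})$ for all relevant $j$ (which is \cref{epsiloneq} itself), this forces $\epsilon_t(\delta_{\smt{t}{k}{}}) = \epsilon_t(\delta_{\smt{s}{k}{}})$ by linearity of $\epsilon_t$. I would set up a short induction on $k$ here rather than writing out the Möbius coefficients.

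Part (c) then follows by translating the Hecke-algebra identity $\delta_{\smt{s}{k+1}{}} = \delta_s \delta_{\smt{t}{k}{}}$ (when $\smt{t}{k}{}$ starts with the letter opposite to $s$, so that concatenating $s$ on the left gives a reduced word) and the quadratic relation $\delta_s^2 = (v^{-1}-v)\delta_s + 1$. Concretely, writing $\delta_{\smt{s}{k+1}{}} = \delta_s \delta_{\smt{t}{k}{}}$ and $\delta_{\smt{t}{k}{}} = \delta_s \delta_{\smt{s}{k-1}{}}$ (both reduced), one computes $\delta_{\smt{s}{k+1}{}} = \delta_s^2 \delta_{\smt{s}{k-1}{}} = (v^{-1}-v)\delta_s\delta_{\smt{s}{k-1}{}} + \delta_{\smt{s}{k-1}{}} = (v^{-1}-v)\delta_{\smt{t}{k}{}} + \delta_{\smt{s}{k-1}{}}$, and applying the (linear) trace $\epsilon_t$ gives exactly the claimed identity. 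I expect parts (b) and (c) to be essentially formal once (a) is in hand; the main obstacle is really just part (a), i.e.\ making sure the three-term recursion for the rational functions in \cref{epsiloneq} is verified cleanly, and being careful that the range $2 \le k \le m-1$ is exactly what keeps every index inside the validity window of \cref{epsiloneq} (for $k = m-1$ the right-hand side involves $\epsilon_t(b_{\smt{s}{m}{}})$, which needs separate attention since $\smt{s}{m}{}$ is the longest element and \cref{finindecompcohograd} covers $k \le m$, so I would double-check that the $k=m$ case of the Hochschild homology computation still feeds the formula correctly there).
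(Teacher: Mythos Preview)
Your approaches to parts (a) and (b) match the paper's: direct computation from \cref{epsiloneq} for (a), and induction via the triangular change of basis \cref{bwbasis} together with the $s\leftrightarrow t$ symmetry of \cref{epsiloneq} for (b).

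Your argument for part (c), however, contains a genuine error. You claim the two decompositions $\delta_{\smt{s}{k+1}{}} = \delta_s \delta_{\smt{t}{k}{}}$ and $\delta_{\smt{t}{k}{}} = \delta_s \delta_{\smt{s}{k-1}{}}$ are both reduced. The first is correct, but the second is not: $\smt{s}{k-1}{}$ \emph{begins} with $s$, so $s\cdot \smt{s}{k-1}{}$ is not a reduced expression, and $\delta_s\delta_{\smt{s}{k-1}{}}\neq \delta_{\smt{t}{k}{}}$. Consequently your claimed Hecke-algebra identity
\[
\delta_{\smt{s}{k+1}{}} = (v^{-1}-v)\delta_{\smt{t}{k}{}} + \delta_{\smt{s}{k-1}{}}
\]
is \emph{false} as an equality of elements of $\mathbf{H}_{W_m}$ (check $k=2$: the right side equals $\delta_t^2\delta_s$, not $\delta_s\delta_t\delta_s$). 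It only becomes true after applying $\epsilon_t$, and that is precisely what needs to be shown.

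The paper instead derives (c) \emph{from} (a). Using \cref{bwbasis} one has $b_{\smt{s}{j}{}} = \delta_{\smt{s}{j}{}} + v\,\delta_{\smt{t}{j-1}{}} + v\,b_{\smt{s}{j-1}{}}$ for each $j$ (peeling off the top two layers of the Bruhat interval). Substituting this into both sides of \cref{partaeq} and cancelling common terms yields
\[
v^{-1}\epsilon_t(\delta_{\smt{s}{k}{}}) + \epsilon_t(\delta_{\smt{t}{k-1}{}}) = \epsilon_t(\delta_{\smt{s}{k+1}{}}) + v\,\epsilon_t(\delta_{\smt{t}{k}{}}),
\]
which, after invoking part (b) to replace $\epsilon_t(\delta_{\smt{s}{k}{}})$ by $\epsilon_t(\delta_{\smt{t}{k}{}})$ and $\epsilon_t(\delta_{\smt{t}{k-1}{}})$ by $\epsilon_t(\delta_{\smt{s}{k-1}{}})$, rearranges to the claim. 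So the logical flow is (a)$+$(b)$\Rightarrow$(c), not (c) as a standalone Hecke identity.
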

\begin{proof}
$(a)$ follows from direct computation using \cref{epsiloneq}. $(b)$ follows from \cref{epsiloneq} and induction using \cref{bwbasis}.\\

$(c)$ Using \cref{bwbasis} to expand the LHS of \cref{partaeq} we obtain $v\epsilon_t(b_{\smt{s}{k}{}})+v^{-1} \epsilon_t(\delta_{\smt{s}{k}{}})+\epsilon_t(\delta_{\smt{t}{k-1}{}})+\epsilon_t(b_{\smt{s}{k-1}{}})$ while the RHS of \cref{partaeq} can be written as $\epsilon_t(\delta_{\smt{s}{k+1}{}}) +v\epsilon_t( \delta_{\smt{t}{k}{}})+v\epsilon_t(b_{\smt{s}{k}{}})+\epsilon_t(b_{\smt{s}{k-1}{}})$. Thus it follows that
\[ v^{-1} \epsilon_t(\delta_{\smt{s}{k}{}})+\epsilon_t(\delta_{\smt{t}{k-1}{}})= \epsilon_t(\delta_{\smt{s}{k+1}{}})+v\epsilon_t(\delta_{\smt{t}{k}{}})\]
and using part $(b)$ and moving $v\epsilon_t(\delta_{\smt{t}{k}{}})$ to the LHS gives the result. 
\end{proof}

Let $\tau_{W_m}$ be Gomi's trace for $\mathbf{H}_{W_m}$ as defined in \cite[Section 4.2]{Gom06}. For $W_m$, it turns out we do not need to know the entries of Lusztig's Exotic Fourier transform matrix (although they are explicitly given in \cite[Section 4.6]{Gom06}) or even the characters of $\mathbf{H}_{W_m}$ in order to compute $\tau_{W_m}$ by the following result \cite[Theorem 4.5]{Gom06}

\begin{theorem}
\label{kihtheorem}
Let $\displaystyle z=\frac{(q-1)r}{1+r}$. Then $\tau_{W_m}: \mathbf{H}_{W_m}\to \Zb[q^{1/2}, q^{-1/2}][z]$ is the unique trace\footnote{This means that $\tau_{W_m}$ is a $\Zb[q^{1/2}, q^{-1/2}]$ linear map such that $\tau_{W_m}(ab)=\tau_{W_m}(ba)$ for all $a,b\in \mathbf{H}_{W_m}$} function on $\mathbf{H}_{W_m}$ satisfying
\begin{enumerate}[(1)]
    \item $\tau_{W_m}(1)=1$
    \item $\displaystyle \tau_{W_m}(T_1)=\tau_{W_m}(T_2)=z$
    \item $\tau_{W_m}( \underbrace{T_1T_2\ldots}_{i+1}\underbrace{\ldots T_1^{-1} T_2^{-1}}_{i-1} )=z^2$, for all $\displaystyle 1\le i \le \left\lfloor \frac{m-1}{2}\right \rfloor  $ 
\end{enumerate}
\end{theorem}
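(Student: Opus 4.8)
The assertion is a characterization, so two things must be shown: \emph{existence}, that Gomi's trace $\tau$ of \cite{Gom06} does satisfy $(1)$--$(3)$, and \emph{uniqueness}, that these conditions single out $\tau$ among all $\Zb[q,q^{-1}]$-linear trace functions on $\mathbf{H}_{W_m}$. For existence, conditions $(1)$ and $(2)$ are the normalization and the ``Markov parameter'' built into the construction of $\tau$, so the first step is simply to read them off. Condition $(3)$ is the genuinely dihedral input; the plan is to deduce it from Gomi's construction (ultimately from Lusztig's exotic Fourier transform, \cite{Gom06}), or — more economically — by converting the recursion \cref{c2lem}(c) from the $\delta$-normalization into the $T$-normalization via \cref{bwbasis} and comparing with the identities $(3)$.

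\textbf{Uniqueness.} The heart of the matter is an induction on Coxeter length inside $W_m$. Recall $W_m=\{e\}\sqcup\{\smt{s}{k}{},\smt{t}{k}{}:1\le k\le m\}$, with $\smt{s}{m}{}=\smt{t}{m}{}$, and that $\{T_w\}_{w\in W_m}$ is a $\Zb[q^{1/2},q^{-1/2}]$-basis of $\mathbf{H}_{W_m}$, so a trace is determined by its values on the $T_w$; conditions $(1)$ and $(2)$ fix these in lengths $0$ and $1$. For the inductive step I would compute $\tau\big(T_s\,T_{\smt{t}{k}{}}\big)$ in two ways: directly it equals $\tau\big(T_{\smt{s}{k+1}{}}\big)$, while by the cyclicity axiom it equals $\tau\big(T_{\smt{t}{k}{}}\,T_s\big)$, which the quadratic relation $T_s^2=(q-1)T_s+q$ rewrites — depending on the parity of $k$ — either as $\tau\big(T_{\smt{t}{k+1}{}}\big)$ or as $(q-1)\tau\big(T_{\smt{t}{k}{}}\big)+q\,\tau\big(T_{\smt{t}{k-1}{}}\big)$. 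Running this (and its $s\leftrightarrow t$ mirror) determines the odd-length values from smaller ones; the even-length values are exactly the residual freedom, and conditions $(3)$ supply them: for index $i$, after rewriting $T_s^{-1}=q^{-1}\big(T_s-(q-1)\big)$ and applying cyclicity, the identity $(3)$ becomes a relation expressing $\tau\big(T_{\smt{s}{2i}{}}\big)$ in terms of $\tau\big(T_{\smt{s}{2i-1}{}}\big)$ and $\tau(T_s)^2$, hence in terms of already-determined data. The braid relation at the longest element provides the last consistency check. Once this is verified, any trace obeying $(1)$--$(3)$ agrees with $\tau$ on the basis, hence everywhere.

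\textbf{Main obstacle, and the intended application.} The delicate point is the bookkeeping in the top-length range: tracking products $T_s^{\pm1}T_t^{\pm1}\cdots$ through the quadratic and braid relations to see which $T_w$ they reduce to, and checking that the list $(3)$, with $1\le i\le\lfloor(m-1)/2\rfloor$, is neither redundant nor deficient for the residual freedom. Since this is precisely Kihara's theorem \cite{Kih04}, the realistic plan is to cite it and spend the effort on what \cref{gomisect} actually needs: verifying that the Poincar\'e series $\epsilon_t$ of Hochschild homology of Soergel bimodules satisfies $(1)$--$(3)$, after the standard normalization relating such a series to the Markov trace. There, $(1)$ and the identity $(1-q)^2\epsilon_t|_{t=0}=\epsilon$ come from $\hh_{\bullet,\bullet}(R)$; $(2)$ comes from the $k=1$ case of \cref{finindecompcohograd} (equivalently \cref{hochbs}); and $(3)$ follows from \cref{epsiloneq} together with \cref{c2lem}(c). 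The crystallographic cases $m\in\{2,3,4,6\}$ can be cross-checked against \cite{WW11}.
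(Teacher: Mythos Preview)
The paper does not prove this theorem at all: it is introduced with ``by the following theorem proved in \cite{Kih04}'' and is simply cited as a black box. Your proposal correctly lands on this same conclusion (``Since this is precisely Kihara's theorem \cite{Kih04}, the realistic plan is to cite it''), so on the actual content of \emph{this} statement you and the paper agree.

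However, your proposal conflates two distinct results. Everything in your final paragraph under ``the intended application'' --- verifying that the normalized $\epsilon_t$ satisfies $(1)$--$(3)$ using $\hh_{\bullet,\bullet}(R)$, \cref{epsiloneq}, and \cref{c2lem}(c) --- is not part of proving \cref{kihtheorem}; it is the proof of the \emph{next} theorem in the paper, which uses \cref{kihtheorem} as input. Your uniqueness sketch is a reasonable outline of how one might reprove Kihara's result, but it is not what the paper does, and as you yourself note the bookkeeping at the top of the length range (checking that the $\lfloor (m-1)/2\rfloor$ conditions in $(3)$ exactly account for the residual freedom, and that the braid relation gives no further constraint) is left unverified. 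If you intend to give an independent proof rather than a citation, that is the step requiring genuine work.
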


\begin{theorem}
For all $x\in \mathbf{H}_{W_m}$
\[ \frac{1}{(1+qt)^2}\epsilon_t(x)=\tau_{W_m}(x) |_{r=qt} \]
\end{theorem}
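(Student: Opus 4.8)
The plan is to show that the normalized functional $\widetilde\tau(x) := \frac{(1-q)^2}{(1+qt)^2}\epsilon_t(x)$ coincides with Gomi's trace $\tau$ by verifying that $\widetilde\tau$ satisfies the three characterizing properties of \cref{kihtheorem}. Since \cref{kihtheorem} asserts uniqueness of a trace function with those properties, this suffices. First I would observe that $\epsilon_t$ is indeed a trace (i.e.\ $\Zb[q,q^{-1}]$-linear with $\epsilon_t(ab)=\epsilon_t(ba)$): linearity is built into the definition, and the trace property follows from the fact that Hochschild (co)homology is a trace on the homotopy category of Soergel bimodules, so $\hh_{\bullet,\bullet}(B\otimes_R B')\cong \hh_{\bullet,\bullet}(B'\otimes_R B)$ as bigraded vector spaces; hence $\widetilde\tau$ is also a trace. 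It then remains only to check the three normalization conditions on the specific elements $1$, $T_1$, $T_2$, and the ``partial braid'' elements $\underbrace{T_1T_2\cdots}_{i+1}\underbrace{\cdots T_1^{-1}T_2^{-1}}_{i-1}$.

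For condition $(1)$: $b_{\mathrm{id}} = 1$ corresponds to $B_{\mathrm{id}} = R$, and from \cref{hochbs} (or directly) $\hh_{\bullet,\bullet}(R) \cong \Lambda^\bullet(\un V\dsbrac{-1})$, which in the $n=2$ case with $\Lambda_{st}=\kb$ contributes Poincaré series $(1+q^{-2}t)^2/(1-q)^2$ — wait, more carefully, one uses the established formula $\epsilon_t(b_{\mathrm{id}}) = (1+qt)^2/(1-q)^2$ (this is the $k=0$ degenerate case of \cref{epsiloneq}, or computed directly from $\hh_{\bullet,\bullet}(R)$ in two variables), so $\widetilde\tau(1) = 1$. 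For condition $(2)$: using $b_s = \delta_s + v$ (the $k=1$ case of \cref{bwbasis}) we get $\delta_s = b_s - v\cdot 1$, hence $\epsilon_t(\delta_s) = \epsilon_t(b_s) - v\,\epsilon_t(1)$, and translating to the $T_s$ presentation via $T_s = v^{-1}\delta_s$ (i.e.\ $\delta_s = v T_s$) and $v = q^{-1/2}$, a direct substitution using the $k=1$ value of \cref{epsiloneq} and the value of $\epsilon_t(1)$ should yield $\widetilde\tau(T_1) = \widetilde\tau(T_2) = \frac{-(1-q)qt}{1+qt}$. For condition $(3)$: the element $\underbrace{T_1T_2\cdots}_{i+1}\underbrace{\cdots T_1^{-1}T_2^{-1}}_{i-1}$ should be re-expressed, via the quadratic and braid relations (or Lemma 3.19-type manipulations), in terms of the standard basis elements $\delta_w$ and then the $b_w$; here I would use the recursion in \cref{c2lem}(c), $\epsilon_t(\delta_{\smt{s}{k+1}{}}) = (v^{-1}-v)\epsilon_t(\delta_{\smt{t}{k}{}}) + \epsilon_t(\delta_{\smt{s}{k-1}{}})$, together with \cref{c2lem}(b) $\epsilon_t(\delta_{\smt{t}{k}{}}) = \epsilon_t(\delta_{\smt{s}{k}{}})$, which exactly mirror the quadratic and symmetry relations used to reduce the partial-braid word. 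Iterating should collapse the expression down to a scalar multiple of $\epsilon_t(1)$ plus lower terms, producing $\widetilde\tau$ of the partial braid equal to $\left(\frac{(1-q)qt}{1+qt}\right)^2$.

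The main obstacle I anticipate is condition $(3)$: unwinding the mixed positive/negative braid word $\underbrace{T_1T_2\cdots}_{i+1}\underbrace{\cdots T_1^{-1}T_2^{-1}}_{i-1}$ into the Kazhdan--Lusztig basis requires care, because $T_s^{-1} = v^2\delta_s^{-1} = v^2(\delta_s - (v^{-1}-v))$ introduces both $\delta_s$ and a constant, so one must track how these constants propagate through the recursion and confirm they reassemble correctly. The cleanest route is probably to set $u_k := \epsilon_t(\delta_{\smt{s}{k}{}})$ and $B_k := \epsilon_t(b_{\smt{s}{k}{}})$, record the linear recursions among them from \cref{c2lem} and \cref{bwbasis}, solve them in closed form using \cref{epsiloneq}, and then substitute into the explicit expansion of the partial braid word in the $\delta_w$ basis. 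I would also double-check the edge cases $i=1$ and $i = \lfloor (m-1)/2\rfloor$ separately, since the range restriction $2\le k\le m-1$ in \cref{c2lem} and the vanishing $[m]=0$ can cause the recursion to behave differently at the boundary. Once all three conditions are verified, \cref{kihtheorem} immediately gives $\widetilde\tau = \tau$ on all of $\mathbf{H}_{W_m}$, which is the claimed identity.
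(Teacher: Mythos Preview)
Your proposal is correct and follows the same overall strategy as the paper: verify the three characterizing conditions of \cref{kihtheorem} for the normalized functional, after noting that $\epsilon_t$ is a trace by the cyclicity of Hochschild homology. Conditions (1) and (2) are handled exactly as you describe.

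The only substantive difference is in condition (3). You propose to expand the partial-braid word in the $\delta_w$ (then $b_w$) basis, solve the linear recursions from \cref{c2lem} in closed form via \cref{epsiloneq}, and substitute. The paper instead exploits the trace property one more time: writing $\varphi_i = \underbrace{\delta_1\delta_2\cdots}_{i+1}\underbrace{\cdots\delta_1^{-1}\delta_2^{-1}}_{i-1}$, it cycles the rightmost $\delta_2^{-1}$ to the front, expands $\delta_2^{-1}=\delta_2+(v-v^{-1})$, and then applies \cref{c2lem}(c) to the resulting $\epsilon_t(\delta_2\delta_1\varphi_{i-1}^*)$ term to obtain the clean one-step reduction $\epsilon_t(\varphi_i)=\epsilon_t(\varphi_{i-1}^*)$. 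Iterating (and using \cref{c2lem}(b) to pass between the two color-swapped versions) collapses every $i$ to the base case $\epsilon_t(\varphi_1)=\epsilon_t(\delta_1\delta_2)$, which is then computed directly as $q\epsilon_t(b_1b_2)-\epsilon_t(T_1)-\epsilon_t(T_2)-\epsilon_t(1)$. Your approach would also succeed, but the paper's inductive reduction avoids tracking the propagation of the constant terms from each $T_j^{-1}$ through a growing sum and sidesteps the boundary bookkeeping you flag; it is the cleaner route and is worth knowing.
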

\begin{proof}
Because we have an isomorphism of doubly graded \un{vector spaces}
\[  \hh^{a,b}(M\otimes_R N)\cong \hh^{a,b}(N\otimes_R M)  \]
the LHS is a trace function on $\mathbf{H}_{W_m}$ and by uniqueness we just need to check the LHS satisfies the conditions in \cref{kihtheorem}. $(1)$ follows from the well known computation $\hh_{\bullet, \bullet}(R)=\mathrm{Sym}(V^*(-2))\otimes_\kb \Lambda^\bullet(V\dsbrac{-1})$, where $\dim V=2$ in this case. $(2)$ is an immediate consequence of \cref{epsiloneq} and that $T_s=q^{1/2}b_s-1$. \\

For $(3)$, under the isomorphism between the two versions of the hecke algebra we have that
\[ \epsilon_t( \underbrace{T_1T_2\ldots}_{i+1}\underbrace{\ldots T_1^{-1} T_2^{-1}}_{i-1} )=v^{-2}\epsilon_t( \underbrace{\delta_1 \delta_2\ldots}_{i+1}\underbrace{\ldots \delta_1^{-1} \delta_2^{-1}}_{i-1} )   \]

For $i\ge 1$, let $\varphi_i= \underbrace{\delta_1 \delta_2\ldots}_{i+1}\underbrace{\ldots \delta_1^{-1} \delta_2^{-1}}_{i-1} $ and $\varphi_i^*= \underbrace{\delta_2 \delta_1\ldots}_{i+1}\underbrace{\ldots \delta_2^{-1} \delta_1^{-1}}_{i-1} $. We claim
\[  \epsilon_t(\varphi_i) =  \epsilon_t(\varphi_{i-1}^*)  \]
and therefore we only need to compute $\epsilon_t(\varphi_1)=\epsilon_t(\varphi_1^*)$. Note $\delta_2^{-1}=\delta_2+v-v^{-1}$ and since $\epsilon_t$ is a trace we see that
\[ \epsilon_t(\varphi_i)=\epsilon_t((\delta_2+v-v^{-1})\delta_1 \varphi_{i-1}^*)=\epsilon_t(\delta_2 \delta_1 \varphi_{i-1}^*)+(v-v^{-1})\epsilon_t( \delta_1 \varphi_{i-1}^*  ) \]
Now, using the quadratic relation, because $\varphi_i$ alternates between $\delta_1, \delta_2$ at the start, $\delta_1^{-1}, \delta_2^{-1}$ at the end with 2 more $\delta_1, \delta_2$ terms than $\delta_1^{-1}, \delta_2^{-1}$ terms, we can write $\displaystyle \varphi_{i-1}^*=\sum_{j=2}^{2i-2} a_j \delta_{\smt{2}{j}{}}$ when $i>1$. Because $\delta_2\delta_1 \delta_{\smt{2}{j}{}}=\delta_{\smt{2}{j+2}{}}$, by \cref{c2lem} part $(c)$ we have
\begin{align*}
 \epsilon_t(\delta_2 \delta_1 \varphi_{i-1}^*)+(v-v^{-1})\epsilon_t( \delta_1 \varphi_{i-1}^*  ) &=(v^{-1}-v)\epsilon_t(\delta_1 \varphi_{i-1}^*)+\epsilon_t(\varphi_{i-1}^*)+(v-v^{-1})\epsilon_t( \delta_1 \varphi_{i-1}^*  )  \\
 &=\epsilon_t(\varphi_{i-1}^*)
\end{align*}
as desired. Finally, using the above and \cref{epsiloneq} we compute
\begin{align*}
\frac{1}{(1+qt)^2} \epsilon_t( \underbrace{T_1 T_2\ldots}_{i+1}\underbrace{\ldots T_1^{-1} T_2^{-1}}_{i-1} )&= \frac{1}{(1+qt)^2}  v^{-2}\epsilon_t( \delta_1 \delta_2 )= \frac{1}{(1+qt)^2} \epsilon_t( T_1 T_2 )     \\
=\frac{1}{(1+qt)^2} \sbrac{q\epsilon_t(b_{12})-\epsilon_t(T_1)-\epsilon_t(T_2)-\epsilon_t(1) } &=\frac{(1+q^2t)^2-2(1+qt)(q-1)qt-(1+qt)^2}{(1+qt)^2}\\
& =\paren{\frac{(1-q)qt}{1+qt}}^2
\end{align*} 
\end{proof}

\begin{corollary}
\label{lihom}
    Let $\omega:\mathbf{H}_{W_m}\to \mathbf{H}_{W_m}$ be the KL anti-involution. Given $B, B^\prime\in \sbim(\hf_{geo}, W_m) $ set $v^{-1}[B]:=[B(1)]$.
    \[ \mathrm{grk}_R^{t}\ext_{R^e}^{\bullet, \bullet}(B,B^\prime)=(1+v^{-2}t)^2\tau_{W_m}\paren{\omega(\ch_{\Delta}(B))  \ch_{\Delta}(B^\prime)} |_{r=v^{-2}t}  \]
    where $\mathrm{grk}_R^t(M)=cv^k t^n$ if $M\cong R(-k)^{\oplus c}[-n]$.
\end{corollary}
\begin{proof}
By adjunction, \cref{cor:adjunct}, and linearity it suffices to show this when $B=R, B^\prime=B_w$, but $\ch_\Delta(B_w)=b_w$.
\end{proof}

By \cite[Remark 4.2]{Gom06}, when $z\to 0$ (equivalently when $t\to 0$), $\tau_{W_m}\to \epsilon$ where $\epsilon$ is the standard/canonical trace on $\mathbf{H}_{W_m}$ and thus the above corollary is a $t-$analog of Soergel's Hom Formula \cite[Theorem 5.27]{EMTW20}.


\end{appendices}

\printbibliography

\noindent
{\textsl \small Cailan Li, Department of Mathematics, Columbia University, New York, NY, USA}

\noindent 
{\tt \small email: ccl@math.columbia.edu}

\end{document}